\documentclass[10pt]{article}

\usepackage[plainpages=false,pdfpagelabels,colorlinks=true,linkcolor=mypurple,urlcolor=mypurple,citecolor=mypurple]{hyperref}

\usepackage{tptstyle}

% arxiv 
% journal
%\usepackage[capitalize]{cleveref}

%%%%%%% Setup of Colorlinks, Margin, Textwidth, Font, Numbering ...

\usepackage{fullpage}
\usepackage[font={footnotesize,it}]{caption}
\numberwithin{equation}{section}

%%%%%%% My macros

\newtoggle{vaos}
\togglefalse{vaos}
\newcommand{\aosversion}[2]{\iftoggle{vaos}{#1}{#2}}

\newcommand{\cG}{{\mathcal{G}}}
\newcommand{\cF}{{\mathcal{F}}}
\newcommand{\cE}{{\mathcal{E}}}
\newcommand{\sig}[1]{{\mathrm{sig}(#1)}}

\definecolor{revisecolor}{HTML}{ae1908}

\begin{document}

\title{Causality Pursuit from Heterogeneous Environments via Neural Adversarial Invariance Learning}

\author{Yihong Gu$^1$
~~~~ Cong Fang$^2$
~~~~ Peter B{\"u}hlmann$^3$
~~~~ Jianqing Fan$^1$\thanks{
J. Fan was supported by ONR grant N00014-25-1-2317 and NSF grants DMS-2053832, DMS-2210833, and DMS-2412029. Y. Gu was supported by the Charlotte Elizabeth Procter Honorific Fellowship from Princeton University. C. Fang was supported by the NSF China No.s 92470117.}\\
 $^1$Department of Operations Research and Financial Engineering, Princeton University \\
$^2$School of Intelligence Science and Technology, Peking University \\
$^3$Seminar for Statistics, ETH Z{\"u}rich}

\date{This version: November 2025}
\maketitle

\begin{abstract}
Pursuing causality from data is a fundamental problem in scientific discovery, treatment intervention, and transfer learning. This paper introduces a novel algorithmic method for addressing nonparametric invariance and causality learning in regression models across multiple environments, where the joint distribution of response variables and covariates varies, but the conditional expectations of outcome given an unknown set of quasi-causal variables are invariant. The challenge of finding such an unknown set of quasi-causal or invariant variables is compounded by the presence of endogenous variables that have heterogeneous effects across different environments. The proposed Focused Adversarial Invariant Regularization (FAIR) framework utilizes an innovative minimax optimization approach that drives regression models toward prediction-invariant solutions through adversarial testing. Leveraging the representation power of neural networks, FAIR neural networks (FAIR-NN) are introduced for causality pursuit.  It is shown that FAIR-NN can find the invariant variables and quasi-causal variables under a minimal identification condition and that the resulting procedure is adaptive to low-dimensional composition structures in a non-asymptotic analysis.  Under a structural causal model, variables identified by FAIR-NN represent pragmatic causality and provably align with exact causal mechanisms under conditions of sufficient heterogeneity. Computationally, FAIR-NN employs a novel Gumbel approximation with decreased temperature and a stochastic gradient descent ascent algorithm. The procedures are demonstrated using simulated and real-data examples.

\end{abstract} 
\noindent{\bf Keywords}: Adversarial Estimation, Causal Discovery, Conditional Moment Restriction, Gumbel Approximation, Invariance, Neural Networks.

\newpage

\section{Introduction}

A fundamental problem in statistics and machine learning is to use collected data to predict the response variable $Y$ based on explanatory covariates $X\in \mathbb{R}^d$.  The objective often centers on estimating the regression function $m_0(x) = \mathbb{E}[Y|X=x]$, which minimizes the population $L_2$ risk $\mathsf{R}(m) = \int |y - m(x)|^2 \mu_0(dx, dy)$, starting from the pioneering work of least squares by
\cite{legendre1805leastsquares} and \cite{gauss1809leastsquares}. 
The problem of achieving sample-efficient estimation of $m_0$ has been extensively studied, and there are many methods that attempt to exploit a low-dimensional structure such as sparsity, low-rankness, or additivity, and develop corresponding optimal methods tailored to this assumed structure \citep{hastie2009elements,wainwright2019high, fan2020statistical}. However, these methods may suffer from model misspecification due to their reliance on imposed structures. As an alternative, algorithmic methods \citep{breiman2001statistical} like neural networks can be adaptive to the low-dimensional structure efficiently \citep{schmidt2020nonparametric, fan2024factor} with no supervision of function structure. This nature endows them with universal applicability across various tasks and data. 

Despite many celebrated efforts for the efficient estimation of $m_0$ or its variants like quantile functions, the ultimate goal of statistical learning is to predict on unseen data, elucidate the causal relationships among variables, and guide decision-making in real-world scenarios. We instinctively regard $m_0$ as such a target function for achieving decent prediction and causal attribution. However, this can be flawed: $m_0$ can produce unstable predictions on unseen data, and we risk false scientific conclusions in numerous cases. Consider a simple thought experiment where we aim to classify an object in a picture as either a cow $(Y=1)$ or a camel $(Y=0)$ using two provided features $X_1$ (body shape) and $X_2$ (background color). In the data we collected from $\mu_0$, the cows usually appear on green grass, while camels often stay on yellow sand. Consequently, the conditional expectation $m_0(x_1, x_2) = \mathbb{E}_{\mu_0}[Y|X_1=x_1, X_2=x_2]$ would be heavily dependent on $x_2$. Such a model is problematic both for attribution and prediction in an unseen environment. Its application in a setting with a different background, such as zoos, would lead to unreliable predictions. Furthermore, attributing the determination of an object to the background surrounding it also contradicts our understanding of causality. In the above case, we may prefer $m_\star(x)=\mathbb{E}[Y|X_1=x_1]$ for prediction and attribution as we know the causal mechanisms.

We refer to the above problem as the ``{\it curse of endogeneity}'', namely, the conditional expectation of the residual for the ``potential'' interested (causal) $m_\star$ is not zero given all the explanatory variables, i.e., $\mathbb{E}[Y-m_\star(X)  |X]\neq 0$. Such a problem will lead to a misalignment between $m_0$ and $m_\star$, i.e., $m_0(X)-m_\star(X)\neq 0$. Hence, traditional regression techniques for estimating $m_0$ will result in an unsatisfactory solution.    

Causal inference methods offer remarkable remedies to the curse of endogeneity. Based on the potential outcome \citep{rubin1974estimating} or structural causal model (SCM) \citep{pearl2009causality}, efficient estimation via various regression techniques \citep{chernozhukov2018double, athey2019generalized} is possible. However, all these methods rely on relatively strong assumptions that are often untestable from data. This, in turn, leads to a high risk of severe misspecification of models and assumptions.

This paper proposes an algorithmic remedy for the ``curse of endogeneity'' taking advantage of data from multiple sources and a high-level invariance principle. Motivated by causal discovery under the SCM framework \citep{peters2016causal}, the invariance principle argues that causal relations remain constant across different environments from multiple sources. Leveraging this invariance principle, we propose an algorithmic framework that estimates the most predictive association, which we refer to as \emph{data-driven causality}, that is invariant across diverse environments. Methodologically and in contrast to previous work, our framework is nonparametric and assumption-lean, making it scalable and robust to model misspecification. From a statistical viewpoint, our estimator requires a minimal number of environments and achieves optimal sample complexity. Furthermore, our approach identifies the causal structure in the setting of an SCM under minimal assumptions of heterogeneity across different environments.

\subsection{The Canonical Model under Study}
\label{sec:intro-problem}

Consider the following multi-environment regression problem. Let $\mathcal{E}$ be the set of environments. For each environment $e\in \mathcal{E}$,  $n$ i.i.d. data $\{(X_i^{(e)},Y_i^{(e)})\}_{i=1}^n$ are drawn from $\mu^{(e)}$ -- the joint distribution of $(X^{(e)},Y^{(e)})$ satisfying
\begin{align}
\label{eq:intro-model}
    Y^{(e)} = m^\star(X_{S^\star}^{(e)}) + \varepsilon^{(e)} \qquad \text{with} \qquad \mathbb{E}[\varepsilon^{(e)}|X_{S^\star}^{(e)}] \equiv 0.
\end{align} 
Here $S^\star$, the unknown true important variable set, and $m^\star: \mathbb{R}^{|S^\star|} \to \mathbb{R}$, the target regression function, are both {\it invariant} across different environments; but the joint distributions $\mu^{(e)}$ can vary. We aim to learn the set of important variables $S^\star$ and estimate the {\it invariant regression function} $m^\star$ using data $\{\{(X_i^{(e)}, Y_i^{(e)})\}_{i=1}^n\}_{e\in \mathcal{E}}$ from $|\mathcal{E}|$ heterogeneous environments. The same $n$ in the formulation is just for expository simplicity; the extension to varying $n^{(e)}$ is straightforward. We refer to the above problem as \emph{nonparametric invariance pursuit}.  

Now we temporarily refrain from causal discussions and frame it as a pure statistical estimation problem. We will use a running example in \cref{sec:1.3} right after introducing our method to provide a causal interpretation of $S^\star$ and then offer in \cref{sec:ident-fairnn} a rigorous and comprehensive interpretation of what $S^\star$ is in the SCM with interventions on $X$. It is also notable to mention that model \eqref{eq:intro-model} only requires invariance in the first moment instead of full distributional invariance, i.e., $\varepsilon^{(e)} \sim F_\varepsilon$ and independent of $X_{S^\star}^{(e)}$, as typically required for causal discovery \citep{peters2016causal}. It is more realistic and allows for between-environment heteroscedastic errors. 

It is important to note that the standard nonparametric regression generally diverges from our target $m^\star$, i.e., $\mathbb{E}[Y^{(e)}|X^{(e)}=x] \neq m^\star(x_{S^\star})$. This mismatch is due to $\mathbb{E}[\varepsilon^{(e)}|X^{(e)}] \neq 0$. Such a ``curse of endogeneity'' problem is the main challenge we need to address. Including even one of the endogenously spurious variables, e.g., $X_2$ background color in the above example, in the regression function will create an inconsistent estimation of $m^\star$.  Thus, it is essential to design an algorithm to eliminate all endogenously spurious variables.

\subsection{Our Algorithmic Remedy: FAIR Estimation}\label{sec1.2}

This paper proposes a unified estimation framework -- the {\it Focused Adversarial Invariance Regularized (FAIR)} estimator. It regularizes the user-specified risk loss $\ell(y, v)$ by a novel regularizer. Specifically, the FAIR estimator is the solution of the following minimax optimization program
\begin{align}
\label{eq:intro-obj}
%\begin{split}
    \min_{g\in \mathcal{G}} \max_{ \substack{f^{(e)} \in \mathcal{F}_{S_g} \\ \forall e\in \mathcal{E}}} \underbrace{\sum_{e\in \mathcal{E}} \mathbb{E}_{\mu^{(e)}}\left[\ell(Y, g(X))\right]}_{\mathsf{R}(g)} + \gamma \underbrace{\sum_{e\in \mathcal{E}}\mathbb{E}_{\mu^{(e)}}\left[\{Y-g(X)\} f^{(e)}(X) - \{f^{(e)}(X)\}^{2}/2\right]}_{\mathsf{J}(g, \{f^{(e)}\}_{e\in \mathcal{E}})}.
%\end{split}
\end{align} 
Here $\ell(\cdot,\cdot)$ is a loss whose population solution leads to the conditional expectation, $\gamma>0$ is the regularization hyper-parameter to be determined, $(\mathcal{G}, \mathcal{F})$ are the function classes to be specified by the user satisfying $\mathcal{G} \subseteq \mathcal{F}$. The first part is the risk minimization, and the second component is the test of exogeneity of the variables $S_g = \mathrm{supp}(g)$ used by the regression function $g$, where $\mathcal{F}_{S_g} = \{f \in \mathcal{F}: f(x) = h(x_{S_g})\text{ for some }h: \mathbb{R}^{|S_g|}\to \mathbb{R}\}$ is the testing function class for the prediction functions in $\mathcal{G}$ that only ``focuses'' on the variables $S_g$ that $g$ used. Two useful classes of functions are linear and square-integrable classes for $(\mathcal{G}, \mathcal{F})$, which correspond respectively to linear models and nonparametric regression models; see \cref{sec:model} for additional details.  Note that the second component is nonnegative after maximization by comparing with $f^{(e)} = 0$ so that the penalty is nonnegative. For the empirical counterpart, we solve a similar minimax optimization program that substitutes $\mathbb{E}_{\mu^{(e)}}[\cdot]$ with the corresponding sample means.

To see why such a FAIR penalty works, let us consider the nonparametric regression setting in which $\mathcal{F} = \{f: \mathbb{E}_{\mu^{(e)}}[f^2(X_{S_g})] < \infty\}$. By conditioning on $X_{S_g}$, for $f^{(e)} \in \mathcal{F}_{S_g}$,
$$
\mathbb{E}_{\mu^{(e)}}\left[\{Y-g(X)\} f^{(e)}(X) \right ] =
\mathbb{E}_{\mu^{(e)}} \left [ \left \{ \mathbb{E}_{\mu^{(e)}}[ Y|X_{S_g}] - g(X) \right \} f^{(e)}(X) \right ].
$$
Then, the supremum in \eqref{eq:intro-obj} can be explicitly found and the objective now becomes
\begin{align}
    \min_{g\in \mathcal{G}} \mathsf{R}(g) + \gamma \cdot \mathsf{J}^\star (g) ~~~  \text{with} ~~~ \mathsf{J}^\star (g) = \frac{1}{2} \sum_{e\in \mathcal{E}}\mathbb{E}_{\mu^{(e)}}\left[\left|g(X) - \mathbb{E}_{\mu^{(e)}}[Y|X_{S_g}]\right|^2\right]. \label{fan1}
\end{align}
Therefore, $g(X) = m^\star(X_{S^\star})$ is a minimax solution.

To motivate \eqref{eq:intro-obj}, let us first consider the additional constraint $\mathbb{E}_{\mu^{(e)}}[f^{(e)}(X_{S_g})^2] = 1$ so that the first part of the second component in \eqref{eq:intro-obj} is basically the maximal correlation between the residual $\{Y-g(X_{S_g})\}$ and testing functions $f^{(e)}(X_{S_g})$. Hence, the criterion \eqref{eq:intro-obj} is to find a set of variables $X_{S_g}$ as exogenous (weakly correlated) with the residuals as possible for all testing functions in $\mathcal{F}_{S_g}$.  By the Lagrange multiplier method, the constrained maximization problem can be written as
$$
    \max_{ f^{(e)} \in \mathcal{F}_{S_g} } \mathbb{E}_{\mu^{(e)}}\left[\{Y-g(X)\} f^{(e)}(X) - \lambda \{f^{(e)}(X)\}^{2}\right].
$$
Choosing the multiplier $\lambda = 1/2$ gives rise to the objective function \eqref{eq:intro-obj}.

FAIR penalty screens out all the endogenously spurious variables when $\gamma$ is sufficiently large.  This is easily seen when the penalty in \eqref{eq:intro-obj} is not zero, such a $g$ is dominated by $g=m^\star$ for large $\gamma$.  After deleting endogenously spurious variables, we can apply the commonly-used statistical variable selection methods \citep{hastie2009elements,wainwright2019high, fan2020statistical} to further eliminate exogenously spurious or weak causal variables such as the time and temperature at which the photos were taken.

\begin{figure}
\centering
\includegraphics[width=0.9\textwidth]{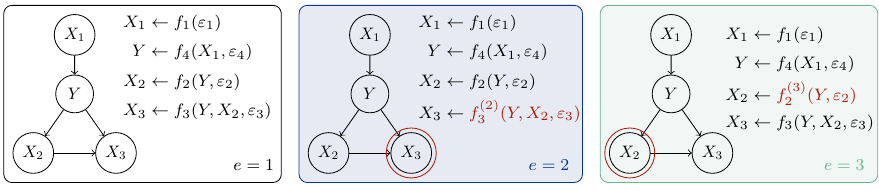}
\caption{The running example when $d=3$ and $|\mathcal{E}|=3$. The arrow from node $x$ to $y$ indicates that $x$ affects $y$ directly. The data-generating process of $(X_1,\ldots, X_3, Y)$ in each environment is described by the set of assignments in each panel, and $\varepsilon_1,\ldots,\varepsilon_4$ are independent noises. Compared with the first environment $e=1$, the assignment for $X_3$ perturbs in $e=2$ and the assignment for $X_2$ perturbs in $e=3$, which are marked by \myred{red}. }
\label{fig:intro}
\end{figure}

\subsection{A Running Example and the Roadmap}
\label{sec:1.3}

Here we use an example to demonstrate the philosophy of our method, namely, to describe the target regression function of our method when data from different environments are observed. Leveraging this example, we also illustrate the key idea of the main theoretical results and offer an overview of this paper.

Let us use the running example with $d=3$ in \cref{fig:intro} to illustrate the causal interpretation of $S^\star$ which our FAIR estimation pursues. The data-generating processes of $(X, Y)$ in the environments are described by the SCMs shown in \cref{fig:intro}: for example, the data-generating process of the first environment is described by the four assignments in the left panel, where $\{f_j\}_{j=1}^4$ are arbitrary nonparametric functions and $\varepsilon_1,\ldots, \varepsilon_4$ are some independent noises. Here, the presentation of the assignments and the cause-effect relationship is for illustration, our algorithm is blind to this knowledge. 

When only data from the first environment is observed, the standard least squares will use all the variables to predict $Y$ if $\{f_j, \varepsilon_j\}_{j=1}^4$ are ``\emph{nondegenerate}''. This is because besides $X_1$ (direct cause of $Y$), both $X_2$ and $X_3$ can help predict the noise $\varepsilon_4$, excluding some ``degenerate'' cases that rarely happen; for example, when $f_2(y,\varepsilon_2) = h(y) + \varepsilon_2$ and $f_3(y,x_2,\varepsilon_3)=h(y) - x_2 + \varepsilon_3 = -\varepsilon_2 +  \varepsilon_3$, only $X_1$ and $X_2$ contribute to prediction; see a formal definition of the ``\emph{nondegenerate}'' cases in \cref{sec:ident-fairnn}. The FAIR estimation will pursue the same $S^\star=\{1,2,3\}$ as the invariance constraint trivially holds when $|\mathcal{E}|=1$. 

Things will be different when data from the second environment $e=2$ becomes accessible. Here, the change of assignment of $X_3$ (\cref{fig:intro} middle panel), or the intervention on $X_3$, will make the conditional moment invariance no longer hold for any variable set containing $X_3$ under nondegenerate cases if such an intervention is also ``\emph{nondegenerate}''. Therefore, the FAIR method will pursue the \emph{maximum invariant set} $S^\star=\{1,2\}$. We use the name ``maximum invariant set'' because it is the most predictive set that preserves the conditional moment invariance constraint. The set $S^\star=\{1,2\}$ includes both the direct cause of $Y$ ($X_1$) and the effect of $Y$ ($X_2$), and this is the best we can get from the currently available data. In this case, all the sets $\{1,2\}, \{1\}, \{2\}, \emptyset$ preserve the invariant structure. The rule we follow is to pick one from the four candidates based on the following question: given available data from $\mathcal{E}=\{1,2\}$, what is the best prediction model from a pragmatic perspective? A model including $X_3$ is not robust -- because we have observed the perturbations of association (i.e., the ``non-invariance'') when $X_3$ is included in the prediction model, the adversarial effect of $X_3$ can make the prediction very bad in an unknown future environment. Here, making predictions on $\{X_1,X_2\}$ may be the best choice. This is because if we hold the belief that in the future, the interventions are made within $X_3$, then the association between $X_{S^\star}=X_{\{1,2\}}$ and $Y$ will be maintained and is the most predictive one among all the maintaining associations. Therefore, the maximum invariant set FAIR estimation pursues can be interpreted as either contemporary direct causes (the candidate direct causes that haven't been falsified) or pragmatic direct causes (for pragmatic considerations in future predictions).

Finally, when we observe additional data from environment $e=3$ (\cref{fig:intro} right panel), the maximum invariant set $S^\star$ will match the exact direct causes $X_1$ under this model. As a comparison, when $\mathcal{E}=\{1,2,3\}$ are observed, the standard least squares, group distributional robust optimization procedure \citep{meinshausen2015maximin, duchi2021learning, sagawa2019distributionally,agarwal2022minimax}, and IRM \citep{arjovsky2019invariant} will produce prediction models using all the variables; and the previous hypothesis-test based procedure from nonlinear ICP \citep{heinze2018invariant} will result in the null prediction because $\emptyset$ is also a set maintaining invariant structure.

We also remark that it is possible to recover the direct causes $X_1$ non-trivially when only one environment is observed. But it is at the cost of imposing additional structural assumptions, for example, assuming $f_4$ is linear \citep{fan2014endogeneity}. This can be implemented in our framework by restricting $\mathcal{G}$ within a linear class and choosing a nonparametric $\mathcal{F}$.

\medskip

\noindent \textbf{Roadmap.} The theoretical claims in the paper will extend the above intuitions to arbitrary multivariate cases in a rigorous manner. \cref{sec:fairnn} and \cref{sec:ident-fairnn} focus on the method and theoretical results for the nonparametric invariance pursuit \eqref{eq:intro-model}. \cref{sec:fairnn} considers the pure estimation problem \emph{nonparametric invariance pursuit} itself. \cref{thm:fairnn} shows that under the existence of the maximum invariant set (which is testable), realizing the prediction and testing function class $\mathcal{G}$ and $\mathcal{F}$ by neural networks can allow us to estimate the regression function $m^\star$ induced by the maximum invariant set efficiently in several aspects. \cref{sec:ident-fairnn} offers a causal interpretation of the maximum invariant set under extra structural assumptions in SCMs: \cref{prop:ident-transfer-learning} shows that there always exists a maximum invariant set under nondegenerate cases, it can be represented as pragmatic direct causes in general (\cref{prop:rtl}) and will match the direct causes under sufficient interventions (\cref{prop:ident-causal-discovery}). 

The above nonparametric invariance pursuit, as a special instance, helps to illustrate the main idea and philosophy of our general invariance pursuit problem and FAIR estimation framework, which will be formally presented in \cref{sec:ip}. In the main text, we provide a sketch of the abstract unified result, from which all the non-asymptotic results are derived as corollaries, along with other applications in \cref{sec:sketch}. This includes the case that is identifiable using only one environment. We provide a computationally efficient implementation using variants of gradient descent and Gumbel approximation, followed by its application to the simulation and real data analysis in \cref{sec:exp}. A robust prediction of water birds and land birds, similar to the thought experiment, is deferred to \aosversion{Appendix C.3}{\cref{sec:app-bird}}.

\subsection{New Contributions}

We propose a unified, algorithmic, and sample-efficient methodological framework that can discover the invariant regression function, i.e., to solve a generalized version of the problem in \cref{sec:intro-problem}. The method is simple, universal, fully algorithmic, and sample-efficient:  It is just one optimization objective \eqref{eq:intro-obj} complemented by one extra hyper-parameter $\gamma$; it accommodates many losses and can be seamlessly integrated by various machine learning algorithms; it does not require any prior structural knowledge, and it is almost as statistically efficient as standard regression under various cases. 

As a special instance in our framework, the FAIR neural network (FAIR-NN) estimator is proposed for which $\mathcal{G}$ and $\mathcal{F}$ are neural networks to unveil $m^\star$ in \eqref{eq:intro-model}. It is the \emph{first} theoretically guaranteed estimator that can \emph{efficiently} recover $m^\star$ under a single \emph{general} and \emph{minimal} identification condition associated with the heterogeneity of the environments. Its sample efficiency can be understood in several notable aspects: it requires the minimal identification condition, leading to fewer required environments; it exhibits the same $L_2$ error rate as if directly regressing $Y$ on known $X_{S^\star}$, regardless of the complexity of spurious associations; and it adapts to the unknown low-dimension structure of the invariant association $m^\star$ in a same manner as \cite{kohler2021rate}. 

We also establish the \emph{first} general causal interpretation of the $S^\star$ in the canonical model \eqref{eq:intro-model} under SCM with interventions on $X$ setting. Specifically, we demonstrate that under \emph{arbitrary nondegenerate interventions}, there always exists a set -- explicitly characterized by the cause-effect relationships and the intervened variables and referred to as ``pragmatic direct causes'' -- that satisfies the aforementioned general identification condition. As a corollary, we establish the general sufficient and necessary condition under which the direct causes of $Y$ can be recovered, while previous recovery results \citep{peters2016causal} only consider special cases. Moreover, even when the interventions are insufficient to identify direct causes, such an $S^\star$ also has implications in robust transfer learning. These results are, to the best of our knowledge, the \emph{first} in the literature and are of independent interest.

While the complicated combinatorial constraint and minimax optimization are introduced in \eqref{eq:intro-obj}, we show that a variant of gradient descent -- gradient descent-ascent with Gumbel approximation \citep{jang2017categorical,maddison2017concrete} to handle the combinatorial-nature ``focused'' constraint $f\in \mathcal{F}_{S_g}$ -- continues to apply to our specifically designed algorithm and neural network estimators with no curse-of-dimension in implementation. Numerical results in \cref{sec:exp} support this. 

Though our framework is designed for algorithmic learning, it is versatile in that the user can also incorporate their strong prior structural knowledge, such as linearity or additivity of $m^\star$, into the FAIR estimation. This can be realized by restricting the function class $\mathcal{G}$ within this known structure and designating $\mathcal{F}$ as a more expansive class. We demonstrate that harnessing such strong structural knowledge can relax the condition for identification. It is worth pointing out that identification is viable even when $|\mathcal{E}|=1$ corresponding to observational data; see examples in \aosversion{Appendix B.6}{\cref{sec:theory-linearx}}. At the methodology level, our method bridges the invariance principle \citep{peters2016causal} and asymmetry principle \citep{janzing2016algorithmic} for observational data into a unified framework.

\subsection{Related Works and Comparisons}

\label{subsec:related}

Starting from the pioneering work of \cite{peters2016causal}, there is considerable literature proposing methods to estimate $m^\star$ in \eqref{eq:intro-model}, predominantly when $m^\star$ is linear. These methods broadly fall into two categories: hypothesis test-based methods and optimization-based methods. For the hypothesis test-based methods \citep{peters2016causal, heinze2018invariant, pfister2019invariant}, the Type-I error is controlled for an estimator $\hat{S}$ with $\mathbb{P}(\hat{S}\subseteq S^\star) \ge 1-\alpha$. Nonetheless, these procedures may result in missing important variables or conservative solutions like $\hat{S} = \emptyset$ due to the inherent worst-case construction in the algorithm. Additionally, the introduction of hypothesis tests also hinders their seamless integration by machine learning algorithms, limiting their scalability. On the other hand, some optimization-based methods \citep{ghassami2017learning, rothenhausler2019causal, rothenhausler2021anchor} focus on linear $m^\star$ and tackle the problem under additional structures such as linear SCMs with additive interventions \citep{rothenhausler2019causal}. This limitation curtails its applicability to a broader nonparametric setting. Some optimization-based methods \citep{pfister2021stabilizing, yin2021optimization} designed for linear models are heuristic and lack finite sample guarantees. In summary, there is still a crucial gap towards efficiently estimating $m^\star$ without additional assumptions on the underlying model. Although \cite{fan2024environment} recently bridged this gap for linear $m^\star$ through an optimization-based method, it is still unclear under the general nonparametric setting. This paper is the first to attain sample-efficient estimation for the general model with non-asymptotic guarantees in terms of both $|\mathcal{E}|$ and $n$. Additionally, it is the first to provide a general sufficient and near-necessary conditions for interventions that enable the exact recovery of direct causes within the SCM framework.

\cite{arjovsky2019invariant} considers a general task, which aims to search for a data representation such that the optimal solution given that representation is optimal across diverse environments. They propose an optimization-based approach called invariant risk minimization (IRM), with many subsequent variants proposed later. However, their method comes with no statistical guarantees and requires at least $d$ environments even for the linear model, and the improvement over standard empirical risk minimization is not clear \citep{rosenfeld2020risks, kamath2021does}. Our paper is the first to offer a comprehensive theoretical analysis of general invariance learning when the representation class is $\{(x_1,\ldots, x_d) \to (a_1 x_1, \ldots, a_d x_d): a_1,\ldots, a_d \in \{0,1\}\}$ and to show that sample efficient estimation is in general viable even when $|\mathcal{E}|=2$. The main reason why this is attainable is due to the \emph{exact} invariance pursued by our FAIR penalty and its ``focused'' nature, see the discussion in \aosversion{Appendix A.2}{\cref{appendix:dis2}}.

Under the SCM framework, there is considerable literature on causal discovery using observational data \citep{spirtes2000causation, richardson1996feedback, chickering2002optimal, hyttinen2013discovering, hyttinen2014constraint}.
However, most of them only attain identification up to Markov equivalent class \citep{geiger1990logic}. To overcome the issue,  existing methods can be roughly divided into two categories -- one based on the invariance principle and the other based on the asymmetry principle. The invariance-based approaches \citep{peters2016causal} use samples from multiple experiments where some unknown intervention may apply to the variables other than $Y$. It leverages the idea that the cause-effect mechanism will remain constant while the reverse effect-cause association may vary. On the other hand, the asymmetry-based approaches \citep{shimizu2006linear, hoyer2008nonlinear, zhang2009identifiability, janzing2012information, peters2014causal} only observe one sample of observational data and use the idea that the cause-effect mechanism admits a simple prior known structure, whereas its inverse does not, example includes the additive noise structure \citep{hoyer2008nonlinear}. These two principles for causal discovery seem to have been orthogonal before. Our estimation framework is the first to offer a unified methodological perspective on these two principles with theoretical guarantees. It demonstrates the ability to simultaneously leverage both principles for identification and estimation.

Adversarial estimation is introduced in \cite{goodfellow2014generative} for generative modeling. Its application in the statistics spans distribution estimation \citep{liang2021well}, instrumental variable regression \citep{dikkala2020minimax}, estimating the (implicit) influence function \citep{chernozhukov2020adversarial, hirshberg2021augmented}, and so on. We adopted adversarial estimation from two novel aspects. Firstly, it allows us to use a simple objective function that homogenizes different tasks and prediction models for estimation. Moreover, such a minimax optimization objective and the Gumbel trick in the implementation jointly relax the combinatorial nature in \eqref{fan1} and make a variant of gradient descent continue to work numerically.

\subsection{Notations} 

We use upper case $(X, Y, Z)$ to represent random variables/vectors and denote their instances as $(x,y,z)$. Define $[n]=\{1,\ldots, n\}$. For a vector $x = (x_1,\ldots, x_d)^\top\in \mathbb{R}^d$, we let $\|x\|_2=(\sum_{j=1}^d x_j^2)^{1/2}$. For given index set $S=\{j_1,\ldots, j_{|S|}\}\subseteq [d]$ with $j_1<\cdots<j_{|S|}$, we denote $[x]_S=(x_{j_1},\ldots, x_{j_{|S|}})^\top \in \mathbb{R}^{|S|}$ and abbreviate it as $x_S$ if there is no ambiguity. We let $a\lor b = \max \{a, b\}$ and $a\land b = \min\{a, b\}$. We use $a(n) \lesssim b(n)$, $b(n) \gtrsim a(n)$, or $a(n) = O(b(n))$ if there exists some constant $C>0$ such that $a(n) \le Cb(n)$ for any $n \ge 3$. Denote $a(n) \asymp b(n)$ if $a(n)\lesssim b(n)$ and $a(n) \gtrsim b(n)$.  In the theorem statement and proof, we will use $C$ to represent the universal constants that may vary from line to line and will use $\tilde{C}, \tilde{C}_1,\ldots$ to represent the constants that may depend on the other defined constants. 

In the context of the multi-environment setup, for each $e\in \mathcal{E}$, let $\Theta^{(e)}=L_2(\mu^{(e)}_x) := \{f: \int f^2(x) \mu_x^{(e)}(dx) < \infty\}$, and denote $\|f\|_{2,e} = \{\int f^2(x) \mu^{(e)}_x(dx)\}^{1/2}$. Given $n$ observations $\{(X_i^{(e)}, Y_i^{(e)})\}_{i=1}^n \subseteq \mathbb{R}^d \times \mathbb{R}$ drawn i.i.d. from $\mu^{(e)}$, we define $\mathbb{E}[f(X^{(e)},Y^{(e)})] = \int f(x, y) \mu^{(e)}(dx, dy)$ and $\hat{\mathbb{E}}[f(X^{(e)},Y^{(e)})] = \frac{1}{n} \sum_{i=1}^n f(X^{(e)}_i, Y^{(e)}_i)$ for any $f \in \Theta^{(e)}$. We assume $\mathbb{E}[|Y^{(e)}|^2] < \infty$. Let $\bar{\mu} = \frac{1}{|\mathcal{E}|} \sum_{e\in \mathcal{E}} \mu^{(e)}$, and $\Theta = L_2(\bar{\mu}_x)$ equipped with the norm $\|\cdot\|_2 = \{\int f^2(x) \bar{\mu}_x(dx)\}^{1/2}$. It is easy to verify that $\Theta = \bigcap_{e\in \mathcal{E}} \Theta^{(e)}$. 

Let $S\subseteq [d]$ be any index set. Given a function class $\mathcal{H} \subseteq \{h: \mathbb{R}^d \to \mathbb{R}\}$, we define $\mathcal{H}_S$ be the class of functions in $\mathcal{H}$ that only depend on variables $x_S$, i.e., $\mathcal{H}_S=\{h\in \mathcal{H}, h(x) \equiv u(x_S) \text{ for some } u: \mathbb{R}^{|S|}\to \mathbb{R} ~~\mu^{(e)}\text{-}a.s. \forall e\in \mathcal{E}\}$. We sometimes also write $h(x_S)$ instead of $h(x)$ for $h \in \mathcal{H}_S$ since $h$ only depends on $x_S$. For any $h\in \mathcal{H}$, we use $S_h \subseteq [d]$ to represent the index set of the variables $h$ depends on. We let $\{\mathcal{H}\}^k = \{(h_1,\ldots, h_k): h_i \in \mathcal{H} ~\forall i\in [k]\}$. For any $(X, Y)$'s joint distribution $\nu$, we use $\nu_x$ to denote the marginal distribution of $X$, and $\nu_{x, S}$ to denote the marginal distribution of $X_S$.

\medskip
\noindent \textbf{Neural Networks.}  We use neural networks as a scalable nonparametric technique: we adopt the fully connected deep neural network with ReLU activation $\sigma(\cdot) = \max\{0, \cdot\}$, and call it \emph{deep ReLU network} for short. Let $L,N$ be any positive integer, a \emph{deep ReLU network with depth $L$ width $N$} admits the form of
\begin{align}
\label{eq:nn-architecture}
    g(x) = T_{L+1} \circ \bar{\sigma}_L \circ T_L \circ \bar{\sigma}_{L-1} \circ \cdots \circ T_2 \circ \bar{\sigma}_1 \circ T_1(x).
\end{align} Here $T_{l}(z) = W_l z + b_l: \mathbb{R}^{d_l} \to \mathbb{R}^{d_{l+1}}$ is a linear map with weight matrix $W_l \in \mathbb{R}^{d_{l}\times d_{l-1}}$ and bias vector $b_{l} \in \mathbb{R}^{d_{l}}$, where $(d_0,d_1\ldots, d_L, d_{L+1}) = (d, N, \ldots, N, 1)$, and $\bar{\sigma}_l: \mathbb{R}^{d_l} \to \mathbb{R}^{d_l}$ applies the ReLU activation $\sigma(\cdot)$ to each entry of a $d_l$-dimensional vector. Here, the equal width is for presentation simplicity.

\begin{definition}[Deep ReLU network class]
    Define the family of deep ReLU networks taking $d$-dimensional vector as input with depth $L$, width $N$, truncated by $B$ as 
        $\mathcal{H}_{\mathtt{nn}}(d, L, N, B) = \{\tilde{g}(x) = \mathrm{Tc}_B(g(x)): g(x) \text{ in } \eqref{eq:nn-architecture}\}$, 
        where $\mathrm{Tc}_B: \mathbb{R} \to \mathbb{R}$ is the truncation operator defined as $\mathrm{Tc}_B(z) = {\min}\{|z|, B\} \cdot \mathrm{sign}(z)$.
\end{definition}

\section{FAIR Least Squares Estimator Using Neural Networks}
\label{sec:fairnn}

In this section, we show that one can use the FAIR-NN least squares estimator, a realization of the FAIR estimator by setting $\ell(y,v)=\frac{1}{2}(y-v)^2$ and specifying both $(\mathcal{G}, \mathcal{F})$ as neural networks, to attain sample-efficient estimation in nonparametric invariance pursuit. 

The main messages of this section are twofold. From a theoretical perspective, it shows that sample-efficient estimation (in both $n$ and $|\mathcal{E}|$) in the general nonparametric invariance pursuit problem is viable under a minimal identification condition related to the heterogeneity of the environments. From a methodological perspective, it demonstrates one key feature of our proposed framework: one can seamlessly integrate black-box machine learning models (e.g., neural networks) into it and fully exploit these models' sample efficiency and capability in being adaptive to low-dimensional structures.

\subsection{Setup}
\label{sec:nip-setup}

Recall that $\mu^{(e)}$ is the joint distribution of $(X,Y)$ in environment $e$. Let $m^{(e,S)}(x) := \mathbb{E}[Y^{(e)}|X_S^{(e)}=x_S]$ be the conditional expectation of $Y$ given $X_S$ in environment $e$. Recall that $\nu_{x,S}$ is the marginal distribution of $X_S$ for $(X,Y)\sim \nu$. It is easy to see that $\mu^{(e)}_{x,S}$ is absolutely continuous with respect to $\bar{\mu}_{x,S}=[\frac{1}{|\mathcal{E}|} \sum_{e\in \mathcal{E}} \mu^{(e)}]_{x,S}$ for any $S\subseteq[d]$ hence $\rho^{(e)}_S$, the Radon–Nikodym derivative of $\mu_{x,S}^{(e)}$ with respect to $\bar{\mu}_{x,S}$, is well defined. We define $\bar{m}^{(S)}(x) = \sum_{e\in \mathcal{E}} \rho^{(e)}_S(x_S) m^{(e,S)}(x)$, which can be interpreted as the population-level least squares that regress $Y$ on $X_S$ using all the data in $\mathcal{E}$. 

\begin{condition}[Model and Regularity Conditions] There exists some positive constants $(C_0, s_{\min})$ such that the following conditions hold. 
\label{cond:regularity-fairnn}

\begin{itemize}
\item[(a)] \underline{Data Generating Process:} We collect data from $|\mathcal{E}| \in \mathbb{N}^+$ environments with $|\mathcal{E}| \le n^{C_0}$. For each environment $e\in \mathcal{E}$, we observe $\{(X_i^{(e)}, Y_i^{(e)})\}_{i=1}^n \overset{i.i.d.}{\sim} \mu^{(e)}$.

\item[(b)] \underline{Invariance Structure:} There exists some set $S^\star$ and $m^\star: \mathbb{R}^{|S^\star|} \to \mathbb{R}$ such that $m^{(e,S^\star)}(x) \allowbreak\equiv m^\star(x_{S^\star})$ for any $e\in \mathcal{E}$.

\item[(c)] \underline{Sub-Gaussian Response:} For any $e\in \mathcal{E}$ and $t\ge 0$, $\mathbb{P}\left[|Y^{(e)}| \ge t\right]\le C_0 e^{-t^2/(2C_0)}$.

\item[(d)] \underline{Boundedness:} $X \in [-C_0, C_0]^d$ $\bar{\mu}$-a.s. and $\|m^{(e,S)}\|_\infty \le C_0$ for any $S\subseteq [d]$ and $e\in \mathcal{E}$.

\item[(e)] \underline{Nondegenerate Covariate:} $\forall S\subseteq [d]$ with $S^\star \setminus S \neq \emptyset$, $\inf_{m\in \Theta_S} \|m - m^\star\|_{2}^2 \ge s_{\min}>0$. 

\end{itemize}

\end{condition}

\cref{cond:regularity-fairnn} (a)--(b) is just a restatement of \eqref{eq:intro-model} together with i.i.d. data within each environment; data across different environments may be dependent. (c)--(d) are standard in nonparametric regression. (e) rules out some degenerate cases, for example, $m^\star(x_1)=x_1^2$ with $S^\star=\{1\}$ and $X_2=X_1^4$, or $m^\star(x_1,x_2)=f(x_1)$ with $S^\star=\{1,2\}$, and is imposed for technical convenience. This condition is not necessary for deriving the $L_2$ error rate, but it is necessary for the variable selection. The target (invariant) regression function in nonparametric invariance pursuit is $m^\star$.  

\subsection{Proposed FAIR-NN Least Squares Estimator}
\label{sec:fairlse}

Given the data $\{\{(X_i^{(e)}, Y_i^{(e)})\}_{i=1}^n\}_{e\in \mathcal{E}}$ from heterogeneous environments, we consider using the following FAIR-NN least squares estimator to learn $m^\star$ in \eqref{eq:intro-model}. Specifically, the FAIR-NN least squares estimator is the solution to the subsequent minimax optimization objective
\begin{align}
\label{eq:def-fair-lse}
    \hat{g} \in \argmin_{g\in \mathcal{G}} \sup_{f^{\mathcal{E}} \in \{\mathcal{F}_{S_g}\}^{|\mathcal{E}|}} \frac{1}{|\mathcal{E}| \cdot n} \sum_{e\in \mathcal{E}, i\in [n]} \left\{Y^{(e)}_i - g(X^{(e)}_i) \right\}^2 + \gamma \hat{\mathsf{J}}(g, f^{\mathcal{E}}).
\end{align} where the first part of the objective $\hat{\mathsf{Q}}_\gamma(g, f^{\mathcal{E}})$ is the pooled least squares loss preventing the estimator from collapsing to conservative solutions, $\gamma$ is the hyper-parameter to be determined, and $\hat{\mathsf{J}}(g, f^{\mathcal{E}})$ is the empirical counterpart of the focused adversarial invariance regularizer defined as
\begin{align}
\label{eq:method-empirical-j}
    \hat{\mathsf{J}}(g, f^{\mathcal{E}}) = \frac{1}{|\mathcal{E}|\cdot n}\sum_{e\in \mathcal{E}, i\in [n]} \left[\big\{Y^{(e)}_i - g(X_i^{(e)})\big\} f^{(e)}(X^{(e)}_i) - \frac{1}{2} \big\{f^{(e)}(X^{(e)}_i)\big\}^2\right].
\end{align} 
The minimax program \eqref{eq:def-fair-lse} is the empirical version of \eqref{eq:intro-obj} via setting $\ell(y, v) = \frac{1}{2}(y-v)^2$. Here we specify the predictor function class $\mathcal{G}$ and testing function class $\mathcal{F}$ as
\begin{align}
\label{eq:fairnn-function-class}
    \mathcal{G}=\mathcal{H}_{\mathtt{nn}}(d, L, N, B) \qquad \text{and} \qquad \mathcal{F} = \mathcal{H}_{\mathtt{nn}}(d, L+2, 2N, 2B)
\end{align} for neural network architecture hyper-parameters $N, L$ and truncation parameter $B=C_0$. Here $B$ can be larger than $C_0$ but should satisfy $B=O(1)$. One can also adopt a larger width, depth, and truncation parameter for $\mathcal{F}$. Our choice of $(N, L, B)$ for $\mathcal{F}$ here is for technical purposes, that is, any $m^{(e,S)}-g$  for $g\in \mathcal{G}$ can be well approximated by some $f\in \mathcal{F}$.

\subsection{Non-Asymptotic Result for FAIR-NN}

\begin{condition}[Identification for Nonparametric Invariance Pursuit]
    \label{cond-fairnn-ident}
    For any $S\subseteq [d]$ such that $\bar{\mu}(\{m^\star \neq \bar{m}^{(S\cup S^\star)}\})>0$, there exists some $e, e'\in \mathcal{E}$ such that $\min\{\mu^{(e)}, \mu^{(e')}\}\allowbreak(\{m^{(e,S)} \neq m^{(e',S)}\})\} > 0$.
\end{condition}

\begin{remark}[Minimal Heterogeneity Condition for Identification]
The above identification condition necessitates that whenever a bias emerges when regressing $Y$ on $X_{S\cup S^\star}$ using least squares, there should be noticeable shifts in the conditional expectation $m^{(e, S)}$ across environments. In other words, $S^\star$ is the maximum set preserving the invariant associations.
This condition is minimal. If it is violated, it would imply that $\exists \tilde{S}\subseteq [d] ~\text{with}~ \tilde{S}\setminus S^\star \neq \emptyset$ such that
\begin{align*}
     \forall e\in \mathcal{E}~~\mathbb{E}[Y^{(e)}|X_{\tilde{S}}^{(e)}] \equiv g(X_{\tilde{S}}^{(e)}) ~~\mu^{(e)}\text{-}a.s.~~\text{for some}~ g: \mathbb{R}^{|S|}\to \mathbb{R},
\end{align*} in which both set $S^\star$ and $\tilde{S}$ embody the invariant conditional expectation structure, thus more environments are needed in this case to pinpoint $S^\star$. Such a minimal identification condition underscores that our proposed FAIR-NN estimator is ``sample efficient'' regarding the number of environments $|\mathcal{E}|$ required; see the discussions in \cref{sec:ident-fairnn}. Notably, such an identification condition relaxes those employed in approaches using intersections like ICP \citep{peters2016causal}. These approaches require the shifts of conditional distributions for all the $S$ with $\bar{m}^{(S)}\neq m^\star$ for identifying $S^\star$. 
\end{remark}

The following theorem provides an oracle-type inequality for the FAIR-NN least squares estimator in a structure-agnostic manner. It shows that under \cref{cond-fairnn-ident}, one can expect consistent estimation and further establish non-asymptotic upper bounds on the $L_2$ error between the estimator \eqref{eq:def-fair-lse} and the invariant regression function $m^\star$. In addition, the theorem quantifies the amount of penalty needed, namely $\gamma^\star_{\mathtt{NN}}$, which is of constant order and is related to the signal-to-noise ratio of the problem.

\begin{theorem}[Oracle-type Inequality for FAIR-NN Least Squares Estimator]
\label{thm:fairnn}
    Assume \cref{cond:regularity-fairnn} and \ref{cond-fairnn-ident} hold. Then $\gamma^\star_{\mathtt{NN}}=\sup_{S\subseteq [d]: \mathsf{b}_{\mathtt{NN}}(S)>0} (\mathsf{b}_{\mathtt{NN}}(S)/\bar{\mathsf{d}}_{\mathtt{NN}}(S)) < \infty$, where \begin{align}
    \label{eq:bnndnn}
         \mathsf{b}_{\mathtt{NN}}(S) = \|m^\star - \bar{m}^{(S\cup S^\star)}\|_2^2 \qquad \text{and}\qquad
         \bar{\mathsf{d}}_{\mathtt{NN}}(S) = \frac{1}{|\mathcal{E}|} \sum_{e\in \mathcal{E}} \|m^{(e,S)} - \bar{m}^{(S)}\|_{2,e}^2.
    \end{align} Consider the estimator that solves \eqref{eq:def-fair-lse} using $\gamma \ge 8\gamma^\star_{\mathtt{NN}}$ and function classes \eqref{eq:fairnn-function-class} with $L, N$ satisfying $NL\le n$ and $N\ge 4$. Then, there exists some constant $\tilde{C}$ depending on $(d, C_0)$ such that for any $n\ge 3$, 
    \begin{align}
    \label{eq:fairnn-error}
        \frac{\|\hat{g} - m^\star\|_{2}}{\tilde{C}} \le \max_{e\in \mathcal{E}} \inf_{h\in \mathcal{G}_{S^\star}} \|m^\star - h\|_{2,e} + \frac{NL\log^{3/2} n}{\sqrt{n}} + 1_{\{{\delta}_{\mathtt{NN},1} > \mathsf{s}\}} \cdot \left(\gamma {\delta}_{\mathtt{NN},1}\right)
    \end{align} occurs with probability at least $1-\tilde{C}n^{-100}$. Here $\delta_{\mathtt{NN}, 1}=\max_{e\in \mathcal{E}, S\subseteq[d]} \inf_{h\in \mathcal{G}_S} \|m^{(e,S)} - h\|_{2,e} + \frac{NL\log^{3/2} n}{\sqrt{n}}$ and $\mathsf{s} = \tilde{C}^{-1} [1\land s_{\min} \land \{\gamma \inf_{S:\bar{\mathsf{d}}_{\mathtt{NN}}(S)>0} \bar{\mathsf{d}}_{\mathtt{NN}}(S)\}]/(1+\gamma)$, where $s_{\min}$ is defined in \cref{cond:regularity-fairnn}(e). Moreover, under the above event, if $\delta_{\mathtt{NN},1} \le \mathsf{s}$, then the variable selection property holds, for $\hat{S} = S_{\hat{g}}$,
    \begin{align}
    \label{eq:var-sel-fairnn}
        S^\star \subseteq \hat{S}  \qquad \text{and}  \qquad \forall e\in \mathcal{E}, ~~ m^{(e,\hat{S})} = m^\star .
    \end{align}
\end{theorem}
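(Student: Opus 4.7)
The plan is to establish the three claims---finiteness of $\gamma^\star_{\mathtt{NN}}$, the $L_2$ oracle bound \eqref{eq:fairnn-error}, and the variable selection conclusion \eqref{eq:var-sel-fairnn}---via a population-vs-sample decomposition of the FAIR minimax objective. First, since $2^{[d]}$ is finite, to show $\gamma^\star_{\mathtt{NN}}<\infty$ it suffices to verify that $\mathsf{b}_{\mathtt{NN}}(S)>0$ implies $\bar{\mathsf{d}}_{\mathtt{NN}}(S)>0$: if $\bar{\mathsf{d}}_{\mathtt{NN}}(S)=0$ then every $m^{(e,S)}$ equals $\bar m^{(S)}$ on the common support, so the environmental conditional means coincide there, contradicting \cref{cond-fairnn-ident}. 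For the stochastic analysis I would use covering-number/Rademacher bounds for deep ReLU networks (pseudo-dimension of order $NL\log(NL)$) together with Bernstein-type tail inequalities to obtain a uniform concentration
$$\sup_{g\in\mathcal{G},\,f^{\mathcal{E}}\in\{\mathcal{F}\}^{|\mathcal{E}|}} \bigl|\hat{\mathsf{R}}(g)+\gamma\hat{\mathsf{J}}(g,f^{\mathcal{E}}) - \mathsf{R}(g)-\gamma\mathsf{J}(g,f^{\mathcal{E}})\bigr| \lesssim (1+\gamma)\delta_n, \quad \delta_n := \tfrac{NL\log^{3/2}n}{\sqrt n},$$
with probability at least $1-\tilde C n^{-100}$. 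The richer architecture $(L+2, 2N)$ for $\mathcal{F}$ guarantees that, uniformly in $g$, the inner population supremum over $\mathcal{F}_{S_g}$ is attained up to $\delta_{\mathtt{NN},1}$ by the oracle critic $m^{(e,S_g)}-g$, so the inner maximization is essentially $\mathsf{J}^\star(g)$.

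\textbf{Population minimax identity and comparator.} Orthogonality (tower property first on $X_{S_g}$ then on $X_{S_g\cup S^\star}$) yields the identity
$$\mathsf{L}_\gamma(g)-\mathsf{R}(m^\star) = \bigl(1+\tfrac{\gamma}{2}\bigr)\tfrac{1}{|\mathcal{E}|}\sum_e\|g-m^{(e,S_g)}\|_{2,e}^2 + \tfrac{1}{|\mathcal{E}|}\sum_e\bigl[\|m^{(e,S_g)}-m^{(e,S_g\cup S^\star)}\|_{2,e}^2 - \|m^{(e,S_g\cup S^\star)}-m^\star\|_{2,e}^2\bigr].$$
The first term is lower bounded by a multiple of $(1+\gamma)\|g-m^\star\|_2^2$; the second, possibly-negative ``endogeneity bonus'' is, after rewriting around $\bar m^{(S_g\cup S^\star)}$, comparable to $\mathsf{b}_{\mathtt{NN}}(S_g\cup S^\star)$ plus cross-terms controlled by $\bar{\mathsf{d}}_{\mathtt{NN}}$. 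The choice $\gamma\ge 8\gamma^\star_{\mathtt{NN}}$ ensures the invariance penalty dominates the endogeneity bonus by the defining ratio $\mathsf{b}_{\mathtt{NN}}/\bar{\mathsf{d}}_{\mathtt{NN}}\le\gamma^\star_{\mathtt{NN}}$, leaving $\mathsf{L}_\gamma(g)-\mathsf{R}(m^\star)\gtrsim (1+\gamma)^{-1}\|g-m^\star\|_2^2$ plus a strictly positive ``gap'' of order $(1+\gamma)\mathsf{s}$ whenever $S_g$ violates invariance. For the matching upper side, take $g^\dagger\in\mathcal{G}_{S^\star}$ as the best neural-net approximator of $m^\star$; by invariance $\mathsf{J}^\star(g^\dagger)\le \tfrac{1}{|\mathcal{E}|}\sum_e\|g^\dagger-m^\star\|_{2,e}^2$, giving $\mathsf{L}_\gamma(g^\dagger)\le \mathsf{R}(m^\star)+O((1+\gamma)\epsilon_0^2)$ where $\epsilon_0=\max_e\inf_{h\in\mathcal{G}_{S^\star}}\|m^\star-h\|_{2,e}$.

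\textbf{Combining, variable selection, and main obstacle.} Lifting the sample-optimality $\hat{\mathsf{L}}(\hat g)\le\hat{\mathsf{L}}(g^\dagger)$ to the population via the uniform bound, and combining it with the lower bound evaluated at $\hat g$, yields \eqref{eq:fairnn-error}. The indicator $1_{\{\delta_{\mathtt{NN},1}>\mathsf{s}\}}$ arises because the preceding ``gap'' has magnitude of order $\mathsf{s}$: when the stochastic/approximation error dominates $\mathsf{s}$ the gap is unusable and only the trivial $\gamma\delta_{\mathtt{NN},1}$ bound survives. Otherwise the same gap argument forces $\bar{\mathsf{d}}_{\mathtt{NN}}(S_{\hat g})=O(\delta_n^2)$; \cref{cond:regularity-fairnn}(e) then rules out $S^\star\not\subseteq S_{\hat g}$ (else $\inf_{h\in\Theta_{S_{\hat g}}}\|m^\star-h\|_2^2\ge s_{\min}$ contradicts the tight penalty), and \cref{cond-fairnn-ident} combined with $\bar{\mathsf{d}}_{\mathtt{NN}}(\hat S)\to 0$ forces $m^{(e,\hat S)}\equiv m^\star$, establishing \eqref{eq:var-sel-fairnn}. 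The main obstacle is the population minimax step: one must couple the combinatorial choice $S_g\in 2^{[d]}$ with the continuous fit $g$, balance the ``focused'' restriction $f\in\mathcal{F}_{S_g}$ against the richer mirror class $\mathcal{F}$, and route all approximation, stochastic, and invariance errors through the $\mathsf{s}$-threshold to recover both the structure-agnostic $L_2$ rate and exact selection simultaneously.
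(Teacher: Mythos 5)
Your overall architecture is the same as the paper's: verify the identification implication $\mathsf{b}_{\mathtt{NN}}(S)>0\Rightarrow\bar{\mathsf{d}}_{\mathtt{NN}}(S)>0$ and use finiteness of $2^{[d]}$ for $\gamma^\star_{\mathtt{NN}}<\infty$; establish a population strong-convexity/gap inequality around $m^\star$ in which $\gamma\ge 8\gamma^\star_{\mathtt{NN}}$ absorbs the negative bias term; transfer to the sample via concentration; and run a threshold argument at $\mathsf{s}$ for variable selection. Your population identity for $\mathsf{L}_\gamma(g)-\mathsf{R}(m^\star)$ is correct and is essentially the paper's Theorem \ref{thm:population} specialized to $\overline{\mathcal{G}_S}=\overline{\mathcal{F}_S}=\Theta_S$. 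However, there are two genuine gaps in the stochastic half.

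First, your uniform concentration bound $\sup_{g,f^{\mathcal{E}}}|\hat{\mathsf{R}}+\gamma\hat{\mathsf{J}}-\mathsf{R}-\gamma\mathsf{J}|\lesssim(1+\gamma)\delta_n$ with $\delta_n=NL\log^{3/2}n/\sqrt n$ is too weak to deliver \eqref{eq:fairnn-error}. Combining a first-power deviation of the loss with a lower bound that is quadratic in $\|g-m^\star\|_2$ gives only $\|\hat g-m^\star\|_2\lesssim\sqrt{\delta_n}$, i.e.\ the slow rate $(NL)^{1/2}\log^{3/4}n\,/\,n^{1/4}$, not the claimed $NL\log^{3/2}n/\sqrt n$. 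The paper instead proves \emph{instance-dependent} (localized) deviation bounds, $|\Delta_{\mathsf{R}}(g,\tilde g)|\lesssim U(\delta_{n,t}\|g-\tilde g\|_{2,e}+\delta_{n,t}^2)$ and the analogous bound for the regularizer involving $\|\tilde g+\tilde f^{(e)}-g-f^{(e)}\|_{2,e}$ (Propositions \ref{prop:nonasymptotic-pooled}--\ref{prop:nonasymptotic-a}), via localized Rademacher complexity, a contraction inequality, and a peeling device; only after absorbing the linear-in-norm terms into the quadratic population lower bound does the fast rate emerge. You also need the separate instance-dependent characterization of the near-optimal discriminator (Proposition \ref{prop:characterize-f}) rather than the blanket assertion that the inner supremum is attained up to $\delta_{\mathtt{NN},1}$.

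Second, your one-pass combination leaves a factor $(1+\gamma)$ (in fact $(1+\gamma)^2$ given your $(1+\gamma)^{-1}$ strong-convexity prefactor) multiplying the leading approximation and stochastic terms, whereas \eqref{eq:fairnn-error} has $\tilde C$ depending only on $(d,C_0)$ in the regime $\delta_{\mathtt{NN},1}\le\mathsf{s}$ — a point the paper emphasizes. Removing the $\gamma$-dependence requires a two-stage argument: first establish the variable selection event $m^{(e,\hat S)}\equiv m^\star$ (your threshold argument, which is fine modulo the concentration issue), and then redo the error bound conditional on that event, where the discriminator's population target becomes $m^\star-\hat g$ and the $\gamma$-weighted terms combine with, rather than fight against, the risk term. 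Relatedly, your standalone claim that the first term of the identity is lower bounded by a multiple of $(1+\gamma)\|g-m^\star\|_2^2$ is false in isolation (take $g=\bar m^{(S_g)}\ne m^\star$ with all $m^{(e,S_g)}$ equal); the correct statement only holds after combining with the bias/heterogeneity terms, which your subsequent sentence does, so this is loose writing rather than a fatal error.
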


\begin{remark}[Interpretation of $\mathsf{b}_{\mathtt{NN}}(S)$ and $\bar{\mathsf{d}}_{\mathtt{NN}}(S)$]
\label{remark:bias-mean-variance}
We refer to $\mathsf{b}_{\mathtt{NN}}(S)$ as bias mean since it exactly characterizes the bias of the least squares estimator in the presence of endogenously spurious variables like the background color in the thought experiment. In particular, letting $\hat{g}_{\mathtt{LSE}(S)}$ be the least squares estimator that regresses $Y$ on $X_{S}$ using all the data, namely, the FAIR-NN estimator with $\gamma = 0$, \aosversion{Proposition B.1}{\cref{prop:bias}} implies
\begin{align*}
    \left|\frac{\|\hat{g}_{\mathtt{LSE}(S)} - m^\star\|_2^2}{\mathsf{b}_{\mathtt{NN}}(S)} - 1\right| = o_{\mathbb{P}}(1) \qquad \text{if}~~ S^\star \subseteq S ~\text{and}~ \mathsf{b}_{\mathtt{NN}}(S)>0.
\end{align*}
We refer to $\bar{\mathsf{d}}_{\mathtt{NN}}(S)$ as the bias variance because it measures the variations of bias across environments. Specifically, when $S^\star\subseteq S$, the bias in environment $e$ is $(m^{(e, S)}-m^\star)$, and $\bar{\mathsf{d}}_{\mathtt{NN}}(S)$ can be viewed as the variance of the bias concerning the uniform distribution on $\mathcal{E}$ since $\bar{\mathsf{d}}_{\mathtt{NN}}(S)  = \frac{1}{|\mathcal{E}|} \sum_{e\in \mathcal{E}} \|(m^{(e, S)} - m^\star) - (\bar{m}^{(S)} - m^\star)\|_{2,e}^2$. We have $\bar{\mathsf{d}}_{\mathtt{NN}}(S^\star) = 0$ by the invariance structure in \cref{cond:regularity-fairnn}(b).
\end{remark}

\begin{remark}[Identification]
\cref{thm:fairnn} combines the identification result, which characterizes when it is possible to consistently estimate $m^\star$, and the finite-sample estimation error result, which characterizes how accurately we can estimate $m^\star$. The main identification message disentangled from the above theorem is that if the minimal heterogeneity condition \cref{cond-fairnn-ident} holds, then one can consistently estimate $m^\star$ provided $\gamma$ is larger than some threshold $8\gamma^\star_{\mathtt{NN}}$ that is independent of $n$.
\end{remark}

Here $\delta_{\mathtt{NN},1}$ can be interpreted as the sum of the worst-case approximation error of neural networks to all the conditional moments $\{m^{(e,S)}\}_{e\in \mathcal{E}, S\subseteq [d]}$ and the stochastic error. One can expect $\delta_{\mathtt{NN},1}=o(1)$ if $NL\log^{1.5}n=o(\sqrt{n})$ and all the conditional moments are Lipschitz functions. Moreover, $\mathsf{s}$ can be explained as the minimum of the signal of true important variables in $S^\star$ and the signal of heterogeneity. Given $\mathsf{s}$ is of constant order and $\delta_{\mathtt{NN},1} = o(1)$, the error bound \eqref{eq:fairnn-error} shows that as $\delta_{\mathtt{NN},1} \le \mathsf{s}$, i.e., if $n$ is large enough, the $L_2$ error is composed of the approximation error of neural networks to $m^\star$ and the stochastic error. In this case, all endogenously spurious variables can be surely screened \citep{fan2008sure}, i.e., \eqref{eq:var-sel-fairnn}, and $m^\star$ can be estimated as well as if the invariant set of variables $S^\star$ is known. At the same time, given our results are non-asymptotic, for a given (not large enough) $n$, we may not be able to eliminate all endogenously spurious variables as in \eqref{eq:var-sel-fairnn}. The error rate in this case will be $(\gamma+1) \delta_{\mathtt{NN}, 1}$ as in \eqref{eq:fairnn-error} given that our method may select wrong variables. The error bounds and $\delta_{\mathtt{NN},1}$ will be presented explicitly as \eqref{eq:rate-fairnn} in \cref{coro:fair-nn-fast} when we impose assumptions on the function class.

\subsection{Adapting to the Low-dimensional Structures Algorithmically}

In this section, we present the convergence rate of the FAIR-NN when $m^\star$ lies within the hierarchical composition model \citep{bauer2019deep}. This is the function class that neural networks can efficiently estimate \citep{schmidt2020nonparametric, kohler2021rate,fan2024factor} with little guidance regarding the forms of functions. We show that FAIR-NN can obtain the same result as standard regression blind to both the knowledge of $S^\star$ and function structure. This example demonstrates our framework's ability to fully leverage the sample efficiency of the adopted machine learning model while also providing a concrete instance that realizes several quantities defined in the structure-agnostic setting of \cref{thm:fairnn}.

\begin{definition}[$(\beta,C)$-smooth Function]
    Let $\beta = r+s$ for some nonnegative integer $r\ge 0$ and $0<s\le 1$, and $C>0$. A $d$-variate function $f$ is $(\beta, C)$-smooth if for every non-negative sequence $\alpha \in \mathbb{N}^d$ such that $\sum_{j=1}^d \alpha_j = r$, the partial derivative $\partial^{\alpha} f=(\partial f)/(\partial x_1^{\alpha_1}\cdots x_d^{\alpha_d})$ exists and satisfies $|\partial^{\alpha} f(x) - \partial^{\alpha} f(z)| \le C \|x-z\|_2^s$.
    We use $\mathcal{H}_{\mathtt{HS}}(d, \beta, C)$ to denote the set of all the $d$-variate $(\beta, C)$-smooth functions.
\end{definition}

\begin{definition}[Hierarchical Composition Model $\mathcal{H}_{\mathtt{HCM}}(d, l, \mathcal{O}, C)$]
\label{hcm}
    We define function class of hierarchical composition model $\mathcal{H}_{\mathtt{HCM}}(d, l, \mathcal{O}, C)$ \citep{kohler2021rate} with $l, d \in \mathbb{N}^+$, $C\in \mathbb{R}^+$, and $\mathcal{O}$, a subset of $[1,\infty) \times \mathbb{N}^+$, in a recursive way as follows. Let $\mathcal{H}_{\mathtt{HCM}}(d, 0,\mathcal{O}, C)=\{h(x)=x_j, j\in [d]\}$, and for each $l\ge 1$, 
    \begin{align*}
    \mathcal{H}_{\mathtt{HCM}}(d, l,\mathcal{O}, C) = \big\{&h: \mathbb{R}^d \to \mathbb{R}: h(x) = g(f_1(x),...,f_t(x))\text{, where} \\
    &~~~~~ g\in \mathcal{H}_{\mathtt{HS}}(t, \beta, C) \text{ with } (\beta, t)\in \mathcal{O} \text{ and } f_i \in \mathcal{H}_{\mathtt{HCM}}(d, l-1,\mathcal{O}, C)\big\}.
\end{align*} 
\end{definition}

Following \cite{kohler2021rate}, we assume all the compositions are at least Lipschitz functions to simplify the presentation. The minimax optimal $L_2$ estimation risk over $\mathcal{H}(d, l, \mathcal{O}, C_h)$ is $n^{-\alpha^\star/(2\alpha^\star + 1)}$, where $\alpha^\star = \min_{(\beta, t)\in \mathcal{O}} (\beta/t)$ is the smallest dimensionality-adjusted degree of smoothness \citep{fan2024factor} that represents the hardest component in the composition.  For example, if $m^\star(x) = f_1(x_1) + f_2(f_3(x_2, x_3), f_4(x_4, x_5)) + f_5(x_1, x_3, x_5)$ and all functions have a bounded second derivative, then the hardest component is the last one, and the dimensionality-adjusted degree of smoothness is $\alpha^* = 2/3$.

\begin{condition}[Function Complexity]
\label{cond:fair-nn-fast}
The following holds: 

\noindent (a) $m^{(e,S)} \in \mathcal{H}_{\mathtt{HCM}}(|S|, l, \mathcal{O}, C_h)$ for any $e\in \mathcal{E}$ and $S\subseteq [d]$ with $\alpha_0 = \inf_{(\beta, t)\in \mathcal{O}} (\beta/t)$.

\noindent (b) $m^\star \in \mathcal{H}_{\mathtt{HCM}}(|S^\star|, l, \mathcal{O}^\star, C_h)$ with $\alpha^\star = \inf_{(\beta, t)\in \mathcal{O}^\star} (\beta/t)$.

\noindent (c) $\max\{C_0, d, l, C_h, \sup_{(\beta, t) \in \mathcal{O}} (\beta \lor t), \sup_{(\beta, t) \in \mathcal{O}^\star} (\beta \lor t)\} \le C_1$ for some constant $C_1>1$.

\noindent (d) The neural network architecture hyper-parameters diverge: $(\log n)/(N\land L)=o(1)$.
\end{condition}

\begin{corollary}[Convergence Rate for FAIR-NN]
\label{coro:fair-nn-fast}
    Under the setting of \cref{thm:fairnn}, assume further that \cref{cond:fair-nn-fast} holds. Then, for any $n \ge 3$, with probability at least $1-\tilde{C}n^{-100}$, the following holds
    \begin{align}
    \label{eq:rate-fairnn}
        \frac{\|\hat{g} - m^\star\|_2}{\tilde{C} \log^{1.5\lor 4\alpha^\star}(n)} \le (NL)^{-2\alpha^\star} + \frac{NL}{\sqrt{n}} + 1_{\{n< n_0\}} \gamma \underbrace{\left[(NL)^{-2\alpha_0} + \frac{NL}{\sqrt{n}}\right]}_{{\delta_{\mathtt{NN,1}}}},
    \end{align} where $n_0$ depends on $(C_1, \gamma, s_{\min}, \inf_{S:\bar{\mathsf{d}}_{\mathtt{NN}}(S)>0} \bar{\mathsf{d}}_{\mathtt{NN}}(S))$, and $\tilde{C}$ is a constant dependent only on $C_1$. Under the optimal choice of network architecture hyper-parameters $N,L$ satisfying $LN \asymp n^{\frac{1}{2(2\alpha^\star+1)}}$, the R.H.S. of \eqref{eq:rate-fairnn} is $n^{-\alpha^\star/(2\alpha^\star+1)}+1_{\{n<n_0\}} \gamma n^{-\alpha_0/(2\alpha^\star+1)}$
\end{corollary}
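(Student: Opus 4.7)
The plan is to reduce the corollary to the oracle inequality already established in \cref{thm:fairnn} by replacing each structure-agnostic quantity on its right-hand side with an explicit bound obtained from deep-ReLU approximation theory for the hierarchical composition class, and then tuning the architecture.

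First I would invoke a Kohler--Langer-style approximation theorem for deep ReLU networks on $\mathcal{H}_{\mathtt{HCM}}$ (the basic statement in \cite{kohler2021rate}, which transfers directly to our truncated width-$N$, depth-$L$ architecture since truncation is cost-free in $L_\infty$ for uniformly bounded targets). Applied with the exponent $\alpha^\star$ inherited from $\mathcal{O}^\star$, this gives
\[
  \inf_{h\in\mathcal{G}_{S^\star}}\|m^\star - h\|_\infty
  \;\lesssim\; \log^{4\alpha^\star}(NL)\,(NL)^{-2\alpha^\star},
\]
where restricting to $\mathcal{G}_{S^\star}$ costs nothing because any function of $x_{S^\star}$ is realized by zeroing the first-layer weights on the remaining coordinates. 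The same theorem applied with exponent $\alpha_0$ to each $m^{(e,S)}\in\mathcal{H}_{\mathtt{HCM}}(|S|,l,\mathcal{O},C_h)$, followed by a union bound over the at most $|\mathcal{E}|\cdot 2^d\le n^{C_0}\cdot 2^{C_1}$ pairs $(e,S)$, yields
\[
  \max_{e\in\mathcal{E},\,S\subseteq[d]}\;\inf_{h\in\mathcal{G}_S}\|m^{(e,S)} - h\|_\infty
  \;\lesssim\; \log^{4\alpha_0}(NL)\,(NL)^{-2\alpha_0}.
\]
Since each $\mu^{(e)}_x$ is a probability measure on a bounded set, these uniform-norm bounds dominate the weighted $L_2$ norms appearing in \cref{thm:fairnn}.

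Next I would substitute these bounds into \eqref{eq:fairnn-error}. The approximation-plus-stochastic contribution then reads $(NL)^{-2\alpha^\star}+NL/\sqrt n$ up to a multiplicative $\log^{1.5\lor 4\alpha^\star}(n)$ factor (using $\log(NL)\le\log n$ since $NL\le n$). For the indicator term, I would observe that the threshold $\mathsf{s}$ from \cref{thm:fairnn} is a strictly positive constant depending only on the quantities listed in the corollary, whereas $\delta_{\mathtt{NN},1}\to 0$ under the divergence of $N\land L$ required in \cref{cond:fair-nn-fast}(d). Hence there is a finite $n_0$, depending exactly on $(C_1,\gamma,s_{\min},\inf_S\bar{\mathsf{d}}_{\mathtt{NN}}(S))$, beyond which $\delta_{\mathtt{NN},1}\le\mathsf{s}$ and the indicator vanishes; when $n<n_0$, the third term is bounded by $\gamma\bigl[(NL)^{-2\alpha_0}+NL/\sqrt n\bigr]$ times the same polylog, producing the finite-sample remainder in \eqref{eq:rate-fairnn}. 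The closing optimization $NL\asymp n^{1/(2(2\alpha^\star+1))}$ is a one-line exercise balancing the approximation and stochastic terms, and substituting into the residual term gives the $n^{-\alpha_0/(2\alpha^\star+1)}$ factor.

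The principal technical obstacle I foresee is Step 1: getting the approximation bound in the precise form $(NL)^{-2\alpha}$, with depth $L$ and width $N$ simultaneously scaling, rather than the fixed-depth parameterization sometimes used in the literature, and doing so uniformly in $(e,S)$ with a transparent $\log$-power. Once that approximation lemma is in place and the union bound is absorbed into the $\log^{4\alpha^\star}(n)$ prefactor, everything that remains is direct substitution into \cref{thm:fairnn} together with the elementary threshold argument for the indicator.
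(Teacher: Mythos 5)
Your proposal is correct and follows essentially the same route as the paper: it bounds the approximation errors $\delta_{\mathtt{a},\mathtt{NN}}^\star$ and $\delta_{\mathtt{NN},1}$ via the deep-ReLU approximation result for $\mathcal{H}_{\mathtt{HCM}}$ (the paper's \cref{lemma:nn-approx-error}, which delivers exactly the $(NL)^{-2\alpha}$ form with simultaneously scaling depth and width, up to the $\log$ reparameterization you anticipate), substitutes into \eqref{eq:fairnn-error}, and disposes of the indicator term by noting that $\mathsf{s}$ is a fixed positive constant while $\delta_{\mathtt{NN},1}=o(1)$, giving a finite $n_0$ depending on the stated quantities. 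The only cosmetic difference is your mention of a ``union bound'' over $(e,S)$, which is unnecessary since the approximation bounds are deterministic and one simply takes the maximum.
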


From \cref{coro:fair-nn-fast}, we can get (up to logarithmic factors) minimax convergence rate $n^{-\alpha^\star/(2\alpha^\star + 1)}$, which is independent of both $\alpha_0$ and $\gamma$, when $n$ is larger than some constant $n_0$. Utilizing neural networks in predictor and discriminator function classes allows the estimator to adapt to the invariant regression function $m^\star$ efficiently from two crucial perspectives. Firstly, similar to using neural networks in nonparametric regression \citep{schmidt2020nonparametric}, adopting neural networks in $\mathcal{G}$ endows the estimator with the capability of being adaptive to the low-dimensional hierarchical structure algorithmically. Secondly, the choice of model parameter $(N, L)$ and the convergence rate depend only on $m^\star$. The (spurious) conditional expectations $m^{(e, S)}$ can be much more complex than $m^\star$. Notably, this complexity will not affect the convergence rate. This can be credited to the scalability of neural networks used as discriminators, i.e., their adaptivity capability in the regularization part of FAIR. 

\begin{remark}[{Error Guarantees} for All $n$] The error bound \eqref{eq:rate-fairnn} is applicable for any $n\ge 3$, even when it selects the wrong variables. {This is the benefit brought by our proposed regularized least squares and cannot be easily attained by alternative two-state procedures, for example, first running some variable selection procedure similar to ICP and then refitting the model.} Furthermore, the error bound will not inflate if the invariant signal $s_{\min}$ and the heterogeneity signal $\inf_{S\subseteq [d]: \bar{\mathsf{d}}_{\mathtt{NN}}(S)>0} \bar{\mathsf{d}}_{\mathtt{NN}}(S)$ is small. Though the error bound scales linearly with $\gamma$, the estimator we propose is not vulnerable to ``weak spurious'' variables, e.g., $x_j$ with $\sup_{e\in \mathcal{E}} \|m^{(e,S^\star \cup \{j\})} - m^\star\|_{2,e} \le \epsilon$, provided all the ratio of the bias $\mathsf{b}_{\mathtt{NN}}(S)$ to heterogeneity $\bar{\mathsf{d}}_{\mathtt{NN}}(S)$ gets controlled. 
\end{remark}

\begin{remark}[Choice of the Hyper-parameter $\gamma$] Though we have to choose a hyper-parameter $\gamma$ larger than a certain threshold to attain such a rate, the convergence rate is independent of $\gamma$. This implies that when the sample size $n$ is large, we do not need to tune the hyper-parameter $\gamma$ for optimal performance. Instead, we can choose some conservative (large) $\gamma$ such that the lower bound $\gamma \ge 8\gamma^\star_{\mathtt{NN}}$ is guaranteed.
\end{remark}

\section{Nonparametric Invariance Pursuit under SCMs}
\label{sec:ident-fairnn}

The results in \cref{sec:fairnn} are for the problem \emph{nonparametric invariance pursuit} itself. In a population-level view, if there exists a ``maximum invariant set'' $S^\star$ satisfying
\begin{align}
\label{eq:maximum-invariant-set}
\begin{split}
    &m^{(e,S^\star)} \equiv \bar{m}^{(S^\star)} ~(\text{invariant}) ~~~ \text{and} ~~~ \\
    &\qquad \forall S\subseteq [d], ~ m^{(e,S)} \equiv \bar{m}^{(S)} \Longrightarrow \bar{m}^{(S\cup S^\star)} = \bar{m}^{(S^\star)}~(\text{maximum})
\end{split}
\end{align} simultaneously, then both $S^\star$ and the induced $m^\star$ can be estimated well as if standard regression by the FAIR-NN estimator. It is natural to ask
\begin{center}
\emph{
Does such a maximum invariant set $S^\star$ exist? What's the semantic meaning of it?}
\end{center}

We offer a general answer to the question under the SCM with arbitrary interventions (on $X$) setting. The short answer is: Yes, it can be interpreted as the ``pragmatic direct causes''. 

\subsection{Structural Causal Model with Interventions on Covariates}

We first introduce the concept of the structural causal model \citep{glymour2016causal}. See \cref{fig:scm-ident} for examples of SCM. It says that each variable in the directed graph is a function of its parents (if any) and an independent innovation or noise.

\begin{definition}[Structural Causal Model] A structural causal model $M=(\mathcal{S}, \nu)$ on $p$ variables $Z_1,\ldots, Z_p$ can be described using $p$ assignment functions $\{f_1,\ldots, f_p\} = \mathcal{S}$: 
\begin{align*}
    Z_j \leftarrow f_j(Z_{\mathtt{pa}(j)}, U_j) \qquad j=1,\ldots, p,
\end{align*} 
where $\mathtt{pa}(j) \subseteq \{1,\ldots, p\}$ is the set of parents, or the direct causes, of the variable $Z_j$, and the joint distribution $\nu(du) = \prod_{j=1}^p \nu_j(du_j)$ over $p$ independent exogenous variables $(U_1,\ldots, U_p)$. For a given model $M$, there is an associated directed graph $G(M)=(V, E)$ that describes the causal relationships among variables, where $V=[p]$ is the set of nodes, $E$ is the edge set such that $(i,j)\in E$ if and only if $i\in \mathtt{pa}(j)$. $G(M)$ is acyclic if there is no sequence $(v_1,\ldots, v_k)$ with $k\ge 2$ such that $v_1=v_k$ and $(v_i, v_{i+1}) \in E$ for any $i\in [k-1]$.
\end{definition}

As in \cite{peters2016causal}, we consider the following data-generating process in $|\mathcal{E}|$ environments. For each $e\in \mathcal{E}$, the process governing $p=d+1$ random variables $Z^{(e)} = (Z_1^{(e)},\ldots, Z_{d+1}^{(e)})=(X_1^{(e)}, \ldots, X_d^{(e)},Y^{(e)})$ is derived from an SCM $M^{(e)}(\mathcal{S}^{(e)}, \nu)$, whose induced graph $G(M^{(e)})$ is acyclic, and assignments as
\begin{align}
\label{eq:scm-model} 
\begin{split}
    X_j^{(e)} &\leftarrow f_j^{(e)}(Z_{\mathtt{pa}(j)}^{(e)}, U_j), \qquad \qquad  j=1,\ldots, d \\
    Y^{(e)} &\leftarrow f_{d+1}(X_{\mathtt{pa}(d+1)}^{(e)}, U_{d+1}).
\end{split}
\end{align}
Here the distribution of exogenous variables $(U_1,\ldots, U_{d+1})$, the cause-effect relationship graph $G$, and the structural assignment $f_{d+1}$ are \emph{invariant} across $e\in \mathcal{E}$, while the structural assignments for $X$ may vary among $e\in \mathcal{E}$. We use superscript $(e)$ to highlight this heterogeneity. This heterogeneity may arise from performing arbitrary interventions on the variables $X$. We use $Z_{\mathtt{pa}(j)}$ to emphasize that $Y$ can be the direct cause of some variables in the covariate vector. See an example in \cref{fig:scm-ident} (a). Here we restrict to the case without hidden confounders; see the statement under the presence of hidden confounders in  \aosversion{Appendix A.6}{\cref{appendix:confounder}}.

To present the result, we consider an augmented SCM that incorporates the environment label $e$ as a variable $E$. We consider the case where $\mathcal{E}=\{0,\ldots, |\mathcal{E}|-1\}$. We let $0$ be the observational environment, and the rest are the interventional environments where some {\it unknown}, {\it arbitrary} interventions are applied to the variables in some given set $I \subseteq [d]$ defined as $I:=\{j: \exists e\in \mathcal{E} ~\text{s.t.}~ f_j^{(e)} \neq f_j^{(0)}\}$. The interventions can be arbitrary: it can be a ``hard'' do-intervention via setting $X_j$ to be $v_j$, or a soft intervention that slightly perturbs the association, e.g., replacing $X_j \gets 2 X_k + U_j$ by $X_j \gets 1.5 X_k + U_j$. The shared cause-effect relationships in all the environments are encoded by $G$, or $\{\mathtt{pa}(j)\}_{j=1}^{d+1}$.

The following SCM $\tilde{M}=(\tilde{\mathcal{S}},\tilde{\nu})$ on $d+2$ variables $Z=(Z_1,\ldots, Z_d, Z_{d+1}, Z_{d+2}) = (X_1,\ldots, X_d, Y, E)$ encodes all the information of $|\mathcal{E}|$ models $\{M^{(e)}(\mathcal{S}^{(e)}, \nu)\}_{e\in \mathcal{E}}$ in \eqref{eq:scm-model}. Denote $\nu_b \sim \mathrm{Uniform}(\mathcal{E})$. Here $\tilde{\nu}(du_1,\ldots, du_{d+2}) = \nu(du_1,\ldots, du_{d+1}) \nu_b(du_{d+2})$, and the assignments $\tilde{\mathcal{S}}=\{\tilde{f}_1,\ldots, \tilde{f}_{d+2}\}$ are defined as
\begin{align}
\begin{split}
    E &\gets \tilde{f}_{d+2}(U_{d+2}) := U_{d+2} \\
    X_j &\gets \begin{cases} \tilde{f}_j(Z_{\mathtt{pa}(j)}, U_{j}) := f^{(0)}_j(Z_{\mathtt{pa}(j)}, U_{j}) &\qquad \forall j\in [d] \setminus I \\
    \tilde{f}_j(Z_{\mathtt{pa}(j)}, E, U_{j}) := f^{(E)}_j(Z_{\mathtt{pa}(j)}, U_j) &\qquad \forall j \in I 
    \end{cases} \\
    Y &\gets \tilde{f}_{d+1}(X_{\mathtt{pa}}(d+1), U_{d+1}) := f_{d+1}(X_{\mathtt{pa(d+1)}}, U_{d+1}),
\end{split}
\label{eq:model-2scm}
\end{align} 
where $I$ is the set of all intervention variables in $\cE$.
It should be noted that throughout this section, the direct cause map $\mathtt{pa}: [d+1]\to [d+1]$ matches the causal relationship $G$ instead of $\tilde{G}=G(\tilde{M})$. See a graphical illustration of the construction in \cref{fig:scm-ident} (b). 

We summarize the above construction as a condition.

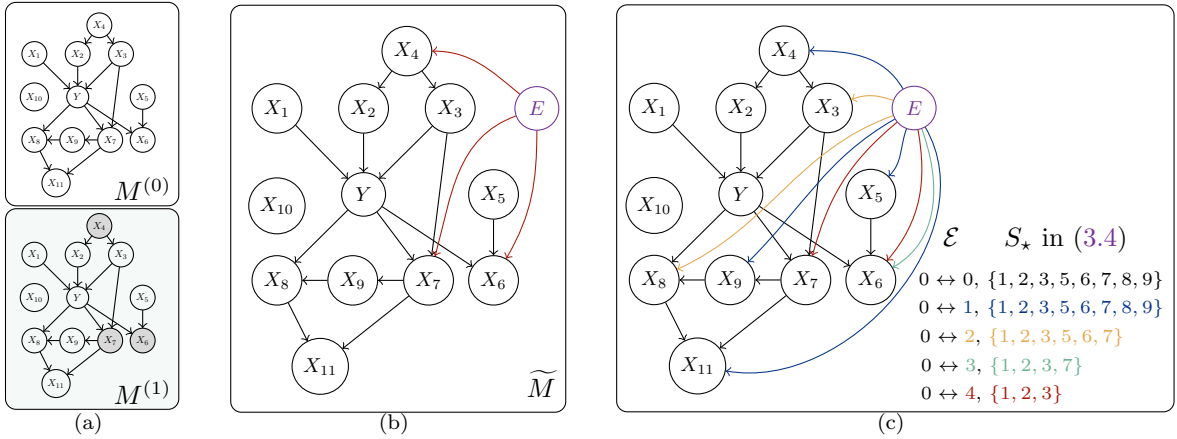
\begin{figure}
\centering

\subfigure[]{
\begin{tikzpicture}[scale=0.36, state/.style={circle, draw, minimum size=0.72cm, scale=0.36}]
\draw[black, rounded corners] (-1, -5.3) rectangle (5, 1.8);

\node[state] at (0, 0) (x1) {$X_1$};
\node[state] at (2*0.75, -1.5) (y) {$Y$};
\node[state] at (2*0.75, 0) (x2) {$X_2$};
\node[state] at (4*0.75, 0) (x3) {$X_3$};
\node[state] at (3*0.75, 1) (x4) {$X_4$}; 
\node[state] at (5*0.75, -1.5) (x5) {$X_5$};
\node[state] at (5*0.75, -3) (x6) {$X_6$};
\node[state] at (3.5*0.75, -3) (x7) {$X_7$};
\node[state] at (1.75*0.75, -3) (x9) {$X_{9}$};
\node[state] at (0, -3) (x8) {$X_8$};
\node[state] at (0, -1.5) (x10) {$X_{10}$};
\node[state] at (1*0.75, -4.5) (x11) {$X_{11}$};

\draw[->] (x1) -- (y);
\draw[->] (x2) -- (y);
\draw[->] (x3) -- (y);
\draw[->] (x4) -- (x2);
\draw[->] (x5) -- (x6);
\draw[->] (x4) -- (x3);
\draw[->] (y) -- (x6);
\draw[->] (y) -- (x7);
\draw[->] (y) -- (x8);
\draw[->] (x7) -- (x9);
\draw[->] (x7) -- (x11);
\draw[->] (x8) -- (x11);
\draw[->] (x3) -- (x7);
\draw[->] (x9) -- (x8);

\draw node at (5*0.75, -4.7) {$M^{(0)}$};

\draw[black, rounded corners, fill=mylightblue!10] (-1, -12.4) rectangle (5, -5.4);

\node[state] at (0, -7) (ix1) {$X_1$};
\node[state] at (2*0.75, -8.5) (iy) {$Y$};
\node[state] at (2*0.75, -7) (ix2) {$X_2$};
\node[state] at (4*0.75, -7) (ix3) {$X_3$};
\node[state, fill=gray!30] at (3*0.75, -6) (ix4) {$X_4$}; 
\node[state] at (5*0.75, -8.5) (ix5) {$X_5$};
\node[state, fill=gray!30] at (5*0.75, -10) (ix6) {$X_6$};
\node[state, fill=gray!30] at (3.5*0.75, -10) (ix7) {$X_7$};
\node[state] at (1.75*0.75, -10) (ix9) {$X_{9}$};
\node[state] at (0, -10) (ix8) {$X_8$};
\node[state] at (0, -8.5) (ix10) {$X_{10}$};
\node[state] at (1*0.75, -11.5) (ix11) {$X_{11}$};

\draw[->] (ix1) -- (iy);
\draw[->] (ix2) -- (iy);
\draw[->] (ix3) -- (iy);
\draw[->] (ix4) -- (ix2);
\draw[->] (ix5) -- (ix6);
\draw[->] (ix4) -- (ix3);
\draw[->] (iy) -- (ix6);
\draw[->] (iy) -- (ix7);
\draw[->] (iy) -- (ix8);
\draw[->] (ix7) -- (ix9);
\draw[->] (ix7) -- (ix11);
\draw[->] (ix8) -- (ix11);
\draw[->] (ix3) -- (ix7);
\draw[->] (ix9) -- (ix8);

\draw node at (5*0.75, -11.8) {$M^{(1)}$};

\end{tikzpicture}
}
\subfigure[]{
\begin{tikzpicture}[scale=0.72, state/.style={circle, draw, minimum size=0.72cm, scale=0.72}]

\draw[black, rounded corners] (-0.8, -5.3) rectangle (4.5, 1.8);

\node[state] at (0, 0) (x1) {$X_1$};
\node[state] at (2*0.75, -1.5) (y) {$Y$};
\node[state] at (2*0.75, 0) (x2) {$X_2$};
\node[state] at (4*0.75, 0) (x3) {$X_3$};
\node[state] at (3*0.75, 1) (x4) {$X_4$}; 
\node[state] at (5*0.75, -1.5) (x5) {$X_5$};
\node[state] at (5*0.75, -3) (x6) {$X_6$};
\node[state] at (3.5*0.75, -3) (x7) {$X_7$};
\node[state] at (1.75*0.75, -3) (x9) {$X_{9}$};
\node[state] at (0, -3) (x8) {$X_8$};
\node[state] at (0, -1.7) (x10) {$X_{10}$};
\node[state] at (1*0.75, -4.5) (x11) {$X_{11}$};
\node[state, color=mypurple] at (0.5, 1) (e) {$E$};

\draw[->] (x1) -- (y);
\draw[->] (x2) -- (y);
\draw[->] (x3) -- (y);
\draw[->] (x4) -- (x2);
\draw[->] (x5) -- (x6);
\draw[->] (x4) -- (x3);
\draw[->] (y) -- (x6);
\draw[->] (y) -- (x7);
\draw[->] (y) -- (x8);
\draw[->] (x7) -- (x9);
\draw[->] (x7) -- (x11);
\draw[->] (x8) -- (x11);
\draw[->] (x3) -- (x7);
\draw[->] (x9) -- (x8);

\draw[->, color=myred] (e) to[out=-90,in=150] (x7);
\draw[->, color=myred] (e) to[out=0, in=180] (x4);
\draw[->, color=myred] (e) to[out=-20,in=105] (x6);
\draw node at (4.2, -4.8) {$\tilde{M}$};

\end{tikzpicture}
} 
\subfigure[]{
\begin{tikzpicture}[scale=0.72, state/.style={circle, draw, minimum size=0.72cm, scale=0.72}]

\draw[black, rounded corners] (-0.65, -5.3) rectangle (9, 1.8);

\node[state] at (0, 0) (x1) {$X_1$};
\node[state] at (2*0.75, -1.5) (y) {$Y$};
\node[state] at (2*0.75, 0) (x2) {$X_2$};
\node[state] at (4*0.75, 0) (x3) {$X_3$};
\node[state] at (3*0.75, 1) (x4) {$X_4$}; 
\node[state] at (5*0.75, -1.5) (x5) {$X_5$};
\node[state] at (5*0.75, -3) (x6) {$X_6$};
\node[state] at (3.5*0.75, -3) (x7) {$X_7$};
\node[state] at (1.75*0.75, -3) (x9) {$X_{9}$};
\node[state] at (0, -3) (x8) {$X_8$};
\node[state] at (0, -1.7) (x10) {$X_{10}$};
\node[state] at (1*0.75, -4.5) (x11) {$X_{11}$};
\node[state, color=mypurple] at (6*0.75, 0) (e) {$E$};

\draw[->] (x1) -- (y);
\draw[->] (x2) -- (y);
\draw[->] (x3) -- (y);
\draw[->] (x4) -- (x2);
\draw[->] (x5) -- (x6);
\draw[->] (x4) -- (x3);
\draw[->] (y) -- (x6);
\draw[->] (y) -- (x7);
\draw[->] (y) -- (x8);
\draw[->] (x7) -- (x9);
\draw[->] (x7) -- (x11);
\draw[->] (x8) -- (x11);
\draw[->] (x3) -- (x7);
\draw[->] (x9) -- (x8);

\draw[->, color=myblue] (e) to[out=-150,in=45] (x9);
\draw[->, color=myblue] (e) to[out=120, in=0] (x4);
\draw[->, color=myblue] (e) to[out=-110,in=45] (x5);
\draw[->, color=myblue] (e) to[out=-60,in=-10] (x11);

\draw[->, color=myyellow] (e) to[out=-160,in=25] (x8);
\draw[->, color=myyellow] (e) to[out=160,in=20] (x3);

\draw[->, color=mygreen] (e) to[out=-70,in=30] (x6);

\draw[->, color=myred] (e) to[out=-140,in=75] (x7);
\draw[->, color=myred] (e) to[out=-80,in=50] (x6);

\node at (6.9-0.3, -2.3) {$\mathcal{E}$ ~~~~$S_\star$ in \eqref{eq:invariant-blanket}};

\node at (7-0.3, -3) {\scriptsize $0\leftrightarrow 0$, $\{1,2,3,5,6,7,8,9\}$};
\node at (7-0.3, -3.5) {\scriptsize $0 \leftrightarrow \myblue{1}$, $\myblue{\{1,2,3,5,6,7,8,9\}}$};
\node at (6.67-0.3, -4) {\scriptsize $0 \leftrightarrow \myyellow{2}$, $\myyellow{\{1,2,3,5,6,7\}}$};
\node at (6.33-0.3, -4.5) {\scriptsize $0 \leftrightarrow \mygreen{3}$, $\mygreen{\{1,2,3,7\}}$};
\node at (6.15-0.3, -5) {\scriptsize $0 \leftrightarrow \myred{4}$, $\myred{\{1,2,3\}}$};
\end{tikzpicture}
}
\caption{(a) is an illustration of the two-environment model, the SCMs in the two environments share the same associated graph: $M^{(0)}$ is an observational environment, and $M^{(1)}$ is an intervention environment where some unknown intervention is applied to $(X_4, X_6, X_7)$, where $M^{(0)}$ and $M^{(1)}$ are defined as \eqref{eq:scm-model}. (b) visualizes $\tilde{G}$, the associated graph of $\tilde{M}$ constructed based on $(M^{(0)}, M^{(1)})$ and \eqref{eq:model-2scm}, which is another plot of the environments in (a). (c) An illustration of \cref{prop:ident-transfer-learning} by showing how $S_\star$ therein will change as we see more and more environments: the arrow from $E$ to $X_j$ with color $e$ means $X_j$ is intervened in $e\in \{\myblue{1}, \myyellow{2}, \mygreen{3}, \myred{4}\}$. For example, $0 \leftrightarrow \mygreen{3}$ means with interventions in environments \myblue{1}, \myyellow{2}, and \mygreen{3}, the invariant variable set is $\mygreen{\{1,2,3,7\}}$.  Although $X_7$ and is reverse causal and hence related to $Y$, we do not know this based only on the given environments.  }
\label{fig:scm-ident}
\end{figure}

\begin{condition}[SCM with Interventions on $X$]
\label{cond:scm-model}
Suppose $M^{(0)},\ldots, M^{(|\mathcal{E}|-1)}$ are defined by \eqref{eq:scm-model}, and $G$ is acyclic. Let $\tilde{M}$ be the model constructed as \eqref{eq:model-2scm} by $\{M^{(e)}\}_{e\in \mathcal{E}}$ with $I$ being given set of variables intervened.
\end{condition}

\subsection{Maximum Invariant Set as the Pragmatic Direct Causes}

We characterize what $S^\star$ would satisfy \eqref{eq:maximum-invariant-set} given a fixed intervention set $I$, and how large $I$ should be to recover the $Y$'s direct causes under arbitrary types of interventions. Define $\mathtt{ch}(k):=\{j: k\in \mathtt{pa}(j)\}$ as the set of children of variable $k$ and $\mathtt{at}(k)$ as the set of all the ancestors of the variable $Z_k$, defined recursively as $\mathtt{at}(k) = \mathtt{pa}(k) \cup \cup_{j\in \mathtt{pa}(k)} \mathtt{at}(j)$ in the topological order of $G$.
The following condition rules out some degenerate cases.

\begin{condition}[Nondegenerate Interventions]
\label{cond:expected-faithfulness}
    The following holds for $\tilde{M}$: (a) $\forall S\subseteq [d]$ containing $Y$'s descendants, %i.e., $d+1\in \cup_{j\in S} \mathtt{at}(j)$, 
    if $E \notindep_{\tilde{M}} Y | X_S$, then there exists some $e, e'\in \mathcal{E}$ such that $(\mu^{(e)} \land \mu^{(e')})(\{m^{(e,S)} \neq m^{(e',S)}\}) > 0$; (b) $\widetilde{M}$ is faithful, i.e.,$\forall ~\text{Disjoint}~ A, B, C\subseteq [d+2]$, if $Z_{A} \perp \!\!\! \perp Z_{B} | Z_{C}$, then$Z_{A} \perp \!\!\! \perp _{\widetilde{G}} Z_{B}|Z_{C}$. Here $Z_{A} \perp \!\!\! \perp _{\widetilde{G}} Z_{B} | Z_{C}$ means the node set $A$ and $B$ are d-separated by $C$ in the graph $\widetilde{G}$; see Definition 2.4.1 in \cite{glymour2016causal} for a formal definition of $d$-separation.
\end{condition}

The condition (b), faithfulness on the graph $\tilde{G}$ constraining that the graph $\tilde{G}$ truly depicts all the conditional independence relationships, is widely used in the causal discovery literature. Condition (a) is further imposed since we only leverage the information of conditional expectations instead of conditional distributions. We impose \cref{cond:expected-faithfulness} such that the dependence on $E$ in the conditional expectation of $Y$ given $X_S$ with any $S\subseteq[d]$ can be represented by the graph $\tilde{G}$ itself. The imposed \cref{cond:expected-faithfulness} rules out the possibility of some degenerate cases; see the justifications for \cref{cond:expected-faithfulness} and some degenerate examples in \aosversion{Appendix A.4}{\cref{sec:discussion:faithfulness}}. It should be noted that our general results in \cref{prop:ident-transfer-learning} and \cref{prop:ident-causal-discovery} apply to arbitrary forms of interventions under \cref{cond:expected-faithfulness}, which is a mild condition
as the violation of faithfulness in \cref{cond:expected-faithfulness} occurs with probability zero under some suitable measure on the model \citep{spirtes2000causation}.

\begin{theorem}[Existence of Maximum Invariant Set] 
\label{prop:ident-transfer-learning}
Under \cref{cond:scm-model}, for%there exists some set $S_\star$, defined as 
\begin{align}
\label{eq:invariant-blanket}
    S_\star = \mathtt{pa}(d + 1) \cup A(I) \cup \bigcup_{j\in A(I)} \left(\mathtt{pa}(j)  \setminus \{d+1\}\right)
\end{align} with $A(I) = \{j: j\in \mathtt{ch}(d+1), j\notin I, \mathtt{at}(j) \cap \mathtt{ch}(d+1) \cap I = \emptyset \}$, we have the invariance $m^{(e,S_\star)} \equiv \bar{m}^{(S_\star)}:= m_\star$. Suppose further
\cref{cond:expected-faithfulness} holds, then \cref{cond-fairnn-ident} holds with $(S^{\star }, m^{\star})= (S_{\star},  m_{\star})$.
\end{theorem}

\cref{prop:ident-transfer-learning} exactly characterizes what $S^\star$ is in our nonparametric invariance pursuit under the SCM with interventions on $X$ -- it doesn't require intervention to be ``sufficient''. Firstly, such a $S^\star$ is well-defined in that there exists one maximum set $S_\star$ satisfying the invariant condition \eqref{eq:intro-model} and heterogeneity condition \cref{cond-fairnn-ident} simultaneously. Secondly, in the SCM setting, such a $S^\star = S_\star$ can be represented in a simple way in \eqref{eq:invariant-blanket}, which lies in between the Markov blanket of the variable $Y$ and the set of $Y$'s direct causes. Note that $A(I)$ can be interpreted as the ``unaffected'' children of $Y$ from the interventions $I$.  As shown in the definition of $A(I)$, the ``unaffected'' children include the children of $Y$ unaffected by both direct interventions in $I$ (itself is not included in $I$) and indirect interventions (it does not have an ancestor that is both $Y$'s child and suffer from intervention). \cref{prop:ident-transfer-learning} states explicitly that the pursued set of invariant variables $S^\star$ is the union of parents of $Y$, unaffected children of $Y$, and parents of these unaffected children. The size of that set $S^\star$ will keep decreasing when $I$ enlarges. It will finally match the direct causes of $Y$ when $I$ includes ``root children set'' $I^\star$ as stated in \cref{prop:ident-causal-discovery} below; see an illustration in \cref{fig:scm-ident} (c).

\begin{proposition}[Direct Cause Recovery] 
\label{prop:ident-causal-discovery}

\noindent {\it (Sufficiency)} Under \cref{cond:scm-model}, define $I^\star = \{j: j\in \mathtt{ch}(d+1), \mathtt{at}(j) \cap \mathtt{ch}(d+1)=\emptyset \}$. If \cref{cond:expected-faithfulness} holds and $I\supseteq I^\star$, then \cref{cond-fairnn-ident} holds with $S^\star = \mathtt{pa}(d+1)$. 

\noindent {\it (Necessity)} Moreover, if $\bar{m}^{(S^\star \cup S)} \neq m^\star$ for any $S$ with $\mathtt{ch}(d+1) \cap S \neq \emptyset$, i.e., $Y$ does not have degenerate children, then \cref{cond-fairnn-ident} holds only if $I\supseteq I^\star$.
\end{proposition}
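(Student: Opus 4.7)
The plan is to reduce both directions to Theorem~\ref{prop:ident-transfer-learning}, which characterizes the maximum invariant set as
$S_\star = \mathtt{pa}(d+1) \cup A(I) \cup \bigcup_{j\in A(I)}(\mathtt{pa}(j) \setminus \{d+1\})$.
The key observation is that $S_\star$ collapses to $\mathtt{pa}(d+1)$ precisely when $A(I)=\emptyset$, so everything hinges on a graph-theoretic equivalence between $A(I)=\emptyset$ and $I\supseteq I^\star$.

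For the sufficiency direction, assume $I\supseteq I^\star$. I would show $A(I)=\emptyset$ by a topological-order argument on the acyclic graph $G$. Take any $j\in\mathtt{ch}(d+1)$. If $j\in I^\star$, then $j\in I$ and $j$ is excluded from $A(I)$ under the convention implicit in the definition. Otherwise $\mathtt{at}(j)\cap\mathtt{ch}(d+1)\neq\emptyset$; choosing a topologically minimal element $k^{*}$ of this set gives $\mathtt{at}(k^{*})\cap\mathtt{ch}(d+1)=\emptyset$, i.e., $k^{*}\in I^\star\subseteq I$, so $k^{*}\in\mathtt{at}(j)\cap\mathtt{ch}(d+1)\cap I$ and $j\notin A(I)$. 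Hence $S_\star=\mathtt{pa}(d+1)$, and the conclusion then follows from Theorem~\ref{prop:ident-transfer-learning} applied with this $S_\star$.

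For necessity, assume $I\not\supseteq I^\star$ and fix $k^{*}\in I^\star\setminus I$. Because $k^{*}\in I^\star$, we have $\mathtt{at}(k^{*})\cap\mathtt{ch}(d+1)=\emptyset$, whence $\mathtt{at}(k^{*})\cap\mathtt{ch}(d+1)\cap I=\emptyset$ and $k^{*}\in A(I)$. Consequently $S_\star$ strictly contains $\mathtt{pa}(d+1)$ and satisfies $\mathtt{ch}(d+1)\cap S_\star\ni k^{*}$. Theorem~\ref{prop:ident-transfer-learning} yields that $m^{(e,S_\star)}\equiv\bar{m}^{(S_\star)}$ is invariant in $e$, while the non-degenerate-children hypothesis in the statement applied at $S=S_\star$ gives $\bar{m}^{(\mathtt{pa}(d+1)\cup S_\star)}=\bar{m}^{(S_\star)}\neq m^\star$. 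This $S_\star$ is then an explicit witness that Condition~\ref{cond-fairnn-ident} fails with $S^\star=\mathtt{pa}(d+1)$: its premise holds but the required heterogeneity across environments does not.

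The main obstacle is the graph-theoretic equivalence $A(I)=\emptyset \iff I\supseteq I^\star$, together with a careful reading of whether a node counts as its own ``ancestor'' in the definition of $A(I)$; neither issue is conceptually deep, but both require tracing the topological structure with some care. Once this equivalence is in place, both halves of the proposition reduce to invoking Theorem~\ref{prop:ident-transfer-learning} plus the non-degeneracy hypothesis in the statement, so no new causal or analytic machinery is needed beyond what has already been developed.
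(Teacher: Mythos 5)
Your proof is correct, but it takes a different route from the paper's. For sufficiency, the paper does \emph{not} go through $S_\star$ and $A(I)$ at all: it proves a standalone Lemma (if $\mathsf{b}_{\mathtt{NN}}(S)>0$ then $S$ contains a descendant of $Y$) and then exhibits, for any such $S$, an explicitly unblocked path $Y\to X_k\gets E$ with $k\in I^\star$ an ancestor of the conditioned descendant, invoking faithfulness and Condition~\ref{cond:expected-faithfulness}(a) directly. For necessity, the paper picks $j\in I^\star\setminus I$ and uses the \emph{smaller} witness $\overline{S}=S^\star\cup\{j\}\cup(\mathtt{pa}(j)\setminus\{d+1\})$, re-running the d-separation case analysis to show $Y\indep_{\tilde G}E\mid X_{\overline S}$. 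Your reduction to \cref{prop:ident-transfer-learning} via the combinatorial equivalence $I\supseteq I^\star\iff A(I)=\emptyset$ is more economical (no new d-separation work), and your topological-minimality argument for that equivalence is sound. What the paper's route buys is independence from the convention you flag: taken literally, $\mathtt{at}(j)$ as defined recursively excludes $j$ itself, under which $I^\star\subseteq A(I)$ always and your sufficiency argument would break; you correctly adopt the self-inclusive reading, which is the only one consistent with Figure~1(c) and with the proof of \cref{prop:ident-transfer-learning} itself (whose Case~2 analysis tacitly assumes intervened children of $Y$ are excluded from $A(I)$). So your reliance on the convention is not a real gap, merely an inherited ambiguity, and your necessity argument (using $S_\star$ as the witness, invariance from the first half of \cref{prop:ident-transfer-learning}, and the non-degeneracy hypothesis at $S=S_\star$) is a clean variant of the paper's.
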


We refer to $I^\star$ as the \emph{minimal intervention set} because it is the exact minimal set of variables that should be intervened on for exact direct cause recovery in general, nondegenerate cases. The set $I^\star$ is determined by the cause-effect relationship graph $G$. In particular, $I^\star$ is $\{6, 7\}$ for the example in \cref{fig:scm-ident}. Notably, $X_8$ does not require intervention, as $X_7$, one of its ancestors, is included in $I^\star$.

Unfortunately, $S_\star \supsetneq \mathtt{pa}(d+1)$ when $I^\star\not\subseteq I$ in general. This is due to a lack of evidence in environments to falsify that some variables in $S^\star$ are not direct causes. Nevertheless, $S^\star=S_\star$ in this setup can still be interpreted as the ``contemporary direct causes'' or ``pragmatic direct causes'' of $Y$ based on the observed environments. We refer to it as ``pragmatic direct causes'' from the perspective of future prediction. The direct causes of $Y$ have implications in robust transfer learning because the conditional moment of $Y$ given direct causes is the most predictive one among all the transferable associations under the worst case where all the covariates are arbitrarily strongly intervened. The ``pragmatic direct causes'' can be understood similarly if future interventions are made within the intervened variables $X_I$. Particularly, if the future interventions are made within the set $I$, then $S^\star$ can be regarded as the direct causes from a pragmatic perspective since the conditional expectation of $Y$ given $X_{S^\star}$ will remain invariant in a new environment $t$. Moreover, it depicts the most predictive one among all the associations in the observational environment $e=0$ that remains in the environment $t$. 

\begin{proposition}[Robust Transfer Learning]
\label{prop:rtl}
 Under \cref{cond:scm-model}, for a new environment $t$ with SCM $M^{(t)}=\{\mathcal{S}^{(t)}, \nu\}$ satisfying $f_j^{(t)} \equiv f_j^{(0)}$ for any $j\in [d+1]\setminus I$, i.e., only $X_I$ is intervened, we have $\mathbb{E}[Y^{(t)}|X_{S_\star}^{(t)}] \equiv \mathbb{E}[Y^{(0)}|X_{S_\star}^{(0)}]$ with $S_\star$ in \eqref{eq:invariant-blanket}. If \cref{cond:expected-faithfulness} holds and $M^{(t)}$ satisfies a condition akin to \cref{cond:expected-faithfulness} (see \aosversion{Appendix A.5}{\cref{sec:rtl}}), then $S_\star$ is the maximum set whose conditional expectation is transferable in that for any $S\subseteq [d]$ such that $\mathbb{E}[Y^{(t)}|X_{S_\star \cup S}^{(t)}] \neq \mathbb{E}[Y^{(t)}|X_{S_\star}^{(t)}]$, one has $\mathbb{E}[Y^{(t)}|X_{S}^{(t)}] \neq \mathbb{E}[Y^{(0)}|X_{S}^{(0)}]$.
\end{proposition}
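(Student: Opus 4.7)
The proof splits into two parts.

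\textbf{Part 1 (Invariance).} The plan is to apply \cref{prop:ident-transfer-learning} directly to the augmented environment collection $\mathcal{E}' := \mathcal{E}\cup\{t\}$. The key observation is that the assumption $f_j^{(t)} \equiv f_j^{(0)}$ for all $j \in [d+1]\setminus I$ ensures that the set of variables intervened across $\mathcal{E}'$ remains exactly $I$, so the set $S_\star$ defined through $A(I)$ in \eqref{eq:invariant-blanket} is unchanged. The first conclusion of \cref{prop:ident-transfer-learning} then yields $m^{(e,S_\star)} \equiv \bar{m}^{(S_\star)}$ for every $e \in \mathcal{E}'$, and specializing to $e = 0$ and $e = t$ gives $\mathbb{E}[Y^{(t)}|X_{S_\star}^{(t)}] \equiv \mathbb{E}[Y^{(0)}|X_{S_\star}^{(0)}]$.

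\textbf{Part 2 (Maximality).} The plan is to argue by contraposition and reduce to $d$-separation in the joint SCM $\tilde{M}'$ for the pair $\{M^{(0)}, M^{(t)}\}$, constructed via \eqref{eq:model-2scm} with intervention set $I_t := \{j : f_j^{(t)} \neq f_j^{(0)}\} \subseteq I$ and environment label $E'$; let $\tilde{G}'$ denote its graph. The akin version of \cref{cond:expected-faithfulness} supplies faithfulness of both $M^{(t)}$ and $\tilde{M}'$ together with the expected-faithfulness property that turns conditional-expectation identities into $d$-separation statements. Suppose $\mathbb{E}[Y^{(t)}|X^{(t)}_{S\cup S_\star}] \not\equiv \mathbb{E}[Y^{(t)}|X^{(t)}_{S_\star}]$; by faithfulness of $M^{(t)}$, there exists $k \in S \setminus S_\star$ such that $Y$ is $d$-connected to $X_k$ in $G$ given $X_{S_\star}$. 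A structural path analysis, using $\mathtt{pa}(d+1) \subseteq S_\star$ to block paths exiting $Y$ through parents and $A(I) \cup \bigl(\bigcup_{j\in A(I)}\mathtt{pa}(j)\setminus\{d+1\}\bigr) \subseteq S_\star$ to block paths through unaffected children, forces any such $d$-connecting path to exit $Y$ through a child $c \in \mathtt{ch}(d+1)\setminus A(I)$; by the definition of $A(I)$, this $c$ has an ancestor $j^\star \in \mathtt{ch}(d+1)\cap I$. The next step is to lift this into a $d$-connecting path from $E'$ to $Y$ in $\tilde{G}'$ given $X_S$, which by the akin faithfulness forces $\mathbb{E}[Y^{(t)}|X^{(t)}_S] \not\equiv \mathbb{E}[Y^{(0)}|X^{(0)}_S]$, completing the contrapositive.

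The main obstacle is the lifting step: $E'$ is adjacent only to $I_t \subseteq I$ rather than to all of $I$, so the intermediate intervened ancestor $j^\star$ need not lie in $I_t$; moreover the enlarged conditioning set $X_S$ may activate colliders that were inactive under $X_{S_\star}$. A careful case analysis driven by the definitions of $A(I)$ and $I_t$ is needed to route the path to $E'$ through some variable in $I_t$ that is an ancestor or descendant of $j^\star$, and to verify that no path previously blocked under $S_\star$ is inadvertently reopened by conditioning on the larger set $X_S$. This graphical chase is the substantive technical content of the proof.
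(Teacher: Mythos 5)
Your Part 1 is correct and matches the paper in substance: the paper reproves the d-separation $Y \indep E | X_{S_\star}$ in the two-environment graph built from $M^{(0)}$ and $M^{(t)}$, which is the same argument you invoke by enlarging $\mathcal{E}$ to $\mathcal{E}\cup\{t\}$ and observing that the intervention set, hence $S_\star$, is unchanged.

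Part 2 has a genuine gap, and it sits exactly where you flag "the main obstacle" — that obstacle is not a graphical chase to be completed but a sign that your construction is the wrong one. You build the joint SCM $\tilde M'$ with intervention set $I_t=\{j: f_j^{(t)}\neq f_j^{(0)}\}$ and assume faithfulness of that object. The paper instead builds $\bar M$ by attaching $E$ to \emph{all} of $I$ (see \cref{sec:rtl}), and the "condition akin to \cref{cond:expected-faithfulness}" is faithfulness plus expected-faithfulness of that larger graph. The difference is not cosmetic. Take $I=\{k\}$ for some $k\in\mathtt{ch}(d+1)$ and $M^{(t)}=M^{(0)}$, so $I_t=\emptyset$. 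Then $k\notin S_\star$, and for $S=\{k\}$ one generically has $\mathbb{E}[Y^{(t)}|X^{(t)}_{S_\star\cup S}]\neq\mathbb{E}[Y^{(t)}|X^{(t)}_{S_\star}]$, yet $\mathbb{E}[Y^{(t)}|X^{(t)}_S]=\mathbb{E}[Y^{(0)}|X^{(0)}_S]$ trivially. In your $\tilde G'$ the node $E'$ is isolated, so faithfulness of $\tilde M'$ holds vacuously, your hypotheses are satisfied, and the conclusion fails: no case analysis can route a path to $E'$ when $E'$ has no children. The paper's hypothesis rules this configuration out, because in $\bar G$ the edge $E\to X_k$ is present while $X_k$ does not actually depend on $E$, violating faithfulness of $\bar M$; in effect the assumption forces every variable in $I$ to be genuinely perturbed in environment $t$. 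Once you adopt the paper's construction, the lifting step you worry about evaporates: the analogue of \cref{lemma:11} produces some $j\in S$ that is $k$ or a descendant of a child $k\in\mathtt{ch}(d+1)\cap I$ of $Y$, and the length-two path $E\to X_k\gets Y$ in $\bar G$ is unblocked given $X_S$ because its collider has a descendant in $S$; faithfulness and expected-faithfulness of $\bar M$ then convert this d-connection into $m^{(0,S)}\neq m^{(t,S)}$.
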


\section{A Unified Framework}
\label{sec:ip}

The proposed FAIR-NN least squares is a special instance of our generic FAIR estimation framework, which homogenizes different risk losses and prediction models. Moreover, our framework also allows the user to incorporate additional structural knowledge into estimation such that identification is sometimes viable when $|\mathcal{E}|=1$. The invariance pursuit problem, the estimation method, and the non-asymptotic results will be presented in a unified manner in this section. 

\subsection{General Invariance Pursuit from Heterogeneous Environments}
\label{sec:model}

In this section, we formalize the problem of \emph{invariance pursuit} using data from multiple environments, which admits the canonical \emph{nonparametric invariance pursuit} in \cref{sec:intro-problem} as a special case.

Let $Y \in \mathbb{R}$ be the response variable and $X\in \mathbb{R}^d$ be the explanatory variable.
We consider the general setting in which we have collected data from multiple environments $\mathcal{E} = \{e_1, \ldots, e_{|\mathcal{E}|}\}$, where $\mathcal{E}$ is the set of a finite number of environments. In each environment $e\in \mathcal{E}$, we observe $n$ i.i.d. observations $\{(X^{(e)}_i, Y^{(e)}_i)\}_{i=1}^n$ that follow from some distribution $\mu^{(e)}$. Let $\Theta_g, \Theta_f \subseteq \Theta$ be the class of prediction and testing functions, respectively. Our goal is to estimate the underlying \emph{invariant regression function} $g^\star \in \Theta_g$ satisfying the invariance structure
\begin{align}
\label{eq:invariant-structure}
\forall e\in \mathcal{E} \qquad \mathbb{E} \left[\left(Y^{(e)} - g^\star(X_{S^\star}^{(e)}) \right) f(X^{(e)}_{S^\star})\right] = 0 \qquad \forall f\in [\Theta_f]_{S^\star},
\end{align} where $S^\star$ is the \emph{unknown} set of true important variables.
We refer to the above problem as \emph{invariance pursuit} or \emph{causal pursuit} exchangeably, as no evidence against causality with the available experiments.

The problem of estimating $g^\star$ in \eqref{eq:invariant-structure} is a generalized version of the canonical 
\emph{nonparametric invariance pursuit} with $g^\star = m^\star$ in \eqref{eq:intro-model} and $\Theta_f = \Theta_g = \Theta$. It depicts a general form and unifies several problems of interest in predecessors. For example, when $\Theta_g$ and $\Theta_f$ are all linear function classes, it reduces to the \emph{linear invariance pursuit} problem, i.e., estimating $g^\star(x) = (\beta^\star)^\top x = (\beta^\star_{S^\star})^\top x_{S^\star}$ with $\beta^\star \in \mathbb{R}^d$ satisfying $\supp(\beta^\star) = S^\star$ in the multi-environment linear regression \citep{fan2024environment} with linear invariance structure
\begin{align} \label{fan2}
    \mathbb{E} \left[ \left(Y^{(e)} - (\beta^\star_{S^\star})^\top X_{S^\star}^{(e)} \right) X_{j}^{(e)} \right] = 0 \qquad \forall e\in \mathcal{E}, j\in S^\star.
\end{align} 

Another example is the \emph{augmented linear invariance pursuit} where $\Theta_g$ is linear and $\Theta_f=\{f(x)=\sum_{j=1}^d \beta_{0,j} x_j + \beta_{1,j} \phi(x_j) \}$ with some transform function $\phi: \mathbb{R}\to \mathbb{R}$.  This can further generalize this to multiple transformed testing functions such as $\phi_1(x_j)=x_j^2$ and $\phi_2(x_j)=|x_j|$ but we keep one here for simplicity. The augmented linear invariance structure that realizes \eqref{eq:invariant-structure} in this case is, for all $e\in \mathcal{E}, j\in S^\star$,
\begin{align}  \label{fan3}
    \mathbb{E} \left[ \left(Y^{(e)} - (\beta^\star_{S^\star})^\top X_{S^\star}^{(e)} \right) X_{j}^{(e)} \right] = \mathbb{E} \left[ \left(Y^{(e)} - (\beta^\star_{S^\star})^\top X_{S^\star}^{(e)} \right) \phi(X_{j}^{(e)}) \right] = 0.
\end{align} 
It coincides with the problem considered by \cite{fan2014endogeneity} when $|\mathcal{E}|=1$ and our method reduces to the FGMM method therein. The \emph{augmented linear invariance pursuit} leverages further a part of the structural knowledge that $\mathbb{E}[Y^{(e)}|X_{S^\star}^{(e)}] = (\beta^\star_{S^\star})^\top X_{S^\star}^{(e)}$, which is much weaker than the assumption $\mathbb{E}[Y^{(e)}|X^{(e)}] = (\beta^\star_{S^\star})^\top X_{S^\star}^{(e)}$ in the sparse linear regression. Identification is possible in this case even when $|\mathcal{E}|=1$.  This is important for most biological medical studies, where data are usually collected in similar settings.  In this case, the FAIR penalty eliminates endogenously spurious variables, making traditional variable selection methods applicable.

\begin{remark}
\label{remark:variable-sel}  
We point out here that there are two kinds of spurious variables. One is endogenously spurious variables such as $X_2=$ background color, and the other is exogenously spurious variables such as $X_3=$ the time the photo was taken or the types of camera used.  The former is harmful, and the latter is nearly harmless in statistical prediction, transfer learning, and even statistical attribution or causality, thinking of $X_3$ as a weak causal variable. The introduction of our FAIR method is to surely screen \citep{fan2008sure} the endogenously spurious variables while keeping all the important variables as in \eqref{eq:var-sel-fairnn}. Exogenously spurious variables can be reduced by using commonly used variable selection methods such as Lasso, SCAD, and best subsets. See \aosversion{Appendix A.7}{\cref{sec:varsel}} for how to attain variable selection consistency.
\end{remark}

Similar to the discussion in \cref{sec:intro-problem}, the main challenge here is the curse of endogeneity. To address this issue, we will harness the insight that the distributions of $(X, Y)$ across diverse environments capture the invariance structure \eqref{eq:invariant-structure}. The key idea is to exploit both the heterogeneity among different environments and the above invariance structure \eqref{eq:invariant-structure} to pinpoint the invariant regression function $g^\star$. 

It should be noted that both $g^\star$ and $S^\star$ are determined by $(\Theta_g, \Theta_f)$ and $\mathcal{E}$ through the structure \eqref{eq:invariant-structure}. It is required that $\partial \Theta_g = \{g - g': g, g'\in \Theta_g\} \subseteq \Theta_f$. In the case of $\Theta_f=\partial \Theta_g$, one uses only heterogeneity among different environments, or the ``invariance principle'', to identify the invariant regression function $g^\star$, as in \eqref{fan2}. Heterogeneous environments are essential in this case. By choosing substantially large $\Theta_f \supsetneq \partial \Theta_g$, one further injects the strong structural assumption that the invariant regression function lies in the class $\Theta_g$ rather than $\Theta_f \setminus \Theta_g$ as in \eqref{fan3}. In this case, one leverages both heterogeneity among environments, i.e., the ``invariance principle'', and the mentioned prior structure knowledge, i.e., the ``asymmetry principle'', to jointly identify $g^\star$. Only one environment may be enough for identifying $g^\star$ when the intersection of both principles gives sufficient conditions.

\subsection{General FAIR Estimation Framework}

Let $\ell: \mathbb{R} \times \mathbb{R} \to \mathbb{R}$ be a user-determined risk loss such that 
\begin{align}
\label{eq:loss-constrain}
    \frac{\partial \ell(y, v)}{\partial v} = (v - y) \psi(v) \qquad \text{and} \qquad \frac{\partial^2 \ell(y, v)}{\partial v^2} > 0,
\end{align} 
which is slightly more general than the quasi-likelihood in the generalized linear model
\citep{nelder1972generalized}.  The constraints in \eqref{eq:loss-constrain} ensure that the conditional expectation aligns with the unique global minima and can be satisfied by various risk losses. Two leading examples are the least square loss $\ell(y, v) = \frac{1}{2}(y-v)^2$ with $\psi(v)=1$ for regression, and the cross-entropy loss $\ell(y, v) = -\log(1-v) - y\log\{v/(1-v)\}$ with $\psi(v)=1/\{v(1-v)\}$ for classification. 

Given all the data $\{\{(X_i^{(e)}, Y_i^{(e)})\}_{i=1}^n\}_{e\in \mathcal{E}}$ from heterogeneous environments together with $(\Theta_g, \Theta_f)$ that may encode part of the prior information when $\Theta_g \neq \Theta$, our proposed focused adversarial invariance regularized estimator (FAIR estimator) is the solution to the subsequent minimax optimization objective
\begin{align}
\label{eq:def-fair-estimator}
    \hat{g} \in \argmin_{g\in \mathcal{G}} \sup_{f^{\mathcal{E}} \in \{\mathcal{F}_{S_g}\}^{|\mathcal{E}|}} \underbrace{\hat{\mathsf{R}}(g) + \gamma \hat{\mathsf{J}}(g, f^{\mathcal{E}})}_{=:\hat{\mathsf{Q}}_\gamma(g, {f}^{\mathcal{E}})}. 
\end{align} 
where $\mathcal{G} \subseteq \Theta_g$ and $\mathcal{F} \subseteq \Theta_f$ are function classes that approximates $\Theta_g$ and $\Theta_f$, respectively. Here $\hat{\mathsf{R}}(g)$ is the pooled sample mean of the user-specified loss across all the environments $\mathcal{E}$:
\begin{align}
\label{eq:method-empirical-r}
    \hat{\mathsf{R}}(g) = \frac{1}{|\mathcal{E}|} \sum_{e\in \mathcal{E}} \hat{\mathbb{E}} \left[\ell(Y^{(e)}, g(X^{(e)}))\right] = \frac{1}{|\mathcal{E}| \cdot n} \sum_{e\in \mathcal{E}, i\in [n]} \ell(Y^{(e)}_i, g(X^{(e)}_i)),
\end{align} 
$\gamma$ is the hyper-parameter to be determined, and $\hat{\mathsf{J}}(g, f^{\mathcal{E}})$ is defined the same as \eqref{eq:method-empirical-j}. We summarize the framework proposed in \cref{fair-algo}.

\begin{algorithm}
\caption{FAIR Estimation}
\begin{algorithmic}[1]
\State \textbf{Input:} Data $\{\mathcal{D}^{(e)}\}_{e\in \mathcal{E}}$ with $\mathcal{D}^{(e)} = \{(X_i^{(e)}, Y_i^{(e)})\}_{i=1}^n$ from $|\mathcal{E}|$ environments. Determine risk loss $\ell(\cdot, \cdot)$.
\State Choose predictor function class $\mathcal{G}$. 
\State \myred{Choose testing function class $\mathcal{F} = \partial \mathcal{G}$, unless with prior knowledge that the target function $\notin \Theta_f \setminus \partial \Theta_g$.}
\State \myred{Choose invariance hyper-parameter $\gamma$}.
\State Solve the \myred{minimax program in \eqref{eq:def-fair-estimator}.}
\end{algorithmic}
\label{fair-algo}
\end{algorithm}

The difference compared with the standard empirical risk minimizer is outlined in \myred{red}: the choice of testing function can be the default $\mathcal{F} = \partial \mathcal{G}$ in the absence of additional priors. Though one additional hyper-parameter $\gamma$ is introduced, our theorem and empirical studies show it has no effect when $n$ is large. So we recommend picking a large enough $\gamma$ like $\gamma=36$ for the causal discovery task and can use either one additional validation set or leave-one-out cross-validation to optimize the prediction error; see the idea of data-driven determination of $\gamma$ in Appendix D.7 of \cite{fan2024environment}. Our \cref{sec:implementation} proposes an efficient implementation of Step 5 if running least squares on $\mathcal{G}$ can be solved by gradient descent, which is quite mild.

From a high-level perspective, our proposed FAIR estimator searches for the most predictive variable set $S$ that preserves some invariance structure imposed by the specification of $(\Theta_g, \Theta_f)$. The framework presented has several limitations: (1) the loss $\ell$ has restrictions in that the conditional expectation must uniquely minimize it; (2) the environment label is discrete; and (3) the discussion still lies within the variable selection level invariance rather than general representation level invariance. We will discuss in \aosversion{Appendix A.3}{\cref{sec:extension}} that our entire framework can be easily extended to the cases where (1) and (2) fail to hold. We add some discussions on the rationale, comparison with IRM, and extension on (3) in \aosversion{Appendix A.2}{\cref{appendix:dis2}}.

\subsection{Sketch of the Generic Result and Its Applications}
\label{sec:sketch}

The non-asymptotic results in \cref{sec:fairnn} can be extended to the general FAIR estimation framework, formally stated in \aosversion{Theorem B.1}{\cref{thm:oracle}}, which unifies the identification condition and $L_2$ estimation errors for specific $(\Theta_g, \Theta_f)$ or $(\mathcal{G}, \mathcal{F})$  under the least squares loss $\ell(y,v)=\frac{1}{2}(y-v)^2$. We sketch the main idea here and defer the complete result and applications to \aosversion{Appendix B}{\cref{sec:theory-appendix}}.

Suppose $[\Theta_g]_S$ and $[\Theta_f]_S$ are closed subspaces of $\Theta_S$ for any $S\subseteq[d]$ so that one can define $\bar{g}^{(S)}(x) = \argmin_{g\in [\Theta_g]_S} \|g - \bar{m}^{(S)}\|_2$ and $f^{(e,S)}(x) = \argmin_{f\in [\Theta_f]_S} \|f - m^{(e,S)}\|_{2,e}$.
Then, the invariant structure and the invariant regression function in \eqref{eq:invariant-structure} can be simplified as
\begin{align}
\label{eq:invariance-thm}
    f^{(e,S^\star)}(x) \equiv \bar{g}^{(S^\star)}(x) := g^\star(x).
\end{align}
Similar to the nonparametric bias mean and bias variance in \cref{remark:bias-mean-variance}, we can define the generalized bias mean and bias variance with respect to $(\Theta_g, \Theta_f)$ as $\mathsf{b}(S) = \|\bar{g}^{(S\cup S^\star)} - g^\star\|_2^2$ and $\bar{\mathsf{d}}(S) = \frac{1}{|\mathcal{E}|} \sum_{e\in \mathcal{E}} \|\bar{g}^{(S)} - f^{(e,S)}\|_{2,e}^2$. The general identification condition akin to \cref{cond-fairnn-ident} is 
\begin{align}
\label{eq:ident-thm}
\forall ~ S\subseteq [d],  \qquad \mathsf{b}(S)>0 ~~ \Longrightarrow ~~ \bar{\mathsf{d}}(S)>0.
\end{align} 
It requires that whenever incorporating more variables in $S$ leads to better prediction performance, the set $S$ will not satisfy the invariance structure \eqref{eq:invariant-structure}. \cref{cond-fairnn-ident} instantiates \eqref{eq:ident-thm} by letting $\bar{\mathsf{d}}(S) =\bar{\mathsf{d}}_{\mathtt{NN}}(S)$ and ${\mathsf{b}}(S) = \mathsf{b}_{\mathtt{NN}}(S)$ with $(\mathsf{b}_{\mathtt{NN}}(S), \bar{\mathsf{d}}_{\mathtt{NN}}(S))$ defined in \eqref{eq:bnndnn}.

\begin{theorem}[Main Result for FAIR Least Squares Estimator, Informal] \label{thm:informal}
Under \eqref{eq:invariance-thm}, \eqref{eq:ident-thm}, and some regularity conditions in regression, one can consistently estimate $g^\star$ by choosing $\gamma \ge 8\sup_{S:\mathsf{b}(S)>0} \{\mathsf{b}(S)/\bar{\mathsf{d}}(S)\}$. In this case, the FAIR estimator $\hat{g}$ in \eqref{eq:def-fair-estimator} with $\ell(y,v)=\frac{1}{2}(y-v)^2$ satisfies, for any $n\ge 3$, w.h.p.,
\begin{align}
\label{eq:error-bound}
    \frac{\|\hat{g} - g^\star\|_2}{C_1} \le \delta_{\mathtt{stoc}} + \delta_{\mathtt{approx}}^\star + \gamma ( \delta_{\mathtt{stoc}} + \delta_{\mathtt{approx}}) 1_{\{\delta_{\mathtt{stoc}} + \delta_{\mathtt{approx}} \ge \frac{s}{1+\gamma}\}}
\end{align} Here $\delta_{\mathtt{stoc}}$ is the stochastic error characterized by the local Rademacher complexity of $\mathcal{F}, \partial \mathcal{G}$ and $n$, $\delta_{\mathtt{approx}}^\star$ measures certain approximation error of $(\mathcal{G}, \mathcal{F})$ w.r.t. $g^\star$, and $\delta_{\mathtt{approx}}$ measures the worst case approximation error of $(\mathcal{G}, \mathcal{F})$ w.r.t. all the $\{f^{(e,S)}\}$. The constant $s>0$ is the signal strength related to $\min_{S: \bar{\mathsf{d}}(S)>0} \bar{\mathsf{d}}(S)$ and $\min_{S: S^\star \setminus S\neq \emptyset} \inf_{g\in [\Theta_g]_S} \|g - g^\star\|_2$, and $C_1$ is a universal constant independent of the two quantities. 
\end{theorem}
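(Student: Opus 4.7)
The first move is to replace the empirical minimax program \eqref{eq:def-fair-estimator} by its idealized population analogue $\mathsf{Q}^\star_\gamma(g) := \mathsf{R}(g) + \gamma \sup_{f^{\mathcal{E}}\in \{[\Theta_f]_{S_g}\}^{|\mathcal{E}|}}\mathsf{J}(g, f^{\mathcal{E}})$. Using the hypothesis $\Theta_g \subseteq \Theta_f$ so that $g \in [\Theta_f]_{S_g}$, together with the $L_2(\mu_x^{(e)})$-orthogonality of the projection $f^{(e,S_g)}$ of $m^{(e,S_g)}$ onto $[\Theta_f]_{S_g}$, the inner supremum admits the closed form
\begin{align*}
\sup_{f^{\mathcal{E}}\in \{[\Theta_f]_{S_g}\}^{|\mathcal{E}|}}\mathsf{J}(g, f^{\mathcal{E}}) \;=\; \frac{1}{2|\mathcal{E}|}\sum_{e\in \mathcal{E}}\|g - f^{(e,S_g)}\|_{2,e}^2,
\end{align*}
exactly analogous to \eqref{fan1}. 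Combined with a Pythagorean decomposition of $\mathsf{R}$ against $m^{(e,S_g)}$, this converts $\mathsf{Q}^\star_\gamma$ into a purely quadratic object in which prediction bias and invariance deficit sit on an equal footing.

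\textbf{Step 2: Population-level quadratic lower bound.} The plan is then to prove that for $\gamma \geq 8\gamma^\star := 8\sup_{S:\mathsf{b}(S)>0}\{\mathsf{b}(S)/\bar{\mathsf{d}}(S)\}$, the population minimizer of $\mathsf{Q}_\gamma^\star$ over $\Theta_g$ coincides with $g^\star$ and satisfies the quadratic lower bound $\mathsf{Q}_\gamma^\star(g) - \mathsf{Q}_\gamma^\star(g^\star) \gtrsim \|g - g^\star\|_2^2 \land s$. I would case-split on the support $S = S_g$: if $\mathsf{b}(S) = 0$ then the risk gap alone controls $\|g - g^\star\|_2^2$ via orthogonality of $\bar{g}^{(S\cup S^\star)} = g^\star$; if instead $\mathsf{b}(S) > 0$, identification \eqref{eq:ident-thm} forces $\bar{\mathsf{d}}(S) \geq \mathsf{b}(S)/\gamma^\star$, and evaluating the FAIR penalty at the least-squares projection $\bar{g}^{(S)}$ gives the penalty a floor of $\gamma\bar{\mathsf{d}}(S)/2$, which for $\gamma \ge 8\gamma^\star$ strictly dominates any reduction in $\mathsf{R}$ attainable by exploiting endogenous variables. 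The signal $s$ absorbs both $\min_{S:\bar{\mathsf{d}}(S)>0}\bar{\mathsf{d}}(S)$ and the nondegenerate-covariate gap $\min_{S^\star\not\subseteq S}\inf_{g\in [\Theta_g]_S}\|g - g^\star\|_2^2$.

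\textbf{Step 3: Empirical process and approximation reduction.} The final step is a standard sample-to-population transfer. Applying localized Rademacher complexity bounds to the $|\mathcal{E}|n$-sample versions of $\mathsf{R}$ and $\mathsf{J}$ uniformly over $(g, f^{\mathcal{E}}) \in \mathcal{G}\times \{\mathcal{F}\}^{|\mathcal{E}|}$ yields a stochastic deviation $\delta_{\mathtt{stoc}}$ governed by the metric entropies of $\mathcal{G}$ and $\mathcal{F}$; a union bound over the $2^d$ possible supports $S_g$ contributes only a constant factor since $d$ is fixed. Passing from $\Theta_g,\Theta_f$ to $\mathcal{G},\mathcal{F}$ in Step 1's identity injects the approximation biases $\delta_{\mathtt{approx}}^\star = \inf_{h\in \mathcal{G}_{S^\star}}\|g^\star - h\|_2$ and $\delta_{\mathtt{approx}} = \max_{e,S}\inf_{h\in \mathcal{F}_S}\|f^{(e,S)} - h\|_{2,e}$. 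Inverting Step 2's quadratic lower bound against these perturbations, the pooled error reduces to $\delta_{\mathtt{stoc}} + \delta_{\mathtt{approx}}^\star$ whenever $\delta_{\mathtt{stoc}} + \delta_{\mathtt{approx}}$ lies below the threshold $s/(1+\gamma)$ (the correct variable-selection regime), and incurs an extra $\gamma(\delta_{\mathtt{stoc}} + \delta_{\mathtt{approx}})$ term otherwise, producing the indicator-valued bound \eqref{eq:error-bound}.

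\textbf{Main obstacle.} The principal difficulty sits in Step 2. Because $S_g$, and hence the admissible testing class $[\Theta_f]_{S_g}$, jumps discontinuously in $g$, no smooth first-order argument is available, and one must case-split on $S_g$ and convert the scalar inequality $\mathsf{b}(S) \leq \gamma^\star \bar{\mathsf{d}}(S)$ into a quadratic lower bound uniform in the full norm $\|\cdot\|_2$, rather than only along the ``bias direction'' $\bar{g}^{(S\cup S^\star)} - g^\star$. Threading this quantitative strong convexity through the empirical process step without degrading the correct $\gamma$- and $s$-dependence is precisely what forces the two-regime shape of \eqref{eq:error-bound}.
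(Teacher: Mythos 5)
Your overall architecture matches the paper's: (i) a closed-form population evaluation of the focused penalty giving the bias-mean/bias-variance decomposition, (ii) a population strong-convexity bound for $\gamma\ge 8\gamma^\star$ obtained by case-splitting on $S_g$, and (iii) a localized empirical-process plus approximation-error transfer yielding the two-regime bound. This is exactly the paper's route (its Theorem on approximate strong convexity, the instance-dependent deviation propositions, the discriminator characterization, and the variable-selection proposition). However, two steps as you state them would not deliver the claimed conclusion.

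First, your Step 2 lower bound $\mathsf{Q}^\star_\gamma(g)-\mathsf{Q}^\star_\gamma(g^\star)\gtrsim \|g-g^\star\|_2^2\wedge s$ with a $\gamma$-independent implicit constant is too weak to produce a \emph{universal} $C_1$ in the selection-consistent regime. The deviation of the penalized empirical objective necessarily scales with $\gamma$ (the penalty is $\gamma$-weighted), so a curvature constant of order $1$ only gives $\|\hat g-g^\star\|_2\lesssim\sqrt{\gamma}\,(\delta_{\mathtt{stoc}}+\delta^\star_{\mathtt{approx}})$. The paper escapes this because, once $\bar g^{(\hat S)}=g^\star$ and $f^{(e,\hat S)}=g^\star$, the penalty itself contributes an extra $\tfrac{\gamma}{2}\|g-g^\star\|_2^2$ of curvature, so the lower bound scales as $(1+\gamma)\|g-\tilde g\|_2^2$ and the $\gamma$ cancels against the $\gamma$-weighted deviations. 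You need to keep that $\gamma$-scaled quadratic term rather than truncating to a universal constant. Relatedly, when $\Theta_f\supsetneq\Theta_g$ your dichotomy $\mathsf{b}(S)=0$ versus $\mathsf{b}(S)>0$ misses a third case ($\mathsf{b}(S)=0$, $S\supseteq S^\star$, but the $\Theta_f$-projections $f^{(e,S)}$ still differ from $g^\star$), which must also be excluded for selection.

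Second, the "standard sample-to-population transfer" in Step 3 hides the two technically essential ingredients. A uniform sup-deviation bound over $\mathcal{G}\times\{\mathcal{F}\}^{|\mathcal{E}|}$ would give a rate governed by the global complexity and would not localize; the paper needs \emph{multiplicative} instance-dependent bounds of the form $|\Delta|\lesssim U(\delta_n\|g-\tilde g\|_{2,e}+\delta_n\|\tilde g+\tilde f-g-f\|_{2,e}+\delta_n^2)$ so that the deviations can be absorbed into the curvature by Young's inequality. Moreover, the estimator's $\hat f^{(e)}$ maximizes the empirical $\hat{\mathsf{J}}$, not the population one, so one must separately prove that $\hat f^{(e)}$ lands within $O(\delta_{\mathtt{stoc}}+\delta_{\mathtt{approx}})$ of $f^{(e,S_{\hat g})}-\hat g$ in $\|\cdot\|_{2,e}$; this is a nonparametric regression analysis of the discriminator, uniform in $g$, and it is precisely where $\delta_{\mathtt{approx}}$ enters and why, after selection, the final rate depends only on the complexity needed to approximate $g^\star$ rather than the (possibly much more complex) spurious $m^{(e,S)}$. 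Without this characterization, "inverting the quadratic lower bound" cannot be carried out, because the population value $\mathsf{J}(\hat g,\hat f^{\mathcal{E}})$ of the empirical maximizer is otherwise uncontrolled.
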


\begin{table}[!t]
    \centering
    \small
    \begin{tabular}{lllllll}
      \hline
     $\Theta_g$ & $\Theta_f$ & $\mathcal{G}$ & $\mathcal{F}$ & Priors & $|\mathcal{E}|=1$ Ident & Result \\
      \hline
      \hline
      Linear & Linear & Linear & Linear & None & Impossible & Thm \aosversion{B.5}{\ref{thm:lglf}}\\
      Linear & Linear w/ $\phi$ & Linear & Linear w/ $\phi$ & Nearly Linear & Possible & Thm \aosversion{B.6}{\ref{thm:lgbf}} \\
      Linear & $\Theta$ & Linear & NN & Linear & Possible & Thm \aosversion{B.7}{\ref{thm:lgnf}} \\
      Additive & $\Theta$ & Additive NN & NN & Additive & Impossible & Thm \aosversion{B.4}{\ref{thm:fairann}}\\
      $\Theta$ & $\Theta$ & NN & NN & None & Impossible & Thm \ref{thm:fairnn}\\
      \hline
    \end{tabular}
    \caption{Applications of \cref{thm:informal}. Recall that $\Theta$ is the set of all $L_2(\bar{\mu}_x)$ functions. For the function classes in columns $\Theta_g, \Theta_f, \mathcal{G}$ and $\mathcal{F}$, ``Linear'' is $\{f(x)=\sum_{j=1}^d \beta_{j} x_j\}$, ``Linear w/ $\phi$'' is $\{f(x)=\sum_{j=1}^d \beta_{j} x_j + \alpha_j \phi(x_j)\}$, ``NN'' is deep ReLU network class, ``Additive'' is the additive functions $\{f(x)=\sum_{j=1}^d f_j(x_j)\}$ and ``Additive NN'' is a structured neural network approximating additive functions. The column ``Priors'' indicates what prior structure knowledge is injected by the choice of $(\Theta_g, \Theta_f)$. For the second row, it is ``nearly linear'' given it only requires that the residual is uncorrelated with all the $\phi(x_j)$ with $j\in S^\star$; the prior for the third row is exactly linear provided $\Theta_f = \Theta$. The column ``$|\mathcal{E}|=1$ Ident'' indicates whether identification for $S^\star$ in \eqref{eq:intro-model} is possible with only one environment. 
    }
    \label{table:estimators-main-text}
\end{table}

The complete and rigorous statement is deferred to \aosversion{Theorem B.1}{\cref{thm:oracle}} in \aosversion{Appendix B.1}{\cref{sec:main-result}}, with more loss function $\ell$ in \aosversion{Theorem B.2}{\cref{thm:oracle-2}}. These generic results can characterize several advantages in our FAIR framework's sample efficiency. Firstly, the error \eqref{eq:error-bound} is structure-agnostic in that it is represented by the sum of approximation error and stochastic error, indicating that (1) our framework can fully exploit the capability of $(\mathcal{G}, \mathcal{F})$ in learning low-dimensional structures, and (2) it has almost no additional cost in sample efficiency compared with standard regression. Moreover, the error rate applies to any $n$, implying the estimation error is guaranteed even when it selects the wrong variable, especially when the signal $s$ is weak. Finally, though a large enough regularization hyper-parameter $\gamma$ is needed to guarantee consistent estimation, the error will be free of $\gamma$ when $n$ is large enough. We also apply our unified result to various specifications of $(\mathcal{G}, \mathcal{F})$, including the non-asymptotic results in identification and convergence rate; see a summary in \cref{table:estimators-main-text}.

\section{Experiments}
\label{sec:exp}

\subsection{An End-to-End Implementation}
\label{sec:implementation}

We realize the minimax optimization using gradient descent ascent, a similar approach adopted in GAN \citep{goodfellow2014generative} training. The main challenge here is how to do ``focused regularization'' that enforces $f^{(e)} \in \mathcal{F}_{S_g}$. Here we consider a re-parameterization trick that disentangles the function $g$ and the variable $S_g$ it selects. To start with, we can write $g(x) = g(a \odot x) = g(x_1 a_1,\ldots, x_d a_d)$ with $a\in \{0,1\}^d$ indicating presence/absence of variables.
Then we can write the objective \eqref{eq:def-fair-estimator} as
\begin{align} \label{fan5}
    (\hat{g}, \hat{a}) \in \argmin_{g\in \mathcal{G}, a\in \{0,1\}^d} \sup_{f^{\mathcal{E}} \in \{\mathcal{F}\}^{|\mathcal{E}|}} \hat{\mathsf{R}}(g(a \odot \cdot)) + \gamma \hat{\mathsf{J}}(g(a \odot \cdot), f^{\mathcal{E}}(a \odot \cdot))
\end{align}
A naive implementation is to first enumerate all the possible $a\in \{0,1\}^d$ and then do gradient descent ascent for given $a$, which is computationally inefficient.  To avoid this, we first rewrite the optimization as a ``continuous'' optimization:
\begin{align*}
    (\hat{g}, \hat{w}) \in \argmin_{g\in \mathcal{G}, w \in R^d} \sup_{f^{\mathcal{E}} \in \{\mathcal{F}\}^{|\mathcal{E}|}} \mathbb{E}_{B(w)} \left [ \hat{\mathsf{R}}(g(B(w) \odot \cdot)) + \gamma \hat{\mathsf{J}}(g(B(w) \odot \cdot), f^{\mathcal{E}}(B(w) \odot \cdot)) \right ],
\end{align*}
where the $j^{th}$ component of $B(w) \in \{0, 1\}^d$ follows an independent Bernoulli with probability of success $\sig {w_j} = \exp(w_j)/(1+\exp(w_j))$.  This is easily seen by taking $\hat{w} = \mbox{logit}(\hat a)= \log (\frac{\hat a}{1 - \hat a})$.  Note that $B_j(w_j) = \indicator\{\mbox{logit} (U_j) \leq w_j\}$ is discontinuous in $w_j$ where $U_j \sim$ uniform[0,1], but can be approximated as
\begin{equation} \label{fan6}
    B_j(w_j) \approx \frac{1}{1 + e^{ (\mbox{\scriptsize logit}(U_j) - w_j))/\tau}} \equiv V_\tau(U_j, w_j) ~~~~ \text{as} ~~~~\tau \to 0^+,
\end{equation}
for which its gradient can be taken.  Let $A_\tau(U, w) = (V_\tau(U_{1}, w_1),\ldots, V_\tau(U_{d}, w_d))^\top \in \mathbb{R}^d$ with $\{U_j\}_{j=1}^d$ being i.i.d. uniform random variables.
One can thus approximate \eqref{fan5} by
\begin{align}
    (\hat{\theta}, \hat{w}) &\in \argmin_{\theta \in \mathbb{R}^{N_g}, w\in \mathbb{R}^d} \sup_{\forall e\in \mathcal{E}, \phi^{(e)} \in \mathbb{R}^{N_f}} \mathbb{E}_{U}[\hat{\mathsf{L}}(A_\tau(U,w), \theta, \{\phi^{(e)}\}_{e\in \mathcal{E}})],
\end{align} with $\hat{\mathsf{L}}(A, \theta, \{\phi^{(e)}\}_{e\in \mathcal{E}})]= \hat{\mathsf{R}}(g(A \odot \cdot; \theta)) + \gamma \hat{\mathsf{J}}(g(A \odot \cdot), f^{\mathcal{E}}(A \odot \cdot; \{\phi^{(e)}\}_{e\in \mathcal{E}}))$, where parametrizations of $g \in \cG$ and $f^{e} \in \cF$ are used.  Since $\mbox{logit}(U_j) \stackrel{d}{=} U_{j,1}- U_{j, 2}$ with $\{U_{j,1}, U_{j,2}\}_{j=1}^d$ being i.i.d. Gumbel(0,1) random variables, the approximation \eqref{fan6} is also referred to as the Gumbel approximation.

One can use similar implementation tricks widely used in stochastic gradient descent with Gumbel approximation that gradually anneals the Gumbel approximation hyperparameter $\tau$; see the pseudo-code in \aosversion{Appendix C.1}{\cref{sec:code}}. We include the simulation for linear models and applications of causal discovery in the main text and defer the simulation for FAIR-NN estimator to  \aosversion{Appendix C.2}{\cref{sec:fairnn-simulation}} and robust prediction of water/land birds to \aosversion{Appendix C.3}{\cref{sec:app-bird}}.

\subsection{Simulations for FAIR-Linear Estimator}
\label{sec:simu-linear}

In this section, we present the simulation result for the FAIR-Linear estimator implemented by the Gumbel approximation trick and gradient descent ascent algorithm. 

\noindent \textbf{Data Generating Process.} We consider the case where $|\mathcal{E}|=2$ and the data $(X^{(e)}, Y^{(e)})$ in each environment $e\in \{0,1\}$ are generated from two SCMs sharing the same causal relationship between variables. For each trial, we first generate the parent-children relationship among the variables. We enumerate all the $i\in [d+1]$. For each $i\in [d+1]$, we randomly pick at most $4$ parents for the variable $Z_i$ from $\{Z_1,\ldots, Z_{i-1}\}$, this step ensures that the induced graph is a  DAG. We use fixed $d=70$, and let the variable $Z_{36}$ be $Y$ and the rest variables constitute the covariate $X$, that is, we let $(Z_1,\ldots, Z_{35}, Z_{36}, Z_{37}, \ldots, Z_{71})=(X_1,\ldots, X_{35}, Y, X_{36}, \ldots, X_{70})$. We also enforce that $Y$ has at least $5$ parents and at least $5$ children by adding parents and children when needed.  The structural assignment for each variable $Z_j$ is defined as
\begin{align*}
    Z_j^{(e)} \gets \sum_{k\in \mathtt{pa}(j)} C_{j,k}^{(e)} f_{j,k}^{(e)}(Z_k^{(e)}) + C_{j, j}^{(e)} \varepsilon_j
\end{align*} where $(\varepsilon_1,\ldots, \varepsilon_{71})$ are independent standard normal distributed. For $j\neq 36$, $f_{j,k}^{(e)}$ are sampled randomly from the candidate functions $\{\cos(x), \sin(x), \sin(\pi x), x, 1/(1+e^{-x})\}$, $C_{j,k}^{(e)}$ are sampled from $\mathrm{Uniform}[-1.5, 1.5]$ with $|C_{j,j}^{(e)}| \ge 0.5$. For $j=36$ and $k<j$, we have $f_{36,k}^{(e)}(x)=x$ and $C_{36,k}^{(0)}\equiv C_{36,k}^{(1)}$ for linearity and invariance. The above data-generating process can be regarded as one observation environment $e=0$ and an interventional environment $e=1$ where the random and simultaneous interventions are applied to all the variables other than the variable $Y$, while the assignment from $Y$'s parent to $Y$ remains and furnishes the target regression function $m^\star(x) = \sum_{k\in \mathtt{pa}(36)} C_{36, k}^{(e)} x_k$ in pursuit. In this case, we let $S^\star = \mathtt{pa}(36)$ and $\beta^\star$ with support set $S^\star$ be such that $\beta_j^\star=C^{(0)}_{36, k}=C^{(1)}_{36, k}$ for any $k\in S^\star$. We also let the noise variance be different for the two environments, i.e., $C_{36,36}^{(0)} \neq C_{36,36}^{(1)}$. Now, the model only has conditional expectation invariance rather than the full conditional distribution invariance. \cref{fig:visualize-linear} (a) visualizes the induced graph in one trial. The complex cause-effect relationships in high-dimensional variables make it very challenging to estimate $\beta^\star$.

\begin{figure}[!t]
\centering
\begin{tabular}{cc}
\subfigure[]{
\includegraphics[width=0.35\linewidth]{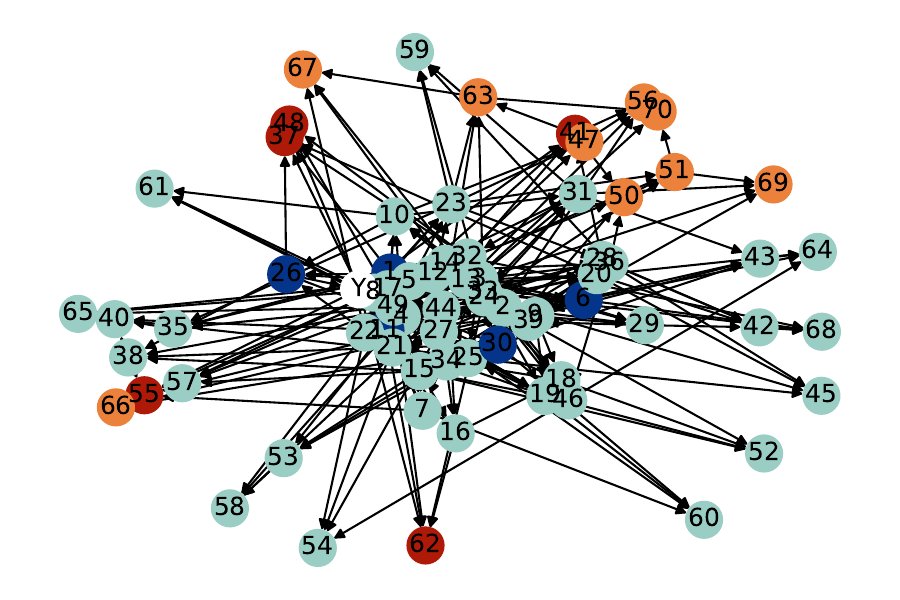}
}&
\subfigure[]{
\includegraphics[width=0.35\linewidth]{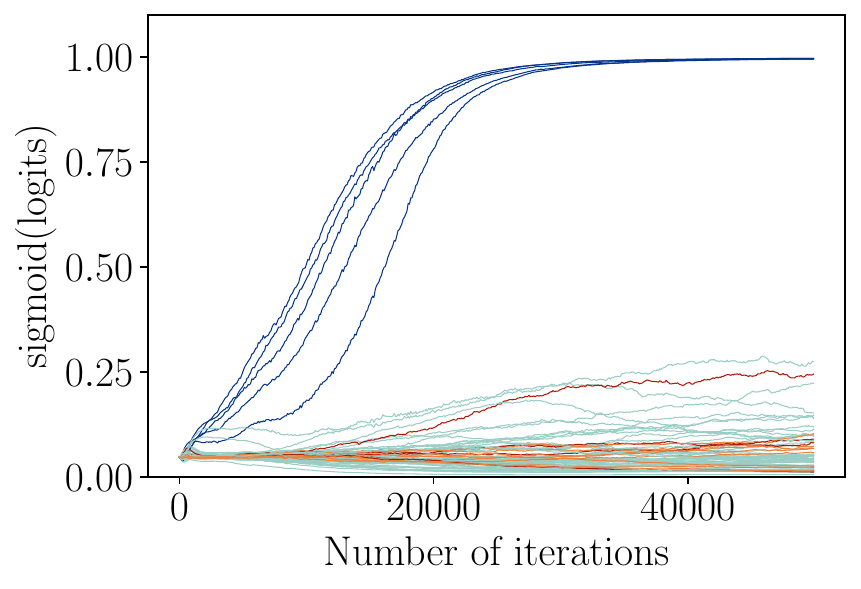}
}
\end{tabular}
\caption{The visualization of (a) the SCM and (b) the $\sig{w}$ during training in one trial for the FAIR-Linear estimator. We use different colors to represent the different relationships with $Y$: \myblue{blue} = parent, \myred{red} = child, \myorange{orange} = offspring, \mylightblue{lightblue} = other.}
\label{fig:visualize-linear}
\end{figure}

\noindent \textbf{Implementation.} For the FAIR-Linear estimator, we realize $\mathcal{G}$ and $\mathcal{F}$ by linear function classes, i.e., $\mathcal{G}=\{g(x)=\beta_g^\top x: \beta_g \in \mathbb{R}^d\}$ and $\mathcal{F}=\{f(x)=\beta_f^\top x: \beta_f \in \mathbb{R}^d\}$, and run gradient descent ascent using Adam optimizer with a learning rate of 1e-3, batch size $64$ for $50k$ iterations. In each iteration, one gradient descent update of the parameters of the predictor $\beta_g$ and Gumbel logits parameters $w$ is followed by the three gradient ascent updates of the discriminators' parameters $(\beta_f^{(1)}, \beta_f^{(2)})$. We adopt a fixed hyper-parameter $\gamma=36$ and report the performance of the following estimators using the median of the estimation error $\|\hat{\beta} - \beta^\star\|_2^2$ over $50$ replications and varying $n\in \{200, 500, 1000, 2000, 5000\}$.
\begin{itemize}[itemsep=0pt]
    \item[(1)] Pool-LS: it simply runs least squares on the full covariate $X$ using all the data. 
    \item[(2)] FAIR-GB: Our FAIR-Linear estimator with Gumbel trick that outputs $\beta_g \odot \sig{w}$.
    \item[(3)] FAIR-RF: it selects the variables $x_j$ with $\sig{w_j}>0.9$ of the fitted model in (2), i.e., $\hat{S}=\{j: \sig{w_j}>0.9\}$, and refits least squares again on $X_{\hat{S}}$ using all the data.
    \item[(4)] Oracle: it runs least squares on $X_{S^\star}$ using all the data.
    \item[(5)] Semi-Oracle: it runs least squares on $X_{G^c}$ using all the data, where $G$ is the set of all the descendants of $Y$. It is unbiased yet has a larger variance compared with the Oracle one. %Compared with the ERM, it manually removes all the variables that will lead to a biased estimation, but it will keep uncorrelated variables compared with the Oracle estimation.  
\end{itemize}

\cref{fig:visualize-linear} (b) visualizes how the Gumbel gate values for different covariates $\sig{w}$ evolve during training in one trial. We can see that $\sig{w_j}$ for $j\in S^\star$ quickly increases and dominates the values for other variables like children/offspring of $Y$. 

\noindent \textbf{Results. } The results are shown in \cref{fig:simulation-result} (a). We can see that the square of the $\ell_2$ estimation error $\|\hat{\beta} - \beta^*\|_2^2$ for the pooled least squares estimator (\myyellow{$\times$}) does not decrease and remains to be very large ($\approx 1.5$) as $n$ increases, indicating that it converges to a biased solution. At the same time, the estimation error for FAIR-GB (\mylightblue{$\blacklozenge$}) decays as $n$ grows ($\approx 0.01$ when $n=1k$) and lies in between that for least squares on $X_{G^c}$ (Semi-Oracle \myorange{$\blacktriangledown$}) and least squares on $X_{S^*}$ (Oracle \myred{$\blacktriangle$}). This is expected to happen since the FAIR-Linear estimator is not designed to screen out all the exogenously spurious variables: They can be further regularized using the commonly used variable selection techniques; see \cref{remark:variable-sel}. We also observe that the training dynamics of adversarial estimation are highly non-stable: though it can converge to an estimate around $\beta^\star$ when $n$ is very large, it fails to converge to $\beta^\star$ at a comparable rate compared to the standard least squares. The FAIR-RF (\myblue{$+$}) estimator then completes the last step towards attaining better accuracy in this regard: we can see that its performances are very close to that of the Oracle estimator when $n$ is very large ($n=5000$).

\begin{figure}[!t]
\centering
\begin{tabular}{cc}
\subfigure[]{
\includegraphics[width=0.35\linewidth]{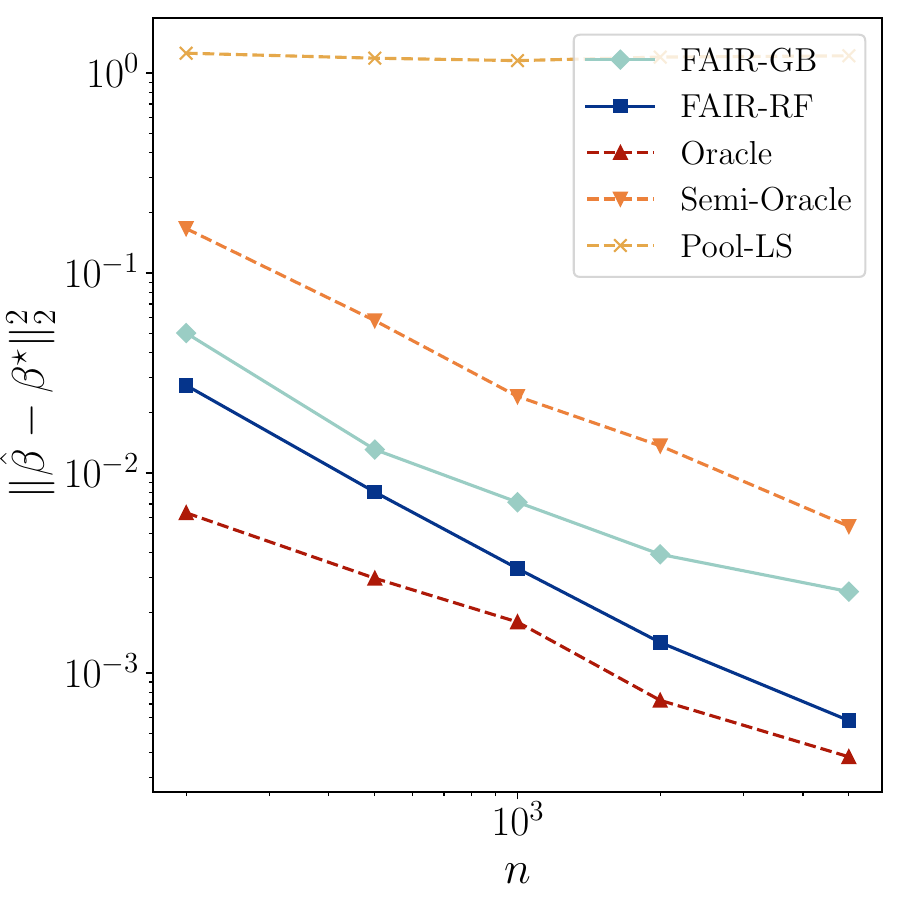}
}&
\subfigure[]{
\includegraphics[width=0.35\linewidth]{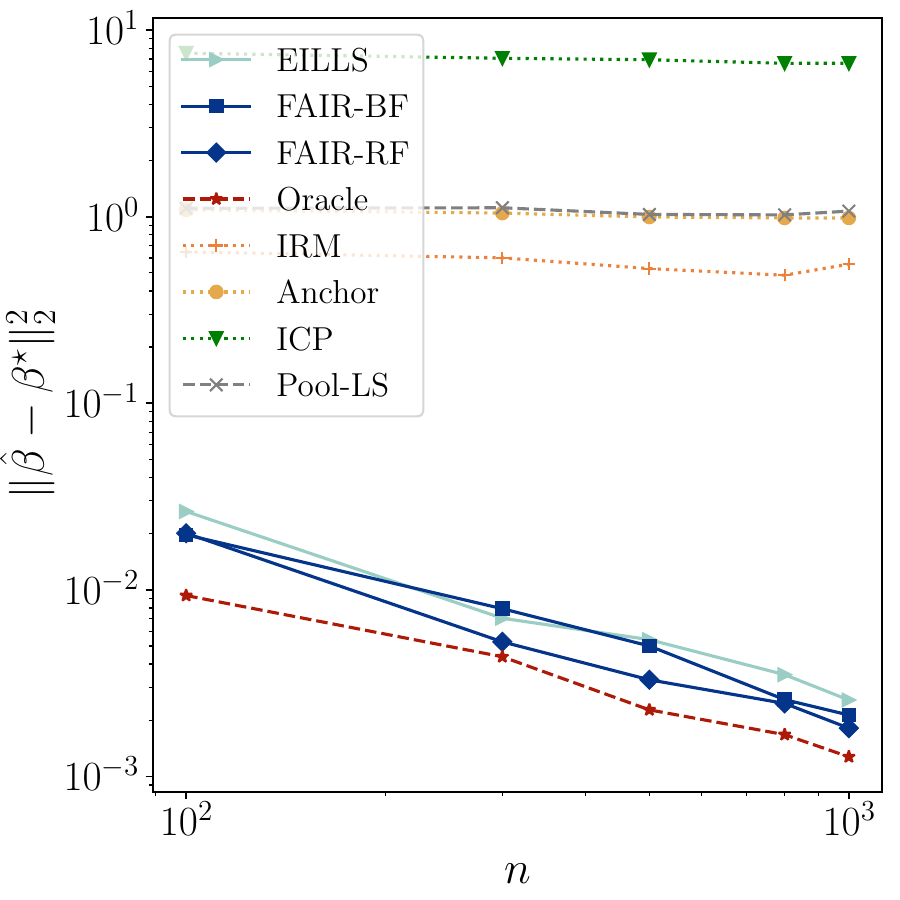}
}
\end{tabular}
\caption{The simulation results for linear models with (a) $d=70$ and (b) $d=15$. Both figures depict how the median estimation errors (based on $50$ replications, shown in log scale) for different estimators (marked with different shapes and colors) change when $n$ varies in (a) $\{200, 500, 1000, 2000, 5000\}$ and (b) $\{100, 200, 500, 800, 1000\}$, respectively. 
}
\label{fig:simulation-result}
\end{figure}

\definecolor{ao}{rgb}{0.0, 0.5, 0.0}

We also compare our FAIR-Linear estimator with the cousin estimator EILLS  (\mylightblue{$\blacktriangleright$}) in \cite{fan2024environment} and other invariance learning estimators (dotted lines), including invariant causal prediction \citep{peters2016causal} (ICP {\color{ao} $\blacktriangledown$}), invariant risk minimization \citep{arjovsky2019invariant} (IRM\myorange{$+$}), anchor regression \citep{rothenhausler2021anchor} (Anchor\myyellow{$\bullet$}) in a similar but smaller dimension setting with $d=15$, under which ICP and EILLS can be computed within affordable time. For the FAIR-Linear estimator, we report the performance of the FAIR-RF (\myblue{$\blacklozenge$}) and the one with brute force search (FAIR-BF\myblue{\textbf{$\blacksquare$}}). The results are shown in \cref{fig:simulation-result} (b): we can see that the FAIR family estimators (\mylightblue{$\blacktriangleright$}\myblue{\textbf{$\blacksquare$}}\myblue{$\blacklozenge$} with solid lines) are the only ones attaining consistent estimation among all the invariant learning methods; see a detailed discussion of the data generating process and results in \aosversion{Appendix C.4.1}{\cref{sec:linear-simu-append}}. 

\subsection{Application I: Discovery in Real Physical Systems}

We apply our method to perform causal discovery in the light tunnel datasets from \cite{gamella2024causal}. The data are collected from a real physical device under different manipulation settings. The tunnel device contains a controllable light source at one end and two linear polarizers mounted on rotating frames. Several sensors are deployed in various positions to measure the light intensity. The causal relationships between the variables of interest are known such that we can get access to the ground-truth cause-effect relationship; see Fig. 2(d) and Fig. 3(a) therein for the device diagram and the cause-effect graphs, respectively. It is worth noticing that the data are collected from a real-world device where the associations between the measurements follow from real-world physical laws. This realistic nature and the knowledge of ground-truth cause-effect knowledge make it an excellent testbed for causal discovery algorithms. 

Following the notations, we use the variables $(R, G, B, \theta_1, \theta_2, \tilde{V}_3, \tilde{V}_2, \tilde{V}_1, \tilde{I}_3, \tilde{I}_2, \tilde{I}_1, \tilde{C})$. Here $(R, G, B)$ is the intensity of the light source at three different wavelengths, $\tilde{C}$ is the drawn electric current, $(\theta_1, \theta_2)$ represent the angles of the polarizer frame, and $(\tilde{V}_3, \tilde{V}_2, \tilde{V}_1, \tilde{I}_3, \tilde{I}_2, \tilde{I}_1)$ are the measurement of light-intensity sensors in various positions. 

We plan to learn algorithmically the direct cause for $Y=\tilde{I}_3$, the infrared measurement of the light-intensity sensor after the polarizers, among a subset of manipulable variables and measurement variables $(X_1,\ldots, X_{11}) = (R, G, B, \theta_1, \theta_2, \tilde{V}_3, \tilde{V}_2, \tilde{V}_1, \tilde{I}_2, \tilde{I}_1, \tilde{C})$ under the following two-environment experimental setting: $e=0$ is the observational environment, $e=1$ is the interventional environment where the variables $\{\tilde{V}_j\}_{j=1}^3$ and $\{\tilde{I}_j\}_{j=1}^2$ are \emph{weakly} intervened on. This leads to the following ``equivalent'' ground-truth cause-effect relationship among those variables and the effect of ``environment intervention'' in \cref{fig:app1} (a). In this case, the variables $(R,G,B,\theta_1,\theta_2)$ are the direct causes, i.e., $S^\star = \{1, 2, 3, 4, 5\}$, $\tilde{V}_3$ are the spurious variables that will lead to biased estimation. The remaining variables are exogenous but have marginal predictive power, i.e., $\mathrm{Var}[Y|X_j] > 0$ for $j \ge 7$.

We will use the following dataset in the experiment: the environment dataset $\mathcal{D}_0$ with size $|\mathcal{D}_0|=10^4$, the weakly interventional environment dataset $\mathcal{D}_1$ with $|\mathcal{D}_1|=3000$, and five strongly interventional environment dataset $\mathcal{D}_{2,Z}$ with $Z\in \{\tilde{V}_1,\tilde{V}_1,\tilde{V}_3,\tilde{I}_1,\tilde{I}_2\}$ and $|\mathcal{D}_{2,Z}|=1000$. In each trial, different methods use the same random subsample $\breve{\mathcal{D}} = \{\breve{\mathcal{D}}_{0}, \breve{\mathcal{D}}_{1}\}$ with $\breve{\mathcal{D}}_{k} \subseteq \mathcal{D}_{k}$ and $|\breve{\mathcal{D}}_{k}|=n=1000$ to fit the model. How the fitted model $\hat{f}$ quantitatively depends on exogenously/endogenously spurious variable $Z$ is evaluated using the OOS $R^2$ in corresponding $\mathcal{D}_{2,Z}$ defined as
\begin{align*}
R^2_{\mathrm{OOS},Z} := \frac{\sum_{(X,Y)\in \mathcal{D}_{2,Z}} \{\hat{f}(X)-Y\}^2}{\sum_{(X,Y)\in \mathcal{D}_{2,Z}} \{Y - \bar{Y}\}^2} \qquad \text{with} \qquad \bar{Y} = \frac{\sum_{(X,Y)\in \breve{\mathcal{D}}_{0}\cup \breve{\mathcal{D}}_{1}} Y}{2n}.
\end{align*} See the detailed data collection and experimental configuration in \aosversion{Appendix C.5}{\cref{appendix:app1}}.

\begin{figure}[!t]
\centering
\begin{tabular}{ccc}
\subfigure[]{
\begin{tikzpicture}[state/.style={circle, draw, minimum size=0.8cm, scale=0.8}]
    \node at (0,0) (x1) {$(R, G, B)$};
    \node at (1.5, -0.5) (x2) {$\theta_1$};
    \node at (1.5, 0.5) (x3) {$\theta_2$};
    \node at (-1.5, -0.5) (x4) {$\tilde{C}$};
    \node at (-1.5, 1) (x5) {$\tilde{I}_2$};
    \node at (0, 1) (x6) {$\tilde{I}_1$};
    \node at (-0.7, 1.5) (x7) {$\tilde{V}_2$};
    \node at (0.8, 1.5) (x8) {$\tilde{V}_1$};
    \node at (-0.7, -1.5) (y) {$\tilde{I}_3$};
    \node at (0.7, -1.5) (x9) {$\tilde{V}_3$};

    \node at (-2, 2) (e) {\mypurple{$E$}};
    \draw[->] (x1) -- (x4);
    \draw[->] (x1) -- (x5);
    \draw[->] (x1) -- (x6);
    \draw[->] (x1) -- (x7);
    \draw[->] (x1) -- (x8);
    \draw[->] (x1) -- (y);
    \draw[->] (x1) -- (x9);
    \draw[->] (y) -- (x9);
    \draw[->] (x2) -- (y);
    \draw[->] (x3) -- (y);
    \draw[->] (x2) -- (x9);
    \draw[->] (x3) -- (x9);
    
    \draw[->, mypurple] (e) to[out=-10,in=150] (x7);
    \draw[->, mypurple] (e) to[out=0,in=160] (x8);
    \draw[->, mypurple] (e) to[out=-100,in=150] (x9);
    \draw[->, mypurple] (e) to[out=-50,in=95] (x5);
    \draw[->, mypurple] (e) to[out=-5,in=120] (x6);
\end{tikzpicture}
} & \subfigure[]{
\includegraphics[width=0.3\linewidth]{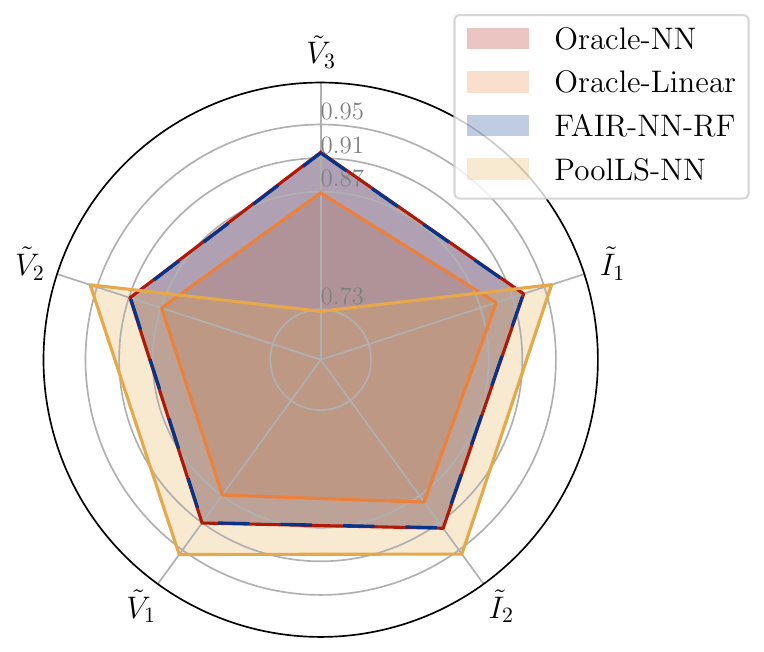}
} & \subfigure[]{
\includegraphics[width=0.28\linewidth]{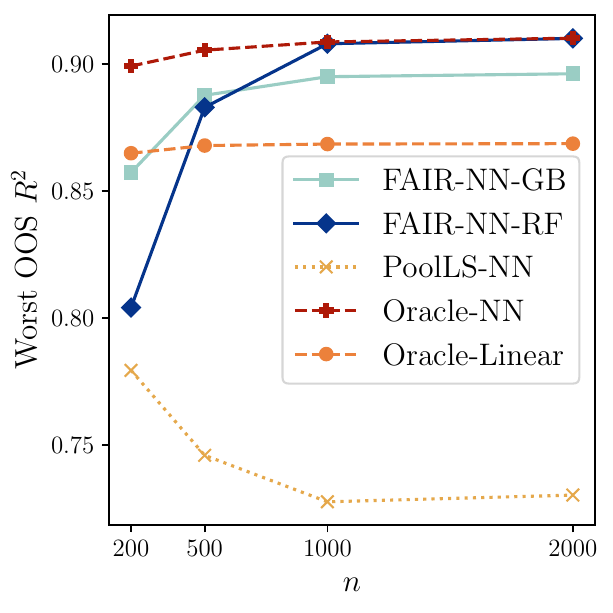}
}\\
\multicolumn{2}{c}{
\subfigure[]{
\includegraphics[width=0.65\linewidth]{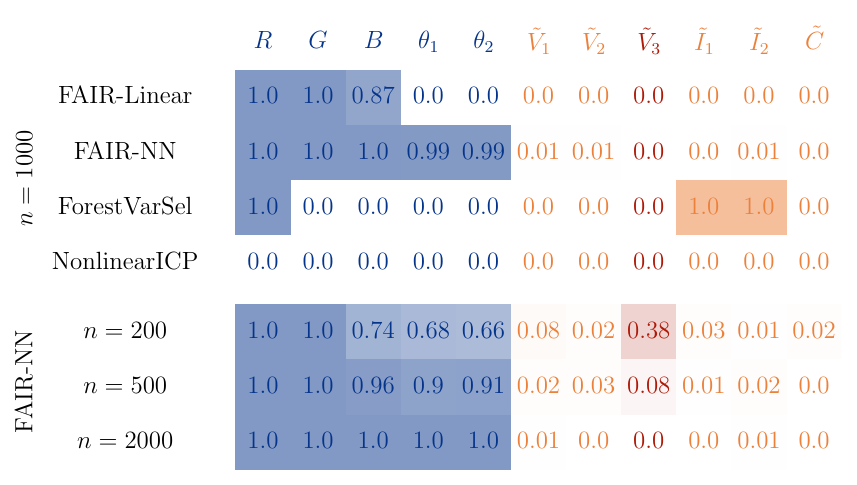}}
} & 
\subfigure[]{
\includegraphics[width=0.3\linewidth]{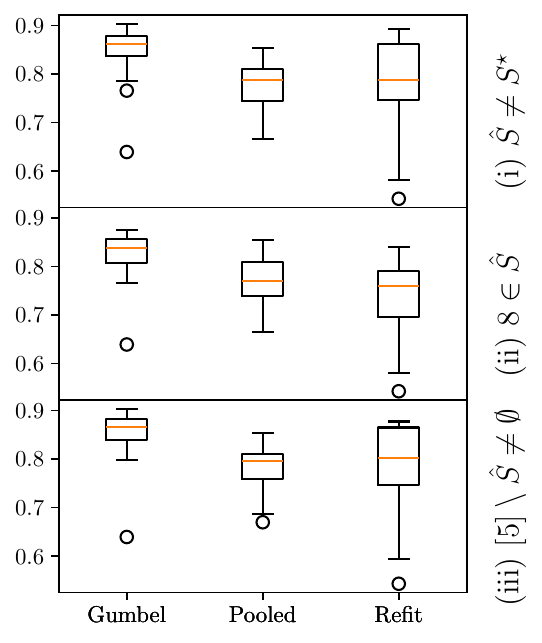}
}
\end{tabular}
\caption{Discovery in Real Physical Systems: (a) the unified cause-effect relationship and interventions similar to \cref{fig:scm-ident} (b). (b) the average out-of-sample $R^2$ for different estimators using the spider chart: the axis annotated by placeholder variable $Z$ corresponds to the test environment where $Z$ is strongly intervened on. We can see the performance of \myred{Oracle-NN} and \myblue{FAIR-NN-RF} is almost identical. 
(c) the average (based on 100 replications) of the worst-case (across 5 environments) of OOS $R^2$ for different methods as a function of $n$. (d) the variable selection rate over $100$ trials for different methods (top panel) and the variable selection rate for FAIR-NN for various $n$ (bottom panel). We use different colors to represent different relationships with $Y$: \myblue{blue}=parent, \myred{red}=child, \myorange{orange}=neither ancestor nor descendant. (e) the distribution of worst-case OOS $R^2$ (y-axis) for Gumbel-trick optimized FAIR-NN (Gumbel), the follow-up refitted estimator (Refit), and Pooled LS (Pooled) when FAIR-NN selects the wrong variables: the subplots from top to bottom consider the cases of (i) failure in selection consistency (ii) false positive that it falsely selects the child $X_8=\tilde{V}_3$ 
(iii) false negative that it does not select the entire ground-truth $(X_1,\ldots, X_5) = (R, G, B, \theta_1, \theta_2)$. 
}
\label{fig:app1}
\end{figure}

The first four rows in \cref{fig:app1} (d) report the variable selection result for several methods over $100$ trials. The nonlinear ICP \citep{heinze2018invariant} method does not select any variables because of its conservative nature and stronger heterogeneity condition to recover the direct cause. We can see that FAIR-NN can successfully recover the direct cause $(R, G, B, \theta_1, \theta_2)$ in this case. It exploits neural networks' capability in efficiently detecting the nonlinear associations (the Malus's law, $\tilde{I}_3 \propto \cos^2(\theta_1 - \theta_2)$ for fixed $(R, G, B)$), while the linear counterpart FAIR-Linear fails to select the variables $(\theta_1, \theta_2)$. It is worth pointing out that such a causality recovery cannot be attained by the traditional predictive power and simplicity tradeoff: the variable selection method based on random forest variable importance measures (ForestVarSel) in \cite{heinze2018invariant} cannot detect $(G, B, \theta_1, \theta_2)$ and falsely select $(\tilde{I}_1, \tilde{I}_2)$. The last three rows in \cref{fig:app1} (d) illustrate how the variable selection rate for the FAIR-NN estimator changes when $n$ grows.

\cref{fig:app1} (b) offers a quantitative illustration by showing the out-of-sample (OOS) $R^2$ of different estimators under environments with \emph{strong} interventions on $(\tilde{I}_1, \tilde{I}_2, \tilde{V}_1, \tilde{V}_2, \tilde{V}_3)$, respectively. The estimator denoted as Oracle-$M$ with $M \in \{\mathrm{Linear}, \mathrm{NN}\}$ referred to the method that runs regress $Y$ on $X_{S^\star}$ using model $M$. In the spider chart, the red shade represents the out-of-sample $R^2$ under different interventions for the {Oracle-NN} estimator that regresses $Y$ on its direct causes. We can see that its performances behave uniformly under various interventions: all the OOS $R^2$ are approximately equal to $0.91$. This is slightly better than that for the linear model ({Oracle-Linear}) by 0.04. This illustrates the capability of neural networks introduced to detect \emph{weak}, \emph{nonlinear} causal signals from heterogeneous environments. The {PoolLS-NN} estimator regressing $Y$ on $X$ using neural network and all the data fully exploits the strong spurious association between $\tilde{V}_3$ and $Y=\tilde{I}_3$, its heavy reliance on $\tilde{V}_3$ let it predict better (than the causal model {Oracle-NN}) when $\tilde{V}_3$ is not intervened. However, its OOS $R^2$ significantly decreases by $0.2$ when $\tilde{V}_3$ is strongly intervened hence the spurious association changes. On the contrary, the OOS $R^2$ for FAIR-NN after refitting ({FAIR-NN-RF}) behaves almost identically to that for {Oracle-NN}. This quantitative result illustrates its capability to correct non-trivial and strong biases without supervision and its efficiency in detecting nonlinear and weak signals. 

\cref{fig:app1} (c) shows how the worst-case OOS $R^2$ among the five, strong intervention environments changes for different estimators when $n$ grows. The performance of the Gumbel-trick optimized FAIR-NN estimator without refitting ({FAIR-NN-GB}) lies between {Oracle-NN} and {Oracle-Linear} and significantly outperforms that of the {PoolLS-NN} estimator. This suggests that the gradient descent optimized algorithm has already found predictions nearly independent of the spurious variable, and the success of variable selection in \cref{fig:app1} (d) is not because of truncating weak but non-negligible spurious signals. Moreover, as shown in \cref{fig:app1} (e), its performance significantly outperforms the least squares estimator using either the full covariate or the selected covariates when $n=200$ and it selects the wrong variables. This further supports the theoretical claims and the advantages of adopting penalized least squares.

\section*{Acknowledges}
The authors would like to thank the anonymous referees, an Associate Editor, and the Editor for their constructive comments that improved the quality and accessibility of this paper. We thank Yiran Jia for helpful discussions on presenting a generic identification result on SCM using the unified graph including $E$, Yimu Zhang for the help with the numerical implementation for robust prediction application, and Xinwei Shen for drawing our attention to Gumbel approximation in the implementation.

\newpage
\renewcommand{\contentsname}{Table of Contents}
\tableofcontents

\appendix
\newpage
\begin{center}
    \Large Supplement to ``Causality Pursuit from Heterogeneous Environments via Neural Adversarial Invariance Learning''
\end{center}

\section*{Organization of the Supplemental Material}
%\section*{Supplemental Material}

The supplemental materials are organized as follows:

\begin{itemize}[itemsep=0pt, leftmargin=20mm]
    \item[\cref{appendix:discussion}] contains the omitted discussions in the main text, including the detailed related works, applicable scenarios for the nonparametric invariance pursuit, some discussions and extensions on the method, and some discussions on the conditions in \cref{sec:fairnn} and \cref{sec:ident-fairnn}.  
    \item[\cref{sec:theory-appendix}] contains the complete result that is sketched in \cref{sec:sketch}.
    \item[\cref{sec:exper}] contains omitted discussions and results in experiments section.
    \item[\cref{sec:proof-main-result}] contains the proofs for our main abstract results in \cref{sec:main-result} including \cref{prop:bias} and \cref{thm:oracle}, together with the proof for the result with general risk loss \cref{thm:oracle-2}.
    \item[\cref{sec:proof-application}] contains the proofs that applying \cref{thm:oracle} to different $(\mathcal{G}, \mathcal{F})$ specifications.
    \item[\cref{sec:proof-population}] contains the proofs for population-level results, including \cref{prop:ident-transfer-learning}, \cref{prop:ident-causal-discovery}, \cref{prop:rtl} in the main text, and \cref{prop:scm-invariant}, \cref{prop:transfer-learning}, \cref{thm:causal-identification-hc} in \cref{appendix:discussion}.
\end{itemize}

\section{Further Discussions}
\label{appendix:discussion}

\subsection{Applicable Scenarios for Nonparametric Invariance Pursuit}
\label{sec:scenario}
This section is devoted to providing a self-contained introduction to the motivation behind the nonparametric invariance pursuit using statements akin to previous literature \citep{peters2016causal, rojas2018invariant, fan2024environment}. 

\medskip
\noindent \textbf{Causal Discovery.} If we can expect $\mathcal{E}$ to be heterogeneous enough, recovering  $S^\star$ in nonparametric invariance pursuit coincides with discovering the direct cause of $Y$ when the multi-environment data come from SCM with intervention on $X$ setting. 

\begin{proposition}
\label{prop:scm-invariant}
    Under the model \eqref{eq:scm-model}, if we further assume that $\mathbb{E}[|Y^{(e)}|^2] < \infty$ for any $e\in \mathcal{E}$, then \eqref{eq:intro-model} holds with $S^\star = \mathtt{pa}(d+1)$.
\end{proposition}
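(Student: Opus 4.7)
The plan is to exhibit the witness $S^\star = \mathtt{pa}(d+1)$ together with $m^\star(x) := \mathbb{E}[f_{d+1}(x, U_{d+1})]$ and verify the two clauses of \eqref{eq:intro-model}: that $Y^{(e)} - m^\star(X_{S^\star}^{(e)})$ has mean zero given $X_{S^\star}^{(e)}$ in every environment, and that both $S^\star$ and $m^\star$ are environment-free.

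First I would establish the key independence statement: in every environment $e$, the exogenous variable $U_{d+1}$ is independent of $X^{(e)}_{\mathtt{pa}(d+1)}$. The argument is a topological induction on $G$. Because $G$ is acyclic and $d+1$ indexes $Y$, no element of $\mathtt{pa}(d+1)$ can have $d+1$ as an ancestor (else a directed cycle through $Y$ would appear). Unrolling the structural equations \eqref{eq:scm-model} along the topological order of $G$, each $X^{(e)}_j$ with $j \in \mathtt{pa}(d+1)$ is a measurable function of the noise variables $\{U_k : k \in \mathtt{at}(j) \cup \{j\}\}$, none of which has index $d+1$. Since the $U_k$'s are mutually independent by the SCM definition, $U_{d+1}$ is independent of the whole vector $X^{(e)}_{\mathtt{pa}(d+1)}$.

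With this in hand, invariance of both $f_{d+1}$ and the law of $U_{d+1}$ across $e$, combined with the tower property, gives
\begin{align*}
\mathbb{E}\!\left[Y^{(e)} \,\big|\, X^{(e)}_{\mathtt{pa}(d+1)} = x\right]
= \mathbb{E}\!\left[f_{d+1}(x, U_{d+1})\right]
= m^\star(x),
\end{align*}
which is free of $e$. Defining $\varepsilon^{(e)} := Y^{(e)} - m^\star(X^{(e)}_{\mathtt{pa}(d+1)})$ then yields $\mathbb{E}[\varepsilon^{(e)} \mid X^{(e)}_{\mathtt{pa}(d+1)}] \equiv 0$, establishing \eqref{eq:intro-model} with $S^\star = \mathtt{pa}(d+1)$. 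The assumption $\mathbb{E}[|Y^{(e)}|^2]<\infty$ ensures $m^\star \in L_2$ and that the conditional expectation is well-defined.

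The only nontrivial step is the independence claim; the rest is a one-line application of the tower property and the invariance of $(f_{d+1}, \mathrm{Law}(U_{d+1}))$. The main obstacle is therefore the bookkeeping that the recursive substitution along the topological order of $G$ never introduces the noise index $d+1$, which is precisely where acyclicity is used.
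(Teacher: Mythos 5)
Your proposal is correct and follows essentially the same route as the paper's proof: both identify $m^\star(x)=\mathbb{E}_{U_{d+1}}[f_{d+1}(x,U_{d+1})]$ and rest on the fact that acyclicity makes $U_{d+1}$ independent of $X^{(e)}_{\mathtt{pa}(d+1)}$, so the conditional expectation is environment-free. You simply spell out the topological-order bookkeeping that the paper compresses into ``it follows from the structural assignments and the fact that $G$ is acyclic.''
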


The SCM \eqref{eq:scm-model} and \cref{prop:scm-invariant} extend the framework described in \cite{peters2016causal} (specifically Section 4.1 and Proposition 1). This model accommodates nonlinear structural assignments. Critically, the residuals $\varepsilon^{(e)} = Y^{(e)} - \mathbb{E}[Y^{(e)}|X_{S^\star}^{(e)}]$, do not need to be independent of $X_{S^\star}^{(e)}$ or remain invariant across various environments as represented by $\varepsilon^{(e)} \sim \mu_\varepsilon$. Such flexibility broadens the scope for various applications, including binary classification. According to \cref{prop:scm-invariant}, when restricted to model \eqref{eq:scm-model}, a specific instantiation of our generic statistical model \eqref{eq:intro-model}, identifying the true important variable set $S^\star$ is tantamount to pinpointing the direct cause of the target variable $Y$. Concurrently, unveiling the invariant association $m^\star$ aligns with uncovering the causal mechanism between $Y$ and its direct causes.

\medskip

\noindent \textbf{Transfer Learning.} Consider we collect data $\{(X_i^{(e)}, Y_i^{(e)})\}_{e\in \mathcal{E}, i\in [n]}$ from $|\mathcal{E}|$ distinct sources and aim to develop a model that produces decent predictions on the data $\{X^{(t)}_{i}\}_{i\in [n_t]}$ in an unseen environment $t$. A significant portion of transfer learning algorithms fundamentally relies on the covariate shift assumption, represented as
\begin{align*}
\mathbb{E}[Y^{(t)}|X^{(t)}] \equiv \mathbb{E}[Y^{(e)}|X^{(e)}] \qquad \forall e\in \mathcal{E}.
\end{align*} However, as illustrated in \cite{fan2024environment, rojas2018invariant}, it is hard for this to be true given collecting so many variables. Therefore, a more realistic assumption is that information from true important variables is transferable, articulated as $\mathbb{E}[Y^{(t)}|X^{(t)}_{S^\star}] = \mathbb{E}[Y^{(e)}|X^{(e)}_{S^\star}]$. The subsequent proposition suggests that though $m^\star$ might not be the optimal predictor in the unseen environment $t$, it does minimize the worst-case $L_2$ risk, and the associated excess risk can be decomposed as follows.

We suppose both the distribution $\mu^{(e)}$ we observed in $\mathcal{E}$ and the future distributions $\nu$ come from the following distribution family. 
\begin{align*}
    \mathcal{U}_{S^\star, m^\star, \sigma^2} = \Big\{\mu: \mathbb{E}_{\mu}[Y^2]<\infty, &\mathbb{E}_{\mu}[Y|X_{S^\star}] = m^\star(X_{S^\star}), \\
    &~~~~~~\mathbb{E}_{\mu}[\mathrm{Var}_{\mu}(Y|X_{S^\star})] \lor \max_{1\le j\le d}\mathbb{E}_{\mu}[X_j^2] \le \sigma^2\Big\},
\end{align*}

\begin{proposition}
\label{prop:transfer-learning}
Let $\nu\in \mathcal{U}_{S^\star, m^\star, \sigma^2}$ be arbitrary. Define 
\begin{align*}
\mathsf{R}_{\mathtt{oos}}(m; \nu_x) = \sup_{\mu\in \mathcal{U}_{S^\star, m^\star, \sigma^2}, \mu_x \sim \nu_x} \mathbb{E}_{(X,Y)\sim \mu} [|Y - m(X)|^2]
\end{align*}
and $\Theta^{(t)} = L_2(\nu_x)$. We have
\begin{align*}
    \forall m\in \Theta^{(t)} \qquad \mathsf{R}_{\mathtt{oos}}(m; \nu_x) - \mathsf{R}_{\mathtt{oos}}(m^\star; \nu_x) 
    = \|m - m^\star\|_{L_2(\nu_x)}^2 + 2\sigma \|m - \tilde{m}\|_{L_2(\nu_x)},
\end{align*} where $\tilde{m}(x) = \mathbb{E}_{X\sim \nu_x}[m(X)|X_{S^\star}=x_{S^\star}]$. The term $2\sigma \|m - \tilde{m}\|_{L_2(\nu_x)}$ is zero when $m\in \Theta_{S^\star}^{(t)}$.
\end{proposition}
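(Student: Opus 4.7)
\textbf{Proof plan for \cref{prop:transfer-learning}.}

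The plan is to evaluate $\mathbb{E}_\mu[(Y - m(X))^2]$ for an arbitrary $\mu \in \mathcal{U}_{S^\star, m^\star, \sigma^2}$ with $\mu_x \sim \nu_x$, then take the supremum over $\mu$. First I would add and subtract $m^\star(X_{S^\star})$ inside the square to get the decomposition
\begin{align*}
\mathbb{E}_\mu[(Y-m(X))^2] = \mathbb{E}_\mu[\mathrm{Var}_\mu(Y|X_{S^\star})] + \|m - m^\star\|_{L_2(\nu_x)}^2 + 2\mathbb{E}_\mu[(Y - m^\star(X_{S^\star}))(m^\star(X_{S^\star}) - m(X))],
\end{align*}
where the second term uses $\mu_x \sim \nu_x$ and that $m^\star$ depends only on $x_{S^\star}$.

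Next I would further decompose the cross term by inserting $\pm \tilde{m}(X_{S^\star})$. The piece involving $m^\star(X_{S^\star}) - \tilde{m}(X_{S^\star})$ is $X_{S^\star}$-measurable, and by the defining constraint $\mathbb{E}_\mu[Y|X_{S^\star}] = m^\star(X_{S^\star})$ of $\mathcal{U}$ it vanishes after conditioning on $X_{S^\star}$. What remains is $2\mathbb{E}_\mu[(Y - m^\star(X_{S^\star}))(\tilde{m}(X_{S^\star}) - m(X))]$, to which I would apply Cauchy--Schwarz: this is bounded by $2\sqrt{\mathbb{E}_\mu[\mathrm{Var}_\mu(Y|X_{S^\star})]}\cdot \|\tilde{m} - m\|_{L_2(\nu_x)} \le 2\sigma \|m - \tilde{m}\|_{L_2(\nu_x)}$, using $\mu_x \sim \nu_x$ and the variance bound defining $\mathcal{U}$. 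Combined, this yields
\begin{align*}
\mathsf{R}_{\mathtt{oos}}(m;\nu_x) \le \sigma^2 + \|m - m^\star\|_{L_2(\nu_x)}^2 + 2\sigma\|m - \tilde{m}\|_{L_2(\nu_x)}.
\end{align*}
Taking $m = m^\star$ immediately gives $\mathsf{R}_{\mathtt{oos}}(m^\star;\nu_x) \le \sigma^2$, and since $\nu$ itself achieves $\sigma^2$ we get equality.

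For the matching lower bound, I would construct an explicit $\mu \in \mathcal{U}_{S^\star, m^\star, \sigma^2}$ that saturates all the inequalities simultaneously. Set $\mu_x = \nu_x$ and define $Y := m^\star(X_{S^\star}) + c(\tilde{m}(X_{S^\star}) - m(X))$ with $c = \sigma/\|m - \tilde{m}\|_{L_2(\nu_x)}$ (the degenerate case $\|m - \tilde{m}\|_{L_2(\nu_x)} = 0$ is handled by taking the constructed $\mu$ equal to $\nu$ and noting both sides reduce to $\|m-m^\star\|_{L_2(\nu_x)}^2$). The conditional mean constraint holds since $\mathbb{E}[\tilde{m}(X_{S^\star}) - m(X)|X_{S^\star}]=0$ by the definition of $\tilde{m}$; by construction $\mathbb{E}_\mu[\mathrm{Var}_\mu(Y|X_{S^\star})] = c^2 \mathbb{E}_{\nu_x}[\mathrm{Var}(m(X)|X_{S^\star})] = c^2\|m-\tilde{m}\|_{L_2(\nu_x)}^2 = \sigma^2$, and the bound on $\mathbb{E}[X_j^2]$ is inherited from $\nu$. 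A direct computation for this $\mu$ gives the cross term exactly $2\sigma \|m-\tilde{m}\|_{L_2(\nu_x)}$, proving equality. Finally, when $m \in \Theta_{S^\star}^{(t)}$, $\tilde{m} = m$ in $L_2(\nu_x)$, so the extra term vanishes.

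The main obstacle is the joint achievability of the two suprema (variance budget and Cauchy--Schwarz); the key observation is that the optimal adversarial residual $Y - m^\star(X_{S^\star})$ must be proportional to $\tilde{m}(X_{S^\star}) - m(X)$, and this choice is automatically centered conditionally on $X_{S^\star}$ precisely because of the projection structure in the definition of $\tilde{m}$, so it lies in $\mathcal{U}$.
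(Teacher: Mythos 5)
Your overall route is the same as the paper's: the upper bound via the decomposition around $m^\star$, killing the $(m^\star-\tilde m)$ piece of the cross term by the tower rule, Cauchy--Schwarz on the $(\tilde m - m)$ piece, and then a saturating distribution with residual $Y-m^\star(X_{S^\star})=\frac{\sigma}{\|m-\tilde m\|_{L_2(\nu_x)}}(\tilde m(X_{S^\star})-m(X))$, which is exactly the paper's $\breve\nu$. Your computation of the cross term for that construction (equal to $+2\sigma\|m-\tilde m\|_{L_2(\nu_x)}$, with the sign chosen correctly) and your verification that it lies in $\mathcal U_{S^\star,m^\star,\sigma^2}$ are both right.

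There is one genuine gap: to conclude the stated identity for the \emph{difference} you need $\mathsf{R}_{\mathtt{oos}}(m^\star;\nu_x)=\sigma^2$ exactly, and your justification of the lower bound --- ``since $\nu$ itself achieves $\sigma^2$'' --- is false. The given $\nu$ is an arbitrary element of $\mathcal U_{S^\star,m^\star,\sigma^2}$, which only requires $\mathbb{E}_\nu[\mathrm{Var}_\nu(Y\mid X_{S^\star})]\le\sigma^2$; for instance $Y=m^\star(X_{S^\star})$ $\nu$-a.s.\ is admissible and gives risk $0$. Your general saturating construction does not repair this, because at $m=m^\star$ one has $\|m-\tilde m\|_{L_2(\nu_x)}=0$ and the construction degenerates. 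The fix is the one the paper uses: take $\tilde\nu(dx\,dy)=\nu_x(dx)\,\mathcal N(m^\star(x_{S^\star}),\sigma^2)(dy)$ (or any conditional law with mean $m^\star(x_{S^\star})$ and variance exactly $\sigma^2$); this lies in $\mathcal U_{S^\star,m^\star,\sigma^2}$ since the marginal constraints are inherited from $\nu_x$, and it witnesses $\mathsf{R}_{\mathtt{oos}}(m^\star;\nu_x)\ge\sigma^2$. With that one-line construction inserted, your argument is complete; your handling of the degenerate case $\tilde m=m$ is fine once you note that the cross term then vanishes identically for every admissible $\mu$, so the difference of suprema is $\|m-m^\star\|_{L_2(\nu_x)}^2$ without needing the value of either supremum.
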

Given the framework described above, our proposed method solving problem in \cref{sec:intro-problem} can be integrated with the re-weighting technique \citep{gretton2009covariate}, a strategy addressing discrepancies within the marginal distribution of $X$, to yield reliable predictions in the previously unobserved environment $t$.

\subsection{Discussion on the Methods}
\label{appendix:dis2}

We provide a discussion in a question-and-response manner. 

\bigskip

\noindent \emph{{[Q] You are doing ``focused regularizer'' that are of combinatorial nature in computation, can it be removed?}}

{\noindent \emph{Answer:} The short answer is No. The regularizer will be the same as running least squares if we do not enforce the discriminator using the same variables that the predictor uses. This is also the main computational difficulty in our framework and why we use randomness relaxation and Gumbel approximation in implementation. Indeed, even for linear invariance pursuit, there are certain fundamental computational limits in this such that no polynomial-time algorithm can attain consistent estimation in pursuing invariance without relying on additional structures other than invariance.}

\bigskip

\noindent \emph{{[Q] The method has a similar form to IRM, what's the major difference? }}

{\noindent \emph{Answer:} The main difference is we should at least let $\Theta_f \supseteq \Theta_g$, such a constraint leverage the idea of over-identification and make identification possible even when $|\mathcal{E}|=2$ provided enough heterogeneity. Suppose our regularizer, which can be seen as a ``correct'' method to pursue condition expectation invariance, is to make $u^{(1)} = u^{(2)}$ for two $s$-dimensional parameter vectors $u^{(1)}, u^{(2)} \in \mathbb{R}^s$, what IRM does is to let $\sum_{i=1}^s u^{(1)}_i = \sum_{i=1}^s u^{(2)}_i$. It is hard to say the latter constraint will make sense and can obtain a similar effect as the former. }

\bigskip

\noindent \emph{{[Q] Could your proposed framework be extended to the representation-level invariance like IRM?}}

{\noindent \emph{Answer:} The short answer is Yes given its algorithmic nature. But identification with two or constant-level environments is impossible now: a linear-in-dimension number of environments is required even for linear representation learning. For example, one can find some linear representation $\Phi: \mathbb{R}^d\to \mathbb{R}^r$ such that
\begin{align*}
    \mathbb{E}[Y^{(e)}|\Phi X^{(e)}] \equiv m^\star(\Phi X^{(e)})
\end{align*}
However, $|\mathcal{E}| \ge r$ is the necessary condition for identification even when the heterogeneity is enough and $r$ is pre-known to us. We conjecture that any finite number of environments $|\mathcal{E}|<\infty$ may be impossible for identification if $\Phi$ lies in some nonparametric function class. }

\subsection{Extensions to General Environment Variable and Loss Function}
\label{sec:extension}

In the main text, we propose an estimation framework leveraging conditional expectation invariance with respect to discrete environment variables. It is worth noticing that our adversarial estimation framework is indeed more versatile than this: one can easily extend it to other conditional point prediction invariance with respect to more general environment covariates. We briefly discuss the direct extension here and leave a rigorous treatment as future work. In the following discussions, suppose we observe data $\{(X_i, Y_i, E_i)\}_{i=1}^n$ drawn i.i.d. from some distribution $\mu_0$, where $X \in \mathbb{R}^d$ is the covariate we used for prediction, $Y \in \mathbb{R}$ is the target response, $E \in \mathbb{R}^{q}$ is the environment covariate we wish our prediction should be invariant with respect to. 

Let $\ell(u, y): \mathbb{R} \times \mathbb{R} \to \mathbb{R}$ be the user-defined risk whose population-level minimizer may not necessarily be the conditional expectation but satisfying certain regularity conditions. Let $\ell_u(u, y) = \partial \ell(u, y) / \partial u$ be the partial sub-gradient with respect to the prediction. Suppose the following general invariance structure with respect to $\ell$ and environment covariate holds, that there exists $S^\star \subseteq [d]$ and a function $g^\star$ that only depends $x_{S^\star}$ such that
\begin{align}
\label{eq:invariance-general}
    \mathbb{E}\left[\ell_u(g^\star(X_{S^\star}), Y)|X_{S^\star}, E\right] \equiv 0.
\end{align} 
It coincides with the main problem of study when $E$ is discrete and $\ell$ satisfies \eqref{eq:loss-constrain}, but also allows for other loss and continuous environment label. Other losses include, but are not limited to Huber loss for robust regression, or $L_1$ loss for median regression. 

We consider the following optimization minimax objective containing a min-max game between a predictor $g: \mathbb{R}^d \to \mathbb{R}$ and a discriminator $f: \mathbb{R}^d \times \mathbb{R}^q \to \mathbb{R}$:
\begin{align}
    \min_{g\in \mathcal{G}} \max_{f\in \mathcal{F}_{S_g}} \underbrace{\frac{1}{n} \sum_{i=1}^n \ell(g(X_i), Y_i)}_{\hat{\mathsf{R}}(g)} + \gamma \underbrace{\frac{1}{n} \sum_{i=1}^n \left[ \ell_u(g(X_i), Y_i) f(X_i, E_i) - 0.5 \{f(X_i, E_i)\}^2\right]}_{\hat{\mathsf{J}}(g, f)},
\end{align} where $\gamma$ is the hyper-parameter to be determined, and $\mathcal{F}_{S_g} = \{f(x, e) \in \mathcal{F}, f(x, e)=w(x_{S_g}, e)\text{ for some }w\}$.
Similar to the calculation in \cref{sec1.2}, one can expect that minimizing the population counterpart of the focused adversarial invariance regularizer $\max_{f\in \mathcal{F}_{S_g}} \hat{\mathsf{J}}(g, f)$ shares a similar nature of imposing \eqref{eq:invariance-general}. One can derive non-asymptotic identification and estimation error results akin to \cref{thm:oracle} and \cref{thm:oracle-2} provided strong convexity and certain Lipschitz property of the loss $\ell(u, y)$. We leave this for future studies.

\subsection{Discussion on the Nondegenerate Intervention Condition}
\label{sec:discussion:faithfulness}

The conditions (a) and (b) in \cref{cond:expected-faithfulness} are imposed to eliminate some degenerate cases. To illustrate the intuitions why such two conditions are needed, and how such a condition will hold in general. We consider the following two examples.

\paragraph{Introduction of condition (a)} From a high-level viewpoint, the introduction of condition (a) is to eliminate the cases where though there are shifts in condition distributions among different environments, it happens that there are no shifts in conditional expectations. This can be illustrated in the following example.

\begin{example}
\label{ex1}
Consider the following canonical model also presented in Example 4.1 in \cite{fan2024environment}.
\begin{align*}
    X_1^{(e)} &\gets \sqrt{0.5} U_1 \\
    Y^{(e)} &\gets X_1^{(e)} + \sqrt{0.5} U_3 \\
    X_2^{(e)} &\gets s^{(e)} Y^{(e)} + U_2 
\end{align*} where $U_1, U_2, U_3$ are independent standard normal variables, and $\mathcal{E}=\{1, 2\}$. We let $e=1$ be the observational environment and $e=2$ be the interventional environment where the linear effect of $Y$ on $X_2$ are intervened ($s^{(1)} \neq s^{(2)}$). We also focus on the regime where $s^{(1)} + s^{(2)} \neq 0$ such that running least squares will lead to a biased solution.
\end{example}

In the above model, we can see that
\begin{align*}
    Y^{(e)}|X_2^{(e)} \sim \mathcal{N}\left(\frac{s^{(e)}}{(s^{(e)})^2+1} X_2^{(e)}, \frac{1}{(s^{(e)})^2+1}\right)
\end{align*}
It is easy to check under the case of nondegenerate child ($s^{(1)} + s^{(2)} \neq 0$) and faithfulness on $\tilde{M}$ ($s^{(1)} \neq s^{(2)}$). We have
\begin{align*}
    Y^{(1)}|X_1^{(1)} \overset{d}{\neq} Y^{(2)}|X_2^{(2)},
\end{align*} or in other words, $Y \indep E | X_2$. However, when $s^{(1)} = 1/s^{(2)} = s$, the following holds
\begin{align*}
    \mathbb{E}[Y^{(1)}|X_2^{(1)}=x] = \frac{s^{(1)}}{(s^{(1)})^2+1} x = \frac{s}{s^2+1} x = \frac{s^{(2)}}{(s^{(2)})^2+1} x = \mathbb{E}[Y^{(2)}|X_2^{(2)}=x] 
\end{align*} The introduction of \cref{cond:expected-faithfulness} (a) is to rule out the cases where $s^{(1)} = 1/s^{(2)} = s$. And it is easy to see when $s^{(1)}$ and $s^{(2)}$ are independently generated from some prior distribution that is absolute continuous with respect to Lebesgue measure on $\mathbb{R}$, i.e., $S^{(1)}, S^{(2)} \sim p_s$, then 
\begin{align*}
    \mathbb{P}\left[S^{(1)} S^{(2)} = 1\right] = 0.
\end{align*}

\paragraph{Introduction of condition (b).} The condition (b), that the faithfulness condition on $\tilde{M}$, is to eliminate the cases where though the interventions are applied, it happens that such interventions do not make an impact on the variables intervened. The following example presents such an example.
\begin{example}
Consider the case where $\mathcal{E}=\{1,2\}$, and the data generating process is as follows
\begin{align*}
    Y^{(e)} &\gets U_3 \\
    X^{(e)}_1 &\gets Y^{(e)} + e + U_1 \\
    X_2^{(e)} &\gets 0.5 Y^{(e)} - s X^{(e)}_1 + e + U_2.
\end{align*} where $U_1, U_2, U_3$ are independent standard normal variables, $s \neq 0.5$ is a fixed parameter. We let $e=1$ be the observational environment and $e=2$ be the interventional environment where shifts in mean are applied to the variables $X_1$ and $X_2$.
\end{example}

In the above case, we have $S^\star = \mathtt{pa}(3) = \emptyset$, and there exists a effective simultaneous intervention on $(X_1, X_2)$. However, such an intervention will not affect $X_2$ if and only if $s=1$ because its direct effect on $X_2$ and the indirect effect passing through $X_1$ get canceled provided $s=1$. To be specific, $X_2^{(e)}$ can be written as 
\begin{align*}
    X_2^{(e)} = 0.5 Y^{(e)} - s (Y^{(e)} + e + U_1) + e  + U_2 = (0.5-s) Y^{(e)} - sU_1 + U_2 + e(1-s).
\end{align*} This implies that
\begin{align*}
    Y \indep E |X_2
\end{align*} provided $s=1$, under which the faithfulness on $\tilde{M}$ fails to hold because we have $Y \notindep_{\tilde{G}} E | X_2$ since the path $Y \to X_2 \gets E$ is not blocked by $X_2$. However, if the parameter $s$ is also generated from some prior distribution that is absolute continuous with respect to Lebesgue measure on $\mathbb{R}$, i.e., $S \sim p_s$, then 
\begin{align*}
    \mathbb{P}\left[S = 1\right] = 0.
\end{align*}

\subsection{The Complete Statement of Proposition \ref{prop:rtl}}
\label{sec:rtl}

Specifically, we construct a unified SCM $(X,Y,E) \sim \bar{M}(\bar{\mathcal{S}}, \nu)$ based on $M^{(0)}$ and new environment $M^{(t)}$ as follows:
\begin{align*}
    E &\gets \text{Uniform}(\{0,t\}) \\
    X_j &\gets \begin{cases} \bar{f}_j(X_{\mathtt{pa}(j)}, U_{j}) := f^{(0)}_j(X_{\mathtt{pa}(j)}, U_{j}) &\qquad \forall j\in [d] \setminus I \\
    \bar{f}_j(X_{\mathtt{pa}(j)}, E, U_{j}) := f^{(t)}_j(X_{\mathtt{pa}(j)}, U_j) &\qquad \forall j \in I 
    \end{cases} \\
    Y &\gets \bar{f}_{d+1}(X_{\mathtt{pa}}(d+1), U_{d+1}) := f_{d+1}(X_{\mathtt{pa(d+1)}}, U_{d+1}).
\end{align*}

We suppose the following condition similar to \cref{cond:expected-faithfulness} holds in the constructed graph.

\begin{condition}
\label{cond:expected-faithfulness-2}
    The following holds for $\bar{M}$: (1) $\forall S\subseteq [d]$ containing $Y$'s descendants, i.e., $d+1\in \cup_{j\in S} \mathtt{at}(j)$, if $E \notindep_{\bar{M}} Y | X_S$, then $(\mu^{(0)} \land \mu^{(t)})(\{m^{(0,S)} \neq m^{(t,S)}\}) > 0$; (2) $\bar{M}$ is faithful, that is,
    \begin{align*}
        \forall ~\text{Disjoint}~ A, B, C\subseteq [d+2], \qquad Z_A \indep Z_B | Z_C ~~ \overset{(a)}{\Longrightarrow} ~~ Z_A \indep_{\bar{G}} Z_B | Z_C,
    \end{align*} where $Z_A \indep_{\tilde{G}} Z_B | Z_C$ means the node set $A$ and $B$ and d-separated conditioned on $C$ in the graph $\bar{G}=G(\bar{M})$.
\end{condition}

We are ready to give a complete statement of \cref{prop:rtl}.

\begin{proposition}[Formal Statement of \cref{prop:rtl}]
\label{prop:rtl-full}
 Under the setting of \cref{prop:ident-transfer-learning}, for a new environment $t$ with SCM $M^{(t)}=\{\mathcal{S}^{(t)}, \nu\}$ satisfying $f_j^{(t)} \equiv f_j^{(0)}$ for any $j\in [d+1]\setminus I$, i.e., only $X_I$ is intervened, we also have $\mathbb{E}[Y^{(t)}|X_{S_\star}^{(t)}] \equiv \mathbb{E}[Y^{(0)}|X_{S_\star}^{(0)}]$. Suppose further that \cref{cond:expected-faithfulness-2} holds for the constructed SCM $\bar{M}$. Then $S_\star$ is the unique largest set whose conditional expectation is transferable, i.e., for any $S\subseteq [d]$ such that $\mathbb{E}[Y^{(t)}|X_{S_\star \cup S}^{(t)}] \neq \mathbb{E}[Y^{(t)}|X_{S_\star}^{(t)}]$, one has $\mathbb{E}[Y^{(t)}|X_{S}^{(t)}] \neq \mathbb{E}[Y^{(0)}|X_{S}^{(0)}]$.
\end{proposition}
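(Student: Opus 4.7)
The plan is to dispatch both conclusions via d-separation computations in the augmented SCM $\bar{M}$, exploiting that $E$ is a root of $\bar{G}$ with outgoing edges only to $X_{\bar{I}}$ where $\bar{I}:=\{j:f_j^{(t)}\neq f_j^{(0)}\}\subseteq I$, so that $\bar{G}$ is an edge-subgraph of the two-environment graph $\tilde{G}$ underlying \cref{prop:ident-transfer-learning}. For the invariance (first conclusion), I plan to reuse the d-separation $Y\perp_{\tilde{G}} E\mid X_{S_\star}$ proved in the structural (first-sentence) part of \cref{prop:ident-transfer-learning} from the explicit form of $S_\star$ in \eqref{eq:invariant-blanket}. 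Since every path in $\bar{G}$ is also a path in $\tilde{G}$, d-separation is preserved under edge deletion, giving $Y\perp_{\bar{G}} E\mid X_{S_\star}$; the global Markov property of $\bar{M}$ then yields $Y\indep E\mid X_{S_\star}$, hence the conditional-expectation invariance $\mathbb{E}[Y^{(t)}\mid X_{S_\star}^{(t)}]\equiv\mathbb{E}[Y^{(0)}\mid X_{S_\star}^{(0)}]$.

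For the maximality (second conclusion), I argue by contrapositive: fix $S\subseteq[d]$ with $\mathbb{E}[Y^{(t)}\mid X_{S}^{(t)}]=\mathbb{E}[Y^{(0)}\mid X_{S}^{(0)}]$ and aim for $\mathbb{E}[Y^{(t)}\mid X_{S\cup S_\star}^{(t)}]=\mathbb{E}[Y^{(t)}\mid X_{S_\star}^{(t)}]$, equivalently $Y\indep X_{S\setminus S_\star}\mid X_{S_\star},E=t$ in $\bar{M}$. A key reduction is that, since $E$ is a root in $\bar{G}$, conditioning on $E$ blocks every path traversing $E$; consequently $Y\perp_{\bar{G}} X_{S\setminus S_\star}\mid X_{S_\star},E$ is equivalent to the purely structural d-separation $Y\perp_{G} X_{S\setminus S_\star}\mid X_{S_\star}$ in the cause-effect graph $G$. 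By faithfulness (\cref{cond:expected-faithfulness-2}(2)), it therefore suffices to establish this environment-free statement. Meanwhile, applying \cref{cond:expected-faithfulness-2}(1) to $S$ (after augmenting $S$ with a descendant of $Y$ drawn from $S_\star$ via the invariance in part 1 when $S$ itself contains none) converts transferability into $Y\indep E\mid X_S$ in $\bar{M}$, and faithfulness again gives $Y\perp_{\bar{G}} E\mid X_S$.

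The heart of the argument is then the graphical claim: $Y\perp_{\bar{G}} E\mid X_S$ together with the structure of $S_\star$ implies $Y\perp_G X_{S\setminus S_\star}\mid X_{S_\star}$. I plan to prove this contrapositively: given an active trail $\pi$ from $Y$ to some $X_l$ with $l\in S\setminus S_\star$ in $G$ under $X_{S_\star}$, construct from $\pi$ an active trail from $Y$ to $E$ in $\bar{G}$ under $X_S$, contradicting $Y\perp_{\bar{G}} E\mid X_S$. Because $\mathtt{pa}(d+1)$, $A(I)$ and $\bigcup_{j\in A(I)}\mathtt{pa}(j)\setminus\{d+1\}$ all lie in $S_\star$, a case analysis at the first step of $\pi$ forces $\pi$ to exit $Y$ through an ``affected'' child $j_0\in\mathtt{ch}(d+1)\setminus A(I)$, and by the defining property of $A(I)$ there exists an intervened $j^\ast\in\mathtt{ch}(d+1)\cap I$ that is either $j_0$ itself or ancestral to $j_0$; concatenating a directed subpath from $\pi$ into $j^\ast$ with the edge $j^\ast\leftarrow E$ yields the desired detour. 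The main obstacle is the collider bookkeeping when the conditioning set switches from $X_{S_\star}$ to $X_S$: colliders originally opened by a descendant in $X_{S_\star}$ must still be opened by some descendant in $X_S$, and non-collider intermediates on the detour must not themselves lie in $X_S$; this is handled by exploiting that the terminal $X_l$ of $\pi$ lies in $S$ and that the bridging subpath into $j^\ast$ is chosen to be a directed ancestral path (so it contains no nodes in $X_S$ except possibly its endpoint), while the boundary case $A(I)=\emptyset$ (so $S_\star=\mathtt{pa}(d+1)$ has no descendant of $Y$) is dispatched directly, since conditioning on $\mathtt{pa}(d+1)$ already blocks every trail leaving $Y$ through a parent and any active trail must therefore exit through a child, reducing to the same analysis.
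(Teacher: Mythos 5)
Your overall architecture is the paper's: both conclusions reduce to d-separation statements in the augmented graph $\bar{G}$, the maximality part hinges on the combinatorial fact that non-invariance of $S\cup S_\star$ forces $S$ to contain a descendant of a child of $Y$ outside $A(I)$ (the analogue of \cref{lemma:11}), the witness of non-transferability is the two-edge collider path $E\to X_{j^\ast}\gets Y$ with $j^\ast\in\mathtt{ch}(d+1)\cap I$ opened by that descendant, and the final passage from d-connection to $m^{(0,S)}\neq m^{(t,S)}$ goes through \cref{cond:expected-faithfulness-2}. Your two departures are organizational but reasonable: for the invariance you inherit $Y\indep_{\tilde{G}} E\mid X_{S_\star}$ from \cref{prop:ident-transfer-learning} via monotonicity of d-separation under edge deletion instead of redoing the path case analysis (indeed $\bar{G}$ has $E$-edges only into $I$, so it is at most the graph $\tilde{G}$), and you run the maximality argument contrapositively where the paper runs it directly.

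The one step that does not work as written is your use of \cref{cond:expected-faithfulness-2}(1) when $S$ contains no descendant of $Y$. Augmenting $S$ by a descendant drawn from $S_\star$ does not help: the hypothesis $\mathbb{E}[Y^{(t)}|X_{S}^{(t)}]=\mathbb{E}[Y^{(0)}|X_{S}^{(0)}]$ concerns $S$, not $S\cup\{a\}$, so the contrapositive of condition (1) cannot be invoked for the augmented set; and when $A(I)=\emptyset$ the set $S_\star=\mathtt{pa}(d+1)$ contains no descendant of $Y$ to borrow in the first place. The repair is simpler than your patch: if $S$ contains no descendant of $Y$, then a fortiori it contains no descendant of any child of $Y$ outside $A(I)$, so the case analysis behind \cref{lemma:11} directly yields $Y\indep_{\bar{G}} X_{S\setminus S_\star}\mid X_{S_\star},E$, and the target identity $\mathbb{E}[Y^{(t)}|X_{S\cup S_\star}^{(t)}]=\mathbb{E}[Y^{(t)}|X_{S_\star}^{(t)}]$ follows from the Markov property alone, with no appeal to condition (1). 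This is also why the paper's direct (non-contrapositive) organization never meets the issue: there the descendant $j\in S$ produced by the lemma automatically certifies the precondition of condition (1).
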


\subsection{Discussion on SCM with Hidden Confounders}
\label{appendix:confounder}

One can derive rigorous causal interpretation results akin to \cref{prop:ident-transfer-learning} under hidden confounders that do not affect other direct causes directly. Here for simplicity, we present the result in the presence of \emph{one} hidden confounder $H$, and the causal graph still maintains acyclic: $H$ is the direct cause of $Y$ but is unobserved. We leave rigorous statements for multiple unobserved confounders to future studies.

Similar to the setup of \cref{prop:ident-transfer-learning}, we consider the case where the data generating process in each environment is governed by SCM on $(Z^{(e)}_1,\ldots, Z^{(e)}_{d+2})=(X^{(e)}_1,\ldots, X^{(e)}_d, Y^{(e)}, H^{(e)})$ as follows:
\begin{align}
\label{eq:scm-model-hc} 
\begin{split}
    X_j^{(e)} &\leftarrow f_j^{(e)}(Z_{\mathtt{pa}(j)}^{(e)}, U_j), \qquad \qquad  j=1,\ldots, d \\
    H^{(e)} &\leftarrow f_{d+2}(Z_{\mathtt{pa}(d+2)}^{(e)}, U_{d+2}) \\
    Y^{(e)} &\leftarrow f_{d+1}(X_{\mathtt{pa}(d+1)}^{(e)}, H^{(e)}, U_{d+1}).
\end{split}
\end{align}
and we cannot observe $Z_{d+2}=H$.  
Similar to the setup for \cref{prop:ident-transfer-learning}, we assume $f_{d+1}$, distribution of noise $U_1,\ldots, U_{d+2}$, and cause-effect relationship $\mathtt{pa}: [d+2]\to 2^{[d+2]}$ are the same across different environments. Here we assume that the assignment for $H^{(e)}$ is also invariant, otherwise, the intervention on $H$ is equivalent to direct intervention on $Y$ when $H$ is unobserved. Moreover, we consider the case where $H$ is cannot directly affect other direct causes of $Y$, that is
\begin{align*}
    d+2 \notin \cup_{j\in \mathtt{pa}(d + 1) \setminus \{d+2\}} \mathtt{at}(j).
\end{align*}

Now we define the augmented SCM $\tilde{M}=(\tilde{\mathcal{S}},\tilde{\nu})$ on $d+3$ variables 
\begin{align*}
Z=(Z_1,\ldots, Z_d, Z_{d+1}, Z_{d+2}, Z_{d+3}) = (X_1,\ldots, X_d, Y, H, E)
\end{align*}
encoding all the information of $|\mathcal{E}|$ models $\{M^{(e)}(\mathcal{S}^{(e)}, \nu)\}_{e\in \mathcal{E}}$ in \eqref{eq:scm-model-hc}. Denote $\nu_b \sim \mathrm{Uniform}(\mathcal{E})$. Here $\tilde{\nu}(du_1,\ldots, du_{d+3}) = \nu(du_1,\ldots, du_{d+2}) \nu_b(du_{d+3})$, and the assignments $\tilde{\mathcal{S}}=\{\tilde{f}_1,\ldots, \tilde{f}_{d+3}\}$ are defined as
\begin{align}
\begin{split}
    E &\gets \tilde{f}_{d+3}(U_{d+3}) := U_{d+3} \\
    X_j &\gets \begin{cases} \tilde{f}_j(Z_{\mathtt{pa}(j)}, U_{j}) := f^{(0)}_j(Z_{\mathtt{pa}(j)}, U_{j}) &\qquad \forall j\in [d] \setminus I \\
    \tilde{f}_j(Z_{\mathtt{pa}(j)}, E, U_{j}) := f^{(E)}_j(Z_{\mathtt{pa}(j)}, U_j) &\qquad \forall j \in I 
    \end{cases} \\
    H &\gets \tilde{f}_{d+2}(X_{\mathtt{pa}}(d+2), U_{d+2}) := f_{d+2}(X_{\mathtt{pa(d+1)}}, U_{d+2}), \\
    Y &\gets \tilde{f}_{d+1}(X_{\mathtt{pa}}(d+1), H, U_{d+1}) := f_{d+1}(X_{\mathtt{pa(d+1)}}, H, U_{d+1}),
\end{split}
\label{eq:model-2scm-hc}
\end{align} 

We summarize the data-generating process as a condition.

\begin{condition}[SCM with One Hidden Confounder and Interventions on $X$]
\label{cond:scm-model-hc}
Suppose $M^{(0)},\ldots, M^{(|\mathcal{E}|-1)}$ are defined by \eqref{eq:scm-model-hc}, and $G$ is acyclic. Let $\tilde{M}$ be the model constructed as \eqref{eq:model-2scm-hc} by $\{M^{(e)}\}_{e\in \mathcal{E}}$ with $I$ be given set of variables intervened. We assume that $d+2 \notin \cup_{j\in \mathtt{pa}(d + 1) \setminus \{d+2\}} \mathtt{at}(j)$. 
\end{condition}

Recall that $\mathtt{pa}(\cdot)$ matches the relationship in the graph $G=G(M^{(0)})$ not $\tilde{G}=G(\tilde{M})$, and $I \subseteq [d]$. We impose a condition akin to \cref{cond:expected-faithfulness}.

\begin{figure}
\centering
\begin{tikzpicture}[scale=0.9, state/.style={circle, draw, minimum size=0.9cm, scale=0.9}]
\draw[black, rounded corners] (-0.65, -5.3) rectangle (11, 2.5);

\node[state] at (0, 0) (x1) {$X_1$};
\node[state] at (2, 0) (x2) {$X_2$};
\node[state] at (3, 1.5) (x3) {$X_3$};
\node[state] at (5, 1.5) (x4) {$X_4$};
\node[state] at (4, 0) (h) {$H$};
\node[state] at (3, -1.5) (y) {$Y$};
\node[state] at (5, -1.5) (x5) {$X_5$};
\node[state] at (0, -3) (x6) {$X_6$};
\node[state] at (2, -3) (x7) {$X_7$};
\node[state] at (4, -3) (x8) {$X_8$};
\node[state] at (3, -4.5) (x9) {$X_9$};
\node[state] at (6, 0) (x10) {$X_{10}$};

\draw[->] (x1) -- (y);
\draw[->] (x2) -- (y);
\draw[->] (x3) -- (x2);
\draw[->] (x4) -- (h);
\draw[->] (h) -- (y);
\draw[->] (h) -- (x5);
\draw[->] (x5) -- (x8);
\draw[->] (x8) -- (x9);
\draw[->] (y) -- (x6);
\draw[->] (x7) -- (x6);
\draw[->] (x10) -- (x5);
\draw[->] (x3) -- (h);
\draw[->] (y) -- (x8);
\draw[->] (x8) -- (x7);

\node[state, color=mypurple] at (7, 1.8) (e) {$E$};

\draw[->, color=myblue] (e) to[out=160, in=40] (x3);
\draw[->, color=myblue] (e) to[out=-110, in=50] (x10);

\draw[->, color=mygreen] (e) to[out=160, in=60] (x1);
\draw[->, color=mygreen] (e) to[out=160, in=40] (x6);

\draw[->, color=myorange] (e) to[out=-170, in=10] (x4);
\draw[->, color=myorange] (e) to[out=-100, in=10] (x8);

\draw[->, color=myred] (e) to[out=-135, in=90] (x5);

\node[anchor=west] at (6.3, -2.5) {$\mathcal{E}$};
\node[anchor=west] at (7.5, -2.5) {$S_\star$ in \cref{thm:causal-identification-hc}};
\node[anchor=west] at (6, -3) {\footnotesize $0\leftrightarrow 0$};
\node[anchor=west] at (7.5, -3) {\footnotesize $\{1,2,3,4,5,6,7,8,10\}$};
\node[anchor=west] at (6, -3.5) {\footnotesize $0\leftrightarrow \myblue{1}$};
\node[anchor=west] at (7.5, -3.5) {\footnotesize $\myblue{\{1,2,3,4,5,6,7,8,10\}}$};
\node[anchor=west] at (6, -4) {\footnotesize $0\leftrightarrow \mygreen{2}$};
\node[anchor=west] at (7.5, -4) {\footnotesize $\mygreen{\{1,2,3,4,5,8,10\}}$};
\node[anchor=west] at (6, -4.5) {\footnotesize $0\leftrightarrow \myorange{3}$};
\node[anchor=west] at (7.5, -4.5) {\footnotesize $\myorange{\{1,2,3,4,5,10\}}$};
\node[anchor=west] at (6, -5) {\footnotesize $0\leftrightarrow \myred{4}$};
\node[anchor=west] at (7.5, -5) {\footnotesize $\myred{\{1,2,3,4\}}$};
\end{tikzpicture}
\caption{An illustrative example of \cref{thm:causal-identification-hc}. The arrow from $E$ to $X_j$ with color $e$ means $X_j$ is intervened in $e\in \{\myblue{1}, \mygreen{2}, \myorange{3}, \myred{4}\}$. For example, $0 \leftrightarrow \mygreen{3}$ means with interventions in environments \myblue{1}, \mygreen{2}, and \myorange{3}, the invariant variable set is $\myorange{\{1,2,3,4,5,10\}}$. Although $X_5$ is the effect of the hidden confounder $H$ and hence related to $Y$, we do not know this based only on the given environments. }
\label{fig:hc}
\end{figure}

\begin{condition}
\label{cond:expected-faithfulness-3}
    The following holds for $\tilde{M}$: (1) $\forall S\subseteq [d]$ containing $Y$ or $H$'s descendants, i.e., $d+1\in \cup_{j\in S} \mathtt{at}(j)$ or $d+2\in \cup_{j\in S} \mathtt{at}(j)$, if $E \notindep_{\tilde{M}} Y | X_S$, then $(\mu^{(e)} \land \mu^{(e')})(\{m^{(e,S)} \neq m^{(e',S)}\}) > 0$ for some $e,e'\in \mathcal{E}$; (2) $\tilde{M}$ is faithful, that is,
    \begin{align*}
        \forall ~\text{Disjoint}~ A, B, C\subseteq [d+3], \qquad Z_A \indep Z_B | Z_C ~~ \overset{(a)}{\Longrightarrow} ~~ Z_A \indep_{\tilde{G}} Z_B | Z_C,
    \end{align*} where $Z_A \indep_{\tilde{G}} Z_B | Z_C$ means the node set $A$ and $B$ and d-separated conditioned on $C$ in the graph $\tilde{G}=G(\tilde{M})$.
\end{condition}

Now we can state the main result.

\begin{theorem}[General Identification under SCM with One Hidden Confounder and Interventions on $X$] 
\label{thm:causal-identification-hc}
Under \cref{cond:scm-model-hc}, for
\begin{align}
\label{eq:invariant-blanket-hc}
    S_\star = (\mathtt{pa}(d + 1) \setminus \{d+2\}) \cup \mathtt{pa}(d+2) \cup A(I) \cup \bigcup_{j\in A(I)} \left(\mathtt{pa}(j) \setminus \{d+1, d+2\}\right)
\end{align} with $A(I) = \{j\in [d]: j\in \mathtt{ch}(d+1) \cup \mathtt{ch}(d+2), \mathtt{at}(j) \cap [\mathtt{ch}(d+1) \cup \mathtt{ch}(d+2)]\cap I = \emptyset \}$, we have the invariance $m^{(e,S_\star)} \equiv \bar{m}^{(S_\star)}:= m_\star$. Suppose further \cref{cond:expected-faithfulness-3} holds, then \cref{cond-fairnn-ident} holds with $S^\star = S_\star$ and $m^\star = m_\star$. 
\end{theorem}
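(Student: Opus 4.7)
My plan is to extend the proof of \cref{prop:ident-transfer-learning} to the hidden-confounder setting by treating the pair $(Y, H) = (Z_{d+1}, Z_{d+2})$ as a joint ``extended response complex'' in the augmented graph $\tilde{G}$. In this view, the observable parents of $H$, namely $\mathtt{pa}(d+2)$ (which lies in $[d]$ by acyclicity, as neither $Y$ nor $H$ itself can appear in $\mathtt{pa}(d+2)$), act as a second parental layer for the complex, and the children of $H$ other than $Y$ act as effective siblings of $Y$. The structural hypothesis $d+2 \notin \bigcup_{j \in \mathtt{pa}(d+1) \setminus \{d+2\}} \mathtt{at}(j)$ legitimates this bookkeeping: it guarantees that the only directed path from $H$ into $Y$ inside $\tilde{G}$ is the single edge $H \to Y$, so the two parental layers $\mathtt{pa}(d+1)\setminus\{d+2\}$ and $\mathtt{pa}(d+2)$ do not create a mutual short-circuit.

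For the invariance claim $m^{(e,S_\star)} \equiv \bar{m}^{(S_\star)}$, I would establish that $X_{S_\star}$ $d$-separates $E$ from $Y$ in $\tilde{G}$; the Markov property of $\tilde{M}$ then gives $\mathbb{E}[Y^{(e)} \mid X_{S_\star}^{(e)}]$ independent of $e$. Every $E$-to-$Y$ path $\pi$ begins with $E \to X_k$ for some $k \in I$, and I classify $\pi$ by how it reaches $Y$: (i) directly via a vertex in $\mathtt{pa}(d+1)\setminus\{d+2\}$; (ii) via $H$ with some upstream vertex in $\mathtt{pa}(d+2)$ traversed as a non-collider; or (iii) via some child of $Y$ or $H$. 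In cases (i) and (ii) the entering vertex lies in $(\mathtt{pa}(d+1)\setminus\{d+2\}) \cup \mathtt{pa}(d+2) \subseteq S_\star$ and is a non-collider on $\pi$ because of its outgoing edge toward $Y$ or $H$, which blocks $\pi$; here the structural hypothesis is used to preclude hybrid paths $\cdots \to H \to \cdots \to Y$ that would evade $\mathtt{pa}(d+2)$. For case (iii), a first-occurrence argument shows that the first $(Y \cup H)$-child met along $\pi$ must lie in $A(I)$, since otherwise it would have an intervened child-ancestor and conditioning on $S_\star$ would open a collider, contradicting the activeness of $\pi$. Conditioning on that child, together with the inclusion $\bigcup_{j \in A(I)}(\mathtt{pa}(j) \setminus \{d+1, d+2\}) \subseteq S_\star$, closes the residual non-collider links.

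For the identification half, I invoke \cref{cond:expected-faithfulness-3}. Under the faithfulness part~(b), if $X_S$ $d$-separated $E$ from $Y$ in $\tilde{G}$, then $Y \indep_{\tilde{M}} E \mid X_S$ would give $m^{(e,S)} \equiv \bar{m}^{(S)}$ for every $e$; combining this with Step~1 applied to the set $S \cup S_\star$ yields $\bar{m}^{(S \cup S_\star)} = m_\star$ $\bar{\mu}$-almost surely. Contrapositively, $\bar{\mu}(\{m_\star \neq \bar{m}^{(S \cup S_\star)}\}) > 0$ forces $E \notindep_{\tilde{G}} Y \mid X_S$, and part~(a) of \cref{cond:expected-faithfulness-3}, whose purpose is exactly to convert non-$d$-separation into measurable cross-environment heterogeneity of the regression functions, supplies $e, e' \in \mathcal{E}$ with $(\mu^{(e)} \land \mu^{(e')})(\{m^{(e,S)} \neq m^{(e',S)}\}) > 0$. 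This is \cref{cond-fairnn-ident} with $S^\star = S_\star$ and $m^\star = m_\star$.

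The main obstacle is case (iii) of the $d$-separation analysis. Because $H$ is unobserved and therefore cannot be placed in $S_\star$, any path that uses $H$ as a fork $X_c \gets H \to Y$ remains open at $H$ itself and must be blocked elsewhere---by an upstream vertex in $\mathtt{pa}(d+2)$, by a downstream vertex in $A(I)$, or by a collider whose descendants stay outside $S_\star$. Showing that $A(I)$ is simultaneously large enough to intercept every ``unaffected'' $(Y \cup H)$-child and small enough not to inadvertently open colliders through intervened offspring is the combinatorial heart of the proof, and it is precisely the hypothesis $d+2 \notin \bigcup_{j \in \mathtt{pa}(d+1)\setminus\{d+2\}} \mathtt{at}(j)$ that keeps this balance consistent.
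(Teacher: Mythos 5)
Your treatment of the invariance half follows the paper's route: the paper also proves $E\indep_{\tilde G} Y\mid X_{S_\star}$ by a case analysis on whether the path enters $Y$ through $\mathtt{pa}(d+1)\setminus\{d+2\}$, through $\mathtt{pa}(d+2)\to H\to Y$, or through a child of $Y$ or $H$, with the hypothesis $d+2\notin\bigcup_{j\in\mathtt{pa}(d+1)\setminus\{d+2\}}\mathtt{at}(j)$ playing exactly the role you assign it and the children case split according to membership in $A(I)$. Your sketch of case (iii) is loose (a path whose first $(Y\cup H)$-child is \emph{not} in $A(I)$ is not excluded; it is blocked by a downstream collider whose descendants one must verify lie outside $S_\star$), but this is the combinatorial work you explicitly flag, and it is the same work the paper does.

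The identification half, however, has a genuine gap. You claim that $E\indep_{\tilde G} Y\mid X_S$, ``combined with Step 1 applied to $S\cup S_\star$,'' yields $\bar m^{(S\cup S_\star)}=m_\star$, and you then take the contrapositive. But these are two different conditional-independence statements that do not compose. Separating $E$ from $Y$ given $X_S$ (or given $X_{S\cup S_\star}$) only tells you that $m^{(e,S)}$ (or $m^{(e,S\cup S_\star)}$) does not depend on $e$; it says nothing about whether $\bar m^{(S\cup S_\star)}$ equals $\bar m^{(S_\star)}=m_\star$. For that you need to separate $Y$ from the \emph{extra covariates}, i.e.\ $Y\indep_{\tilde G} X_{S\setminus S_\star}\mid X_{S_\star}$, which is a different $d$-separation requiring its own path analysis. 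The paper supplies this as a standalone graphical lemma (the analogue of \cref{lemma:11-hc}): if $\mathsf b_{\mathtt{NN}}(S)>0$ then $S$ must contain a descendant of some child of $Y$ or $H$ lying outside $A(I)$, proved by showing the contrapositive $d$-separation $Y\indep_{\tilde G} X_{S\setminus S_\star}\mid X_{S_\star}$. From that topological fact one exhibits a concrete intervened child $k$ with a descendant in $S$, so the collider path $E\to X_k\gets Y$ (or $E\to X_k\gets H\to Y$) is opened by conditioning on $X_S$, giving $E\notindep_{\tilde G} Y\mid X_S$; faithfulness (b) and then condition (a) of \cref{cond:expected-faithfulness-3} finish the argument. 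This lemma also guarantees that $S$ contains descendants of $Y$ or $H$, which is a precondition for invoking condition (a) at all; your route never establishes this. To repair your proof you would need to state and prove this second $d$-separation lemma rather than deriving it from the $E$-versus-$Y$ separation of Step 1.
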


We use the following example in \cref{fig:hc} to illustrate how $S^\star$ will vary when we observe more and more environments. Here are a few worth remarking on
\begin{itemize}
    \item We can see that in the presence of one hidden confounder, though identifying the direct causes of $Y$ is impossible (because $H$ is the direct cause and cannot be observed), identifying the direct causes plus a set of surrogate direct causes (the direct causes of the hidden confounder) is possible by our algorithm. To see this, as more and more interventions are applied such that $A(I)$ is $\emptyset$, the maximum invariant set $S^\star$ will collapse to
    \begin{align*}
        S^\star = (\mathtt{pa}(d + 1) \setminus \{d+2\}) \cup \mathtt{pa}(d+2),
    \end{align*} this generalizes \cref{prop:ident-causal-discovery}.
    \item Similar to \cref{prop:ident-transfer-learning}, such a set $S^\star$ will include some of $Y$'s child, or $H$'s child that survived from intervention $I$. In this case, the robust transfer learning property \cref{prop:rtl} still holds, we can claim that, for a new environment $t$, if the interventions are made within the set $I$, then $S^\star$ represents the most predictive associations which are transferable under the worst cases, namely
    \begin{align*}
        &m^{(t, S^\star)} = m^{(0, S^\star)} \qquad \text{and} \qquad \\
        &\qquad \forall S\in [d], m^{(0, S\cup S^\star)} \neq m^{(0, S^\star)} ~\Longrightarrow~ m^{(t, S)} \neq m^{(0, S)}.
    \end{align*}
\end{itemize}

\subsection{Attaining Variable Selection Consistency and Extension to High-dimension Regime}
\label{sec:varsel}

        It is worth noticing that with the help of the FAIR penalty, one can only guarantee the following screening \citep{fan2008sure} property rather than variable selection consistency. To be specific, in the setting of Section 2, the selected variable set $\hat{S}$ satisfies
		\begin{align}
			\label{eq:ss}
			\hat{S} \supseteq S^\star \qquad \text{and} \qquad \forall e\in \mathcal{E},~ \mathbb{E}[Y^{(e)}|X^{(e)}_{\hat{S}}] = m^\star(X_{S^\star}^{(e)}),
		\end{align} and it does not necessarily imply $\hat{S} = S^\star$. 
		
		There are two strategies to further attain variable selection consistency. In the low-dimensional regime, one can first run our FAIR estimator and then perform standard variable selection methods on the variables $\hat{S}$ that FAIR selects. This is because the screening property \eqref{eq:ss} guarantees the full covariate exogeneity on $X_{\hat{S}}$, that is,
		\begin{align*}
			\mathbb{E}[Y - m^\star(X_{S^\star}) | X_{\hat{S}}] = 0.
		\end{align*} 
        So it reduces to the standard setting of nonparametric variable selection. 
		
		In the high-dimensional regime, one can further add sparsity or variable selection penalty to attain variable selection consistency. Examples include $L_q$ with $q\in [0,1]$ \citep{tibshirani1997lasso, zhang2010analysis}, SCAD \citep{fan2001variable} for linear models, $(\mathcal{H},1)$-norm \citep{raskutti2012minimax} and group Lasso \citep{huang2010variable} for high-dimensional additive and structured nonparametric models, clipped-L1 weights \citep{fan2024factor} for structured neural networks. Now, the full objective function to be optimized admits the following form
		\begin{align*}
			\mathsf{R}^{\mathcal{E}}(g) + \sup_{\{f_e\}_{e\in \mathcal{E}}\in \mathcal{F}} \gamma \mathsf{J}(g, \{f_e\}_{e\in \mathcal{E}}) + \lambda \mathsf{P}(g)
		\end{align*} where $\mathsf{P}(g)$ is the standard sparsity or variable selection penalty. One can derive corresponding variable selection consistency results similar to Theorem 4.5 in \cite{fan2024environment} that augments least squares loss with an invariance regularizer similar to the FAIR penalty and $L_0$ penalty. We leave it for future studies.

\section{Generic Results and Its Applications}
\label{sec:theory-appendix}

\subsection{Main Result for the General FAIR Least Squares Estimator}
\label{sec:main-result}

This section is designed to offer a unified main result characterizing when the FAIR least squares estimator can identify the target regression function together with a non-asymptotic $L_2$ error bound for general $(\mathcal{G}, \mathcal{F})$. We first introduce some standard regularity conditions.

\begin{condition}[Data Generating Process]
\label{cond:general-dgp}
We collect data from $|\mathcal{E}| \in \mathbb{N}^+$ environments. For each environment $e\in \mathcal{E}$, we observe $(X_1^{(e)}, Y_1^{(e)}), \ldots, (X_n^{(e)}, Y_n^{(e)}) \overset{i.i.d.}{\sim} \mu^{(e)}$.
\end{condition}

\begin{condition}[Sub-Gaussian Response]
\label{cond:general-response}
    For any $e\in \mathcal{E}$ and $t\ge 0$, $\mathbb{P}\left[|Y^{(e)}| \ge t\right]\le C_y e^{-t^2/(2\sigma_y^2)}$, where $\sigma_y>0$ and $C_y>0$ are some constants independent of $e$ and $t$.
\end{condition}

To impose statistical complexity on the function classes we used, we introduce the definition of \emph{localized population Rademacher complexity}, described as follows.

\begin{definition}[Localized Population Rademacher Complexity]
For a given radius $\delta>0$, function class $\mathcal{H}$, and distribution $\nu$, define
\begin{align*}
    R_{n,\nu}(\delta;\mathcal{H}) = \mathbb{E}_{X,\varepsilon}\left[\sup_{h\in \mathcal{H}, \|h\|_{L_2(\nu)} \le \delta} \left|\frac{1}{n} \sum_{i=1}^n \varepsilon_i h(X_i)\right|\right],
\end{align*} where $X_1,\ldots, X_n$ are i.i.d. samples from distribution $\nu$, and $\varepsilon_1,\ldots, \varepsilon_n$ are i.i.d. Rademacher variables taking values in $\{-1,+1\}$ with equal probability which are also independent of $(X_1,\ldots, X_n)$.
\end{definition} 

\begin{condition}[Function Class]
\label{cond:general-function-class}
Suppose the following holds for the function class $\mathcal{G}$ and $\mathcal{F}$ we use:
\begin{itemize}[noitemsep]
\item[(1).] It is uniformly bounded by $B \ge 1$, i.e., $\sup_{h\in \mathcal{G} \cup \mathcal{F}} \|h\|_\infty \le B$.
\item[(2).] $0\in \mathcal{F}$ and the statistical complexity of the function classes $\mathcal{G}+\mathcal{F}:= \{g+f: g\in \mathcal{G}, f\in \mathcal{F}_{S_g}\}$ is upper-bounded by $\delta_n$. In particular, there exists some quantity $1/n \le \delta_n < 1$ such that
\begin{align*}
    R_{n,\mu^{(e)}}(\delta; \partial \mathcal{G}) \le B \delta_n \delta \qquad \text{and} \qquad R_{n,\mu^{(e)}}(\delta; \partial (\mathcal{G}+\mathcal{F})) \le 2B\delta_n \delta 
\end{align*} for any $e\in \mathcal{E}$ and $\delta \in [\delta_n, 2B]$, where $\partial \mathcal{H} = \{h - h': h, h'\in \mathcal{H}\}$.
\end{itemize}
\end{condition}

Note that when $-\cG = \cG$, $R_{n,\mu^{(e)}}(\delta; \partial \mathcal{G}) =R_{n,\mu^{(e)}}(\delta; \mathcal{G})$.
The above three assumptions \cref{cond:general-dgp}, \ref{cond:general-response}, \ref{cond:general-function-class} are standard in the theoretical analysis of regression. Recall the definition of $m^{(e,S)}$ and $\bar{m}^{(S)}$ in \cref{sec:nip-setup}, now we introduce the specific assumption in our multi-environment regression setting. 

\begin{condition}[Invariance and Identification]
\label{cond:general-gf}
For any $S$, let $\overline{\mathcal{G}_S} \supseteq \mathcal{G}_S$, $\overline{\mathcal{F}_S} \supseteq \mathcal{F}_S$ be closed subspaces of $\Theta_S$ satisfying $\overline{\mathcal{G}_S} \subseteq \overline{\mathcal{F}_S}$. In this case, we can define $\Pi_{\mathcal{A}}(h) = \argmin_{a\in \mathcal{A}} \|a - h\|_2$ and $\Pi^{(e)}_{\mathcal{A}}(h) = \argmin_{a\in \mathcal{A}} \|a - h\|_{2,e}$ when $\mathcal{A} \in \{\overline{\mathcal{F}_S}, \overline{\mathcal{G}_S}\}$ and $h\in \Theta_S$. Suppose the following holds:
\begin{itemize}
    \item[1.] (Invariance) There exists some index set $S^\star\subseteq [d]$ such that
    \begin{align*}
        \forall e\in \mathcal{E} \qquad \Pi^{(e)}_{\overline{\mathcal{F}_{S^\star}}}(m^{(e,S^\star)}) = \Pi_{\overline{\mathcal{G}_{S^\star}}}(\bar{m}^{(S^\star)}) := g^\star
    \end{align*}
    \item[2.] (Heterogeneity) For each $S\subseteq [d]$, if $\mathsf{b}_{\mathcal{G}}(S) > 0$, then $\bar{\mathsf{d}}_{\mathcal{G}, \mathcal{F}}(S) > 0$, where
    \begin{align}
        \label{eq:theory-bias-and-variance-general}
        \mathsf{b}_{\mathcal{G}}(S) = \|\Pi_{\overline{\mathcal{G}_{S\cup S^\star}}}(\bar{m}^{(S\cup S^\star)}) - g^\star\|_{2}^2 ~~ \text{and} ~~
        \bar{\mathsf{d}}_{\mathcal{G}, \mathcal{F}}(S) = \frac{1}{|\mathcal{E}|} \sum_{e\in \mathcal{E}} \|\Pi_{\overline{\mathcal{F}_{S}}}^{(e)} (m^{(e,S)}) - \Pi_{\overline{\mathcal{G}_S}}(\bar{m}^{(S)}) \|_{2,e}^2.
    \end{align}
    \item[3.] (Nondegenerate Covariate) For any $S\subseteq [d]$ such that $S^\star \setminus S \neq \emptyset$, we have $\inf_{g\in \overline{\mathcal{G}_S}} \|g - g^\star\|_{2}^2 \ge s_{\min}$ for some constant $s_{\min}>0$.
\end{itemize}
\end{condition}

The first condition ``invariance'' specifies the target regression function $g^\star$ of interests and states the invariance structure imposed for our theoretical analysis. It relaxes the general conditional expectation invariance \eqref{eq:intro-model} when $\overline{\mathcal{F}_S} \subsetneq \Theta_S$. Two leading examples are (1) the fully nonparametric class $\overline{\mathcal{G}_S} = \overline{\mathcal{F}_S} = \Theta_S$, and (2) linear class $\overline{\mathcal{G}_S} = \overline{\mathcal{F}_S} = \{f(x)=\beta_S^\top x_S: \beta_S\in \mathbb{R}^{|S|}\}$. In the first example, we are interested in estimating the invariant conditional expectation $g^\star = m^\star$, and the invariance condition requires the conditional expectation invariance \eqref{eq:intro-model}, that 
\begin{align*}
    \forall e\in \mathcal{E} \qquad m^{(e,S^\star)}(x) = m^\star(x_{S^\star}).
\end{align*} In the second example, when the covariance matrices $\mathbb{E}[X^{(e)} (X^{(e)})^\top]$ across all the environments are all positive definite, we are interested in estimating the invariant linear predictor $g^\star(x) = x^\top \beta^\star$, and such the ``invariance'' condition only requires that
\begin{align*}
    \forall e\in \mathcal{E} \qquad \beta^{(e,S^\star)} \equiv \beta^\star \qquad \text{where} ~~~~ \beta^{(e,S^\star)} = \argmin_{\beta \in \mathbb{R}^d, \beta_{(S^\star)^c}=0} \mathbb{E}[|Y^{(e)} - \beta^\top X^{(e)}|^2],
\end{align*} that is, the best linear predictors constrained on $S^\star$ among all the environment are the same. In this case, the conditional expectations $m^{(e, S^\star)}(x)$ can be nonlinear or different.

The second condition ``heterogeneity'' is for identification and is fundamental to derive the population-level strong convexity with respect to $g^\star$. The two quantities in \eqref{eq:theory-bias-and-variance-general} are general forms of the bias mean and the bias variance, respectively. We refer to $\mathsf{b}_{\mathcal{G}}(S)$ as the bias mean because $\mathsf{b}_{\mathcal{G}}(S)$ is the precise bias of the estimator that regress $Y$ on $X_S$ when $S^\star\subseteq S$ using all the data. This can be formally presented in the following proposition, which asserts that in the absence of our proposed regularizer, a vanilla least squares estimator will not consistently estimate $g^\star$, and the discrepancy $\|\hat{g} - g^\star\|_2^2$ is approximately equal to $\mathsf{b}(S)$ when $n$ is large. 

\begin{proposition}[Inconsistency of Least Squares Estimator]
\label{prop:bias}
Let $S$ be an index set such that $S^\star \subseteq S\subseteq [d]$.
Assume \cref{cond:general-dgp}, \ref{cond:general-response}, \ref{cond:general-function-class}--\ref{cond:general-gf} hold, and $\mathsf{b}_{\mathcal{G}}(S) > 0$. Suppose further that $ U \delta_{n,\log n}+\inf_{g\in \mathcal{G}_S} \|g - \Pi_{\overline{\mathcal{G}_{S}}}(\bar{m}^{(S)})\|_2 = o(1)$, where $U$ and $\delta_{n,t}$ are two constants defined in \cref{thm:oracle} below. Then the estimator $\hat{g}_{\mathtt{R}}$ that minimizes \eqref{eq:method-empirical-r} in $\mathcal{G}_{S}$ satisfies, for large enough $n$,
\begin{align*}
    0.99 \le \frac{\|\hat{g}_{\mathtt{R}} - g^\star\|_2^2}{\mathsf{b}_{\mathcal{G}}(S)} \le 1.01 
\end{align*} with probability at least $1-\{C_y(\sigma_y+1)+1\} n^{-100}$.
\end{proposition}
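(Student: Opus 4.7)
Let $g^\dagger := \Pi_{\overline{\mathcal{G}_S}}(\bar{m}^{(S)})$ be the population-level projection. By the definition of $\mathsf{b}_{\mathcal{G}}$ in \eqref{eq:theory-bias-and-variance-general}, because $S^\star \subseteq S$ gives $S \cup S^\star = S$, we have $\|g^\dagger - g^\star\|_2^2 = \mathsf{b}_{\mathcal{G}}(S)$, a strictly positive constant. The plan is to show that $\hat{g}_{\mathtt{R}}$ converges in $L_2(\bar{\mu})$ to $g^\dagger$ at a rate $o(1)$ that is, by hypothesis, asymptotically negligible relative to $\sqrt{\mathsf{b}_{\mathcal{G}}(S)}$, and then invoke the triangle inequality around the fixed point $g^\dagger$.

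First I would record the population-level Pythagorean identity. Since $\ell(y,v) = \tfrac12 (y-v)^2$ and $g^\dagger$ minimizes $\mathsf{R}$ over the closed subspace $\overline{\mathcal{G}_S}$, a standard decomposition yields
\begin{align*}
\mathsf{R}(g) - \mathsf{R}(g^\dagger) \;=\; \tfrac{1}{2}\, \|g - g^\dagger\|_2^{\,2}
\qquad \text{for every } g \in \overline{\mathcal{G}_S},
\end{align*}
using the fact that $\|f\|_2^2 = \tfrac{1}{|\mathcal{E}|} \sum_{e} \|f\|_{2,e}^2$ and that the cross terms vanish by the projection characterization $\sum_e \mathbb{E}_{\mu^{(e)}}[(m^{(e,S)} - g^\dagger) h] = 0$ for all $h \in \overline{\mathcal{G}_S}$. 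This gives exact strong convexity with respect to $g^\dagger$, which is what allows stochastic error in risk to be converted into $L_2$ error.

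Next I would carry out the standard basic inequality argument for empirical risk minimizers. For any sieve element $\tilde g \in \mathcal{G}_S$, the optimality $\hat{\mathsf{R}}(\hat g_{\mathtt{R}}) \leq \hat{\mathsf{R}}(\tilde g)$ combined with the Pythagorean identity above produces
\begin{align*}
\tfrac{1}{2} \|\hat g_{\mathtt{R}} - g^\dagger\|_2^{\,2}
\;\leq\; (\mathsf{R} - \hat{\mathsf{R}})(\hat g_{\mathtt{R}} - g^\dagger) \;+\; \bigl(\hat{\mathsf{R}}(\tilde g) - \hat{\mathsf{R}}(g^\dagger)\bigr).
\end{align*}
The first term is controlled by a uniform deviation bound for the class $\{(y,x) \mapsto (y - g(x))^2 - (y - g^\dagger(x))^2 : g \in \mathcal{G}_S\}$; using sub-Gaussianity of $Y$ (\cref{cond:general-response}), the $L_\infty$ bound $B$ (\cref{cond:general-function-class}(1)), and the local Rademacher complexity bound $R_{n,\mu^{(e)}}(\delta; \partial \mathcal{G}) \leq B \delta_n \delta$ (\cref{cond:general-function-class}(2)), a peeling/fixed-point argument of the type used to prove \cref{thm:oracle} yields, with probability at least $1 - \{C_y(\sigma_y + 1) + 1\} n^{-100}$,
\begin{align*}
\|\hat g_{\mathtt{R}} - g^\dagger\|_2 \;\leq\; C\bigl( U \delta_{n,\log n} \;+\; \inf_{g \in \mathcal{G}_S} \|g - g^\dagger\|_2 \bigr),
\end{align*}
where the $\log n$ factor comes from upgrading to the $n^{-100}$ failure probability. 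By the definition of $g^\dagger$ the infimum equals $\inf_{g \in \mathcal{G}_S}\|g - \Pi_{\overline{\mathcal{G}_S}}(\bar m^{(S)})\|_2$, so the hypothesis guarantees this whole quantity is $o(1)$.

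Finally, since $\mathsf{b}_{\mathcal{G}}(S) > 0$ is a fixed positive constant while $\|\hat g_{\mathtt{R}} - g^\dagger\|_2 = o(1)$ on the high-probability event, for all $n$ large enough the triangle inequality
\begin{align*}
\bigl| \|\hat g_{\mathtt R} - g^\star\|_2 - \sqrt{\mathsf{b}_{\mathcal{G}}(S)}\, \bigr| \;\leq\; \|\hat g_{\mathtt R} - g^\dagger\|_2
\end{align*}
gives a ratio $\|\hat g_{\mathtt R} - g^\star\|_2 / \sqrt{\mathsf{b}_{\mathcal{G}}(S)}$ within $[\sqrt{0.99}, \sqrt{1.01}]$, and squaring finishes the proof. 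The main obstacle, and the only non-routine step, is the high-probability uniform deviation bound in the second paragraph: one needs sub-Gaussian concentration for the quadratic loss class together with a localized Rademacher/entropy peeling argument in order to extract the rate $\delta_{n,\log n}$. This is precisely the same machinery driving \cref{thm:oracle}, so in practice I would import that empirical-process lemma with $\gamma = 0$ and target $g^\dagger$ in place of $g^\star$, rather than redeveloping it.
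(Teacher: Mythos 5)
Your proposal is correct and follows essentially the same route as the paper: population strong convexity around $g^\dagger=\Pi_{\overline{\mathcal{G}_S}}(\bar m^{(S)})$ via the projection theorem, the ERM basic inequality combined with the uniform deviation bound for the pooled risk (the paper's \cref{prop:nonasymptotic-pooled}, i.e.\ your ``\cref{thm:oracle} with $\gamma=0$''), and a triangle-inequality/expansion argument to convert $\|\hat g_{\mathtt R}-g^\dagger\|_2=o(1)$ into the ratio bound. The only cosmetic difference is that the paper keeps the comparator $\tilde g\in\mathcal{G}_S$ throughout (so the empirical-process bound is never applied to $g^\dagger$, which may lie in $\overline{\mathcal{G}_S}\setminus\mathcal{G}_S$) and absorbs the approximation error through the population lower bound $\mathsf{R}(g)-\mathsf{R}(\tilde g)\ge\tfrac14\|g-\tilde g\|_2^2-\|\tilde g-g^\dagger\|_2^2$; your sketch should be rearranged the same way so that $\hat{\mathsf{R}}(g^\dagger)$ never appears.
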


On the other hand, our proposed FAIR estimator will not converge to the biased solution under the condition ``heterogeneity''. The condition ``heterogeneity'' is an abstraction of the ``identification'' condition in previous subsections, for example, \cref{cond-fairnn-ident} for FAIR-NN.

The last condition ``nondegenerate covariate'' ensures that the target regression function $g^\star$ cannot be exactly fitted by any function $g$ whose dependent variable set $S_g$ does not cover $S^\star$. It reduces to be ``non-collinearity'' when $\mathcal{G}$ is linear. 

In practice, we may only get access to the approximate solution. In our theoretical analysis, we focus on the performance of the approximate solution $(\hat{g}, \hat{f}^{\mathcal{E}})$ satisfying
\begin{align}
\label{eq:approx-solution}
    \sup_{f^{\mathcal{E}} \in \{\mathcal{F}_{S_{\hat{g}}}\}^{|\mathcal{E}|}} \hat{\mathsf{Q}}_\gamma(\hat{g}, f^{\mathcal{E}}) -  (\gamma+1) \delta_{\mathtt{opt}}^2 \le \hat{\mathsf{Q}}_\gamma(\hat{g}, \hat{f}^{\mathcal{E}}) \le \inf_{g\in \mathcal{G}} \sup_{f^\mathcal{E} \in \{\mathcal{F}_{S_g}\}^{|\mathcal{E}|}} \hat{\mathsf{Q}}_\gamma(g, f^{\mathcal{E}}) + (1+\gamma) \delta_{\mathtt{opt}}^2
\end{align} with some optimization error $\delta_{\mathtt{opt}}^2>0$, here $\gamma$ in $(1 + \gamma)$ is the same as that in $\hat{\mathsf{Q}}_{\gamma}$. Now we are ready to state the main result regarding the statistical rate of convergence of our estimator $\hat{g}$ to $g^\star$, that is,
\begin{align*}
    \|\hat{g} - g^\star\|_2 = \left\{ \int (\hat{g} - g^\star)^2 \bar{\mu}_x(dx) \right\}^{1/2}.
\end{align*} 

\begin{theorem}[Main Result for the FAIR Estimator with $\ell_2$ Loss]
\label{thm:oracle}
Assume Conditions \ref{cond:general-dgp}--\ref{cond:general-gf} hold. Define the critical threshold
\begin{align*}
    \gamma^\star := \sup_{S\subseteq [d]: \mathsf{b}_{\mathcal{G}}(S) > 0} \frac{\mathsf{b}_{\mathcal{G}}(S)}{\bar{\mathsf{d}}_{\mathcal{G}, \mathcal{F}}(S)}.
\end{align*} There exists some universal constant $C$ such that, for any $\gamma \ge 8\gamma^\star$, the following holds:

\noindent (1) \underline{General $L_2$ error rate.} Let $t>0$ be arbitrary. Define general approximation errors with respect to the function class $\mathcal{G}$ and $\mathcal{F}$ as
\begin{align*}
&\delta_{\mathtt{a},\mathcal{G}} = \inf_{g\in \mathcal{G}_{S^\star}} \|g-g^\star\|_2 ~~~~\text{and} \\
&~~~~~~~~~~\delta_{\mathtt{a}, \mathcal{F},\mathcal{G}}(S) = \sqrt{\frac{1}{|\mathcal{E}|} \sum_{e\in \mathcal{E}} \sup_{g\in \mathcal{G}: S_g=S} \inf_{f\in {\mathcal{F}_{S_g}}} \|\Pi_{\overline{\mathcal{F}_S}}^{(e)}(m^{(e,S)}) - g - f\|_{2,e}^2},
\end{align*} and the stochastic error as $\delta_{n,t} = \delta_n + \{(\log(nB|\mathcal{E}|) + t + 1)/n\}^{1/2}$, where $\delta_n$ is the quantity in \cref{cond:general-function-class}. Let $U=B(B + \sigma_y \sqrt{\log(n|\mathcal{E}|)})$, then
\begin{align}
\label{eq:main-result-general-rate}
    \|\hat{g} - g^\star\|_2 \le C (1+\gamma) \left(U\delta_{n,t} + \delta_{\mathtt{a}, \mathcal{G}} + \delta_{\mathtt{a}, \mathcal{F}, \mathcal{G}}(S_{\hat{g}}) + \delta_{\mathtt{a}, \mathcal{F}, \mathcal{G}}(S^\star) + \delta_{\mathtt{opt}} \right).
\end{align} with probability at least $\mathfrak{p}=1-6e^{-t}-2C_y(\sigma_y+1)n^{-100}$.

\noindent (2) \underline{Faster $L_2$ error rate and variable selection property.} Moreover, if
\begin{align} 
\label{eq:main-result-faster-cond}
\begin{split}
    \delta_{\mathtt{opt}}^2 + \sup_{S\subseteq [d]}\delta^2_{\mathtt{a}, \mathcal{F}, \mathcal{G}}(S) &+ \delta^2_{\mathtt{a}, \mathcal{G}} + UB\delta_{n,t} \\
    &\le \left\{ 1\land \frac{s_{\min}}{\gamma+1} \land \left(\frac{\gamma}{\gamma+1} \inf_{S: \bar{\mathsf{d}}_{\mathcal{G}, \mathcal{F}}(S) > 0} \bar{\mathsf{d}}_{\mathcal{G}, \mathcal{F}}(S)\right) \right\}/C
\end{split}
\end{align} then the following holds, with probability at least $\mathfrak{p}$, the following holds
\begin{align}
\label{eq:main-result-rate}
    \|\hat{g} - g^\star\|_2 \le C \left(U\delta_{n,t} + \delta_{\mathtt{a}, \mathcal{G}} + \delta_{\mathtt{a}, \mathcal{F}, \mathcal{G}}^\star + \delta_{\mathtt{opt}} \right), \qquad S^\star \subseteq S_{\hat{g}} ~~\text{and}~~ \bar{\mathsf{d}}_{\mathcal{G}, \mathcal{F}}(S_{\hat{g}})=0,
\end{align} where $\delta_{\mathtt{a}, \mathcal{F}, \mathcal{G}}^\star = \{\frac{1}{|\mathcal{E}|} \sum_{e\in \mathcal{E}} \sup_{g\in \mathcal{G}} \inf_{f\in \mathcal{F}_{S_g}} \|g^\star - g - f\|_{2,e}^2\}^{1/2}$.
\end{theorem}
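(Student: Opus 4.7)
The plan is to follow the classical template for oracle inequalities in penalized $M$-estimation: establish a restricted strong convexity inequality at the population level, obtain a uniform empirical deviation bound over predictor-discriminator pairs, and marry the two via the basic inequality for the approximate saddle point. Define the population objective $\mathsf{Q}_\gamma(g) := \mathsf{R}(g) + \gamma\,\mathsf{J}^\star(g)$ with $\mathsf{J}^\star(g) := \sup_{f^\mathcal{E}\in\{\overline{\mathcal{F}_{S_g}}\}^{|\mathcal{E}|}}\mathsf{J}(g,f^\mathcal{E})$, which, by the same closed-form calculation as in \eqref{fan1}, equals $\tfrac{1}{2|\mathcal{E}|}\sum_{e}\|\Pi^{(e)}_{\overline{\mathcal{F}_{S_g}}}(m^{(e,S_g)})-g\|_{2,e}^2$. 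The goal is the population inequality
\begin{equation*}
\mathsf{Q}_\gamma(g)-\mathsf{Q}_\gamma(g^\star) \;\geq\; c_1\|g-g^\star\|_2^2 \;+\; c_2\gamma\,\bar{\mathsf{d}}_{\mathcal{G},\mathcal{F}}(S_g) \qquad \forall g\in\mathcal{G},
\end{equation*}
valid once $\gamma\geq 8\gamma^\star$. The error bounds then follow by applying this inequality to $\hat g$ after stochastically controlling $\mathsf{Q}_\gamma(\hat g)-\mathsf{Q}_\gamma(g^\star)$.

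To derive this population inequality, set $S=S_g$ and $\bar g^{(S)}:=\Pi_{\overline{\mathcal{G}_S}}(\bar m^{(S)})$. Because $g,g^\star\in \Theta_{S\cup S^\star}$, a Pythagorean expansion combined with the orthogonality relation $\mathbb{E}_{\bar\mu}[(\bar m^{(S\cup S^\star)}-\Pi_{\overline{\mathcal{G}_{S\cup S^\star}}}(\bar m^{(S\cup S^\star)}))(g-g^\star)]=0$ reduces $\mathsf{R}(g)-\mathsf{R}(g^\star)$ to $\|g-g^\star\|_2^2 - 2\langle \bar g^{(S\cup S^\star)}-g^\star, g-g^\star\rangle_{\bar\mu}$, and Cauchy-Schwarz controls the cross term by $2\sqrt{\mathsf{b}_{\mathcal{G}}(S)}\|g-g^\star\|_2$. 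Expanding $\mathsf{J}^\star(g)$ around $\bar g^{(S)}$ via the identity $\|f^{(e,S)}-g\|_{2,e}^2=\|f^{(e,S)}-\bar g^{(S)}\|_{2,e}^2-2\langle f^{(e,S)}-\bar g^{(S)},g-\bar g^{(S)}\rangle_e+\|g-\bar g^{(S)}\|_{2,e}^2$ and applying Cauchy-Schwarz followed by Young's inequality yields $\mathsf{J}^\star(g)\geq c\,\bar{\mathsf{d}}_{\mathcal{G},\mathcal{F}}(S)-C\|g-\bar g^{(S)}\|_2^2$. Combining these with another Young-type split of the quadratic $\|g-\bar g^{(S)}\|_2^2$ against $\|g-g^\star\|_2^2$ and $\mathsf{b}_{\mathcal{G}}(S)$, and invoking the defining inequality $\gamma\,\bar{\mathsf{d}}_{\mathcal{G},\mathcal{F}}(S)\geq 8\mathsf{b}_{\mathcal{G}}(S)$ (which holds for every $S$ by $\gamma\geq 8\gamma^\star$), delivers the claimed restricted strong convexity with explicit constants.

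On the empirical side, the cross product $\{Y^{(e)}-g(X^{(e)})\}f^{(e)}(X^{(e)})$ is sub-exponential under \cref{cond:general-response} and uniform boundedness. Truncating $Y$ at the level $U\asymp B(B+\sigma_y\sqrt{\log(n|\mathcal{E}|)})$, Talagrand's concentration inequality combined with the localized Rademacher bound $R_{n,\mu^{(e)}}(\delta;\partial(\mathcal{G}+\mathcal{F}))\leq 2B\delta_n\delta$ from \cref{cond:general-function-class} and a standard fixed-point/peeling argument (as in \citep{wainwright2019high}) yields, with probability at least $1-O(e^{-t})-O(n^{-100})$, a uniform deviation of $|\hat{\mathsf{Q}}_\gamma(g,f^\mathcal{E})-\mathsf{Q}_\gamma(g,f^\mathcal{E})|$ of order $(1+\gamma)U\delta_{n,t}\,(\delta_{n,t}+\|g-g^\star\|_2+\sqrt{\mathsf{J}^\star(g)})$ over $g\in\mathcal{G}$ and $f^\mathcal{E}\in\{\mathcal{F}_{S_g}\}^{|\mathcal{E}|}$. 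Chaining this with the approximate saddle-point inequality \eqref{eq:approx-solution} against a reference pair $(\tilde g,\tilde f^\mathcal{E})$ with $\tilde g\in\mathcal{G}_{S^\star}$ realizing $\|\tilde g-g^\star\|_2\leq\delta_{\mathtt{a},\mathcal{G}}$ and $\tilde f^\mathcal{E}$ realizing the discriminator approximation $\delta_{\mathtt{a},\mathcal{F},\mathcal{G}}(S^\star)$, and then invoking the population strong convexity of the previous paragraph, produces a quadratic inequality in $\|\hat g-g^\star\|_2$ whose solution is \eqref{eq:main-result-general-rate}.

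For part (2), the condition \eqref{eq:main-result-faster-cond} makes the squared right-hand side of \eqref{eq:main-result-general-rate} strictly smaller than both $s_{\min}$ and $\gamma\inf_{S:\bar{\mathsf{d}}_{\mathcal{G},\mathcal{F}}(S)>0}\bar{\mathsf{d}}_{\mathcal{G},\mathcal{F}}(S)$. Since the population inequality sharply bounds $c_1\|\hat g-g^\star\|_2^2 + c_2\gamma\,\bar{\mathsf{d}}_{\mathcal{G},\mathcal{F}}(S_{\hat g})$ by the controlled excess $\mathsf{Q}_\gamma(\hat g)-\mathsf{Q}_\gamma(g^\star)$, the value $\bar{\mathsf{d}}_{\mathcal{G},\mathcal{F}}(S_{\hat g})$ must fall strictly below $\inf_{S:\bar{\mathsf{d}}>0}\bar{\mathsf{d}}_{\mathcal{G},\mathcal{F}}(S)$ and therefore equal $0$; the heterogeneity clause of \cref{cond:general-gf} then forces $\mathsf{b}_{\mathcal{G}}(S_{\hat g})=0$, and the nondegenerate-covariate clause rules out $S^\star\not\subseteq S_{\hat g}$. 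On this ``good'' event the penalty at the population level shrinks to its approximation residual $\delta_{\mathtt{a},\mathcal{F},\mathcal{G}}^\star$ (no spurious multiplier by $\gamma$), and replaying the bridging argument produces the sharper rate \eqref{eq:main-result-rate}. The hardest step, in my view, is the uniform empirical deviation: the focused constraint $f^{(e)}\in\mathcal{F}_{S_g}$ ties the discriminator's support to the discrete choice $S_g$, a priori inviting a union bound over $2^d$ patterns; the saving grace is that all relevant differences live inside the single combined class $\mathcal{G}+\mathcal{F}$ whose localized Rademacher complexity is already controlled by $\delta_n$ in \cref{cond:general-function-class}, so no combinatorial blow-up appears in the final rate.
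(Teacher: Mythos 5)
Your proposal follows essentially the same route as the paper's own proof: a population-level restricted strong convexity inequality (the paper's Theorem~\ref{thm:population}), localized empirical deviation bounds driven by the Rademacher condition in Condition~\ref{cond:general-function-class} (the paper's Propositions~\ref{prop:nonasymptotic-pooled} and~\ref{prop:nonasymptotic-a}), the basic inequality for the approximate saddle point against a reference pair $(\tilde g,\tilde f^{\mathcal{E}})$, and a proof-by-contradiction for the support property in part (2) (the paper's Proposition~\ref{prop:variable-selection}). Your closing observation about why no $2^d$ union bound is needed is exactly the mechanism the paper exploits. Two steps, however, need repair before the argument is rigorous.

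First, the claimed uniform bound on the \emph{absolute} deviation $|\hat{\mathsf{Q}}_\gamma(g,f^{\mathcal{E}})-\mathsf{Q}_\gamma(g,f^{\mathcal{E}})|$ of order $(1+\gamma)U\delta_{n,t}\,(\delta_{n,t}+\|g-g^\star\|_2+\sqrt{\mathsf{J}^\star(g)})$ is false as stated: the risk part $\hat{\mathsf{R}}(g)$ contains $\tfrac{1}{2n}\sum_i (Y_i^{(e)})^2$, whose fluctuation around its mean is $\Theta(n^{-1/2})$ independently of $g$, while your bound degenerates to $O(U\delta_{n,t}^2)=O(U\log n/n)$ near $g^\star$. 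The deviation must be measured on \emph{differences} $\{\hat{\mathsf{Q}}_\gamma(g,f^{\mathcal{E}})-\hat{\mathsf{Q}}_\gamma(\tilde g,\tilde f^{\mathcal{E}})\}-\{\mathsf{Q}_\gamma(g,f^{\mathcal{E}})-\mathsf{Q}_\gamma(\tilde g,\tilde f^{\mathcal{E}})\}$, so that the $Y^2$ terms cancel and the remaining increments are localized by $\|g-\tilde g\|_{2,e}$ and $\|g+f^{(e)}-\tilde g-\tilde f^{(e)}\|_{2,e}$; this is exactly how the paper's Propositions~\ref{prop:nonasymptotic-pooled} and~\ref{prop:nonasymptotic-a} are phrased. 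Since you immediately apply the bound against a reference pair, the fix is routine, but the statement as written would not survive. Second, your population strong convexity is phrased for the profiled penalty $\mathsf{J}^\star(g)=\sup_{f^{\mathcal{E}}\in\{\overline{\mathcal{F}_{S_g}}\}^{|\mathcal{E}|}}\mathsf{J}(g,f^{\mathcal{E}})$ over the \emph{closed subspaces} $\overline{\mathcal{F}_{S_g}}$, whereas the estimator maximizes the \emph{empirical} penalty over the finite class $\mathcal{F}_{S_g}$. Bridging the two is a nontrivial step: one must show that any near-maximizer $\hat f^{(e)}$ of $\hat{\mathsf{A}}^{(e)}(\hat g,\cdot)$ satisfies $\|\hat f^{(e)}-\{\Pi^{(e)}_{\overline{\mathcal{F}_{S_{\hat g}}}}(m^{(e,S_{\hat g})})-\hat g\}\|_{2,e}\lesssim \delta_{\mathtt{a},\mathcal{F},\mathcal{G}}(S_{\hat g})+U\delta_{n,t}+(\text{opt.\ gap})$, a self-contained least-squares argument that the paper isolates as Proposition~\ref{prop:characterize-f} and that is where the approximation errors $\delta_{\mathtt{a},\mathcal{F},\mathcal{G}}(S_{\hat g})$ and $\delta_{\mathtt{a},\mathcal{F},\mathcal{G}}(S^\star)$ actually enter the final rate; in your writeup they appear in the conclusion without a step that produces them. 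Finally, for part (1) you should make explicit how the term $\|g^\star-\Pi^{(e)}_{\overline{\mathcal{F}_{S_{\hat g}}}}(m^{(e,S_{\hat g})})\|_{2,e}$ (which is not small when $S_{\hat g}$ is a bad set) gets absorbed: the paper does this via the instance-dependent inequality $\tfrac{1}{|\mathcal{E}|}\sum_e\|g^\star-\Pi^{(e)}_{\overline{\mathcal{F}_{S_{\hat g}}}}(m^{(e,S_{\hat g})})\|_{2,e}^2\le C\{(1+\gamma^\star)\bar{\mathsf{d}}_{\mathcal{G},\mathcal{F}}(S_{\hat g})+\|\hat g-\tilde g\|_2^2+\delta_{\mathtt{a},\mathcal{G}}^2+n^{-1}\}$, absorbing it into the positive terms $\tfrac{\gamma}{4}\bar{\mathsf{d}}_{\mathcal{G},\mathcal{F}}(S_{\hat g})$ and $\|\hat g-\tilde g\|_2^2$ of the strong convexity bound; your localization by $\sqrt{\mathsf{J}^\star(g)}$ can serve the same purpose but the absorption needs to be carried out.
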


\cref{thm:oracle} generalizes Theorem 4.4 in \cite{fan2024environment} to a broad spectrum of $(\mathcal{G}, \mathcal{F})$ configurations. After specifying the function class $(\mathcal{G},\mathcal{F})$, one can further derive the corresponding identification condition by calculating $(\mathsf{b}_{\mathcal{G}}(S), \bar{\mathsf{d}}_{\mathcal{G}, \mathcal{F}}(S))$ and establish a high probability bound on the $L_2$ error by substituting approximation errors $(\delta_{\mathtt{a},\mathcal{G}}, \delta_{\mathtt{a}, \mathcal{F},\mathcal{G}}(S), \delta_{\mathtt{a}, \mathcal{F}, \mathcal{G}}^\star)$ and stochastic error $\delta_{n}$ for the function class $(\mathcal{G}, \mathcal{F})$. In particular, when $\mathcal{G}$ and $\mathcal{F}$ are restricted to the linear function class, they not only match but also significantly improve the result in \cite{fan2024environment}; see \cref{sec:theory-linearx}. All the results in \cref{table:estimators} are direct corollaries of our abstract result \cref{thm:oracle}.

It is required that $\gamma$ should be greater than a constant-level critical threshold $8\gamma^\star$ for consistent estimation of $g^\star$. \cref{thm:oracle} further establishes a crude instant-dependent and oracle-type error bound \eqref{eq:main-result-general-rate} that holds for arbitrary $n \ge 2$ and scales linearly with $\gamma$. Furthermore, when the stochastic error and approximation errors all go to $0$ as $n$ increases and $n$ is large enough such that \eqref{eq:main-result-faster-cond} holds, we have \eqref{eq:main-result-rate}, which improves the $L_2$ error bound \eqref{eq:main-result-general-rate} in two aspects -- the error bound is no longer dependent on either $\gamma$ or other $m^{(e, S)}$ with $S\neq S^\star$. The quantities in the RHS of \eqref{eq:main-result-faster-cond} can be interpreted as the smaller of (1) the signal of true important variables and (2) the signal of heterogeneity. When one of these signals is weak, one can expect to demand more data to differentiate whether it is signal or noise.

One important ingredient in the FAIR estimator is the choice of regularization hyper-parameter $\gamma$ that promotes the invariance. \cref{thm:oracle} offers some insights on choosing $\gamma$. Firstly, $\gamma \ge C\gamma^\star$ is required such that it will correctly identify $g^\star$ from a population-level perspective. Second, it will influence the $L_2$ error rate when $n$ is not large enough such that \eqref{eq:main-result-faster-cond} does not hold. % [CM1]
Furthermore, the final $L_2$ error rate \eqref{eq:main-result-rate} when $n$ is large enough is independent of $\gamma$. This indicates that the estimator's performance is somewhat not very sensitive to the choice of hyper-parameter $\gamma$. In this case, one can adopt a slightly conservative large $\gamma$ to meet the population condition $\gamma \ge C\gamma^\star$.

\subsection{Extension to the General Risk Loss under the Nonparametric Setting}
\label{sec:general-loss}

\begin{condition}[Risk Loss]
\label{cond:general-loss}
    Let $v_l=\inf_{g\in \mathcal{G}\cup \{g^\star\}}\sup_{L}\{g(X)\ge L, \bar{\mu}_x\text{-}a.s.\}$ and $v_r=\sup_{g\in \mathcal{G}\cup \{g^\star\}}\inf_{U}\{g(X)\le U, \bar{\mu}_x\text{-}a.s.\}$
    Define $\mathcal{V} = [v_l, v_r]$ be the value that $g(X)$ takes, and $\mathcal{Y}= [\sup_{l}\{Y\ge l, \bar{\mu}_x\text{-}a.s.\}, \inf_{u}\{Y\le u, \bar{\mu}_x\text{-}a.s.\}]$ be the value that $Y$ takes. The loss $\ell(\cdot, \cdot)$ satisfies
    \begin{itemize}[noitemsep]
        %\item[(1)] $\mathbb{E}[\ell(Y,m^{(e,S)}(X^{(e)}))] < \infty$ for any $e\in \mathcal{E}$, $S\subset [d]$, and $\ell(y,\cdot)$ is a convex function for any $y\in \mathbb{R}$. 
        \item[(1)]  $\ell(y, v) < \infty$ for any $y\in \mathcal{Y}$ and $v\in \mathcal{V}$ and twice continuously differentiable in $\mathcal{Y} \times \mathcal{V}$. $\frac{\partial \ell(y, v)}{\partial v} = (v-y)\psi(v)$ for some continuously differentiable $\psi(v): \mathbb{R} \to \mathbb{R}$.
        \item[(2)] There exists some universal constant $\zeta\ge 1$ such that
        \begin{align*}
            |\psi(v)| \le \zeta \qquad \text{and} \qquad {\zeta^{-1}} \le \frac{\partial^2 \ell}{\partial v^2}(Y, v) \le {\zeta} \qquad \forall v \in \mathcal{V} \text{ and } \bar{\mu}\text{-}a.s. ~.
        \end{align*}
    \end{itemize}
\end{condition}

The assumptions on risk loss in \cref{cond:general-loss} is standard: (1) ensures that $\ell$ is well-defined on optimal solutions and linear combination of them, (2) requires that the population-level global minima is conditional mean, (3) guarantees that the loss function is strongly convex and smooth in the domain, and satisfies $|\ell(y,v)-\ell(y,v')|\le \zeta|y-\tilde{v}||v-v'|$ for some universal constant $\zeta$, which slightly relaxes the Lipschitz condition in \cite{farrell2021deep} and \cite{foster2019orthogonal}. 

We now state the invariance and identification condition when the general risk loss is adopted.

\begin{condition}[Invariance and Identification for General Risk Loss]
\label{cond:general-gf-2}
Suppose the following holds
\begin{itemize}
    \item[1.] (Invariance) There exists some index set $S^\star\subseteq [d]$ such that
    \begin{align*}
        \forall e\in \mathcal{E} \qquad m^{(e,S^\star)} = \bar{m}^{(S^\star)} =: m^\star
    \end{align*}
    \item[2.] (Heterogeneity) For each $S\subseteq [d]$, if $\mathsf{b}(S) > 0$, then $\bar{\mathsf{d}}(S) > 0$, where
    \begin{align}
        \label{eq:theory-bias-and-variance-general-2}
        \mathsf{b}(S) := \|\bar{m}^{(S\cup S^\star)} - m^\star\|_{2}^2, \qquad 
        \bar{\mathsf{d}}(S) := \frac{1}{|\mathcal{E}|} \sum_{e=1}^m \|m^{(e,S)} - \bar{m}^{(S)} \|_{2,e}^2.
    \end{align}
    \item[3.] (Nondegenerate Covariate) For any $S\subseteq [d]$ such that $S^\star \setminus S \neq \emptyset$, we have $\inf_{g\in \Theta_S} \|g - m^\star\|_{2}^2 \ge s_{\min}$ for some constant $s_{\min}>0$.
\end{itemize}
\end{condition}

We are now ready to state the main result in this case.

\begin{theorem}[Main Result for the FAIR Estimator with General Risk Loss]
\label{thm:oracle-2}
Assume \cref{cond:general-dgp},\ref{cond:general-response},\ref{cond:general-function-class}, and \cref{cond:general-loss}--\ref{cond:general-gf-2} hold. Define the critical threshold
\begin{align*}
    \gamma^\star := \sup_{S\subseteq [d]: \mathsf{b}(S) > 0} \frac{\mathsf{b}(S)}{\bar{\mathsf{d}}(S)}.
\end{align*} There exists some universal constant $C$ such that, for any $\gamma \ge 8\zeta^2\gamma^\star$, the following holds:

\noindent (1) \underline{General $L_2$ error rate.} Let $t>0$ be arbitrary. Define general approximation errors with respect to the function class $\mathcal{G}$ and $\mathcal{F}$ as
\begin{align*}
\delta_{\mathtt{a},\mathcal{G}} = \inf_{g\in \mathcal{G}_{S^\star}} \|g-m^\star\|_2 ~~~~\text{and} ~~~~\delta_{\mathtt{a}, \mathcal{F},\mathcal{G}}(S) = \sqrt{\frac{1}{|\mathcal{E}|} \sum_{e\in \mathcal{E}} \sup_{g\in \mathcal{G}: S_g=S} \inf_{f\in {\mathcal{F}_{S_g}}} \|m^{(e,S)} - g - f\|_{2,e}^2},
\end{align*} and the stochastic error as $\delta_{n,t} = \delta_n + \{(\log(nB|\mathcal{E}|) + t + 1)/n\}^{1/2}$, where $\delta_n$ is the quantity in \cref{cond:general-function-class}. Let $U=B(B + \sigma_y \sqrt{\log(n|\mathcal{E}|)})$, then
\begin{align}
\label{eq:main-result-general-rate-2}
    \|\hat{g} - m^\star\|_2 \lor \|\hat{g} - m^\star\|_n \le C (\zeta+\gamma)\zeta \left(U\delta_{n,t} + \delta_{\mathtt{a}, \mathcal{G}} + \delta_{\mathtt{a}, \mathcal{F}, \mathcal{G}}(S_{\hat{g}}) + \delta_{\mathtt{a}, \mathcal{F}, \mathcal{G}}(S^\star) + \delta_{\mathtt{opt}} \right).
\end{align} with probability at least $\mathfrak{p}=1-6e^{-t}-2C_y(\sigma_y+1)n^{-100}$.

\noindent (2) \underline{Faster $L_2$ error rate.} Moreover, if
\begin{align} 
\label{eq:main-result-faster-cond-2}
\begin{split}
    &\delta_{\mathtt{opt}}^2 + \sup_{S\subseteq [d]}\delta^2_{\mathtt{a}, \mathcal{F}, \mathcal{G}}(S) + \delta^2_{\mathtt{a}, \mathcal{G}} + UB\delta_{n,t} \\
    &~~~~~~~~~~~~~~~~ \le \left\{ 1\land \frac{s_{\min}}{(\gamma+\zeta)\zeta} \land \left(\frac{\gamma}{\gamma+\zeta} \inf_{S: \bar{\mathsf{d}}_{\mathcal{G}, \mathcal{F}}(S) > 0} \bar{\mathsf{d}}_{\mathcal{G}, \mathcal{F}}(S)\right) \right\}/C
\end{split}
\end{align} then the following holds, with probability at least $\mathfrak{p}$,
\begin{align}
\label{eq:main-result-rate-2}
    \|\hat{g} - m^\star\|_2 \lor \|\hat{g} - m^\star\|_n \le C \zeta^2\left(U\delta_{n,t} + \delta_{\mathtt{a}, \mathcal{G}} + \delta_{\mathtt{a}, \mathcal{F}, \mathcal{G}}^\star + \delta_{\mathtt{opt}} \right),
\end{align} where $\delta_{\mathtt{a}, \mathcal{F}, \mathcal{G}}^\star = \{\frac{1}{|\mathcal{E}|} \sum_{e\in \mathcal{E}} \sup_{g\in \mathcal{G}} \inf_{f\in \mathcal{F}_{S_g}} \|m^\star - g - f\|_{2,e}^2\}^{1/2}$.
\end{theorem}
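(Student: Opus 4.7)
The plan is to parallel the proof of \cref{thm:oracle} (the least-squares specialization) while carefully tracking the strong-convexity/smoothness constant $\zeta$ of the general loss $\ell$, and to exploit a crucial structural observation: the FAIR penalty $\hat{\mathsf{J}}(g, f^{\mathcal{E}})$ is independent of $\ell$, since it only involves the inner form $(Y-g)f - f^2/2$. In particular, at the population level the maximizer over $f^{(e)}\in \mathcal{F}_{S_g}$ still equals the $L_2(\mu^{(e)})$-projection of $m^{(e,S_g)} - g$ onto $\mathcal{F}_{S_g}$, and the penalty collapses to the same heterogeneity-type expression $\tfrac{1}{2}\sum_e \|\Pi^{(e)}_{\mathcal{F}_{S_g}}(m^{(e,S_g)}-g)\|_{2,e}^2$ as in the least-squares case. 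Hence the entire identification mechanism—ensuring $m^\star$ is the unique population minimizer when $\gamma > 8\zeta^2 \gamma^\star$—is inherited from \cref{thm:oracle}, after inflating the threshold by $\zeta^2$ to compensate for the two-sided bound on $\partial_v^2 \ell$.

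Next I would establish, via a second-order Taylor expansion of $v\mapsto \ell(Y,v)$ around $m^\star(X)$ and the two-sided bound $\zeta^{-1}\le \partial_v^2 \ell(Y,v)\le \zeta$ in \cref{cond:general-loss}, the sandwich
\begin{align*}
\frac{1}{2\zeta}\,\|g - m^\star\|_2^2 \;\le\; \mathsf{R}(g) - \mathsf{R}(m^\star) \;\le\; \frac{\zeta}{2}\,\|g - m^\star\|_2^2,
\end{align*}
valid whenever $S^\star \subseteq S_g$ and $g$ inherits the invariance structure (so the first-order term vanishes via $\mathbb{E}[(Y-m^\star)\psi(m^\star)\mid X_{S^\star}]=0$). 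This is the key device to convert excess-risk bounds into the $L_2$ rates \eqref{eq:main-result-general-rate-2} and \eqref{eq:main-result-rate-2}. I would then run the same trichotomy on $S=S_{\hat g}$ used in the proof of \cref{thm:oracle}: case (i) $S^\star\not\subseteq S$, excluded by the nondegeneracy signal $s_{\min}$; case (ii) $S^\star\subseteq S$ with $\mathsf{b}(S)>0$, where the penalty overwhelms any risk improvement once $\gamma\ge 8\zeta^2\gamma^\star$; and case (iii) $\mathsf{b}(S)=0$, contributing directly to the approximation-error terms $\delta_{\mathtt{a},\mathcal{G}}$, $\delta_{\mathtt{a},\mathcal{F},\mathcal{G}}(S)$.

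For the stochastic error I would use that under \cref{cond:general-loss} the loss is $U\zeta$-Lipschitz in $v$ on the truncated support $\mathcal{Y}\times \mathcal{V}$ (since $|\psi|\le \zeta$ and sub-Gaussianity truncates $Y$ at order $\sigma_y\sqrt{\log(n|\mathcal{E}|)}$, producing the $U$ factor). By symmetrization and the Ledoux--Talagrand contraction principle, the local Rademacher bound in \cref{cond:general-function-class} then yields uniform concentration of $\hat{\mathsf{R}}-\mathsf{R}$ at rate $U\delta_{n,t}$, controlling $\|\hat g-m^\star\|_2$ and the empirical norm $\|\hat g-m^\star\|_n$ simultaneously via standard peeling. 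The adversarial component $\hat{\mathsf{J}}$ is already quadratic and is handled verbatim as in \cref{thm:oracle}. The main obstacle will be keeping the $\zeta$-dependence tight: unlike the least-squares case, the risk is only approximately quadratic, so each step that previously used the exact identity $\mathsf{R}(g)-\mathsf{R}(m^\star)=\|g-m^\star\|_2^2$ now generates a residual of order $\zeta\|g-m^\star\|_2^2$ that must be absorbed into the localization radius, and this is precisely what forces the critical threshold on $\gamma$ to inflate from $8\gamma^\star$ to $8\zeta^2\gamma^\star$ and the prefactors in \eqref{eq:main-result-general-rate-2}--\eqref{eq:main-result-rate-2} to scale as $(\gamma+\zeta)\zeta$ and $\zeta^2$, respectively.
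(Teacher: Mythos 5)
Your overall architecture matches the paper's: the quadratic FAIR penalty is untouched by the change of loss (so \cref{prop:nonasymptotic-a} and \cref{prop:characterize-f} apply verbatim), the population analysis is redone via a second-order Taylor expansion of $\ell$ with the two-sided curvature bound $\zeta^{-1}\le \partial_v^2\ell\le\zeta$, the stochastic term uses the factored gradient $(v-y)\psi(v)$ with $|\psi|\le\zeta$ plus truncation of $Y$ to produce the extra $\zeta$ in the $U\zeta\,\delta_{n,t}$ rate, and the faster rate goes through a variable-selection step followed by a sharper localization. This is exactly \cref{prop:population-general-loss} and \cref{prop:nonasymptotic-pooled-general-loss} in the paper.

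However, your central ``sandwich'' device is stated under the wrong condition, and as written it fails precisely in the case that carries the identification argument. You claim $\tfrac{1}{2\zeta}\|g-m^\star\|_2^2\le \mathsf{R}(g)-\mathsf{R}(m^\star)\le\tfrac{\zeta}{2}\|g-m^\star\|_2^2$ for $S^\star\subseteq S_g$, with the first-order term killed by $\mathbb{E}[(Y-m^\star)\psi(m^\star)\mid X_{S^\star}]=0$. That conditional-expectation identity is true, but to annihilate the first-order term $\mathbb{E}[(Y-m^\star(X))\psi(m^\star(X))(g(X)-m^\star(X))]$ you need $g-m^\star$ to be $X_{S^\star}$-measurable, i.e., $S_g\subseteq S^\star$ --- the opposite inclusion. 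For $S_g\supsetneq S^\star$ with $\mathsf{b}(S_g)>0$ (your case (ii)), the first-order term equals $\int(\bar m^{(S_g\cup S^\star)}-m^\star)\psi(m^\star)(g-m^\star)\,d\bar\mu_x$, which is nonzero and is exactly the endogeneity bias. The paper does not discard it: it expands around $\tilde g\in\mathcal{G}_{S^\star}$ rather than $m^\star$, keeps the cross term, bounds it by Cauchy--Schwarz as $\tfrac{\delta\zeta^{-1}}{2}\|g-\tilde g\|_2^2+\zeta^2\delta^{-1}\{\mathsf{b}(S)+\|m^\star-\tilde g\|_2^2\}$, and then absorbs $\zeta^2\delta^{-1}\mathsf{b}(S)$ into $\tfrac{\gamma}{4}\bar{\mathsf{d}}(S)$ using $\gamma\ge 4\zeta^2\delta^{-1}\gamma^\star$ --- this is where the $\zeta^2$ inflation of the threshold actually originates, not from the curvature residuals you describe. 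Your trichotomy gestures at the right resolution (``the penalty overwhelms any risk improvement''), but you need to make the cross term explicit and show it is dominated by $\gamma\bar{\mathsf{d}}(S)$; without that, the proof has no mechanism to rule out biased solutions with $S^\star\subseteq S_{\hat g}$ and $\mathsf{b}(S_{\hat g})>0$. A secondary point: expanding around $\tilde g\in\mathcal{G}_{S^\star}$ rather than $m^\star$ also matters because $m^\star$ need not lie in $\mathcal{G}$, and the approximation error $\delta_{\mathtt{a},\mathcal{G}}$ enters through $\|\tilde g-m^\star\|_2$ in the remainder terms.
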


\subsection{Key Ideas and Proof Sketch of Theorem \ref{thm:oracle}}
\label{subsec:proof-sketch}
We first introduce some additional notations. Let
\begin{align*}
    \mathsf{A}^{(e)}(g, f^{(e)}) &= \mathbb{E} \left[ \{Y^{(e)} - g(X^{(e)})\} f^{(e)}(X^{(e)}) - \frac{1}{2} \{f^{(e)}(X^{(e)})\}^2\right] \\
    \hat{\mathsf{A}}^{(e)}(g, f^{(e)}) &= \frac{1}{n} \sum_{i=1}^n  \{Y_i^{(e)} - g(X_i^{(e)})\} f^{(e)}(X_i^{(e)}) - \frac{1}{2} \{f^{(e)}(X_i^{(e)})\}^2.
\end{align*} Define the population-level pooled risk and FAIR estimator loss as
\begin{align*}
    \mathsf{R}(g) = \frac{1}{|\mathcal{E}|} \sum_{e\in \mathcal{E}} \mathbb{E}\left[ \frac{1}{2} |Y^{(e)} - g(X^{(e)})|^2 \right] \qquad \text{and}\qquad \mathsf{Q}_\gamma(g, f^{\mathcal{E}}) = \mathsf{R}(g) + \gamma \mathsf{J}(g, f^{\mathcal{E}})
\end{align*} 

We will use the following theorem establishing approximate strong convexity with respect to $g^\star$.

\begin{theorem}
\label{thm:population}
    Assume \cref{cond:general-gf} hold, $\ell(y, v)=\frac{1}{2}(y-v)^2$. Let $\delta \in (0,1)$ be arbitrary. Then the following holds, for any $\gamma\ge 4\delta^{-1} \gamma^\star$,
    \begin{align*}
        &\mathsf{Q}_\gamma(g, f^{\mathcal{E}}) - \mathsf{Q}_\gamma(\tilde{g}, \tilde{f}^{\mathcal{E}}) \\
        &~~~\ge \frac{1-\delta}{2} \|g - \tilde{g}\|_2^2 +  \frac{\gamma}{4} \bar{\mathsf{d}}_{\mathcal{G}, \mathcal{F}}(S) + \frac{\gamma}{2} \|g - \Pi_{\overline{\mathcal{G}_S}}(\bar{m}^{(S)})\|_2^2\\
        &~~~~~~~~~ - \frac{\gamma}{2|\mathcal{E}|} \sum_{e\in \mathcal{E}}  \|f^{(e)} - \{\Pi_{\overline{\mathcal{F}_S}}^{(e)} (m^{(e,S)}) - g\}\|_{2,e}^2 - (\delta^{-1} + \gamma/2) \|\tilde{g} - g^\star\|_2^2  
    \end{align*} for any $g\in \mathcal{G}$, $\tilde{g} \in \mathcal{G}_{S^\star}$ and $S_{\tilde{g}} = S^\star$, $f^{\mathcal{E}} \in \{\overline{\mathcal{F}_{S_g}}\}^{|\mathcal{E}|}$, and $\tilde{f}^{\mathcal{E}} \in \{\overline{\mathcal{F}_{S^\star}}\}^{|\mathcal{E}|}$.
\end{theorem}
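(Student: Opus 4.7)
The plan is to split $\mathsf{Q}_\gamma(g, f^{\mathcal{E}}) - \mathsf{Q}_\gamma(\tilde g, \tilde f^{\mathcal{E}})$ into its risk and penalty parts, evaluate each one as a perfect-square decomposition about the relevant $L_2$-projections, and then balance the risk-to-penalty cross term with Young's inequality, using the hypothesis $\gamma \geq 4\delta^{-1}\gamma^\star$ precisely to absorb the bias $\mathsf{b}_{\mathcal{G}}(S_g)$ into the heterogeneity $\bar{\mathsf{d}}_{\mathcal{G},\mathcal{F}}(S_g)$. For the risk, expanding $(Y-g)^2 - (Y-\tilde g)^2 = (g-\tilde g)^2 - 2(g-\tilde g)(Y - \tilde g)$ and conditioning $Y$ on $X_{S_g \cup S^\star}$ in each environment converts the cross term into an inner product against $\bar m^{(S_g\cup S^\star)} - \tilde g$ in $L_2(\bar\mu)$; since $g - \tilde g \in \overline{\mathcal{G}_{S_g\cup S^\star}}$, orthogonality of the projection $\bar g^{(S_g\cup S^\star)} := \Pi_{\overline{\mathcal{G}_{S_g\cup S^\star}}}(\bar m^{(S_g\cup S^\star)})$ lets me swap $\bar m^{(S_g\cup S^\star)}$ for $\bar g^{(S_g\cup S^\star)}$, yielding
\begin{equation*}
\mathsf{R}(g) - \mathsf{R}(\tilde g) = \tfrac{1}{2}\|g - \tilde g\|_2^2 - \bigl\langle g - \tilde g,\; \bar g^{(S_g\cup S^\star)} - \tilde g\bigr\rangle_{\bar\mu}.
\end{equation*}

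For the penalty, I first carry out the inner maximization over $f^{(e)} \in \overline{\mathcal{F}_{S_g}}$ by completing the square: using that $f^{(e)}$ depends only on $X_{S_g}$ and so $\mathbb{E}[(Y - g)f^{(e)}] = \mathbb{E}[(m^{(e,S_g)} - g)f^{(e)}]$, then applying Pythagoras to the $\overline{\mathcal{F}_{S_g}}$-projection $u^{(e)} := \Pi^{(e)}_{\overline{\mathcal{F}_{S_g}}}(m^{(e,S_g)})$, I obtain
\begin{equation*}
\mathsf{J}(g, f^{\mathcal{E}}) = \tfrac{1}{2|\mathcal{E}|}\sum_{e\in\mathcal{E}}\|u^{(e)} - g\|_{2,e}^2 - \tfrac{1}{2|\mathcal{E}|}\sum_{e\in\mathcal{E}}\|f^{(e)} - (u^{(e)} - g)\|_{2,e}^2.
\end{equation*}
The invariance hypothesis gives $u^{(e)} \equiv g^\star$ across $e$ when $S = S^\star$, so dropping the nonnegative second term at $(\tilde g, \tilde f^{\mathcal{E}})$ produces $\mathsf{J}(\tilde g, \tilde f^{\mathcal{E}}) \leq \tfrac{1}{2}\|g^\star - \tilde g\|_2^2$. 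A short calculation exploiting the subspace nesting $\overline{\mathcal{G}_{S_g}} \subseteq \overline{\mathcal{F}_{S_g}}$ (so that $m^{(e,S_g)}$ and $u^{(e)}$ share inner products with every $\overline{\mathcal{G}_{S_g}}$-test function) identifies $\bar g^{(S_g)} := \Pi_{\overline{\mathcal{G}_{S_g}}}(\bar m^{(S_g)})$ with the $\overline{\mathcal{G}_{S_g}}$-projection of the $\bar\mu$-weighted average $|\mathcal{E}|^{-1}\sum_e \rho^{(e)}_{S_g} u^{(e)}$, which annihilates the cross term when $\|u^{(e)} - g\|_{2,e}^2$ is expanded about $\bar g^{(S_g)}$ and averaged over $\mathcal{E}$, leaving the clean identity
\begin{equation*}
\tfrac{1}{|\mathcal{E}|}\sum_{e\in\mathcal{E}}\|u^{(e)} - g\|_{2,e}^2 = \bar{\mathsf{d}}_{\mathcal{G},\mathcal{F}}(S_g) + \|g - \bar g^{(S_g)}\|_2^2.
\end{equation*}

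The last step balances the risk cross term. Writing $\bar g^{(S_g\cup S^\star)} - \tilde g = (\bar g^{(S_g\cup S^\star)} - g^\star) + (g^\star - \tilde g)$ and applying $|ab| \leq (\delta/4)a^2 + \delta^{-1}b^2$ to each factor peels off at most $(\delta/2)\|g-\tilde g\|_2^2$ from the positive quadratic $\tfrac{1}{2}\|g-\tilde g\|_2^2$ (giving the factor $(1-\delta)/2$), a $\delta^{-1}\mathsf{b}_{\mathcal{G}}(S_g)$ remainder from the first piece (since $\|\bar g^{(S_g\cup S^\star)} - g^\star\|_2^2 = \mathsf{b}_{\mathcal{G}}(S_g)$), and a $\delta^{-1}\|g^\star - \tilde g\|_2^2$ remainder from the second. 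The bias remainder is then killed by the heterogeneity penalty through $\delta^{-1}\mathsf{b}_{\mathcal{G}}(S_g) \leq \delta^{-1}\gamma^\star\bar{\mathsf{d}}_{\mathcal{G},\mathcal{F}}(S_g) \leq (\gamma/4)\bar{\mathsf{d}}_{\mathcal{G},\mathcal{F}}(S_g)$ under $\gamma \geq 4\delta^{-1}\gamma^\star$, so a net $\gamma/4$ coefficient survives on $\bar{\mathsf{d}}_{\mathcal{G},\mathcal{F}}(S_g)$; combining the $\|g^\star - \tilde g\|_2^2$ remainders from the risk and from $\mathsf{J}(\tilde g, \tilde f^{\mathcal{E}})$ gives the claimed $-(\delta^{-1} + \gamma/2)$ coefficient. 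I expect the main obstacle to be the clean penalty identity of the second paragraph: verifying cross-term vanishing requires exactly the nesting $\overline{\mathcal{G}_S}\subseteq\overline{\mathcal{F}_S}$ built into the invariance condition, since without it the pooled projection $\bar g^{(S_g)}$ would not align with the adversarial optima $u^{(e)}$, and the whole orchestration of Young's coefficients around the cutoff $\gamma^\star/\delta$ would break.
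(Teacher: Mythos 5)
Your proposal is correct and follows essentially the same route as the paper's proof: the same quadratic expansion of the pooled risk with the cross term handled via the $\overline{\mathcal{G}_{S\cup S^\star}}$-projection and a Young/Cauchy--Schwarz split into $\frac{\delta}{2}\|g-\tilde g\|_2^2 + \delta^{-1}\mathsf{b}_{\mathcal{G}}(S) + \delta^{-1}\|\tilde g - g^\star\|_2^2$, the same completion-of-the-square and double-Pythagoras decomposition of the penalty into $\frac{1}{2}\bar{\mathsf{d}}_{\mathcal{G},\mathcal{F}}(S) + \frac{1}{2}\|g-\Pi_{\overline{\mathcal{G}_S}}(\bar m^{(S)})\|_2^2$ minus the discriminator-suboptimality term (relying on exactly the nesting $\overline{\mathcal{G}_S}\subseteq\overline{\mathcal{F}_S}$ to kill the cross term), and the same absorption $\delta^{-1}\mathsf{b}_{\mathcal{G}}(S)\le\frac{\gamma}{4}\bar{\mathsf{d}}_{\mathcal{G},\mathcal{F}}(S)$ under $\gamma\ge 4\delta^{-1}\gamma^\star$. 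The only cosmetic difference is that you apply Young's inequality separately to the two pieces of $\bar g^{(S\cup S^\star)}-\tilde g$ where the paper uses one Cauchy--Schwarz followed by $\|a+b\|_2^2\le 2\|a\|_2^2+2\|b\|_2^2$; the resulting constants are identical.
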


Recall our definition of
\begin{align*}
    \delta_{n,t} = \delta_n + \sqrt{\frac{t + \log (nB|\mathcal{E}|) + 1}{n}} \qquad \text{and} \qquad U = B(B + \sigma\sqrt{\log(n|\mathcal{E}|)})
\end{align*}
The first proposition establishes instance-dependent error bounds on 
\begin{align*}
    \Delta_\mathsf{R}(g, \tilde{g}) := \{\hat{\mathsf{R}}(g) - \hat{\mathsf{R}}(\tilde{g})\} - \{{\mathsf{R}}(g) - \mathsf{R}(\tilde{g})\},
\end{align*} and is standard in nonparametric regression literature.

\begin{proposition}[Instance-dependent error bounds for pooled risk]
\label{prop:nonasymptotic-pooled}
Suppose \cref{cond:general-dgp},\ref{cond:general-response}, \ref{cond:general-function-class} hold. There exists some universal constant $C$ such that for any $\eta>0$ and $t>0$, the following event
\begin{align*}
    \forall g, \tilde{g} \in \mathcal{G}, ~~ |\Delta_\mathsf{R}(g, \tilde{g})| \le C U \left\{ \delta_{n,t}^2 + \delta_{n,t} \frac{1}{|\mathcal{E}|} \sum_{e\in \mathcal{E}} \|g - \tilde{g}\|_{2,e}\right\}
\end{align*} occurs with probability at least $1-3e^{-t}-C_y(\sigma_y+1) n^{-100}$.
\end{proposition}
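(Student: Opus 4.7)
The strategy is to (i) reduce to bounded responses by truncating the sub-Gaussian $Y$'s, (ii) obtain a localised uniform concentration bound on each environment, and (iii) average across environments.

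\emph{Truncation.} By \cref{cond:general-response} and a union bound over the $n|\mathcal{E}|$ samples, the event $\mathcal{A}=\{\max_{e,i}|Y_i^{(e)}|\le T\}$ with $T\asymp\sigma_y\sqrt{\log(n|\mathcal{E}|)}$ has probability at least $1-C_y(\sigma_y+1)n^{-100}$, and the corresponding population tail $\mathbb{E}[|Y|^2\mathbf{1}\{|Y|>T\}]$ is super-polynomially small and can be absorbed into the rate. On $\mathcal{A}$ the integrand
\[
\phi_{g,\tilde g}(x,y):=\ell(y,g(x))-\ell(y,\tilde g(x))=\tfrac{1}{2}\{g(x)+\tilde g(x)-2y\}\{g(x)-\tilde g(x)\}
\]
is uniformly bounded by $2U$ and satisfies $\|\phi_{g,\tilde g}\|_{L_2(\mu^{(e)})}\le (B+T)\,\|g-\tilde g\|_{2,e}$ for every $e\in\mathcal{E}$.

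\emph{Per-environment localisation.} For each fixed $e\in\mathcal{E}$, I peel the pairs $(g,\tilde g)$ over bands $\|g-\tilde g\|_{2,e}\in[2^{k-1}\delta_{n,t},2^k\delta_{n,t}]$ with $k=0,\ldots,O(\log(nB))$ and apply Talagrand's concentration inequality on each band. The required bound on the local Rademacher complexity of $\{\phi_{g,\tilde g}:\|g-\tilde g\|_{2,e}\le r\}$ comes from the decomposition $\phi_{g,\tilde g}=\tfrac{1}{2}(g^2-\tilde g^2)-y(g-\tilde g)$: the first summand is controlled by applying the Ledoux-Talagrand contraction principle to the $2B$-Lipschitz map $v\mapsto v^2$ on $[-B,B]$, and the second uses that $\varepsilon_i Y_i$ remains symmetric with $|\varepsilon_i Y_i|\le T$ on $\mathcal{A}$, so that conditional contraction gives $R_{n,\mu^{(e)}}(\{y(g-\tilde g):\|g-\tilde g\|_{2,e}\le r\})\lesssim T\cdot R_{n,\mu^{(e)}}(r;\partial\mathcal{G})$. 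Combined with \cref{cond:general-function-class}(2) this yields a localised Rademacher complexity of order $U\delta_n r$. Absorbing the $O(\log(nB|\mathcal{E}|))$ factor from peeling and the forthcoming union bound over $e$ into the definition of $\delta_{n,t}$, Talagrand's inequality produces, for each $e$ and with probability at least $1-e^{-t}/|\mathcal{E}|$,
\[
\sup_{g,\tilde g\in\mathcal{G}}\bigl|(\hat{\mathbb{E}}_e-\mathbb{E}_e)\phi_{g,\tilde g}\bigr|\;\le\;CU\bigl\{\delta_{n,t}^2+\delta_{n,t}\,\|g-\tilde g\|_{2,e}\bigr\}.
\]

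\emph{Aggregation and main obstacle.} A union bound over $e\in\mathcal{E}$ together with the identity $\Delta_\mathsf{R}(g,\tilde g)=\tfrac{1}{|\mathcal{E}|}\sum_{e}(\hat{\mathbb{E}}_e-\mathbb{E}_e)\phi_{g,\tilde g}$ and the triangle inequality deliver the stated bound in terms of the average $\tfrac{1}{|\mathcal{E}|}\sum_{e}\|g-\tilde g\|_{2,e}$; intersecting with $\mathcal{A}$ contributes the $C_y(\sigma_y+1)n^{-100}$ term, and the three $e^{-t}$ terms gather the truncation-induced expectation correction and the two deviation pieces produced by the Bousquet-form Talagrand inequality. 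The main technical obstacle is the control of the localised Rademacher complexity of the product class $\{\phi_{g,\tilde g}\}$: because the ``weight'' $\tfrac{1}{2}(g+\tilde g-2y)$ depends on both the functions $g,\tilde g$ and on the random response $y$, the contraction principle cannot be invoked directly on the class $\{\phi_{g,\tilde g}\}$; the workaround is the decomposition $\phi_{g,\tilde g}=\tfrac{1}{2}(g^2-\tilde g^2)-y(g-\tilde g)$ described above, which reduces the complexity of the product class to that of $\partial\mathcal{G}$ supplied by \cref{cond:general-function-class}.
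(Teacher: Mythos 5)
Your proposal is correct and follows essentially the same route as the paper: decompose $\ell(y,g)-\ell(y,\tilde g)$ into a $Y$-weighted linear term, a bounded-weight linear term, and a quadratic term, truncate the sub-Gaussian response at $\asymp\sigma_y\sqrt{\log(n|\mathcal{E}|)}$, control each piece by a localized (two-function) contraction/peeling/Talagrand argument reducing to the complexity of $\partial\mathcal{G}$, and union bound over environments; the paper packages the contraction-peeling-Talagrand step into its Lemma~\ref{lemma:ep1} and handles the truncation tail by a Markov bound on the supremum of the tail empirical process rather than by conditioning on a boundedness event, but these are cosmetic differences.
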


The analysis of the focused adversarial invariance regularizer is more involved. The next proposition establishes the instance-dependent error bound for the regularizer. We define
\begin{align*}
    \Delta_{\mathsf{A}}^{(e)}(g, \tilde{g}, f^{(e)}, \tilde{f}^{(e)}) = \mathsf{A}^{(e)}(f, g^{(e)}) - \mathsf{A}^{(e)}(\tilde{f}, \tilde{g}^{(e)}) - \left\{\hat{\mathsf{A}}^{(e)}(f, g^{(e)}) - \hat{\mathsf{A}}^{(e)}(\tilde{f}, \tilde{g}^{(e)})\right\}
\end{align*} and
\begin{align*}
    \mathcal{M}(\mathcal{G},\mathcal{F}) = \left\{(g, \tilde{g}, f, \tilde{f}): g, \tilde{g} \in \mathcal{G} ~\text{and}~ f \in \mathcal{F}_{S_g}, \tilde{f} \in \mathcal{F}_{S_{\tilde{g}}} \right\}.
\end{align*}

\begin{proposition}[Instance-dependent error bounds for regularizer]
\label{prop:nonasymptotic-a}
Suppose \cref{cond:general-dgp}, \ref{cond:general-response}, \ref{cond:general-function-class} hold. There exists some universal constant $C$ such that for any $t>0$, the following event 
\begin{align*}
    \forall e\in \mathcal{E}, &~\forall (g, \tilde{g}, f^{(e)}, \tilde{f}^{(e)}) \in \mathcal{M}(\mathcal{G}, \mathcal{F}),\\
    &~~~~~~~~~~|\Delta_{\mathsf{A}}^{(e)}(g, \tilde{g}, f^{(e)}, \tilde{f}^{(e)})| \le C U \left(\delta_{n,t} \left(\|\tilde{g} - g\|_{2,e} + \|\tilde{g} + \tilde{f}^{(e)} - g - f^{(e)}\|_{2,e}\right) + \delta_{n,t}^2\right)
\end{align*} occurs with probability at least $1-3e^{-t}-C_y(\sigma_y+1) n^{-100}$.
\end{proposition}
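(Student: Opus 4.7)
My plan rests on a clean algebraic identity. Using $2ab - b^2 = a^2 - (a-b)^2$ with $a = Y^{(e)} - g(X^{(e)})$ and $b = f^{(e)}(X^{(e)})$, one rewrites $\mathsf{A}^{(e)}$ as a difference of two squared-error risks:
\[
    2\mathsf{A}^{(e)}(g, f^{(e)}) \;=\; \mathbb{E}\bigl[(Y^{(e)} - g(X^{(e)}))^2 - (Y^{(e)} - g(X^{(e)}) - f^{(e)}(X^{(e)}))^2\bigr],
\]
and the same identity applies to the empirical version $\hat{\mathsf{A}}^{(e)}$. Writing $h = g + f^{(e)}$ and $\tilde h = \tilde g + \tilde f^{(e)}$, this decomposes $2\Delta_{\mathsf{A}}^{(e)}$ as a difference of two squared-loss empirical processes:
\[
    2\Delta_{\mathsf{A}}^{(e)}(g, \tilde g, f^{(e)}, \tilde f^{(e)}) \;=\; \Xi_1^{(e)}(g, \tilde g) \;-\; \Xi_2^{(e)}(h, \tilde h),
\]
where $\Xi_1^{(e)}$ (resp.\ $\Xi_2^{(e)}$) is the deviation between expectation and empirical mean of $(Y^{(e)} - \cdot)^2 - (Y^{(e)} - \cdot)^2$ over pairs from $\mathcal{G}$ (resp.\ from $\mathcal{G}+\mathcal{F}$).

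Each of $\Xi_1^{(e)}$ and $\Xi_2^{(e)}$ has exactly the structure bounded in \cref{prop:nonasymptotic-pooled}, except that here I need a per-environment version rather than the pooled one. The recipe is identical: first truncate $Y^{(e)}$ at level $\tau \asymp \sigma_y\sqrt{\log(n|\mathcal{E}|)}$ using \cref{cond:general-response}, paying the $C_y(\sigma_y+1)n^{-100}$ probability slack and rendering all summands bounded by $U \asymp B(B+\tau)$. Then I apply a peeling argument: decompose the supremum over $g-\tilde g$ (resp.\ $h-\tilde h$) into shells of $L_2(\mu^{(e)})$ radius $2^k \delta_{n,t}$, and on each shell invoke Talagrand/Bousquet concentration together with the linear-in-radius Rademacher bounds $R_{n,\mu^{(e)}}(\delta;\partial\mathcal{G}) \le B\delta_n\delta$ and $R_{n,\mu^{(e)}}(\delta;\partial(\mathcal{G}+\mathcal{F})) \le 2B\delta_n\delta$ supplied by \cref{cond:general-function-class}. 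Contraction, valid because the squared loss is $2U$-Lipschitz in the prediction on the truncated event, yields, uniformly over the respective classes,
\[
    |\Xi_1^{(e)}(g,\tilde g)| \le CU\bigl(\delta_{n,t}^2 + \delta_{n,t}\|g-\tilde g\|_{2,e}\bigr), \quad |\Xi_2^{(e)}(h,\tilde h)| \le CU\bigl(\delta_{n,t}^2 + \delta_{n,t}\|h-\tilde h\|_{2,e}\bigr).
\]
A union bound over $e \in \mathcal{E}$ is absorbed by the $\log(nB|\mathcal{E}|)$ factor already hidden in $\delta_{n,t}$, and substituting $h - \tilde h = (\tilde g + \tilde f^{(e)}) - (g + f^{(e)})$ delivers the stated inequality after a triangle inequality.

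The main obstacle, and the one genuine technical step, is the per-environment localized oracle inequality above: converting the \emph{uniform} linear-in-radius Rademacher bound of \cref{cond:general-function-class} into a \emph{localized} deviation bound with the fast $\delta_{n,t}^2$ additive term requires the standard but delicate combination of peeling with Talagrand's inequality, extended to squared loss with a sub-Gaussian rather than a bounded response. This is, however, precisely the machinery that proves \cref{prop:nonasymptotic-pooled}, so in writing the proof I would isolate a single-environment analogue of that proposition as a self-contained sub-lemma and invoke it twice: once with the Rademacher bound on $\partial\mathcal{G}$ to control $\Xi_1^{(e)}$, and once with the bound on $\partial(\mathcal{G}+\mathcal{F})$ to control $\Xi_2^{(e)}$. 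With that lemma in hand, the rest of the argument is the two-line identity and triangle inequality sketched above.
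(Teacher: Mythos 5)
Your proposal is correct and follows essentially the same route as the paper: the paper's proof also starts from the identity $ab-\tfrac12 b^2=\tfrac12\{a^2-(a-b)^2\}$ to rewrite $\Delta_{\mathsf{A}}^{(e)}$ as the sum of two squared-loss empirical processes over $\partial\mathcal{G}$ and $\partial(\mathcal{G}+\mathcal{F})$, truncates $Y^{(e)}$ at level $\sigma_y\sqrt{\log(n|\mathcal{E}|)}$ to pay the $C_y(\sigma_y+1)n^{-100}$ slack, and then applies a peeling-plus-Talagrand localized Rademacher bound (its Lemma on instance-dependent empirical processes) term by term before a union bound over $e\in\mathcal{E}$. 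The only difference is presentational: you would package the per-environment squared-loss deviation bound as a single sub-lemma invoked twice, whereas the paper expands each square into quadratic and cross terms and bounds the six resulting pieces individually.
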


We first utilize \cref{prop:nonasymptotic-a} in a way that $g$ and $\tilde{g}$ are the same. In this case, the optimization problem of $\max$-$\mathcal{F}$ in one single environment $e\in \mathcal{E}$ for fixed $g\in \mathcal{G}$ is similar to least squares regression that fits the target regression function 
\begin{align*}
    \Pi_{\overline{\mathcal{F}_S}}^{(e)} (m^{(e,S)}) - g.
\end{align*} Thus one can establish high probability error bounds on the $\|\cdot\|_{2,e}$ norm between the empirical loss maximizer $\hat{f}_g^{(e)}$ and the above target function in terms of statistical error $\delta_{n,t}$ and approximation error rate $\delta_{\mathtt{a},\mathcal{F},\mathcal{G}}(e,S_g)$, defined as
\begin{align*}
\delta_{\mathtt{a},\mathcal{F},\mathcal{G}}(e,S) := \sup_{g\in \mathcal{G}: S_g = S} \inf_{f\in \mathcal{F}_S} \|\Pi_{\overline{\mathcal{F}_S}}^{(e)}(m^{(e,S)}) - g - f \|_{2,e}
\end{align*} We formally present the above intuition in the following instance-dependent error bound in \cref{prop:characterize-f} in a way that the optimization gap term is maintained in the error bound.

\begin{proposition}[Instance-dependent characterization of approximately optimal discriminator]
\label{prop:characterize-f}
Let $0<\eta<1/2$ be arbitrary, under the event defined in \cref{prop:nonasymptotic-a}, the following holds,
\begin{align*}
    &\forall e\in \mathcal{E}, \forall g \in \mathcal{G}, \forall f^{(e)} \in \mathcal{F}_{S_g}, \\
    &~~~~~~ \|\Pi_{\overline{\mathcal{F}_S}}(m^{(e,S)}) - g - f^{(e)} \|_{2,e}^2 \le \frac{2\eta^{-1}+2-4\eta}{1-2\eta}\delta^2_{\mathtt{a}, \mathcal{F}, \mathcal{G}}(e, S_g) + \frac{2\eta^{-1}+4}{1-2\eta} C^2 U^2\delta_{n,t}^2 \\
    &~~~~~~~~~~~~~~~~~~~~~~~~~~~~~~~~~~~~~~~~~~~~~~~~ + \frac{4}{1-2\eta} \left\{\sup_{\breve{f} \in \mathcal{F}_{S_g}} \hat{\mathsf{A}}^{(e)}(g, \breve{f}) - \hat{\mathsf{A}}^{(e)}(g, f^{(e)})\right\}
\end{align*} where $C$ is the universal constant defined in \cref{prop:nonasymptotic-a}. Averaging over all the $e\in \mathcal{E}$, we obtain
\begin{align*}
    &\forall g\in \mathcal{G}, ~\forall f^{\mathcal{E}} \in \{\mathcal{F}_{S_g}\}^{|\mathcal{E}|}, \\
    &~~~~~~ \frac{1}{|\mathcal{E}|} \sum_{e\in \mathcal{E}} \|\Pi_{\overline{\mathcal{F}_S}}(m^{(e,S)}) - g - f^{(e)} \|_{2,e}^2 \le \frac{2\eta^{-1}+2-4\eta}{1-2\eta}\delta^2_{\mathtt{a}, \mathcal{F}, \mathcal{G}}(S_g) + \frac{2\eta^{-1}+4}{1-2\eta} C^2 U^2\delta_{n,t}^2\\
    &~~~~~~~~~~~~~~~~~~~~~~~~~~~~~~~~~~~~~~~~~~~~~~~~~~~~~~ + \gamma^{-1} \frac{4}{1-2\eta} \left\{\sup_{\breve{f}^{\mathcal{E}} \in \{\mathcal{F}_{S_g}\}^{|\mathcal{E}|}} \hat{\mathsf{Q}}_\gamma(g, \breve{f}) - \hat{\mathsf{Q}}_\gamma(g, f^{\mathcal{E}})\right\}
\end{align*}
\end{proposition}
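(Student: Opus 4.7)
The plan is to exploit the fact that, for a fixed $g\in \mathcal{G}$, the population functional $f\mapsto \mathsf{A}^{(e)}(g,f)$ is a concave quadratic on the Hilbert space $\overline{\mathcal{F}_{S_g}}$, with an explicit maximizer. Conditioning on $X_{S_g}$ and using the inclusion $\overline{\mathcal{G}_{S_g}}\subseteq \overline{\mathcal{F}_{S_g}}$ from \cref{cond:general-gf} (so that $g\in \overline{\mathcal{F}_{S_g}}$), introduce $h_g^{(e)} := \Pi^{(e)}_{\overline{\mathcal{F}_{S_g}}}(m^{(e,S_g)}) - g$. Then $\mathsf{A}^{(e)}(g,f) = \langle h_g^{(e)},f\rangle_{2,e} - \tfrac12\|f\|_{2,e}^2$ for every $f\in \overline{\mathcal{F}_{S_g}}$, and completing the square yields the key identity $\mathsf{A}^{(e)}(g,f_0)-\mathsf{A}^{(e)}(g,f^{(e)}) = \tfrac12\|h_g^{(e)}-f^{(e)}\|_{2,e}^2 - \tfrac12\|h_g^{(e)}-f_0\|_{2,e}^2$ valid for all $f_0,f^{(e)}\in \mathcal{F}_{S_g}$. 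Choosing $f_0\in \mathcal{F}_{S_g}$ with $\|h_g^{(e)}-f_0\|_{2,e}\le \delta_{\mathtt{a},\mathcal{F},\mathcal{G}}(e,S_g)$ (which exists up to arbitrary slack by definition of the infimum) upgrades the identity to $\|h_g^{(e)}-f^{(e)}\|_{2,e}^2 \le \delta_{\mathtt{a},\mathcal{F},\mathcal{G}}^2(e,S_g) + 2\{\mathsf{A}^{(e)}(g,f_0)-\mathsf{A}^{(e)}(g,f^{(e)})\}$, which is the target in disguise.

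Next I would pass from population to empirical and absorb the quadratic remainder. Writing $\mathsf{A}^{(e)}(g,f_0)-\mathsf{A}^{(e)}(g,f^{(e)}) = \{\hat{\mathsf{A}}^{(e)}(g,f_0)-\hat{\mathsf{A}}^{(e)}(g,f^{(e)})\} + \Delta_{\mathsf{A}}^{(e)}(g,g,f_0,f^{(e)})$, the first bracket is trivially bounded above by $\sup_{\breve f\in \mathcal{F}_{S_g}}\hat{\mathsf{A}}^{(e)}(g,\breve f) - \hat{\mathsf{A}}^{(e)}(g,f^{(e)})$, and the deviation $|\Delta_{\mathsf{A}}^{(e)}|$ is controlled by \cref{prop:nonasymptotic-a} with $\tilde g = g$, giving $CU\bigl(\delta_{n,t}\|f_0-f^{(e)}\|_{2,e}+\delta_{n,t}^2\bigr)$. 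Splitting via the triangle inequality $\|f_0-f^{(e)}\|_{2,e}\le \|h_g^{(e)}-f_0\|_{2,e}+\|h_g^{(e)}-f^{(e)}\|_{2,e}$ and two applications of Young's inequality with parameter $\eta$ — one to absorb $2\eta\|h_g^{(e)}-f^{(e)}\|_{2,e}^2$ onto the left-hand side, the other to trade $CU\delta_{n,t}\|h_g^{(e)}-f_0\|_{2,e}$ against multiples of $\delta_{\mathtt{a},\mathcal{F},\mathcal{G}}^2(e,S_g)$ and $C^2U^2\delta_{n,t}^2$ — yields the per-environment bound after elementary rearrangement, the factor $1/(1-2\eta)$ arising from moving the absorbed term across the inequality. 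The bookkeeping relies on the assumptions $B\ge 1$ and $CU\ge 1$ to fold the linear-in-$\delta_{n,t}^2$ remainder into the quadratic term.

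To obtain the averaged inequality I would simply sum the per-environment estimate over $e\in \mathcal{E}$ and divide by $|\mathcal{E}|$. Two facts compress the right-hand side. First, by definition $\frac{1}{|\mathcal{E}|}\sum_e \delta_{\mathtt{a},\mathcal{F},\mathcal{G}}^2(e,S_g) \le \delta_{\mathtt{a},\mathcal{F},\mathcal{G}}^2(S_g)$ via the envelope in the sup over $g$ with prescribed $S_g$. Second, since $\hat{\mathsf{R}}(g)$ cancels and the supremum $\sup_{\breve f^{\mathcal{E}}\in \{\mathcal{F}_{S_g}\}^{|\mathcal{E}|}}$ decouples across the product, $\sup_{\breve f^{\mathcal{E}}}\hat{\mathsf{Q}}_\gamma(g,\breve f^{\mathcal{E}}) - \hat{\mathsf{Q}}_\gamma(g,f^{\mathcal{E}}) = \gamma\cdot\frac{1}{|\mathcal{E}|}\sum_e \{\sup_{\breve f\in \mathcal{F}_{S_g}}\hat{\mathsf{A}}^{(e)}(g,\breve f) - \hat{\mathsf{A}}^{(e)}(g,f^{(e)})\}$, which accounts for the $\gamma^{-1}$ factor in the second bound. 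The main obstacle is purely clerical: tracking the Young-inequality constants to land on the exact coefficients $(2\eta^{-1}+2-4\eta)/(1-2\eta)$ and $(2\eta^{-1}+4)/(1-2\eta)$; conceptually the argument is just the quadratic identity, \cref{prop:nonasymptotic-a}, and AM-GM, stitched together.
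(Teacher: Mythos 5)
Your proposal is correct and follows essentially the same route as the paper's proof: exploit the quadratic structure of $f\mapsto\mathsf{A}^{(e)}(g,f)$ via conditioning on $X_{S_g}$ and the projection theorem, pass to the empirical objective using \cref{prop:nonasymptotic-a} with $\tilde g=g$, insert a near-optimal approximant $f_0$, and absorb terms with Young's inequality before averaging over $e$ and identifying the $\gamma^{-1}$ factor through $\hat{\mathsf{Q}}_\gamma$. The only cosmetic difference is that you use the exact completing-the-square identity where the paper writes the same quadratic as a Cauchy--Schwarz lower bound; the two are equivalent manipulations and lead to the same constants.
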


Now we are ready to prove \cref{thm:oracle}.

For the proof of (2) faster $L_2$ rate, we will divide the proof into two main steps as follows. 

\begin{enumerate}
\item In the first step, we establish a variable selection property claim that when the \cref{eq:main-result-faster-cond} holds, and the events defined in \cref{prop:nonasymptotic-pooled} and \ref{prop:nonasymptotic-a} occurs, then $\hat{S}$ satisfies
\begin{align*}
    \forall e\in \mathcal{E} \qquad \Pi_{\overline{\mathcal{F}_{\hat{S}}}}^{(e)}(m^{(e,\hat{S})}) = g^\star
\end{align*} using proof by contradiction that any $g$ such that such that the above constrain is violated in $S_g$, will not be the approximate solution of the minimax optimization $\inf_g\sup_{f^{\mathcal{E}}}\hat{\mathsf{Q}}_\gamma(g,f^{\mathcal{E}})$. This can be summarized as the following \cref{prop:variable-selection}. The variable selection property in \eqref{eq:main-result-rate} also follows from this.
\item In the second step, we proceed conditioned on the above claim and derive a sharp $L_2$ error bound. To derive a sharp error bound, we combine (1) the approximate strong convexity with respect to $g^\star$, i.e., \cref{thm:population}, (2) the instance-dependent error bound for $\mathsf{J}$ and $\mathsf{R}$, i.e., \cref{prop:nonasymptotic-pooled} and \ref{prop:nonasymptotic-a}, and (3) the key fact that, if the claim in step 1 holds, then 
\begin{align*}
\|\tilde{g} + \tilde{f}_{\tilde{g}}^{(e)} - g - f_g^{(e)}\|_{2,e} &\le \|\tilde{g} + \tilde{f}_{\tilde{g}}^{(e)} - g^\star + \Pi_{\overline{\mathcal{F}_{S_g}}}^{(e)} (m^{(e,S_g)}) - g - f_g^{(e)}\|_{2,e} \\
&\lesssim \|\tilde{g} + \tilde{f}_{\tilde{g}}^{(e)} - g^\star\|_{2,e} + \|g^\star - g - f_g^{(e)}\|_{2,e} \\
&\lesssim \delta_{n,t} + \delta_{\mathtt{a}, \mathcal{F}, \mathcal{G}}^\star.
\end{align*}
\end{enumerate}

The proof of (1) is similar to the second step in the proof of (2), but now we no longer have $g^\star = \Pi_{\overline{\mathcal{F}_{S_g}}}^{(e)} (m^{(e,S_g)})$. The key challenge here is to establish an upper bound on $\|g^\star - \Pi_{\overline{\mathcal{F}_{S_g}}}^{(e)} (m^{(e,S_g)})\|_{2,e}$ without imposing other population-level condition like Condition 7 in an early version of \cite{fan2024environment}. Instead, we will use the following instance-dependent bound, that
\begin{align*}
\frac{1}{|\mathcal{E}|} \sum_{e\in \mathcal{E}} \|g^\star - \Pi_{\overline{\mathcal{F}_{S_g}}}^{(e)} (m^{(e,S_g)})\|_{2,e}^2 \le C\left( (1+\gamma^\star)\bar{\mathsf{d}}_{\mathcal{G}, \mathcal{F}}(S_g) + \|g - g^\star\|_2^2\right)
\end{align*}
Such a bound is a population-level instance-dependent bound in that both the R.H.S. and L.H.S. are dependent on the function $g$.

\begin{proposition}
\label{prop:variable-selection}
Under the event defined in \cref{prop:nonasymptotic-a} and \ref{prop:nonasymptotic-pooled}, we have the event
\begin{align}
\label{eq:main-result-variable-sel}
   \mathcal{A}_+ := \left\{\forall e\in \mathcal{E} \qquad \Pi_{\overline{\mathcal{F}_{\hat{S}}}}^{(e)}(m^{(e,\hat{S})}) = g^\star  \qquad \text{for} ~~\hat{S} = S_{\hat{g}}\right\}
\end{align} occurs if the condition \eqref{eq:main-result-faster-cond} with some large universal constant $C$ holds.
\end{proposition}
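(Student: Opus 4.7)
The plan is a proof by contradiction. Suppose $\mathcal{A}_+$ fails, so there exists $e \in \mathcal{E}$ with $\Pi^{(e)}_{\overline{\mathcal{F}_{\hat{S}}}}(m^{(e,\hat{S})}) \neq g^\star$. Split this failure into two (not mutually exclusive) cases whose union is exhaustive: (i) $S^\star \setminus \hat{S} \neq \emptyset$, in which case the nondegenerate-covariate part of \cref{cond:general-gf} forces $\|\hat{g} - g^\star\|_2^2 \geq s_{\min}$; and (ii) $S^\star \subseteq \hat{S}$ but then, since the invariance assumption gives $\Pi^{(e)}_{\overline{\mathcal{F}_{S^\star}}}(m^{(e,S^\star)}) = g^\star$, the assumed inequality forces $\bar{\mathsf{d}}_{\mathcal{G},\mathcal{F}}(\hat{S}) + \mathsf{b}_{\mathcal{G}}(\hat{S}) > 0$, and the heterogeneity part of \cref{cond:general-gf} then yields $\bar{\mathsf{d}}_{\mathcal{G},\mathcal{F}}(\hat{S}) \geq \mathfrak{d} := \inf_{S:\bar{\mathsf{d}}_{\mathcal{G},\mathcal{F}}(S) > 0} \bar{\mathsf{d}}_{\mathcal{G},\mathcal{F}}(S)$. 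In each case the ``population signal'' is at least a fixed constant that the hypothesis \eqref{eq:main-result-faster-cond} makes strictly larger than the error terms below.

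The upper-bound side exploits that $\hat{g}$ is approximately minimax: for any $\tilde{g} \in \mathcal{G}_{S^\star}$,
\begin{align*}
\sup_{f^{\mathcal{E}} \in \{\mathcal{F}_{\hat{S}}\}^{|\mathcal{E}|}} \hat{\mathsf{Q}}_\gamma(\hat{g}, f^{\mathcal{E}}) \;\leq\; \sup_{f^{\mathcal{E}} \in \{\mathcal{F}_{S^\star}\}^{|\mathcal{E}|}} \hat{\mathsf{Q}}_\gamma(\tilde{g}, f^{\mathcal{E}}) \;+\; 2(1+\gamma)\delta_{\mathtt{opt}}^2.
\end{align*}
Choosing $\tilde{g}$ to be the best $\mathcal{G}_{S^\star}$-approximator of $g^\star$, the population-level inner sup at $\tilde{g}$ is attained at $g^\star - \tilde{g}$ by the invariance assumption, so it is at most $\mathsf{R}(g^\star) + O((1+\gamma)\delta_{\mathtt{a},\mathcal{G}}^2)$; \cref{prop:nonasymptotic-pooled} and \cref{prop:nonasymptotic-a}, together with the approximation error $\delta_{\mathtt{a},\mathcal{F},\mathcal{G}}(S^\star)$, transfer this to the empirical version at cost $O((1+\gamma)(U\delta_{n,t} + \delta_{\mathtt{a},\mathcal{F},\mathcal{G}}(S^\star)^2))$. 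For the matching lower bound on the same empirical sup, take $f^{(e)}$ to be a best $\mathcal{F}_{\hat{S}}$-approximator of $\Pi^{(e)}_{\overline{\mathcal{F}_{\hat{S}}}}(m^{(e,\hat{S})}) - \hat{g}$ (which is the exact population maximizer over $\overline{\mathcal{F}_{\hat{S}}}$), and apply \cref{prop:nonasymptotic-a} again. Plugging both sides into \cref{thm:population} with $\delta = 1/2$ and the same $\tilde{g}$ yields, after a short rearrangement,
\begin{align*}
\tfrac{1}{4}\|\hat{g}-\tilde{g}\|_2^2 \;+\; \tfrac{\gamma}{4}\bar{\mathsf{d}}_{\mathcal{G},\mathcal{F}}(\hat{S}) \;+\; \tfrac{\gamma}{2}\|\hat{g} - \Pi_{\overline{\mathcal{G}_{\hat{S}}}}(\bar{m}^{(\hat{S})})\|_2^2 \;\lesssim\; (1+\gamma)\big(UB\delta_{n,t} + \delta_{\mathtt{a},\mathcal{G}}^2 + \sup_S \delta_{\mathtt{a},\mathcal{F},\mathcal{G}}(S)^2 + \delta_{\mathtt{opt}}^2\big).
\end{align*}
Under \eqref{eq:main-result-faster-cond} with $C$ large enough, the right-hand side is strictly smaller than $s_{\min}/4$ in case (i) (using $\|\hat{g}-\tilde{g}\|_2^2 \gtrsim \|\hat{g}-g^\star\|_2^2 - \delta_{\mathtt{a},\mathcal{G}}^2$) and strictly smaller than $\gamma\mathfrak{d}/4$ in case (ii), producing the desired contradiction.

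The main obstacle, and the step to execute with the most care, is relating the \emph{focused} empirical supremum $\sup_{f^{\mathcal{E}} \in \{\mathcal{F}_{\hat{S}}\}^{|\mathcal{E}|}} \hat{\mathsf{J}}(\hat{g},f^{\mathcal{E}})$ to the corresponding \emph{population} supremum over the larger space $\overline{\mathcal{F}_{\hat{S}}}$, whose maximizer $\Pi^{(e)}_{\overline{\mathcal{F}_{\hat{S}}}}(m^{(e,\hat{S})}) - \hat{g}$ generally does not lie in $\mathcal{F}_{\hat{S}}$. This is exactly what \cref{prop:characterize-f} is built for: it converts the empirical optimization gap into an $\|\cdot\|_{2,e}$-bound on $f^{(e)} - (\Pi^{(e)}_{\overline{\mathcal{F}_{\hat{S}}}}(m^{(e,\hat{S})}) - \hat{g})$ modulo $\delta_{\mathtt{a},\mathcal{F},\mathcal{G}}(\hat{S})$ and $U\delta_{n,t}$, which is exactly the form absorbed by the cross-term produced by \cref{thm:population}. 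A secondary subtlety is that $\hat{S}$ is random, so the approximation and heterogeneity bounds must be controlled uniformly over all $S \subseteq [d]$; this is why $\sup_S \delta_{\mathtt{a},\mathcal{F},\mathcal{G}}(S)^2$ and $\inf_{S:\bar{\mathsf{d}}_{\mathcal{G},\mathcal{F}}(S) > 0} \bar{\mathsf{d}}_{\mathcal{G},\mathcal{F}}(S)$ appear in the hypothesis \eqref{eq:main-result-faster-cond}.
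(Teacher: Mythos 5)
Your proposal is correct and follows essentially the same route as the paper's proof: the same lower bound from \cref{thm:population} with $\delta=1/2$ combined with \cref{prop:nonasymptotic-pooled} and \cref{prop:nonasymptotic-a}, the same choice of near-optimal $\tilde{g}$ and discriminators, the same use of the approximate minimax optimality of $\hat{g}$, and the same signal-versus-noise case analysis. The only difference is presentational — you argue by contradiction directly at $\hat{g}$ and merge into your case (ii) what the paper splits into its Cases 1 and 3 (namely $\mathsf{b}_{\mathcal{G}}(\hat{S})>0$ versus $\mathsf{b}_{\mathcal{G}}(\hat{S})=0$ with a nonzero $\overline{\mathcal{F}_{\hat S}}$-projection gap), whereas the paper excludes all "bad" index sets uniformly.
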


%\subsection{Examples of Specifications on $\mathcal{G}$ and $\mathcal{F}$ and Connections to Predecessors}

\subsection{Applications of Theorem \ref{thm:oracle} and Connection to the Predecessors}

%As shown in the above subsection, a key ingredient in our proposed framework is to determine the two function classes $(\mathcal{G}, \mathcal{F})$ for the minimax optimization. 
We present some examples here, sorted by the potential approximation capability of the function class $(\mathcal{G}, \mathcal{F})$.

\begin{example}[Linear $\mathcal{G}$, Linear $\mathcal{F}$]
\label{ex-linear-linear}
    The simplest case is that $\mathcal{G}$ and $\mathcal{F}$ are all linear function classes, that
    \begin{align*}
        \mathcal{G} = \mathcal{F} = \{h(x) = \beta^\top x: \beta\in \mathbb{R}^d\}:= \mathcal{H}_{\mathtt{lin}}(d).
    \end{align*}
\end{example}

The objective takes on a form that closely resembles the EILLS objective proposed in \cite{fan2024environment}. To see this, the EILLS objective is expressed as $\frac{1}{|\mathcal{E}|} \sum_{e\in \mathcal{E}} \hat{\mathbb{E}}[|Y^{(e)}-g(X^{(e)})|^2] + \frac{\gamma}{|\mathcal{E}|} \sum_{e\in \mathcal{E}} \|\hat{r}_{g}^{(e)}\|_2^2$ where $\hat{r}_{g}^{(e)} = \hat{\mathbb{E}}[\{Y^{(e)} - g(X^{(e)})\} X_{S_g}^{(e)}]$. If we take the supremum over all the $f^{(e)}\in \mathcal{F}_{S_g}$ with $e\in \mathcal{E}$, the objective in \eqref{eq:def-fair-estimator} transforms into
\begin{align*}
    \sup_{f^\mathcal{E}\in \{\mathcal{F}_{S_g}\}^{|\mathcal{E}|}} \hat{\mathsf{Q}}_\gamma(g,f^\mathcal{E}) = \frac{1}{|\mathcal{E}|} \sum_{e\in \mathcal{E}} \hat{\mathbb{E}}[|Y^{(e)}-g(X^{(e)})|^2] + \frac{\gamma}{|\mathcal{E}|} \sum_{e\in \mathcal{E}} (\hat{r}_{g}^{(e)})^\top \{\hat{\mathbb{E}}[X_S^{(e)} (X_S^{(e)})^\top]\}^{-1} (\hat{r}_{g}^{(e)}).
\end{align*} It slightly stabilizes the EILLS objective in that the regularizer has a matched moment index compared with the pooled least squares loss; see a detailed explanation and theoretical justification in \cref{sec:theory-linearx}.

\begin{example}[Linear $\mathcal{G}$, Augmented Linear $\mathcal{F}$]
\label{ex-linear-auglinear}
Consider the case where $\mathcal{F}$ is potentially larger than $\mathcal{G}$, that is, $\mathcal{G} = \mathcal{H}_{\mathtt{lin}}(d)$ and $\mathcal{F} = \{f(x) = \beta^\top x + \beta_\phi^\top \bar{\phi}(x): \beta, \beta_\phi \in \mathbb{R}^d\} := \mathcal{H}_{\mathtt{alin}}(d, \phi)$, where $\bar{\phi}(x)=(\phi(x_1), \ldots, \phi(x_d))$ applies a transformation function $\phi: \mathbb{R} \to \mathbb{R}$ to each entry of the vector $x$.
\end{example}

The proposed estimator utilizes both the heterogeneity among different environments and the strong prior knowledge that the true regression function admits linear form. It bridges the EILLS estimator in \cite{fan2024environment} and the Focused GMM estimator in \cite{fan2014endogeneity} when the instrumental variables are $[X_S, \bar{\phi}(X_S)]$ and reduces to an improved version of the latter when $|\mathcal{E}|=1$.

\begin{example}[Linear $\mathcal{G}$, Neural Network $\mathcal{F}$]
\label{ex-linear-nn}
We consider a more algorithmic version of \cref{ex-linear-auglinear} that uses neural networks to automatically learn the transformation function, that is, $\mathcal{G} = \mathcal{H}_{\mathtt{lin}}(d)$ and $\mathcal{F} = \mathcal{H}_{\mathtt{nn}}(d, L_f, N_f, B_f)$ with neural network architecture hyper-parameters of $(L_f, N_f, B_f)$. 
\end{example}

The above three estimators focus on linear $\mathcal{G}$, the simplest structural function class. We now consider a more complicated structural function class when we know the invariant association admits additive form. 
\begin{example}[Additive Neural Network $\mathcal{G}$, Neural Network $\mathcal{F}$]
\label{ex-ann-nn}
    We let $\mathcal{G} = \mathcal{H}_{\mathtt{ann}}(d, L_g, N_g, B_g) := \{g(x) = \mathrm{Tc}_{B_g}(\sum_{j=1}^d g_j(x_j)): g_j \in \mathcal{H}_{\mathtt{nn}}(1, L_g, N_g, \infty) \}$ and $\mathcal{F} = \mathcal{H}_{\mathtt{nn}}(d, L_f, N_f, B_f)$. Here $(L_g, N_g, B_g)$ and $(L_f, N_f, B_f)$ are all neural network architecture hyper-parameters.
\end{example}

Finally, we present the most algorithmic estimator, the \emph{FAIR-NN} estimator, in which both $\mathcal{G}$ and $\mathcal{F}$ are realized by fully-connected neural networks with no additional imposed structures.

\begin{example}[Neural Network $\mathcal{G}$, Neural Network $\mathcal{F}$]
\label{ex-nn-nn}
    We let $\mathcal{G} = \mathcal{H}_{\mathtt{nn}}(d, L_g, N_g, B_g)$ and $\mathcal{F} = \mathcal{H}_{\mathtt{nn}}(d, L_f, N_f, B_f)$ with neural network architecture hyper-parameters $(L_g, N_g, B_g)$ and $(L_f, N_f, B_f)$.
\end{example}

\begin{table}[!t]
    \centering
    \footnotesize
    \begin{tabular}{llllll}
      \hline
       & $\mathcal{G}$ & $\mathcal{F}$ & Category & Short Name & Result \\
      \hline
      \hline
      \cref{ex-linear-linear} & $\mathcal{H}_{\mathtt{lin}}(d)$ & $\mathcal{H}_{\mathtt{lin}}(d)$ & $\mathcal{G} \asymp \mathcal{F}$ & FAIR-Linear & \cref{thm:lglf} \\
      \cref{ex-nn-nn} & $\mathcal{H}_{\mathtt{nn}}(d, L_g, N_g, B_g)$ & $\mathcal{H}_{\mathtt{nn}}(d, L_f, N_f, B_f)$ & $\mathcal{G} \asymp \mathcal{F}$ & FAIR-NN & \cref{thm:fairnn}\\

      \cref{ex-linear-auglinear} & $\mathcal{H}_{\mathtt{lin}}(d)$ & $\mathcal{H}_{\mathtt{alin}}(d, \phi)$ & $\mathcal{G} \ll \mathcal{F}$ & FAIR-AugLinear & \cref{thm:lgbf}\\
      \cref{ex-linear-nn} & $\mathcal{H}_{\mathtt{lin}}(d)$ & $\mathcal{H}_{\mathtt{nn}}(d, L_f, N_f, B_f)$ & $\mathcal{G} \ll \mathcal{F}$ & FAIR-NNLinear & \cref{thm:lgnf}\\
      \cref{ex-ann-nn} & $\mathcal{H}_{\mathtt{ann}}(d, L_g, N_g, B_g)$ & $\mathcal{H}_{\mathtt{nn}}(d, L_f, N_f, B_f)$ & $\mathcal{G} \ll \mathcal{F}$ & FAIR-ANN & \cref{thm:fairann}\\
      \hline
    \end{tabular}
	\caption{A Glimpse of Estimators}
    \label{table:estimators}
\end{table}

Our framework requires $\mathcal{G} \subseteq \mathcal{F}$. We can divide the above estimators into two main categories that (1) $\mathcal{G}$ has roughly the same representation power as $\mathcal{F}$, denoted as $\mathcal{G} \asymp \mathcal{F}$, and (2) $\mathcal{F}$ has at least as good representation power as $\mathcal{G}$, denoted as $\mathcal{G} \ll \mathcal{F}$. For the former, our framework uses only heterogeneity among different environments to identify the invariant association. For the latter, our framework utilizes both the heterogeneity and strong prior structural assumption that the invariant association cannot be significantly better approximated by $\mathcal{F}$ than by $\mathcal{G}$ to jointly identify the invariant association. We summarize the proposed estimators above and divide them into these two categories in \cref{table:estimators}.

\subsection{FAIR-ANN: Bridging Invariance and Additional Structural Knowledge}
\label{sec:fairann}

We next consider the estimator that utilizes both heterogeneity and the strong structural assumption that the invariant association $m^\star$ admits additive form to identify $m^\star$, which can be summarized as the following assumption.

\begin{condition}[Invariance and Nondegenerate Covariate for FAIR-ANN]
\label{cond-fairann-invariance}
There exists some set $S^\star$ and $m^\star: \mathbb{R}^{|S^\star|} \to \mathbb{R}$ such that $m^{(e,S^\star)}(x) \equiv m^\star(x_{S^\star})=\sum_{j\in S^\star} m^\star_j(x_j)$ for any $e\in \mathcal{E}$. Moreover, for any $S\subseteq [d]$ with $S^\star \setminus S \neq \emptyset$, $\inf_{m\in \Theta_S} \|m - m^\star\|_{2}^2 \ge s_{\min}>0$.
\end{condition}

\begin{condition}[Boundedness in Nonparametric Regression]
    \label{cond-fairnn-bounded}
    There exists some constants $b_x$ and $b_m$ such that (1) $X \in [-b_x, b_x]^d$ $\bar{\mu}$-a.s. and (2) $\|m^{(e,S)}\|_\infty \le b_m$ for any $S\subseteq [d]$ and $e\in \mathcal{E}$.
\end{condition}

\begin{condition}
\label{cond-fairann-rsc}
There exists some constant $C_a$ such that
\begin{align*}
    \left\|\sum_{j=1}^d m_j(x_j)\right\|_{2}^2 \ge C_a^{-1} \sum_{j=1}^d \|m_j(x_j)\|_{2}^2 \qquad \forall (m_1,\ldots, m_d)\in \prod_{j=1}^d \Theta_{\{j\}} ~\text{with}~ \int m_j(x_j) \bar{\mu}_x(dx)\equiv0.
\end{align*}
\end{condition}
The above condition is referred to as the nonparametric version of the restricted strong convexity condition, which is widely used in the theoretical analysis for nonparametric high-dimension additive models \citep{van2008high, raskutti2012minimax, yuan2016minimax}. This condition is imposed to let $\prod_{j\in S} \Theta_{\{j\}}$ be a closed subspace of $\Theta_S$, where we can define
\begin{align*}
    A_{S}(h) = \argmin_{u\in \prod_{j\in S} \Theta_{\{j\}}} \|h - u\|_2,
\end{align*} which finds a unique additive function dependent on $x_S$ that fits $h$ best in $\|\cdot\|_2$ norm.

\begin{condition}[Identification for FAIR-ANN]
\label{cond-fairann-ident}
For any $S\subseteq [d]$ such that $\bar{\mu}(\{m^\star \neq A_{S\cup S^\star}(\bar{m}^{(S\cup S^\star)})\}) > 0$, either of the two holds: (1) there exists some $e, e'\in \mathcal{E}$ such that $(\mu^{(e)}\land \mu^{(e')})(\{m^{(e, S)} \neq m^{(e', S)}\}) > 0$, or (2) $\bar{\mu}(\{\bar{m}^{(S)} \neq A_S(\bar{m}^{(S)})\})>0$.
\end{condition}

With network hyper-parameter $N,L$, we realize the $\mathcal{G}$ and $\mathcal{F}$ as
\begin{align}
\label{eq:fairann-function-class}
    \mathcal{G}=\mathcal{H}_{\mathtt{ann}}(d, L, N, b_m) \qquad \text{and} \qquad \mathcal{F} = \mathcal{H}_{\mathtt{nn}}(d, L+2, 2dN, 2b_m).
\end{align} Similarly to the choice of for FAIR-NN \eqref{eq:fairnn-function-class}, the choice of $\mathcal{F}$ is to ensure $\mathcal{G} - \mathcal{G} \subseteq \mathcal{F}$. 

\begin{theorem}[Optimal Rate for FAIR-ANN Least Squares Estimator]
\label{thm:fairann}
    Assume \cref{cond:general-dgp},\ref{cond:general-response}, and \ref{cond-fairann-invariance}--\ref{cond-fairann-ident} hold. Assume further that all the conditional moments $\{m^{(e,S)}\}_{e\in \mathcal{E}, S\subseteq [d]}$ are $(\beta', C')$-smooth for some $\beta'>0$ and $C'>0$, and $\delta_{\mathtt{opt}}=o(1)$. Consider the FAIR-ANN estimator that solves \eqref{eq:approx-solution} with $\ell(y,v) = \frac{1}{2}(y-v)^2$ using $\gamma \ge 8\gamma^\star_{\mathtt{AN}}$ with
    \begin{align}
    \label{eq:fairann-gamma}
        \gamma_{\mathtt{AN}}^\star:= \sup_{S\subseteq [d]: \bar{\mu}(\{m^\star \neq A_{S\cup S^\star}(\bar{m}^{(S\cup S^\star)})\})>0} \frac{\|m^\star - A_{S\cup S^\star}(\bar{m}^{(S\cup S^\star)}))\|_2^2}{\frac{1}{|\mathcal{E}|} \sum_{e\in \mathcal{E}} \|m^{(e,S)} - A_{S}(\bar{m}^{(S)}))\|_{2,e}^2},
    \end{align} and function class \eqref{eq:fairann-function-class} with $L, N$ satisfying $LN \asymp \{n (\log n)^{8\beta^\star-3}\}^{\frac{1}{2(2\beta^\star+1)}}$ and $(\log n)/(N\land L) = o(1)$.
    Then, we have (1) $\gamma_{\mathtt{AN}}^\star\le \gamma_{\mathtt{NN}}^\star$, and (2) for $n$ large enough, the following event occurs with probability at least $1-\tilde{C}n^{-100}$
    \begin{align}
    \label{eq:rate-fairann}
            \sup_{\substack{m^\star=\sum_{j\in S^\star} m^\star_j(x_j) ~\text{with}~ m_j^\star \in \mathcal{H}_{\mathtt{HS}}(1, \beta^\star, C^\star) \\ \|m^\star\|_\infty \le b_m} }\|\hat{g} - m^\star\|_2 \le \tilde{C}\left\{\delta_{\mathtt{opt}} + \left(\frac{\log^7 n}{n}\right)^{-\frac{\beta^\star}{2\beta^\star+1}} \right\},
    \end{align} where $\tilde{C}$ is a constant that depends on $(C_1, d, \beta^\star, C^\star, \sigma_y,C_y, b_x, b_m)$ but independent of $\gamma, \delta_{\mathtt{opt}}$ and $n$.
\end{theorem}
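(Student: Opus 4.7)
The plan is to derive \cref{thm:fairann} as a consequence of the abstract oracle inequality \cref{thm:oracle}, specialized to $\overline{\mathcal{G}_S} = \prod_{j \in S} \Theta_{\{j\}}$ (the additive $L_2$ subspace) and $\overline{\mathcal{F}_S} = \Theta_S$. Under \cref{cond-fairann-rsc}, $\overline{\mathcal{G}_S}$ is a closed subspace of $\Theta_S$, so $\Pi_{\overline{\mathcal{G}_S}}$ coincides with the additive projection $A_S$; \cref{cond-fairann-invariance} then gives $g^\star = m^\star$. With these identifications, the abstract critical threshold $\gamma^\star$ in \cref{thm:oracle} equals $\gamma_{\mathtt{AN}}^\star$ as defined in \eqref{eq:fairann-gamma}, so the hypothesis $\gamma \ge 8\gamma_{\mathtt{AN}}^\star$ is exactly what \cref{thm:oracle} requires.

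\textbf{Part (1).} For the numerator, $m^\star$ lies in the additive subspace onto which $A_{S\cup S^\star}$ projects, so $\mathsf{b}_{\mathtt{AN}}(S) = \|A_{S\cup S^\star}(\bar m^{(S\cup S^\star)}) - m^\star\|_2^2 \le \|\bar m^{(S\cup S^\star)} - m^\star\|_2^2 = \mathsf{b}_{\mathtt{NN}}(S)$. For the denominator, the key is the orthogonality property that $\bar m^{(S)}$ is the $L_2(\bar\mu_{x,S})$-projection of $Y$ onto $\Theta_S$, which implies $\frac{1}{|\mathcal{E}|}\sum_{e\in\mathcal{E}} \langle m^{(e,S)} - \bar m^{(S)}, h\rangle_{2,e} = 0$ for every $h \in \Theta_S$. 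Taking $h = \bar m^{(S)} - A_S(\bar m^{(S)})$ and expanding the squared norm annihilates the cross term and produces the Pythagorean identity $\bar{\mathsf{d}}_{\mathtt{AN}}(S) = \bar{\mathsf{d}}_{\mathtt{NN}}(S) + \|\bar m^{(S)} - A_S(\bar m^{(S)})\|_2^2 \ge \bar{\mathsf{d}}_{\mathtt{NN}}(S)$. Noting that $\mathsf{b}_{\mathtt{AN}}(S) > 0$ forces $\mathsf{b}_{\mathtt{NN}}(S) > 0$ (since $A$ fixes $m^\star$), the ratio-wise inequality passes to the supremum, yielding $\gamma_{\mathtt{AN}}^\star \le \gamma_{\mathtt{NN}}^\star$.

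\textbf{Part (2).} To invoke \cref{thm:oracle}(2) I would estimate each ingredient. The localized Rademacher complexity of the relevant truncated ReLU classes satisfies $\delta_n \lesssim NL(\log n)^{3/2}/\sqrt n$, a standard bound. For $\delta_{\mathtt{a},\mathcal{G}}$, applying a univariate Yarotsky/Schmidt-Hieber approximation to each of the $|S^\star|\le d$ components $m^\star_j \in \mathcal{H}_{\mathtt{HS}}(1,\beta^\star,C^\star)$ inside $\mathcal{H}_{\mathtt{ann}}$ and summing gives $\delta_{\mathtt{a},\mathcal{G}} \lesssim (NL)^{-2\beta^\star}(\log n)^c$, uniformly in the class. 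The same univariate rate controls $\delta_{\mathtt{a},\mathcal{F},\mathcal{G}}^\star$, since for any $g \in \mathcal{G}$ the residual $m^\star - g$ is additive and Lipschitz so each summand is univariate smooth. The generic term $\delta_{\mathtt{a},\mathcal{F},\mathcal{G}}(S)$ for arbitrary $S$ enters only in the regime condition \eqref{eq:main-result-faster-cond} (not in the final bound) and can be estimated by the coarser multivariate rate $(NL)^{-2\beta'/|S|}(\log n)^{c'}$ for $(\beta',C')$-smooth $m^{(e,S)}$. Choosing $NL \asymp \{n(\log n)^{8\beta^\star-3}\}^{1/(2(2\beta^\star+1))}$ balances $\delta_n$ with $\delta_{\mathtt{a},\mathcal{G}}$ and $\delta_{\mathtt{a},\mathcal{F},\mathcal{G}}^\star$, yielding the announced rate $(\log^7 n/n)^{\beta^\star/(2\beta^\star+1)}$. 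Since each quantity in the left-hand side of \eqref{eq:main-result-faster-cond} decays as a positive power of $n$ while the right-hand side depends only on $s_{\min}$, $\gamma$, and $\inf_{S:\bar{\mathsf{d}}(S)>0}\bar{\mathsf{d}}_{\mathcal{G},\mathcal{F}}(S)$, the condition holds for all $n \ge n_0$ for some $n_0$ depending only on these fixed parameters, after which \cref{thm:oracle}(2) delivers \eqref{eq:rate-fairann}.

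\textbf{Main obstacle.} The principal subtlety is preserving the faster univariate approximation rate $(NL)^{-2\beta^\star}$ in the final bound rather than the slower multivariate rate $(NL)^{-2\beta'/d}$ that would propagate if the worst-case approximation term $\delta_{\mathtt{a},\mathcal{F},\mathcal{G}}(S)$ from \eqref{eq:main-result-general-rate} leaked into \eqref{eq:main-result-rate}. The resolution is precisely that part (2) of \cref{thm:oracle} replaces the supremum-over-$S$ quantity by $\delta_{\mathtt{a},\mathcal{F},\mathcal{G}}^\star$, evaluated only at $S^\star$, where the residual is additive and enjoys the one-dimensional rate; the coarse multivariate rate for generic $S$ is consumed only inside the threshold $n_0$. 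The remaining work is routine verification that all constants and $n_0$ can be chosen uniformly over the smooth additive class in \eqref{eq:rate-fairann}, which is immediate since every bound depends on the class only through $(\beta^\star, C^\star, d, b_m)$.
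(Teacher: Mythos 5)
Your proposal follows essentially the same route as the paper's proof: it specializes \cref{thm:oracle} with $\overline{\mathcal{G}_S}=\prod_{j\in S}\Theta_{\{j\}}$ and $\overline{\mathcal{F}_S}=\Theta_S$, proves part (1) via the same contraction bound on $\mathsf{b}$ and the same Pythagorean identity $\bar{\mathsf{d}}_{\mathtt{AN}}(S)=\bar{\mathsf{d}}_{\mathtt{NN}}(S)+\|\bar m^{(S)}-A_S(\bar m^{(S)})\|_2^2$, and obtains part (2) by the same accounting of $\delta_n$, $\delta_{\mathtt{a},\mathcal{G}}$, $\delta_{\mathtt{a},\mathcal{F},\mathcal{G}}^\star$ (univariate rate) versus $\sup_S\delta_{\mathtt{a},\mathcal{F},\mathcal{G}}(S)$ (multivariate rate, consumed only by the threshold condition). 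The argument is correct, and you correctly identify the key point that the final bound depends only on the approximation error at $S^\star$.
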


The choice of $N, L$, and the convergence rate align with FAIR-NN with $\alpha^\star = \beta^\star$. Given the strong structural prior knowledge that the true regression function is additive, FAIR-ANN requires weaker identification condition \cref{cond-fairann-ident} and also smaller critical threshold of $\gamma$. In particular, \cref{cond-fairann-ident} requires that for any $S$ such that regressing $Y$ on $X_{S\cup S^\star}$ via additive models yields biased estimation, there should be either (1) a shift in conditional moments $m^{(e, S)}$ across different environments, or (2) one of the conditional moments $m^{(e, S)}$ is non-additive. This characteristic is called the ``\emph{double identifiable}'' property since meeting either of these conditions can consistently estimate $m^\star$. Notably, the critical threshold $\gamma^\star_{\mathtt{AN}}$ can be smaller than that of the FAIR-NN estimator. A small $\gamma$ can be adopted if either the signal of violating the additive structure or the signal of heterogeneity is strong. 

\subsection{Theoretical Analysis for Linear Prediction Function Class}
\label{sec:theory-linearx}

In this section, we apply our result in \cref{thm:oracle} to the cases where the target regression function $g^\star$ is linear. As such, we use linear function class $\mathcal{H}_{\mathtt{lin}}(d)$ as our predictor function class $\mathcal{G}$. Our theorem suggests that enhancing the potential approximation ability of the discriminator function class $\mathcal{F}$ will result in (1) a stronger condition on invariance, and (2) a weaker identification condition and a reduced choice of critical threshold $\gamma^\star$. 

\subsubsection{Linear Testing Function Class}

We first consider the case where we use linear discriminator function class $\mathcal{F}=\mathcal{H}_{\mathtt{lin}}(d)$. We introduce some notations used in linear regression and state some standard regularity conditions used in linear regression and are also imposed in \cite{fan2024environment}. 

\begin{condition}
\label{cond:lglf}
Suppose the following holds:
\begin{itemize}
    \item[(1)] The data satisfies \cref{cond:general-dgp} with $|\mathcal{E}| \le n^{C_1}$ for some constant $C_1$.
    \item[(2)] The covariance matrix $\Sigma^{(e)}=\mathbb{E}[X^{(e)} (X^{(e)})^\top] \in \mathbb{R}^{d\times d}$ in each environment satisfies $\lambda(\Sigma^{(e)}) \ge \kappa_L$ for some constant $\kappa_L > 0$.
    \item[(3)] Define the pooled covariance matrix ${\Sigma} := |\mathcal{E}|^{-1} \sum_{e\in \mathcal{E}} \Sigma^{(e)}$. There exists some positive constant $C_x, \sigma_x$ such that
    \begin{align*}
        \forall e\in \mathcal{E}, ~ \forall v\in \mathbb{R}^d ~\text{with} ~\|v\|_2=1, ~\forall t\in [0,\infty), \qquad \mathbb{P}\left( |v^\top ({\Sigma})^{-1/2} X^{(e)}| \ge t\right) \le C_x e^{-t^2 / (2\sigma_x^2)} 
    \end{align*}
    \item[(4)] \cref{cond:general-response} holds.
\end{itemize}
\end{condition}

Under \cref{cond:lglf} that the covariance matrices are all positive definite, we can define
\begin{align*}
    \beta^{(e,S)} = \argmin_{\beta \in \mathbb{R}^d: \beta_{S^c}=0} \mathbb{E}[|Y^{(e)} - \beta^\top X^{(e)}|^2]
\end{align*}
We can state the invariance and identification condition in this case.
\begin{condition}[Invariance in Linear $\mathcal{G}$ and Linear $\mathcal{F}$]
\label{cond:lglf-invariance}
There exists some $S^\star \subseteq [d]$ and $\beta^\star \in \mathbb{R}^d$ with $\beta^\star_{(S^\star)^c}=0$ and $\min_{j\in S^\star} |\beta_j^\star| = \beta_{\min} > 0$ such that
\begin{align}
\label{eq:lglf:invariance}
    \forall e\in \mathcal{E} \qquad \beta^{(e,S)} = \beta^\star.
\end{align}
\end{condition}
Let $\varepsilon^{(e)} = Y^{(e)} - (\beta^\star)^\top X^{(e)}$, the above invariance equality \eqref{eq:lglf:invariance} is equivalent to that $X_{S^\star}$ are exogenous across all the environments, that is,
\begin{align*}
\forall e\in \mathcal{E} \qquad \mathbb{E}[\varepsilon^{(e)} X_{S^\star}^{(e)}] = 0
\end{align*}

\begin{condition}[Identification for Linear $\mathcal{G}$ and Linear $\mathcal{F}$]
\label{cond:lglf:identification}
For any  $S \subseteq [d]$ with $\sum_{e\in \mathcal{E}} \mathbb{E}[X_S^{(e)} \varepsilon^{(e)}] \neq 0$, there exists $e, e'\in \mathcal{E}$ such that $\beta^{(e,S)} \neq \beta^{(e',S)}$.
\end{condition}

We are ready to state the result using truncated linear function class with bounded $L_2$ norm, that is,
\begin{align*}
    \mathcal{H}_{\mathtt{lin}}(d, B_1, B_2) = \left\{f(x)=\mathrm{Tc}_{B_2}(\beta^\top x): \beta \in \mathbb{R}^d, \|\Sigma^{1/2} \beta\|_2 \le B_1\right\}.
\end{align*}

\begin{theorem}[Linear $\mathcal{G}$ and Linear $\mathcal{F}$] 
\label{thm:lglf}
Suppose \cref{cond:lglf}--\ref{cond:lglf:identification} hold, and we choose 
\begin{align*}
\mathcal{G} = \mathcal{H}_{\mathtt{lin}}(d, C_2, C_2 \sqrt{\log n})\qquad \text{and}\qquad \mathcal{F} = \mathcal{H}_{\mathtt{lin}}(d, 2C_2, 2C_2 \sqrt{\log n})
\end{align*}with some constant $C_2 \ge 2(\sigma_x \lor 1) \max_{e\in \mathcal{E}, S\subseteq[d]} \|\Sigma^{1/2} \beta^{(e,S)}\|_2$. Then, there exists some constant $\tilde{C}$ that only depends on $(C_1, C_2, \sigma_x, C_x, \sigma_y, C_y)$ such that the FAIR least squares estimator using the above function class and hyper-parameter $\gamma$ satisfying $\gamma \ge 8\gamma_{\mathtt{LL}}^\star=8\sup_{S:\mathsf{b}_{\mathtt{LL}}(S) > 0} \mathsf{b}_{\mathtt{LL}}(S)/\bar{\mathsf{d}}_{\mathtt{LL}}(S)$, where 
\begin{align}
\label{eq:biasvar-lglf}
\begin{split}
    \mathsf{b}_{\mathtt{LL}}(S) &= \bigg\|\frac{1}{|\mathcal{E}|}\sum_{e\in \mathcal{E}} \mathbb{E}[X_{S\cup S^\star}^{(e)} \varepsilon^{(e)}]\bigg\|_{(\bar{\Sigma}_{S\cup S^\star})^{-1}}^2 \le (\kappa_L)^{-1}\bigg\|\frac{1}{|\mathcal{E}|}\sum_{e\in \mathcal{E}} \mathbb{E}[X_S^{(e)} \varepsilon^{(e)}]\bigg\|_2^2, \\
    \bar{\mathsf{d}}_{\mathtt{LL}}(S) &= \frac{1}{|\mathcal{E}|} \sum_{e\in \mathcal{E}} \|\beta^{(e,S)}_S - \beta^{(S)}_\dagger\|_{\Sigma_S^{(e)}}^2 \ge \kappa_L \frac{1}{|\mathcal{E}|} \sum_{e\in \mathcal{E}} \|\beta^{(e,S)} - \bar{\beta}^{(S)}\|_2^2
\end{split}
\end{align} with $\beta_\dagger^{(S)} = (\bar{\Sigma}_S)^{-1} \{\frac{1}{|\mathcal{E}|} \sum_{e\in \mathcal{E}} \mathbb{E}[X^{(e)}_{S} Y^{(e)}]\}$ and $\bar{\beta}^{(S)} = \frac{1}{|\mathcal{E}|}\sum_{e\in \mathcal{E}} \beta^{(e,S)}$, satisfies, with probability at least $1-\tilde{C}n^{-100}$, \begin{align}
\label{eq:lglf:l2-general}
    \forall n \ge 3 \qquad \|{\Sigma}^{1/2}(\beta_{\hat{g}} - \beta^\star)\|_2 \le \tilde{C}(1 + \gamma) \sqrt{\frac{d\log^5 (n)}{n}},
\end{align} for $\hat{g}(x) = \mathrm{Tc}_B(\beta_{\hat{g}}^\top x)$. Moreover, if $d=o((1+\gamma^2)n/(\log^6 n))$, then for large enough $n$, we further have
\begin{align}
\label{eq:lglf:l2-faster}
    \|{\Sigma}^{1/2}(\beta_{\hat{g}} - \beta^\star)\|_2 \le \tilde{C} \sqrt{\frac{d\log^5(n)}{n}}
\end{align}
\end{theorem}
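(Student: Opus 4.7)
The plan is to specialize the abstract oracle inequality \cref{thm:oracle} to the linear setting by taking $\overline{\mathcal{G}_S} = \overline{\mathcal{F}_S} := \{x\mapsto\beta^\top x_S : \beta\in\mathbb{R}^{|S|}\}$, which are closed subspaces of $\Theta_S$ under both $\|\cdot\|_2$ and $\|\cdot\|_{2,e}$ thanks to the uniform eigenvalue bound $\lambda_{\min}(\bar{\Sigma}_S)\wedge\lambda_{\min}(\Sigma_S^{(e)})\ge\kappa_L$ in \cref{cond:lglf}(2). Under this identification, $\Pi_{\overline{\mathcal{G}_S}}(\bar m^{(S)})$ is the pooled OLS predictor with coefficient $\beta_\dagger^{(S)}$, while $\Pi^{(e)}_{\overline{\mathcal{F}_S}}(m^{(e,S)})$ is the environment-specific OLS predictor with coefficient $\beta^{(e,S)}_S$. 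The invariance hypothesis \eqref{eq:lglf:invariance} then yields $\Pi^{(e)}_{\overline{\mathcal{F}_{S^\star}}}(m^{(e,S^\star)}) = (\beta^\star)^\top x = \Pi_{\overline{\mathcal{G}_{S^\star}}}(\bar m^{(S^\star)}) =: g^\star$ for every $e$, verifying the first clause of \cref{cond:general-gf}.

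For the remaining clauses I would compute the bias and variance terms explicitly. Expanding $Y^{(e)} = (\beta^\star)^\top X^{(e)} + \varepsilon^{(e)}$ with $\beta^\star_{(S^\star)^c}=0$ gives $\beta_\dagger^{(S\cup S^\star)} - \beta^\star = (\bar{\Sigma}_{S\cup S^\star})^{-1}\{|\mathcal{E}|^{-1}\sum_{e}\mathbb{E}[X^{(e)}_{S\cup S^\star}\varepsilon^{(e)}]\}$, so $\mathsf{b}_{\mathcal{G}}(S)$ equals the quadratic form in \eqref{eq:biasvar-lglf}; the subsequent inequality follows from $\|(\bar{\Sigma}_{S\cup S^\star})^{-1}\|_{\mathrm{op}}\le\kappa_L^{-1}$ and the fact that the $S^\star$-coordinates of $\mathbb{E}[X^{(e)}_{S\cup S^\star}\varepsilon^{(e)}]$ vanish. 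The orthogonality identity $\mathbb{E}[(Y^{(e)}-\beta^\top X_S^{(e)})^2] = \mathbb{E}[(Y^{(e)}-(\beta^{(e,S)}_S)^\top X_S^{(e)})^2] + \|\beta - \beta^{(e,S)}_S\|_{\Sigma_S^{(e)}}^2$ then identifies $\beta_\dagger^{(S)}$ as the $\|\cdot\|_{\Sigma_S^{(e)}}$-weighted ``centre'' of $\{\beta^{(e,S)}_S\}_{e\in\mathcal{E}}$, pinning down $\bar{\mathsf{d}}_{\mathcal{G},\mathcal{F}}(S)$; a $\|\cdot\|_2$ relaxation gives the lower bound. \cref{cond:lglf:identification} now directly enforces the heterogeneity implication $\mathsf{b}_{\mathtt{LL}}(S)>0\Rightarrow\bar{\mathsf{d}}_{\mathtt{LL}}(S)>0$, while a Schur-complement computation on $\bar{\Sigma}$ combined with $\min_{j\in S^\star}|\beta^\star_j|\ge\beta_{\min}$ yields $s_{\min}\ge\kappa_L\beta_{\min}^2>0$, verifying the non-degeneracy clause.

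For the complexity bound, $\partial\mathcal{G}$ and $\partial(\mathcal{G}+\mathcal{F})$ are truncated $d$-dimensional ellipsoids of radius $O(C_2)$, and a standard Dudley chaining argument under the sub-Gaussian tail in \cref{cond:lglf}(3) yields $R_{n,\mu^{(e)}}(\delta;\partial\mathcal{G})\lesssim\delta\sqrt{d\log n/n}$, so one may take $\delta_n\asymp\sqrt{d\log n/n}$ with envelope $B = C_2\sqrt{\log n}$ and hence $U\lesssim\log n$. The approximation errors $\delta_{\mathtt{a},\mathcal{G}}$, $\delta_{\mathtt{a},\mathcal{F},\mathcal{G}}(S)$, and $\delta^\star_{\mathtt{a},\mathcal{F},\mathcal{G}}$ all vanish modulo truncation, and the sub-Gaussian tail together with the specific choice $C_2 \ge 2(\sigma_x\vee 1)\max_{e,S}\|\bar{\Sigma}^{1/2}\beta^{(e,S)}\|_2$ makes truncation inactive with probability $1-O(n^{-100})$, contributing only a negligible $n^{-c}$ error. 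Part (1) of \cref{thm:oracle} with $t=100\log n$ then delivers \eqref{eq:lglf:l2-general}, and part (2) delivers \eqref{eq:lglf:l2-faster} once the sample size satisfies $d\log^6 n/n \lesssim \{s_{\min}\wedge\gamma\inf_S\bar{\mathsf{d}}_{\mathtt{LL}}(S)\}/(1+\gamma)$, i.e.\ $d=o((1+\gamma^2)n/\log^6 n)$. The main obstacle I anticipate is the bookkeeping required to pass from the $L^2(\bar{\mu})$-bound on the truncated function $\hat g$ to the coefficient-level bound $\|\Sigma^{1/2}(\beta_{\hat g}-\beta^\star)\|_2$: on the high-probability event that neither $\beta_{\hat g}^\top X$ nor $(\beta^\star)^\top X$ exceeds $B_2 = C_2\sqrt{\log n}$, the truncation operator is the identity and the two distances coincide, but this event must be controlled uniformly over the data simultaneously with the oracle inequality event.
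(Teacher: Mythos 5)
Your proposal is correct and follows essentially the same route as the paper's proof: specialize \cref{thm:oracle} with $\overline{\mathcal{G}_S}=\overline{\mathcal{F}_S}$ the linear subspace supported on $S$, identify the projections with the environment-wise and pooled OLS coefficients $\beta^{(e,S)}$ and $\beta_\dagger^{(S)}$, compute $\mathsf{b}_{\mathtt{LL}}$ and $\bar{\mathsf{d}}_{\mathtt{LL}}$ exactly as in \eqref{eq:biasvar-lglf}, verify heterogeneity from \cref{cond:lglf:identification}, bound the complexity by $\delta_n\asymp\sqrt{d\log n/n}$ up to logs (the paper does this via a pseudo-dimension bound rather than Dudley chaining, but this is cosmetic), and control the approximation errors by the sub-Gaussian tail and the choice of $C_2$.

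One small correction on the final step you flag as the ``main obstacle.'' Since $\|\cdot\|_2$ is a population norm, conditioning on a high-probability event over the observed data does not make the truncation the identity $\bar\mu_x$-almost surely, so the mechanism you describe does not quite close the argument. The paper's resolution is purely population-level and simpler: by the triangle inequality, $\|\Sigma^{1/2}(\beta_{\hat g}-\beta^\star)\|_2 \le \|\beta_{\hat g}^\top x - \mathrm{Tc}_B(\beta_{\hat g}^\top x)\|_2 + \|\hat g - g^\star\|_2$, and the first term is bounded by $2\{\mathbb{E}[|\beta_{\hat g}^\top X|^2 1\{|\beta_{\hat g}^\top X|\ge B\}]\}^{1/2}\lesssim\sqrt{\log n/n}$ uniformly over the constraint $\|\Sigma^{1/2}\beta_{\hat g}\|_2\le C_2$, using the sub-Gaussian tail integral and the choice $B=C_2\sqrt{\log n}$. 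With that substitution your argument is complete.
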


\begin{remark}
We present the results using truncated function classes, and there exist poly-$\log n$ factors in the non-asymptotic $L_2$ error bounds. These are for technical convenience such that we can directly apply our result \cref{thm:oracle} which focuses on uniformly bounded function classes. Indeed, one can use a finer analysis and obtain the $\ell_2$ error bound
\begin{align*}
    \sqrt{\frac{d + \log n}{n}}
\end{align*} using unbounded linear function class. 
\end{remark}

The obtained results in \cref{thm:lglf} align with (up to $\log(n)$ factors) and offer significant enhancements over Theorem 2 \& 3 from \cite{fan2024environment}. Firstly, the ``invariance'' condition gets relaxed, we only assume that the noise $\varepsilon^{(e)}$ and the true important variables $X_{S^\star}^{(e)}$ are uncorrelated rather than conditional independent across different environments. Meanwhile, the identification condition \cref{cond:lglf:identification} exactly matches that in \cite{fan2024environment} (refer to Condition 5 therein), and the choice of critical threshold $\gamma^\star$ gets reduced as indicated by the inequality in \eqref{eq:biasvar-lglf} and given that $\kappa_L = O(1)$. Such an improvement can be attributed to the term $-\frac{1}{2} \{f^{(e)}\}^2$ in our minimax regularization that stabilizes the objective. To see this, consider $\beta$ with $\supp(\beta) = S^\star$, the population-level EILLS objective can be written as
\begin{align*}
     (\beta - \beta^\star)^\top \Sigma (\beta - \beta^\star) + \gamma \frac{1}{|\mathcal{E}|} \sum_{e\in \mathcal{E}} (\beta - \beta^\star)_{S^\star}^\top (\Sigma^{(e)}_{S^\star})^2 (\beta - \beta^\star)_{S^\star},
\end{align*} where a square of the covariance matrix appears in the regularizer. This does not match what it is in the empirical risk part and will make the objective less stable. Meanwhile, the population-level FAIR objective with sup-$f$ in this case is
\begin{align*}
    (1+\gamma) (\beta - \beta^\star)^\top \Sigma (\beta - \beta^\star),
\end{align*} which the problem of mismatched covariance matrix order disappears.

We've also refined the non-asymptotic $L_2$ error bounds. On the one hand, we can derive the error bound without further imposing stronger population-level conditions (Condition 7 required by Theorem 3 in \cite{fan2024environment}). On the other, the faster $\ell_2$ error bound for sufficiently large $n$ remains independent of the hyper-parameter $\gamma$ we choose. These refinements result from our tighter characterization of the instance-dependent error bounds compared to the ones in \cite{fan2024environment}; see the discussion on technical novelties in \cref{subsec:proof-sketch}.

\subsubsection{Augmented Linear Testing Function Class}

Here we consider the case where the discriminator function class $\mathcal{F}$ is potentially larger than the predictor function class $\mathcal{G}$. We introduce the following notations. We let $[x, y]$ be the concatenation of two vectors $x\in \mathbb{R}^{d_1}$ and $y\in \mathbb{R}^{d_2}$ as a $d_1+d_2$ dimensional vector. For each $S\subseteq [d]$, we define $\tilde{X}_S^{(e)} = [X_S^{(e)}, \bar{\phi}(X_S^{(e)})]\in \mathbb{R}^{2|S|}$, $\tilde{\Sigma}^{(e)}_S=\mathbb{E}[\tilde{X}_S^{(e)} (\tilde{X}_S^{(e)})^\top] \in \mathbb{R}^{(2|S|)\times(2|S|)}$ and let $\tilde{X}^{(e)} = \tilde{X}^{(e)}_{[d]}$ and $\tilde{\Sigma}^{(e)} = \tilde{\Sigma}^{(e)}_{[d]}$. We impose additional regularity conditions due to the incorporation of basis function $\phi$.

\begin{condition}
\label{cond:lgbf}
There exists some constant $\tilde{\kappa}_L > 0$ such that $\lambda_{\min}(\tilde{\Sigma}^{(e)}) \ge \tilde{\kappa}_L$ for any $e\in \mathcal{E}$. Moreover, define $\tilde{\Sigma}:= |\mathcal{E}|^{-1} \sum_{e\in \mathcal{E}} \tilde{\Sigma}^{(e)}$. There exists some positive constant $C_{\tilde{x}}, \sigma_{\tilde{x}}$ such that
\begin{align*}
    \forall e\in \mathcal{E}, ~ \forall v\in \mathbb{R}^{2d} ~\text{with} ~\|v\|_2=1, ~\forall t\in [0,\infty), \qquad \mathbb{P}\left( |v^\top (\tilde{\Sigma})^{-1/2} \tilde{X}^{(e)}| \ge t\right) \le C_{\tilde{x}} e^{-t^2 / (2\sigma_{\tilde{x}}^2)} 
\end{align*}
\end{condition}
Under \cref{cond:lgbf} such that the covariance matrix for $\tilde{X}$ are positive definite, we can define 
\begin{align*}
\tilde{\beta}^{(e,S)} = [\breve{\beta}, \breve{\beta}^\phi] ~~~~\text{with}~~~~ (\breve{\beta}, \breve{\beta}^\phi) = \argmin_{({\beta}, \beta^\phi) \in (\mathbb{R}^d)^2, \beta_{S^c} = \beta_{S^c}^\phi = 0} \mathbb{E}[|Y^{(e)} - \beta^\top X^{(e)} - (\beta^\phi)^\top \bar{\phi}(X^{(e)})|^2],
\end{align*} and $\tilde{\beta}^{(e,S)}_S = [\breve{\beta}_S, \breve{\beta}^\phi_S]$ be a $2|S|$-dimensional vector. The invariance and identification conditions in this case are as follows.

\begin{condition}[Invariance in Linear $\mathcal{G}$ and Augmented Linear $\mathcal{F}$]
\label{cond:lgbf-invariance}
There exists some $S^\star \subseteq [d]$ and $\beta^\star \in \mathbb{R}^d$ with $\beta^\star_{(S^\star)^c}=0$ and $\min_{j\in S^\star} |\beta_j^\star| = \beta_{\min} > 0$ such that
\begin{align}
\label{eq:lgbf:invariance}
    \forall e\in \mathcal{E} \qquad \tilde{\beta}^{(e,S)} = [\beta^\star, 0].
\end{align}
\end{condition}
Let $\varepsilon^{(e)} = Y^{(e)} - (\beta^\star)^\top X^{(e)}$ be the noise, the above invariance equality \eqref{eq:lglf:invariance} is equivalent to that both $X_{S^\star}$ and $\bar{\phi}(X_{S^\star})$ are uncorrelated with noise across all the environments, that is,
\begin{align*}
\forall e\in \mathcal{E} \qquad \mathbb{E}[\varepsilon^{(e)} X_{S^\star}^{(e)}] = \mathbb{E}[\varepsilon^{(e)} \bar{\phi}(X_{S^\star}^{(e)})] = 0
\end{align*}

\begin{condition}[Identification for Linear $\mathcal{G}$ and Augmented Linear $\mathcal{F}$]
\label{cond:lgbf:identification}
For any  $S \subseteq [d]$ with $\sum_{e\in \mathcal{E}} \mathbb{E}[X_S^{(e)} \varepsilon^{(e)}] \neq 0$, either (1) there exists some $e\in \mathcal{E}$ such that $\tilde{\beta}^{(e,S)} \neq [\beta^{(e,S)}, 0]$, or (2) there exists $e, e'\in \mathcal{E}$ such that $\beta^{(e,S)} \neq \beta^{(e',S)}$.
\end{condition}

For technical convenience, we also used truncated function class the discriminator class, defined as $\mathcal{H}_{\mathtt{alin}}(d, \phi, B) = \{\tilde{f}=\mathrm{Tc}_B(f): f \in \mathcal{H}_{\mathtt{alin}}\}$.

\begin{theorem}[Linear $\mathcal{G}$ and Augmented Linear $\mathcal{F}$] 
\label{thm:lgbf}
Suppose \cref{cond:lglf}, \ref{cond:lgbf}--\ref{cond:lgbf:identification} hold, and we choose 
\begin{align*}
\mathcal{G} = \mathcal{H}_{\mathtt{lin}}(d, C_2, C_2 \sqrt{\log n}) \qquad \text{and} \qquad \mathcal{F} = \mathcal{H}_{\mathtt{alin}}(d, \phi, 2C_2 \sqrt{\log n})
\end{align*}
with some constant $C_2 \ge 2(\sigma_{\tilde{x}} \lor 1) \max_{e\in \mathcal{E}, S\subseteq[d]} \|\tilde{\Sigma}^{1/2} \tilde{\beta}^{(e,S)}\|_{2}$. Then, there exists some constant $\tilde{C}$ that only depends on $(C_1, C_2, \sigma_{\tilde{x}}, C_{\tilde{x}}, \sigma_y, C_y)$ such that the FAIR least squares estimator using the above function classes and hyper-parameter $\gamma$ satisfying $\gamma \ge 8\gamma^\star_{\mathtt{LA}}=8\sup_{S:\mathsf{b}_{\mathtt{LL}}(S) > 0} \mathsf{b}_{\mathtt{LL}}(S)/\bar{\mathsf{d}}_{\mathtt{LA}}(S)$, where 
\begin{align}
\label{eq:biasvar-lgbf}
    \bar{\mathsf{d}}_{\mathtt{LA}}(S) = \frac{1}{|\mathcal{E}|} \sum_{e\in \mathcal{E}} \|\tilde{\beta}^{(e,S)}_S - [\beta^{(S)}_\dagger, 0]\|_{\tilde{\Sigma}_S^{(e)}}^2 \ge \bar{\mathsf{d}}_{\mathtt{LL}}(S) \qquad \text{with} ~~ \beta^{(S)}_\dagger \text{defined in \cref{thm:lglf}},
\end{align} satisfies the $L_2$ error bound \eqref{eq:lglf:l2-general} with probability at least $1-\tilde{C}n^{-100}$. Moreover, if $d=o((1+\gamma^2)n/(\log^6 n))$, for large enough $n$, the error bound \eqref{eq:lglf:l2-faster} also holds with probability at least $1-\tilde{C}n^{-100}$.
\end{theorem}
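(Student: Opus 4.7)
The plan is to derive \cref{thm:lgbf} as a direct application of the abstract oracle inequality \cref{thm:oracle} to the specific pair $(\mathcal{G},\mathcal{F})=(\mathcal{H}_{\mathtt{lin}}(d,C_2,C_2\sqrt{\log n}),\,\mathcal{H}_{\mathtt{alin}}(d,\phi,2C_2\sqrt{\log n}))$. I would choose the ambient closed subspaces of \cref{cond:general-gf} to be the untruncated versions, namely $\overline{\mathcal{G}_S}=\{\beta^\top x:\beta\in\mathbb{R}^d,\beta_{S^c}=0\}$ and $\overline{\mathcal{F}_S}=\{\beta^\top x+\alpha^\top\bar{\phi}(x):\beta_{S^c}=\alpha_{S^c}=0\}$, which are finite-dimensional and closed in $L_2(\bar{\mu}_x)$ under the positive-definiteness of $\tilde{\Sigma}^{(e)}$ from \cref{cond:lgbf}. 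Then \cref{cond:lgbf-invariance} immediately gives $\Pi^{(e)}_{\overline{\mathcal{F}_{S^\star}}}(m^{(e,S^\star)})=[\beta^\star,0]\cdot\tilde{X}^{(e)}_{S^\star}=\Pi_{\overline{\mathcal{G}_{S^\star}}}(\bar{m}^{(S^\star)})=:g^\star$, and nondegeneracy of covariates follows from $\lambda_{\min}(\Sigma^{(e)})\ge\kappa_L>0$ via $s_{\min}\gtrsim\kappa_L\beta_{\min}^2$.

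Next, I would explicitly compute the two abstract quantities in \eqref{eq:theory-bias-and-variance-general}. A first-order calculation gives $\Pi_{\overline{\mathcal{G}_{S\cup S^\star}}}(\bar{m}^{(S\cup S^\star)})-g^\star=(\bar{\Sigma}_{S\cup S^\star})^{-1}\{|\mathcal{E}|^{-1}\sum_{e\in\mathcal{E}}\mathbb{E}[X^{(e)}_{S\cup S^\star}\varepsilon^{(e)}]\}$, so $\mathsf{b}_{\mathcal{G}}(S)=\mathsf{b}_{\mathtt{LL}}(S)$ exactly as in \cref{thm:lglf}. For the variance term, $\Pi^{(e)}_{\overline{\mathcal{F}_S}}(m^{(e,S)})$ is represented by $\tilde{\beta}^{(e,S)}_S$ while $\Pi_{\overline{\mathcal{G}_S}}(\bar{m}^{(S)})$ is represented by $[\beta^{(S)}_\dagger,0]$, so $\bar{\mathsf{d}}_{\mathcal{G},\mathcal{F}}(S)=\bar{\mathsf{d}}_{\mathtt{LA}}(S)$. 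The inequality $\bar{\mathsf{d}}_{\mathtt{LA}}(S)\ge\bar{\mathsf{d}}_{\mathtt{LL}}(S)$ follows from a Pythagorean decomposition in $L_2(\mu^{(e)})$: since $\beta^{(e,S)\top}_S X_S^{(e)}$ is the projection of $\tilde{\beta}^{(e,S)}_S\cdot\tilde{X}_S^{(e)}$ onto $\mathrm{span}(X_S^{(e)})$, the residual is orthogonal to $(\beta^{(e,S)}_S-\beta^{(S)}_\dagger)^\top X_S^{(e)}$, so the cross term in the expansion vanishes and only a nonnegative correction remains. Identification (the heterogeneity clause of \cref{cond:general-gf}) then follows from \cref{cond:lgbf:identification}: if $\mathsf{b}_{\mathtt{LL}}(S)>0$, then either some $\tilde{\beta}^{(e,S)}_S$ has a nontrivial $\phi$-part (populating the orthogonal residual just described) or the $\beta^{(e,S)}$'s differ across $e$ (populating the pure linear part), and either way $\bar{\mathsf{d}}_{\mathtt{LA}}(S)>0$.

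The remaining ingredients are standard. I would verify \cref{cond:general-function-class} with $B=2C_2\sqrt{\log n}$ and $\delta_n\asymp B\sqrt{d/n}$ (up to $\log$ factors) via Dudley's entropy integral applied to the linear and augmented-linear classes of ambient dimension at most $2d$, using the sub-Gaussianity in \cref{cond:lglf}(3) and \cref{cond:lgbf} to control empirical $L_2$ diameters. The most delicate step is bounding the approximation errors $\delta_{\mathtt{a},\mathcal{G}}$, $\delta_{\mathtt{a},\mathcal{F},\mathcal{G}}(S)$ and $\delta_{\mathtt{a},\mathcal{F},\mathcal{G}}^\star$ in the presence of truncation: the choice $C_2\ge 2(\sigma_{\tilde{x}}\lor 1)\max_{e,S}\|\tilde{\Sigma}^{1/2}\tilde{\beta}^{(e,S)}\|_2$ combined with the sub-Gaussian tail gives $\mathbb{P}(|[\tilde{\beta}^{(e,S)}]^\top\tilde{X}^{(e)}|>C_2\sqrt{\log n})=O(n^{-101})$, so truncation alters the relevant best predictors only on an event contributing $O(n^{-50})$ to all three approximation errors. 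Combining $U\asymp\sqrt{\log n}$ from the response sub-Gaussianity with \eqref{eq:main-result-general-rate} of \cref{thm:oracle} then yields \eqref{eq:lglf:l2-general}; checking that \eqref{eq:main-result-faster-cond} holds once $d=o((1+\gamma^2)n/\log^6 n)$ and $n$ is sufficiently large produces the $\gamma$-free bound \eqref{eq:lglf:l2-faster}. The two steps I expect to take the most care are the orthogonality argument driving $\bar{\mathsf{d}}_{\mathtt{LA}}(S)\ge\bar{\mathsf{d}}_{\mathtt{LL}}(S)$ and the tail-bound control of truncation for the approximation errors; everything else is a direct specialization of the abstract machinery.
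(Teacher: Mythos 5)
Your proposal is correct and follows essentially the same route as the paper: verify \cref{cond:general-gf} by computing the projections onto the untruncated linear and augmented-linear subspaces (your Pythagorean argument for $\bar{\mathsf{d}}_{\mathtt{LA}}(S)\ge\bar{\mathsf{d}}_{\mathtt{LL}}(S)$ is the same nested-projection inequality the paper uses), bound the complexity of the at-most-$2d$-dimensional classes, control the truncation-induced approximation errors via the sub-Gaussian tails guaranteed by the choice of $C_2$, and conclude by \cref{thm:oracle}. The only cosmetic difference is that the paper obtains $\delta_n$ through pseudo-dimension/uniform covering numbers rather than Dudley's entropy integral, which changes nothing substantive.
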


We can see that the proposed estimator utilizes both the heterogeneity among different environments and strong prior knowledge that the true regression function admits linear form to help the identification. It bridges the EILLS estimator in \cite{fan2024environment} and the Focused GMM (FGMM) estimator in \cite{fan2014endogeneity} when the instrumental variables are $[X_S, \bar{\phi}(X_S)]$ and hence has some advantages over the individual ones. We illustrate this as follows.

\begin{itemize}[noitemsep]
\item[1.] When there are multiple environments $|\mathcal{E}|> 1$, the identification condition \cref{cond:lgbf:identification} is weaker to both the EILLS and FGMM estimators. In particular, a consistent estimate $\beta^\star$ is attainable if incorporating variables $x_j$ with $\sum_{e\in \mathcal{E}} \mathbb{E}[X_j^{(e)} \varepsilon^{(e)}] \neq 0$ will result in either (1) a shift in the best linear predictor across environments or (2) the fitted residuals is strongly correlated with some nonlinear basis. We refer to this property as ``\emph{double identifiable}'' property, given satisfying either condition can lead to the consistent estimation of the true parameter. Furthermore, the critical threshold $\gamma^\star$ can be smaller than that of the EILLS estimator according to the inequality $\bar{\mathsf{d}}_{\mathtt{LA}}(S) \ge \bar{\mathsf{d}}_{\mathtt{LL}}(S)$. This implies that the estimation is sample efficient, which allows for a small $\gamma$, if either the signal of nonlinear basis or the signal of heterogeneity is strong.
\item[2.] If there is only one environment $|\mathcal{E}|=1$, it reduces to an estimator similar to the FGMM estimator. Consistent estimation remains feasible in this case but completely impossible for EILLS estimator. Moreover, the identification condition, in this case, resembles and relaxes that in \cite{fan2014endogeneity}.
\end{itemize}

At the same time, it should be noted that the above advantages over the EILLS estimator (linear $\mathcal{F}$) are at the cost of imposing stronger invariance condition \cref{cond:lgbf-invariance}, which assures that the noise should not only be uncorrelated with $X_j^{(e)}$ but also be uncorrelated with $\phi(X_j^{(e)})$ for any $j\in S^\star$ and $e\in \mathcal{E}$. 

\subsubsection{Neural Network Testing Function Class}

We impose some regularity conditions on the regression function.
\begin{condition}
\label{cond:lgnf}
    There exists some constant $(C_m, \sigma_m)$ such that $m^{(e,S)}$ is $C_m$ Lipschitz and $|m^{(e,S)}(0)|\le C_m$ for any $e\in \mathcal{E}$ and $S\subseteq [d]$ and
    \begin{align*}
        \mathbb{P}(|m^{(e,S)}(X_S^{(e)})| \ge t) \le C_m e^{-t^2/(2\sigma_m^2)} \qquad \forall t\in [0,\infty)
    \end{align*}
\end{condition}

In this case, we consider the strongest invariance condition together with the weakest identification when the predictor function class  $\mathcal{G}$ is linear.

\begin{condition}[Invariance in Linear $\mathcal{G}$ and Neural Network $\mathcal{F}$]
\label{cond:lgnf-invariance}
There exists some $S^\star \subseteq [d]$ and $\beta^\star \in \mathbb{R}^d$ with $\beta^\star_{(S^\star)^c}=0$ and $\min_{j\in S^\star} |\beta_j^\star| = \beta_{\min} > 0$ such that
\begin{align}
\label{eq:lgnf:invariance}
    \forall e\in \mathcal{E} \qquad \mathbb{E}[Y^{(e)}|X_{S^\star}^{(e)}] \equiv (\beta^\star)^\top X^{(e)}
\end{align}
\end{condition}

\begin{condition}[Identification for Linear $\mathcal{G}$ and Neural Network $\mathcal{F}$]
\label{cond:lgnf:identification}
For any  $S \subseteq [d]$ with $\sum_{e\in \mathcal{E}} \mathbb{E}[X_S^{(e)} \varepsilon^{(e)}] \neq 0$, either (1) there exists some $e\in \mathcal{E}$ such that $\mu^{(e)}(\{m^{(e,S)} \neq X^\top \beta^{(e,S)}\}) > 0$, or (2) there exists $e, e'\in \mathcal{E}$ such that $\beta^{(e,S)} \neq \beta^{(e',S)}$.
\end{condition}

\begin{theorem}[Linear $\mathcal{G}$ and Neural Network $\mathcal{F}$] 
\label{thm:lgnf}
Suppose \cref{cond:lglf}, \ref{cond:lgnf}--\ref{cond:lgnf:identification} hold, and we choose the function classes $\mathcal{G} = \mathcal{H}_{\mathtt{lin}}(d, C_2, C_2\sqrt{\log n})$ and $\mathcal{H}_{\mathtt{nn}}(d, \log^d n, \log^d n, C_2\sqrt{\log n})$ with some constant $C_2 \ge (1\lor\sigma_x\lor \sigma_m) \max_{e\in \mathcal{E}, S\subseteq[d]}\|\Sigma^{1/2}\beta^\star\|_2$. 
 Then, there exists some constant $\tilde{C}$ that only depends on $(C_1, C_2, d, \sigma_{m}, C_{m}, \sigma_y, C_y, \sigma_x, C_x)$ such that the FAIR estimator using the above function classes and hyper-parameter $\gamma$ satisfying $\gamma \ge 8\gamma^\star_{\mathtt{LN}}=8\sup_{S:\mathsf{b}_{\mathtt{LL}}(S) > 0} \mathsf{b}_{\mathtt{LL}}(S)/\bar{\mathsf{d}}_{\mathtt{LN}}(S)$, where 
 \begin{align}
 \label{eq:biasvar-lgnf}
 \bar{\mathsf{d}}_{\mathtt{LN}}(S) = \frac{1}{|\mathcal{E}|} \sum_{e\in \mathcal{E}} \|m^{(e,S)} - (\beta^{(S)}_\dagger)^\top x_S\|_{2,e}\ge \mathsf{d}_{\mathtt{LA}}(S),
 \end{align} satisfies, for large enough $n$,
\begin{align*}
    \qquad \|\beta_{\hat{g}} - \beta^\star\|_2 \le \tilde{C} (\log^{d+3} n) n^{-1/2}
\end{align*} with probability at least $1-\tilde{C}n^{-100}$.
\end{theorem}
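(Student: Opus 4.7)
The plan is to derive \cref{thm:lgnf} as a specialization of the abstract oracle inequality \cref{thm:oracle} with the choice $\mathcal{G}=\mathcal{H}_{\mathtt{lin}}(d,C_2,C_2\sqrt{\log n})$ and $\mathcal{F}=\mathcal{H}_{\mathtt{nn}}(d,\log^d n,\log^d n,C_2\sqrt{\log n})$. First I would set $\overline{\mathcal{G}_S}$ equal to the linear functions supported on $S$ and $\overline{\mathcal{F}_S}=\Theta_S$, exploiting universal approximation of ReLU networks. Under \cref{cond:lgnf-invariance}, $m^{(e,S^\star)}(x)\equiv(\beta^\star)^\top x$ is already linear, so both projections in \cref{cond:general-gf} coincide with $g^\star(x)=(\beta^\star)^\top x$, giving the invariance piece. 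For general $S$, $\Pi_{\overline{\mathcal{G}_S}}(\bar m^{(S)})=(\beta^{(S)}_\dagger)^\top x_S$ and $\Pi^{(e)}_{\overline{\mathcal{F}_S}}(m^{(e,S)})=m^{(e,S)}$, so the abstract bias mean and bias variance in \eqref{eq:theory-bias-and-variance-general} reduce exactly to $\mathsf{b}_{\mathtt{LL}}(S)$ and $\bar{\mathsf{d}}_{\mathtt{LN}}(S)$.

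Next I would translate the identification and nondegeneracy conditions. Under positive definiteness of $\Sigma$, $\mathsf{b}_{\mathtt{LL}}(S)>0$ is equivalent to $\sum_e\mathbb{E}[X_S^{(e)}\varepsilon^{(e)}]\neq 0$; in this case \cref{cond:lgnf:identification} supplies either nonlinearity of some $m^{(e,S)}$ or between-environment variation of $\beta^{(e,S)}$, and in both scenarios a single pooled linear predictor cannot reproduce every $m^{(e,S)}$ in $L_2(\mu^{(e)}_x)$, yielding $\bar{\mathsf{d}}_{\mathtt{LN}}(S)>0$. The inequality $\bar{\mathsf{d}}_{\mathtt{LN}}(S)\ge\bar{\mathsf{d}}_{\mathtt{LA}}(S)$ of \eqref{eq:biasvar-lgnf} follows by orthogonality of $L_2(\mu^{(e)}_x)$ projection onto the augmented-linear subspace. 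The nondegenerate covariate condition is immediate from $\lambda_{\min}(\Sigma^{(e)})\ge\kappa_L$ together with $\min_{j\in S^\star}|\beta^\star_j|=\beta_{\min}$, forcing $\inf_{g\in\overline{\mathcal{G}_S}}\|g-g^\star\|_2^2\ge\kappa_L\beta_{\min}^2$ whenever $S^\star\setminus S\neq\emptyset$. At this point $\gamma^\star=\gamma^\star_{\mathtt{LN}}$ is identified and the hypothesis $\gamma\ge 8\gamma^\star$ is in force.

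The analytic core is controlling the error terms in \cref{thm:oracle}. Because ReLU activations satisfy $x=\mathrm{ReLU}(x)-\mathrm{ReLU}(-x)$, any linear function $g^\star-g$ with $g\in\mathcal{G}$ lies exactly inside $\mathcal{F}$ up to truncation; combined with the sub-Gaussian tail for $(\Sigma)^{-1/2}X^{(e)}$ and the choice $C_2\ge(1\lor\sigma_x\lor\sigma_m)\max_{e,S}\|\Sigma^{1/2}\beta^{(e,S)}\|_2$, this makes both $\delta_{\mathtt{a},\mathcal{G}}$ and $\delta_{\mathtt{a},\mathcal{F},\mathcal{G}}^\star$ of order $n^{-100}$. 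For $\delta_{\mathtt{a},\mathcal{F},\mathcal{G}}(S)$, which is needed only to verify \eqref{eq:main-result-faster-cond} and not to drive the final rate, \cref{cond:lgnf} provides $C_m$-Lipschitz $m^{(e,S)}$ on $[-C_0,C_0]^{|S|}$; a standard deep-ReLU approximation bound yields error of order $(NL)^{-1/|S|}$ up to poly-log factors, which tends to zero because $NL=\log^{2d}n\to\infty$. For the stochastic complexity, the combined class $\mathcal{G}+\mathcal{F}$ inherits the pseudo-dimension estimate of $\mathcal{F}$, giving $\delta_n\lesssim\sqrt{NL\log(NL)/n}$; together with $U\lesssim\sqrt{\log n}$ and the explicit form of $\delta_{n,t}$, the dominant contribution $(1+\gamma)U\delta_{n,t}$ admits a bound of order $\tilde C\log^{d+3}(n)/\sqrt{n}$ once all log factors from depth, width, truncation level, and union bound over $\mathcal{E}$ are absorbed.

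Finally, for $n$ large enough the left-hand side of \eqref{eq:main-result-faster-cond} is dominated by these vanishing quantities while the right-hand side is bounded below by the $n$-independent signals $s_{\min}$ and $\inf_{S:\bar{\mathsf{d}}_{\mathtt{LN}}(S)>0}\bar{\mathsf{d}}_{\mathtt{LN}}(S)$; invoking \cref{thm:oracle}(2) delivers $\|\hat g-g^\star\|_2\lesssim\log^{d+3}(n)/\sqrt{n}$ with probability at least $1-\tilde Cn^{-100}$. To translate this into the parameter norm I would combine $(\beta_{\hat g}-\beta^\star)^\top\Sigma(\beta_{\hat g}-\beta^\star)\ge\kappa_L\|\beta_{\hat g}-\beta^\star\|_2^2$ with the observation that the truncation gap $\|\hat g-\beta_{\hat g}^\top x\|_2$ is at most $n^{-100}$ by the sub-Gaussian tail of $X^{(e)}$. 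The main obstacle is bookkeeping the log factors in the Rademacher/pseudo-dimension bound for deep ReLU networks of width and depth $\log^d n$; this is what ultimately pins down the exponent $d+3$, whereas the remaining steps amount to an essentially routine specialization of the abstract machinery together with the sub-Gaussian truncation bookkeeping.
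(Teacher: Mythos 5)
Your proposal follows essentially the same route as the paper's proof: specialize \cref{thm:oracle} with $\overline{\mathcal{G}_S}$ linear and $\overline{\mathcal{F}_S}=\Theta_S$, identify $\mathsf{b}_{\mathtt{LL}}(S)$ and $\bar{\mathsf{d}}_{\mathtt{LN}}(S)$ as the abstract bias/heterogeneity quantities, verify identification via the two cases of \cref{cond:lgnf:identification}, observe that $\delta_{\mathtt{a},\mathcal{F},\mathcal{G}}(S)$ for general $S$ only needs to be $o(1)$, and invoke \cref{thm:oracle}(2). The only caveats are bookkeeping ones you largely flag yourself: the covariates here are merely sub-Gaussian, so the Lipschitz neural-network approximation of $m^{(e,S)}$ must be carried out on a growing cube of side $\sqrt{\log n}$ with a tail correction (not on a fixed $[-C_0,C_0]^{|S|}$), and the truncation-induced errors $\delta_{\mathtt{a},\mathcal{G}}$, $\delta_{\mathtt{a},\mathcal{F},\mathcal{G}}^\star$ come out as $O(\sqrt{\log n/n})$ rather than $n^{-100}$ — both still negligible against the $\log^{d+3}(n)/\sqrt{n}$ main term, so the conclusion is unaffected.
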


The estimator can be viewed as an advanced version of the one using $\mathcal{F} = \mathcal{H}_{\mathtt{alin}}(d, \phi)$. It leverages neural networks to search for appropriate basis function $\phi$ with strong signals. With the proper choice of the neural network hyper-parameters, the estimator still maintains a parametric optimal rate (up to logarithmic factors). Additionally, it requires a weaker identification condition as described by \cref{cond:lgnf:identification} and reduced critical threshold $\gamma^\star$ according to the inequality $\bar{\mathsf{d}}_{\mathtt{LN}}(S) \ge \bar{\mathsf{d}}_{\mathtt{LA}}(S)$ in \cref{thm:lgnf}.

\section{Further Details of Experiments}
\label{sec:exper}

\subsection{Pseudo-code of the Gradient Descent Ascent Algorithm}
\label{sec:code}

\begin{algorithm}
\caption{FAIR Gradient Descent Ascent Training}
\begin{algorithmic}[1]
\State \textbf{SGD Hyper-parameters: } iteration steps $T$, batch size $m$, predictor/discriminator iter steps $T_g$/$T_f$.
\State \textbf{FAIR Hyper-parameters: } invariance regularization $\gamma$.
\State \textbf{Annealing Hyper-parameters: } Initial $\tau_0$ and final $\tau_T$.
\State \textbf{Models: } predictor $g(x;\theta)$, discriminators $\{f^{(e)}(x;\phi^{(e)})\}_{e\in \mathcal{E}}$, gate $w$. 
\State \textbf{Input: } data $\{\mathcal{D}^{(e)}\}_{e\in \mathcal{E}}$ with $\mathcal{D}^{(e)} = \{(x_i^{(e)}, y_i^{(e)})\}_{i=1}^n$ from $|\mathcal{E}|$ environments, loss function $\ell(\cdot, \cdot)$.
\State \textbf{Output: } Parameters of the prediction model: $w$ and $\theta$
\State Initialize $\theta, \{\phi^{(e)}\}_{e\in \mathcal{E}}$ with random weights, $w = 0$.
\For{$t \in \{1,\ldots, T\}$}
    \State Set $\tau_t = \tau_0 \times (\tau_T/\tau_0)^{t/T}$
    \For{$t_f \in \{1,\ldots, T_f\}$} \Comment{Discriminator Ascent}
        \State Sample $\{u_{j}\}_{j=1}^d=\{(u_{j,1},u_{j,2})\}_{j=1}^d$ from $\mathrm{Gumbel}(0,1)$.
        \State Calculate $a=(a_1,\ldots, a_d)$ with $a_j = V_{\tau_t}(w_j, u_{j})$, where $V(\cdot)$ is defined in \eqref{fan6}. 
        \For{$e\in \mathcal{E}$} \Comment{Update $f^{(e)}$}
        \State Sample minibatch of $m$ examples $\{(x^{(e, i)},y^{(e,i)})\}_{i=1}^m$ from $\mathcal{D}^{(e)}$.
        \State Update the discriminator by ascending its stochastic gradient:
        \[
        \nabla_{\phi^{(e)}} \frac{\gamma}{m} \sum_{i=1}^{m} \left[ \{y^{(e,i)} - g(x^{(e,i)})\}f_{\phi^{(e)}}(x^{(e,i)}) - \frac{1}{2} \{f_{\phi^{(e)}}(\phi^{(e)})\}^2\right]
        \] ~~~~~~~~~~~~~ where $g(x) = g(a(w) \odot x; \theta) ~ \text{and} ~ f_{\phi^{(e)}}(x) = f(a(w) \odot x; \phi^{(e)})$

        \EndFor
    \EndFor
    \For{$t_g \in \{1,\ldots, T_g\}$} \Comment{Predictor Descent}
        \State Sample $\{u_{j}\}_{j=1}^d=\{(u_{j,1},u_{j,2})\}_{j=1}^d$ from $\mathrm{Gumbel}(0,1)$.
        \State Calculate $a=(a_1,\ldots, a_d)$ with $a_j = V_{\tau_t}(w_j, u_{j})$, where $V(\cdot)$ is defined in \eqref{fan6}. 
        \For{$e\in \mathcal{E}$} \Comment{Enumerate Environments}
            \State Sample minibatch of $m$ examples $\{(x^{(e, i)},y^{(e,i)})\}_{i=1}^m$ from $\mathcal{D}^{(e)}$.
            \State Calculate loss as function of $\theta$ and $w$, that is
            \begin{align*}
                L^{(e)}(\theta, w) &= \frac{\gamma}{m} \sum_{i=1}^{m} \left[ \{y^{(e,i)} - g_{w, \theta}(x^{(e,i)})\}f_{w}(x^{(e,i)}) - \frac{1}{2} \{f_{w}(x^{(e, i)})\}^2\right]  \\
                &~~~~~~ + \frac{1}{m} \sum_{i=1}^m \left[\ell\left(y^{(e, i)}, g_{w, \theta}( x^{(e,i)})\right)\right]
            \end{align*} ~~~~~~~~~~~~~ where $g_{w, \theta}(x) = g(a(w) \odot x; \theta) ~ \text{and} ~ f_{w}(x) = f(a(w) \odot x; \phi^{(e)})$
        \EndFor
        \State Update the predictor weights $w, \theta$ by descending its stochastic gradient:
    \[
    \nabla_{(\theta, w)} \sum_{e\in \mathcal{E}} L^{(e)}(\theta, w)
    \]
    \EndFor
\EndFor
\end{algorithmic}
\label{algo-sgd}

\end{algorithm}

See \cref{algo-sgd}.

\subsection{Finite Performance of FAIR-NN Estimator}
\label{sec:fairnn-simulation}

\noindent \textbf{Data Generating Process.} We consider the following data generating process with $d=26$ and $|\mathcal{E}|=2$ in each trial as
\begin{align*}
    X_i^{(e)} &\gets \begin{cases}
        \varepsilon_i^{(e)} &\qquad i \le 5 \\
        f_{i,0}^{(e)}(Y^{(e)}) + \varepsilon_i^{(e)} & \qquad 6 \le i \le 9 \\
        \sum_{j\in \mathtt{pa}(i) \subseteq [8]} f_{i,j}^{(e)}(X_j^{(e)}) + \varepsilon_i^{(e)} &\qquad 10 \le i \le 26
    \end{cases} \\
    Y^{(e)} &\gets m_k^\star(X_1^{(e)},\ldots, X_5^{(e)}) + \varepsilon_0,
\end{align*} where the regression function $m^\star$ is either $m^\star_1(x)=\sum_{k=1}^5 m_{0,j}(x_j)$ with random chosen $m_{0,j}$ or a hierarchical composition model $m^\star_2(x) = x_1 x_2^3 + \log(1 + e^{\tanh(x_3)} + e^{x_4}) + \sin(x_5)$; see detailed model and omitted implementation details in \cref{sec:nonlinear-simu-append}. In the two environments, the cause-effect relationships are shared. The variable $Y$'s parent set is $\{1,2,3,4,5\}$, its children set is $\{6,7,8,9\}$, and may have potential descendants in $\{9,\ldots, 26\}$.  The above data generating process can be regarded as one observation environment $e=0$ and an interventional environment $e=1$ where the random and simultaneous interventions are applied to all the variables other than the variable $Y$, while the assignment from $Y$'s parent to $Y$ remains and furnishes the target regression function $m_k^\star(x)$ with $k\in \{1,2\}$ in pursuit. \cref{fig:visualize-nonlinear} (a) visualizes the induced graph in one trial. 

\begin{figure}[!t]
\centering
\begin{tabular}{cc}
\subfigure[]{
\includegraphics[scale=0.35]{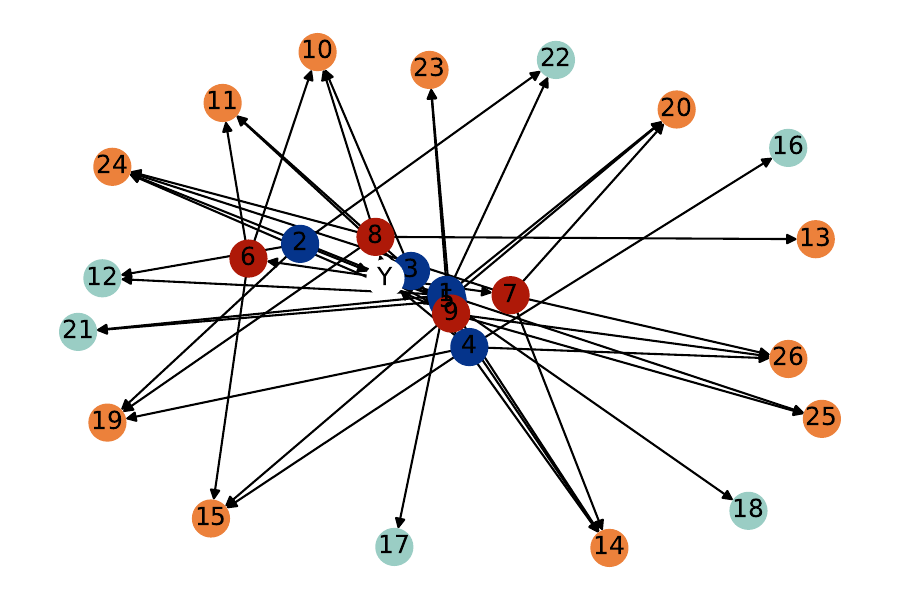}
}&
\subfigure[]{
\includegraphics[scale=0.35]{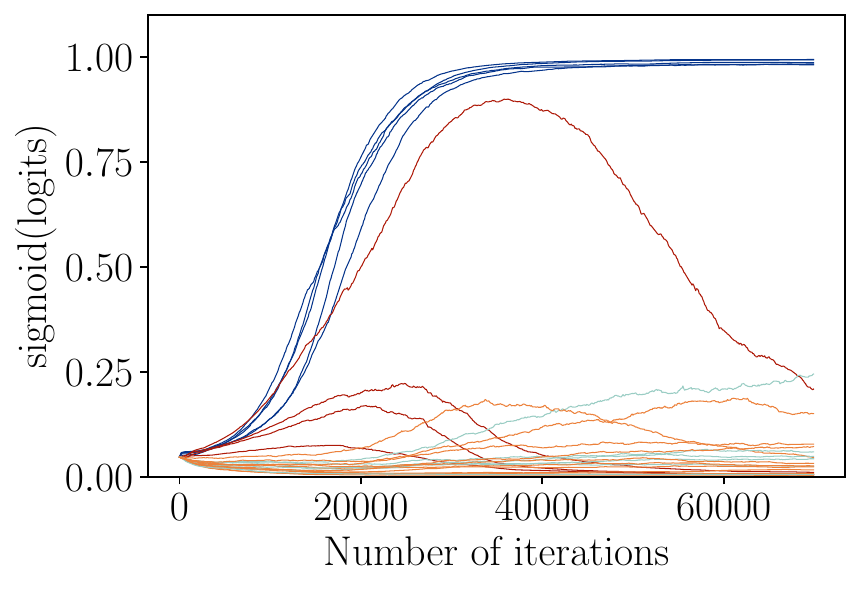}
}
\end{tabular}
\caption{The visualization of (a) the SCM and (b) the $\sig{w}$ during training in one trial for FAIR-NN estimator when $k=1$. We use different colors to represent the different relationships with $Y$: \myblue{blue} = parent, \myred{red} = child, \myorange{orange} = offspring, \mylightblue{lightblue} = other. 
}
\label{fig:visualize-nonlinear}
\end{figure}

\medskip
\noindent \textbf{Implementation.} We let $\mathcal{G}$ be the class of ReLU neural network with depth $2$ and width $128$ and $\mathcal{F}$ be the class of ReLU neural network with depth $2$ and width $196$, and run gradient descent ascent using similar experimental configurations. We use the following empirical mean squared square computed using another $2\times n_{\mathrm{test}}=2\times 30000$ i.i.d. sampled data 
\begin{align*}
    \hat{\mathtt{MSE}} = \frac{1}{2n_{\mathrm{test}}}\sum_{e\in \mathcal{E}} \sum_{i=1}^{n_{\mathrm{test}}} \{m^\star(x^{(e)}_i) - \hat{m}(x^{(e)}_i)\}^2
\end{align*} as the evaluation metric. We report the median of $\hat{\mathtt{MSE}}$ over $100$ replications for the estimators (1) -- (4) akin to that for the linear model. For (1), (2), and (4), we also use a ReLU neural network with depth $2$ and width $128$ in running least squares. \cref{fig:visualize-nonlinear} (b) also visualizes how the Gumbel gate values for different covariates $\sig{w}$ evolve during training in one trial. We can see that the training dynamics for $\sig{w}$ is much more challenging and interesting than that for the linear model depicted in \cref{fig:visualize-linear}: the weight for some $Y$'s children quickly increases at a comparable rate than the variables in $S^\star$ at the beginning, but such a trend slows down and finally completely reverses in the middle. We leave the rigorous and in-depth analysis behind such dynamics for future studies.

\begin{figure}[!t]
\centering
\begin{tabular}{cc}
\subfigure[]{
\includegraphics[scale=0.4]{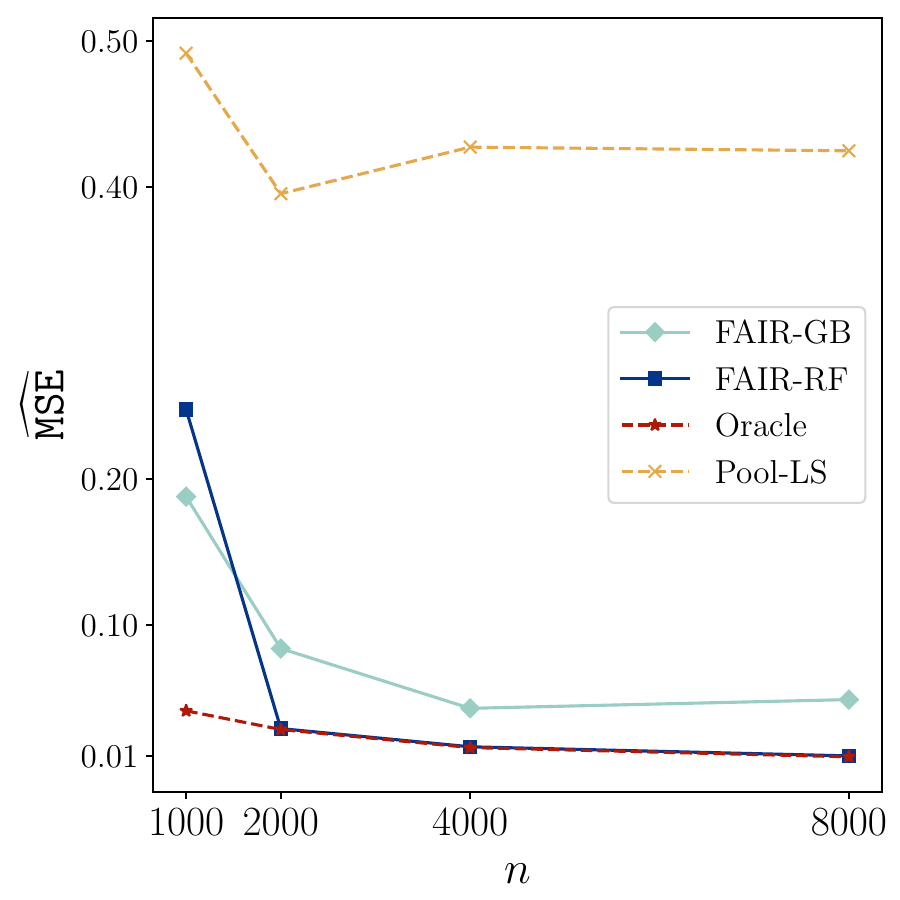}
}&
\subfigure[]{
\includegraphics[scale=0.4]{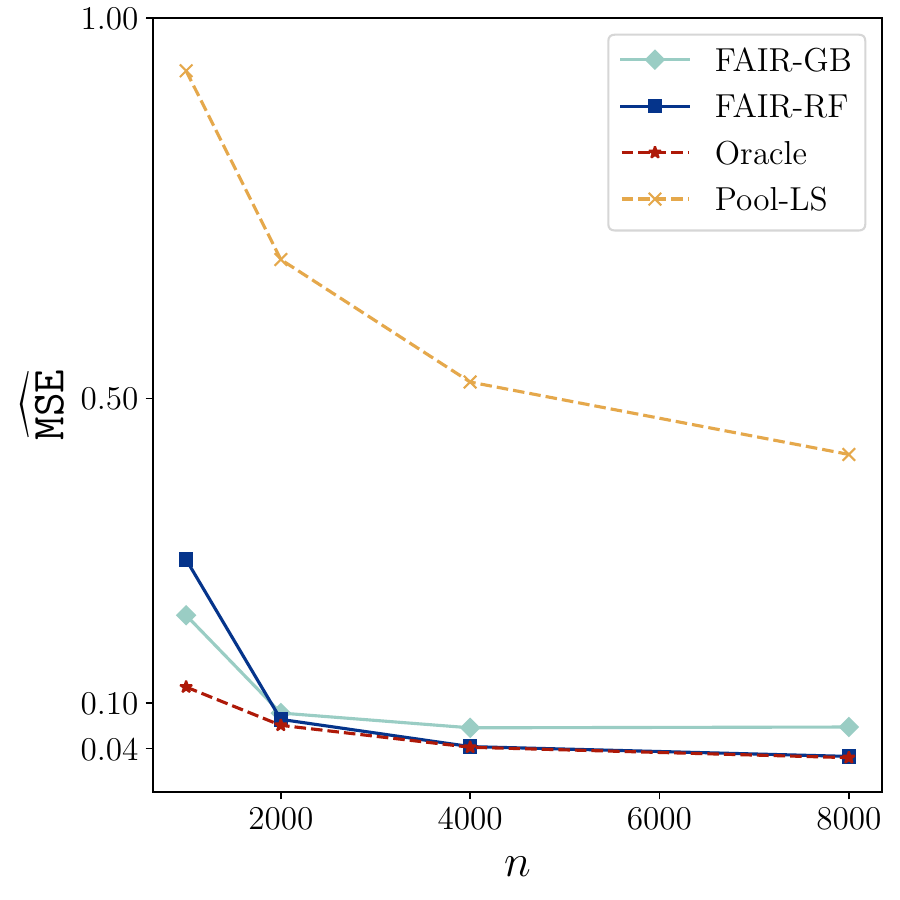}
}
\end{tabular}
\caption{The simulation results for nonlinear models with (a) $m_1^\star$ and (b) $m_2^\star$. Both figures depict how the median estimation errors (based on $50$ replications) for different estimators (marked with different shapes and colors) change when $n$ varies in $\{1000, 2000, 3000, 5000\}$ for (a) and $\{1000, 2000, 3000, 5000, 10000\}$ for (b). 
}
\label{fig:simulation-result-nonlinear}
\end{figure}

\medskip
\noindent \textbf{Results.} The results are shown in \cref{fig:simulation-result-nonlinear} and the messages are similar to those for FAIR-Linear estimators. The pooled least squares yield biased estimation, while our proposed FAIR-NN estimator can unveil the invariant association $m^\star$ from the two environments. Moreover, the refitted FAIR-NN estimator can obtain a near-oracle performance when $n$ is large.

\subsection{Application II: Prediction Based on Extracted Features}
\label{sec:app-bird}

We consider an image object classification task with a spurious background. The target is to classify water birds ($Y=1$) and land birds ($Y=0$) (see examples in \cref{fig:app2} (a)) under backgrounds of water or land based on the feature $X\in \mathbb{R}^{500}$ extracted from ResNet pre-trained on ImageNet. We train a linear classifier on top of $X$ using data from two environments $\{\mathcal{D}_k\}_{k=1}^2$. In the first environment $\mathcal{D}_1$, $r_w=95\%$ water birds appear on the water background and $r_l=90\%$ land birds stay in the land background. The spurious correlation numbers are $r_w=75\%$ and $r_l=70\%$ in $\mathcal{D}_2$. A good predictor should be based on the core features related to the bird's appearance rather than the strong spurious correlation between the background and label. The trained model is evaluated in a test environment $\mathcal{D}_{3}$ where the spurious correlation reverses: $r_w=2\%$ and $r_l=2\%$. We repeat the experiment $10$ times, where in each trial the training dataset are sampled from a larger dataset with sizes $n=|\mathcal{D}_1|=|\mathcal{D}_2|=30k$, while the testing dataset are fixed with size $|\mathcal{D}_{3}|=30k$. We compare our FAIR estimator using $\mathcal{G}=\{\sig{\beta^\top x}\}, \mathcal{F}=\{\beta^\top x\}$ and classification loss $\ell(y, v) = -\log(1-v) - y\log\{v/(1-v)\}$ ({FAIR-GB}) with invariant risk minimization ({IRM}) \citep{arjovsky2019invariant} and group distributionally robust optimization ({GroupDRO}) \citep{sagawa2019distributionally}. We also consider running Lasso on different environments for reference, including (1) using all the data $\mathcal{D}_1\cup \mathcal{D}_2$ ({Pooled Lasso}); (2) using data in $\mathcal{D}_2$ ({Lasso on D2}); (3) using another randomized controlled environment $\mathcal{D}_4$ with $r_w=r_l=50\%$ and $|\mathcal{D}_4|=n$ ({Oracle}). All the models are linear, and the performance of (3) can be seen as the upper bound of the performance using linear models; see data collection and experiential configuration details in {\cref{app:waterbird}}.
\begin{figure}
\centering
\subfigure[]{
\includegraphics[scale=0.23]{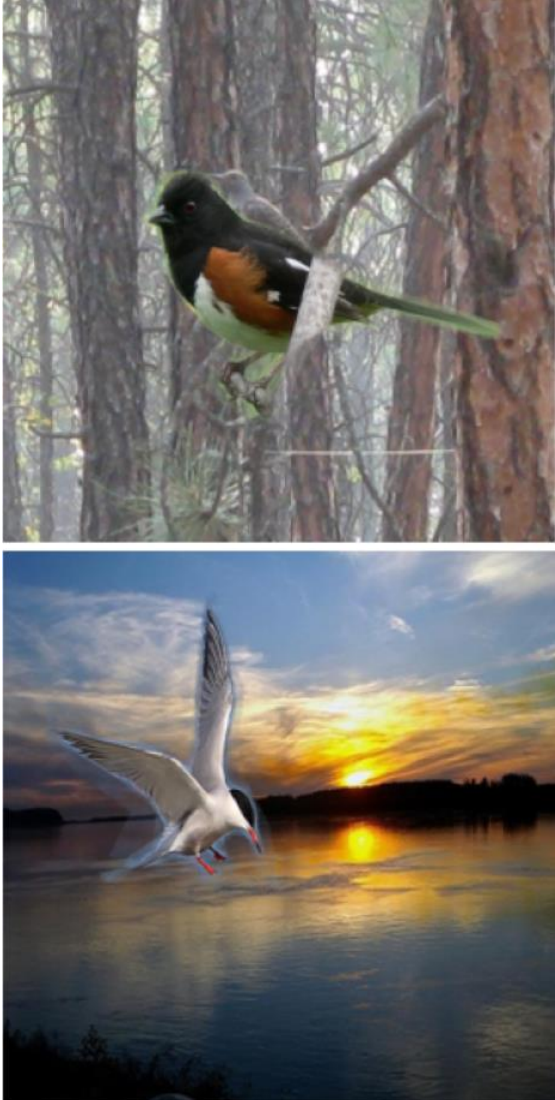}
} 
\subfigure[]{
\begin{tikzpicture}
\node at (0,0.2) {\textbf{Method}};
\node at (2.5,0.2) {\textbf{Test Accuracy}};

\node at (0,-0.5) {Oracle};
\node at (2.5,-0.5) {91.06 $\pm$ 0.24 \%};

\node at (0,-1) {Lasso on D2};
\node at (2.5,-1) {84.57 $\pm$ 0.71 \%};

\node at (0,-1.5) {Pooled Lasso};
\node at (2.5,-1.5) {79.08 $\pm$ 0.54 \%};

\node at (0,-2.5) {IRM};
\node at (2.5,-2.5) {80.32 $\pm$ 0.67 \%};

\node at (0,-3) {GroupDRO};
\node at (2.5,-3) {82.75 $\pm$ 1.10 \%};

\node at (0,-3.5) {FAIR-GB};
\node at (2.5,-3.5) {89.56 $\pm$ 0.53 \%};

\end{tikzpicture}
}
\subfigure[]{
\includegraphics[scale=0.48]{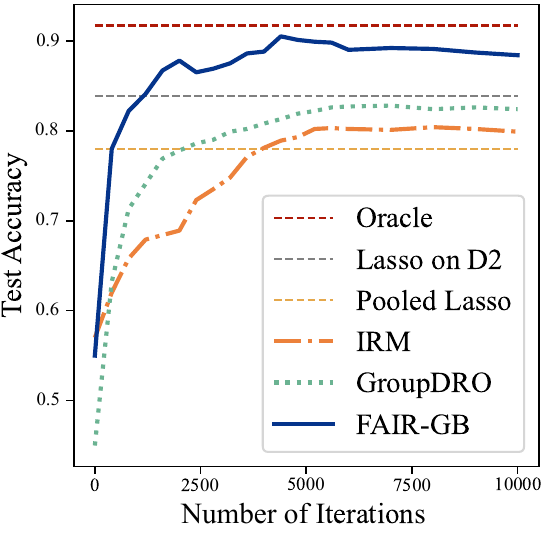}
}
\caption{Prediction Based on Extracted Features: (a) provides two sample images in the dataset: land bird on land (up) and water bird in water (bottom). (b) reports the average $\pm$ standard deviation of test accuracy over 10 trials for different estimators. (c) shows how the test accuracy changes over iterations for IRM, GroupDRO, and our FAIR estimator in one trial, where the number of iterations refers to the number of gradient descent updates.} 
\label{fig:app2}
\end{figure}

The performances are reported in \cref{fig:app2} (b). \cref{fig:app2} (c) also depicts how test accuracy changes as iterations, namely, the number of gradient descent updates, in one trial. {FAIR-GB} performs similar to {Oracle} and significantly outperforms {Lasso on D2}, while other methods ({IRM}, {GroupDRO}) falls behind { Lasso on D2}. This indicates that these methods cannot go beyond interpolating spurious associations in $\mathcal{D}_1$ and $\mathcal{D}_2$, while our method can nearly eliminate the spurious signals using the relatively small perturbations in the two environments.

\subsection{Detailed Simulation Configuration}

\subsubsection{Linear Model with d=15}
\label{sec:linear-simu-append}

\paragraph{Data Generating Process.} The data-generating process is similar to that described in \cref{sec:simu-linear}. We also let $|\mathcal{E}| = 2$, and use the same procedure to generate parent-children relationship and structural assignment except that (1) we use $d=15$ and let the variable $Z_{8}$ be $Y$; and (2) we enforce that $Y$ has at least $3$ parents and $3$ children (3) the structural assignment for variable $Y$ is
\begin{align*}
    Y^{(8)} = Z^{(8)} \gets \sum_{k\in \mathtt{pa}(8)} C_{8,k} Z_k^{(e)} + C_{8,8} \varepsilon_8,
\end{align*} that is we let the variance noise to be the same for the two environments. This is because we will include ICP in our simulation comparisons, which requires conditional distribution invariance. 

\paragraph{Implementation.} We use the same configurations in the implementation of FAIR-GB and FAIR-RF. We also use fixed $\gamma=36$ for all the FAIR family estimators including EILLS. It is worth noticing that ICP, anchor regression, and IRM introduce an additional hyper-parameter, we pick it in an oracle way for them: that is, we enumerate all the candidate hyper-parameters and select the one that minimizes the $L_2$ estimation error. We report the performance for $n\in \{100, 200, 500, 800, 1000\}$.

\paragraph{Discussion of Results.} For anchor regression and IRM, their performance and the corresponding relationships w.r.t. Pool-LS are similar to the 12 variable illustrations in \cite{fan2024environment}. The anchor regression is almost the same as Pool-LS because it is essentially the same as standard least squares when the environments are discrete: indeed, in $|\mathcal{E}|=2$, it just runs least squares with a difference intercept for the interventional environment $e=1$. The IRM is better than vanilla least squares by slightly decreasing the bias, while the performance improvement is negligible compared with the bias it has. 

For ICP, the performance is even worse than pooled least squares because it collapses to conservative solutions like $0$. Note that we apply interventions to all the variables in environment $e=1$, under which it is possible for ICP to identify $\beta^\star$ and $S^\star$ when $n=\infty$. The large estimation error it depicts is due to its inefficiency in estimation. 

We can also see that the performance of FAIR-BF and FAIR-RF are similar, demonstrating the effectiveness of our proposed gradient descent ascent algorithm with Gumbel approximation. The performance of FAIR-GF and FAIR-RF is slightly better than EILLS. This is because the FAIR estimator is essentially doing the most efficient pooled least squares when it selects the correct variable. 

\subsubsection{Nonlinear Model}

\label{sec:nonlinear-simu-append}

\paragraph{Data Generating Process. }

For the structural assignment, we let $\varepsilon_i^{(e)} = \varepsilon_i$ for $i\le 5$ and $\varepsilon_i^{(e)} = C_{i,i}^{(e)} \varepsilon_i$ where $(\varepsilon_1,\ldots, \varepsilon_{26})$ are independent $\mathrm{Uniform}([-1.5, 1.5])$ random variables to let the covariates to be uniformly bounded and $C_{i,i}^{(e)}$ are scalars that are randomly generated in each trial. $\varepsilon_0$ is standard normal distributed that is independent of $(\varepsilon_1,\ldots, \varepsilon_{26})$. 

For the assignments for the children of $Y$, we let $f_{i,0}^{(e)}(u) = C_{i,0}^{(e)} \tanh(u)$, where $C_{i,0}^{(e)}$ are scalars that is randomly sampled from $\mathrm{Uniform}([-1.5,1.5])$ for $e=0$ and $\mathrm{Uniform}([-5,5])$ for $e=1$, the noise level $C_{i,i}^{(e)}$ is a scalar generated from $\mathrm{Uniform}([1,1.5])$. For the assignments for other variables $X_i$ with $i\ge 10$, we let $f_{i,j}^{(e)}(u) = C_{i,j}^{(e)} h_{i,j}^{(e)}(u)$ where $h_{i,j}^{(e)}$ are randomly picked from the function set $\{\tanh(x), \sin(x), \cos(x)\}$, the noise level $C_{i,i}^{(e)}$ is a scalar generated from $\mathrm{Uniform}([2, 3])$. For $m_1^\star$, it is $\sum_{k=1}^5 f_{0,j}(x)$ with $f_{0,j}(x)$ randomly picked from $\{\tanh(x), \sin(x), \max(0, x), x\}$. 

\paragraph{Implementation.} For the FAIR-NN implementation using Gumbel approximation, we also run gradient descent ascent using the Adam optimizer using a learning rate of 1e-3, batch size $64$. The number of iterations is $70k$ for $m_1^\star$ and $80k$ for $m_2^\star$. In each iteration, one gradient descent update of the neural network parameters in $g$ and the Gumbel logits parameter $w$ is conducted followed by three gradient ascent updates of the neural network parameters in $f^{(0)}$ and $f^{(1)}$. We also use fixed $\gamma = 36$. The implementation details for the estimators are:
\begin{itemize}[itemsep=0pt]
    \item[(1)] Pool-LS: it simply runs least squares on the full covariate $X$ using all the data.
    \item[(2)] FAIR-GB: Our FAIR-NN estimator with Gumbel approximation, its prediction on the test dataset is evaluated by averaging the predictions over $100$ Gumbel samples. 
    \item[(3)] FAIR-RF: it first selects the variables $x_j$ in the fitted model in (2) with $\sig{w_j}>t$, i.e., $\hat{S}=\{j: \sig{w_j}>t\}$, and runs least squares again on $X_{\hat{S}}$ using all the data. Here we let $t=0.6$ for $n\le 2000$ and $t=0.9$ for $n>2000$. 
    \item[(4)] Oracle: it runs least squares on $X_{S^\star}$ using all the data.
\end{itemize}

For FAIR-GB, we report the estimated MSE for the model in the last iteration. For other estimators, we also run gradient descent using the Adam optimizer for 10k iterations. We report the estimated MSE for the model with early stopping regularization: that is, we report the estimated MSE of the model that has the smallest validation error, and the validation data is sampled independently and identically to the training data with sample size $n_{\mathrm{valid}} = \lfloor 3n/ 7\rfloor$. 

\subsection{Details of the Discovery in Real Physical System Application}
\label{appendix:app1}

\noindent \textbf{Data Collection.} We directly use the dataset `lt\_interventions\_standard\_v1' released in \cite{gamella2024causal}. 

For the training dataset, given fixed sample size $n$, the data in the first environment $e=0$ is sampled from the experimental setting `uniform\_reference'. For the second environment $e=1$, a mixture of interventions is applied. To be specific, a weak intervention on the variables $\tilde{V}_3, \tilde{V}_1, \tilde{V}_2, \tilde{I}_1, \tilde{I}_2$ with probability $(1/3, 1/6, 1/6, 1/6, 1/6)$, respectively. This is equivalent to sample data from the experimental setting `t\_vis\_3\_weak', `t\_vis\_1\_weak', `t\_vis\_2\_weak,', `t\_ir\_1\_weak', `t\_ir\_2\_weak' with weights $(1/3, 1/6, 1/6, 1/6, 1/6)$. 

For the test data used for evaluation in \cref{fig:app1} (b)--(c), we use the data from the experimental setting `t\_vis\_3\_strong', `t\_vis\_1\_strong', `t\_vis\_2\_strong', `t\_ir\_1\_strong', `t\_ir\_2\_strong'. Since there is an out-of-support issue for the intervention, i.e.,
\begin{align*}
    | \mathrm{Mean}_{\mu_{X,i}}(X) - \mathrm{Mean}_{\bar{\mu}_n}(X) | > 1.6 \cdot \mathrm{Std}_{\bar{\mu}_n}(X)
\end{align*} where $\mu_{X,i}$ is the empirical distribution of $X$ in the experimental setting where strong intervention is intervened on $X$, and $\bar{\mu}_n$ is the empirical distribution of $X$ in the training dataset. Thus, we recenter the variable $X$ in the corresponding test intervention environment such that it has the same empirical mean as that in the training dataset. 

\medskip
\noindent \textbf{Explanation on the Equivalent Graph.} We regress $\tilde{I}_3$ on $(R, G, B, \theta_1, \theta_2, \tilde{V}_1, \tilde{V}_2, \tilde{V}_3, \tilde{I}_1, \tilde{I}_2)$. There are several hidden confounders, hence there should be an arrow from $\tilde{V}_3$ to $\tilde{I}_3$ and an arrow from $\tilde{I}_3$ to $\tilde{V}_3$ if $\tilde{V}_3$ is not intervened given the existence of hidden confounders $(L_{3,1}, L_{3,2})$. Introducing the variable $\tilde{V}_3$ in predicting $\tilde{I}_3$ can increase the predictive power given it can provide additional information of $(L_{3,1}, L_{3,2})$. The (equivalent) arrow from $\tilde{V}_3$ to $\tilde{I}_3$ do disappear because of the intervention on $\tilde{V}_3$ will make the association perturbs. 

\medskip
\noindent \textbf{Experimental Setup.} For the FAIR-NN implementation using Gumbel approximation, we also run gradient descent ascent using the Adam optimizer using a learning rate of 1e-3, batch size $64$. The number of iterations is $100k$. In each iteration, one gradient descent update of the neural network parameters in $g$ and the Gumbel logits parameter $w$ is conducted followed by three gradient ascent updates of the neural network parameters in $f^{(0)}$ and $f^{(1)}$. We also use fixed $\gamma = 36$. The neural network architectures for all the estimators are the same and are the same as in the simulation of FAIR-NN. The implementation details for all the estimators are:
\begin{itemize}[itemsep=0pt]
    \item[(1)] Pooled-NN: it simply runs least squares on the full covariate $X$ using all the data.
    \item[(2)] FAIR-NN-GB: Our FAIR-NN estimator with Gumbel approximation, its prediction on the test dataset is evaluated by averaging the predictions over $100$ Gumbel samples.
    \item[(3)] FAIR-NN-RF: it first selects the variables $x_j$ in the fitted model in (2) with $\sig{w_j}>0.9$, i.e., $\hat{S}=\{j: \sig{w_j}>t\}$, and runs least squares again on $X_{\hat{S}}$ using all the data.
    \item[(4)] Oracle-NN: it runs least squares on $X_{S^\star}$ using all the data and neural networks.
    \item[(5)] Oracle-Linear: it runs least squares on $X_{S^\star}$ using all the data and linear model.
\end{itemize}
The out-of-sample $R^2$ for all the estimators is reported based on the model selection using the validation set that is sampled from the same source as training data with sample size $n'=0.6 n$. Such a model selection is adopted to prevent the model from over-fitting.

\subsection{Details of the Prediction Based on Extracted Features}\label{app:waterbird}

We generate datasets by combining the bird images in the CUB dataset \citep{wah2011caltech} and the background images in the Places dataset \citep{zhou2017places} using specific probabilities, which is similar to the waterbird setting in \cite{sagawa2019distributionally} except the spurious correlation ratio. In each environment, there are $50\%$ water birds and $50\%$ land birds. The probabilities of each environment are as follows:

\begin{itemize}
\item[(a)]
Environment-1. We place $95\%$ of all water birds against a water background, with the remaining $5\%$ against a land background. We place $90\%$ of all land birds against a land background, with the remaining $10\%$ against a water background. The dataset is denoted by $\bar{\mathcal{D}}_1$, with $50$k images.

\item[(b)]
Environment-2. We place $75\%$ of all waterbirds against a water background, with the remaining $25\%$ against a land background. We place $70\%$ of all landbirds against a land background, with the remaining $30\%$ against a water background. The dataset is denoted by $\bar{\mathcal{D}}_2$, with $50$k images.

\item[(c)]
Environment-3 (Test Environment). We only place $2\%$ of all waterbirds against a water background, with the remaining $98\%$ against a land background. We place $2\%$ of all landbirds against a land background, with the remaining $98\%$ against a water background. The dataset is denoted by $\mathcal{D}_3$, with $30$k images.

\item[(d)]
Environment-4 (Oracle Environment). We place $50\%$ of all waterbirds against a water background, with the remaining $50\%$ against a land background. We place $50\%$ of all landbirds against a land background, with the remaining $50\%$ against a water background. The dataset is denoted by $\mathcal{D}_{4}$, with $30$k images.
\end{itemize}

\medskip
\noindent \textbf{Class Identification.} We apply the CUB dataset \cite{wah2011caltech}, which contains images of birds, along with pixel-level segmentation masks for each bird. When generating the dataset, we classify each bird into waterbird if it belongs to the seabird or waterfowl categories (e.g., albatross, auklet, cormorant, frigatebird, fulmar, gull, jaeger, kittiwake, pelican, puffin, tern, gadwall, grebe, mallard, merganser, guillemot, or Pacific loon) and the land birds if it does not belong to the seabird or waterfowl categories.

\medskip
\noindent \textbf{Image Generation.} When picking bird images from the CUB dataset, we use the provided pixel-level segmentation masks to crop each bird from its original background. Then we decide which environment they should be placed in and select either a water background like ocean and lake or a land background like bamboo forest and broadleaf forest sourced from the Places dataset \cite{zhou2017places}. We randomly select $70\%$ of the images in the CUB dataset as a training set and the remaining $30\%$ as a testing set and generate our dataset for training and testing based on the split CUB dataset.

\medskip
\noindent \textbf{Feature Extraction.} Based on the dataset, we use the Pytorch torchvision implementation of the ResNet50 model \cite{he2016deep} with the pre-trained weights to extract the feature of the images, obtaining a dataset of the feature vector of 2048 dimensions. Then we apply principal components analysis (PCA) to reduce the dimensions of the feature vector to $500$ based on the whole training data $\bar{\mathcal{D}}_1$ and $\bar{\mathcal{D}}_2$. We apply the same dimensionality reduction transformation to data in other environments.

\medskip
\noindent \textbf{Experiment Setup.} We run FAIR-Linear with Gumbel approximation on the dataset. Following the standard setting, we apply the logistic loss and Adam optimizer using a learning rate of $1e-2$, weight decay of $1e-4$, and batch size $4096$ for $10000$ iterations. In each iteration, one gradient descent update of the neural network parameters in $g$ and the Gumbel logits parameter $\omega$ is conducted based on $5$ gradient ascent updates of the neural network parameters in $f$. We fix $\gamma$ as $200$. For any methods that use data in $\bar{\mathcal{D}}_1$ and $\bar{\mathcal{D}}_2$, it is trained in a random subset $\mathcal{D}_k \in \bar{\mathcal{D}}_k$ with $|\mathcal{D}_k|=30k$. The implementation details for all the estimators are:

(1) Oracle: it runs logistic regression with $\ell_1$ penalty and penalty weight $\alpha=0.001$ on the oracle environment $\mathcal{D}_3$ for $1000$ iterations.

(2) Pooled Lasso: it runs logistic regression with $\ell_1$ penalty and penalty weight $\alpha=0.001$ on $\mathcal{D}_1$ and $\mathcal{D}_2$ for $1000$ iterations.

(3) Lasso on D2: it runs logistic regression with $\ell_1$ penalty and penalty weight $\alpha=0.001$ on the $\mathcal{D}_2$ for $1000$ iterations.

(4) FAIR-GB: Our FAIR-Linear estimator with Gumbel approximation trained on $\mathcal{D}_1$ and $\mathcal{D}_2$ for $10000$ iterations.

(5) IRM: it runs Invariant Risk Minimization (IRM) trained on $\mathcal{D}_1$ and $\mathcal{D}_2$ with $\ell_2$ regularizer weight $0.001$ and penalty weight $100$ for $10000$ iterations.

(6) GroupDRO: it runs Group Distributionally Robust Optimization (Group-DRO) on $\mathcal{D}_1$ and $\mathcal{D}_2$ using ResNet50 and $\gamma=0.1$ for $10000$ iterations.

%%%%%%%%%%%%%%%%%%%%%%%%%%%%%%%%%%%%%%%%%%%%%%%%%%%%%%%%%%%%%%%%%%%%%%%%%%%%%%%%%%%%%%%%%%%%%%%%%%%%%%%%%%%%%%%%%%%%%%%%%%%%%
%%%%%%%%%%%%%%%%%%%%%%%%%%%%%%%%%%%%%%%%%%%%%%%%%%%%%%%%%%%%%%%%%%%%%%%%%%%%%%%%%%%%%%%%%%%%%%%%%%%%%%%%%%%%%%%%%%%%%%%%%%%%%
%%%%%%%%%%%%%%%%%%%%%%%%%
%%%%%%%%%%%%%%%%%%%%%%%%%                                Linear g star
%%%%%%%%%%%%%%%%%%%%%%%%%
%%%%%%%%%%%%%%%%%%%%%%%%%%%%%%%%%%%%%%%%%%%%%%%%%%%%%%%%%%%%%%%%%%%%%%%%%%%%%%%%%%%%%%%%%%%%%%%%%%%%%%%%%%%%%%%%%%%%%%%%%%%%%
%%%%%%%%%%%%%%%%%%%%%%%%%%%%%%%%%%%%%%%%%%%%%%%%%%%%%%%%%%%%%%%%%%%%%%%%%%%%%%%%%%%%%%%%%%%%%%%%%%%%%%%%%%%%%%%%%%%%%%%%%%%%%

%%%%%%%%%%%%%%%%%%%%%%%%%%%%%%%%%%%%%%%%%%%%%%%%%%%%%%%%%%%%%%%%%%%%%%%%%%%%%%%%%%%%%%%%%%%%%%%%%%%%%%%%%%%%%%%%%%%%%%%%%%%%%
%%%%%%%%%%%%%%%%%%%%%%%%%%%%%%%%%%%%%%%%%%%%%%%%%%%%%%%%%%%%%%%%%%%%%%%%%%%%%%%%%%%%%%%%%%%%%%%%%%%%%%%%%%%%%%%%%%%%%%%%%%%%%
%%%%%%%%%%%%%%%%%%%%%%%%%
%%%%%%%%%%%%%%%%%%%%%%%%%                                Main Result Proof
%%%%%%%%%%%%%%%%%%%%%%%%%
%%%%%%%%%%%%%%%%%%%%%%%%%%%%%%%%%%%%%%%%%%%%%%%%%%%%%%%%%%%%%%%%%%%%%%%%%%%%%%%%%%%%%%%%%%%%%%%%%%%%%%%%%%%%%%%%%%%%%%%%%%%%%
%%%%%%%%%%%%%%%%%%%%%%%%%%%%%%%%%%%%%%%%%%%%%%%%%%%%%%%%%%%%%%%%%%%%%%%%%%%%%%%%%%%%%%%%%%%%%%%%%%%%%%%%%%%%%%%%%%%%%%%%%%%%%

\section{Proofs for Main Results in Appendix \ref{sec:main-result} and Theorem \ref{thm:oracle-2}}
\label{sec:proof-main-result}

\subsection{Proof of the Faster L2 Error Bound and Variable Selection Result}

In this section, we prove the faster $L_2$ error bound and variable selection result in \eqref{eq:main-result-rate}. The proof proceeds conditioned on that both events in \cref{prop:nonasymptotic-a} and \ref{prop:nonasymptotic-pooled} occurs, and also the event $\mathcal{A}_+$ defined in the proof of \eqref{eq:main-result-variable-sel} occurs such that the selected set $\hat{S}$ satisfies
\begin{align}
\label{eq:proof-main-rate-eq1}
\forall e\in \mathcal{E}, \qquad \Pi_{\overline{\mathcal{F}_{\hat{S}}}}^{(e)}(m^{(e,\hat{S})}) = g^\star.
\end{align}

Note that \cref{prop:variable-selection} explicitly establishes that $\hat{S} \supseteq S^\star$ and $\bar{\mathsf{d}}_{\mathcal{G},\mathcal{F}}(\hat{S}) = 0$, which concludes the proof of the variable selection result in \eqref{eq:main-result-rate}.

Now we use \cref{thm:population} and \cref{prop:nonasymptotic-pooled}, \ref{prop:nonasymptotic-a} in a different way. On one hand, we apply \cref{thm:population} with $\delta=0.5$, $g, f^{\mathcal{E}} = \hat{g}, \hat{f}^{\mathcal{E}}$ and $\tilde{g}, \tilde{f}^{\mathcal{E}}$ to be determined as
\begin{align*}
    &\mathsf{Q}_\gamma(\hat{g}, \hat{f}^{\mathcal{E}}) - \mathsf{Q}_\gamma(\tilde{g}, \tilde{f}^{\mathcal{E}}) \\
    &~~~~\ge 0.25 \|\hat{g} - \tilde{g}\|_2^2 +  \frac{\gamma}{4} \bar{\mathsf{d}}_{\mathcal{G}, \mathcal{F}}(S) + \frac{\gamma}{2} \|\hat{g} - \Pi_{\overline{\mathcal{G}_{\hat{S}}}}(\bar{m}^{(\hat{S})})\|_2^2\\
    &~~~~~~~~~~ - \frac{\gamma}{2|\mathcal{E}|} \sum_{e\in \mathcal{E}}  \|\hat{f}^{(e)} - \{\Pi_{\overline{\mathcal{F}_{\hat{S}}}}^{(e)} (m^{(e,\hat{S})}) - \hat{g}\}\|_{2,e}^2 - \frac{4 + \gamma}{2} \|\tilde{g} - g^\star\|_2^2  \\
    &~~~~= 0.25 \|\hat{g} - \tilde{g}\|_2^2 + 0.5 \gamma \|\hat{g} - g^\star\|_2^2 \\
    &~~~~~~~~~~ - \frac{\gamma}{2|\mathcal{E}|} \sum_{e\in \mathcal{E}}  \|\hat{f}^{(e)} - \{g^\star - \hat{g}\}\|_{2,e}^2 - \frac{4 + \gamma}{2} \|\tilde{g} - g^\star\|_2^2 
\end{align*} where the last identity follows from \eqref{eq:proof-main-rate-eq1} and the fact that $\|\Pi_{\overline{\mathcal{G}_{\hat{S}}}}(\bar{m}^{(\hat{S})}) - g^\star\|_{2} = 0$ because $\mathsf{b}_{\hat{S}}=0$. Combing with the fact that 
\begin{align*}
    \|\hat{g} - g^\star\|_2^2 = \|\hat{g} - \tilde{g} + \tilde{g} - g^\star\|_2^2 \ge 0.5 \|\hat{g} - \tilde{g}\|_{2}^2 - \|\tilde{g} - g^\star\|_2^2,
\end{align*} we obtain
\begin{align}
\label{eq:proof-main-rate-lb}
\begin{split}
\mathsf{Q}_\gamma(\hat{g}, \hat{f}^{\mathcal{E}}) - \mathsf{Q}_\gamma(\tilde{g}, \tilde{f}^{\mathcal{E}}) &\ge 0.25 (1 + \gamma) \|\hat{g} - \tilde{g}\|_2^2 - 2(1 + \gamma) \|\tilde{g} - g^\star\|_2^2 \\
&~~~~~~~~~~ - \frac{\gamma}{2|\mathcal{E}|} \sum_{e\in \mathcal{E}}  \|\hat{f}^{(e)} - \{g^\star - \hat{g}\}\|_{2,e}^2.
\end{split}
\end{align}

On the other hand, we also have
\begin{align}
    \mathsf{Q}_\gamma(\hat{g}, \hat{f}^{\mathcal{E}}) - \mathsf{Q}_\gamma(\tilde{g}, \tilde{f}^{\mathcal{E}}) &= \mathsf{Q}_\gamma(\hat{g}, \hat{f}^{\mathcal{E}}) - \hat{\mathsf{Q}}_\gamma(\hat{g}, \hat{f}^{\mathcal{E}}) + \hat{\mathsf{Q}}_\gamma(\tilde{g}, \tilde{f}^{\mathcal{E}}) - \mathsf{Q}_\gamma(\tilde{g}, \tilde{f}^{\mathcal{E}}) \nonumber \\
    & ~~~~~~~~~~ + \hat{\mathsf{Q}}_\gamma(\hat{g}, \hat{f}^{\mathcal{E}}) - \hat{\mathsf{Q}}_\gamma(\tilde{g}, \tilde{f}^{\mathcal{E}}) \nonumber \\
    & = \hat{\mathsf{Q}}_\gamma(\hat{g}, \hat{f}^{\mathcal{E}}) - \hat{\mathsf{Q}}_\gamma(\tilde{g}, \tilde{f}^{\mathcal{E}}) - \Delta_{\mathsf{R}}(\hat{g}, \tilde{g}) \nonumber \\
    & ~~~~~~~~~~ + \frac{\gamma}{|\mathcal{E}|} \sum_{e\in \mathcal{E}} \Delta_{\mathsf{A}}(g, \tilde{g}, \hat{f}^{(e)}, \tilde{f}^{(e)}) \nonumber \\
    \begin{split}
    &\le \hat{\mathsf{Q}}_\gamma(\hat{g}, \hat{f}^{\mathcal{E}}) - \hat{\mathsf{Q}}_\gamma(\tilde{g}, \tilde{f}^{\mathcal{E}}) + |\Delta_{\mathsf{R}}(\hat{g}, \tilde{g})| \\
    & ~~~~~~~~~~ + \frac{\gamma}{|\mathcal{E}|} \sum_{e\in \mathcal{E}} |\Delta_{\mathsf{A}}(g, \tilde{g}, \hat{f}^{(e)}, \tilde{f}^{(e)})| \label{eq:proof-main-rate-eq2}
    \end{split}
\end{align} Now we choose $\tilde{g}$ such that 
\begin{align}
\label{eq:proof-main-rate-tilde-g-choice}
\|\tilde{g} - g^\star\|_{2}^2 \le \inf_{g\in \mathcal{G}_{S^\star}} \|g - g^\star\|_{2}^2 + \frac{1}{n} \le \delta^2_{\mathtt{a}, \mathcal{G}} + \frac{1}{n}
\end{align} and $\tilde{f}^{\mathcal{E}} = \tilde{f}^{\mathcal{E}}_{\tilde{g}}$ such that
\begin{align}
\label{eq:proof-main-rate-tilde-f-choice}
    \hat{\mathsf{Q}}_\gamma(\tilde{g}, \tilde{f}^{\mathcal{E}}) \ge \sup_{\breve{f}^{\mathcal{E}} \in \{\mathcal{F}_{S^\star}\}^{|\mathcal{E}|}} \hat{\mathsf{Q}}_\gamma(\tilde{g}, \breve{f}^{\mathcal{E}}) - (\gamma+1) \delta_{\mathtt{opt}}^2.
\end{align} Combining with the fact that our choice of $\hat{f}^{\mathcal{E}}$ satisfies $\hat{\mathsf{Q}}_\gamma(\hat{g}, \hat{f}^{\mathcal{E}}) \ge \sup_{f^{\mathcal{E}} \in \{\mathcal{F}_{\hat{S}}\}^{|\mathcal{E}|}} \hat{\mathsf{Q}}_\gamma(\hat{g}, f^{\mathcal{E}}) - (1+\gamma)\delta_{\mathtt{opt}}^2$, we find that
\begin{align}
\label{eq:proof-main-rate-opt-error}
\begin{split}
\hat{\mathsf{Q}}_\gamma(\hat{g}, \hat{f}^{\mathcal{E}}) - \hat{\mathsf{Q}}_\gamma(\tilde{g}, \tilde{f}^{\mathcal{E}}_{\tilde{g}}) &\le \inf_{g\in \mathcal{G}} \sup_{{f}^{\mathcal{E}} \in \{\mathcal{F}_{S_g}\}^{|\mathcal{E}|}} \hat{\mathsf{Q}}_\gamma({g}, {f}^{\mathcal{E}}) + (1+\gamma)\delta_{\mathtt{opt}}^2 - \hat{\mathsf{Q}}_\gamma(\tilde{g}, \tilde{f}^{\mathcal{E}}_{\tilde{g}}) \\ 
&\le \inf_{g\in \mathcal{G}} \sup_{{f}^{\mathcal{E}} \in \{\mathcal{F}_{S_g}\}^{|\mathcal{E}|}} \hat{\mathsf{Q}}_\gamma({g}, {f}^{\mathcal{E}}) - \sup_{\breve{f}^{\mathcal{E}} \in \{\mathcal{F}_{S^\star}\}^{|\mathcal{E}|}} \hat{\mathsf{Q}}_\gamma(\tilde{g}, \breve{f}^{\mathcal{E}}) + 2(1+\gamma)\delta_{\mathtt{opt}}^2 \le 2(1+\gamma)\delta_{\mathtt{opt}}^2.
\end{split}
\end{align} Here the first inequality follows from the optimization objective \eqref{eq:approx-solution}, the second inequality follows from \eqref{eq:proof-main-rate-tilde-f-choice}, and the last inequality follows from the definition of infimum. 

Putting \eqref{eq:proof-main-rate-opt-error} back into \eqref{eq:proof-main-rate-eq2} with our choice of $\tilde{g}$ and $\tilde{f}^{\mathcal{E}}$ in \eqref{eq:proof-main-rate-tilde-g-choice} and \eqref{eq:proof-main-rate-tilde-f-choice} gives
\begin{align*}
\mathsf{Q}_\gamma(\hat{g}, \hat{f}^{\mathcal{E}}) - \mathsf{Q}_\gamma(\tilde{g}, \tilde{f}_{\tilde{g}}^{\mathcal{E}}) \le 2(1+\gamma)\delta_{\mathtt{opt}}^2 + \frac{\gamma}{|\mathcal{E}|} \sum_{e\in \mathcal{E}} |\Delta_{\mathsf{A}}(g, \tilde{g}, \hat{f}^{(e)}, \tilde{f}^{(e)}_{\tilde{g}})| + |\Delta_{\mathsf{R}}(g,\tilde{g})|
\end{align*}

Applying the instance-dependent error bounds in \cref{prop:nonasymptotic-a} and \cref{prop:nonasymptotic-pooled} to the above inequality, we further have
\begin{align}
&\mathsf{Q}_\gamma(\hat{g}, \hat{f}^{\mathcal{E}}) - \mathsf{Q}_\gamma(\tilde{g}, \tilde{f}_{\tilde{g}}^{\mathcal{E}}) \nonumber\\
&~~~\le 2(1+\gamma)\delta_{\mathtt{opt}}^2 + \frac{\gamma}{|\mathcal{E}|} \sum_{e\in \mathcal{E}} |\Delta_{\mathsf{A}}(\hat{g}, \tilde{g}, \hat{f}^{(e)}, \tilde{f}^{(e)}_{\tilde{g}})| + |\Delta_{\mathsf{R}}(\hat{g},\tilde{g})| \nonumber \\
&~~~\le \gamma CU \frac{1}{|\mathcal{E}|}\sum_{e\in \mathcal{E}} \left\{\delta_{n,t} (\|\tilde{g} - g\|_{2,e} + \|\tilde{g} + \tilde{f}^{(e)}_{\tilde{g}} - \hat{g} - \hat{f}^{(e)}\|_{2,e}) + \delta_{n,t}^2\right\} \nonumber \\ 
&~~~~~~~~~~~ + CU \frac{1}{|\mathcal{E}|} \sum_{e\in \mathcal{E}} \left\{\delta_{n,t} \|g - \tilde{g}\|_{2,e} + \delta_{n,t}^2\right\} + 2(1+\gamma)\delta_{\mathtt{opt}}^2 \nonumber \\
&~~~\le \gamma CU \frac{1}{|\mathcal{E}|} \sum_{e\in \mathcal{E}}\left\{ 10 CU \delta_{n,t}^2 + \delta_{n,t}^2 + (CU)^{-1} 0.05 \left(\|\tilde{g} - \hat{g}\|_{2,e}^2 + \|\tilde{g} + \tilde{f}^{(e)}_{\tilde{g}} - \hat{g} - \hat{f}^{(e)}\|_{2,e}^2\right)\right\} \nonumber \\
&~~~~~~~~~~~ + CU \frac{1}{|\mathcal{E}|} \sum_{e\in \mathcal{E}} \left\{0.05(CU)^{-1}\|g - \tilde{g}\|_{2,e} + 10CU \delta_{n,t}^2 + \delta_{n,t}^2\right\} +  2(1+\gamma)\delta_{\mathtt{opt}}^2 \nonumber \\
\begin{split}
\label{eq:proof-main-rate-up1}
&~~~\le C (\gamma+1) U^2 \delta_{n,t}^2  + 0.05 (1+\gamma)\|\tilde{g} - \hat{g}\|_2^2 + 2(1+\gamma)\delta_{\mathtt{opt}}^2 \\
&~~~~~~~~~~~ + \gamma \frac{1}{|\mathcal{E}|} \sum_{e\in \mathcal{E}} \|\tilde{g} + \tilde{f}^{(e)}_{\tilde{g}} - \hat{g} - \hat{f}^{(e)}\|_{2,e}^2
\end{split}
\end{align} where the third inequality follows from the fact that $ab\le 0.5a^2\eta + 0.5b^2\eta^{-1}$ for any $\eta>0$, and the last inequality follows from the fact that $\frac{1}{|\mathcal{E}|}\sum_{e\in \mathcal{E}}\|\cdot\|_{2,e} = \|\cdot\|_2$. Observe the set $\hat{S}$ satisfies \eqref{eq:proof-main-rate-eq1}, thus it follows from triangle inequality that
\begin{align}
    \|\tilde{g} + \tilde{f}^{(e)}_{\tilde{g}} - \hat{g} - \hat{f}^{(e)}\|_{2,e}^2 &= \|\tilde{g} + \tilde{f}^{(e)}_{\tilde{g}} - g^\star + \Pi_{\overline{\mathcal{F}_S}}^{(e)}(m^{(e,\hat{S})}) - \hat{g} - \hat{f}^{(e)}\|_{2,e}^2 \nonumber \\
    &\le 2\|g^\star - \tilde{g} - \tilde{f}^{(e)}_{\tilde{g}}\|_{2,e}^2 + 2\|\Pi_{\overline{\mathcal{F}_S}}^{(e)}(m^{(e,\hat{S})}) - \hat{g} - \hat{f}^{(e)}\|_{2,e}^2 \nonumber
\end{align} Applying the second argument in \cref{prop:characterize-f} with $\eta = 1/4$, the instance-dependent characterization of $f^{(e)}$, to $\tilde{f}^{(e)}_{\tilde{g}}$ and $\hat{f}^{(e)}$ simultaneously, we find
\begin{align}
    &\frac{1}{|\mathcal{E}|} \sum_{e\in \mathcal{E}} \|\tilde{g} + \tilde{f}^{(e)}_{\tilde{g}} - \hat{g} - \hat{f}^{(e)}\|_{2,e}^2 \nonumber\\
    &~~~\le \frac{2}{|\mathcal{E}|} \sum_{e\in \mathcal{E}} \|g^\star - \tilde{g} - \tilde{f}^{(e)}_{\tilde{g}}\|_{2,e}^2 + \frac{2}{|\mathcal{E}|} \sum_{e\in \mathcal{E}} \|\Pi_{\overline{\mathcal{F}_S}}^{(e)}(m^{(e,\hat{S})}) - \hat{g} - \hat{f}^{(e)}\|_{2,e}^2 \nonumber \\
    &~~~\le 48\left(\delta_{\mathtt{a}, \mathcal{F}, \mathcal{G}}(S^\star) + \delta_{\mathtt{a}, \mathcal{F}, \mathcal{G}}(\hat{S})\right) + 72 C^2U^2\delta_{n,t}^2 \nonumber \\
    &~~~~~~~~~~~~ + \frac{16}{\gamma} \left(\sup_{\breve{f} \in \{\mathcal{F}_{S_{\tilde{g}}}\}^{|\mathcal{E}|}}\hat{\mathsf{Q}}_\gamma(\tilde{g}, \breve{f}^{\mathcal{E}}) - \hat{\mathsf{Q}}_\gamma(\tilde{g}, \tilde{f}_{\tilde{g}}^{\mathcal{E}})\right) \nonumber \\
    &~~~~~~~~~~~~ + \frac{16}{\gamma} \left(\sup_{\breve{f} \in \{\mathcal{F}_{S_{\hat{g}}}\}^{|\mathcal{E}|}}\hat{\mathsf{Q}}_\gamma(\hat{g}, \breve{f}^{\mathcal{E}}) - \hat{\mathsf{Q}}_\gamma(\hat{g}, \hat{f}^{\mathcal{E}})\right) \nonumber \\
    &~~~\le C \left\{ U^2 \delta_{n,t}^2 + \delta^2_{\mathtt{a}, \mathcal{F}, \mathcal{G}}(\hat{S}) + \delta^2_{\mathtt{a}, \mathcal{F}, \mathcal{G}}(S^\star) + \frac{1+\gamma}{\gamma}\delta_{\mathtt{opt}}^2 \right\}.\label{eq:proof-main-rate-sume}
\end{align} Here the second inequality follows from the \cref{prop:characterize-f}, the third inequality follows from our choice of $\tilde{f}^{\mathcal{E}}_{\tilde{g}}$ \eqref{eq:proof-main-rate-tilde-f-choice} and the definition of approximate solution \eqref{eq:approx-solution}. Substituting it back into \eqref{eq:proof-main-rate-up1}, we have
\begin{align*}
    &\mathsf{Q}_\gamma(\hat{g}, \hat{f}^{\mathcal{E}}) - \mathsf{Q}_\gamma(\tilde{g}, \tilde{f}_{\tilde{g}}^{\mathcal{E}}) \\
    &~~~~ \le 0.05 (1+\gamma) \|\tilde{g} - \hat{g}\|_2^2 + C(\gamma+1)\left(U^2\delta_{n,t}^2 + \delta^2_{\mathtt{a}, \mathcal{F}, \mathcal{G}}(\hat{S}) + \delta^2_{\mathtt{a}, \mathcal{F}, \mathcal{G}}(S^\star) + \delta_{\mathtt{opt}}^2\right).
\end{align*} Combining it with the lower bound \eqref{eq:proof-main-rate-lb} we obtained for $\mathsf{Q}_\gamma(\hat{g}, \hat{f}^{\mathcal{E}}) - \mathsf{Q}_\gamma(\tilde{g}, \tilde{f}_{\tilde{g}}^{\mathcal{E}})$ gives,
\begin{align*}
0.2 (1 + \gamma) \|\hat{g} - \tilde{g}\|_2^2  &\le C(\gamma+1)U^2\delta_{n,t}^2 + C\gamma \left(\delta^2_{\mathtt{a}, \mathcal{F}, \mathcal{G}}(\hat{S}) + \delta^2_{\mathtt{a}, \mathcal{F}, \mathcal{G}}(S^\star)\right)  + C(1+\gamma)\delta_{\mathtt{opt}}^2  \\
& ~~~~~~~~ + (1 + \gamma) \|\tilde{g} - g^\star\|_2^2 + \frac{\gamma}{2|\mathcal{E}|} \sum_{e\in \mathcal{E}}  \|\hat{f}^{(e)} - \{g^\star - \hat{g}\}\|_{2,e}^2 \\
&\le C(\gamma + 1) \left\{U^2 \delta_{n,t}^2 + \delta^2_{\mathtt{a}, \mathcal{F}, \mathcal{G}}(\hat{S}) + \delta^2_{\mathtt{a}, \mathcal{F}, \mathcal{G}}(S^\star) + \delta_{\mathtt{a}, \mathcal{G}}^2 + \delta_{\mathtt{opt}}^2\right\}
\end{align*} where the last equality follows from our choice of $\tilde{g}$ and application of \cref{prop:characterize-f} that similar to \eqref{eq:proof-main-rate-sume}. Note that
\begin{align*}
\delta^2_{\mathtt{a}, \mathcal{F}, \mathcal{G}}(\hat{S}) \lor \delta^2_{\mathtt{a}, \mathcal{F}, \mathcal{G}}(S^\star) &\le \frac{1}{|\mathcal{E}|} \sum_{e\in \mathcal{E}} \sup_{g\in \mathcal{G}} \inf_{f\in \mathcal{F}_{S_g}} \|g^\star - g - f\|^2_{2,e} 
\end{align*} because of \eqref{eq:main-result-variable-sel}, we have
\begin{align*}
    \|\hat{g} - \tilde{g}\|_2 \le C\left(U\delta_{n,t} + \delta_{\mathtt{a}, \mathcal{G}} + \sqrt{\frac{1}{|\mathcal{E}|} \sum_{e\in \mathcal{E}} \sup_{g\in \mathcal{G}} \inf_{f\in \mathcal{F}_{S_g}} \|g^\star - g - f\|^2_{2,e}} + \delta_{\mathtt{opt}}\right)
\end{align*} applying triangle equality yields,
\begin{align*}
\|\hat{g} - g^\star\|_2 &\le 2\|\hat{g} - \tilde{g}\|_2 + 2\|\tilde{g} - g^\star\|_2 \\
&\le C\left(U\delta_{n,t} + \delta_{\mathtt{a}, \mathcal{G}} + \sqrt{\frac{1}{|\mathcal{E}|} \sum_{e\in \mathcal{E}} \sup_{g\in \mathcal{G}} \inf_{f\in \mathcal{F}_{S_g}} \|g^\star - g - f\|^2_{2,e}} + \delta_{\mathtt{opt}}\right).
\end{align*} This completes the proof.

\subsection{Proof of the General Error Bound}

The proof of \eqref{eq:main-result-general-rate} is similar to the proof of \eqref{eq:main-result-rate}, the key difference is that we no longer have \eqref{eq:proof-main-rate-eq1} such that we need to establish bound on $\|\Pi_{\overline{\mathcal{F}_{\hat{S}}}}^{(e)}(m^{(e,S)}) - g^\star\|_2$.

On one hand, we apply \cref{thm:population} with $\delta=0.5$, $g, f^{\mathcal{E}} = \hat{g}, \hat{f}^{\mathcal{E}}$ and $\tilde{g}, \tilde{f}^{\mathcal{E}}$ to be determined as
\begin{align*}
    &\mathsf{Q}_\gamma(\hat{g}, \hat{f}^{\mathcal{E}}) - \mathsf{Q}_\gamma(\tilde{g}, \tilde{f}^{\mathcal{E}}) \\
    &~~~~\ge 0.25 \|\hat{g} - \tilde{g}\|_2^2 +  \frac{\gamma}{4} \bar{\mathsf{d}}_{\mathcal{G}, \mathcal{F}}(\hat{S}) + \frac{\gamma}{2} \|\hat{g} - \Pi_{\overline{\mathcal{G}_{\hat{S}}}}(\bar{m}^{(\hat{S})})\|_2^2\\
    &~~~~~~~~~~ - \frac{\gamma}{2|\mathcal{E}|} \sum_{e\in \mathcal{E}}  \|\hat{f}^{(e)} - \{\Pi_{\overline{\mathcal{F}_{\hat{S}}}}^{(e)} (m^{(e,\hat{S})}) - \hat{g}\}\|_{2,e}^2 - \frac{4 + \gamma}{2} \|\tilde{g} - g^\star\|_2^2 
\end{align*} We define
\begin{align*}
\bar{g}^{({S})} = \Pi_{\overline{\mathcal{G}_{S}}}^{(e)} (\bar{m}^{({S})}) \qquad \text{and} \qquad f^{(e,S)} = \Pi_{\overline{\mathcal{F}_{{S}}}}^{(e)} (m^{(e,{S})}).
\end{align*} Then
\begin{align}
\label{eq:proof-general-rate-lb}
\begin{split}
\mathsf{Q}_\gamma(\hat{g}, \hat{f}^{\mathcal{E}}) - \mathsf{Q}_\gamma(\tilde{g}, \tilde{g}^{\mathcal{E}}) &\ge 0.25 \|\hat{g} - \tilde{g}\|_2^2 + \frac{\gamma}{4} \bar{\mathsf{d}}_{\mathcal{G}, \mathcal{F}}(\hat{S}) - 2(1 + \gamma) \|\tilde{g} - g^\star\|_2^2 \\
&~~~~~~~~~~ - \frac{\gamma}{2|\mathcal{E}|} \sum_{e\in \mathcal{E}}  \|\hat{f}^{(e)} - \{f^{(e,\hat{S})} - \hat{g}\}\|_{2,e}^2.
\end{split}
\end{align}

Following a same choice of $\tilde{g}$ and $\tilde{f}^{\mathcal{E}}$ as \eqref{eq:proof-main-rate-tilde-g-choice}, \eqref{eq:proof-main-rate-tilde-f-choice}, and apply \cref{prop:nonasymptotic-a}, \cref{prop:nonasymptotic-pooled} in a same way, we obtain
\begin{align}
&\mathsf{Q}_\gamma(\hat{g}, \hat{f}^{\mathcal{E}}) - \mathsf{Q}_\gamma(\tilde{g}, \tilde{f}_{\tilde{g}}^{\mathcal{E}}) \nonumber \\
&~~~\le 2(1+\gamma)\delta_{\mathtt{opt}}^2 + \frac{\gamma}{|\mathcal{E}|} \sum_{e\in \mathcal{E}} |\Delta_{\mathsf{A}}(\hat{g}, \tilde{g}, \hat{f}^{(e)}, \tilde{f}^{(e)}_{\tilde{g}})| + |\Delta_{\mathsf{R}}(\hat{g},\tilde{g})| \nonumber \\
&~~~\le 2(1+\gamma)\delta_{\mathtt{opt}}^2 + CU \frac{1}{|\mathcal{E}|} \sum_{e\in \mathcal{E}} \left\{\delta_{n,t} \|\hat{g} - \tilde{g}\|_{2,e} + \delta_{n,t}^2\right\} \nonumber \\ 
&~~~~~~~~~~~ + \gamma CU \frac{1}{|\mathcal{E}|}\sum_{e\in \mathcal{E}} \left\{\delta_{n,t} (\|\tilde{g} - \hat{g}\|_{2,e} + \|\tilde{g} + \tilde{f}^{(e)}_{\tilde{g}} - \hat{g} - \hat{f}^{(e)}\|_{2,e}) + \delta_{n,t}^2\right\}  \nonumber \\
&~~~\le 2(1+\gamma)\delta_{\mathtt{opt}}^2 + CU \frac{1}{|\mathcal{E}|} \sum_{e\in \mathcal{E}} \left\{0.05(CU)^{-1}\|\hat{g} - \tilde{g}\|_{2,e} + 10CU \delta_{n,t}^2 + \delta_{n,t}^2\right\} \nonumber \\
&~~~~~~~~~~~ + \gamma CU \frac{1}{|\mathcal{E}|} \sum_{e\in \mathcal{E}}\left\{ 500 CU (\gamma+1) \delta_{n,t}^2 + \gamma \delta_{n,t}^2 \right\} \nonumber \\
&~~~~~~~~~~~ + \gamma CU \frac{1}{|\mathcal{E}|} \sum_{e\in \mathcal{E}}(CU(\gamma+1))^{-1} 0.001 \left\{\left(\|\tilde{g} - \hat{g}\|_{2,e}^2 + \|\tilde{g} + \tilde{f}^{(e)}_{\tilde{g}} - \hat{g} - \hat{f}^{(e)}\|_{2,e}^2\right)\right\} \nonumber \\
&~~~\le C (\gamma+1)^2 U^2 \delta_{n,t}^2  + 0.05 \|\tilde{g} - \hat{g}\|_2^2 + 2(1+\gamma)\delta_{\mathtt{opt}}^2 + 0.001 \|\hat{g} - \tilde{g}\|_2^2 \nonumber \\
&~~~~~~~~~~~ + 0.001\frac{\gamma}{\gamma+1}\frac{1}{|\mathcal{E}|} \sum_{e\in \mathcal{E}} \|\tilde{g} + \tilde{f}^{(e)}_{\tilde{g}} - \hat{g} - \hat{f}^{(e)}\|_{2,e}^2 \nonumber \\
\begin{split}
&~~~\overset{(a)}{\le} C (\gamma+1)^2 U^2 \delta_{n,t}^2 + 0.05  \|\tilde{g} - \hat{g}\|_2^2 + 2(1+\gamma)\delta_{\mathtt{opt}}^2 \\
&~~~~~~~~~~~ + 0.001 \|\tilde{g} - \hat{g}\|_2^2 + 0.002 \frac{\gamma}{1+\gamma} \sum_{e\in \mathcal{E}} \|g^\star - f^{(e,\hat{S})}\|_{2,e}^2 \\
&~~~~~~~~~~~ + \frac{0.004\gamma}{\gamma+1} \frac{1}{|\mathcal{E}|} \sum_{e\in \mathcal{E}} \left(\|\tilde{g} + \tilde{f}^{(e)}_{\tilde{g}} - g^\star\|_{2,e}^2 + \|f^{(e,\hat{S})} - \hat{g} - \hat{f}^{(e)}\|_{2,e}^2\right)
\end{split}
\label{eq:proof-general-rate-eq2}
\end{align}
where $(a)$ follows from the fact that
\begin{align*}
    \|\tilde{g} + \tilde{f}^{(e)}_{\tilde{g}} - \hat{g} - \hat{f}^{(e)}\|_{2,e}^2 &= \|\tilde{g} + \tilde{f}^{(e)}_{\tilde{g}} - g^\star + f^{(e,\hat{S})} - \hat{g} - \hat{f}^{(e)} + g^\star - f^{(e,\hat{S})}\|_{2,e}^2 \\
    &\le 2\|g^\star - f^{(e,\hat{S})}\|_{2,e}^2 + 4\|\tilde{g} + \tilde{f}^{(e)}_{\tilde{g}} - g^\star\|_{2,e}^2 + 4\|f^{(e,\hat{S})} - \hat{g} - \hat{f}^{(e)}\|_{2,e}^2.
\end{align*}

We claim that 
\begin{align}
\frac{1}{|\mathcal{E}|} \sum_{e\in \mathcal{E}} \|g^\star - f^{(e,\hat{S})}\|_{2,e}^2 \le (2+28\gamma^\star)\bar{\mathsf{d}}_{\mathcal{G}, \mathcal{F}}(\hat{S}) + 24 \left(\|\hat{g} - \tilde{g}\|_2^2 + \delta_{\mathtt{a}, \mathcal{G}}^2 + \frac{1}{n} \right)
\label{eq:proof-general-rate-eq3}
\end{align} and defer its proof to the end of this section.

Substituting \eqref{eq:proof-general-rate-eq3} back into \eqref{eq:proof-general-rate-eq2} and combining such an upper bound with the lower bound \eqref{eq:proof-general-rate-lb}, we obtain
\begin{align*}
    0.25 \|\hat{g} - \tilde{g}\|_2^2 + &\frac{\gamma}{4} \mathsf{d}_{\mathcal{G}, \mathcal{F}}(\hat{S}) \\
    &\le C(1+\gamma) \|\tilde{g} - g^\star\|_2^2 + C(\gamma+1)^2 U^2 \delta_{n,t}^2 + 0.05  \|\tilde{g} - \hat{g}\|_2^2 + 2(1+\gamma)\delta_{\mathtt{opt}}^2\\
    &~~~~~~~~ + \left(\frac{\gamma}{\gamma+1}+\gamma\right) \frac{1}{2|\mathcal{E}|} \sum_{e\in \mathcal{E}}  \|\hat{f}^{(e)} - \{f^{(e,\hat{S})} - \hat{g}\}\|_{2,e}^2 \\
    &~~~~~~~~ + \frac{\gamma}{\gamma+1} \frac{1}{|\mathcal{E}|} \sum_{e\in \mathcal{E}} \|\tilde{g} + \tilde{f}^{(e)}_{\tilde{g}} - g^\star\|_{2,e}^2 \\
    &~~~~~~~~ + 0.05 \gamma \mathsf{d}_{\mathcal{G}, \mathcal{F}}(\hat{S}) + 0.05 \|\hat{g} - \tilde{g}\|_2^2 + 0.05 \left(\delta_{\mathtt{a}, \mathcal{G}}^2 + \frac{1}{n} \right).
\end{align*} Plugging in our choice of $\tilde{g}$ \eqref{eq:proof-main-rate-tilde-g-choice} gives
\begin{align*}
    0.1 \|\hat{g} - \tilde{g}\|_2^2 &\le C\left\{(1+\gamma) (\delta_{\mathtt{a}, \mathcal{G}}^2 + \delta_{\mathtt{opt}}^2) + (1+\gamma)^2 U^2 \delta_{n,t}^2 \right\} \\
    &~~~~~~~~+\gamma(1+1/(\gamma+1)) \frac{1}{2|\mathcal{E}|} \sum_{e\in \mathcal{E}}  \|\hat{f}^{(e)} - \{f^{(e,\hat{S})} - \hat{g}\}\|_{2,e}^2 \\
    &~~~~~~~~+\frac{\gamma}{1+\gamma} \frac{1}{|\mathcal{E}|} \sum_{e\in \mathcal{E}} \|\tilde{g} + \tilde{f}^{(e)}_{\tilde{g}} - g^\star\|_{2,e}^2
\end{align*} because $n^{-1} \le \delta_{n,t}$ and $\bar{\mathsf{d}}_{\mathcal{G}, \mathcal{F}}(\hat{S}) \ge 0$. Applying \cref{prop:characterize-f}, we find
\begin{align*}
    \gamma(1+1/(\gamma+1))\frac{1}{2|\mathcal{E}|} \sum_{e\in \mathcal{E}}  \|\hat{f}^{(e)} - \{f^{(e,\hat{S})} - \hat{g}\}\|_{2,e}^2 \le (1+\gamma) C \left\{\delta_{\mathtt{opt}}^2 + \delta^2_{\mathtt{a}, \mathcal{F}, \mathcal{G}}(\hat{S}) + U^2\delta_{n,t}^2\right\},
\end{align*} and
\begin{align*}
\frac{\gamma}{1+\gamma} \frac{1}{|\mathcal{E}|} \sum_{e\in \mathcal{E}} \|\tilde{g} + \tilde{f}^{(e)}_{\tilde{g}} - g^\star\|_{2,e}^2\le C\left\{\delta_{\mathtt{opt}}^2 + \delta^2_{\mathtt{a}, \mathcal{F}, \mathcal{G}}(S^\star) + U^2\delta_{n,t}^2\right\}.
\end{align*} Substituting them back yields
\begin{align*}
\|\hat{g} - \tilde{g}\|_2 \le C(1+\gamma)\left\{\delta_{\mathtt{a}, \mathcal{G}} + \delta_{\mathtt{a}, \mathcal{F}, \mathcal{G}}(\hat{S}) + \delta_{\mathtt{a}, \mathcal{F}, \mathcal{G}}(S^\star) + \delta_{\mathtt{opt}} + U \delta_{n,t}^2 \right\}.
\end{align*} Applying the triangle inequality, that
\begin{align*}
\|\hat{g} - g^\star\|_2 \le \|\hat{g} - \tilde{g}\|_2 + \|\tilde{g} - g^\star\|_2
\end{align*} completes the proof.

\noindent \emph{Proof of the Claim \eqref{eq:proof-general-rate-eq3}.} It follows from triangle inequality that
\begin{align*}
    \frac{1}{|\mathcal{E}|} \sum_{e\in \mathcal{E}} \|g^\star - f^{(e,\hat{S})}\|_{2,e}^2 &= \frac{1}{|\mathcal{E}|} \sum_{e\in \mathcal{E}} \|g^\star - \bar{g}^{(\hat{S})} + \bar{g}^{(\hat{S})} - f^{(e,\hat{S})}\|_{2,e}^2 \\
    &\le \frac{1}{|\mathcal{E}|} \sum_{e\in \mathcal{E}} 2\|g^\star - \bar{g}^{(\hat{S})}\|_{2,e}^2 + 2\|\bar{g}^{(\hat{S})} - f^{(e,\hat{S})}\|_{2,e}^2 \\
    &= 2\|g^\star - \bar{g}^{(\hat{S})}\|_2^2 + 2\bar{\mathsf{d}}_{\mathcal{G}, \mathcal{F}}(\hat{S}) \\
    &=2\|g^\star - \bar{g}^{(\hat{S} \cup S^\star)} + \bar{g}^{(\hat{S} \cup S^\star)} - \bar{g}^{(\hat{S})}\|_2^2 + 2\bar{\mathsf{d}}_{\mathcal{G}, \mathcal{F}}(\hat{S}) \\
    &\le 4\|g^\star - \bar{g}^{(\hat{S} \cup S^\star)}\|_{2}^2 + 4\|\bar{g}^{(\hat{S} \cup S^\star)} - \bar{g}^{(\hat{S})}\|_{2}^2 + 2\bar{\mathsf{d}}_{\mathcal{G}, \mathcal{F}}(\hat{S})\\
    &= 4\mathsf{b}_{\mathcal{G}}(\hat{S}) + 4\|\bar{g}^{(\hat{S} \cup S^\star)} - \bar{g}^{(\hat{S})}\|_{2}^2 + 2\bar{\mathsf{d}}_{\mathcal{G}, \mathcal{F}}(\hat{S}) \\
    &\le (4\gamma^\star+2) \mathsf{d}_{\mathcal{G,F}}(\hat{S}) + 4\|\bar{g}^{(\hat{S} \cup S^\star)} - \bar{g}^{(\hat{S})}\|_{2}^2.
\end{align*}
We claim that
\begin{align*}
\|\bar{g}^{(\hat{S} \cup S^\star)} - \bar{g}^{(\hat{S})}\|_{2}^2 \le \|\hat{g} - \bar{g}^{(\hat{S} \cup S^\star)}\|_{2}^2,
\end{align*} this implies
\begin{align}
\label{eq:proof-general-rate-eq5}
\frac{1}{|\mathcal{E}|} \sum_{e\in \mathcal{E}} \|g^\star - f^{(e,\hat{S})}\|_{2,e}^2 \le (4\gamma^\star+2) \mathsf{d}_{\mathcal{G,F}}(\hat{S}) + 4\|\bar{g}^{(\hat{S} \cup S^\star)} - \hat{g}\|_{2}^2.
\end{align} This is because
\begin{align*}
    \|\bar{g}^{(\hat{S})} - \bar{g}^{(\hat{S} \cup S^\star)}\|_{2}^2 &\overset{(a)}{=} \frac{1}{|\mathcal{E}|} \sum_{e\in \mathcal{E}} \mathbb{E}\left[\{Y^{(e)} - \bar{g}^{(\hat{S})}(X^{(e)})\}^2 - \{Y^{(e)} - \bar{g}^{(\hat{S} \cup S^\star)}(X^{(e)})\}^2\right] \\
    &~~~~~~~~~~ + 2\int (\bar{m}^{(\hat{S}\cup S^\star)}(x) - \bar{g}^{(e,\hat{S}\cup S^\star)}(x)) (\bar{g}^{(e,\hat{S}\cup S^\star)}(x) - \bar{g}^{(\hat{S})}(x))\bar{\mu}_x(dx) \\
    &\overset{(b)}{=} \frac{1}{|\mathcal{E}|} \sum_{e\in \mathcal{E}} \mathbb{E}\left[\{Y^{(e)} - \bar{g}^{(\hat{S})}(X^{(e)})\}^2 - \{Y^{(e)} - \bar{g}^{(\hat{S} \cup S^\star)}(X^{(e)})\}^2\right] \\
    &\overset{(c)}{\le} \frac{1}{|\mathcal{E}|} \sum_{e\in \mathcal{E}} \mathbb{E}\left[\{Y^{(e)} - \hat{g}(X^{(e)})\}^2 - \{Y^{(e)} - \bar{g}^{(\hat{S} \cup S^\star)}(X^{(e)})\}^2\right] \\
    &\overset{(d)}{=} \frac{1}{|\mathcal{E}|} \sum_{e\in \mathcal{E}} \mathbb{E}\left[\{Y^{(e)} - \hat{g}(X^{(e)})\}^2 - \{Y^{(e)} - \bar{g}^{(\hat{S} \cup S^\star)}(X^{(e)})\}^2\right] \\
    & ~~~~~~~~~~ + 2\int (m^{(\hat{S}\cup S^\star)}(x) - \bar{g}^{(\hat{S}\cup S^\star)}(x)) (\bar{g}^{(\hat{S}\cup S^\star)}(x) - \hat{g}(x))\bar{\mu}_x(dx) \\
    &\overset{(e)}{=} \|\hat{g} - \bar{g}^{(\hat{S} \cup S^\star)}\|_2^2
\end{align*} where $(a)$ and $(e)$ follows from \cref{lemma:pooled-l2-diff}, $(b)$ and $(d)$ follows from projection theorem \cref{thm:projection} and our definition of $\bar{g}^{(S)}$, $(c)$ follows from the fact that
\begin{align*}
    &\frac{1}{|\mathcal{E}|} \sum_{e\in \mathcal{E}} \mathbb{E}\left[\{Y^{(e)} - \hat{g}(X^{(e)})\}^2 \right] - \frac{1}{|\mathcal{E}|} \sum_{e\in \mathcal{E}} \mathbb{E}\left[\{Y^{(e)} - \bar{g}^{(\hat{S})}(X^{(e)})\}^2 \right] \\
    &~~~~= 2\int (\bar{m}^{(\hat{S})}(x) - \bar{g}^{(\hat{S})}(x))(\hat{g}(x) - \bar{g}^{(\hat{S})}(x)) \bar{\mu}(dx) + \|\bar{g}^{(\hat{S})} - \hat{g}\|_2^2 = \|\bar{g}^{(\hat{S})} - \hat{g}\|_2^2 \ge 0.
\end{align*}
It then follows from triangle inequality that
\begin{align*}
\|\bar{g}^{(\hat{S} \cup S^\star)} - \bar{g}^{(\hat{S})}\|_{2}^2 &\le \|\hat{g} - \tilde{g} + \tilde{g} - g^\star + g^\star - \bar{g}^{(\hat{S} \cup S^\star)}\|_{2}^2 \\
&\overset{(a)}{\le} 3\|\hat{g} - \tilde{g}\|_2^2 + 3\|\tilde{g} - g^\star\|_2^2 + 3\|g^\star - \bar{g}^{(\hat{S}\cup S^\star)}\|_2^2 \\
&\overset{(b)}{\le} 3\|\hat{g} - \tilde{g}\|_2^2 + 3\left(\delta_{\mathtt{a}, \mathcal{G}}^2 + \frac{1}{n}\right) + 3\mathsf{b}_{\mathcal{G}}(\hat{S}) \\
&\overset{(c)}{\le} 3\left(\|\hat{g} - \tilde{g}\|_2^2 + \delta_{\mathtt{a}, \mathcal{G}}^2 + \frac{1}{n} + \gamma^\star \mathsf{d}_{\mathcal{G,F}}(\hat{S})\right).
\end{align*} Here $(a)$ follows from the fact that $(a+b+c)^3\le 3(a^2+b^2+c^2)$, $(b)$ follows from the definition of $\tilde{g}$ and $\mathsf{b}_{\mathcal{G}}(S)$, $(c)$ follows from the definition of $\gamma^\star$. Substituting it back into \eqref{eq:proof-general-rate-eq5} completes the proof of the claim \eqref{eq:proof-general-rate-eq3}.

\subsection{Proof of Theorem \ref{thm:population}}
\label{subsec:proof-population}

Let $S=S_g$, then we can decompose the difference of loss into
\begin{align*}
    \mathsf{Q}_\gamma(g, f^\mathcal{E}) - \mathsf{Q}_\gamma(\tilde{g}, \tilde{f}^\mathcal{E}) &=\frac{1}{|\mathcal{E}|} \sum_{e\in \mathcal{E}} \mathbb{E} \left[\frac{1}{2} \left\{ (Y^{(e)}- g(X^{(e)}))^{2} - (Y^{(e)} - \tilde{g}(X^{(e)}))^2\right\}\right] \\
    &~~~~~~~~~~+\gamma \frac{1}{|\mathcal{E}|} \sum_{e\in \mathcal{E}} \mathbb{E}\left[ \{Y^{(e)}-g(X^{(e)})\} f^{(e)}(X^{(e)}) - \frac{1}{2}\{f^{(e)}(X^{(e)})\}^2\right] \\
    &~~~~~~~~~~-\gamma  \frac{1}{|\mathcal{E}|} \sum_{e\in \mathcal{E}} \mathbb{E}\left[ \{Y^{(e)}-\tilde{g}(X^{(e)})\} \tilde{f}^{(e)}(X^{(e)}) - \frac{1}{2}\{\tilde{f}^{(e)}(X^{(e)})\}^2\right] \\
    &= \mathsf{T}_1(g, \tilde{g}) + \gamma \mathsf{T}_2(g, f^{\mathcal{E}}) - \gamma \mathsf{T}_3(\tilde{g}, \tilde{f}^{\mathcal{E}}).
\end{align*} We will establish lower bounds on the three terms. We first introduce some additional notations, let
\begin{align*}
    \langle f, g\rangle = \int f(x) g(x) \bar{\mu}_x(dx) \qquad \text{and} \qquad \langle f, g\rangle_e = \int f(x) g(x) \mu_x^{(e)}(dx)
\end{align*}

\noindent {\sc Step 1. Lower Bound on $\mathsf{T}_1$.} Denote $\overline{S} = S\cup S^\star$ It follows from \cref{lemma:pooled-l2-diff} that
\begin{align}
\label{eq:proof-population-t1-lb1}
\mathsf{T}_1(g, \tilde{g}) = \frac{1}{2} \|g - \tilde{g}\|_2^2 + \underbrace{\langle \tilde{g} - \bar{m}^{(\overline{S})}, g - \tilde{g}\rangle }_{\mathsf{T}_{1,1}(g, \tilde{g})}
\end{align}
It then follows from the projection theorem that, 
\begin{align*}
   \forall h \in \overline{\mathcal{G}_{\overline{S}}} \qquad  \langle \bar{m}^{(\overline{S})} - \Pi_{\overline{\mathcal{G}_{\overline{S}}}}(\bar{m}^{(\overline{S})}), h\rangle = 0.
\end{align*} Given that $\overline{\mathcal{G}_{\overline{S}}}$ is a linear subspace such that $\partial \mathcal{G} = \{g - g': g, g'\in \mathcal{G}\} \subseteq \overline{\mathcal{G}_{\overline{S}}}$, we obtain
\begin{align*}
    \langle \tilde{g} - \bar{m}^{(\overline{S})}, g - \tilde{g}\rangle &= \langle \tilde{g} - \Pi_{\overline{\mathcal{G}_{\overline{S}}}}(\bar{m}^{(\overline{S})}) + \Pi_{\overline{\mathcal{G}_{\overline{S}}}}(\bar{m}^{(\overline{S})}) - \bar{m}^{(\overline{S})}, g - \tilde{g}\rangle\\
    &= \langle \tilde{g} - \Pi_{\overline{\mathcal{G}_{\overline{S}}}}(\bar{m}^{(\overline{S})}), g - \tilde{g} \rangle.
\end{align*}
It then follows from the Cauchy-Schwartz inequality that
\begin{align*}
\left|\mathsf{T}_{1,1}(g, \tilde{g})\right| &\le \left\| \tilde{g} - \Pi_{\overline{\mathcal{G}_{\overline{S}}}}(\bar{m}^{(\overline{S})}) \right\|_2 \left\|g - \tilde{g}\right\|_2\\
&\le \frac{\delta^{-1}}{2} \left\| \tilde{g} - \Pi_{\overline{\mathcal{G}_{\overline{S}}}}(\bar{m}^{(\overline{S})}) \right\|_2^2 + \frac{\delta}{2} \|g - \tilde{g}\|_2^2 \\
&\le \delta^{-1} \|\tilde{g} - g^\star\|_2^2 + \delta^{-1} \|\Pi_{\overline{\mathcal{G}_{\overline{S}}}}(\bar{m}^{(\overline{S})}) - g^\star\|_2^2 + \frac{\delta}{2} \|g - \tilde{g}\|_2^2.
\end{align*} Here the last inequality follows from triangle inequality and Young's inequality that $\|f+g\|_2^2\le 2\|f\|_2^2+2\|g\|_2^2$. Substituting it back into \eqref{eq:proof-population-t1-lb1}, we conclude that
\begin{align}
\label{eq:proof-population-step1-final}
    \mathsf{T}_1(g, \tilde{g}) \ge &\frac{1-\delta}{2}\|g - \tilde{g}\|_{2}^2 - \delta^{-1} \|\tilde{g} - g^\star\|_2^2 - \delta^{-1} \mathsf{b}_{\mathcal{G}}(S)
\end{align}

\noindent {\sc Step 2. Calculating $\mathsf{T}_2$.}  For $\mathsf{T}_2$, it follows from the tower rule of conditional expectation and the fact that $ab-\frac{1}{2}b^2=\frac{1}{2}\{a^2-(a-b)^2\}$ that
\begin{align*}
        \mathsf{T}_2(g, f^{\mathcal{E}}) &= |\mathcal{E}|^{-1} \sum_{e\in \mathcal{E}} \mathbb{E}\left[(m^{(e,S)}(X) - g(X^{(e)})) f^{(e)}(X^{(e)}) - \frac{1}{2} \{f^{(e)}(X^{(e)})\}^2\right]\\
        &= (2|\mathcal{E}|)^{-1} \sum_{e\in \mathcal{E}} \mathbb{E}\Big[\{m^{(e,S)}(X^{(e)}) - g(X^{(e)})\}^2\Big] \\
        &~~~~~~~~~~~~~~~~~~~~~~~~~~~~~~~ - \mathbb{E}\Big[\{m^{(e,S)}(X^{(e)}) - g(X^{(e)}) - f^{(e)}(X^{(e)})\}^2\Big].
\end{align*} Hence we have
\begin{align}
\label{eq:proof-ds-step0-lb}
\mathsf{T}_2(g, f^{\mathcal{E}}) = (2|\mathcal{E}|)^{-1} \sum_{e\in \mathcal{E}} \|m^{(e,S)}-g\|_{2,e}^2 - \|\{m^{(e,S)}-g\} - f^{(e)}\|_{2,e}^2.
\end{align}
Observe that
\begin{align*}
        \sum_{e\in \mathcal{E}} \|m^{(e,S)} - g\|_{2,e}^2 &= \sum_{e\in \mathcal{E}} \|m^{(e,S)} - \Pi_{\overline{\mathcal{G}_S}} (\bar{m}^{(S)}) + \Pi_{\overline{\mathcal{G}_S}} (\bar{m}^{(S)}) - g\|_{2,e}^2  \\
        &\overset{(a)}{=} \sum_{e\in \mathcal{E}} \|m^{(e,S)} - \Pi_{\overline{\mathcal{G}}} (\bar{m}^{(S)})\|_{2,e}^2 + \sum_{e\in \mathcal{E}} \| \Pi_{\overline{\mathcal{G}_S}} (\bar{m}^{(S)}) - g\|_{2,e}^2 \\
        &~~~~~~~~~~ + \int \left(\sum_{e\in \mathcal{E}} m^{(e,S)}(x) \rho^{(e)}_S(x) - |\mathcal{E}| \Pi_{\overline{\mathcal{G}_S}}(\bar{m}^{(S)})(x) \right) \\
        &~~~~~~~~~~~~~~~~~~~~~~~~~~~~~~~~~~~~ \times (\Pi_{\overline{\mathcal{G}_S}} (\bar{m}^{(S)})(x) - g(x)) \bar{\mu}_{x}(dx) \\
        &\overset{(b)}{=} \Bigg(\sum_{e\in \mathcal{E}} \|m^{(e,S)} - \Pi_{\overline{\mathcal{G}_S}} (\bar{m}^{(S)})\|_{2,e}^2 \Bigg)+ |\mathcal{E}| \|\Pi_{\overline{\mathcal{G}_S}} (\bar{m}^{(S)}) - g\|_{2}^2 \\
        & ~~~~~~~~~~ + 2 |\mathcal{E}| \langle \bar{m}^{(S)} - \Pi_{\overline{\mathcal{G}_S}} (\bar{m}^{(S)}), \Pi_{\overline{\mathcal{G}_S}} (\bar{m}^{(S)}) - g\rangle \\
        &\overset{(c)}{=} \Bigg(\sum_{e\in \mathcal{E}} \|m^{(e,S)} - \Pi_{\overline{\mathcal{G}_S}} (\bar{m}^{(S)})\|_{2,e}^2 \Bigg)+ |\mathcal{E}| \|\Pi_{\overline{\mathcal{G}_S}} (\bar{m}^{(S)}) - g\|_{2}^2.
\end{align*} where $(a)$ follows from the fact that $\frac{1}{|\mathcal{E}|}\sum_{e\in \mathcal{E}}\rho^{(e)}_S(x) \equiv 1$, and $(b)$ follows from the definition of $\bar{m}^{(e)}$ that $\sum_{e\in \mathcal{E}} m^{(e,S)}(x_S) \rho^{(e)}_S(x_S) \equiv |\mathcal{E}| \bar{m}^{(S)}(x_S)$, $(c)$ follows from the projection theorem and the fact $\Pi_{\overline{\mathcal{G}_S}} (\bar{m}^{(S)}) - g \in \overline{\mathcal{G}_S}$ since $\overline{\mathcal{G}_S}$ is a subspace. Moreover, for any $e\in \mathcal{E}$, it follows from the projection theorem and the fact that $\overline{\mathcal{G}_S} \subseteq \overline{\mathcal{F}_S}$ that 
    \begin{align*}
    \|m^{(e,S)} - \Pi_{\overline{\mathcal{G}_S}} (\bar{m}^{(S)})\|_{2,e}^2 &= \|m^{(e,S)} - \Pi_{\overline{\mathcal{F}_S}}^{(e)} (m^{(e,S)}) + \Pi_{\overline{\mathcal{F}_S}}^{(e)} (m^{(e,S)}) - \Pi_{\overline{\mathcal{G}_S}} (\bar{m}^{(S)})\|_{2,e}^2 \\
    &=\|m^{(e,S)} - \Pi_{\overline{\mathcal{F}_S}}^{(e)} (m^{(e,S)})\|_{2,e}^2 + \|\Pi_{\overline{\mathcal{F}_S}}^{(e)} (m^{(e,S)}) - \Pi_{\overline{\mathcal{G}_S}} (\bar{m}^{(S)})\|_{2,e}^2
\end{align*} 
Combining the above two claims yields that, for any $g\in \mathcal{G}$, 
\begin{align}
\label{eq:ds-step1}
    \begin{split}
        \frac{1}{|\mathcal{E}|} \sum_{e\in \mathcal{E}} \|m^{(e,S)} - g\|_{2,e}^2 &= \|\Pi_{\overline{\mathcal{G}_S}} (\bar{m}^{(S)}) - g\|_2^2 + \frac{1}{|\mathcal{E}|} \sum_{e\in \mathcal{E}} \|\Pi_{\overline{\mathcal{F}_S}}^{(e)} (m^{(e,S)}) - \Pi_{\overline{\mathcal{G}_S}} (\bar{m}^{(S)})\|_{2,e}^2\\
        &~~~~~~~~~~ + \frac{1}{|\mathcal{E}|} \sum_{e\in \mathcal{E}}  \|m^{(e,S)} - \Pi_{\overline{\mathcal{F}_S}}^{(e)} (m^{(e,S)})\|_{2,e}^2
    \end{split}
\end{align}
At the same time, it also follows from the projection theorem and the fact $f - (\Pi_{\overline{\mathcal{F}_S}}^{(e)} (m^{(e,S)}) - g) \in \overline{\mathcal{F}_S}$ that, for any $g \in \mathcal{G}_S$ and $f \in \mathcal{F}_S$
\begin{align*}
    \|f - (m^{(e,S)} - g)\|_{2,e}^2 = \|f - \{\Pi_{\overline{\mathcal{F}_S}}^{(e)} (m^{(e,S)}) - g\}\|_{2,e}^2 + \|m^{(e,S)} - \Pi_{\overline{\mathcal{F}_S}}^{(e)} (m^{(e,S)})\|_{2,e}^2
\end{align*}
Combining the above identity with \eqref{eq:ds-step1} and substituting them back into \eqref{eq:proof-ds-step0-lb} yields
\begin{align}
\label{eq:proof-population-step2}
    \mathsf{T}_2(g, f^{\mathcal{E}}) = \frac{\bar{\mathsf{d}}_{\mathcal{G}, \mathcal{F}}(S)}{2} + \frac{1}{2}\|\Pi_{\overline{\mathcal{G}_S}} (\bar{m}^{(S)}) - g\|_2^2 - \frac{1}{2|\mathcal{E}|} \sum_{e\in \mathcal{E}}  \|f^{(e)} - \{\Pi_{\overline{\mathcal{F}_S}}^{(e)} (m^{(e,S)}) - g\}\|_{2,e}^2
\end{align}

\noindent {\sc Step 3. Calculating $\mathsf{T}_3$.} Note that $\tilde{f}^{(e)} \in \mathcal{F}_{S^\star}$ and $S_g = S^\star$. We apply the result obtained in {\sc Step 2} in this case. According to the assumption that
\begin{align*}
    \Pi^{(e)}_{\overline{\mathcal{F}_{S^\star}}}(m^{(e,S^\star)}) = \Pi_{\overline{\mathcal{G}_{S^\star}}}(\bar{m}^{(S^\star)}) = g^\star
\end{align*} in \cref{cond:general-gf}, we obtain $\bar{\mathsf{d}}_{\mathcal{G}, \mathcal{F}}(S^\star) = 0$, and 
then
\begin{align}
\label{eq:proof-population-step3}
    \mathsf{T}_3(\tilde{g}, \tilde{f}^{\mathcal{E}}) = \frac{1}{2}\|g^\star - \tilde{g}\|_2^2 - \frac{1}{2|\mathcal{E}|} \sum_{e\in \mathcal{E}}  \|\tilde{f}^{(e)} - \{g^\star - \tilde{g}\}\|_{2,e}^2.
\end{align} 

Combining \eqref{eq:proof-population-step1-final}, \eqref{eq:proof-population-step2} and \eqref{eq:proof-population-step3} in three steps together, we have
\begin{align*}
    &\mathsf{T}_1(g, \tilde{g}) + \gamma \mathsf{T}_2(g, f^{\mathcal{E}}) - \gamma \mathsf{T}_3(\tilde{g}, \tilde{f}^{\mathcal{E}}) \\
    \ge& \frac{1-\delta}{2} \|g - \tilde{g}\|_2^2 - \delta^{-1} \|\tilde{g} - g^\star\|_2^2 +  \left(-\delta^{-1} \mathsf{b}_{\mathcal{G}}(S) + \frac{4\delta^{-1}\gamma^\star}{4} \bar{\mathsf{d}}_{\mathcal{G}, \mathcal{F}}(S)\right) + \frac{\gamma}{4} \bar{\mathsf{d}}_{\mathcal{G}, \mathcal{F}}(S)\\
    &~~~~~~ + \frac{\gamma}{2} \|g - \Pi_{\overline{\mathcal{G}_S}}(\bar{m}^{(S)})\|_2^2 - \frac{\gamma}{2|\mathcal{E}|} \sum_{e\in \mathcal{E}}  \|f^{(e)} - \{\Pi_{\overline{\mathcal{F}_S}}^{(e)} (m^{(e,S)}) - g\}\|_{2,e}^2 - \frac{\gamma}{2} \|\tilde{g} - g^\star\|_2^2
\end{align*} Substituting the defintion on $\gamma^*$ completes the proof.

\subsection{Proof of Proposition \ref{prop:nonasymptotic-pooled}}
\label{subsec:proof-prop1}

We can decompose $\Delta_{\mathsf{R}}(g, \tilde{g})$ into
\begin{align*}
    \Delta_{\mathsf{R}}(g, \tilde{g}) &= \frac{1}{|\mathcal{E}|} \sum_{e\in \mathcal{E}} (\hat{\mathbb{E}} - \mathbb{E}) \left[\frac{1}{2}\{Y^{(e)} - g(X^{(e)})\}^2 - \frac{1}{2}\{Y^{(e)} -\tilde{g}(X^{(e)})\}^2 \right] \\
    &= \frac{1}{|\mathcal{E}|} \sum_{e\in \mathcal{E}} (\hat{\mathbb{E}} - \mathbb{E}) \left[ Y^{(e)} (g - \tilde{g})(X^{(e)})\right] \\
    & ~~~~~~~~~~~~~~~~ + (\hat{\mathbb{E}} - \mathbb{E})\left[ -\tilde{g}(X^{(e)}) (g - \tilde{g})(X^{(e)})\right] \\
    & ~~~~~~~~~~~~~~~~ + (\hat{\mathbb{E}} - \mathbb{E})\left[ \frac{1}{2} \{g(X^{(e)}) - \tilde{g}(X^{(e)})\}^2  \right] \\
    &= \frac{1}{|\mathcal{E}|} \sum_{e\in \mathcal{E}} \mathsf{T}_1^{(e)}(g, \tilde{g}) + \mathsf{T}_2^{(e)}(g, \tilde{g}) + \mathsf{T}_3^{(e)}(g, \tilde{g}) 
\end{align*}
We apply \cref{lemma:ep1} to derive high probability bounds on $\mathsf{T}^{(e)}_k$. 

\noindent {\sc Step 1. Bounds on $\mathsf{T}_{1}^{(e)}$.} For $\mathsf{T}_1^{(e)}$, we will use truncation argument. To be specific, let $K>0$ to be determined, we can decompose $\mathsf{T}_1^{(e)}$ into
\begin{align*}
    \mathsf{T}_1^{(e)}(g,\tilde{g}) &= (\hat{\mathbb{E}} - \mathbb{E}) \left[ 1\{|Y^{(e)}|\le K\} Y^{(e)} (g - \tilde{g})(X^{(e)})\right] \\
    & ~~~~~~~~~~ + (\hat{\mathbb{E}} - \mathbb{E}) \left[ 1\{|Y^{(e)}|>K\} Y^{(e)} (g - \tilde{g})(X^{(e)})\right] \\
    &= \mathsf{T}_{1,1}^{(e)}(g,\tilde{g},K) + \mathsf{T}_{1,2}^{(e)}(g,\tilde{g},K).
\end{align*} 
For $\mathsf{T}_{1,1}^{(e)}(g,\tilde{g},K)$, applying \cref{lemma:ep1} with $\mathcal{H} = \mathcal{G}$, $v(g,\tilde{g}, z)=1\{|Y^{(e)}|\le K\} Y^{(e)}$ that is uniformly bounded by $K$, and $\phi(x)=x$, we find that, for any $e\in \mathcal{E}$ and $u>0$, the following event
\begin{align}
\label{eq:prop1-sn1}
\begin{split}
    &\mathcal{C}_{1,1}^{(e)}(u) = \left\{\forall g,\tilde{g}\in \mathcal{G}, ~|\mathsf{T}_{1,1}^{(e)}(g,\tilde{g},K)| \le CBK \left(s_{n,1}\|\tilde{g} - g\|_{2,e} + s_{n,1}^2\right)\right\} \\
    &~~~~~~~~~~~~~~~~~~~~~~~~~~~~~ \text{with}~ s_{n,1} = \delta_n + \sqrt{\frac{u + 1 + \log(nB))}{n}}
\end{split}
\end{align} occurs with probability at least $1-e^{-u}$ for some universal constant $C$. Applying union bound over all the $e\in \mathcal{E}$, we obtain that, 
\begin{align}
    \mathbb{P}\left[\mathcal{C}_{1,1}(t)\right] &= \mathbb{P}\left[\forall e\in \mathcal{E},~\forall g,\tilde{g}\in \mathcal{G}, ~|\mathsf{T}_{1,1}^{(e)}(g,\tilde{g},K)| \le C BK (s_{n,2}\|\tilde{g}- g\|_{2,e} + s_{n,2}^2)\right] \nonumber \\
    &\ge 1 - \sum_{e\in \mathcal{E}} \mathbb{P}\left[\left(\mathcal{C}_{1,1}^{(e)}(u)\right)^c\right]\ge 1 - |\mathcal{E}| e^{-u} \ge 1-e^{-t} \label{eq:prop1-c11}
\end{align} where $s_{n,2} = \delta_n + \sqrt{\frac{t + 1 + \log(nB|\mathcal{E}|))}{n}}$.

For $\mathsf{T}_{1,2}^{(e)}(g,\tilde{g},K)$, it follows from Markov inequality that, for any given $e\in \mathcal{E}$ and $x>0$,
\begin{align*}
    &\mathbb{P}\left[\sup_{g,\tilde{g}\in \mathcal{G}}|\mathsf{T}_{1,2}^{(e)}(g,\tilde{g},K)| > x\right] \\
    &~~~\le x^{-1} \mathbb{E} \left[\sup_{g,\tilde{g}\in \mathcal{G}}(\hat{\mathbb{E}} - \mathbb{E}) \left[1\{|Y^{(e)}|>K\} Y^{(e)}  (g - \tilde{g})(X^{(e)})\right]\right]\\
    &~~~\le x^{-1} \mathbb{E} \left[\sup_{g,\tilde{g}\in \mathcal{G}}(\hat{\mathbb{E}} + \mathbb{E}) \left[\left|1\{|Y^{(e)}|>K\} Y^{(e)}  (g - \tilde{g})(X^{(e)})\right|\right]\right] \\
    &~~~\le x^{-1} 4B \mathbb{E}[|Y^{(e)}| 1\{|Y^{(e)}| > K\}]
\end{align*} It then follows from the sub-Gaussian response condition \cref{cond:general-response} that
\begin{align*}
    \mathbb{E}[|Y^{(e)}| 1\{|Y^{(e)}| > K\}] &= \int |y| 1\{|y|\ge K\} \mu^{(e)}_y(dy) \\
    &= \int 1\{|y|\ge K\} \left(\int_0^\infty 1\{t\le |y|\} dt\right) v \mu^{(e)}_y(dy)\\
    &= \int_0^\infty \int 1\{|y| \ge K\lor t\}\mu_y^{(e)}(dy) dt \\
    &= \int_0^\infty \mathbb{P}\left(|Y^{(e)}| \ge t\lor K\right) dt \\
    &\le K C_{y} e^{-K^2/(2\sigma_y^2)} + \frac{C_y \sigma_y^2}{K} e^{-K^2/(2\sigma_y^2)}.
\end{align*} Hence, we can conclude that, for any fixed $e\in \mathcal{E}$, $x>0$ and $K>0$, 
\begin{align}
\label{eq:proof-prop1-truncation-argument}
\mathbb{P}\left[\sup_{g,\tilde{g}\in \mathcal{G}} |\mathsf{T}_{1,2}^{(e)}(g,\tilde{g},K)| > x\right] \le 4BC_y e^{-K^2/(2\sigma_y^2)} x^{-1} (K+\sigma_y^2/K)
\end{align} Applying union bound with $x=44Bn^{-1}$, $K=\sigma_y\sqrt{102\log(n|\mathcal{E}|)}$, we have
\begin{align}
\label{eq:prop1-c12}
\begin{split}
    &\mathbb{P}[\mathcal{C}_{1,2}] = \mathbb{P}\left[\forall e\in \mathcal{E}, \sup_{g,\tilde{g}\in \mathcal{G}}\mathsf{T}_{1,2}^{(e)}(g,\tilde{g},K) \le \frac{44B}{n}\right] \\
    &~~~\ge 1- |\mathcal{E}| \times C_y \frac{1}{11 (n\cdot B|\mathcal{E}|)^{102}} n (\sigma_y \sqrt{102\log(n|\mathcal{E}|)} + 1)\\
    &~~~\ge 1- C_y(\sigma_y+1) n^{-100}, 
\end{split}
\end{align}

\noindent {\sc Step 2. Bounds on $\mathsf{T}_{2}^{(e)}$.} Applying \cref{lemma:ep1} with $\mathcal{H}=\mathcal{G}$, $v(g, \tilde{g}, z) = -\tilde{g}$ uniformly bounded by $L_1= B$, and $\phi(x)=x$, we have that, for any $e$ and $u>0$, the following event
\begin{align*}
    \mathcal{C}_2^{(e)}(u) = \left\{ |\mathsf{T}_2^{(e)}(g, \tilde{g})| \le C B^2  (s_{n,1} \|\tilde{g} - g\|_{2,e} + s_{n,1}^2)\right\}
\end{align*} occurs with probability $1-e^{-u}$ for some universal constant $C$, where $s_{n,1}$ is the same quantity defined in \eqref{eq:prop1-sn1}. Applying union bound over all the $e\in \mathcal{E}$ gives that
\begin{align}
\label{eq:prop1-c2}
    \mathbb{P}\left[\mathcal{C}_{2}(t)\right]= \mathbb{P}\left[\forall e\in \mathcal{E}, ~\forall g,\tilde{g}\in \mathcal{G},~|\mathsf{T}_{2}^{(e)}(g,\tilde{g})| \le C B^2  (s_{n,2}\|\tilde{g}- g\|_{2,e} + s_{n,2}^2)\right] \ge 1-e^{-t}.
\end{align} where $s_{n,2}=s_{n,1}$ with $u=t + \log(|\mathcal{E}|)$.

\noindent {\sc Step 3. Bounds on $\mathsf{T}_{3}^{(e)}$.} Following a similar procedure as what we do for $\mathsf{T}_2^{(e)}$, we apply \cref{lemma:ep1} with $\mathcal{H}=\mathcal{G}$, $v(g, \tilde{g}, z) = 0.5$ uniformly bounded by $L_1=1/2$, and $\phi(x)=x^2$ that is $2B$-Lipschitz due to the boundedness of $(g,\tilde{g})$, followed by using union bound over all the $e\in \mathcal{E}$. Therefore,
\begin{align}
\label{eq:prop1-c3}
    \mathbb{P}\left[\mathcal{C}_{3}(t)\right]= \mathbb{P}\left[\forall e\in \mathcal{E}, ~\forall g,\tilde{g}\in \mathcal{G}, ~|\mathsf{T}_{3}^{(e)}(g,\tilde{g})| \le C B^2 (s_{n,2}\|\tilde{g}- g\|_{2,e} + s_{n,2}^2)\right] \ge 1-e^{-t}.
\end{align}

\noindent {\sc Step 4. Putting the pieces together.} Recall our choice of $K=\sigma\sqrt{102\log(n|\mathcal{E}|)}$. Combining \eqref{eq:prop1-c11}, \eqref{eq:prop1-c12}, \eqref{eq:prop1-c2}, and \eqref{eq:prop1-c3} together, we can conclude that, under $\mathcal{C}_{1,1}(t) \cap \mathcal{C}(1,2) \cap \mathcal{C}_2(t) \cap \mathcal{C}_{3}(t)$ that occurs with probability at least $1-3e^{-t}-11C_y(\sigma_y+1) n^{-100}$, the following holds
\begin{align*}
    \forall g, \tilde{g}, ~~ |\Delta_{\mathsf{R}}(g, \tilde{g})| &\le \frac{1}{|\mathcal{E}|} \sum_{e\in \mathcal{E}} |\mathsf{T}_{1,1}^{(e)}(g,\tilde{g},K)| + |\mathsf{T}_{1,2}^{(e)}(g,\tilde{g},K)| + |\mathsf{T}_{2}^{(e)}(g,\tilde{g})| + |\mathsf{T}_{3}^{(e)}(g,\tilde{g})| \\
    &\overset{(a)}{\le} \frac{1}{|\mathcal{E}|} \sum_{e\in \mathcal{E}} \Bigg\{CBK \left(s_{n,2}\|g - \tilde{g}\|_{2,e} + s_{n,2}^2\right) \\
    &~~~~~~~~~~~~~~~~~~~+ \frac{44B}{n} + 2CB^2  \left(s_{n,2} \|g - \tilde{g}\|_{2,e} + s_{n,2}^2\right) \Bigg\}\\
    &\overset{(b)}{\le} C  B(B+\sqrt{\log(n|\mathcal{E}|)}) \left\{ s_{n,2}^2 + s_{n,2} \frac{1}{|\mathcal{E}|} \sum_{e\in \mathcal{E}} \|g - \tilde{g}\|_{2,e}\right\} %\\
\end{align*} where (a) follows from the instant-dependent error bounds in \eqref{eq:prop1-c11}, \eqref{eq:prop1-c12}, \eqref{eq:prop1-c2} and \eqref{eq:prop1-c3}, and (b) follows from our choice of $K$ and the fact $1/n\le \delta_n \le s_{n,2}$. This completes the proof. \qed

\subsection{Proof of Proposition \ref{prop:nonasymptotic-a}}
\label{subsec:proof-prop2}

It follows from the fact $ab-\frac{1}{2}b^2 = \frac{1}{2}a^2 - \frac{1}{2}(a-b)^2$ that,
\begin{align*}
    &\Delta_{\mathsf{A}}^{(e)}(g, \tilde{g}, f^{(e)}, \tilde{f}^{(e)}) \\
    &~~~= (\mathbb{E} - \hat{\mathbb{E}}) \Bigg[\{Y^{(e)} - g(X^{(e)})\}f^{(e)}(X^{(e)})-\frac{1}{2}\{f^{(e)}(X^{(e)})\}^2\\
    &~~~~~~~~~~~~~~~~~~~~~~~~~~~~~ -\left\{ \{Y^{(e)} - \tilde{g}(X^{(e)})\}\tilde{f}^{(e)}(X^{(e)}) - \frac{1}{2}\{\tilde{f}^{(e)}(X^{(e)})\}^2 \right\} \Bigg]\\
    &~~~= \frac{1}{2}(\mathbb{E} - \hat{\mathbb{E}}) \left[\{Y^{(e)} - g(X^{(e)})\}^2 - \{Y^{(e)} - \tilde{g}^{(e)}\}^2\right] \\
    &~~~~~~~~~ + \frac{1}{2}(\mathbb{E} - \hat{\mathbb{E}}) \left[ \{Y^{(e)} - g^{(e)}(X^{(e)}) - {f}^{(e)}(X^{(e)})\}^2 - \{Y^{(e)} - \tilde{g}^{(e)}(X^{(e)}) - \tilde{f}^{(e)}(X^{(e)})\}^2\right] \\
    &~~~= \frac{1}{2}(\mathbb{E} - \hat{\mathbb{E}}) \left[\{\tilde{g}(X^{(e)}) - g(X^{(e)})\}^2\right] \\
    &~~~~~~~~~ + (\mathbb{E} - \hat{\mathbb{E}}) \left[Y^{(e)} \{\tilde{g}(X^{(e)}) - g(X^{(e)})\} \right] +  (\mathbb{E} - \hat{\mathbb{E}}) \left[- \tilde{g}(X^{(e)}) \{\tilde{g}(X^{(e)}) - g(X^{(e)})\} \right] \\
    &~~~~~~~~~ + \frac{1}{2}(\mathbb{E} - \hat{\mathbb{E}}) \left[\{\tilde{g}(X^{(e)}) + \tilde{f}^{(e)}(X^{(e)}) - g(X^{(e)}) - f^{(e)}(X^{(e)})\}^2\right] \\
    &~~~~~~~~~ + (\mathbb{E} - \hat{\mathbb{E}}) \left[Y^{(e)}\{\tilde{g}(X^{(e)}) + \tilde{f}^{(e)}(X^{(e)}) - g(X^{(e)}) - f^{(e)}(X^{(e)})\}\right] \\
    &~~~~~~~~~ + (\mathbb{E} - \hat{\mathbb{E}}) \left[-\{\tilde{g}(X^{(e)}) + \tilde{f}^{(e)}(X^{(e)})\}\{\tilde{g}(X^{(e)}) + \tilde{f}^{(e)}(X^{(e)}) - g(X^{(e)}) + f^{(e)}(X^{(e)})\}\right] \\
    &~~~= \sum_{k=1}^3 \mathsf{T}_{k}^{(e)}(\tilde{g}, g) + \sum_{k=4}^6 \mathsf{T}_{k}^{(e)}(\tilde{g}, g, \tilde{f}^{(e)}, f^{(e)}).
\end{align*}

We define
\begin{align*}
s_n = \delta_n + \sqrt{\frac{t + \log(nB|\mathcal{E}|) + 1}{n}}.
\end{align*} 

For $\mathsf{T}_{1}^{(e)}$ and $\mathsf{T}_{4}^{(e)}$, applying \cref{lemma:ep1} with (1) $v=1$ uniformly bounded by 1, (2) $\phi(\cdot)$ that is $4B$-Lipschitz by the boundedness of $\mathcal{G} \cup \mathcal{F}$, and (3) $\mathcal{H}=\mathcal{G}$ and $\mathcal{H}=\mathcal{G}+\mathcal{F}$ respectively, followed by using union bound across $e\in \mathcal{E}$, we obtain, the following two events
\begin{align*}
    \mathcal{C}_{1}(t) &= \left\{\forall e\in \mathcal{E}, ~\forall g, \tilde{g} \in \mathcal{G}, |\mathsf{T}_{1}^{(e)}(g,\tilde{g})| \le CB^2 (s_n\|g - \tilde{g}\|_{2,e} + s_n^2)\right\} \\
    \mathcal{C}_{4}(t) &= \Bigg\{\forall e\in \mathcal{E}, ~\forall (g, \tilde{g}, f^{(e)}, \tilde{f}^{(e)}) \in \mathcal{M}^{(e)}(\mathcal{G},\mathcal{F}), \\
    &~~~~~~~~~~~~~~~~ |\mathsf{T}_{4}^{(e)}(g,\tilde{g}, f^{(e)}, \tilde{f}^{(e)})| \le CB^2 (s_n\|\tilde{g} + \tilde{f}^{(e)} - g - f^{(e)}\|_{2,e} + s_n^2)\Bigg\}
\end{align*} both occur with probability at least $1-e^{-t}$ for some universal constant $C>0$.

For $\mathsf{T}_{2}^{(e)}$, we will use truncation argument. Let $K=\sigma\sqrt{102 \log(n|\mathcal{E}|)}$, we define
\begin{align*}
    \mathsf{T}_{2,1}^{(e)}(g,\tilde{g};K) &= (\mathbb{E} - \hat{\mathbb{E}}) \left[Y^{(e)} 1\{|Y^{(e)}| \le K\}\{\tilde{g}(X^{(e)}) - g(X^{(e)})\} \right] \\
    \mathsf{T}_{2,2}^{(e)}(g,\tilde{g};K) &= (\mathbb{E} - \hat{\mathbb{E}}) \left[Y^{(e)} 1\{|Y^{(e)}| > K\}\{\tilde{g}(X^{(e)}) - g(X^{(e)})\} \right]
\end{align*}
For the first part, applying \cref{lemma:ep1} with (1) $v$ uniformly bounded by $K$, (2) $\phi(t)=t$ that is $1$-Lipschitz and $\mathcal{H} = \mathcal{G}$ followed by applying union bound over all the $e\in \mathcal{E}$, we obtain
\begin{align*}
    \forall e\in \mathcal{E}, ~\forall g, \tilde{g} \in \mathcal{G}, ~
    |\mathsf{T}_{2,1}^{(e)}(g,\tilde{g};K)| \le CKB(s_n \|\tilde{g} - g\|_{2,e} + s_n^2)
\end{align*} occurs with probability at least $1-e^{-t}$ for some universal constant $C>0$. Moreover, following a similar argument to \eqref{eq:prop1-c12}, we find
\begin{align*}
    \forall e\in \mathcal{E}, ~\forall g, \tilde{g} \in \mathcal{G}, ~ |\mathsf{T}_{2,2}^{(e)}(g,\tilde{g};K)| \le \frac{44B}{n}
\end{align*} with probability at least $1-C_y(\sigma_y+1) n^{-100}$. Combining the two pieces together with triangle inequality $|\mathsf{T}_{2}^{(e)}| \le |\mathsf{T}_{2,1}^{(e)}(g,\tilde{g};K)| + |\mathsf{T}_{2,2}^{(e)}(g,\tilde{g};K)|$ yields, the event
\begin{align*}
    \mathcal{C}_{2}(t) = \{\forall e\in \mathcal{E}, ~\forall g, \tilde{g} \in \mathcal{G}, ~ |\mathsf{T}_{2,1}^{(e)}(g,\tilde{g};K)| \le C(K+1)B(s_n \|\tilde{g} - g\|_{2,e} + s_n^2)\}
\end{align*} occurs with probability at least $1-e^{-t}-C_y(\sigma_y+1) n^{-100}$. It follows almost the same procedure except that we apply \cref{lemma:ep1} with $\mathcal{H} = \mathcal{G} + \mathcal{F}$ that, with probability at least $1-e^{-t} - C_y(\sigma_y+1) n^{-100}$, the following event 
\begin{align*}
    \mathcal{C}_{5}(t) = \Big\{\forall e\in \mathcal{E}, ~ &\forall (g, \tilde{g}, f^{(e)}, \tilde{f}^{(e)}) \in \mathcal{M}^{(e)}(\mathcal{G},\mathcal{F}), \\&~~~~~~~~~~|\mathsf{T}_{5}^{(e)}(g,\tilde{g}, f^{(e)}, \tilde{f}^{(e)})| \le CBK (s_n\|\tilde{g} + \tilde{f}^{(e)} - g - f^{(e)}\|_{2,e} + s_n^2)\Big\}
\end{align*} occurs.

For $\mathsf{T}_{3}^{(e)}$ and $\mathsf{T}_{5}^{(e)}$, applying \cref{lemma:ep1} with (1) $v(h,h',z)=h'(x)$ that is uniformly bounded by $2B$, (2) $\phi(x)=x$ that is $1$-Lipschitz, and (3) $\mathcal{H} = \mathcal{G}$ and $\mathcal{H} = \mathcal{G} + \mathcal{F}$ respectively, followed by using union bound over all the $e\in \mathcal{E}$, we claim, the following two events,
\begin{align*}
    \mathcal{C}_{3}(t) &= \left\{\forall e\in \mathcal{E}, ~\forall g, \tilde{g} \in \mathcal{G}, |\mathsf{T}_{3}^{(e)}(g,\tilde{g})| \le CB^2 (s_n\|g - \tilde{g}\|_{2,e} + s_n^2)\right\} \\
    \mathcal{C}_{6}(t) &= \Big\{\forall e\in \mathcal{E}, ~\forall (g, \tilde{g}, f^{(e)}, \tilde{f}^{(e)}) \in \mathcal{M}^{(e)}(\mathcal{G},\mathcal{F}), \\
    &~~~~~~~~~~~~~~~~~~~~~~~~~|\mathsf{T}_{6}^{(e)}(g,\tilde{g}, f^{(e)}, \tilde{f}^{(e)})| \le CB^2 (s_n\|\tilde{g} + \tilde{f}^{(e)} - g - f^{(e)}\|_{2,e} + s_n^2)\Big\}
\end{align*} both occur with probability at least $1-e^{-t}$ for some universal constant $C>0$.

Combining all the pieces, we can conclude that, under the event $\cap_{k=1}^6\mathcal{C}_{k}(t)$, which occurs with probability at least $1-3e^{-t}-C_y(\sigma_y+1) n^{-100}$, the following error bound holds, that $\forall e\in \mathcal{E}$ and $\forall (g, \tilde{g}, f^{(e)}, \tilde{f}^{(e)}) \in \mathcal{M}(\mathcal{G}, \mathcal{F})$, 
\begin{align*}
    |\Delta_{\mathsf{A}}^{(e)}(g, \tilde{g}, f^{(e)}, \tilde{f}^{(e)})| &\le \sum_{k=1}^3 |\mathsf{T}_{k}^{(e)}(\tilde{g}, g)| + \sum_{k=4}^6 |\mathsf{T}_{k}^{(e)}(\tilde{g}, g, \tilde{f}^{(e)}, f^{(e)})| \\
    &\le CB(B+\sigma_y\sqrt{\log(n|\mathcal{E}|)}) \left(s_n \|\tilde{g} - g\|_{2,e} + s_n^2\right) \\
    &~~~~~~~~ + CB(B+\sigma_y\sqrt{\log(n|\mathcal{E}|)}) \left(s_n \|\tilde{g} + \tilde{f}^{(e)} - g - f^{(e)}\|_{2,e} + s_n^2\right)
\end{align*}
This completes the proof.

\qed

\subsection{Proof of Proposition \ref{prop:characterize-f}}

\label{subsec:proof-prop3}

We use the decomposition that, for any $g\in \mathcal{G}$, $f^{(e)} \in \mathcal{F}_{S_g}, \breve{f} \in \mathcal{F}_{S_g}$,
\begin{align*}
    \mathsf{A}^{(e)}(g, \breve{f}) - \mathsf{A}^{(e)}(g, f^{(e)}) =&~ \mathsf{A}^{(e)}(g, \breve{f}) - \hat{\mathsf{A}}^{(e)}(g, f^{(e)}) + \hat{\mathsf{A}}^{(e)}(g, \breve{f}) - \mathsf{A}^{(e)}(g, f^{(e)}) \\
    & ~~~~~~ + \hat{\mathsf{A}}^{(e)}(g, \breve{f}) - \hat{\mathsf{A}}^{(e)}(g, f^{(e)}) \\
    =&~ \Delta_{\mathsf{A}}^{(e)}(g, g, \breve{f}, f^{(e)}) + \hat{\mathsf{A}}^{(e)}(g, \breve{f}) - \hat{\mathsf{A}}^{(e)}(g, f^{(e)})
\end{align*} Applying the error bound in \cref{prop:nonasymptotic-a}, we have
\begin{align*}
    \mathsf{A}^{(e)}(g, \breve{f}) - \mathsf{A}^{(e)}(g, f^{(e)}) &\le |\Delta_{\mathsf{A}}^{(e)}(g, g, \breve{f}, f^{(e)})| + \hat{\mathsf{A}}^{(e)}(g, \breve{f}) - \hat{\mathsf{A}}^{(e)}(g, f^{(e)}) \\
    &\le \frac{\eta}{2} \|f^{(e)} - \breve{f}\|_{2,e}^2 + (\eta^{-1}/2+1)C^2U^2 s_n^2 + \hat{\mathsf{A}}^{(e)}(g, \breve{f}) - \hat{\mathsf{A}}^{(e)}(g, f^{(e)})
\end{align*} holds for any $g \in \mathcal{G}$ and $f^{(e)}, \breve{f} \in \mathcal{F}_{S_g}$, where the last inequality follows from the fact that $ab \le \frac{\eta}{2} a^2 + \frac{1}{2\eta} b^2$
On the other hand, according to the definition of $\mathsf{A}^{(e)}$, we also have
\begin{align*}
&\mathsf{A}^{(e)}(g, \breve{f}) - \mathsf{A}^{(e)}(g, f^{(e)}) \\
&~~~= \mathbb{E} \left[\{Y^{(e)} - g(X^{(e)})\} \{\breve{f}(X^{(e)}) - f^{(e)}(X^{(e)})\}\right] - \frac{1}{2} \|\breve{f}\|_{2,e}^2 + \frac{1}{2} \|f^{(e)}\|_{2,e}^2 \\
&~~~= \mathbb{E} \left[\{\Pi_{\overline{\mathcal{F}},e}(m^{(e,S)}) - g(X^{(e)})\} \{\breve{f}(X^{(e)}) - f^{(e)}(X^{(e)})\}\right] - \frac{1}{2} \|\breve{f}\|_{2,e}^2 + \frac{1}{2} \|f^{(e)}\|_{2,e}^2 \\
&~~~= \frac{1}{2} \|f^{(e)} - \breve{f}\|_{2,e}^2 - \int (f^{(e)} - \tilde{f})(\Pi_{\overline{\mathcal{F}_S}}^{(e)}(m^{(e,S)}) - g(X^{(e)}) - \breve{f})\mu^{(e)}(dx) \\
&~~~\ge \frac{1}{2} (1-\eta) \|f^{(e)} - \breve{f}\|_{2,e}^2 - \frac{\eta^{-1}}{2} \|\Pi_{\overline{\mathcal{F}_S}}^{(e)}(m^{(e,S)}) - g - \breve{f}\|_{2,e}^2
\end{align*} Combining the upper bound and lower bound on $\mathsf{A}^{(e)}(g, \breve{f}) - \mathsf{A}^{(e)}(g, f^{(e)})$ together, we find
\begin{align*}
    \frac{1}{2} (1-2\eta) \|f^{(e)} - \breve{f}\|_{2,e}^2 &\le (\eta^{-1}/2+1)C^2U^2 s_n^2 + \frac{\eta^{-1}}{2} \|\Pi_{\overline{\mathcal{F}_S}}^{(e)}(m^{(e,S)}) - g - \breve{f}\|_{2,e}^2 \\
    &~~~~~~~~ + (\hat{\mathsf{A}}^{(e)}(g, \breve{f}) - \hat{\mathsf{A}}^{(e)}(g, f^{(e)})).
\end{align*} Note that here $g$, $f^{(e)}, \breve{f} \in \mathcal{F}_{S_g}$ are both arbitrary, we let $\breve{f}$ to be that
\begin{align*}
\|\Pi_{\overline{\mathcal{F}_S}}^{(e)}(m^{(e,S)}) - g(X^{(e)}) - \breve{f}\|_{2,e} \le \delta_{\mathtt{a},\mathcal{G}, \mathcal{F}}(e,S_g) + \epsilon
\end{align*} for some $\epsilon>0$. Then, one has
\begin{align*}
\frac{1}{2} (1-2\eta) \|f^{(e)} - \breve{f}\|_{2,e}^2 &\le (\eta^{-1}/2+1)C^2 s_n^2 + \frac{\eta^{-1}}{2} (\delta_{\mathtt{a},\mathcal{G}, \mathcal{F}}(e,S_g) + \epsilon) \\
    &~~~~~~~~ + \sup_{f\in \mathcal{F}_{S_g}}(\hat{\mathsf{A}}^{(e)}(g, f) - \hat{\mathsf{A}}^{(e)}(g, f^{(e)}))
\end{align*} and thus
\begin{align*}
    \|f^{(e)} - \breve{f}\|_{2,e}^2 &\le \frac{\eta^{-1} + 2}{1-2\eta} C^2U^2 \delta_{n,t}^2 + \frac{\eta^{-1}}{1-2\eta} (\delta_{\mathtt{a},\mathcal{G}, \mathcal{F}}(e,S_g) + \epsilon) \\
    &~~~~~~~ + \frac{2}{1-2\eta} \sup_{f\in \mathcal{F}_{S_g}}(\hat{\mathsf{A}}^{(e)}(g, f) - \hat{\mathsf{A}}^{(e)}(g, f^{(e)})).
\end{align*} Then
\begin{align*}
    &\|\Pi_{\overline{\mathcal{F}_S}}^{(e)}(m^{(e,S)}) - g(X^{(e)}) - f^{(e)}\|_{2,e}^2 \\ 
    &~~~\le 2\|f^{(e)} - \breve{f}\|_{2,e}^2 + 
    2 \|\Pi_{\overline{\mathcal{F}_S}}^{(e)}(m^{(e,S)}) - g(X^{(e)}) - f^{(e)}\|_{2,e}^2 \\
    &~~~\le \frac{2\eta^{-1} + 4}{1-2\eta} C^2U^2 s_n^2 + \frac{2\eta^{-1}+2-4\eta}{1-2\eta}(\delta_{\mathtt{a},\mathcal{G}, \mathcal{F}}(e,S_g) + \epsilon) \\
    &~~~~~~~~~~~~~ + \frac{4}{1-2\eta} \sup_{f\in \mathcal{F}_{S_g}}(\hat{\mathsf{A}}^{(e)}(g, f) - \hat{\mathsf{A}}^{(e)}(g, f^{(e)})).
\end{align*} Letting $\epsilon \to 0$ completes the proof of the first claim. Averaging over all the $e\in \mathcal{E}$ then completes the proof of the second claim by noting that
\begin{align*}
    \frac{1}{|\mathcal{E}|} \sum_{e\in \mathcal{E}} \sup_{f\in \mathcal{F}_{S_g}} \hat{\mathsf{A}}^{(e)}(g, f) &= \sup_{\breve{f}^{\mathcal{E}}\in \{\mathcal{F}_{S_g}\}^{|\mathcal{E}|}} \frac{1}{|\mathcal{E}|} \sum_{e\in \mathcal{E}} \hat{\mathsf{A}}^{(e)}(g, \breve{f}^{(e)}) \\
    &= \sup_{\breve{f}^{\mathcal{E}}\in \{\mathcal{F}_{S_g}\}^{|\mathcal{E}|}} \gamma^{-1} \left(\hat{\mathsf{Q}}_\gamma(g, \breve{f}^{\mathcal{E}}) - \hat{\mathsf{R}}(g)\right),
\end{align*} and
\begin{align*}
    \frac{1}{|\mathcal{E}|} \sum_{e\in \mathcal{E}} \hat{\mathsf{A}}^{(e)}(g, f^{(e)}) = \gamma^{-1}\left(\hat{\mathsf{Q}}_\gamma(g, f^{\mathcal{E}}) - \hat{\mathsf{R}}(g)\right).
\end{align*}

\subsection{Proof of Proposition \ref{prop:variable-selection}}
\label{subsec:proof-prop-variable-selection}

Define the set of ``bad'' variable index set 
\begin{align*}
\overline{S} = \mathcal{S} \cup \left\{S: \mathsf{b}_{\mathcal{G}}(S) = 0, S\supseteq S^\star, \exists e\in \mathcal{E}, \|\Pi_{\overline{\mathcal{F}_S}}^{(e)}(m^{(e,S)}) - g^\star \|_{2,e} > 0\right\}
\end{align*} with
\begin{align*}
    \mathcal{S} &= \left\{S \subseteq [d]: \mathsf{b}_{\mathcal{G}}(S) > 0 ~\text{or}~ S^\star \setminus S \neq \emptyset \right\}.
\end{align*}

Define the event $\mathcal{A}_+$ as
\begin{align*}
&\forall g ~\text{with}~S_g \in \overline{\mathcal{S}}, ~~~~\exists f^{\mathcal{E}} \in \{\mathcal{F}_{S_g}\}^{|\mathcal{E}|} ~\text{and}~\tilde{g} ~\text{with}~ S_{\tilde{g}} = S^\star ~\text{s.t.}\\
&~~~~~~~~~~~~~~~~~~~~ \hat{\mathsf{Q}}_{\gamma}(g, f^{\mathcal{E}}) - \sup_{\tilde{f}^{\mathcal{E}} \in \{\mathcal{F}_{S^\star}\}^{|\mathcal{E}|}}\hat{\mathsf{Q}}_{\gamma}(\tilde{g}, \tilde{f}^{\mathcal{E}}) > 2(1+\gamma)\delta_{\mathtt{opt}}^2.
\end{align*}

Denote $\hat{S}=S_{\hat{g}}$. We claim that $\mathcal{A}_+ \subseteq \{\hat{S} \notin \overline{\mathcal{S}}\}$. This is because the solution of the minimax optimization objective $\hat{g}, \hat{f}^{\mathcal{E}}$ satisfies
\begin{align*}
    \hat{\mathsf{Q}}_{\gamma}(\hat{g}, \hat{f}^{\mathcal{E}}) \le \inf_{g\in \mathcal{G}} \sup_{f^{\mathcal{E}}\in\{\mathcal{F}_{S_g}\}^{|\mathcal{E}|}} \hat{\mathsf{Q}}_{\gamma}({g}, f^{\mathcal{E}}) + (1+\gamma)\delta_{\mathtt{opt}}^2 \le \sup_{\tilde{f}^{\mathcal{E}}\in\{\mathcal{F}_{S^\star}\}^{|\mathcal{E}|}} \hat{\mathsf{Q}}_{\gamma}(\tilde{g}, \tilde{f}^{\mathcal{E}}) + (1+\gamma)\delta_{\mathtt{opt}}^2
\end{align*} and
\begin{align*}
    \hat{\mathsf{Q}}_{\gamma}(\hat{g}, \hat{f}^{\mathcal{E}}) \ge \sup_{\breve{f}^{\mathcal{E}} \in \{\mathcal{F}_{S_g}\}^{|\mathcal{E}|}} \hat{\mathsf{Q}}_{\gamma}(\hat{g}, \breve{f}^{\mathcal{E}}) - \gamma\delta_{\mathtt{opt}}^2 \ge \hat{\mathsf{Q}}_{\gamma}(\hat{g}, f^{\mathcal{E}}) - (\gamma+1)\delta_{\mathtt{opt}}^2
\end{align*} simultaneously. Combining the above two inequalities indicates that $\hat{g}$ in the minimax optimization solution satisfies
\begin{align*}
\forall \tilde{g} \in \mathcal{G}, ~\forall f^{\mathcal{E}} \in \{\mathcal{F}_{\hat{S}}\}^{|\mathcal{E}|}, ~~ \hat{\mathsf{Q}}_{\gamma}(\hat{g}, f^{\mathcal{E}}) &\le \sup_{\tilde{f}^{\mathcal{E}}\in\{\mathcal{F}_{S^\star}\}^{|\mathcal{E}|}} \hat{\mathsf{Q}}_{\gamma}(\tilde{g}, \tilde{f}^{\mathcal{E}}) + 2(1+\gamma)\delta_{\mathtt{opt}}^2
\end{align*} Therefore, we can conclude that if $\mathcal{A}_+$ occurs, then $\hat{S} \notin \overline{\mathcal{S}}$.

The rest of the proof is to show that if condition \eqref{eq:main-result-faster-cond} holds with some large enough universal constant $C>0$, then $\mathcal{A}_+$ occurs. Such a condition is equivalent to
\begin{align}
\label{eq:thm1-cond-n}
    (1+\gamma) \left(\delta_{\mathtt{opt}}^2 + \sup_{S\subseteq[d]} \delta_{\mathtt{a}, \mathcal{F}, \mathcal{G}}(S)^2 +  \delta_{\mathtt{a}, \mathcal{G}}^2 + UB\delta_{n,t} \right) \le \left( {s_{\min}} \land \gamma \inf_{S: \bar{\mathsf{d}}_{\mathcal{G}, \mathcal{F}}(S) > 0} \bar{\mathsf{d}}_{\mathcal{G}, \mathcal{F}}(S)\right)/\breve{C}
\end{align} for some large $\breve{C}>0$. Without loss of generality, we assume $\delta_{n,t}<1$ because we can choose large enough $C$ in condition \eqref{eq:main-result-faster-cond}.

To this end, we use the decomposition that, for any $g\in \mathcal{G}$, $f^{\mathcal{E}} \in \{\mathcal{F}_{S_g}\}^{|\mathcal{E}|}$, $\tilde{g} \in \mathcal{G}$ with $S_{\tilde{g}} = S^\star$ and $\tilde{f}^{\mathcal{E}} \in \{\mathcal{F}_{S^\star}\}^{|\mathcal{E}|}$, the following holds, that
\begin{align*}
    &\hat{\mathsf{Q}}_\gamma(g, f^{\mathcal{E}}) - \hat{\mathsf{Q}}_\gamma(\tilde{g}, \tilde{f}^{\mathcal{E}}) \\
    &~~~=  \hat{\mathsf{Q}}_\gamma(g, f^{\mathcal{E}}) - {\mathsf{Q}}_\gamma(g, f^{\mathcal{E}}) + {\mathsf{Q}}_\gamma(\tilde{g}, \tilde{f}^{\mathcal{E}}) - \hat{\mathsf{Q}}_\gamma(\tilde{g}, \tilde{f}^{\mathcal{E}}) \\
    & ~~~~~~~~~~~ + {\mathsf{Q}}_\gamma(g, f^{\mathcal{E}}) -  {\mathsf{Q}}_\gamma(\tilde{g}, \tilde{f}^{\mathcal{E}}) \\
    &~~~= \Delta_{\mathsf{R}}(g, \tilde{g}) - \frac{1}{|\mathcal{E}|} \sum_{e\in \mathcal{E}} \Delta_{\mathsf{A}}(g, \tilde{g}, f^{(e)}, \tilde{f}^{(e)}) + {\mathsf{Q}}_\gamma(g, f^{\mathcal{E}}) -  {\mathsf{Q}}_\gamma(\tilde{g}, \tilde{f}^{\mathcal{E}})\\
    &~~~\ge -|\Delta_{\mathsf{R}}(g, \tilde{g})| - \frac{\gamma}{|\mathcal{E}|} \sum_{e\in \mathcal{E}} |\Delta_{\mathsf{A}}(g, \tilde{g}, f^{(e)}, \tilde{f}^{(e)})| + {\mathsf{Q}}_\gamma(g, f^{\mathcal{E}}) -  {\mathsf{Q}}_\gamma(\tilde{g}, \tilde{f}^{\mathcal{E}}) \\
    &~~~\overset{(a)}{\ge} - CU \frac{1}{|\mathcal{E}|} \sum_{e\in \mathcal{E}} \left\{\delta_{n,t} \|\tilde{g} - g\|_{2,e} + \delta_{n,t}^2\right\}\\
    & ~~~~~~~~~~~ - CU \frac{\gamma}{|\mathcal{E}|} \sum_{e\in \mathcal{E}} \left\{\delta_{n,t} \left(\|\tilde{g} - g\|_{2,e} + \|\tilde{g} + \tilde{f}^{(e)} - g - f^{(e)}\|_{2,e}\right) + \delta_{n,t}^2 \right\}\\
    & ~~~~~~~~~~~ + 0.25 \|g - \tilde{g}\|_2^2 +  \frac{\gamma}{4} \bar{\mathsf{d}}_{\mathcal{G}, \mathcal{F}}(S) + \frac{\gamma}{2} \|g - \Pi_{\overline{\mathcal{G}_S}}(\bar{m}^{(S)})\|_2^2\\
    &~~~~~~~~~~~ - \frac{\gamma}{2|\mathcal{E}|} \sum_{e\in \mathcal{E}}  \|f^{(e)} - \{\Pi_{\overline{\mathcal{F}_S}}^{(e)} (m^{(e,S)}) - g\}\|_{2,e}^2 - (2 + 0.5\gamma) \|\tilde{g} - g^\star\|_2^2.
\end{align*} Here in $(a)$ we apply \cref{thm:population} with $\delta=1/2$, \cref{prop:nonasymptotic-pooled} and \cref{prop:nonasymptotic-a}.

It follows from the uniform boundedness of $\mathcal{G} \cup \mathcal{F}$ that
\begin{align*}
    &\frac{1}{|\mathcal{E}|} \sum_{e\in \mathcal{E}} \left\{CU\delta_{n,t} \left(\|\tilde{g} - g\|_{2,e} + \|\tilde{g} + \tilde{f}^{(e)} - g - f^{(e)}\|_{2,e}\right) + CU \delta_{n,t}^2 \right\} \\
    \le& ~ \frac{1}{|\mathcal{E}|} \sum_{e\in \mathcal{E}} \left\{CU \delta_{n,t}^2 + C UB\delta_{n,t}\right\} \le CUB \delta_{n,t}
\end{align*} provided that $\delta_{n,t}<1$ and $B>1$, and
\begin{align*}
CU \frac{1}{|\mathcal{E}|} \sum_{e\in \mathcal{E}} \left\{\delta_{n,t} \|\tilde{g} - g\|_{2,e} + \delta_{n,t}^2\right\} \le C UB \{\delta_{n,t} + \delta_{n,t}^2\} \le C UB \delta_{n,t}
\end{align*}
Substituting the above inequality back, we have
\begin{align*}
    &\hat{\mathsf{Q}}_\gamma(g, f^{\mathcal{E}}) - \hat{\mathsf{Q}}_\gamma(\tilde{g}, \tilde{f}^{\mathcal{E}}) \\
    &~~~\ge 0.25 \|g - \tilde{g}\|_{2}^2 + \frac{\bar{\mathsf{d}}_{\mathcal{G}, \mathcal{F}}(S)}{4} - C(1 + \gamma)UB \delta_{n,t} \\
    &~~~~~~~~~ - \left\{\frac{\gamma}{2|\mathcal{E}|} \sum_{e\in \mathcal{E}}  \|f^{(e)} - \{\Pi_{\overline{\mathcal{F}},e} (m^{(e,S)}) - g\}\|_{2,e}^2 +  (2 + 0.5\gamma) \|\tilde{g} - g^\star\|_2^2\right\}
\end{align*} for arbitrary $\tilde{g} \in \mathcal{G}$ with $S_{\tilde{g}} = S^\star$ and $\tilde{f}^{\mathcal{E}} \in \{\mathcal{F}_{S^\star}\}^{|\mathcal{E}|}$. Now we choose all the $f^{(e)}$ satisfying
\begin{align*}
    \|f^{(e)} - \{\Pi_{\overline{\mathcal{F}_S}}^{(e)} (m^{(e,S)}) - g\}\|_{2,e}^2 \le \{\delta_{\mathtt{a}, \mathcal{F}, \mathcal{G}}(e, S)\}^2 + \epsilon,
\end{align*} and choose $\tilde{g}$ satisfying
\begin{align*}
    \|\tilde{g} - g^\star\|_2^2 \le \delta_{\mathtt{a}, \mathcal{G}}^2 + \epsilon
\end{align*} with arbitrary small $\epsilon>0$, then
\begin{align}
\label{eq:thm1-proof-eq1}
\begin{split}
&\hat{\mathsf{Q}}_\gamma(g, f^{\mathcal{E}}) - \hat{\mathsf{Q}}_\gamma(\tilde{g}, \tilde{f}^{\mathcal{E}}) - 2(1+\gamma)\delta_{\mathtt{opt}}^2 \\
&~~~\ge 0.05 \|g - \tilde{g}\|_{2}^2 + \frac{\gamma}{4} \bar{\mathsf{d}}_{\mathcal{G}, \mathcal{F}}(S) - 2(1+\gamma)\delta_{\mathtt{opt}}^2\\
&~~~~~~~~~~~ -\left\{C (1 + \gamma)UB \delta_{n,t} + (0.5\gamma+2) \delta_{\mathtt{a}, \mathcal{G}}^2  + \epsilon(0.5\gamma+2)\right\}\\
&~~~~~~~~~~~ - \gamma \delta_{\mathtt{a}, \mathcal{F}, \mathcal{G}}(S)
\end{split}
\end{align} for any $g\in \mathcal{G}$, $\tilde{f}^{\mathcal{E}} \in \mathcal{F}_{S^\star}$, $f^{\mathcal{E}}$ that depends on $g$, and some $\tilde{g}$ with $S_{\tilde{g}} = S^\star$.

We conclude the proof by showing that for any $g \in \mathcal{G}$ with $S=S_g \in \overline{\mathcal{S}}$, the R.H.S. of \eqref{eq:thm1-proof-eq1} is greater than $0$ under \eqref{eq:thm1-cond-n}, thus $\mathcal{A}_+$ holds. The proof is divided into three cases.

\noindent \emph{Case 1. $\mathsf{b}_{\mathcal{G}}(S) > 0$.} In this case, we have $\bar{\mathsf{d}}_{\mathcal{G}, \mathcal{F}}(S) > 0$ by (2) in \cref{cond:general-gf}. Then, we have 
\begin{align*}
\hat{\mathsf{Q}}_\gamma(g, f^{\mathcal{E}}) - \hat{\mathsf{Q}}_\gamma(\tilde{g}, \tilde{f}^{\mathcal{E}}) - 2(1+\gamma)\delta_{\mathtt{opt}}^2 \ge 0.25  \|g - \tilde{g}\|_{2}^2 + \frac{\gamma}{8} \bar{\mathsf{d}}_{\mathcal{G}, \mathcal{F}}(S)  \ge 0 + \frac{\gamma}{8} \bar{\mathsf{d}}_{\mathcal{G}, \mathcal{F}}(S) > 0
\end{align*}
by letting $\epsilon \to 0$ under the condition that
\begin{align*}
    2(1+\gamma)\delta_{\mathtt{opt}}^2 + \gamma \delta_{\mathtt{a}, \mathcal{F}, \mathcal{G}}(S) + (0.5\gamma+2) (\delta_{\mathtt{a}, \mathcal{G}}^2 + CUB\delta_{n,t})  \le  \gamma \frac{\bar{\mathsf{d}}_{\mathcal{G}, \mathcal{F}}(S)}{8}.
\end{align*} which is implied by \eqref{eq:thm1-cond-n} with large enough $\breve{C}$.

\noindent \emph{Case 2. $S^\star \setminus S\neq \emptyset$. } In this case, applying Young's inequality $2ab \ge -0.5a^2 - 2b^2$ gives
\begin{align*}
    \|g - \tilde{g}\|_2^2 \ge \|g - g^\star + g^\star - \tilde{g}\|_2^2 &\ge 0.5 \|g - g^\star\|_2^2 - \|g^\star - \tilde{g}\|_2^2 \\
    &\overset{(a)}{\ge} 0.5 s_{\min} - \delta_{\mathtt{a}, \mathcal{G}} - \epsilon.
\end{align*} Here $(a)$ follows from (2) in \cref{cond:general-gf}. Similarly, we have $\hat{\mathsf{Q}}_\gamma(g, f^{\mathcal{E}}) - \hat{\mathsf{Q}}_\gamma(\tilde{g}, \tilde{f}^{\mathcal{E}}) - 2(1+\gamma)\delta_{\mathtt{opt}}^2 \ge 0.25\cdot 0.5 s_{\min} > 0$ via letting $\epsilon \to 0$ provided
\begin{align*}
2(1+\gamma)\delta_{\mathtt{opt}}^2 + \gamma \delta_{\mathtt{a}, \mathcal{F}, \mathcal{G}}(S) + (0.5\gamma+2) (\delta_{\mathtt{a}, \mathcal{G}}^2 + CUB\delta_{n,t}) \le 0.5  s_{\min}.
\end{align*} And the above inequality holds under \eqref{eq:thm1-cond-n} with large enough $\breve{C}$.

\noindent \emph{Case 3. $S\in \overline{\mathcal{S}} \setminus {\mathcal{S}}$. } In this case, it follows from the definition of $\overline{\mathcal{S}}$ that 
\begin{align}
\label{eq:proof-thm1-trick1}
    \sum_{e\in \mathcal{E}} \|\Pi_{\overline{\mathcal{F}_S}}^{(e)} (m^{(e,S)}) - g^\star\|_{2,e}^2 > 0.
\end{align} At the same time, given that $\mathsf{b}_{\mathcal{G}}(S) = 0$ and $S\supseteq S^*$ because $S\notin \mathcal{S}$, we obtain
\begin{align*}
    0 = \mathsf{b}_{\mathcal{G}}(S) = \|\Pi_{\overline{\mathcal{G}_S}} (\bar{m}^{(S)}) - g^\star\|_{2}^2 = \frac{1}{|\mathcal{E}|}\sum_{e\in \mathcal{E}} \|\Pi_{\overline{\mathcal{G}_S}} (\bar{m}^{(S)}) - g^\star\|_{2,e}^2
\end{align*} indicating that $\Pi_{\overline{\mathcal{G}_S}} (\bar{m}^{(S)}) - g^\star = 0$ $\mu^{(e)}$-a.s. for any $e\in \mathcal{E}$. Therefore, we have
\begin{align*}
    \bar{\mathsf{d}}_{\mathcal{G}, \mathcal{F}}(S) = \frac{1}{|\mathcal{E}|} \sum_{e\in \mathcal{E}} \|\Pi_{\overline{\mathcal{G}_S}} (\bar{m}^{(S)}) - \Pi_{\overline{\mathcal{F}_S}}^{(e)}(m^{(e,S)})\|_{2,e}^2 \overset{(a)}{=} \frac{1}{|\mathcal{E}|} \sum_{e\in \mathcal{E}} \|g^\star - \Pi_{\overline{\mathcal{F}_S}}^{(e)}(m^{(e,S)})\|_{2,e}^2 \overset{(b)}{>} 0
\end{align*} Here (a) follows from the fact that $\Pi_{\overline{\mathcal{G}_S}} (\bar{m}^{(S)}) - g^\star = 0$ $\mu^{(e)}$-a.s., and (b) follows from \eqref{eq:proof-thm1-trick1}. Similar to case 1, we have $\hat{\mathsf{Q}}_\gamma(g, f^{\mathcal{E}}) - \hat{\mathsf{Q}}_\gamma(\tilde{g}, \tilde{f}^{\mathcal{E}}) - 2(1+\gamma)\delta_{\mathtt{opt}}^2 \ge 0.05 \|g - \tilde{g}\|_{2}^2 + \frac{\gamma}{8} \bar{\mathsf{d}}_{\mathcal{G}, \mathcal{F}}(S) \ge \frac{\gamma}{8} \bar{\mathsf{d}}_{\mathcal{G}, \mathcal{F}}(S) > 0$ by letting $\epsilon \to 0$ under the condition that
\begin{align*}
    2(1+\gamma)\delta_{\mathtt{opt}}^2 + \gamma \delta_{\mathtt{a}, \mathcal{F}, \mathcal{G}}(S) + (0.5\gamma + 2) (\delta_{\mathtt{a}, \mathcal{G}}^2 + CUB\delta_{n,t})  \le \gamma \frac{\bar{\mathsf{d}}_{\mathcal{G}, \mathcal{F}}(S)}{8}.
\end{align*} And the above inequality holds under \eqref{eq:thm1-cond-n} with large enough $\breve{C}$.

Combining the above three cases completes the proof.

\subsection{Proof of Proposition \ref{prop:bias}}

Define $\langle f, g\rangle = \int f(x) g(x) \bar{\mu}_x(dx)$
It follows from \cref{lemma:pooled-l2-diff} that for any $g, \tilde{g} \in \mathcal{G}_S$, 
\begin{align*}
    \mathsf{R}(g) - \mathsf{R}(\tilde{g}) = \frac{1}{2}\|g - \tilde{g}\|_2^2 - \langle g - \tilde{g}, \bar{m}^{(S)} - \tilde{g}\rangle
\end{align*} Observe that 
\begin{align*}
\langle g - \tilde{g}, \bar{m}^{(S)} - \tilde{g}\rangle = \langle g - \tilde{g}, \Pi_{\overline{\mathcal{G}_S}}(\bar{m}^{(S)}) - \tilde{g}\rangle 
\end{align*} by projection theorem \cref{thm:projection}. Applying Cauchy-Schwarz inequality, we obtain
\begin{align*}
    \mathsf{R}(g) - \mathsf{R}(\tilde{g}) \ge \frac{1}{4} \|g - \tilde{g}\|_2^2 - \|\tilde{g} - \Pi_{\overline{\mathcal{G}_S}}(\bar{m}^{(S)})\|_2^2.
\end{align*}
On the other hand, for $\hat{g}_{\mathtt{R}}$ and any $\tilde{g} \in G_S$, we have the following decomposition:
\begin{align*}
    \mathsf{R}(\hat{g}_{\mathtt{R}}) - \mathsf{R}(\tilde{g}) &= \mathsf{R}(\hat{g}_{\mathtt{R}}) - \hat{\mathsf{R}}(\hat{g}_{\mathtt{R}}) + \hat{\mathsf{R}}(\hat{g}_{\mathtt{R}}) - \hat{\mathsf{R}}(\tilde{g}) + \hat{\mathsf{R}}(\tilde{g}) - \mathsf{R}(\tilde{g}) \\
    &\overset{(a)}{\le} - \Delta_{\mathsf{R}}(\hat{g}_{\mathtt{R}}, \tilde{g}) \\
    &\overset{(b)}{\le} C U \left\{ \delta_{n,t}^2 + \delta_{n,t} \frac{1}{|\mathcal{E}|} \sum_{e\in \mathcal{E}} \|\hat{g}_{\mathtt{R}} - \tilde{g}\|_{2,e}\right\} \le \{CU + 4(CU)^2\} \delta_{n, t}^2 + \frac{1}{8} \|\hat{g}_{\mathtt{R}} - \tilde{g}\|_2^2
\end{align*} where $(a)$ follows from the fact that $\hat{g}_{\mathtt{R}}$ is the empirical risk minimizer, $(b)$ follows from \cref{prop:nonasymptotic-pooled}. Putting the upper bound and lower bound on $\mathsf{R}(\hat{g}_{\mathtt{R}}) - \mathsf{R}(\tilde{g})$ together, we obtain, with probability at least $1-\{C_y(\sigma_y + 1)+1\}n^{-100}$, 
\begin{align*}
    \|\hat{g}_{\mathtt{R}} - \tilde{g}\|_{2} \le C_1 \left(U\delta_{n, 100\log n} + \|\tilde{g} - \Pi_{\overline{\mathcal{G}_S}}(\bar{m}^{(S)})\|_2\right).
\end{align*} We let $\delta_a = \inf_{g\in \mathcal{G}_S} \|g - \Pi_{\overline{\mathcal{G}_{S}}}(\bar{m}^{(S)})\|_2$. By definition, there exists some $\tilde{g} \in \mathcal{G}_S$ such that $\|\tilde{g} - \Pi_{\overline{\mathcal{G}_{S}}}(\bar{m}^{(S)})\|_2 \le \delta_a + \frac{1}{n}$. It then follows from triangle inequality and our choice of $\tilde{g}$ above that
\begin{align*}
    \|\hat{g}_{\mathtt{R}} - \Pi_{\overline{\mathcal{G}_{S}}}(\bar{m}^{(S)})\|_2 \le \|\Pi_{\overline{\mathcal{G}_{S}}}(\bar{m}^{(S)}) - \tilde{g}\|_2 + \|\tilde{g} - \hat{g}_{\mathtt{R}}\|_2
    &\le 200 C_1 (U\delta_{n, \log n} + \delta_a + n^{-1}).
\end{align*}
Meanwhile, it follows from Cauchy-Schwarz inequality that
\begin{align*}
    \|\hat{g}_{\mathtt{R}} - g^\star\|_2^2 &= \|\hat{g}_{\mathtt{R}} - \Pi_{\overline{\mathcal{G}_{S}}}(\bar{m}^{(S)}) + \Pi_{\overline{\mathcal{G}_{S}}}(\bar{m}^{(S)}) - g^\star\|_2^2 \\
    &= \mathsf{b}_{\mathcal{G}}(S) + \|\hat{g}_{\mathtt{R}} - \Pi_{\overline{\mathcal{G}_{S}}}(\bar{m}^{(S)})\|_2^2 + 2\langle \hat{g}_{\mathtt{R}} - \Pi_{\overline{\mathcal{G}_{S}}}(\bar{m}^{(S)}), \Pi_{\overline{\mathcal{G}_{S}}}(\bar{m}^{(S)}) - g^\star\rangle\\
    &\ge (1-\eta) \mathsf{b}_{\mathcal{G}}(S) - (\eta^{-1}-1) \|\hat{g}_{\mathtt{R}} - \Pi_{\overline{\mathcal{G}_{S}}}(\bar{m}^{(S)})\|_2^2
\end{align*} for any $\eta \in (0,1)$. Since $\delta_{a} + U\delta_{n, \log n}=o(1)$, setting $\eta = 0.005$ gives
\begin{align*}
\|\hat{g}_{\mathtt{R}} - g^\star\|_2^2 \ge 0.995 \mathsf{b}_{\mathcal{G}}(S) - o(1).
\end{align*} dividing both sides by positive $\mathsf{b}_{\mathcal{G}}(S)$ completes the proof of the lower bound provided $n$ is large enough. The upper bound follows similarly.

\subsection{Technical Lemmas}

\begin{lemma}[Instance-dependent error bound on empirical process]
\label{lemma:ep1}
Suppose the function class $\mathcal{H}$ satisfies $\sup_{h\in \mathcal{H}} \|h\|_\infty \le b$, and for any $\delta \ge \delta_n \ge 1/n$, the local population Rademacher complexity satisfies
\begin{align*}
    R_{n, \nu}(\delta;\partial \mathcal{H}) \le b\delta_n \delta
\end{align*} and the function $\Phi(h, h', z): \mathcal{H} \times \mathcal{H} \times \mathcal{Z}$ satisfies that, $\nu$-a.s.,
\begin{align*}
    \Phi(h, h', Z) = v(h, h', Z) \phi(h - h') ~~~~\text{with}~~~~ |v(h, h', z)|\le L_1, \phi\text{ is } L_2 \text{-Lipschitz and } \phi(0)=0.
\end{align*}
Then let $\delta_* = \delta_n + \sqrt{\frac{t+1+\log(nb)}{n}}$
\begin{align*}
    \mathbb{P}\Bigg[\forall h,h'\in \mathcal{H}, ~~ &\Big| \frac{1}{n} \sum_{i=1}^n \Phi(h, h', Z_i) - \mathbb{E}[\Phi(h, h', Z_i)] \Big| \\
    &~~~~~~~~~~\le C(bL_1L_2)\{\delta_* \|h-h'\|_{L_2(\nu)} + \delta_*^2\} \Bigg] \ge 1-e^{-t}.
\end{align*} for some universal constant $C>0$.
\end{lemma}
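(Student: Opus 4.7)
\medskip

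\noindent\textbf{Proof proposal for Lemma \ref{lemma:ep1}.}

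The plan is to combine a symmetrization/contraction reduction with Talagrand's concentration inequality and then apply a dyadic peeling argument over the $L_2(\nu)$-diameter $\|h-h'\|_{L_2(\nu)}$. First, I will reparameterize the process by the difference $u := h-h' \in \partial\mathcal{H}$. Since $|v(h,h',z)| \le L_1$, $\phi$ is $L_2$-Lipschitz, and $\phi(0)=0$, the integrand satisfies the pointwise envelope $|\Phi(h,h',z)| \le L_1 L_2 |u(z)|$, in particular $\|\Phi(h,h',\cdot)\|_\infty \le 2bL_1L_2$ and $\mathrm{Var}_\nu[\Phi(h,h',\cdot)] \le L_1^2 L_2^2 \|u\|_{L_2(\nu)}^2$. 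These two moment bounds are what drive the localized control.

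Second, for each radius $r \in [\delta_n, 2b]$ I will control the shell supremum
\[
Z_n(r) := \sup_{(h,h'):\,\|h-h'\|_{L_2(\nu)}\le r} \bigl| \tfrac{1}{n}\textstyle\sum_{i=1}^n \Phi(h,h',Z_i) - \mathbb{E}[\Phi(h,h',Z)] \bigr|.
\]
Symmetrization gives $\mathbb{E}[Z_n(r)] \le 2\,\mathbb{E}\sup \bigl|\tfrac{1}{n}\sum_i \epsilon_i v(h,h',Z_i)\phi(u(Z_i))\bigr|$. Since $v$ is a $\{|{\cdot}|\le L_1\}$-bounded multiplier and $\phi$ is $L_2$-Lipschitz with $\phi(0)=0$, a Ledoux--Talagrand contraction (applied to the class of envelope-dominated maps $z\mapsto\phi(u(z))$ with the multiplier $v$ treated via the standard envelope/multiplier inequality) yields $\mathbb{E}[Z_n(r)] \le C\,L_1 L_2 \cdot R_{n,\nu}(r;\partial\mathcal{H}) \le C L_1 L_2\, b\,\delta_n\, r$, invoking the hypothesized Rademacher bound.

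Third, I will upgrade the in-expectation bound to high probability via Bousquet's/Talagrand's inequality for empirical processes of bounded functions with variance proxy $L_1^2L_2^2 r^2$ and envelope $2bL_1L_2$: with probability at least $1-e^{-s}$,
\[
Z_n(r) \;\le\; C\,L_1L_2\,b\,\Bigl( \delta_n r + r\sqrt{\tfrac{s}{n}} + \tfrac{s}{n}\Bigr).
\]
Finally, a peeling argument over dyadic shells $r_k = 2^k \delta_\ast$ for $k=0,1,\dots,K$ with $K = \lceil \log_2(2b/\delta_\ast)\rceil = O(\log(nb))$ and a union bound with $s = t + k + \log(nb) + 1$ produces, with probability at least $1 - \sum_k e^{-s_k} \ge 1-e^{-t}$, a uniform bound of the form $|Z_n(h,h')| \le C L_1 L_2 b\bigl(\delta_\ast \|h-h'\|_{L_2(\nu)} + \delta_\ast^2\bigr)$. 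The $\delta_\ast^2$ term is built to absorb three contributions at once: (i) pairs in the smallest shell $\|h-h'\|_{L_2(\nu)} \le \delta_\ast$, for which the bound $\delta_n r \le \delta_\ast^2$ is used directly; (ii) the $\sqrt{s/n}\cdot r$ Talagrand term, rewritten as $\le \delta_\ast r$ by definition of $\delta_\ast$; and (iii) the residual $s/n \lesssim \delta_\ast^2$.

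The main obstacle will be handling the multiplier $v(h,h',z)$ cleanly inside the contraction step, because $v$ depends jointly on $(h,h')$ and hence cannot be moved outside the supremum by a plain Ledoux--Talagrand contraction. I plan to resolve this either by (a) the de la Peña--Gin\'e multiplier inequality applied to the class $\{z\mapsto \phi(u(z))\}_{u\in\partial\mathcal{H}}$ with multiplier envelope $L_1$, or, equivalently, (b) rewriting the process via its envelope $L_1L_2|u(z)|$ and invoking chaining against $\partial\mathcal{H}$ directly; both routes lose only universal constants. A minor secondary issue, handled by a standard truncation using $\|u\|_\infty \le 2b$, is the passage from the population $L_2(\nu)$-norm in the peeling partition to a bound that holds deterministically in the empirical moments, and this also contributes only terms absorbed by $\delta_\ast^2$.
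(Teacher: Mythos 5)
Your architecture --- reparameterize by $u=h-h'$, symmetrize and contract down to the local Rademacher complexity of $\partial\mathcal{H}$, apply Talagrand's concentration with variance proxy $(L_1L_2 r)^2$ and envelope $O(bL_1L_2)$, then peel over dyadic shells in $\|h-h'\|_{L_2(\nu)}$ with a union bound whose $\log(nb)$ cost is absorbed into $\delta_*$ --- is exactly the paper's proof of \cref{lemma:ep1}, including the way the three sources of error are distributed between the $\delta_*\|h-h'\|_{L_2(\nu)}$ and $\delta_*^2$ terms.

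The one step you explicitly defer is, however, the only step where the paper does nontrivial work, and neither of your two proposed resolutions closes it as written. The multiplier $v(h,h',z)$ is indexed by the \emph{same} pair $(h,h')$ over which the supremum is taken, so the de la Pe\~na--Gin\'e multiplier inequality (which decouples a multiplier that does not vary with the function index) does not apply directly; and chaining against the envelope $L_1L_2|u(z)|$ controls only the size of $\Phi$, not its increments, which is what a Rademacher or chaining bound requires. The paper resolves this by reproving the Ledoux--Talagrand contraction coordinate by coordinate: conditioning on $\varepsilon_1^{n-1}$ and $Z_1^n$, taking near-maximizers $(h_+,h_+')$ and $(h_-,h_-')$ for the two values of $\varepsilon_n$, invoking the pointwise increment bound $|\Phi(h_+,h_+',z)-\Phi(h_-,h_-',z)|\le L_1L_2\,|(h_+-h_+')(z)-(h_--h_-')(z)|$ (this is precisely where the product structure $\Phi=v\cdot\phi(h-h')$ with $|v|\le L_1$ and $\phi$ being $L_2$-Lipschitz is consumed), and then doing a sign-bookkeeping step to replace $\Phi(\cdot,\cdot,Z_n)$ by $L_1L_2\,\varepsilon_n(h-h')(Z_n)$ in the $n$-th coordinate; iterating over coordinates yields $\mathbb{E}[Z_n(r)]\le 4L_1L_2\,R_{n,\nu}(r;\partial\mathcal{H})$. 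If you substitute that argument for your routes (a)/(b), the rest of your proposal goes through verbatim. Your ``secondary issue'' about passing between population and empirical norms does not actually arise here: the shells and the Talagrand variance proxy are both defined with the population $L_2(\nu)$ norm, so no truncation step is needed.
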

\begin{proof}[Proof of \cref{lemma:ep1}]
    Throughout the proof, we let $\|\cdot\|_2 = \|\cdot\|_{L_2(\nu)}$. Define 
    \begin{align*}
        Z_n(\delta) := \sup_{h, h'\in \mathcal{H}, \|h-h'\|_2 \le \delta} \Big| \frac{1}{n} \sum_{i=1}^n \Phi(h, h', Z_i) - \mathbb{E}[\Phi(h, h', Z_i)] \Big|
    \end{align*}
    \noindent {\sc Step 1. Bound on $\mathbb{E}[Z_n(\delta)]$.} It follows from the symmetrization argument that
    \begin{align*}
        \mathbb{E}[Z_n(\delta)] \le 2 \mathbb{E}\left[\sup_{h, h'\in \mathcal{H}, \|h-h'\|_2 \le \delta} \Big| \frac{1}{n} \sum_{i=1}^n \varepsilon_i \Phi(h, h', Z_i) \Big|\right]
    \end{align*} for i.i.d. Rademacher random variables $\varepsilon_1,\ldots, \varepsilon_n$ that is also independent of $Z_1, \ldots, Z_n$. We claim that
    \begin{align}
    \label{eq:proof-claim-ep1-1}
        \mathbb{E}\left[\sup_{h, h'\in \mathcal{H}, \|h-h'\|_2 \le \delta} \Big| \frac{1}{n} \sum_{i=1}^n \varepsilon_i \Phi(h, h', Z_i) \Big|\right] \le 2L_1 L_2 R_{n, \nu}(\delta;\partial \mathcal{H})
    \end{align} using a similar argument w.r.t. Talagrand contraction inequality. To this end, let $\epsilon>0$, $ \mathcal{H}_{\partial, \delta} = \{(h, h')\in \mathcal{H}, \|h-h'\|_2\le \delta\}$, and
    \begin{align*}
        \mathsf{T}_m(h,h') = \frac{1}{n} \sum_{i=1}^{m} \varepsilon_i \Phi(h, h', Z_i).
    \end{align*} For fixed $\varepsilon_1,\ldots, \varepsilon_{n-1}$ and $Z_1,\ldots, Z_n$, let $h_{+}, h'_{+}$ be such that
    \begin{align*}
        \mathsf{T}_{n-1}(h_+, h_+') + \frac{1}{n} \Phi(h_+, h_+', Z_i) \ge \sup_{(h, h')\in \mathcal{H}_{\partial, \delta}} \mathsf{T}_{n-1}(h,h') + \frac{1}{n} \Phi(h, h', Z_i) - \epsilon,
    \end{align*} and $h_{-}$ and $h'_{-}$ be such that
    \begin{align*}
        \mathsf{T}_{n-1}(h_-, h_-') - \frac{1}{n} \Phi(h_-, h_-', Z_i) \ge \sup_{(h, h')\in \mathcal{H}_{\partial, \delta}} \mathsf{T}_{n-1}(h, h') - \frac{1}{n} \Phi(h, h', Z_i) - \epsilon.
    \end{align*} Then given the definition of Rademacher random variable that $\varepsilon_n \sim \mathrm{Unif}\{-1,+1\}$, 
    \begin{align*}
        &\mathbb{E}\left[\sup_{(h,h')\in\mathcal{H}_{\partial, \delta}}  \mathsf{T}_n(h,h') \Big|Z_{1}^n, \varepsilon_{1}^{n-1}\right]\\ 
        = &\frac{1}{2} \mathbb{E}\left[\sup_{(h,h')\in \mathcal{H}_{\partial, \delta}} \mathsf{T}_{n-1}(h,h') + \frac{1}{n} \Phi(h, h', Z_n) \Bigg|Z_1^n,\varepsilon_1^{n-1}\right]\\
        & ~~~~~~ + \frac{1}{2} \mathbb{E}\left[\sup_{(h,h')\in \mathcal{H}_{\partial, \delta}} \left|\mathsf{T}_{n-1}(h,h')- \frac{1}{n} \Phi(h, h', Z_n)\right| \Big|Z_1^n,\varepsilon_1^{n-1}\right] \\
        \le & \frac{1}{2} \mathbb{E} \left[\mathsf{T}_{n-1}(h_+, h_+') + \frac{1}{n} \Phi(h_+, h_+', Z_n)| \Big| Z_1^n,\varepsilon_1^{n-1}\right] \\
        & ~~~~~~ + \frac{1}{2} \mathbb{E}\left[\mathsf{T}_{n-1}(h_-,h_-')- \frac{1}{n} \Phi(h_-, h_-', Z_n) \Bigg|Z_1^n,\varepsilon_1^{n-1}\right] + \epsilon \\
        &= \frac{1}{2} \left\{\mathsf{T}_{n-1}(h_+, h_+') + \mathsf{T}_{n-1}(h_-, h_-') \right\} + \frac{1}{2}\left\{\frac{1}{n} \Phi(h_+, h_+', Z_n) - \frac{1}{n} \Phi(h_-, h_-', Z_n)\right\} + \epsilon
    \end{align*}
    Applying the condition on $\Phi$ gives
    \begin{align*}
        |\Phi(h_+, h_+', Z_n) - \Phi(h_-, h_-', Z_n)| \le \text{sign}(h_+-h_+'-\{h_--h_-'\}) (L_1 L_2) \left\{(h_+-h_+')-(h_--h_-')\right\},
    \end{align*} now that $u = \text{sign}(h_+-h_+'-\{h_--h_-'\}) \in \{-1,+1\}$ and only depends on $h_+, h_+', h_-, h_-'$, then
    \begin{align*}
        &\frac{1}{2} \left\{\mathsf{T}_{n-1}(h_+, h_+') + \mathsf{T}_{n-1}(h_-, h_-') \right\} + \frac{1}{2}\left\{\frac{1}{n} \Phi(h_+, h_+', Z_n) - \frac{1}{n} \Phi(h_-, h_-', Z_n)\right\} \\
        \le & \frac{1}{2} \left\{\mathsf{T}_{n-1}(h_+, h_+') + \mathsf{T}_{n-1}(h_-, h_-') \right\} + \frac{u}{2n} (L_1L_2) (h_+-h_+') - \frac{u}{2n} (L_1L_2) (h_--h_-')\\
        \overset{(a)}{\le} & \sup_{h,h'\in \mathcal{H}_{\partial, \delta}} \frac{1}{2} \left\{\mathsf{T}_{n-1}(h, h') + \frac{(L_1L_2)}{n} (h-h')\right\} + \sup_{h,h'\in \mathcal{H}_{\partial, \delta}} \frac{1}{2} \left\{\mathsf{T}_{n-1}(h, h') - \frac{(L_1L_2)}{n} (h-h')\right\} \\
        \le & \mathbb{E}\left[\sup_{(h,h')\in\mathcal{H}_{\partial, \delta}}  \mathsf{T}_{n-1}(h,h') + \frac{1}{n} (L_1L_2) \varepsilon_n(h-h')(Z_n) \Big|Z_{1}^n, \varepsilon_{1}^{n-1}\right]
    \end{align*} where $(a)$ follows from a discussion on whether $u=+1$ or $u=-1$: if $u=+1$, then 
    \begin{align*}
    &\frac{1}{2} \left\{\mathsf{T}_{n-1}(h_+, h_+') + \mathsf{T}_{n-1}(h_-, h_-') \right\} + \frac{u}{2n} (L_1L_2) (h_+-h_+') - \frac{u}{2n} (L_1L_2) (h_--h_-') \\
    =& \frac{1}{2} \left\{\mathsf{T}_{n-1}(h_+, h_+') + \frac{1}{2n} (L_1L_2) (h_+-h_+') \right\} + \left\{\mathsf{T}_{n-1}(h_-, h_-') - \frac{1}{2n} (L_1L_2) (h_--h_-') \right\} \\
    \le& \mathrm{R.H.S.~of~} (a).
    \end{align*} Meanwhile, if $u=-1$, then
    \begin{align*}
    &\frac{1}{2} \left\{\mathsf{T}_{n-1}(h_+, h_+') + \mathsf{T}_{n-1}(h_-, h_-') \right\} + \frac{u}{2n} (L_1L_2) (h_+-h_+') - \frac{u}{2n} (L_1L_2) (h_--h_-') \\
    =& \frac{1}{2} \left\{\mathsf{T}_{n-1}(h_+, h_+') - \frac{1}{2n} (L_1L_2) (h_+-h_+') \right\} + \left\{\mathsf{T}_{n-1}(h_-, h_-') + \frac{1}{2n} (L_1L_2) (h_--h_-') \right\} \\
    \le& \mathrm{R.H.S.~of~} (a).
    \end{align*}
    Applying the tower rule of conditional expectation to both sides of the inequality we obtained so far, that 
    \begin{align*}
        &\mathbb{E}\left[\sup_{(h,h')\in\mathcal{H}_{\partial, \delta}}  \mathsf{T}_n(h,h') \Big|Z_{1}^n, \varepsilon_{1}^{n-1}\right] \\
        &~~~~~~~~ \le \mathbb{E}\left[\sup_{(h,h')\in\mathcal{H}_{\partial, \delta}}  \mathsf{T}_{n-1}(h,h') + \frac{1}{n} \varepsilon_n (L_1L_2) (h-h')(Z_n) \Big|Z_{1}^n, \varepsilon_{1}^{n-1}\right] + \epsilon
    \end{align*} and letting $\epsilon\to 0$, we find
    \begin{align*}
        \mathbb{E}\left[\sup_{(h,h')\in\mathcal{H}_{\partial, \delta}}  \mathsf{T}_n(h,h')\right] \le \mathbb{E}\left[\sup_{(h,h')\in\mathcal{H}_{\partial, \delta}}  \mathsf{T}_{n-1}(h,h') + \frac{L_1L_2}{n} \varepsilon_n (h-h')(Z_n)\right].
    \end{align*} Iteratively applying this yields
    \begin{align*}  
    \mathbb{E}\left[\sup_{(h,h')\in\mathcal{H}_{\partial, \delta}}  \mathsf{T}_n(h,h')\right]\le L_1L_2 \mathbb{E}\left[\sup_{(h,h')\in\mathcal{H}_{\partial, \delta}}  \frac{1}{n} \sum_{i=1}^n \varepsilon_i(h-h')(Z_i)\right] \le (L_1L_2) R_{n,\nu}(\delta; \partial \mathcal{H})
    \end{align*} Following a similar argument, we can also obtain $\mathbb{E}[\sup_{(h,h')\in\mathcal{H}_{\partial, \delta}}  -\mathsf{T}_n(h,h')] \le (L_1L_2) R_{n,\nu}(\delta; \partial \mathcal{H})$. Combining with the fact that $\sup_{a}|v(a)| \le \sup_{a} v(a) + \sup_{a} -v(a)$ completes the proof of the claim \eqref{eq:proof-claim-ep1-1}. Then, we can upper bound $\mathbb{E}[Z_n(\delta)]$ as
    \begin{align*}
    \mathbb{E}[Z_n(\delta)] \le 4(L_1L_2) R_{n, \nu}(\delta;\partial \mathcal{H}).
    \end{align*}

    \noindent {\sc Step 2. Establish a high probability bound on $Z_n(\delta)$.} Given that for any $\delta \ge \delta_n$, 
    \begin{align*}
        \mathbb{E}[Z_n(\delta)] \le 4(L_1L_2) R_{n, \nu}(\delta;\partial \mathcal{H}) \le 4 \delta_n \delta b L_1L_2
    \end{align*} and
    It follows from the facts that $h$ is upper bounded by $b$ and $|v(h,h',Z)| \le L_1$ and $\phi(x) \le L_2 |x|$ that
    \begin{align*}
        \sigma^2 &:= \sup_{h,h'\in \mathcal{H}_{\partial, \delta}} \mathbb{E} (\Phi - \mathbb{E}\Phi)^2 \le \sup_{h,h'\in \mathcal{H}_{\partial, \delta}} \mathbb{E} (\Phi)^2 \le \sup_{h,h'\in \mathcal{H}_{\partial, \delta}} \mathbb{E} [L_1^2L_2^2(h-h')^2] \le (L_1L_2)^2 \delta^2\\
        \mathsf{U} &:= \sup_{h,h'\in \mathcal{H}_{\partial, \delta}} \|\Phi - \mathbb{E}[\Phi]\|_\infty \le 2L_1L_2 b \\
        \mathsf{E} &:= \mathbb{E}[Z_n(\delta)] \le 4\delta_n \delta b L_1 L_2.
    \end{align*} 
    Combining the above facts and applying Talagrand inequality for the supremum of empirical process, we have 
    \begin{align*}
    \forall x>0, \qquad \mathbb{P}\left[Z_n(\delta) \ge \mathsf{E} + \sqrt{2(\sigma^2 + \mathsf{U}\mathsf{E}) x} + \frac{\mathsf{U}x}{3}\right] \le e^{-nx}.
    \end{align*} Therefore, we can conclude that, for all $\delta > \delta_n$ and $u>0$,
\begin{align*}
    \mathbb{P}\left[Z_n(\delta) \ge (L_1L_2) \delta \left(12 b\delta_n + \sqrt{\frac{u}{n}}\right) + b(L_1L_2) \frac{u}{n}\right] \ge e^{-u}.
\end{align*}

    \noindent {\sc Step 3. Apply peeling device.} 
    Define
    \begin{align*}
        \mathcal{A}_k = \{(h, h')\in \partial \mathcal{H}, a_{k-1} \delta_n \le \|h - h'\|_2 \le a_k \delta_n\}
    \end{align*} with $a_k = 2^{k-1}$ for $k \ge 1$ and $a_0 = 0$. Then, let $\mathfrak{C}=24bL_1L_2$, and $u=\log(2\log(4b/\delta_n))+t$
    \begin{align*}
        &\mathbb{P}\left[\exists h,h'\in \mathcal{H}, ~ \left| (\hat{\mathbb{E}}-\mathbb{E})[\Phi] \right| > \mathfrak{C} \left(
        (\delta_n + \sqrt{u/n}) (\|h-h'\|_{2} + \delta_n) + \frac{u}{n}\right) \right] \\
        \le& \sum_{k=1}^{\lceil \log_2(2b/\delta_n) \rceil} \mathbb{P}\left[\exists (h, h')\in \mathcal{A}_k, ~\left| (\hat{\mathbb{E}}-\mathbb{E})[\Phi] \right| > \mathfrak{C}\left((\delta_n + \sqrt{u/n})(\|h - h'\|_2 + \delta_n) + \frac{t}{n}\right)\right] \\
        \le& \sum_{k=1}^{\lceil \log_2(2b/\delta_n) \rceil} \mathbb{P}\left[Z_n(a_k \delta_n) > \mathfrak{C}\left((\delta_n + \sqrt{u/n})(a_{k-1} + 1) \delta_n + \frac{u}{n}\right)\right] \\
        \le& \sum_{k=1}^{\lceil \log_2(2b/\delta_n) \rceil} \mathbb{P}\left[Z_n(a_k \delta_n) > \frac{\mathfrak{C}}{2} a_k \delta_n(\delta_n + \sqrt{u/n}) + \mathfrak{C}\frac{u}{n}\right] \\
        \le& 2\log(4b/\delta_n)e^{-u} \le e^{-t}
    \end{align*} where the first inequality follows from union bound and the fact that $\cup_{k=1}^{\lceil \log_2(2b/\delta_n) \rceil}\mathcal{A}_k = \partial \mathcal{H}$, the second inequality follows from the definition of $\mathcal{A}_k$, the third inequality follows from the fact that $a_{k-1} + 1 \ge \frac{1}{2} a_k$. This completes the proof by the fact that $\log(2\log(4b/\delta_n)) \le C(\log(nb) + 1)$ for some universal constant $C>0$.
    
\end{proof}

\begin{theorem}[Projection Theorem in Hilbert Space]
\label{thm:projection}
Suppose $\mathcal{M}$ is a closed subspace of the Hilbert space $\mathcal{H}$ equipped with the inner product $\langle \cdot, \cdot \rangle$. Then for any $h\in \mathcal{H}$, there exists a unique $a \in \mathcal{M}$ such that
\begin{itemize}
    \item[1.] $\|h-a\|_2^2 = \inf_{y\in \mathcal{M}} \|h - y\|_2^2$, where $\|x\|_2^2 = \langle x, x\rangle$.
    \item[2.] $\langle h - a, y\rangle = 0$ for any $y\in \mathcal{M}$.
\end{itemize}
\end{theorem}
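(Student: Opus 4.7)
The plan is to establish the two conclusions in the order (existence and minimization) then (orthogonality characterization), and finally (uniqueness), using only the inner product structure and completeness of $\mathcal{H}$ together with closedness of $\mathcal{M}$. The main workhorse will be the parallelogram identity $\|u+v\|_2^2 + \|u-v\|_2^2 = 2\|u\|_2^2 + 2\|v\|_2^2$, which is the distinctive feature of a Hilbert (as opposed to merely Banach) space and which converts the quadratic optimization problem into a Cauchy estimate.

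First I would set $d = \inf_{y \in \mathcal{M}} \|h - y\|_2$ and pick a minimizing sequence $\{y_n\} \subseteq \mathcal{M}$ with $\|h - y_n\|_2^2 \to d^2$. Applying the parallelogram identity to $u = h - y_n$ and $v = h - y_m$ gives
\begin{equation*}
\|y_n - y_m\|_2^2 = 2\|h - y_n\|_2^2 + 2\|h - y_m\|_2^2 - 4\bigl\|h - \tfrac{y_n + y_m}{2}\bigr\|_2^2.
\end{equation*}
Since $\mathcal{M}$ is a subspace, $(y_n + y_m)/2 \in \mathcal{M}$, so the last term is bounded below by $4d^2$; together with $\|h - y_n\|_2^2 \to d^2$ this forces $\|y_n - y_m\|_2 \to 0$, i.e.\ $\{y_n\}$ is Cauchy. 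Completeness of $\mathcal{H}$ yields a limit $a \in \mathcal{H}$, closedness of $\mathcal{M}$ gives $a \in \mathcal{M}$, and continuity of the norm gives $\|h - a\|_2 = d$, establishing conclusion~1.

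Next I would deduce the orthogonality characterization in conclusion~2 from the first-order optimality condition: for any $y \in \mathcal{M}$ and any $t \in \mathbb{R}$, $a + ty \in \mathcal{M}$, so the scalar function $\phi(t) = \|h - a - ty\|_2^2 = \|h - a\|_2^2 - 2t\langle h - a, y\rangle + t^2 \|y\|_2^2$ achieves its minimum at $t = 0$, forcing $\langle h - a, y\rangle = 0$. Conversely, if $a' \in \mathcal{M}$ satisfies $\langle h - a', y\rangle = 0$ for all $y \in \mathcal{M}$, then for any $y \in \mathcal{M}$,
\begin{equation*}
\|h - y\|_2^2 = \|h - a' + a' - y\|_2^2 = \|h - a'\|_2^2 + \|a' - y\|_2^2 \ge \|h - a'\|_2^2,
\end{equation*}
using $a' - y \in \mathcal{M}$ and the orthogonality, so $a'$ is a minimizer. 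This dual characterization then yields uniqueness: if $a, a'$ both minimize, apply the parallelogram identity once more to get $\|a - a'\|_2^2 = 2\|h - a\|_2^2 + 2\|h - a'\|_2^2 - 4\|h - (a+a')/2\|_2^2 \le 4d^2 - 4d^2 = 0$.

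The only point requiring care, and the likely source of confusion for a reader, is the use of closedness of $\mathcal{M}$: without it, the Cauchy sequence would still converge in $\mathcal{H}$ but the limit might fall outside $\mathcal{M}$, destroying existence. The rest is essentially bookkeeping with the inner product, and no heavy machinery (e.g.\ weak compactness or separability) is needed.
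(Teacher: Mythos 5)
Your proof is correct. The paper states this classical projection theorem without proof (it is invoked as a known result in the technical lemmas section), and your argument is the standard one: existence of the minimizer via a minimizing sequence, the parallelogram identity, completeness of $\mathcal{H}$ and closedness of $\mathcal{M}$; orthogonality via the first-order condition on $\phi(t)=\|h-a-ty\|_2^2$; and uniqueness via one more application of the parallelogram identity. The only implicit assumption worth flagging is that the expansion of $\phi(t)$ uses a real inner product, which is consistent with the real $L_2$ spaces used throughout the paper.
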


\begin{lemma}
\label{lemma:pooled-l2-diff}
For any $g, \tilde{g} \in \Theta$, we have
\begin{align*}
    \frac{1}{|\mathcal{E}|} \sum_{e\in \mathcal{E}} \mathbb{E}\left[ \{Y^{(e)} - g(X^{(e)})\}^2 - \{Y^{(e)} - \tilde{g}(X^{(e)})\}^2 \right] = \|g - \tilde{g}\|_2^2 - 2\langle g - \tilde{g}, \bar{m}^{(S_g\cup S_{\tilde{g}})} - \tilde{g}\rangle_{\bar{\mu}_x},
\end{align*} where $\langle h, g \rangle_{\nu} = \int h(x) g(x) \nu(dx)$.
\end{lemma}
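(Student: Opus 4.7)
The proof plan is a direct expansion that leverages only two facts: the algebraic identity for the difference of two squares, and the defining relation of $\bar m^{(S)}$ as the pooled conditional expectation (via the Radon--Nikodym derivatives $\rho^{(e)}_S$). Specifically, I will start from the elementary expansion
\begin{align*}
\{Y^{(e)} - g(X^{(e)})\}^2 - \{Y^{(e)} - \tilde g(X^{(e)})\}^2 = \{g(X^{(e)}) - \tilde g(X^{(e)})\}^2 - 2\{Y^{(e)} - \tilde g(X^{(e)})\}\{g(X^{(e)}) - \tilde g(X^{(e)})\},
\end{align*}
obtained from $(Y-g)^2 = (Y-\tilde g)^2 + 2(Y - \tilde g)(\tilde g - g) + (\tilde g - g)^2$. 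Taking expectations under $\mu^{(e)}$ and averaging over $e \in \mathcal{E}$ splits the left-hand side into two pieces, and the first piece becomes $\|g - \tilde g\|_2^2$ by the definition of $\bar\mu_x = |\mathcal{E}|^{-1}\sum_{e} \mu_x^{(e)}$.

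For the second (cross) term, I will use the tower property. Since $g - \tilde g$ depends only on $x_{S}$ with $S := S_g \cup S_{\tilde g}$, conditioning on $X^{(e)}_{S}$ gives
\begin{align*}
\mathbb{E}\bigl[\{Y^{(e)} - \tilde g(X^{(e)})\}\{g(X^{(e)}) - \tilde g(X^{(e)})\}\bigr] = \mathbb{E}\bigl[\{m^{(e,S)}(X^{(e)}) - \tilde g(X^{(e)})\}\{g(X^{(e)}) - \tilde g(X^{(e)})\}\bigr],
\end{align*}
since $\tilde g$ likewise depends only on $x_{S}$. At this point both factors are functions of $x_S$, so the expectation can be rewritten as an integral against $\mu_{x,S}^{(e)} = \rho^{(e)}_S \, \bar\mu_{x,S}$.

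The final step is to average over $e$ and pull the sum inside the integral against $\bar\mu_{x,S}$. Using the identity $|\mathcal{E}|^{-1}\sum_e \rho^{(e)}_S \equiv 1$ (which is immediate from $\bar\mu_{x,S} = |\mathcal{E}|^{-1}\sum_e \mu^{(e)}_{x,S}$) together with the pooled-regression identity $|\mathcal{E}|^{-1}\sum_{e} \rho^{(e)}_S m^{(e,S)} \equiv \bar m^{(S)}$ (as used in the proof of \cref{thm:population}), the cross term becomes
\begin{align*}
\frac{1}{|\mathcal{E}|}\sum_{e\in \mathcal{E}} \int (m^{(e,S)} - \tilde g)(g - \tilde g)\, \rho^{(e)}_S \, d\bar\mu_{x,S} = \int (\bar m^{(S)} - \tilde g)(g - \tilde g)\, d\bar\mu_x = \langle g - \tilde g, \, \bar m^{(S)} - \tilde g\rangle_{\bar\mu_x},
\end{align*}
which yields the claimed identity after multiplying by $-2$.

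There is no real obstacle here: the only subtlety is the bookkeeping involved in moving from $\mu^{(e)}_x$-integrals to a $\bar\mu_x$-integral via the Radon--Nikodym derivatives $\rho^{(e)}_S$, and in making sure that the $|\mathcal{E}|^{-1}$ normalisation in $\bar\mu_x$ is consistent with the normalisation implicit in the definition of $\bar m^{(S)}$ used throughout the paper. Once the tower-rule reduction and this normalisation check are in place, the statement is purely algebraic.
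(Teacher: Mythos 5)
Your proposal is correct and follows essentially the same route as the paper's own proof: expand the difference of squares, identify the quadratic piece as $\|g-\tilde g\|_2^2$, and handle the cross term by the tower rule on $X_{S_g\cup S_{\tilde g}}$ followed by the change of measure via $\rho^{(e)}_S$ and the pooled identity $|\mathcal{E}|^{-1}\sum_e \rho^{(e)}_S m^{(e,S)} \equiv \bar m^{(S)}$. Your normalisation check on $\bar m^{(S)}$ is also the right one to make, since the displayed definition in the notation section omits the $|\mathcal{E}|^{-1}$ factor that is used consistently elsewhere in the paper.
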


\begin{proof}[Proof of \cref{lemma:pooled-l2-diff}]
It follows from the fact that $(a - b)^2 - (a - c)^2 = -2(a - c)(b - c)+(b - c)^2$ with $a = y$, $b = g$ and $c=\tilde{g}$ that
\begin{align*}
    \left\{(y - g(x))^2 - (y - \tilde{g}(x))^2 \right\} = - 2(y - \tilde{g}(x))(g(x) - \tilde{g}(x)) + ({g}(x) - \tilde{g}(x))^2.
\end{align*} This implies that 
\begin{align*}
     \mathsf{T}(g,\tilde{g}):=&~\frac{1}{|\mathcal{E}|} \sum_{e\in \mathcal{E}} \mathbb{E} \left[(Y^{(e)}- g(X^{(e)}))^{2} - (Y^{(e)} - \tilde{g}(X^{(e)}))^2\right] \\
    = &~\frac{1}{|\mathcal{E}|} \sum_{e\in \mathcal{E}} \Bigg\{2\mathbb{E}\left[ \{Y^{(e)} - \tilde{g}(X^{(e)})\} \{g(X^{(e)}) - \tilde{g}(X^{(e)})\}\right] + \|\tilde{g} - g\|_{2,e}^2 \Bigg\} 
\end{align*} 
It then follows from the fact $\|\cdot\|_2 = \frac{1}{|\mathcal{E}|}\sum_{e\in \mathcal{E}} \|\cdot\|_{2,e}$ that
\begin{align}
\mathsf{T}(g, \tilde{g}) = \|g - \tilde{g}\|_2^2 + \underbrace{\frac{1}{|\mathcal{E}|}\sum_{e\in \mathcal{E}} \mathbb{E}\left[ \{Y^{(e)} - \tilde{g}(X^{(e)})\} \{g(X^{(e)}) - \tilde{g}(X^{(e)})\}\right]}_{\mathsf{T}_{1}(g, \tilde{g})}
\end{align}
Denote $\overline{S} = S\cup \tilde{S}$, where $S=S_g$ and $\tilde{S}=S_{\tilde{g}}$, it follows from the tower rule of conditional expectation that
\begin{align*}
    &\mathsf{T}_{1}(g, \tilde{g}) \\
    =&\frac{1}{|\mathcal{E}|} \sum_{e\in \mathcal{E}} \mathbb{E}\Big[\{Y^{(e)} - \tilde{g}(X^{(e)})\} \{g(X^{(e)}) - \tilde{g}(X^{(e)})\} \big|X_{\overline{S}}\big]\Big] \\
    =&\frac{1}{|\mathcal{E}|} \sum_{e\in \mathcal{E}} \int \left\{\tilde{g}_{\tilde{S}}(x_{\tilde{S}}) - m^{(e,\overline{S})}(x_{\overline{S}})\right\} \left\{g_S(x_S) - \tilde{g}_{\tilde{S}}(x_{\tilde{S}})\right\} \mu^{(e)}_{x,\overline{S}}(dx_{\overline{S}}) \\
    =&\frac{1}{|\mathcal{E}|} \sum_{e\in \mathcal{E}} \int \left\{\tilde{g}_{\tilde{S}}(x_{\tilde{S}}) - m^{(e,\overline{S})}(x_{\overline{S}})\right\} \rho^{(e)}_{\overline{S}}(x_{\overline{S}}) \left\{g_S(x_S) - \tilde{g}_{\tilde{S}}(x_{\tilde{S}})\right\} \bar{\mu}_{x,\overline{S}}(dx_{\overline{S}}) \\
    =&\int \left\{\tilde{g}_{\tilde{S}}(x_{\tilde{S}}) \frac{1}{|\mathcal{E}|}\sum_{e\in \mathcal{E}} \rho_{\overline{S}}^{(e)}(x_{\overline{S}}) - \frac{1}{|\mathcal{E}|}\sum_{e\in \mathcal{E}} \rho_{\overline{S}}^{(e)}(x_{\overline{S}}) m^{(e,\overline{S})}(x_{\overline{S}})\right\} \left\{g_S(x_S) - \tilde{g}_{\tilde{S}}(x_{\tilde{S}})\right\} \bar{\mu}_{x,\overline{S}}(dx_{\overline{S}}) \\
    =&\int \left\{ \tilde{g}_{\tilde{S}}(x_{\tilde{S}}) - \bar{m}^{(e,\overline{S})}(x_{\overline{S}}) \right\} \left\{g_S(x_S) - \tilde{g}_{\tilde{S}}(x_{\tilde{S}})\right\} \bar{\mu}_{x,\overline{S}}(dx_{\overline{S}}) \\
    =& \langle \tilde{g} - \bar{m}^{(\overline{S})}, g - \tilde{g}\rangle_{\bar{\mu}_x}.
\end{align*} This completes the proof.
\end{proof}

\subsection{Proof of Theorem \ref{thm:oracle-2}}

The proof is similar to the proof of \cref{thm:oracle}, we only highlight the differences. We will use the following two propositions to (1) establish approximate strong convexity w.r.t. $m^\star$, and (2) establish instance-dependent error bound for pooled risks, respectively, which is similar to \cref{thm:population} and \cref{prop:nonasymptotic-pooled}, the proofs can be found in \cref{sec:proof-prop-general-loss}.

\begin{proposition}
\label{prop:population-general-loss}
Assume \cref{cond:general-loss}--\ref{cond:general-gf-2} hold. Let $\delta \in (0,1)$ be arbitrary. Then the following holds, for any $\gamma \ge 4\delta^{-1} \zeta^2 \gamma^\star$, 
    \begin{align*}
        \mathsf{Q}_\gamma(g, f^{\mathcal{E}}) - \mathsf{Q}_\gamma(\tilde{g}, \tilde{f}^{\mathcal{E}}) \ge& \frac{1-\delta}{2} \zeta^{-1} \|g - \tilde{g}\|_2^2 +  \frac{\gamma}{4} \bar{\mathsf{d}}(S) + \frac{\gamma}{2} \|g - \bar{m}^{(S)}\|_2^2\\
        &~~~~~~ - \frac{\gamma}{2|\mathcal{E}|} \sum_{e\in \mathcal{E}}  \|f^{(e)} - \{m^{(e,S)} - g\}\|_{2,e}^2 - (\delta^{-1}\zeta^2 + \gamma/2) \|\tilde{g} - g^\star\|_2^2  
    \end{align*} for any $g\in \mathcal{G}$, $\tilde{g} \in \mathcal{G}_{S^\star}$ and $S_{\tilde{g}} = S^\star$, $f^{\mathcal{E}} \in \{\Theta_{S_g}\}^{|\mathcal{E}|}$, and $\tilde{f}^{\mathcal{E}} \in \{\Theta_{S^\star}\}^{|\mathcal{E}|}$.
\end{proposition}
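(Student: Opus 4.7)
The plan is to replay the three-step population decomposition from the proof of \cref{thm:population}, leaving Steps~2 and 3 essentially untouched and concentrating the new work on Step~1. Writing
\[
\mathsf Q_\gamma(g, f^{\mathcal E}) - \mathsf Q_\gamma(\tilde g, \tilde f^{\mathcal E}) = \mathsf T_1(g,\tilde g) + \gamma\,\mathsf T_2(g,f^{\mathcal E}) - \gamma\,\mathsf T_3(\tilde g,\tilde f^{\mathcal E}),
\]
I observe that the adversarial pieces $\mathsf T_2, \mathsf T_3$ depend only on the quadratic penalty $\mathsf J$, not on $\ell$. Since \cref{cond:general-gf-2} projects onto the full ambient space $\Theta_S$, we have $\Pi_{\overline{\mathcal G_S}}(\bar m^{(S)}) = \bar m^{(S)}$ and $\Pi_{\overline{\mathcal F_S}}^{(e)}(m^{(e,S)}) = m^{(e,S)}$, and the identities derived in Steps~2 and 3 of the proof of \cref{thm:population} transfer verbatim. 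Combined with the invariance $m^{(e,S^\star)} = m^\star = g^\star$ which forces $\bar{\mathsf d}(S^\star)=0$, they immediately yield the $\tfrac{\gamma}{4}\bar{\mathsf d}(S)$, $\tfrac{\gamma}{2}\|g-\bar m^{(S)}\|_2^2$, $-\tfrac{\gamma}{2|\mathcal E|}\sum_e\|f^{(e)} - (m^{(e,S)}-g)\|_{2,e}^2$, and $-\tfrac{\gamma}{2}\|\tilde g - g^\star\|_2^2$ contributions appearing in the target (dropping the nonnegative residual $\tfrac{\gamma}{2|\mathcal E|}\sum_e\|\tilde f^{(e)} - (g^\star - \tilde g)\|_{2,e}^2$ that emerges from $-\gamma\mathsf T_3$).

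The novel ingredient is a lower bound on $\mathsf T_1$ compatible with the general loss. Using the pointwise Taylor identity with integral remainder, together with the Hessian bound $\partial_v^2\ell(y,\cdot) \ge \zeta^{-1}$ from \cref{cond:general-loss}(2), I obtain
\[
\mathsf T_1(g, \tilde g) \ge \tfrac{\zeta^{-1}}{2}\|g-\tilde g\|_2^2 + \mathsf T_{1,1}(g,\tilde g), \quad \mathsf T_{1,1} := \tfrac{1}{|\mathcal E|}\sum_{e\in\mathcal E} \mathbb E\bigl[(\tilde g(X^{(e)}) - Y^{(e)})\,\psi(\tilde g(X^{(e)}))\,(g-\tilde g)(X^{(e)})\bigr],
\]
where I used $\partial_v\ell(y,v) = (v-y)\psi(v)$. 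Conditioning on $X_{S \cup S^\star}$ replaces $Y^{(e)}$ by $m^{(e, S\cup S^\star)}$, and the Radon--Nikodym identity $\tfrac{1}{|\mathcal E|}\sum_e \rho^{(e)}_{S\cup S^\star}\,m^{(e, S\cup S^\star)} = \bar m^{(S\cup S^\star)}$ (exactly as in Step~1 of the proof of \cref{thm:population}) collapses the environmental sum into $\mathsf T_{1,1} = \langle (\tilde g - \bar m^{(S\cup S^\star)})\psi(\tilde g),\, g - \tilde g\rangle_{\bar\mu_x}$. Cauchy--Schwarz combined with $|\psi|\le\zeta$ and the triangle inequality $\|\tilde g - \bar m^{(S\cup S^\star)}\|_2 \le \|\tilde g - g^\star\|_2 + \sqrt{\mathsf b(S)}$ then control $\mathsf T_{1,1}$ by $\zeta(\|\tilde g-g^\star\|_2 + \sqrt{\mathsf b(S)})\|g-\tilde g\|_2$.

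Young's inequality, tuned so that the $\|g-\tilde g\|_2^2$ residue is exactly $\tfrac{\delta\zeta^{-1}}{2}\|g-\tilde g\|_2^2$ and is absorbed into the strong-convexity term to leave $\tfrac{(1-\delta)\zeta^{-1}}{2}\|g-\tilde g\|_2^2$, produces deficits of order $\delta^{-1}\zeta^{2}\|\tilde g - g^\star\|_2^2$ and $\delta^{-1}\zeta^{2}\,\mathsf b(S)$. The $\mathsf b(S)$ deficit is dominated by $\tfrac{\gamma}{4}\bar{\mathsf d}(S)$ through the threshold $\gamma \ge 4\delta^{-1}\zeta^2\gamma^\star$ and the definition $\gamma^\star = \sup_{S:\mathsf b(S)>0}\mathsf b(S)/\bar{\mathsf d}(S)$, while the $\|\tilde g - g^\star\|_2^2$ deficit combines with the $-\tfrac{\gamma}{2}\|\tilde g - g^\star\|_2^2$ term from $-\gamma\mathsf T_3$ to produce the stated $-(\delta^{-1}\zeta^{2}+\gamma/2)\|\tilde g - g^\star\|_2^2$ factor. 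The main obstacle is the careful tracking of $\zeta$ powers: a naive split $\zeta A B \le \tfrac{\delta\zeta^{-1}}{2}A^2 + \tfrac{\zeta^3}{2\delta}B^2$ yields a $\zeta^{3}/\delta$ coefficient rather than the tighter $\zeta^{2}/\delta$ required by the statement, so attaining the announced constant will require a sharper balance of the Cauchy--Schwarz factorization against the two-sided Hessian bounds $\zeta^{-1} \le \partial_v^2\ell \le \zeta$, plausibly by exploiting the Bregman-divergence representation $\ell(y,v) - \ell(y,v') - \partial_v\ell(y,v')(v-v') = \int_0^1 (1-t)\partial_v^2\ell(y, v' + t(v-v'))(v-v')^2\,dt$ on both sides to save one factor of $\zeta$ in the interaction term.
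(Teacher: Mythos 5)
Your proposal follows the paper's proof essentially step for step: reuse Steps 2--3 of the proof of \cref{thm:population} for the regularizer terms (the projections being identities under \cref{cond:general-gf-2}, with $\bar{\mathsf d}(S^\star)=0$ from invariance), Taylor-expand $\ell$ with the Hessian lower bound $\zeta^{-1}$ for the risk term, collapse the cross term by conditioning on $X_{S\cup S^\star}$ and the Radon--Nikodym identity, and close with Cauchy--Schwarz and Young absorbed by the threshold on $\gamma$.

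On the one obstacle you flag, do not search for a sharper balance: the paper's own proof performs exactly the naive split you describe, bounding $\|g-\tilde g\|_2\,\|\psi(\tilde g)\{\bar m^{(S\cup S^\star)}-\tilde g\}\|_2$ by $\tfrac{\delta\zeta^{-1}}{2}\|g-\tilde g\|_2^2+\tfrac{\zeta^2\delta^{-1}}{2}\|\bar m^{(S\cup S^\star)}-\tilde g\|_2^2$. But once the quadratic coefficient is fixed at $\delta\zeta^{-1}/2$ so that it can be absorbed into the strong-convexity term, Young's inequality forces the companion coefficient to be $\zeta\delta^{-1}/2$, and $|\psi|\le\zeta$ then yields $\zeta^{3}\delta^{-1}/2$, not $\zeta^{2}\delta^{-1}/2$; a weighted Cauchy--Schwarz does not improve this. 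This is a factor-of-$\zeta$ bookkeeping slip in the paper rather than a gap in your argument: carrying the correct power through simply changes the threshold to $\gamma\ge 4\delta^{-1}\zeta^{3}\gamma^\star$ and the coefficient of $\|\tilde g-g^\star\|_2^2$ to $\delta^{-1}\zeta^{3}+\gamma/2$, both of which are harmlessly absorbed into the $C\zeta^{k}$ constants appearing in \cref{thm:oracle-2}. With that caveat recorded, your proof is correct and is the paper's proof.
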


We define 
\begin{align*}
    \mathsf{R}(g) = \mathbb{E}[\ell(Y^{(e)}, g(X^{(e)}))] \qquad \text{and} \qquad \hat{\mathsf{R}}(g) = \frac{1}{n} \sum_{i=1}^n \ell(Y^{(e)}_i, g(X^{(e)}_i)).
\end{align*} The following proposition is used to establish a high probability instance-dependent error bound for the following quantity
\begin{align*}
    \Delta_\mathsf{R}(g, \tilde{g}) := \{\hat{\mathsf{R}}(g) - \hat{\mathsf{R}}(\tilde{g})\} - \{{\mathsf{R}}(g) - \mathsf{R}(\tilde{g})\}.
\end{align*}

\begin{proposition}[Instance-dependent error bounds for general pooled risk]
\label{prop:nonasymptotic-pooled-general-loss}
Suppose \cref{cond:general-dgp}, \cref{cond:general-response}, \ref{cond:general-function-class}, and \cref{cond:general-loss} hold. There exists some universal constant $C$ such that for any $\eta>0$ and $t>0$, the following event
\begin{align*}
    \forall g, \tilde{g} \in \mathcal{G}, ~~ |\Delta_\mathsf{R}(g, \tilde{g})| \le C U \zeta \left\{ \delta_{n,t}^2 + \delta_{n,t} \frac{1}{|\mathcal{E}|} \sum_{e\in \mathcal{E}} \|g - \tilde{g}\|_{2,e}\right\}
\end{align*} occurs with probability at least $1-3e^{-t}-C_y(\sigma_y+1) n^{-100}$.
\end{proposition}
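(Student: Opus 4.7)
\textbf{Proof proposal for Proposition \ref{prop:nonasymptotic-pooled-general-loss}.} The plan is to mirror the structure of the proof of \cref{prop:nonasymptotic-pooled} given in \cref{subsec:proof-prop1}: truncate the sub-Gaussian response, apply the localized Rademacher complexity bound of \cref{lemma:ep1} on the truncated part, control the tail contribution by a Markov+sub-Gaussian argument, and take a union bound over $e\in\mathcal{E}$. The only real difference is that for a general loss $\ell$ we must exhibit the multiplicative factorization required by \cref{lemma:ep1}.

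\textbf{Step 1 (factorization).} The key observation is that, by the fundamental theorem of calculus and \cref{cond:general-loss}(1),
\begin{align*}
\ell(y,g(x))-\ell(y,\tilde g(x)) &= (g(x)-\tilde g(x))\int_0^1 \ell_v\bigl(y,\tilde g(x)+t(g(x)-\tilde g(x))\bigr)\,dt \\
&=: V(g,\tilde g;x,y)\cdot(g(x)-\tilde g(x)).
\end{align*}
Since $\ell_v(y,v)=(v-y)\psi(v)$ with $|\psi|\le\zeta$, and since $|g(x)|,|\tilde g(x)|\le B$ by \cref{cond:general-function-class}, for any $(x,y)$ with $|y|\le K$ we have $|V(g,\tilde g;x,y)|\le \zeta(B+K)$. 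This is precisely the product form $\Phi(h,h',z)=v(h,h',z)\phi(h-h')$ of \cref{lemma:ep1} with $\phi(u)=u$ ($L_2=1$) and $L_1=\zeta(B+K)$.

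\textbf{Step 2 (truncated empirical process bound).} Fix $K=\sigma_y\sqrt{102\log(n|\mathcal{E}|)}$ and decompose
\begin{align*}
\Delta_{\mathsf R}(g,\tilde g)=\frac{1}{|\mathcal{E}|}\sum_{e\in\mathcal{E}}\Bigl(\mathsf T^{(e)}_{\mathrm{in}}(g,\tilde g;K)+\mathsf T^{(e)}_{\mathrm{out}}(g,\tilde g;K)\Bigr),
\end{align*}
where $\mathsf T^{(e)}_{\mathrm{in}}$ is the contribution from observations with $|Y^{(e)}_i|\le K$ and $\mathsf T^{(e)}_{\mathrm{out}}$ from those with $|Y^{(e)}_i|>K$. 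Applying \cref{lemma:ep1} to $\mathsf T^{(e)}_{\mathrm{in}}$ (with $\mathcal H=\mathcal G$, $b=B$, $L_1=\zeta(B+K)$, $L_2=1$) and union-bounding over $e\in\mathcal{E}$ produces, with probability $\ge 1-e^{-t}$,
\begin{align*}
\bigl|\mathsf T^{(e)}_{\mathrm{in}}(g,\tilde g;K)\bigr|\le C\,B\zeta(B+K)\,\bigl(\delta_{n,t}\,\|g-\tilde g\|_{2,e}+\delta_{n,t}^2\bigr)
\end{align*}
simultaneously for all $e\in\mathcal{E}$, $g,\tilde g\in\mathcal G$, where $\delta_{n,t}$ absorbs the extra $\log|\mathcal{E}|$ factor from the union bound exactly as in \cref{subsec:proof-prop1}.

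\textbf{Step 3 (tail bound).} For $\mathsf T^{(e)}_{\mathrm{out}}$, the integral bound $|\ell(y,g(x))-\ell(y,\tilde g(x))|\le\zeta(|y|+B)|g(x)-\tilde g(x)|\le 2B\zeta(|y|+B)$ combined with Markov's inequality and the sub-Gaussian tail from \cref{cond:general-response}, as in the derivation of \eqref{eq:proof-prop1-truncation-argument}, yields $\sup_{g,\tilde g}|\mathsf T^{(e)}_{\mathrm{out}}|\le CB\zeta/n$ for every $e\in\mathcal{E}$ simultaneously with probability $\ge 1-C_y(\sigma_y+1)n^{-100}$, after our choice of $K$.

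\textbf{Step 4 (assembly).} Combining Steps 2 and 3, noting that $1/n\le\delta_{n,t}^2$ and that $B(B+K)\asymp U$, the resulting bound is
\begin{align*}
|\Delta_{\mathsf R}(g,\tilde g)|\le CU\zeta\Bigl\{\delta_{n,t}^2+\delta_{n,t}\,\tfrac{1}{|\mathcal E|}\textstyle\sum_{e\in\mathcal E}\|g-\tilde g\|_{2,e}\Bigr\}
\end{align*}
on an event of probability at least $1-3e^{-t}-C_y(\sigma_y+1)n^{-100}$, which is exactly the claim. The main technical obstacle is Step 1: verifying that the mean-value factorization yields a $v(\cdot)$ bounded uniformly by the correct $L_1$ under truncation (so that the final constant reads $\zeta U$ rather than the larger $\zeta U$ multiplied by extra $K$-factors); once this is in place, the rest of the argument is a verbatim adaptation of \cref{subsec:proof-prop1}, and no new probabilistic tools are required because $\mathcal F$ does not appear in $\Delta_{\mathsf R}$.
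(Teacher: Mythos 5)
Your proposal is correct, and the probabilistic skeleton — truncation of $Y^{(e)}$ at $K\asymp\sigma_y\sqrt{\log(n|\mathcal{E}|)}$, an application of \cref{lemma:ep1} to the truncated part, a Markov-plus-sub-Gaussian tail bound for the rest, and a union bound over $e\in\mathcal{E}$ — is exactly the paper's. The one genuine difference is the algebraic decomposition in Step~1. The paper uses a second-order Taylor expansion, $\ell(y,v)-\ell(y,\tilde v)=(y-\tilde v)\psi(\tilde v)(v-\tilde v)+\tfrac12 q(y,v,\tilde v)(v-\tilde v)^2$, and then splits $\Delta_{\mathsf R}$ into three empirical processes: two linear pieces (with multipliers $Y\psi(\tilde g)$ and $-\tilde g\psi(\tilde g)$, handled with $\phi(u)=u$) and one quadratic piece (with multiplier $q/2$ bounded by $\zeta/2$, handled with $\phi(u)=u^2$, which is $2B$-Lipschitz). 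You instead use the exact first-order integral remainder, collapsing everything into a single process with $\phi(u)=u$ and multiplier $V(g,\tilde g;x,y)=\int_0^1\ell_v(y,\tilde g(x)+s(g(x)-\tilde g(x)))\,ds$, bounded by $\zeta(B+K)$ on the truncated event because the convex combination stays in $\mathcal{V}$ and $|\psi|\le\zeta$ there. Your version is slightly more economical (one application of \cref{lemma:ep1} instead of three, so a better failure probability $1-e^{-t}$ rather than $1-3e^{-t}$), at the cost of a multiplier $v(h,h',z)$ that depends jointly on $g$, $\tilde g$, and $z$; since the paper already invokes \cref{lemma:ep1} with such a jointly-dependent multiplier for its quadratic term $\mathsf T_3^{(e)}$, this is within the lemma's intended scope and does not introduce a gap. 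Both routes produce the constant $B(B+K)\zeta\asymp U\zeta$ and hence the stated bound.
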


We are ready to prove the claims in \cref{thm:oracle-2}.

\begin{proof}[Proof of the Rate~\eqref{eq:main-result-general-rate-2}]

The proof is similar to that for \eqref{eq:main-result-general-rate}, we only highlight the differences.
Let $\hat{S} = S_{\hat{g}}$. We first apply \cref{prop:population-general-loss} with $\delta=0.5$ and substitute $g=\hat{g}$, $f^{\mathcal{E}}=\hat{f}^{\mathcal{E}}$, $\tilde{g}$ be that $\|\tilde{g} - m^\star\|_2 \le \delta_{\mathtt{a}, \mathcal{G}} + n^{-1}$ and $\tilde{f}^{\mathcal{E}}$ be that $\mathsf{Q}_\gamma(\tilde{g}, \tilde{f}^{\mathcal{E}}) \ge \sup_{f^{\mathcal{E}} \in \{\mathcal{F}_{S^\star}\}^{|\mathcal{E}|}} \mathsf{Q}_\gamma(\tilde{g}, f^{\mathcal{E}}) -\gamma \delta_{\mathtt{opt}}^2$. Then,
\begin{align*}
    \mathsf{Q}_\gamma(\hat{g}, \hat{f}^{\mathcal{E}}) - \mathsf{Q}_\gamma(\tilde{g}, \tilde{f}^{\mathcal{E}}) &\ge 0.25 \zeta^{-1} \|\hat{g} - \tilde{g}\|_2^2 + \frac{\gamma}{4} \bar{\mathsf{d}}(S) + \frac{\gamma}{2} \|\hat{g} - \bar{m}^{(\hat{S})}\|_2^2 \\
    &~~~~~~~~ - \frac{\gamma}{2|\mathcal{E}|} \sum_{e\in \mathcal{E}} \|\hat{f}^{(e)} - \{m^{(e,S)} - \hat{g}\}\|_{2,e}^2 - \frac{\gamma+4\zeta^2}{2} (\delta_{\mathtt{a}, \mathcal{G}}^2 + n^{-2}|).
\end{align*}
Apply \cref{prop:nonasymptotic-pooled-general-loss} and \ref{prop:nonasymptotic-a} in a similar way to \eqref{eq:proof-general-rate-eq2}, we obtain
\begin{align*}
\mathsf{Q}_\gamma(\hat{g}, \hat{f}^{\mathcal{E}}) - \mathsf{Q}_\gamma(\tilde{g}, \tilde{f}^{\mathcal{E}}) &\le 2(1+\gamma) \delta_{\mathtt{opt}}^2 + CU\zeta \frac{1}{|\mathcal{E}|} \sum_{e\in \mathcal{E}} \left\{\delta_{n, t} \|\hat{g} - \tilde{g}\|_{2,e} + \delta_{n, t}^2 \right\} \\
&~~~~~~~~ + C \gamma U \frac{1}{|\mathcal{E}|} \sum_{e\in \mathcal{E}} \left\{\delta_{n, t} \left(\|\tilde{g} - \hat{g}\|_{2,e}+ \|\tilde{g} + \tilde{f}^{(e)} - \hat{g} - \hat{f}^{(e)}\|_{2,e}\right) + \delta_{n, t}^2\right\}
\end{align*}
Combining the derivation in \eqref{eq:proof-general-rate-eq2} with the fact that
\begin{align*}
CU\zeta \frac{1}{|\mathcal{E}|} \sum_{e\in \mathcal{E}} \left\{\delta_{n, t} \|\hat{g} - \tilde{g}\|_{2,e} + \delta_{n, t}^2 \right\} \le 10C^2U^2\zeta^3  \delta_{n,t}^2 + CU\zeta \delta_{n, t}^2 + 0.05\zeta^{-1}\|\hat{g} - \tilde{g}\|_2^2,
\end{align*} we have
\begin{align*}
    \mathsf{Q}_\gamma(\hat{g}, \hat{f}^{\mathcal{E}}) - \mathsf{Q}_\gamma(\tilde{g}, \tilde{f}^{\mathcal{E}}) &\le 2(1+\gamma) \delta_{\mathtt{opt}}^2 + C(\gamma + \zeta)^2 \zeta U^2 \delta_{n, t}^2 + 0.05 \zeta^{-1}\|\hat{g} - \tilde{g}\|_2^2 \\
    &~~~~~~~~+ 0.001 \zeta^{-1} \|\hat{g} - \tilde{g}\|_2^2 + 0.002 \frac{\gamma}{\gamma+1} \sum_{e\in \mathcal{E}} \| m^\star - m^{(e,\hat{S})} \|_{2,e}^2 \\
    &~~~~~~~~+ \frac{0.004\gamma}{\gamma+1} \frac{1}{|\mathcal{E}|}\sum_{e\in \mathcal{E}} \left\{\|\tilde{g} + \tilde{f}^{(e)} - m^\star\|_{2,e}^2 + \|m^{(e,\hat{S})} - \hat{g} - \hat{f}^{(e)}\|_{2,e}^2 \right\} 
\end{align*} 
Similar to \eqref{eq:proof-general-rate-eq3}, we can also claim that
\begin{align}
\label{eq:proof-general-rate-general-loss-eq1}
    \frac{\gamma}{\gamma+1} \sum_{e\in \mathcal{E}} \| m^\star - m^{(e,\hat{S})} \|_{2,e}^2 \le (2+28\gamma^\star) \bar{\mathsf{d}}(\hat{S}) + 24\left( \|\hat{g} - \tilde{g}\|_{2}^2 + \delta_{\mathtt{a}, \mathcal{G}}^2 + \frac{1}{n}\right).
\end{align} 
Combining the upper bound and lower bound on $\mathsf{Q}_\gamma(\hat{g}, \hat{f}^{\mathcal{E}}) - \mathsf{Q}_\gamma(\tilde{g}, \tilde{f}^{\mathcal{E}})$ together and plugging in \eqref{eq:proof-general-rate-general-loss-eq1}, we obtain
\begin{align*}
    &0.15 \zeta^{-1} \|\hat{g} - \tilde{g}\|_2^2 \\
    &~~~\le C \Big\{ (\zeta^2 + \gamma) (\delta_{\mathtt{a}, \mathcal{G}}^2 + \delta_{\mathtt{opt}}^2) + (\zeta+\gamma)^2\zeta U^2\delta_{n,t}^2 \Big\}\\
    &~~~~~~~~~~~ + \gamma (1+1/(\gamma+1)) \frac{1}{2|\mathcal{E}|} \sum_{e\in \mathcal{E}} \left(\|\hat{f}^{(e)} - (m^{(e,\hat{S})}-\hat{g})\|_{2,e}^2 + \|\tilde{f}^{(e)} - (m^\star-\tilde{g})\|_{2,e}^2 \right)\\
    &~~~\overset{(a)}{\le} C \Big\{ (\zeta^2 + \gamma) (\delta_{\mathtt{a}, \mathcal{G}}^2 + \delta_{\mathtt{opt}}^2) + (\zeta+\gamma)^2\zeta U^2\delta_{n,t}^2 \Big\} \\
    &~~~~~~~~~~~ + C (1+\gamma)(\delta_{\mathtt{opt}}^2 + \delta^2_{\mathtt{a}, \mathcal{F}, \mathcal{G}}(\hat{S}) + \delta^2_{\mathtt{a}, \mathcal{F}, \mathcal{G}}(S^\star) + U^2\delta_{n, t}^2),
\end{align*} where $(a)$ follows from \cref{prop:characterize-f}. This further implies that
\begin{align*}
    \|\hat{g} - \tilde{g}\|_2^2 \le C \zeta^2 (\zeta + \gamma)^2 \left\{U^2\delta_{n, t}^2 + \delta^2_{\mathtt{a}, \mathcal{F}, \mathcal{G}}(\hat{S}) + \delta^2_{\mathtt{a}, \mathcal{F}, \mathcal{G}}(S^\star) + \delta_{\mathtt{a}, \mathcal{G}}^2 + \delta_{\mathtt{opt}} \right\}.
\end{align*} Applying the triangle inequality $\|\hat{g} - m^\star\|_2 \le \|\hat{g} - \tilde{g}\|_2 + \|\tilde{g} - m^\star\|_2$ concludes the proof.
\end{proof}

\begin{proof}[Proof of Rate~\eqref{eq:main-result-rate-2}]
The proof is also similar to that for \eqref{eq:main-result-rate}, we only highlight the differences. Let $\hat{S}=S_{\hat{g}}$. We first show that the following event
\begin{align*}
\mathcal{A}_+ = \left\{\forall e\in \mathcal{E},~ m^{(e,\hat{S})}=m^\star\right\}
\end{align*} occurs under the event defined in \cref{prop:nonasymptotic-pooled-general-loss} and \cref{prop:nonasymptotic-a} if Condition \eqref{eq:main-result-faster-cond-2} holds with some large universal constant $C$. Following a similar idea to the proof of \cref{prop:variable-selection}, it suffices to show that the following event
\begin{align*}
    &\forall g ~\text{with}~S_g \in \{S\subseteq [d]: \exists e\in \mathcal{E}, \|m^{(e,S)}-m^\star\|_{2,e}>0\}, ~~~~\exists f^{\mathcal{E}} \in \{\mathcal{F}_{S_g}\}^{|\mathcal{E}|} ~\text{and}~\tilde{g} ~\text{with}~ S_{\tilde{g}} = S^\star ~\text{s.t.}\\
&~~~~~~~~~~~~~~~~~~~~ \hat{\mathsf{Q}}_{\gamma}(g, f^{\mathcal{E}}) - \sup_{\tilde{f}^{\mathcal{E}} \in \{\mathcal{F}_{S^\star}\}^{|\mathcal{E}|}}\hat{\mathsf{Q}}_{\gamma}(\tilde{g}, \tilde{f}^{\mathcal{E}}) > 2(1+\gamma)\delta_{\mathtt{opt}}^2.
\end{align*} Now we apply \cref{prop:nonasymptotic-pooled-general-loss}, \cref{prop:nonasymptotic-a} and \cref{prop:population-general-loss} (with $\delta=0.5$) and choose $\tilde{g}$ be such that $\|\tilde{g} - m^\star\|_2 \le \delta_{\mathtt{a},\mathcal{G}} + n^{-1}$, we can obtain a similar lower bound to \eqref{eq:thm1-proof-eq1} that for any $g$, we can choose a corresponding $f^{\mathcal{E}}=f^{\mathcal{E}}_g$ such that 
\begin{align}
\label{eq:variable-sel-lb1}
\begin{split}
&\hat{\mathsf{Q}}_\gamma(g, f^{\mathcal{E}}_g) - \hat{\mathsf{Q}}_\gamma(\tilde{g}, \tilde{f}^{\mathcal{E}}) - 2(1+\gamma)\delta_{\mathtt{opt}}^2 \\
&~~~\ge 0.25\zeta^{-1} \|g - \tilde{g}\|_{2}^2 + \frac{\gamma}{4} \bar{\mathsf{d}}(S) - 2(1+\gamma)\delta_{\mathtt{opt}}^2\\
&~~~~~~~~~~~ -\left\{C (\zeta + \gamma)UB \delta_{n,t} + (0.5\gamma+2) \delta_{\mathtt{a}, \mathcal{G}}^2 + 0.5\gamma \delta_{\mathtt{a}, \mathcal{F}, \mathcal{G}}(S)\right\}
\end{split}
\end{align} provided $\delta_{n,t}<1$ and $B\ge 1$. Therefore, it follows similarly to the three case discussions in the proof of \cref{prop:variable-selection} that, if
\begin{align*}
    (\zeta + \gamma) \left(\delta_{\mathtt{opt}}^2 + \sup_{S\subseteq[d]} \delta_{\mathtt{a}, \mathcal{F}, \mathcal{G}}(S)^2 + \delta_{\mathtt{a}, \mathcal{G}}^2 + UB\delta_{n, t}\right) \le \left(\zeta^{-1} s_{\min} \land \gamma \inf_{S:\bar{\mathsf{d}}(S)>0} \bar{\mathsf{d}}(S) \right) / \breve{C}
\end{align*} for some large enough universal constant $\breve{C}$, then 
\begin{align*}
\hat{\mathsf{Q}}_\gamma(g, f^{\mathcal{E}}_g) - \hat{\mathsf{Q}}_\gamma(\tilde{g}, \tilde{f}^{\mathcal{E}}) - 2(1+\gamma)\delta_{\mathtt{opt}}^2 > 0
\end{align*} for any $g$ with $S_g \in \{S\subseteq [d]: \exists e\in \mathcal{E}, \|m^{(e,S)}-m^\star\|_{2,e}>0\}$. This concludes the proof of the claim that the event $\mathcal{A}^+$ occurs under the event defined in \cref{prop:nonasymptotic-pooled-general-loss} and \cref{prop:nonasymptotic-a} if Condition \eqref{eq:main-result-faster-cond-2} holds with some large universal constant $C$.

The rest of the proof proceeds condition on the above variable selection property, and is similar to the proof of the error bound \eqref{eq:main-result-rate}. On one hand, we apply \cref{prop:population-general-loss} with $\delta=0.5$ and substitute $g=\hat{g}$, $f^{\mathcal{E}}=\hat{f}^{\mathcal{E}}$, $\tilde{g}$ be that $\|\tilde{g} - m^\star\|_2 \le \delta_{\mathtt{a}, \mathcal{G}} + n^{-1}$ and $\tilde{f}^{\mathcal{E}}$ be that $\mathsf{Q}_\gamma(\tilde{g}, \tilde{f}^{\mathcal{E}}) \ge \sup_{f^{\mathcal{E}} \in \{\mathcal{F}_{S^\star}\}^{|\mathcal{E}|}} \mathsf{Q}_\gamma(\tilde{g}, f^{\mathcal{E}}) -\gamma \delta_{\mathtt{opt}}^2$. Then,
\begin{align*}
    \mathsf{Q}_\gamma(\hat{g}, \hat{f}^{\mathcal{E}}) - \mathsf{Q}_\gamma(\tilde{g}, \tilde{f}^{\mathcal{E}}) &\ge 0.25\zeta^{-1} \|\hat{g} - \tilde{g}\|_2^2 + 0.5\gamma \|\hat{g} - m^\star\|_2^2 \\
    &~~~~~~~~ - \frac{\gamma+4\zeta^2}{2} \|\tilde{g} - m^\star\|_2^2 - \frac{\gamma}{2|\mathcal{E}|} \sum_{e\in \mathcal{E}} \|\hat{f}^{(e)} - (m^\star - \hat{g})\|_{2,e}^2 \\
    &\ge 0.25 (\zeta^{-1} + \gamma) \|\hat{g} - \tilde{g}\|_2^2 - 2(\zeta^2+\gamma) (\delta_{\mathtt{a},\mathcal{G}}^2 + n^{-2}) \\
    &~~~~~~~~ - \frac{\gamma}{2|\mathcal{E}|} \sum_{e\in \mathcal{E}} \|\hat{f}^{(e)} - (m^\star - \hat{g})\|_{2,e}^2
\end{align*}
On the other hand, applying \cref{prop:nonasymptotic-pooled-general-loss} and \ref{prop:nonasymptotic-a} in a similar way to \eqref{eq:proof-general-rate-eq2}, we obtain
\begin{align*}
&\mathsf{Q}_\gamma(\hat{g}, \hat{f}^{\mathcal{E}}) - \mathsf{Q}_\gamma(\tilde{g}, \tilde{f}^{\mathcal{E}}) \\
&~~~\le 2(1+\gamma) \delta_{\mathtt{opt}}^2 + CU\zeta \frac{1}{|\mathcal{E}|} \sum_{e\in \mathcal{E}} \left\{\delta_{n, t} \|\hat{g} - \tilde{g}\|_{2,e} + \delta_{n, t}^2 \right\} \\
&~~~~~~~~~~~ + C \gamma U \frac{1}{|\mathcal{E}|} \sum_{e\in \mathcal{E}} \left\{\delta_{n, t} \left(\|\tilde{g} - \hat{g}\|_{2,e}+ \|\tilde{g} + \tilde{f}^{(e)} - \hat{g} - \hat{f}^{(e)}\|_{2,e}\right) + \delta_{n, t}^2\right\} \\
&~~~\le 2(1+\gamma) \delta_{\mathtt{opt}}^2 + CU^2\zeta^3\delta_{n,t}^2 + 0.05\zeta^{-1}\|\hat{g} - \tilde{g}\|_2^2 + \gamma \left(20CU^2 \delta_{n, t}^2 + 0.05 \|\hat{g} - \tilde{g}\|_2^2 \right)\\
&~~~~~~~~~~~ + \gamma \frac{1}{|\mathcal{E}|} \sum_{e\in \mathcal{E}} \left(\|\tilde{g} + \tilde{f}^{(e)} - m^\star\|_{2,e}^2 + \|\hat{g} + \hat{f}^{(e)} - m^\star\|_{2,e}^2 \right)
\end{align*} Combing the upper bound and lower bound together, we find
\begin{align*}
0.20(\zeta^{-1} + \gamma) \|\hat{g} - \tilde{g}\|_2^2 &\le CU^2\delta_{n,t}^2 (\zeta^3 + \gamma) + 2(\zeta^2 + \gamma) \delta_{\mathtt{a}, \mathcal{G}}^2 + (1+\gamma) \delta_{\mathtt{opt}}^2\\
&~~~~~~~~ + 2\gamma \frac{1}{|\mathcal{E}|} \sum_{e\in \mathcal{E}} \left(\|\tilde{g} + \tilde{f}^{(e)} - m^\star\|_{2,e}^2 + \|\hat{g} + \hat{f}^{(e)} - m^\star\|_{2,e}^2 \right)\\
&\overset{(a)}{\le} CU^2\delta_{n,t}^2 (\zeta^3 + \gamma) + 2(\zeta^2 + \gamma) \delta_{\mathtt{a}, \mathcal{G}}^2 + (1+\gamma) \delta_{\mathtt{opt}}^2 \\
&~~~~~~~~ + (\gamma +1)C(U^2\delta_{n, t}^2 + \delta^2_{\mathtt{a}, \mathcal{F}, \mathcal{G}}(\hat{S}) + \delta^2_{\mathtt{a}, \mathcal{F}, \mathcal{G}}(S^\star))
\end{align*} where $(a)$ follows from \cref{prop:characterize-f}. Observe that $(x+a)/(x+b) \le a/b$ for any $x\ge 0$ and $a\le b$.  Therefore,
\begin{align*}
    \|\hat{g} - \tilde{g}\|_{2}^2 \le C \zeta^4 \left\{\delta_{\mathtt{opt}}^2 + U^2 \delta_{n,t}^2 + \delta_{\mathtt{a}, \mathcal{G}}^2 + \delta^2_{\mathtt{a}, \mathcal{F}, \mathcal{G}}(\hat{S}) + \delta^2_{\mathtt{a}, \mathcal{F}, \mathcal{G}}(S^\star)\right\}
\end{align*} provided $\zeta \ge 1$. This concludes the proof via observing that $\delta_{\mathtt{a}, \mathcal{F}, \mathcal{G}}(\hat{S}) \lor \delta_{\mathtt{a}, \mathcal{F}, \mathcal{G}}(S^\star) \le \delta_{\mathtt{a}, \mathcal{F}, \mathcal{G}}^\star$ and applying triangle inequality $\|\hat{g} - m^\star\|_{2}\le \|\hat{g} - \tilde{g}\|_{2} + \|\tilde{g} - m^\star\|_2$.

\end{proof}

\subsection{Proof of Proposition \ref{prop:nonasymptotic-pooled-general-loss}}

It follows from \cref{cond:general-loss} and the Taylor expansion that $\ell(y, v) - \ell(y, \tilde{v}) = \frac{\partial \ell}{\partial v}(y, \tilde{v}) (v - \tilde{v}) +  \frac{\partial^2 \ell}{\partial v}(y, \bar{v})\frac{(v - \tilde{v})^2}{2}$ with some $\bar{v} \in [v \land \tilde{v}, v \lor \tilde{v}]$. We can thus let $\frac{\partial \ell}{\partial v}(y, \tilde{v}) = \psi(\tilde{v}) (y - \tilde{v})$, and let $\frac{\partial^2 \ell}{\partial v}(y, \bar{v}) = q(y, v, \tilde{v})$. Then, we can decompose $\Delta_{\mathsf{R}}(g, \tilde{g})$ into
\begin{align*}
    \Delta_{\mathsf{R}}(g, \tilde{g}) &= \frac{1}{|\mathcal{E}|} \sum_{e\in \mathcal{E}} (\hat{\mathbb{E}} - \mathbb{E}) \left[\ell(Y^{(e)}, g(X^{(e)})) - \ell(Y^{(e)}, \tilde{g}(X^{(e)})) \right] \\
    &= \frac{1}{|\mathcal{E}|} \sum_{e\in \mathcal{E}} (\hat{\mathbb{E}} - \mathbb{E}) \left[ Y^{(e)} \psi(\tilde{g}(X^{(e)})) (g - \tilde{g})(X^{(e)})\right] \\
    & ~~~~~~~~~~~~~~~~ + (\hat{\mathbb{E}} - \mathbb{E})\left[ -\tilde{g}(X^{(e)}) \psi(\tilde{g}(X^{(e)})) (g - \tilde{g})(X^{(e)})\right] \\
    & ~~~~~~~~~~~~~~~~ + (\hat{\mathbb{E}} - \mathbb{E})\left[ \frac{q(Y^{(e)}, g(X^{(e)}), \tilde{g}(X^{(e)}))}{2} \{g(X^{(e)}) - \tilde{g}(X^{(e)})\}^2  \right] \\
    &= \frac{1}{|\mathcal{E}|} \sum_{e\in \mathcal{E}} \mathsf{T}_1^{(e)}(g, \tilde{g}) + \mathsf{T}_2^{(e)}(g, \tilde{g}) + \mathsf{T}_3^{(e)}(g, \tilde{g}) 
\end{align*}
We apply \cref{lemma:ep1} to derive high probability bounds on $\mathsf{T}^{(e)}_k$. 

\noindent {\sc Step 1. Bounds on $\mathsf{T}_{1}^{(e)}$.} For $\mathsf{T}_1^{(e)}$, we will use truncation argument. To be specific, let $K>0$ to be determined, we can decompose $\mathsf{T}_1^{(e)}$ into
\begin{align*}
    \mathsf{T}_1^{(e)}(g,\tilde{g}) &= (\hat{\mathbb{E}} - \mathbb{E}) \left[ 1\{|Y^{(e)}|\le K\} Y^{(e)} \psi(\tilde{g}(X^{(e)})) (g - \tilde{g})(X^{(e)})\right] \\
    & ~~~~~~~~~~ + (\hat{\mathbb{E}} - \mathbb{E}) \left[ 1\{|Y^{(e)}|>K\} Y^{(e)} \psi(\tilde{g}(X^{(e)})) (g - \tilde{g})(X^{(e)})\right] \\
    &= \mathsf{T}_{1,1}^{(e)}(g,\tilde{g},K) + \mathsf{T}_{1,2}^{(e)}(g,\tilde{g},K).
\end{align*} 
For $\mathsf{T}_{1,1}^{(e)}(g,\tilde{g},K)$, applying \cref{lemma:ep1} with $\mathcal{H} = \mathcal{G}$, $v(g,\tilde{g}, z)=Y^{(e)} \psi(\tilde{g}(X^{(e)}))$ that is uniformly bounded by $K\zeta$ by \cref{cond:general-loss} (2), and $\phi(x)=x$, we find that, for any $e\in \mathcal{E}$ and $u>0$, the following event
\begin{align}
\label{eq:zeta-prop1-sn1}
\begin{split}
    \mathcal{C}_{1,1}^{(e)}(u) &= \left\{\forall g,\tilde{g}\in \mathcal{G}, ~|\mathsf{T}_{1,1}^{(e)}(g,\tilde{g},K)| \le CBK\zeta \left(s_{n,1}\|\tilde{g} - g\|_{2,e} + s_{n,1}^2\right)\right\} \\
    &\qquad\qquad\qquad \qquad ~~ \text{with}~ s_{n,1} = \delta_n + \sqrt{\frac{u + 1 + \log(nB))}{n}}
\end{split}
\end{align} occurs with probability at least $1-e^{-u}$ for some universal constant $C$. Applying union bound over all the $e\in \mathcal{E}$, we obtain that, 
\begin{align}
    \mathbb{P}\left[\mathcal{C}_{1,1}(t)\right] &= \mathbb{P}\left[\forall e\in \mathcal{E},~\forall g,\tilde{g}\in \mathcal{G}, ~|\mathsf{T}_{1,1}^{(e)}(g,\tilde{g},K)| \le C BK\zeta (s_{n,2}\|\tilde{g}- g\|_{2,e} + s_{n,2}^2)\right] \nonumber \\
    &\ge 1 - \sum_{e\in \mathcal{E}} \mathbb{P}\left[\left(\mathcal{C}_{1,1}^{(e)}(u)\right)^c\right]\ge 1 - |\mathcal{E}| e^{-u} \ge 1-e^{-t} \label{eq:zeta-prop1-c11}
\end{align} where $s_{n,2} = \delta_n + \sqrt{\frac{t + 1 + \log(nB|\mathcal{E}|))}{n}}$.

For $\mathsf{T}_{1,2}^{(e)}(g,\tilde{g},K)$, it follows from Markov inequality that, for any given $e\in \mathcal{E}$ and $x>0$,
\begin{align*}
    &\mathbb{P}\left[\sup_{g,\tilde{g}\in \mathcal{G}}|\mathsf{T}_{1,2}^{(e)}(g,\tilde{g},K)| > x\right] \\
    &~~~\le x^{-1} \mathbb{E} \left[\sup_{g,\tilde{g}\in \mathcal{G}}(\hat{\mathbb{E}} - \mathbb{E}) \left[1\{|Y^{(e)}|>K\} Y^{(e)} \psi(\tilde{g}(X^{(e)})) (g - \tilde{g})(X^{(e)})\right]\right]\\
    &~~~\le x^{-1} \mathbb{E} \left[\sup_{g,\tilde{g}\in \mathcal{G}}(\hat{\mathbb{E}} + \mathbb{E}) \left[\left|1\{|Y^{(e)}|>K\} Y^{(e)} \psi(\tilde{g}(X^{(e)})) (g - \tilde{g})(X^{(e)})\right|\right]\right] \\
    &~~~\le x^{-1} 4B\zeta \mathbb{E}[|Y^{(e)}| 1\{|Y^{(e)}| > K\}]
\end{align*} It then follows from the sub-Gaussian response condition \cref{cond:general-response} that
\begin{align*}
    \mathbb{E}[|Y^{(e)}| 1\{|Y^{(e)}| > K\}] &= \int |y| 1\{|y|\ge K\} \mu^{(e)}_y(dy) \\
    &= \int 1\{|y|\ge K\} \left(\int_0^\infty 1\{t\le |y|\} dt\right) v \mu^{(e)}_y(dy)\\
    &= \int_0^\infty \int 1\{|y| \ge K\lor t\}\mu_y^{(e)}(dy) dt = \int_0^\infty \mathbb{P}\left(|Y^{(e)}| \ge t\lor K\right) dt \\
    &\le K C_{y} e^{-K^2/(2\sigma_y^2)} + \frac{C_y \sigma_y^2}{K} e^{-K^2/(2\sigma_y^2)}.
\end{align*} Hence, we can conclude that, for any fixed $e\in \mathcal{E}$, $x>0$ and $K>0$, 
\begin{align}
\label{eq:zeta-proof-prop1-truncation-argument}
\mathbb{P}\left[\sup_{g,\tilde{g}\in \mathcal{G}} |\mathsf{T}_{1,2}^{(e)}(g,\tilde{g},K)| > x\right] \le 4B\zeta C_y e^{-K^2/(2\sigma_y^2)} x^{-1} (K+\sigma_y^2/K)
\end{align} Applying union bound with $x=n^{-1} 44 B\zeta$, $K=\sigma_y\sqrt{102\log(n|\mathcal{E}|)}$, we have
\begin{align}
\label{eq:zeta-prop1-c12}
\begin{split}
    &\mathbb{P}[\mathcal{C}_{1,2}] = \mathbb{P}\left[\forall e\in \mathcal{E}, \sup_{g,\tilde{g}\in \mathcal{G}}\mathsf{T}_{1,2}^{(e)}(g,\tilde{g},K) 
    \le \frac{44\zeta B}{n}\right] \\
    &~~~\ge 1- |\mathcal{E}| \times C_y \frac{1}{(n\cdot |\mathcal{E}|)^{102}} n (\sigma_y \sqrt{102\log(n|\mathcal{E}|)} + 1)\\
    &~~~\ge 1- C_y (\sigma_y+1) n^{-100}, 
\end{split}
\end{align}

\noindent {\sc Step 2. Bounds on $\mathsf{T}_{2}^{(e)}$.} Applying \cref{lemma:ep1} with $\mathcal{H}=\mathcal{G}$, $v(g, \tilde{g}, z) = -\tilde{g} \psi(\tilde{g})$ uniformly bounded by $L_1= B\zeta$, and $\phi(x)=x$, we have that, for any $e$ and $u>0$, the following event
\begin{align*}
    \mathcal{C}_2^{(e)}(u) = \left\{ |\mathsf{T}_2^{(e)}(g, \tilde{g})| \le C B^2 \zeta (s_{n,1} \|\tilde{g} - g\|_{2,e} + s_{n,1}^2)\right\}
\end{align*} occurs with probability $1-e^{-u}$ for some universal constant $C$, where $s_{n,1}$ is the same quantity defined in \eqref{eq:zeta-prop1-sn1}. Applying union bound over all the $e\in \mathcal{E}$ gives that
\begin{align}
\label{eq:zeta-prop1-c2}
    \mathbb{P}\left[\mathcal{C}_{2}(t)\right]= \mathbb{P}\left[\forall e\in \mathcal{E}, ~\forall g,\tilde{g}\in \mathcal{G},~|\mathsf{T}_{2}^{(e)}(g,\tilde{g})| \le C B^2 \zeta (s_{n,2}\|\tilde{g}- g\|_{2,e} + s_{n,2}^2)\right] \ge 1-e^{-t}.
\end{align} where $s_{n,2}=s_{n,1}$ with $u=t + \log(|\mathcal{E}|)$.

\noindent {\sc Step 3. Bounds on $\mathsf{T}_{3}^{(e)}$.} Following a similar procedure as what we do for $\mathsf{T}_2^{(e)}$, we apply \cref{lemma:ep1} with $\mathcal{H}=\mathcal{G}$, $v(g, \tilde{g}, z) = 0.5 q(g, \tilde{g}, y)$ uniformly bounded by $L_1=\zeta/2$, and $\phi(x)=x^2$ that is $2B$-Lipschitz due to the boundedness of $(g,\tilde{g})$, followed by using union bound over all the $e\in \mathcal{E}$. Therefore,
\begin{align}
\label{eq:zeta-prop1-c3}
    \mathbb{P}\left[\mathcal{C}_{3}(t)\right]= \mathbb{P}\left[\forall e\in \mathcal{E}, ~\forall g,\tilde{g}\in \mathcal{G}, ~|\mathsf{T}_{3}^{(e)}(g,\tilde{g})| \le C B^2\zeta (s_{n,2}\|\tilde{g}- g\|_{2,e} + s_{n,2}^2)\right] \ge 1-e^{-t}.
\end{align}

\noindent {\sc Step 4. Putting the pieces together.} Recall our choice of $K=\sigma_y \sqrt{102\log(n|\mathcal{E}|)}$. Combining \eqref{eq:zeta-prop1-c11}, \eqref{eq:zeta-prop1-c12}, \eqref{eq:zeta-prop1-c2}, and \eqref{eq:zeta-prop1-c3} together, we can conclude that, under $\mathcal{C}_{1,1}(t) \cap \mathcal{C}_{1,2} \cap \mathcal{C}_2(t) \cap \mathcal{C}_{3}(t)$ that occurs with probability at least $1-3e^{-t}-C_y(\sigma_y+1) n^{-100}$, the following holds
\begin{align*}
    \forall g, \tilde{g}, ~~ |\Delta_{\mathsf{R}}(g, \tilde{g})| &\le \frac{1}{|\mathcal{E}|} \sum_{e\in \mathcal{E}} |\mathsf{T}_{1,1}^{(e)}(g,\tilde{g},K)| + |\mathsf{T}_{1,2}^{(e)}(g,\tilde{g},K)| + |\mathsf{T}_{2}^{(e)}(g,\tilde{g})| + |\mathsf{T}_{3}^{(e)}(g,\tilde{g})| \\
    &\overset{(a)}{\le} \frac{1}{|\mathcal{E}|} \sum_{e\in \mathcal{E}} CBK\zeta \left(s_{n,2}\|g - \tilde{g}\|_{2,e} + s_{n,2}^2\right) \\
    &~~~~~~~~~~ + \frac{44B\zeta}{n} + 2CB^2 \zeta  \left(s_{n,2} \|g - \tilde{g}\|_{2,e} + s_{n,2}^2\right) \\
    &\overset{(b)}{\le} C  B(B+\sqrt{\log(n|\mathcal{E}|)})\zeta \left\{ s_{n,2}^2 + s_{n,2} \frac{1}{|\mathcal{E}|} \sum_{e\in \mathcal{E}} \|g - \tilde{g}\|_{2,e}\right\} %\\
\end{align*} where (a) follows from the instant-dependent error bounds in \eqref{eq:zeta-prop1-c11}, \eqref{eq:zeta-prop1-c12}, \eqref{eq:zeta-prop1-c2} and \eqref{eq:zeta-prop1-c3}, and (b) follows from our choice of $K$ and the fact $1/n\le \delta_n \le s_{n,2}$. This completes the proof. \qed

\subsection{Proof of Proposition \ref{prop:population-general-loss}}
\label{sec:proof-prop-general-loss}

Note that the regularizer loss coincides with that \cref{thm:population} with $\ell(y,v) = \frac{1}{2}(y-v)^2$, then it follows similar to {\sc Step 2} and {\sc Step 3} in the proof of \cref{thm:population} that
\begin{align*}
    \mathsf{Q}_\gamma(g, f^{\mathcal{E}}) - \mathsf{Q}_\gamma(\tilde{g}, \tilde{f}^{\mathcal{E}}) &\ge \frac{1}{|\mathcal{E}|} \sum_{e\in \mathcal{E}} \mathbb{E}[\ell(Y^{(e)}, g(X^{(e)})) - \ell(Y^{(e)}, \tilde{g}(X^{(e)}))]\\
    &~~~~~~~~ + \gamma \left\{\frac{\bar{\mathsf{d}}(S)}{2} + \frac{1}{2}\| \bar{m}^{(S)} - g\|_2^2 - \frac{1}{2|\mathcal{E}|} \sum_{e\in \mathcal{E}}  \|f^{(e)} - m^{(e,S)} - g\}\|_{2,e}^2\right\} \\
    &~~~~~~~~ - \gamma \left\{ \frac{\bar{\mathsf{d}}(S^\star)}{2} + \frac{1}{2} \|m^\star - \tilde{g}\|_2^2\right\}
\end{align*} Applying Taylor expansion to $\ell$, it follows from \cref{cond:general-loss} (1) that
\begin{align*}
\ell(y, v) - \ell(y, \tilde{v}) = \frac{\partial \ell}{\partial v}(\tilde{v}) (v - \tilde{v}) + \frac{\partial^2 \ell}{\partial v^2}(\bar{v}) \frac{(v - \tilde{v})^2}{2} = (y - \tilde{v}) g(\tilde{v}) (v - \tilde{v}) + \frac{\partial^2 \ell}{\partial v^2}(\bar{v}) \frac{(v - \tilde{v})^2}{2}
\end{align*} for some $\bar{v} \in [v\land \tilde{v}, v\lor \tilde{v}]$, substituting $y=Y^{(e)}$, $v=g(X^{(e)})$ and $\tilde{v} = \tilde{g}(X^{(e)})$ in and taking expectations, we obtain
\begin{align*}
    & \frac{1}{|\mathcal{E}|} \sum_{e\in \mathcal{E}} \mathbb{E}[\ell(Y^{(e)}, g(X^{(e)})) - \ell(Y^{(e)}, \tilde{g}(X^{(e)}))] \\
    &~~= \frac{1}{|\mathcal{E}|} \sum_{e\in \mathcal{E}} \mathbb{E} \left[\{Y^{(e)} - \tilde{g}(X^{(e)}) \}\psi(\tilde{g}(X^{(e)})) \{g(X^{(e)}) - \tilde{g}(X^{(e)})\} + \frac{\partial^2 \ell}{\partial v^2}(\bar{V}) \frac{\{g(X^{(e)}) - \tilde{g}(X^{(e)})\}^2}{2} \right] \\
    &~~\overset{(a)}{\ge} \frac{1}{|\mathcal{E}|} \sum_{e\in \mathcal{E}} \mathbb{E} \left[\mathbb{E}\left[\{Y^{(e)} - \tilde{g}(X^{(e)}) \}\psi(\tilde{g}(X^{(e)})) \{g(X^{(e)}) - \tilde{g}(X^{(e)})\}\big|X_{S_g\cup S^\star}\right]\right] + \frac{\zeta^{-1}}{2} \|g - \tilde{g}\|_{2,e}^2 \\
    &~~\overset{(b)}{=} \frac{1}{|\mathcal{E}|} \sum_{e\in \mathcal{E}} \int \{m^{(e,S_g\cup S^\star)}(x) - \tilde{g}(x)\}\psi(\tilde{g}(x))\{g(x) - \tilde{g}(x)\} \mu^{(e)}(dx) + \frac{\zeta^{-1}}{2} \|g - \tilde{g}\|_{2,e}^2 \\
    &~~\overset{(c)}{=} \int \{\bar{m}^{(S_g\cup S^\star)}(x) - \tilde{g}(x)\} \psi(\tilde{g}(x)) \{g(x) - \tilde{g}(x)\} \bar{\mu}_x(dx) + \frac{\zeta^{-1}}{2} \|g - \tilde{g}\|_2^2
\end{align*} where $(a)$ follows from the lower bound on second derivative of $\ell$ on the domain \cref{cond:general-loss} (2) and tower rule of conditional expectation, $(b)$ follows from the definition of conditional expectation, $(c)$ follows from definition of $\bar{m}^{(S)}$ and the fact that $\frac{1}{|\mathcal{E}|} \sum_{e\in \mathcal{E}} \|\cdot\|_{2,e}^2 = \|\cdot\|_2^2$. It then follows from Cauchy-Schwarz inequality and the fact that $\psi(v)$ is bounded by $\zeta$ that
\begin{align*}
   &\left| \int \{\bar{m}^{(S_g\cup S^\star)}(x) - \tilde{g}(x)\} \psi(\tilde{g}(x)) \{g(x) - \tilde{g}(x)\} \bar{\mu}_x(dx) \right| \\
   &~~\le \|g - \tilde{g}\|_{2} \|\psi(\tilde{g})\{\bar{m}^{(S_g\cup S^\star)}- \tilde{g}\}\|_2 \\
   &~~\le \frac{\delta \zeta^{-1}}{2} \|g - \tilde{g}\|_{2}^2 + \frac{\zeta^2 \delta^{-1}}{2} \|\bar{m}^{(S_g\cup S^\star)} - \tilde{g}\|_2^2 \\
   &~~\le \frac{\delta \zeta^{-1}}{2} \|g - \tilde{g}\|_{2} + \zeta^2 \delta^{-1} \left\{ \mathsf{b}(S) +\|m^\star-\tilde{g}\|_2^2\right\}
\end{align*} Putting all the pieces together, we conclude that whenever $\gamma \ge 4\zeta^2\delta^{-1} \gamma^\star$,
\begin{align*}
    &\mathsf{Q}_\gamma(g, f^{\mathcal{E}}) - \mathsf{Q}_\gamma(\tilde{g}, \tilde{f}^{\mathcal{E}}) \\
    &~~~\ge \frac{1-\delta}{2} \zeta^{-1} \|g - \tilde{g}\|_2^2 + \frac{\gamma}{4} \bar{\mathsf{d}}(S) + \frac{\gamma}{2} \|\bar{m} - g\|_2^2\\
    &~~~~~~~~~~~ + \left(\frac{ 4\zeta^2\delta^{-1} \gamma^\star}{4} \bar{\mathsf{d}}(S) - \zeta^2 \delta^{-1} \mathsf{b}(S) \right) \\
    &~~~~~~~~~~~ - \gamma \frac{1}{2|\mathcal{E}|} \sum_{e\in \mathcal{E}}  \|f^{(e)} - m^{(e,S)} - g\}\|_{2,e}^2 - \left(\zeta^2 \delta^{-1} + \frac{\gamma}{2} \right) \|m^\star - \tilde{g}\|_2^2.
\end{align*} This completes the proof provided the definition of $\gamma^\star$. \qed

%%%%%%%%%%%%%%%%%%%%%%%%%%%%%%%%%%%%%%%%%%%%%%%%%%%%%%%%%%%%%%%%%%%%%%%%%%%%%%%%%%%%%%%%%%%%%%%%%%%%%%%%%%%%%%%%%%%%%%%%%%%%%
%%%%%%%%%%%%%%%%%%%%%%%%%%%%%%%%%%%%%%%%%%%%%%%%%%%%%%%%%%%%%%%%%%%%%%%%%%%%%%%%%%%%%%%%%%%%%%%%%%%%%%%%%%%%%%%%%%%%%%%%%%%%%
%%%%%%%%%%%%%%%%%%%%%%%%%
%%%%%%%%%%%%%%%%%%%%%%%%%                                Example Estimators
%%%%%%%%%%%%%%%%%%%%%%%%%
%%%%%%%%%%%%%%%%%%%%%%%%%%%%%%%%%%%%%%%%%%%%%%%%%%%%%%%%%%%%%%%%%%%%%%%%%%%%%%%%%%%%%%%%%%%%%%%%%%%%%%%%%%%%%%%%%%%%%%%%%%%%%
%%%%%%%%%%%%%%%%%%%%%%%%%%%%%%%%%%%%%%%%%%%%%%%%%%%%%%%%%%%%%%%%%%%%%%%%%%%%%%%%%%%%%%%%%%%%%%%%%%%%%%%%%%%%%%%%%%%%%%%%%%%%%

\section{Proofs for Example Estimators}
\label{sec:proof-application}

\subsection{Preliminaries}

We will use the following facts relating to the approximation error and stochastic error for neural networks. We first introduce some notations. Define
\begin{align*}
    \|h\|_{\infty, X} = \sup_{x \in X} |h|.
\end{align*}
Let $\mathcal{H}$ be a function class defined on $\mathcal{Z}$, we denote $\mathcal{N}(\epsilon, \mathcal{H}, d(\cdot, \cdot))$ to be the $\epsilon$-covering number of function class $\mathcal{H}$ with respect to the metric $d$, we let
\begin{align*}
    &\mathcal{N}_p(\epsilon, \mathcal{H}, z_1^n) = \mathcal{N}\left(\epsilon, \mathcal{H}, d\right) \\
    &\qquad \text{with} \qquad d(f, g) = \begin{cases}
        \left(\frac{1}{n} \sum_{i=1}^n |f(z_i) - g(z_i)|^p\right)^{1/p} \qquad & 1\le p<\infty \\
        \max_{1\le i\le n} |f(z_i) - g(z_i)| \qquad & p = \infty
    \end{cases}
\end{align*} for any $p \in [1,\infty]$, and define the uniform covering number $\mathcal{N}_\infty(\varepsilon, \mathcal{H}, n)$ as
\begin{align*}
    \mathcal{N}_\infty(\epsilon, \mathcal{H}, n) = \sup_{z_1,\ldots, z_n} \mathcal{N}_{\infty}(\epsilon, \mathcal{H}, z_1^n)
\end{align*}

We will use the following facts regarding the uniform covering number.
\begin{lemma}
\label{lemma:uniform-cover-number-plus}
    The following holds
    \begin{align*}
    \log \mathcal{N}_\infty(\epsilon, \mathcal{H}_1 + \mathcal{H}_2, n) \le \log \mathcal{N}_\infty(\epsilon/2, \mathcal{H}_1, n) + \log \mathcal{N}_\infty(\epsilon/2, \mathcal{H}_2, n)
    \end{align*}
\end{lemma}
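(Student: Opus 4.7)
The claim is the standard covering-number bound for a Minkowski sum of function classes, and the plan is a direct explicit-cover construction. Fix an arbitrary tuple $z_1^n = (z_1,\ldots,z_n)$. The first step is to select a minimal $(\epsilon/2)$-cover $\mathcal{C}_1 \subseteq \mathcal{H}_1$ of $\mathcal{H}_1$ under the empirical $\ell_\infty$ pseudometric $d(f,g) = \max_{1 \le i \le n}|f(z_i)-g(z_i)|$, and analogously a minimal $(\epsilon/2)$-cover $\mathcal{C}_2 \subseteq \mathcal{H}_2$. By definition these have cardinalities $|\mathcal{C}_j| = \mathcal{N}_\infty(\epsilon/2,\mathcal{H}_j,z_1^n)$ for $j=1,2$.

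Next I form the candidate cover $\mathcal{C} = \{c_1 + c_2 : c_1 \in \mathcal{C}_1,\, c_2 \in \mathcal{C}_2\}$ for $\mathcal{H}_1+\mathcal{H}_2$, which has cardinality at most $|\mathcal{C}_1|\cdot|\mathcal{C}_2|$. To verify it is an $\epsilon$-cover, take any $h \in \mathcal{H}_1+\mathcal{H}_2$ and write $h = h_1+h_2$ with $h_j \in \mathcal{H}_j$. Choose $c_j \in \mathcal{C}_j$ with $\max_i |h_j(z_i)-c_j(z_i)| \le \epsilon/2$; then the triangle inequality gives
\begin{align*}
\max_{1\le i\le n} |h(z_i) - (c_1(z_i)+c_2(z_i))| \le \max_i |h_1(z_i)-c_1(z_i)| + \max_i |h_2(z_i)-c_2(z_i)| \le \epsilon.
\end{align*}
Hence $\mathcal{N}_\infty(\epsilon,\mathcal{H}_1+\mathcal{H}_2,z_1^n) \le \mathcal{N}_\infty(\epsilon/2,\mathcal{H}_1,z_1^n)\cdot \mathcal{N}_\infty(\epsilon/2,\mathcal{H}_2,z_1^n)$.

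Finally, taking logarithms, applying $\log(ab)=\log a + \log b$, and then taking the supremum over $z_1^n$ on both sides yields the stated inequality, since each factor on the right is individually bounded by its supremum. There is no real obstacle here: the argument is entirely mechanical, and the only substantive step is the triangle-inequality verification that the product cover has the prescribed radius. One minor bookkeeping point is that the supremum of a product of two nonnegative functions is bounded by the product of their suprema, which is what legitimizes passing from a pointwise bound on $z_1^n$ to the uniform covering number on the right-hand side; this is trivial but should be noted explicitly in the written proof.
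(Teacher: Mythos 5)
Your proof is correct and follows essentially the same route as the paper's: construct the product cover from $(\epsilon/2)$-covers of $\mathcal{H}_1$ and $\mathcal{H}_2$, verify the radius by the triangle inequality, and pass to the uniform covering number by taking suprema. The only cosmetic difference is that the paper fixes covers of size equal to the uniform covering numbers from the outset rather than taking minimal covers for each $z_1^n$ and bounding the supremum of the product at the end; both are valid.
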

\begin{proof}[Proof of \cref{lemma:uniform-cover-number-plus}]
    For fixed $z_1,\ldots, z_n$, we let $N_1 = \mathcal{N}_\infty(\epsilon/2, \mathcal{H}_1, n)$ and $N_2 = \mathcal{N}_\infty(\epsilon/2, \mathcal{H}_2, n)$, it follows from the definition of uniform covering number that, there exists $\{f_1,\ldots, f_{N_1} \} \subseteq \mathcal{H}_1$ and $\{g_1,\ldots, g_{N_2}\} \subseteq \mathcal{H}_2$ such that, for any $f \in \mathcal{H}_1$ and $g\in \mathcal{H}_2$, 
    \begin{align*}
        \sup_{i\in [n]} \inf_{k\in [N_1]} |f(z_i) - f_k(z_i)| \le \epsilon / 2 \qquad \text{and} \qquad \sup_{i\in [n]} \inf_{k\in [N_2]} |g(z_i) - g_k(z_i)| \le \epsilon / 2
    \end{align*} We construct another covering set $V = \{v(z) = f_{k_1}(z) + g_{k_2}(z): k_1\in [N_1], k_2\in [N_2]\}$. It is obvious that $|V| \le N_1N_2$. Moreover, for any $h\in \mathcal{H}_1 + \mathcal{H}_2$ with $h=f+g$, 
    \begin{align*}
        \sup_{i\in [n]} \inf_{v \in V} |h(z_i) - v(z_i)| &= \sup_{i\in [n]} \inf_{v \in V} |f(z_i) + g(z_i) - v(z_i)| \\
        &= \sup_{i\in [n]} \inf_{k_1\in [N_1], k_2 \in [N_2]} |f(z_i) + g(z_i) - f_{k_1}(z_i) - f_{k_2}(z_i)|\\
        &\le \sup_{i\in [n]} \inf_{k_1\in [N_1]} |f(z_i) - f_{k_1}(z_i)| + \sup_{i\in [n]} \inf_{k_2\in [N_2]} |f(z_i) - f_{k_2}(z_i)| \le \epsilon.
    \end{align*}
    Therefore, one has
    \begin{align*}
    \mathcal{N}_{\infty}(\epsilon, \mathcal{H}_1 + \mathcal{H}_2, z_1^n) \le |V| \le N_1N_2
    \end{align*} Taking supremum over all the $z_1,\ldots, z_n$ completes the proof.
\end{proof}

\begin{lemma}[Calculating Local Rademacher Complexity with Uniform Covering Number]
\label{lemma:local-rademacher-uniform-covering-number}
    Let $Z_1,\ldots, Z_n \overset{i.i.d.}{\sim} \nu$ be random variables on $\mathcal{Z}$, and $\mathcal{H}$ be a function class satisfying $\sup_{h\in \mathcal{H}}\|h\|_\infty \le b$ \begin{align}
\label{eq:entropy-h}
    \log \mathcal{N}_\infty(\epsilon, \mathcal{H}, n) \le A_1 \log (A_2/\epsilon) \qquad \qquad \forall \epsilon \in (0, b]
\end{align} where $(A_1, A_2)$ are dependent on $\mathcal{H}$ and $n$ but independent of $\epsilon$. Then there exists some universal constant $C$ such that, for any $n\ge 3$
\begin{align*}
    R_{n,\nu}(\delta; \mathcal{H}) \le b \delta_n \delta\qquad \forall \delta \in [\delta_n, b] 
\end{align*} with $\delta_n =C\sqrt{n^{-1} (A_1\log (A_2n)+\log(bn))}$.
\end{lemma}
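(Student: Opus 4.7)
The plan is to combine Dudley-style chaining with Massart's finite-class inequality, where the uniform covering number controls the entropy at every scale. Condition on $X_1,\ldots,X_n$ and work with the conditional Rademacher complexity $\widehat{R}_n(\delta) := \mathbb{E}_\varepsilon[\sup_{h \in \mathcal{H}: \|h\|_{L_2(\nu)}\le \delta} |n^{-1}\sum_{i=1}^n \varepsilon_i h(X_i)|]$, and take the $X$-expectation only at the end. At each geometric scale $\eta_k = \delta\, 2^{-k}$ for $k=0,1,\ldots,K$ with $K \asymp \log_2(\delta n)$, the assumption \eqref{eq:entropy-h} together with the definition of $\mathcal{N}_\infty(\cdot,\mathcal{H},n)$ furnishes a (sample-dependent) cover $\mathcal{C}_{\eta_k}\subseteq \mathcal{H}$ of cardinality at most $(A_2/\eta_k)^{A_1}$ and projections $\pi_k(h)\in\mathcal{C}_{\eta_k}$ such that $\max_i|h(X_i)-\pi_k(h)(X_i)|\le \eta_k$. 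The standard telescoping decomposition
$$h = \pi_0(h) + \sum_{k=1}^K \bigl(\pi_k(h)-\pi_{k-1}(h)\bigr) + \bigl(h-\pi_K(h)\bigr)$$
then reduces the problem to bounding each piece separately.

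For the increments, $\|\pi_k(h)-\pi_{k-1}(h)\|_{\infty,X_1^n}\le 3\eta_{k-1}$ and there are at most $|\mathcal{C}_{\eta_k}|\cdot|\mathcal{C}_{\eta_{k-1}}|$ such pairs, so Massart's finite-class lemma yields
$$\mathbb{E}_\varepsilon\Bigl[\sup_h\Bigl|\tfrac1n\sum_{i=1}^n\varepsilon_i(\pi_k-\pi_{k-1})(h)(X_i)\Bigr|\Bigr] \,\lesssim\, \eta_{k-1}\sqrt{A_1\log(A_2/\eta_k)/n}.$$
Summing over $k$ produces a Dudley-type integral bounded by $C\delta\sqrt{A_1\log(A_2 n)/n}$, while the residual $\|h-\pi_K(h)\|_{\infty,X_1^n}\le 1/n$ contributes an $O(1/n)$ term. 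For the coarsest layer $\pi_0(h)\in\mathcal{C}_\delta$, the key localization step is to restrict attention to the ``active'' cover points, namely those within empirical $L_\infty$-distance $\delta$ of some $h$ with $\|h\|_{L_2(\nu)}\le \delta$. Applying Massart to this restricted subclass, with variance proxy governed by $\max_{g\text{ active}}\|g\|_n$, yields $\mathbb{E}_\varepsilon[\sup |\cdot|]\lesssim (\max \|g\|_n)\sqrt{A_1\log(A_2/\delta)/n}$.

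The main technical hurdle is controlling $\max_{g\text{ active}}\|g\|_n$ after taking expectation over $X$. I would argue that for any active $g$ there exists $h$ with $\|h\|_{L_2(\nu)}\le \delta$ and $\|g-h\|_{\infty,X_1^n}\le \delta$, whence $\|g\|_n \le \|h\|_n + \delta$; then a one-sided uniform concentration inequality for $\|h\|_n^2-\|h\|_{L_2(\nu)}^2$ over $\mathcal{H}$ (proved by symmetrization and another application of chaining to the squared class, together with the $b$-boundedness) gives $\|h\|_n^2 \le 2\delta^2 + O(b^2\delta_n^2)$ in expectation, hence $\max\|g\|_n \lesssim \delta + b\delta_n$. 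Plugging this back, the coarse layer is bounded by $(\delta+b\delta_n)\sqrt{A_1\log(A_2/\delta)/n}$. Combining all three contributions gives
$$R_{n,\nu}(\delta;\mathcal{H}) \,\lesssim\, \delta\sqrt{\tfrac{A_1\log(A_2 n)}{n}} + b\delta_n \sqrt{\tfrac{A_1\log(A_2 n)}{n}} + \tfrac{1}{n},$$
and since $\delta\ge \delta_n$ with $\delta_n^2\asymp n^{-1}(A_1\log(A_2 n)+\log(bn))$, each term is at most a universal constant multiple of $b\delta_n\delta$, yielding the claim. The delicate part is calibrating the concentration in the second paragraph so that the localization constant and the factor of $b$ propagate correctly; everything else is routine chaining bookkeeping.
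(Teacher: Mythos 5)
Your proposal is correct, and it reaches the bound through the same two pillars as the paper -- chaining against the uniform covering numbers, plus a uniform comparison between the empirical norm $\|h\|_{n,\nu}$ and the population norm $\|h\|_{2,\nu}$ -- but it executes them in a different order and in a different mode. The paper first proves a \emph{high-probability} bound on $\sup_{\|h\|_{n,\nu}\le\delta}|n^{-1}\sum_i\varepsilon_i h(Z_i)|$ by a sub-Gaussian chaining argument localized in the \emph{empirical} norm, upgrades it to an instance-dependent statement via a peeling device, separately invokes a ratio-type uniform law (Theorem~19.3 of Gy\"orfi et al.) to transfer between $\|h\|_{n,\nu}$ and $\|h\|_{2,\nu}$, and only at the very end integrates the tail bounds (using $b$-boundedness on the complement event) to recover the expectation $R_{n,\nu}(\delta;\mathcal{H})$. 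You instead work directly in expectation, conditioning on the sample and applying Massart's finite-class lemma at each dyadic scale, localizing from the start in the \emph{population} norm and pushing the empirical-versus-population comparison into the control of $\max_g\|g\|_n$ over the active points of the coarsest cover; this avoids the peeling step and the tail-to-expectation conversion entirely. The trade-off is that your route is shorter and more self-contained for the expectation statement being proved, while the paper's tail-bound machinery is of the form it reuses elsewhere; in both arguments the genuinely nontrivial ingredient is the same uniform concentration of $\|h\|_n^2-\|h\|_{2,\nu}^2$ at scale $b^2\delta_n^2$, which you sketch via symmetrization and contraction applied to the squared class and the paper obtains by citation -- so neither treatment leaves a gap the other closes. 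One small bookkeeping point: your final bound $\delta\sqrt{A_1\log(A_2n)/n}\lesssim b\,\delta_n\delta$ silently uses $b\ge1$, exactly as the paper's ``without loss of generality $b=1$'' rescaling does; since every application in the paper has $B\ge1$, this is harmless, but worth a sentence if you write it up.
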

\begin{proof}
Without loss of generality, we prove the claim when $b=1$.
Let $Z_1,\ldots, Z_n \overset{i.i.d.}{\sim} \nu$, and $\varepsilon_1,\ldots, \varepsilon_n$ be i.i.d. Rademacher random variables that is also independent of $(Z_1,\ldots, Z_n)$. In the proof, we denote $\|h\|_{2,\nu} = \{\int h^2\nu(dx)\}^{1/2}$ and $\|h\|_{n,\nu}=\{\frac{1}{n} \sum_{i=1}^n h^2(Z_i)\}^{1/2}$.

\noindent {\sc Step 1. Application of Chaining.} Let  $V_u = \frac{A_1 \log (A_2 n) + u+1}{n}$ for arbitrary $u>0$. Define the event
\begin{align}
    \mathcal{A}_t(\delta) = \left\{\sup_{h\in \mathcal{H}, \|h\|_{n,\nu} \le \delta} \left|\frac{1}{n} \sum_{i=1}^n \varepsilon_i h(Z_i) \right|\le 10 \delta \sqrt{\frac{A_1 \log (A_2 n) + u+1}{n}}\right\}
\end{align} we claim that $\mathbb{P}[\mathcal{A}_t(\delta)] \ge 1-4e^{-u}$. For any fixed $(Z_1,\ldots, Z_n)$, we define $\mathcal{H}_n(\delta) = \{h\in \mathcal{H}: \|h\|_{n, v} \le \delta\}$, $X_h = \frac{1}{n} \sum_{i=1}^n h(Z_i) \varepsilon_i$ for any $h \in \mathcal{H}$, and let define the distance $d(h, h')=n^{-1/2}\|h - h'\|_{n, \nu}$. It is easy to see that $\sup_{h, h'\in \mathcal{H}_n(\delta)} d(h, h') \le 2n^{-1/2}\delta$, and
\begin{align*}
    \mathbb{P}\left[|X_h - X_{h'}| \ge u|Z_1^n\right] \le e^{-{u^2}/(2d(h, h'))}
\end{align*} using Hoeffding bound and the fact that $\varepsilon_1,\ldots,\varepsilon_n$ are i.i.d. random variable with $\mathbb{E} [e^{\lambda \varepsilon_1}] \le e^{\frac{1}{2} \lambda^2}$ for any $\lambda > 0$. Then it follows from the chaining argument (Lemma 11 in \cite{fan2024factor}) that 
\begin{align*}
    \sup_{h, h'\in \mathcal{H}_n(\delta)} |X_h - X_{h'}| \le \alpha(\epsilon, u) + \int_{\epsilon/4}^{2n^{-1/2}\delta} \sqrt{\log \mathcal{N}(\omega, \mathcal{H}_n(\delta), d)} d\omega + (\sqrt{u} + 1) \frac{2\delta}{\sqrt{n}}
\end{align*} with probability at least $1-2e^{-u}$, where $\alpha(\epsilon, u)$ satisfies $\mathbb{P}\left[\sup_{h,h'\in \mathcal{H}_n(\delta), d(h,h') \le \epsilon} |X_h-X_{h'}| \le \alpha(\epsilon, u)\right] \ge 1-e^{-u}$. We choose $\epsilon=n^{-1.5}$. On one hand, it follows from Cauchy-Schwarz inequality that
\begin{align*}
    \sup_{h,h'\in \mathcal{H}_n(\delta), d(h,h') \le \epsilon} \le \sqrt{\frac{1}{n} \sum_{i=1}^n \varepsilon_i^2} \times  \|h-h'\|_{n,\nu} \le \epsilon \sqrt{n} \le n^{-1} = \alpha(\epsilon, u)
\end{align*} with probability 1 since $\frac{1}{n} \sum_{i=1}^n \varepsilon_i^2=1$. On the other hand, observe that any $\epsilon$-net of $\mathcal{H}_n(\delta)$ with respect to $\|\cdot\|_{\infty, \{Z_1^n\}}$ is also a $\sqrt{n}\epsilon$-net of $\mathcal{H}_n(\delta)$ with respect to the metric $d$ we defined. Therefore, applying \eqref{eq:entropy-h} yields
\begin{align*}
    &\int_{n^{-3/2}/4}^{2n^{-1/2}\delta}  \sqrt{\log \mathcal{N}(\omega, \mathcal{H}_n(\delta), d)} d\omega \\
    &~~~\le \int_{n^{-3/2}/4}^{2n^{-1/2}\delta}  \sqrt{\log \mathcal{N}_\infty(\omega/n^{1/2} , \mathcal{H}, n)} d\omega \\
    &~~~\le \int_{n^{-3/2}/4}^{2n^{-1/2}\delta} \sqrt{A_1 \log (A_2\sqrt{n}/\omega)} d\omega
    \le 4n^{-1/2} \delta \sqrt{A_1 \log (A_2 n)}.
\end{align*} Putting all the pieces together, we have
\begin{align*}
    \sup_{h, h'\in \mathcal{H}_n(\delta)} (X_h - X_{h'}) \le n^{-1} + 8\left(\sqrt{\frac{A_1\log(A_2 n) + u + 1}{n}} \right) 
\end{align*} Combine with the fact that for fixed $\breve{h}$, $\mathbb{P}[|X_{\breve{h}}| \ge \sqrt{u/n} \delta|Z_1^n] \le 2e^{-u}$, we find
\begin{align*} 
\sup_{h\in \mathcal{H}_n(\delta)} |X_h| \le \sup_{h\in \mathcal{H}_n(\delta)} |X_h - X_h'| + |X_{\breve{h}}| \le 10\left(\sqrt{\frac{A_1\log(A_2 n) + u + 1}{n}} \right) 
\end{align*} with probability at least $1-4e^{-u}$. Therefore, we conclude $\mathbb{P}[\mathcal{A}_t(\delta)] = \mathbb{E}[\mathbb{P}(\mathcal{A}_t(\delta)|Z_1^n)] \le 1-4e^{-u}$. 

\noindent {\sc Step 2. Application of peeling.} Let $\delta_\star=\sqrt{n^{-1} (5+t+A_1\log(A_2 n) + \log(bn))}$. In this step, we show that the following event
\begin{align*}
    \mathcal{B}_t = \left\{ \left|\frac{1}{n} \sum_{i=1}^n h(Z_i) \varepsilon_i \right| \le 20 \left(\|h\|_{n, \nu} \delta_\star + \delta_\star^2\right)\right\}
\end{align*} occurs with probability at least $1-e^{-t}$. To this end, we apply a peeling device. Define $\eta_\ell = 2^{\ell-1}$ for $\ell \ge 1$ and $\eta_0 = 0$. Let $\mathcal{H}_\ell = \{h\in \mathcal{H}: \eta_{\ell-1} \delta_* \le \|h\|_{n,\nu} \le \eta_\ell \delta_*\}$, then
\begin{align*}
    \mathbb{P}\left[\mathcal{B}_t^c\right] &= \mathbb{P}\left[\exists h\in \mathcal{H}, \left|\frac{1}{n} \sum_{i=1}^n h(Z_i) \varepsilon_i \right| \ge 20 \left(\|h\|_{n, \nu} \delta_\star + \delta_\star^2\right)\right] \\
    &\overset{(a)}{\le} \sum_{\ell=1}^{\lceil\log_2(b/\delta_*)\rceil} \mathbb{P}\left[\exists h\in \mathcal{H}_\ell, \left|\frac{1}{n} \sum_{i=1}^n h(Z_i) \varepsilon_i \right| \ge 20 \left(\|h\|_{n, \nu} \delta_\star + \delta_\star^2\right)\right] \\
    &\overset{(b)}{\le} \sum_{\ell=1}^{\lceil\log_2(b/\delta_*)\rceil} \mathbb{P}\left[\sup_{h\in \mathcal{H}, \|h\|_{n, \nu} \le \eta_l \delta_* }\left|\frac{1}{n} \sum_{i=1}^n h(Z_i) \varepsilon_i \right| \ge 20 \left((\eta_{\ell-1} \delta_\star)\delta_\star + \delta_\star^2\right)\right] \\
    &\overset{(c)}{\le} \sum_{\ell=1}^{\lceil\log_2(b/\delta_*)\rceil} \mathbb{P}\left[\sup_{h\in \mathcal{H}, \|h\|_{n, \nu} \le 
    \eta_l \delta_* }\left|\frac{1}{n} \sum_{i=1}^n h(Z_i) \varepsilon_i \right| \ge 10 (\eta_{\ell} \delta_\star^2) \right] \\
    &\overset{(d)}{\le} 4\log(bn) 4e^{-t-4-2\log(bn)} \le e^{-t}.
\end{align*} where $(a)$ follows from union bound and the uniform boundedness of $\mathcal{H}$, $(b)$ follows from the definition of $\mathcal{H}_\ell$, $(c)$ follows from the fact that $\eta_{\ell-1}+1 \ge \frac{1}{2}\eta_{\ell}$, $(d)$ follows from the result we obtained in {\sc Step 1}. 

\noindent {\sc Step 3. } Define the event
\begin{align*}
    \mathcal{C}_t = \left\{\left|\|h\|_{n, \nu}^2 - \|h\|_{2, \nu}^2\right| \le \frac{1}{2} \|h\|_{2, \nu}^2 + C_1b^2\delta_*\right\}
\end{align*} for some large enough constant $C_1>0$, we apply Theorem 19.3 in \cite{gyorfi2002distribution} to show that $\mathbb{P}[\mathcal{C}_t] \ge 1-e^{-t}$. Without loss of generality, we prove for the case where $b=1$, the results for general $b$ can be obtained via re-scaling argument. 

Let $\bar{\mathcal{H}} = \{h^2: h\in \mathcal{H}\}$. We have $\|\bar{h}\|_\infty \le 1$ for any $\bar{h} \in \bar{\mathcal{H}}$ and $\mathbb{E}[\bar{h}^2] \le \mathbb{E}[\bar{h}]$.  Following their notations, we choose $\epsilon=1/2$ and $\alpha=C_2\frac{A_1 \log (A_2n)+t+1}{n}$ for some large enough universal constant $C_2>0$ to be determined. Note any $\epsilon$-net of $\mathcal{H}$ w.r.t. $\|\cdot\|_{\infty, \{z_1^n\}}$ norm is also a $(b\epsilon)$-net of $\bar{\mathcal{H}}$ with respect to $\|\cdot\|_{n, \nu}$ norm, then for any $\omega \in (0,ne)$, it follows from \eqref{eq:entropy-h} that
\begin{align*}
    \log \mathcal{N}_2(\omega, \{h\in \mathcal{H}, \|h\|_{n, v}^2\le 16\delta\}, z_1^n) \le \log \mathcal{N}_\infty(\omega/b,\mathcal{H}, z_1^n) \le A_1\log (A_2/\omega).
\end{align*} We let $C_2$ be large enough such that the condition (Eq (19.11) therein)
\begin{align*}
    \int_{\delta/(128)}^{\sqrt{\delta}} \sqrt{\log \mathcal{N}_2(\omega, \{h\in \mathcal{H}, \|h\|_{n, v}^2\le 16\delta\}, z_1^n)} \le \sqrt{\delta A_1\log (A_2 128n)} \le \frac{\sqrt{n} \delta}{748\sqrt{2}}
\end{align*} is satisfied for any $\delta \ge \alpha/8$. Applying Theorem 19.3 in \cite{gyorfi2002distribution}, we obtain
\begin{align*}
    \mathbb{P}\left[\sup_{h\in \mathcal{H}} \frac{|\|h\|_{n, \nu}^2 - \|h\|_{2, \nu}^2|}{\|h\|_{2, \nu}^2+\alpha} \ge 1/2\right] \le C_3 e^{-n\alpha/C_3}
\end{align*} for some large enough constant $C_3>0$. We let $C_1$ be also large enough such that $C_2 \ge C_3 \log C_3$. Then the above inequality implies that 
\begin{align*}
    \forall h\in \mathcal{H} \qquad |\|h\|_{n, \nu}^2 - \|h\|_{2, \nu}^2| \le \frac{1}{2} \left(\|h\|_{2, \nu}^2+\alpha\right)
\end{align*} with probability at least $1-e^{-n\alpha} \ge 1-e^{-t}$. This completes the proof.

\noindent {\sc Step 4. Conclude the proof.} By the definition of $R_{n,\nu}(\delta;\mathcal{H})$, for fixed $\delta \in [\delta_*, b]$, 
\begin{align*}
R_{n,\nu}(\delta;\mathcal{H}) &= \mathbb{E} \left[\sup_{h\in \mathcal{H}, \|h\|_2 \le \delta} \left|\frac{1}{n} \sum_{i=1}^n h(Z_i) \varepsilon_i\right|\right] \\
&= \mathbb{E} \left[\sup_{h\in \mathcal{H}, \|h\|_{2,\nu} \le \delta} \left|\frac{1}{n} \sum_{i=1}^n h(Z_i) \varepsilon_i\right| 1\{\mathcal{B}_{\log n}, \mathcal{C}_{\log n} ~\text{all occur}\}\right] \\
&~~~~~~~~~~~~ + \mathbb{E} \left[\sup_{h\in \mathcal{H}, \|h\|_2 \le \delta} \left|\frac{1}{n} \sum_{i=1}^n h(Z_i) \varepsilon_i\right| 1\{\text{one of } \mathcal{B}_{\log n}, \mathcal{C}_{\log n} ~\text{does not occur}\}\right] \\
&\le \mathbb{E} \left[\sup_{h\in \mathcal{H}, \|h\|_{2,\nu} \le \delta} 20\left(\|h\|_{n,\nu} \delta_* + \delta_*^2\right) 1\{\mathcal{C}_{\log n} \text{ occurs}\}\right] + \frac{2b}{n}\\
&\le \mathbb{E} \left[\sup_{h\in \mathcal{H}, \|h\|_{2,\nu} \le \delta} 20\left\{\sqrt{2\|h\|_{2,\nu}^2 + 2C_1b^2\delta_*}\delta_* + \delta_*^2\right\}\right] + \frac{2b}{n}\\
&\le C_4 \left(\delta \delta_* + b\delta_*^2\right).
\end{align*} for some universal constant $C_4>0$. Therefore it is clear that $R_{n,\nu}(\delta;\mathcal{H}) \le b\delta \delta_n$ for any $\delta \ge \delta_n = \sqrt{2C_4} \delta_*$.

\end{proof}

The following lemma is used to characterize the non-asymptotic approximation error for the ReLU neural network. 

\begin{lemma}[Upper bound on neural network approximation error for $\mathcal{H}_{\mathtt{HS}}$ and $\mathcal{H}_{\mathtt{HCM}}$]
\label{lemma:nn-approx-error}
There exists some universal constants $C$ depending only on $(d, \beta_s)$ such that for arbitrary $g\in \mathcal{H}_{\mathtt{HS}}(d, \beta_s, C_s)$ and $N, L\in \mathbb{N}^+ \setminus \{1\}$, there exists a deep ReLU network $g^\dagger \in \mathcal{H}_{\mathtt{nn}}(d, C\lceil L\log L\rceil, C\lceil N\log N\rceil, \infty, \infty)$ satisfying
\begin{align*}
    \|g^\dagger-g\|_{\infty,[0,1]^d} \le C C_s(NL)^{-2\beta_s/d}.
\end{align*}
Let $b$ be some fixed positive constant, and $\mathcal{P}$ be such that $\sup_{(\beta, t)\in \mathcal{P}} (\beta \lor t) <\infty$. There also exists some universal constants $C$ depending on $(d, l, \mathcal{P}, C_h, \sup_{(\beta, t)\in \mathcal{P}} (\beta \lor t), b)$ such that for arbitrary $g\in \mathcal{H}_{\mathtt{HCM}}(d, l,\mathcal{P}, C_h)$ and  $N, L\in \mathbb{N}^+ \setminus \{1\}$, there exists a deep ReLU network $g^\dagger \in \mathcal{H}_{\mathtt{nn}}(d, C\lceil L\log L\rceil, C\lceil N\log N\rceil, \infty, \infty)$ satisfying
\begin{align*}
    \|g^\dagger-g\|_{\infty,[-b,b]^d} \le C (NL)^{-\inf_{(\beta, t) \in \mathcal{P}}(2\beta/t)}.
\end{align*}
\end{lemma}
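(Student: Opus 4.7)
\emph{Part 1 (Hölder smooth case).} My plan is to invoke the sharp ReLU approximation rate for Hölder-smooth functions due to Lu--Shen--Yang--Zhang (and used in Kohler--Langer's analysis), which says that for any $g\in\mathcal{H}_{\mathtt{HS}}(d,\beta_s,C_s)$ and any $N,L\ge 2$ there is a deep ReLU network with width $O(N\log N)$ and depth $O(L\log L)$ achieving sup-norm error $O(C_s(NL)^{-2\beta_s/d})$ on $[0,1]^d$. If I had to build this from scratch, the steps would be: (i) tile $[0,1]^d$ into cells of side $\sim (NL)^{-2/d}$, (ii) on each cell replace $g$ by its order-$\lfloor\beta_s\rfloor$ Taylor polynomial (Hölder continuity gives the pointwise approximation error $C_sC'(NL)^{-2\beta_s/d}$), (iii) implement a ReLU approximation of monomials (the Yarotsky gadget for $x\mapsto x^2$ and iteration to monomials), and (iv) implement the ``soft indicator'' partition of unity for the cells using ReLU bit-extraction constructions to get the extra factor $L$ in the denominator beyond the plain Yarotsky $N^{-2\beta_s/d}$. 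The technical heart is (iv); fortunately the result is off-the-shelf, and one can simply re-scale outputs so that the bound reads $CC_s(NL)^{-2\beta_s/d}$.

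\emph{Part 2 (Hierarchical composition case).} I will proceed by induction on the composition depth $l$, writing $\alpha^\star=\inf_{(\beta,t)\in\mathcal{P}}(2\beta/t)$. The base case $l=0$ is trivial because $g(x)=x_j$ is realized exactly by a width-$1$, depth-$1$ ReLU network (using $x=\sigma(x)-\sigma(-x)$). For the inductive step, write
\[
g(x) = h(f_1(x),\dots,f_t(x)), \qquad h\in\mathcal{H}_{\mathtt{HS}}(t,\beta,C_h),\ (\beta,t)\in\mathcal{P},\ f_i\in\mathcal{H}_{\mathtt{HCM}}(d,l-1,\mathcal{P},C_h).
\]
Apply the inductive hypothesis to each $f_i$ to obtain a ReLU network $\widetilde f_i$ with error $\varepsilon_f:=C'(NL)^{-\alpha^\star}$ on $[-b,b]^d$. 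Since $f_i$ maps $[-b,b]^d$ into a bounded set $[-b',b']$ (all component functions are at least Lipschitz and bounded on compacta), $\widetilde f_i$ stays in a slightly enlarged box $[-b'',b'']$. Rescale and apply Part 1 to $h$ on $[-b'',b'']^t$ to obtain $\widetilde h$ with sup-norm error $\varepsilon_h:=C''(NL)^{-2\beta/t}\le C''(NL)^{-\alpha^\star}$. Then define $\widetilde g(x)=\widetilde h(\widetilde f_1(x),\dots,\widetilde f_t(x))$ and estimate
\[
\|\widetilde g - g\|_{\infty,[-b,b]^d} \le \|\widetilde h(\widetilde f) - h(\widetilde f)\|_\infty + \|h(\widetilde f)-h(f)\|_\infty \le \varepsilon_h + \mathrm{Lip}(h)\sqrt{t}\,\varepsilon_f,
\]
using that each component of $\mathcal{P}$ is at least Lipschitz. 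Both terms are $O((NL)^{-\alpha^\star})$, with constants depending only on the allowed parameters. The resulting network is a stacking of the approximations for $\widetilde f_i$ (in parallel) followed by that for $\widetilde h$, so its total width and depth are still of the form $C\lceil N\log N\rceil$ and $C\lceil L\log L\rceil$ after absorbing the $l$-dependent multiplicative constants into $C$.

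\emph{Main obstacle.} Part 1 itself is the sharp and technical ingredient; once it is in hand, Part 2 is a bookkeeping induction. The subtlety in Part 2 is ensuring that the widths and depths of the sub-networks can be combined without blowing up beyond $O(N\log N)$ and $O(L\log L)$: this works because $l$, $d$, and $\sup_{(\beta,t)\in\mathcal{P}}(\beta\lor t)$ are all treated as constants absorbed into $C$, so stacking finitely many sub-networks in parallel (width) and in series (depth) only changes $C$. A secondary point that must be checked carefully is that the intermediate ReLU networks $\widetilde f_i$ take values in the domain where $h$ has been uniformly approximated; this is handled by choosing $b''>b'$ with a safety margin and, if needed, truncating $\widetilde f_i$ using two ReLUs so that its range sits inside $[-b'',b'']$, which only improves the sup-norm error.
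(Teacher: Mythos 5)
Your proposal is correct and matches the paper's treatment: the paper simply cites Theorem 1 of Lu et al. (2021) for the Hölder-smooth rate and Proposition 3.4 of Fan et al. (2024) for the hierarchical composition case, and your Part 2 reconstructs exactly the standard Lipschitz-propagation induction underlying that cited proposition. No gaps.
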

\begin{proof}[Proof of \cref{lemma:nn-approx-error}] The proof for $\mathcal{H}_{\mathtt{HS}}$ can be found in Theorem 1 in \cite{lu2021deep}. The proof for $\mathcal{H}_{\mathtt{HCM}}$ can be found in Proposition 3.4 in \cite{fan2024noise}.
\end{proof}

We also need the following simple approximation result.

\begin{lemma}
\label{lemma:approx-minus}
    Let $L \in \mathbb{N}$, $N \ge 4$, $b\in \mathbb{R}^+$ and $V\ge b \lor 1$ be arbitrary.
    For any $g_1\in \mathcal{H}_{\mathtt{nn}}(d, L, N, b, V)$ and $g_2\in \mathcal{H}_{\mathtt{nn}}(d, L, N, b, V)$, there exists some $g^\dagger \in \mathcal{H}_{\mathtt{nn}}(d, L+2, 2N, 2b, V)$ such that
    \begin{align*}
    \forall x\in \mathbb{R}^d\qquad
        g^\dagger(x) = g_1(x) - g_2(x)
    \end{align*}
\end{lemma}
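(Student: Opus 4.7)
The plan is to give an explicit concatenation-and-combination construction. Write $g_i = \mathrm{Tc}_b(\tilde g_i)$ where $\tilde g_i$ is an untruncated ReLU network of depth $L$ and width $N$ for $i=1,2$, and use the ReLU identity $\mathrm{Tc}_b(z) = \sigma(z+b)-\sigma(z-b)-b$ valid for all $z\in \mathbb{R}$ together with $z = \sigma(z)-\sigma(-z)$ valid for all $z\in \mathbb{R}$. The target network will run $\tilde g_1$ and $\tilde g_2$ in parallel using the first $L$ hidden layers of width $2N$, then use two additional hidden layers to realize the two truncations and the final subtraction.

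Concretely, let $\tilde g_i(x) = T_{L+1}^{(i)} \circ \bar\sigma_L \circ T_L^{(i)} \circ \cdots \circ \bar\sigma_1 \circ T_1^{(i)}(x)$ with weight matrices $W_\ell^{(i)}$ and bias vectors $b_\ell^{(i)}$. Define the first $L$ linear maps of $g^\dagger$ by block-stacking: $\widetilde T_1 = [T_1^{(1)}\,;\,T_1^{(2)}]$ mapping $\mathbb{R}^d \to \mathbb{R}^{2N}$, and for $2\le \ell \le L$, let $\widetilde T_\ell$ use the block-diagonal matrix $\mathrm{diag}(W_\ell^{(1)}, W_\ell^{(2)})$ and the concatenated bias, so that after the $L$-th ReLU activation the hidden state is exactly $(h_L^{(1)}, h_L^{(2)})\in\mathbb{R}^{2N}$. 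Now define $\widetilde T_{L+1}:\mathbb{R}^{2N}\to \mathbb{R}^{2N}$ so that its first four output coordinates equal $\tilde g_1(x)+b$, $\tilde g_1(x)-b$, $\tilde g_2(x)+b$, $\tilde g_2(x)-b$ (each a simple affine combination of $h_L^{(1)}$ or $h_L^{(2)}$ using the original output weights $T_{L+1}^{(1)},T_{L+1}^{(2)}$ plus a bias $\pm b$) and zero on the remaining $2N-4$ coordinates. After ReLU, the first four coordinates become $\sigma(\tilde g_1+b),\sigma(\tilde g_1-b),\sigma(\tilde g_2+b),\sigma(\tilde g_2-b)$. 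Since $g_1-g_2 = [\sigma(\tilde g_1+b)-\sigma(\tilde g_1-b)-b] - [\sigma(\tilde g_2+b)-\sigma(\tilde g_2-b)-b] = \sigma(\tilde g_1+b)-\sigma(\tilde g_1-b)-\sigma(\tilde g_2+b)+\sigma(\tilde g_2-b)$, set $\widetilde T_{L+2}:\mathbb{R}^{2N}\to\mathbb{R}^{2N}$ so that its first two coordinates are the linear forms $+ (g_1-g_2)$ and $-(g_1-g_2)$ of the four active inputs, and zero elsewhere. After ReLU, the first two coordinates are $\sigma(g_1-g_2)$ and $\sigma(-(g_1-g_2))$. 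Finally take $\widetilde T_{L+3}:\mathbb{R}^{2N}\to\mathbb{R}$ equal to $[+1,-1,0,\ldots,0]$, which outputs $\sigma(g_1-g_2)-\sigma(-(g_1-g_2)) = g_1-g_2$.

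It remains to verify the architectural budget. The network has exactly $L+2$ hidden layers of uniform width $2N$, matching $\mathcal{H}_{\mathtt{nn}}(d, L+2, 2N, 2b, V)$; the extra $4 \le 2N$ and $2 \le 2N$ neurons needed in layers $L+1$ and $L+2$ are accommodated because $N\ge 4$. All newly introduced entries of weight matrices and biases are in $\{-1,0,+1,+b,-b\}$, so their magnitudes are bounded by $b\vee 1 \le V$, matching the weight parameter $V$; the original weights are unchanged and already bounded by $V$. The output satisfies $|g_1(x)-g_2(x)|\le 2b$ pointwise, so the terminal truncation $\mathrm{Tc}_{2b}$ acts as the identity, and thus $g^\dagger(x) = g_1(x)-g_2(x)$ for every $x\in\mathbb{R}^d$.

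There is no substantive obstacle; the only mild subtlety is that in the standard fully-connected architecture \eqref{eq:nn-architecture} every hidden layer has the same width, so we cannot simply taper to a narrow layer for the truncation arithmetic. The fix is the zero-padding above, which wastes the unused coordinates. A minor bookkeeping point is the sign convention in reconstructing $g_1-g_2$ from its positive and negative ReLU parts, which is why the extra depth $L+2$ (rather than $L+1$) is needed: one additional ReLU pair is required to render $g_1-g_2$ (of arbitrary sign) as a difference of nonnegative activations before the final affine map.
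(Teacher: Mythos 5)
Your construction is correct and follows essentially the same route as the paper: both write $g_i = \mathrm{Tc}_b(\tilde g_i)$, run the untruncated networks in parallel at width $2N$, realize the two truncations with two extra ReLU layers (the paper via a cited $\min/\max$ gadget of depth $1$ and width $4$, you via the explicit identity $\mathrm{Tc}_b(z)=\sigma(z+b)-\sigma(z-b)-b$), and absorb the subtraction into the final affine map, noting the outer $\mathrm{Tc}_{2b}$ acts as the identity since $|g_1-g_2|\le 2b$. One small inaccuracy in your closing remark: the second extra layer is not forced by any sign issue, since the final affine map could already output $g_1-g_2$ directly from the four activations $\sigma(\tilde g_1\pm b), \sigma(\tilde g_2 \pm b)$, so depth $L+1$ would suffice; your $\sigma(z)-\sigma(-z)$ pass-through is simply harmless padding to match the prescribed depth $L+2$.
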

\begin{proof}[Proof of \cref{lemma:approx-minus}]
Suppose $g_1 = \mathrm{Tc}_b(\tilde{g}_1)$ and $g_2 = \mathrm{Tc}_b(\tilde{g}_2)$ for $\tilde{g}_1, \tilde{g}_2 \in \mathcal{H}_{\mathtt{nn}}(d, L, N, \infty, V)$. We construct 
\begin{align*}
    g^\dagger = \mathrm{Tc}_{2b} \left(\max(\min(\tilde{g}_1(x), b), -b) - \max(\min(\tilde{g}_2(x), b), -b)\right).
\end{align*} It is easy to verify that $g^\dagger = g_1(x) - g_2(x)$.

Meanwhile, it follows from Lemma 12 in \cite{fan2024factor} that $\min, \max$ can be implemented using ReLU neural networks with depth $1$, width $4$, and weights upper bounded by $1$, then our constructed $g^\dagger$ can be implemented via ReLU network with depth $L+2$ and width $2N\lor 8$. This completes the proof.
\end{proof}

\subsection{Proof of Theorem \ref{thm:fairnn}}
\label{sec:proof-fairnn}

Throughout the proof, we note that the quantities defined in \cref{thm:oracle} satisfy
\begin{align*}
    \max\left\{\sigma_y, C_y,\frac{\log |\mathcal{E}|}{\log n}, B\right\} \le \mathrm{poly}(C_0) \lesssim 1
\end{align*}
To apply \cref{thm:oracle}, we need to first calculate $\mathsf{b}_{\mathcal{G}}(S)$ and $\bar{\mathsf{d}}_{\mathcal{G}, \mathcal{F}}(S)$ and verify the general condition \cref{cond:general-gf}, and then calculate the approximation error and stochastic error to establish concrete $L_2$ error bound.

\noindent {\sc Step 1. Verify \cref{cond:general-gf}.} We choose $\overline{\mathcal{G}_S}=\overline{\mathcal{F}_S} = \Theta_S$, thus the first part ``invariance'' and the last part ``nondegenerate covariate'' are satisfied with $g^\star = m^\star$ provided \cref{cond:regularity-fairnn} (b)--(c) holds. Given that the projections are all the identity map, we have
\begin{align}
\label{eq:bias-var-nn}
    \mathsf{b}_{\mathcal{G}}(S) = \|m^\star - \bar{m}^{(S\cup S^\star)}\|_{2}^2 \qquad \text{and} \qquad \bar{\mathsf{d}}_{\mathcal{G}, \mathcal{F}}(S) = \frac{1}{|\mathcal{E}|} \sum_{e\in \mathcal{E}} \|\bar{m}^{(S)} - m^{(e,S)}\|_{2,e}^2.
\end{align} Moreover, 
\begin{align*}
\forall S\subseteq [d] ~with~ \mathsf{b}_{\mathcal{G}}(S) > 0 \qquad \overset{(a)}{\Longrightarrow} \qquad & \bar{\mu}(\{\bar{m}^{(e,S\cup S^\star)} \neq m^\star\}) > 0\\
\overset{(b)}{\Longrightarrow} \qquad & \exists e, e'\in \mathcal{E} ~s.t.~ (\mu^{(e)}\land \mu^{(e')})(\{m^{(e,S)} \neq m^{(e',S)}\}) > 0  \\
\overset{(c)}{\Longrightarrow} \qquad & \bar{\mathsf{d}}_{\mathcal{G}, \mathcal{F}}(S) > 0
\end{align*}
where $(a)$ follows from the calculate of $\mathsf{b}_{\mathcal{G}}(S)$ and the fact that $\|f\|_{L_2(\nu)}>0$ implies $\nu(\{f\neq 0\})>0$, $(b)$ follows from \cref{cond-fairnn-ident}. The derivation of $(c)$ follows from the fact that
\begin{align*}
    |\mathcal{E}| \bar{\mathsf{d}}_{\mathcal{G}, \mathcal{F}}(S) = \sum_{e\in \mathcal{E}} \|m^{(e,S)} - \bar{m}^{(S)} \|_{2,e}^2 &\ge \|m^{(e,S)} - \bar{m}^{(S)} \|_{2,e}^2 + \|m^{(e',S)} - \bar{m}^{(S)} \|_{2,e'}^2 \\
    &\overset{(a)}{\ge} \int \frac{\{m^{(e,S)} - m^{(e',S)}\}^2}{2} (\mu^{(e)} \land \mu^{(e')})(dx) \overset{(b)}{>} 0.
\end{align*} where $(a)$ follows from the fact that $ (x-a)^2 + (x-b)^2 \ge \frac{1}{2}(b-a)^2$, $(b)$ follows from the fact that $\nu(\{f\neq 0\})>0$ implies $\|f\|_{L_2(\nu)}>0$. Therefore, ``heterogeneity'' in \cref{cond:general-gf} is verified.

\noindent {\sc Step 2. Verifying \cref{cond:general-function-class} and specifying $\delta_n$.} Applying further Theorem~7 of \cite{BHLM2019} yields the bound ${\rm Pdim}(\mathcal{H}_{\mathtt{nn}})  \lesssim W L \log(W)$, where $W$ is the number of parameters of the network $\mathcal{H}_{\mathtt{nn}}$, this indicates that
\begin{align*}
    {\rm Pdim}(\mathcal{G}) + {\rm Pdim}(\mathcal{F}) \lesssim (LN^2 + dN) L \log(LN^2 + dN) \lesssim C_1 L^2N^2 (1+\log n).
\end{align*}

It then follows from Theorem 12.2 of \cite{AB1999} that, for any $\epsilon \in (0,2B]$
\begin{align*}
    \log \mathcal{N}_\infty(\varepsilon, \mathcal{G}, n) \lor \log \mathcal{N}_\infty(\varepsilon, \mathcal{F}, n) &\le \left(\mathrm{Pdim}(\mathcal{H}) \lor \mathrm{Pdim}(\mathcal{F}) \right)\log\left(\frac{eBn}{\epsilon}\right) \\
    &\lesssim C_1 (NL)^2 (1+\log n) \log\left(eBn/\epsilon\right)
\end{align*}

Then it follows from \cref{lemma:uniform-cover-number-plus} and the fact that $n\ge 3$ that that
\begin{align*}
\log \mathcal{N}_\infty(\varepsilon, \partial \mathcal{G}, n) \le 2 \log \mathcal{N}_\infty(\varepsilon/2, \mathcal{G}, n) \lesssim C_1 (NL)^2 (\log n) \log\left(eBn/\epsilon\right)
\end{align*} and
\begin{align*}
\log \mathcal{N}_\infty(\varepsilon, \partial (\mathcal{G}+\mathcal{F}), n) \le 2 \log \mathcal{N}_\infty(\varepsilon/2, (\mathcal{G}+\mathcal{F}), n) \lesssim C_1 (NL)^2 (\log n) \log\left(eBn/\epsilon\right)
\end{align*} Applying \cref{lemma:local-rademacher-uniform-covering-number}, we find that \cref{cond:general-function-class} holds with $\delta_n = \tilde{C}_1 n^{-1/2} (NL)\log n$ for some large constant $\tilde{C}_1>0$ that may depend on $(d, C_0)$.

\noindent {\sc Step 3. Calculating Approximation Error.} 
We define
\begin{align*}
&\delta_{\mathtt{a}, \mathtt{NN}}^\star := \max_{e\in \mathcal{E}} \inf_{g\in [\mathcal{H}_{\mathtt{nn}}(d, L, N, b_m)]_{S^\star}} \|m^\star - g\|_{2,e} \qquad \text{and} \qquad \\
&~~~~~\delta_{\mathtt{a}, \mathtt{NN}}^\dagger := \max_{e\in \mathcal{E}, S\subseteq[d]}\inf_{g\in [\mathcal{H}_{\mathtt{nn}}(d, L, N, b_m)]_S} \|m^{(e,S)} - g\|_{2,e}
\end{align*}
By the boundedness condition $B\ge C_0$ and \cref{cond:regularity-fairnn} (d), it is easy to verify that $\delta_{\mathtt{a}, \mathcal{G}} \le \delta_{\mathtt{a}, \mathtt{NN}}^\star$. At the same time, for any $e\in \mathcal{E}$ and $S\subseteq [d]$ and arbitrary $\epsilon$, there exist some $u^{(e,S)}\in \mathcal{H}_{\mathtt{nn}}(d, L, N, b_m)$ such that
\begin{align*}
    \|u^{(e,S)} - m^{(e,s)}\|_{2,e} \le \delta_{\mathtt{a}, \mathtt{NN}}^\dagger + \epsilon.
\end{align*} It then follows \cref{lemma:approx-minus} that,  for any $e\in \mathcal{E}$ and $S\subseteq [d]$,
\begin{align*}
    \sup_{g\in \mathcal{G}: S_g = S} \inf_{f\in \mathcal{F}_S} \|m^{(e,S)} - g - f\|_{2,e} &\le \sup_{g\in \mathcal{G}: S_g = S} \|m^{(e,S)} - g - (u^{(e,S)} - g)\|_{2,e} \le  \delta_{\mathtt{a}, \mathtt{NN}}^\dagger + \epsilon,
\end{align*} provided $\mathcal{G} = \mathcal{H}_{\mathtt{nn}}(d, L, N, b_m)$ and $\mathcal{F} = \mathcal{H}_{\mathtt{nn}}(d, L+2, 2N, 2b_m)$ and $N\ge 4$. This yields \begin{align*}
    \delta_{\mathtt{a}, \mathcal{F}, \mathcal{G}}(S) \le \delta_{\mathtt{a}, \mathtt{NN}}^\dagger
\end{align*} for any $S\subseteq [d]$ via letting $\epsilon \to 0$. Similarly, we also have
\begin{align*}
    \delta_{\mathtt{a}, \mathcal{F}, \mathcal{G}}^\star \le \delta_{\mathtt{a}, \mathtt{NN}}^\star.
\end{align*}

\noindent {\sc Step 4. Apply \cref{thm:oracle}.} Now that \cref{cond:general-gf} is validated and \cref{cond:general-function-class} is also validated with $\delta_n = \tilde{C}_1 n^{-1/2} (NL)(1+\log n)$. Observe $U\le \tilde{C}_2 \log n$ for some constant $\tilde{C}_2$ dependent on $C_0$. Applying \cref{thm:oracle} (1), we obtain that if $\gamma \ge 8\gamma^\star$ where $\gamma^\star = \sup_{\mathsf{b}_{\mathcal{G}}(S)>0} \mathsf{b}_{\mathcal{G}}(S) / \bar{\mathsf{d}}_{\mathcal{G}, \mathcal{F}}(S)$ with $\mathsf{b}_{\mathcal{G}}(S)$ and $\bar{\mathsf{d}}_{\mathcal{G}, \mathcal{F}}(S)$ defined in \eqref{eq:bias-var-nn}, then the following holds with probability at least $1-\tilde{C}_3 n^{-100}$, 
\begin{align*}
    \|\hat{g} - m^\star\|_{2} &\le \tilde{C}_3(1+\gamma)\left(U\delta_{n,\log^{100} n} + \delta_{\mathtt{a}, \mathcal{G}} + \delta_{\mathtt{a}, \mathcal{F}, \mathcal{G}}(S_{\hat{g}}) + \delta_{\mathtt{a}, \mathcal{F}, \mathcal{G}}(S^\star) + \delta_{\mathtt{opt}} \right)\\
    &\le \tilde{C}_3 (1+\gamma) \left(\frac{NL\log^{3/2} n}{\sqrt{n}} + \delta_{\mathtt{a}, \mathtt{NN}}^\dagger \right) = \tilde{C}_3(1+\gamma)\delta_{\mathtt{NN}, 1},
\end{align*} since $\delta_{\mathtt{a}, \mathtt{NN}}^\star \le \delta_{\mathtt{a}, \mathtt{NN}}^\dagger$ and $\delta_{\mathtt{opt}}=0$, where $\tilde{C}_3$ is some constant dependent on $(d, C_0)$. This completes the proof of the generic convergence rate holds for all the $n\ge 3$. Moreover, via setting large enough $\tilde{C}\ge \tilde{C}_4 > 1+C \tilde{C}_3$ in the statement such that $\delta^2 \le \delta$ for $\delta = \delta_{\mathtt{a}, \mathtt{NN}}^\dagger$, the inequality on $\delta_{\mathtt{NN},1}$ further yields that
\begin{align*}
    &(1+\gamma) \left\{\delta_{\mathtt{opt}}^2 + (\delta_{\mathtt{a}, \mathtt{NN}}^\dagger)^2 + (\delta_{\mathtt{a}, \mathtt{NN}}^\star)^2 + \frac{NL\log^{3/2} n}{\sqrt{n}} \right\} \\
    &~~~ \le  \tilde{C}_3(1+\gamma)\delta_{\mathtt{NN}, 1} \\
    &~~~\le \left\{ s_{\min}\land \left(\gamma \inf_{S: \bar{\mathsf{d}}_{\mathcal{G}, \mathcal{F}}(S) > 0} \bar{\mathsf{d}}_{\mathcal{G}, \mathcal{F}}(S)\right) \right\} / C
\end{align*} where $C$ is the universal constant in \cref{thm:oracle}, which validates the $\eqref{eq:main-result-faster-cond}$, apply \cref{thm:oracle} (2), we have, with probability at least $1-\tilde{C}_5 n^{-100}$, 
\begin{align*}
    \|\hat{g} - m^\star\|_{2} &\le C\left(U\delta_{n,\log(n^{100})} + \delta_{\mathtt{a}, \mathcal{G}} +  \delta_{\mathtt{a}, \mathcal{F}, \mathcal{G}}^\star\right)\\
    &\le \tilde{C}_5 \left(\frac{NL\log^{3/2} n}{\sqrt{n}} + \delta_{\mathtt{a}, \mathtt{NN}}^\star  \right).
\end{align*} where $\tilde{C}_5$ is some constant dependent on $(d, C_0)$. Setting $\tilde{C} = \max_{1\le i\le 5} \tilde{C}_i$ completes the proof.

%\subsection{Proof of \cref{coro:fair-nn-slow}}

%It follows from the first claim in \cref{lemma:nn-approx-error} and the assumption that all the $m^{(e,S)}$ are $(\beta', C')$ smooth on bounded support $|X| \le b_x$ that
%\begin{align*}
%    \delta_{\mathtt{a}, \mathtt{NN}}^\dagger \le \max_{e\in \mathcal{E}, S\subseteq[d]}\inf_{g\in [\mathcal{H}_{\mathtt{nn}}(d, L, N, b_m)]_S} \|m^{(e,S)} - g\|_{2,e} \le \tilde{C}_1 \left(\frac{NL}{\log^2 n}\right)^{-2\beta'/d}
%\end{align*} provided $N \land L \ge \tilde{C}_1\log n$, where $\tilde{C}_1$ is a constant that dependent on $(b_x, d, \beta', C')$, substituting the choice of $(N, L)$ in the statement of \cref{coro:fair-nn-slow} and applying \cref{thm:fairnn} completes the proof.

\subsection{Proof of Theorem \ref{coro:fair-nn-fast}}

It follows from the second claim in \cref{lemma:nn-approx-error} and the assumption that $m^\star \in \mathcal{H}_{\mathtt{HCM}}(d, l, \mathcal{O}^\star, C_h)$ on bounded support $|X| \le C_0$ that
\begin{align*}
    \delta_{\mathtt{a}, \mathtt{NN}}^\star \le \max_{e\in \mathcal{E}} \inf_{g\in [\mathcal{H}_{\mathtt{nn}}(d, L, N, b_m)]_{S^\star}} \|m^\star - g\|_{2,e} \le \tilde{C}_2 \left(\frac{NL}{\log^2 n}\right)^{-2\alpha^\star}
\end{align*} provided $N \land L \ge \tilde{C}_2\log n$, where $\tilde{C}_2$ is a constant that dependent on $(C_0, d, l, \sup_{(\beta, t)\in \mathcal{O}^\star} (\beta \lor t), C_h)$. Then under our choice of $NL$, we have
\begin{align*}
    \delta_{\mathtt{NN}, 1} &\le (1+\gamma)\tilde{C}_3 \left(\sqrt{\frac{(NL)^2 \log^3 n}{n}} + \left(\frac{NL}{\log^2 n}\right)^{-2\alpha_0} \right) \\
    &\le 2\tilde{C}_3(1+\gamma) \left(\frac{\log^7 n}{n}\right)^{\frac{\alpha^\star \land \alpha_0}{2\alpha^\star + 1}} = o(1) 
\end{align*} for some constant $\tilde{C}_3$ dependent on $(C_0, d, l, \sup_{(\beta, t)\in \mathcal{O}} (\beta \lor t), C_h)$. This yields the bound applicable to any $n \ge 3$. Moreover, for large enough $n \ge n^\star$, the condition on $\delta_{\mathtt{NN}, 1}$ is satisfied. Applying the faster rate in \cref{thm:fairnn} gives, with probability at least $1-\tilde{C} n^{-100}$, the following holds, that
\begin{align*}
    \|\hat{g} - m^\star\|_2 &\le \tilde{C}_4 \left( \sqrt{\frac{(NL)^2 \log^3 n}{n}} + \left(\frac{NL}{\log^2 n}\right)^{-2\alpha^\star} \right) \\
    &\le \tilde{C}_4  \left(\frac{\log^7 n}{n}\right)^{-\frac{\alpha^\star}{2\alpha^\star+1}}.
\end{align*} Here $\tilde{C}_4$ is the constant dependent on $\tilde{C}_2$ and $(d, C_0)$, This completes the proof. \qed

\subsection{Proof of Theorem \ref{thm:fairann}}

We will use the following technical lemma.
\begin{lemma}
\label{lemma:rsc-to-closed}
Suppose \cref{cond-fairann-rsc} holds. For any $S \subseteq [d]$, $\prod_{j\in S} \Theta_{\{j\}}$ is a closed subspace of $\Theta_S$.
\end{lemma}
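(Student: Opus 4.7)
The goal is to show that the subspace of additive functions $\prod_{j\in S}\Theta_{\{j\}} = \{\sum_{j\in S} m_j(x_j): m_j \in \Theta_{\{j\}}\}$ is closed in $\Theta_S = L_2(\bar{\mu}_{x,S})$. Linearity is automatic from the linearity of each $\Theta_{\{j\}}$, so the content is closedness. My plan is to take a Cauchy sequence $h_n = \sum_{j\in S} m_j^{(n)}$ in this class, normalize the summands via mean-centering, and then use Condition \ref{cond-fairann-rsc} (the nonparametric RSC) to transfer Cauchyness from $\{h_n\}$ to the individual normalized components.

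The first step is to address the non-uniqueness of the additive decomposition, which can shift arbitrary constants among the summands. For each $n$, I will write $m_j^{(n)}(x_j) = c_j^{(n)} + \tilde{m}_j^{(n)}(x_j)$ where $c_j^{(n)} := \int m_j^{(n)}(x_j)\,\bar{\mu}_x(dx)$ and $\tilde{m}_j^{(n)} \in \Theta_{\{j\}}$ has zero mean under $\bar{\mu}_x$. Setting $C^{(n)} := \sum_{j\in S} c_j^{(n)}$ gives $h_n = C^{(n)} + \sum_{j\in S}\tilde{m}_j^{(n)}$. Because $C^{(n)}-C^{(m)}$ is a constant and $\sum_{j\in S}(\tilde{m}_j^{(n)} - \tilde{m}_j^{(m)})$ is a mean-zero function in $\Theta$, these two pieces are orthogonal in $L_2(\bar{\mu}_x)$, so the Pythagorean identity yields
\begin{align*}
\|h_n - h_m\|_2^2 \;=\; \bigl(C^{(n)} - C^{(m)}\bigr)^2 + \Bigl\|\sum_{j\in S}\bigl(\tilde{m}_j^{(n)} - \tilde{m}_j^{(m)}\bigr)\Bigr\|_2^2.
\end{align*}

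Extending the tuple $(\tilde{m}_j^{(n)} - \tilde{m}_j^{(m)})_{j\in S}$ by zero on $[d]\setminus S$ yields an element of $\prod_{j=1}^d \Theta_{\{j\}}$ with every entry mean-zero, so Condition \ref{cond-fairann-rsc} applies and gives
\begin{align*}
\Bigl\|\sum_{j\in S}\bigl(\tilde{m}_j^{(n)} - \tilde{m}_j^{(m)}\bigr)\Bigr\|_2^2 \;\ge\; C_a^{-1}\sum_{j\in S}\bigl\|\tilde{m}_j^{(n)} - \tilde{m}_j^{(m)}\bigr\|_2^2.
\end{align*}
Since $\{h_n\}$ is Cauchy in $\Theta_S$, both displayed quantities vanish as $m,n\to\infty$; hence $\{C^{(n)}\}$ is Cauchy in $\mathbb{R}$, and each $\{\tilde{m}_j^{(n)}\}$ is Cauchy in $\Theta$. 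Now I use the auxiliary fact that $\Theta_{\{j\}}$ is itself a closed subspace of $\Theta$ (an $L_2$-limit of functions depending only on $x_j$ still depends only on $x_j$, $\bar{\mu}_x$-a.s., via a.s.-convergence along a subsequence), which gives $\tilde{m}_j^{(n)} \to \tilde{m}_j^\star \in \Theta_{\{j\}}$ and $C^{(n)} \to C^\star$. Therefore $h_n \to C^\star + \sum_{j\in S}\tilde{m}_j^\star$ in $\|\cdot\|_2$, and by uniqueness of limits this equals $h$. Absorbing $C^\star$ into any single component $m_{j_0}^\star := \tilde{m}_{j_0}^\star + C^\star$ and setting $m_j^\star := \tilde{m}_j^\star$ for $j\neq j_0$ exhibits $h$ as an element of $\prod_{j\in S}\Theta_{\{j\}}$, completing the proof.

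The only nontrivial obstacle is the non-uniqueness of the additive decomposition; this is resolved by the orthogonal mean/mean-zero split above, after which Condition \ref{cond-fairann-rsc} gives coordinate-wise Cauchyness essentially for free, and the fact that each one-variable class $\Theta_{\{j\}}$ is itself $L_2$-closed furnishes the limiting components.
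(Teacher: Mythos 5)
Your proposal is correct and follows essentially the same route as the paper: decompose each additive function into mean-zero components plus a constant, use orthogonality (Pythagoras) to separate the constant part, apply Condition \ref{cond-fairann-rsc} to get coordinate-wise Cauchyness, and invoke closedness of each $\Theta_{\{j\}}$ (and of $\mathbb{R}$) to assemble the limit inside the additive class. The only differences are cosmetic: you justify the closedness of $\Theta_{\{j\}}$ and the zero-extension needed to invoke the condition explicitly, details the paper leaves implicit.
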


\begin{proof}[Proof of \cref{thm:fairann}]
\noindent {\sc Step 1. Verify \cref{cond:general-gf}.} We let
\begin{align*}
    \overline{\mathcal{G}_S} = \prod_{j\in S} \Theta_{\{j\}} \qquad \text{and} \qquad \overline{\mathcal{F}_S} = \Theta_S.
\end{align*} It follows from \cref{cond-fairann-rsc} and \cref{lemma:rsc-to-closed} that $\overline{\mathcal{G}_S}$ is a closed subspace of $\Theta_S$ for any $S\subseteq [d]$. Hence ``invariance'' with $g^\star = m^\star$ and ``irrepresentablity'' in \cref{cond:general-gf} is validated by \cref{cond-fairann-invariance}. Moreover, 
\begin{align}
\label{eq:fairann-bias-var}
    \mathsf{b}_{\mathcal{G}}(S) = \|A_{S\cup S^\star}(\bar{m}^{(S\cup S^\star)}) - m^\star\|_2^2 \qquad \text{and} \qquad\bar{\mathsf{d}}_{\mathcal{G}, \mathcal{F}}(S) = \frac{1}{|\mathcal{E}|} \sum_{e\in \mathcal{E}} \|m^{(e,S)} - A_S(\bar{m}^{(S)})\|_{2,e}^2.
\end{align} Denote $\langle f, g\rangle = \int f(x)g(x) \bar{\mu}_x(dx)$, it follows from the fact that $m^\star$ is additive that
\begin{align}
\label{eq:fairann-b-upperbound}
\mathsf{b}_{\mathcal{G}}(S) &= \|A_{S\cup S^\star}(\bar{m}^{(S\cup S^\star)} - m^\star)\|_2^2 \le \|\bar{m}^{(S\cup S^\star)} - m^\star\|_2^2.
\end{align} 

We argue that
\begin{align}
    \bar{\mathsf{d}}_{\mathcal{G}, \mathcal{F}}(S) &= \frac{1}{|\mathcal{E}|} \sum_{e\in \mathcal{E}} \|m^{(e,S)} - \bar{m}^{(S)} + \bar{m}^{(S)} - A_S(\bar{m}^{(S)})\|_{2,e}^2 \nonumber \\
    &\overset{(a)}{=} \frac{1}{|\mathcal{E}|} \sum_{e\in \mathcal{E}} \|m^{(e,S)} - \bar{m}^{(S)}\|_{2,e}^2 + \|\bar{m}^{(S)} - A_S(\bar{m}^{(S)})\|_{2,e}^2 \nonumber \\
    &= \left(\frac{1}{|\mathcal{E}|} \sum_{e\in \mathcal{E}} \|m^{(e,S)} - \bar{m}^{(S)}\|_{2,e}^2\right) + \|\bar{m}^{(S)} - A_S(\bar{m}^{(S)})\|_2^2 \label{eq:fairann-d-lb1} \\
    &\ge \left(\frac{1}{|\mathcal{E}|} \sum_{e\in \mathcal{E}} \|m^{(e,S)} - \bar{m}^{(S)}\|_{2,e}^2\right). \label{eq:fairann-d-lb2}
\end{align} where $(a)$ follows from the fact that
\begin{align*}
    &\frac{1}{|\mathcal{E}|} \int \sum_{e\in \mathcal{E}} (m^{(e,S)} - \bar{m}^{(S)})(\bar{m}^{(S)} - A_S(\bar{m}^{(S)})) \mu^{(e)}(dx) \\
    =&\frac{1}{|\mathcal{E}|} \int \sum_{e\in \mathcal{E}} \{m^{(e,S)}_S(x_S) - \bar{m}^{(S)}_S(x_S)\} \rho^{(e)}_S(x_S) \{\bar{m}^{(S)}_S(x_S) - [A_S(\bar{m}^{(S)})]_S(x_S)\} \bar{\mu}(dx) \\
    =& \left\langle  \frac{1}{|\mathcal{E}|} \sum_{e\in \mathcal{E}} m^{(e,S)}\rho^{(e)}_S - \bar{m}^{(S)}, \bar{m}^{(S)} - A_S(\bar{m}^{(S)})\right\rangle = \langle 0, \bar{m}^{(S)} - A_S(\bar{m}^{(S)})\rangle = 0.
\end{align*}

Therefore, we can validate ``heterogeneity'' in \cref{cond:general-gf} as
\begin{align*}
    \forall S\subseteq [d]~\text{with}~ \mathsf{b}_{\mathcal{G}}(S) > 0 \qquad &\overset{(a)}{\Longrightarrow} \qquad \bar{\mu}(\{m^\star \neq A_{S\cup S^\star}(\bar{m}^{(S\cup S^\star)})\}) > 0 \\
    &\overset{(b)}{\Longrightarrow} \qquad 
    \exists e, e'\in \mathcal{E}, (\mu^{(e)}\land \mu^{(e')})(\{m^{(e,S)}\neq m^{(e',S)}\}) > 0 \\
    &\qquad \qquad ~\text{OR}~ \exists e\in \mathcal{E}, \bar{\mu}(\{\bar{m}^{(S)} \neq A_S(\bar{m}^{(S)})\})>0\\
    &\overset{(c)}{\Longrightarrow} \qquad \bar{\mathsf{d}}_{\mathcal{G}, \mathcal{F}}(S) > 0,
\end{align*} where $(a)$ follows from the calculate of $\mathsf{b}_{\mathcal{G}}(S)$ and the fact that $\|f\|_{L_2(\nu)}>0$ implies $\nu(\{f\neq 0\})>0$, $(b)$ follows from \cref{cond-fairnn-ident}. The derivation of $(c)$ can be divided into two cases:

\noindent \emph{Case 1. $\exists e, e'\in \mathcal{E}, (\mu^{(e)}\land \mu^{(e')})(\{m^{(e,S)}\neq m^{(e',S)}\}) > 0$. } In this case, it follows from the lower bound \eqref{eq:fairann-d-lb2} and the discussion in {\sc Step 1.} in \cref{sec:proof-fairnn} that $\bar{\mathsf{d}}_{\mathcal{G}, \mathcal{F}}(S)>0$.

\noindent \emph{Case 2. $m^{(e,S)}$ are $\bar{\mu}_x$-a.s. the same but $\bar{\mu}(\{\bar{m}^{(S)} \neq A_S(\bar{m}^{(S)})\})$.} In this case, it follows from \eqref{eq:fairann-d-lb1} that $\bar{\mathsf{d}}_{\mathcal{G}, \mathcal{F}}(S) = \|\bar{m}^{(S)} - A_S(\bar{m}^{(S)})\|_{2}^2 > 0$.

At the same time, combining the definition of $\gamma^\star$ with the upper bound of $\mathsf{b}_{\mathcal{G}}(S)$ \eqref{eq:fairann-b-upperbound} and the lower bound of $\bar{\mathsf{d}}_{\mathcal{G}, \mathcal{F}}(S)$ \eqref{eq:fairann-d-lb2}, we find that $\gamma^\star_{\mathtt{AN}} \le \gamma^\star_{\mathtt{NN}}$.

\noindent {\sc Step 2. Verify \cref{cond:general-function-class} and Calculate Approximation Error.} It follows similar to {\sc Step 2} in \cref{sec:proof-fairnn} that \cref{cond:general-function-class} holds with $\delta_n = \tilde{C}_1 n^{-1/2} (NL)(1+\log n)$ for some constant $\tilde{C}_1$ that depends on $(C_1, d, b_m)$.

Observe that $\mathcal{H}_{\mathtt{ann}}(d, L, N, b_m) \subseteq \mathcal{H}_{\mathtt{nn}}(d, L, dN, b_m)$. Then it follows similar to {\sc Step 3} in \cref{sec:proof-fairnn} that
\begin{align*}
    \mathsf{\delta}_{\mathtt{a}, \mathcal{G}} &\le \tilde{C}_2(NL/\log^2 n)^{-2\beta^\star} \\
    \sup_{S\subseteq [d]}\mathsf{\delta}_{\mathtt{a}, \mathcal{F}, \mathcal{G}}(S) &\le \tilde{C}_2(NL/\log^2 n)^{-2\beta'/d} \\
    \mathsf{\delta}_{\mathtt{a}, \mathcal{F}, \mathcal{G}}^\star &\le \tilde{C}_2(NL/\log^2 n)^{-2\beta^\star}.
\end{align*} via applying \cref{lemma:nn-approx-error}.

\noindent {\sc Step 3. Apply \cref{thm:oracle}.} Now that \cref{cond:general-gf} is validated and \cref{cond:general-function-class} is also validated with $\delta_n = \tilde{C}_1 n^{-1/2} (NL)(1+\log n)$. Applying \cref{thm:oracle} (2) in a similar manner to the proof of \cref{thm:fairnn} and \cref{coro:fair-nn-fast} completes the proof.

\end{proof}

\begin{proof}[Proof of \cref{lemma:rsc-to-closed}]
It is easy to verify that $M=\prod_{j\in S} \Theta_{\{j\}}$ is a subspace of $\Theta_S$. It remains shows that such a subspace is closed. To this end, let $\{f^{(k)}\}_{k=1}^\infty$ be an arbitrary Cauchy sequence in $M$, that for any $\epsilon>0$, there exists some $K>0$ such that
\begin{align*}
    \|f^{(k)} - f^{(k')}\|_2 \le \epsilon \qquad \forall k, k'\ge K.
\end{align*}
Since $f^{(k)}$ is an element in $M$, without loss of generality, we can write 
\begin{align*}
    f^{(k)} = \sum_{j\in S} f_j^{(k)} + b^{(k)}
\end{align*} where $f_j^{(k)} \in \Theta_{j}$ satisfying $\int f_j^{(k)}(x) \bar{\mu}_x(dx)=0$, and $b^{(k)}$ is a scalar. Then it follows from \cref{cond-fairann-rsc} that
\begin{align*}
    \epsilon^2 \ge \|f^{(k)} - f^{(k')}\|_2^2 &= \left\|\sum_{j\in S} f_j^{(k)} - \sum_{j\in S} f_j^{(k')} + b^{(k)}-b^{(k')}\right\|_2^2 \\
    &\ge \left\|\sum_{j\in S} (f_j^{(k)} - f_j^{(k')})\right\|_2 + (b^{(k)} - b^{(k')})^2 \\
    &\ge C_a^{-1} \sum_{j\in S} \left\|f_j^{(k)} - f_j^{(k')}\right\|_2^2 \ge C_a^{-1} \|f_j^{(k)} - f_j^{(k')}\|_2^2.
\end{align*}
and $\epsilon^2 \ge \|f^{(k)} - f^{(k')}\|_2^2 \ge (b^{(k)} - b^{(k')})^2$.
Then we have, for any $j\in S$ and any $\epsilon>0$, there exists some $K$ such that
\begin{align*}
    \|f_j^{(k)} - f_j^{(k')}\|_2 \le \sqrt{C_a} \epsilon,
\end{align*} and $|b^{(k)}-b^{(k')}| \le \epsilon$. This indicates that for any $j\in [S]$, $\{f^{(k)}_j\}_{k=1}^\infty$ is a Cauchy sequence in $\Theta_{\{j\}}$, and $\{b^{(k)}\}_{k=1}^\infty$ is Cauchy sequence in $\mathbb{R}$. Because $\Theta_{\{j\}}$ and $\mathbb{R}$ are all closed, then there exists some $f_j^\star \in \Theta_{\{j\}}$ and $b^\star \in \mathbb{R}$ such that
\begin{align*}
    \lim_{k\to \infty}\|f_j^{(k)} - f_j^\star\|_2 = 0 \qquad \text{and} \qquad \lim_{k\to \infty} |b^{(k)} - b^\star| = 0
\end{align*} Let $f^\star = \sum_{j\in S} f_j^\star + b^\star$, it is easy to verify that $f^\star \in M$. Moreover, 
\begin{align*}
    \limsup_{k\to \infty}\|f^{(k)} - f^\star\|_2^2 &\le \limsup_{k\to \infty} \left\|\sum_{j\in S} (f_j^{(k)} - f_j^\star)\right\|_2^2 + \limsup_{k\to \infty} (b^{(k)} - b^\star)^2 \\
    &\le \sum_{j\in S}|S| \limsup_{k\to \infty} \left\|f_j^{(k)} - f_j^\star\right\|_2^2 + \limsup_{k\to \infty} (b^{(k)} - b^\star)^2 = 0,
\end{align*} which implies that all the Cauchy sequence in $M$ converges to an element in $M$, this verifies that $M$ is closed according to the definition of closed space in Hilbert space.

\end{proof}

\subsection{Proof of Theorem \ref{thm:lglf}}
\label{sec:proof-lglf}

Our proof will be divided into three steps. In the first step, we calculate the projections and verify the population-level invarianceheterogeneity condition. In the second step, we calculate the critical radius $\delta_n$. We conclude the proof via applying \cref{thm:oracle}.

\noindent {\sc Step 1. Validate \cref{cond:general-gf}.} We choose 
\begin{align}
\overline{\mathcal{G}_S} = \overline{\mathcal{F}_S} = \{f(x)=\beta^\top x: \beta_{S^c} = 0\} =: \mathcal{H}_S.
\end{align} We first show that $\mathcal{H}_S$ is a closed subspace of $\Theta_S$. To this end, we only need to show that for any Cauchy sequence $\{h_1, h_2\ldots, \} \subseteq \mathcal{H}_S$, there exists some $\tilde{h} \in \mathcal{H}_S$ such that $\lim_{k\to\infty}\|h_k - \tilde{h}\|_{2} = 0$. By the definition of $\mathcal{H}_S$ and $\|\cdot\|_{2}$, let $\beta(h) \in \mathbb{R}^d$ be such that $h(x) = (\beta(h))^\top x$, it is easy to see that $\|h - h'\|_{2} = \|\Sigma^{1/2}\{\beta(h) - \beta(h')\}\|_{2}$. Because $\Sigma$ is positive definition by \cref{cond:lglf} (2), the sequence $\beta(h_k)$ is also a Cauchy sequence in a $|S|$ dimensional subspace of $\mathbb{R}^d$. It follows from the completeness of $\mathbb{R}^{|S|}$ that there exists some $\tilde{\beta} \in \mathbb{R}^d$ such that $\tilde{\beta}_{S^c}=0$ and $\lim_{k\to \infty} \|\beta - \tilde{\beta}\|_2 \to 0$. Letting $\tilde{h} = \tilde{\beta}^\top x$ completes the proof.

Now we have already shown that $\mathcal{H}_S$ is a closed subspace of $\Theta_S$ for any $S\subseteq [d]$. Our proof in {\sc Step 1} will be divided into the following substeps. 

\begin{itemize}
\item In Step 1.1, we calculate the projections and verify \cref{cond:general-gf} ``invariance'' and ``nondegenerate covariate'' by \cref{cond:lglf-invariance}.
\item In Step 1.2, we calculate the $\mathsf{b}_{\mathcal{G}}(S)$ and $\bar{\mathsf{d}}_{\mathcal{G}, \mathcal{F}}(S)$ in this scenario, which are also denoted as $\mathsf{b}_{\mathtt{LL}}(S)$ and $\bar{\mathsf{d}}_{\mathtt{LL}}(S)$, respectively, and derive the inequalities in \eqref{eq:biasvar-lglf}.
\item In Step 1.3, we validate \cref{cond:general-gf} ``heterogeneity'' by \cref{cond:lglf:identification}.
\end{itemize}

\noindent {\sc Step 1.1 Calculate the Projections.} We claim that
\begin{align}
\label{eq:proof-corollary1-claim1}
\Pi_{\mathcal{H}_S}^{(e)} (m^{(e,S)}) = x^\top \beta^{(e,S)} \qquad \text{and} \qquad \Pi_{\mathcal{H}_S}({\bar{m}^{(S)}}) = x_S^\top \underbrace{\left\{\{{\Sigma}_S \}^{-1} \frac{1}{|\mathcal{E}|}\sum_{e\in \mathcal{E}} \mathbb{E}[X^{(e)}_S Y^{(e)}] \right\}}_{{\beta}^{(S)}_\dagger}
\end{align} For the first part, let $\langle f, g\rangle_e = \int f(x) g(x) \mu^{(e)}_x(dx)$, it follows from projection theorem that $\Pi_{\mathcal{H}_S} (m^{(e,S)})$ satisfies 
\begin{align*}
    \langle m^{(e,S)} - \Pi_{\mathcal{H}_S}^{(e)} (\bar{m}^{(S)}), x_j\rangle_{e} = 0 \qquad \forall j\in S.
\end{align*} We let $\Pi_{\mathcal{H}_S}^{(e)} (\bar{m}^{(S)}) = (\beta_{S})^\top x_S$ with some $\beta_S \in \mathbb{R}^{|S|}$, the above equation implies that
\begin{align*}
    \mathbb{E}[ (\mathbb{E}[Y^{(e)}|X_S^{(e)}] - X_S^\top \beta_S) X^{(e)}_S] = 0,
\end{align*} which is equivalent to $\beta_S = \{\Sigma_S^{(e)} \}^{-1} \mathbb{E}[Y^{(e)}X^{(e)}] = \beta^{(e,S)}_S$ as $\kappa_L > 0$. 

Similarly, for the second part, let $\Pi_{\mathcal{H}_S}({\bar{m}^{(S)}}) = \bar{\beta}_S^\top x_S$ with some $\bar{\beta}_S \in \mathbb{R}^{|S|}$, it also follows from the projection theorem \cref{thm:projection} that
\begin{align*}
    \forall j \in [S] \qquad 0&=\langle \bar{m}^{(S)} - \bar{\beta}_S^\top x_S, x_j\rangle \\
    &= \int \left\{\frac{1}{|\mathcal{E}|} \sum_{e\in \mathcal{E}} \rho^{(e)}_S(x_S) m^{(e,S)} - \bar{\beta}_S^\top x_j\right\} x_S \bar{\mu}_x(dx_S) \\
    &= \frac{1}{|\mathcal{E}|}\sum_{e\in \mathcal{E}} \mathbb{E} \Big[ \big\{\mathbb{E}[Y^{(e)}|X_S^{(e)}] - \bar{\beta}_S X_S^{(e)} \big\} X^{(e)}_j\Big] \\
    &= \frac{1}{|\mathcal{E}|} \sum_{e\in \mathcal{E}} \mathbb{E} [Y^{(e)} X_j^{(e)}] - \bar{\beta}_S^\top \left\{\frac{1}{|\mathcal{E}|} \sum_{e\in \mathcal{E}} \mathbb{E}[X_S^{(e)} X_j^{(e)}]\right\}.
\end{align*} Hence we obtain
\begin{align*}
    \frac{1}{|\mathcal{E}|} \sum_{e\in \mathcal{E}} \mathbb{E} [Y^{(e)} X_S^{(e)}] - {\Sigma}_S \bar{\beta}_S = 0.
\end{align*} Combining the fact that ${\Sigma}_S$ is positive-definite completes the proof the claim of \eqref{eq:proof-corollary1-claim1}.

It then follows from \cref{cond:lglf-invariance} that
\begin{align*}
\Pi_{\overline{\mathcal{F}_{S^\star}}}^{(e)} (m^{(e,S^\star)}) = x^\top \beta^{(e,S^\star)} = x^\top \beta^\star = x_{S^\star}^\top {\beta_{\dagger}^{(S^\star)}} = \Pi_{\overline{\mathcal{G}_{S^\star}}} (\bar{m}^{(S^\star)})
\end{align*} which validates the ``invariance'' condition in \cref{cond:general-gf} with $g^\star(x) = (\beta^\star)^\top x$. Meanwhile, for any $h(x) = \beta^\top x$ with $\beta \in \mathbb{R}^d$, if there exists some $j\in S^\star$ such that $\beta_j = 0$, then
\begin{align*}
    \|h - g^\star\|_2^2 = \|\Sigma^{1/2}(\beta - \beta^\star)\|_2^2 \ge \|\beta - \beta^\star\|_2^2 \ge \|\beta_j^\star\|_2^2 \ge \beta_{\min} > 0,
\end{align*} this verifies the ``nondegenerate covariate'' condition in \cref{cond:general-gf}.

\noindent {\sc Step 1.2. Calculate $(\mathsf{b}_{\mathcal{G}}(S), \bar{\mathsf{d}}_{\mathcal{G}, \mathcal{F}}(S))$ and Establish Upper/Lower Bounds.} It follows from the data generating process and \eqref{eq:proof-corollary1-claim1} that
\begin{align*}
    \mathsf{b}_{\mathtt{LL}}(S) := \mathsf{b}_{\mathcal{G}}(S) &= \int \left({\beta}_\dagger^{(S\cup S^\star)} - \beta^\star \right) x_{S\cup S^\star} x_{S\cup S^\star}^\top \left({\beta}_\dagger^{(S\cup S^\star)} - \beta^\star \right) \bar{\mu}(dx_{S\cup S^\star}) \\
    &= \left\|{\beta}_\dagger^{(S\cup S^\star)} - \beta^\star \right\|_{({\Sigma}_{S\cup S^\star})}^2 \\
    &= \left\|\{{\Sigma}_{S\cup S^\star}\}^{-1} \frac{1}{|\mathcal{E}|} \sum_{e\in \mathcal{E}} \left(\Sigma_{S\cup S^\star}^{(e)} \beta_{S\cup S^\star}^\star + \mathbb{E}[X_{S\cup S^\star}^{(e)} \varepsilon^{(e)}] \right) - \beta^\star \right\|_{({\Sigma}_{S\cup S^\star})}^2 \\
    &= \left\|\{{\Sigma}_{S\cup S^\star}\}^{-1} \frac{1}{|\mathcal{E}|} \sum_{e\in \mathcal{E}} \mathbb{E}[X_{S\cup S^\star}^{(e)} \varepsilon^{(e)}] \right\|_{({\Sigma}_{S\cup S^\star})}^2 \\
    &= \left\|\frac{1}{|\mathcal{E}|} \sum_{e\in \mathcal{E}} \mathbb{E}[X_{S\cup S^\star}^{(e)} \varepsilon^{(e)}] \right\|_{({\Sigma}_{S\cup S^\star})^{-1}}^2 \overset{(a)}{\le} \lambda_{\max}(({\Sigma}_{S\cup S^\star})^{-1}) \left\|\frac{1}{|\mathcal{E}|} \sum_{e\in \mathcal{E}} \mathbb{E}[X_S^{(e)} \varepsilon^{(e)}] \right\|_2^2 \\
    &\le \frac{1}{\lambda_{\min}({\Sigma}_{S\cup S^\star})} \left\|\frac{1}{|\mathcal{E}|} \sum_{e\in \mathcal{E}} \mathbb{E}[X_S^{(e)} \varepsilon^{(e)}] \right\|_2^2
    \le \kappa_L^{-1} \left\|\frac{1}{|\mathcal{E}|} \sum_{e\in \mathcal{E}} \mathbb{E}[X_S^{(e)} \varepsilon^{(e)}] \right\|_2^2.
\end{align*} where (a) follows from the fact that $\mathbb{E}[X_{S^\star}^{(e)} \varepsilon^{(e)}] \equiv 0$ by the invariance assumption $\mathbb{E}[\varepsilon^{(e)}|X_{S^\star}^{(e)}] \equiv 0$.

At the same time, substituting our calculations of projections into the general representation of $\bar{\mathsf{d}}_{\mathcal{G}, \mathcal{F}}(S)$, we have
\begin{align*}
   \bar{\mathsf{d}}_{\mathtt{LL}}(S) = \bar{\mathsf{d}}_{\mathcal{G}, \mathcal{F}}(S) &= \frac{1}{|\mathcal{E}|} \sum_{e\in \mathcal{E}} \int \left\{(\beta^{(e,S)}_S - {\beta}^{(S)}_\dagger)^\top x_S\right\}^2 \mu^{(e)}(dx_S) \\
    &= \frac{1}{|\mathcal{E}|} \sum_{e\in \mathcal{E}} \|\beta^{(e,S)}_S - {\beta}^{(S)}_\dagger\|_{\Sigma_S^{(e)}}^2\\
    &\ge \kappa_L \frac{1}{|\mathcal{E}|} \sum_{e\in \mathcal{E}} \|\beta^{(e,S)}_S - {\beta}^{(S)}_\dagger\|_2^2 \\
    &\ge \kappa_L \inf_{\beta: \beta_{S^c} = 0} \frac{1}{|\mathcal{E}|} \sum_{e\in \mathcal{E}} \|\beta^{(e,S)} - \beta\|_2^2 \ge \kappa_L  \frac{1}{|\mathcal{E}|} \sum_{e\in \mathcal{E}} \|\beta^{(e,S)} - \bar{\beta}^{(S)}\|_2^2
\end{align*} where the last inequality follows from the fact that $\bar{\beta}^{(S)} = \frac{1}{|\mathcal{E}|} \sum_{e\in \mathcal{E}} \beta^{(e,S)}$ minimizes the previous constrained minimization problem.

\noindent {\sc Step 1.3. Validate ``Heterogeneity'' Condition.} According to the above calculation, we have
\begin{align*}
\forall S\subseteq [d] ~with~ \mathsf{b}_{\mathcal{G}}(S) > 0 \qquad \overset{(a)}{\Longrightarrow} \qquad & \sum_{e\in \mathcal{E}} \mathbb{E}[X_S^{(e)} \varepsilon^{(e)}] \neq 0\\
\overset{(b)}{\Longrightarrow} \qquad & \exists e,e'\in \mathcal{E} ~s.t.~{\beta}^{(e,S)} \neq \beta^{(e',S)} \\
\overset{(c)}{\Longrightarrow} \qquad & \bar{\mathsf{d}}_{\mathcal{G}, \mathcal{F}}(S) > 0.
\end{align*} where $(a)$ follows from the calculate of $\mathsf{b}_S$, $(b)$ follows from \cref{cond:lglf:identification}, $(c)$ follows from the above lower bound that
\begin{align*}
    \mathsf{d}_S \ge \kappa \frac{1}{|\mathcal{E}|} \left(\|\beta^{(e,S)} - \bar{\beta}^{(S)}\|_2^2 + \|\beta^{(e',S)} - \bar{\beta}^{(S)}\|_2^2\right) > 0.
\end{align*} This completes the validation of \cref{cond:general-gf}.

\noindent {\sc Step 2. Validate \cref{cond:general-function-class}.} Now we choose $\mathcal{G} = \mathcal{H}_{\mathtt{lin}}(d, C_2, B)$ and $\mathcal{F} = \mathcal{H}_{\mathtt{lin}}(d, 2C_2, 2B)$ with
\begin{align*}
    B = 2C_2\sqrt{n} \ge 2\sigma_x \max_{e\in \mathcal{E}, S\subseteq[d]} \|\Sigma^{1/2} \beta^{(e,S)}\|_2 \sqrt{\log n}
\end{align*} it is easy to see that $\mathcal{G}$ and $\mathcal{F}$ can be represented as ReLU network with input dimension $d$, depth $0$ and width $0$, then it follows similarly to {\sc Step 2.} in the proof of \cref{thm:fairnn} that
\begin{align*}
    \mathrm{Pdim}(\mathcal{G}) \lor \mathrm{Pdim}(\mathcal{F}) \le d,
\end{align*} and \cref{cond:general-function-class} holds with $\delta_n = \tilde{C}_1  (\log n)^{3/2} \sqrt{d/n}$ for some constant $\tilde{C}_1$ dependent on $C_2$.

\noindent {\sc Step 3. Apply \cref{thm:oracle}.} Now that \cref{cond:general-gf} is validated and \cref{cond:general-function-class} is also validated with $\delta_n \le \tilde{C}_1 (\log n)^{3/2} \sqrt{d/n}$. Observe that $U \le 2B(\sigma_y\sqrt{C_1}\sqrt{\log n} + 2B)$. Setting $t=100 \log n$, the stochastic error $\delta_{n, t}$ satisfies
\begin{align*}
    U \delta_{n, t} \le \tilde{C}_2 \sqrt{d/n} \log^{5/2} n.
\end{align*} for some constant $\tilde{C}_2$ that depends on $(C_1, C_2, \sigma_x, \sigma_y)$ by \cref{cond:lglf} (1).
At the same time, observe that
\begin{align*}
    \delta_{\mathtt{a}, \mathcal{G}} 
    &\le \frac{1}{|\mathcal{E}|} \sum_{e\in \mathcal{E}} \mathbb{E}[|(\beta^\star)^\top X^{(e)} - \mathrm{Tc}_{B}((\beta^\star)^\top X^{(e)})|^2]\\
    &\le \frac{4}{|\mathcal{E}|} \sum_{e\in \mathcal{E}} \mathbb{E} \left[|(\beta^\star)^\top X^{(e)}|^2 1\{|(\beta^\star)^\top X^{(e)} \ge B\}\right]
\end{align*}
Observe that for any random variable $X$ and scalar $s$
\begin{align}
\label{eq:sub-gaussian-integral}
\mathbb{E}[X^2 1\{|X| \ge s\}] = \int x^2 1\{|x| \ge s\} \nu_x(dx) = \int \int 1\{t\le x^2, |x|\ge s\} dt \nu_x(dx) 
= \int_0^\infty \mathbb{P}(|X|\ge s \lor \sqrt{t}) dt
\end{align}
Let $v = (\Sigma^{1/2} \beta^\star)$, then it follows from the sub-Gaussian $\Sigma^{-1/2} X^{(e)}$ (\cref{cond:lglf} (2)) that, for any $e\in \mathcal{E}$
\begin{align*}
    \mathbb{E}[|v^\top \Sigma^{-1/2} X^{(e)}| 1\{|v^\top \Sigma^{-1/2} X^{(e)}| > B\}] 
    &= \int_0^\infty \mathbb{P}\left(|v^\top \Sigma^{-1/2} X^{(e)}| \ge \sqrt{t} \lor B\right) dt \\
    &\le B^2 C_{x} e^{-B^2/(2\|v\|_2^2\sigma_x^2)} + {2C_x \|v\|_2 \sigma_x^2} e^{-B^2/(2\|v\|_2^2 \sigma_x^2)} \lesssim \frac{\log n}{n}.
\end{align*} where last inequality follows from our choice of $B$. Therefore, 
\begin{align}
\label{eq:approx-a-g-linear}
    \delta_{\mathtt{a}, \mathcal{G}} \le \sqrt{\tilde{C}_3 \frac{\log n}{n}}
\end{align} for some constant $\tilde{C}_3$ dependent on $(C_1, C_2, \sigma_x, C_x)$. 
Observe that
\begin{align*}
    |x - \mathrm{Tc}_B(y) - \mathrm{Tc}_{2B}(x - y)|^2 \le 16\left\{1\{|x|\ge B\}|x|^2 + 1\{|y| \ge B\} B^2\right\}.
\end{align*}
This implies that, for any $e\in \mathcal{E}$ and $S\subseteq [d]$, 
\begin{align*}
    &\sup_{g\in \mathcal{G}_S: S_g=S} \inf_{f\in \mathcal{F}_S} \mathbb{E}[|(\beta^{(e,S)})^\top X^{(e)} - \mathrm{Tc}_{B}(\beta_g^\top X^{(e)}) - \mathrm{Tc}_{2B}(\beta_f^\top X^{(e)})|^2]\\ 
    &\le \sup_{g\in \mathcal{G}_S: S_g=S} \mathbb{E}[|(\beta^{(e,S)})^\top X^{(e)} - \mathrm{Tc}_{B}(\beta_g^\top X^{(e)}) - \mathrm{Tc}_{2B}((\beta^{(e,S)} - \beta_g)^\top X^{(e)})|^2] \\
    &\le 16 \left\{ \mathbb{E}[|(\beta^{(e,S)})^\top X^{(e)}|^2 1\{|\beta^{(e,S)})^\top X^{(e)}| \ge B\}] + \sup_{\beta: \|\Sigma^{1/2} \beta\|_{2} \le C_2} \mathbb{P} \left(|\beta^\top X^{(e)}| \ge B\right)\right\}\\
    &\le 32 \tilde{C}_3 \frac{\log n}{n}
\end{align*} provided our choice of $B$, this further implies that
\begin{align}
\label{eq:approx-a-f-linear}
    \sup_{S\subseteq [d]} \delta_{\mathtt{a}, \mathcal{G}, \mathcal{F}}(S) \le \sqrt{32\tilde{C}_3 \frac{\log n}{n}}.
\end{align}

Now we apply \cref{thm:oracle}, then for any $\gamma \ge 8\gamma^\star_{\mathtt{LL}}$ with $\gamma^\star_{\mathtt{LL}} = \sup_{S\subseteq[d]: \sum_{e\in \mathcal{E}} \mathbb{E}[X_S^{(e)}\varepsilon^{(e)}] \neq 0} \mathsf{b}_{\mathtt{LL}}(S) / \bar{\mathsf{d}}_{\mathtt{LL}}(S)$, the minimizer satisfies, with probability at least $1-C_y (\sigma_y+1) n^{-100} - n^{-100}$, 
\begin{align*}
    \|\hat{g} - g^\star\|_{2} &\le C(1+\gamma) \left(U\delta_{n, t}  + \sup_{S\subseteq [d]} \delta_{\mathtt{a}, \mathcal{G}, \mathcal{F}}(S) + \delta_{\mathtt{a}, \mathcal{G}} \right) \\
    &\le (1+\gamma) C\left(\tilde{C}_2 \sqrt{d/n} \log^{5/2} n + 8\sqrt{\tilde{C}_3} \sqrt{\frac{\log n}{n}}\right) \\
    &\le \tilde{C}_4 (1 + \gamma) \left(\log^{5/2} n \sqrt{\frac{d}{n}} \right)
\end{align*} where $\tilde{C}_4$ is a constant dependent on $(C_1, C_2, \sigma_x, C_x, \sigma_y, C_y)$.

Moreover, given that
\begin{align*}
    BU\delta_{n, t}  + \sup_{S\subseteq [d]} \delta_{\mathtt{a}, \mathcal{G}, \mathcal{F}}(S) + \delta_{\mathtt{a}, \mathcal{G}} = O \left((1+\gamma) (\log n)^3 \sqrt{\frac{d}{n}} \right) = o(1)
\end{align*} provided $d=o((1+\gamma^2)n/(\log^6 n))$, then for $n$ large enough, with same probability, 
\begin{align*}
    \|\hat{g} - g^\star\|_{2} \le C \left(U\delta_{n, t}  + \sup_{S\subseteq [d]} \delta_{\mathtt{a}, \mathcal{G}, \mathcal{F}}(S) + \delta_{\mathtt{a}, \mathcal{G}} \right) \le \tilde{C}_4 (\log n)^{5/2} \sqrt{\frac{d}{n}}.
\end{align*} 

Given that we derive high probability error bound on $\|\hat{g} - g^\star\|_2$, it follows from triangle equality that
\begin{align*}
    \|\Sigma^{1/2}(\beta_{\hat{g}} - \beta^\star)\|_2 = \|\beta_{\hat{g}}^\top x - g^\star\|_2 &\le \|\beta_{\hat{g}}^\top x - \hat{g}\|_2 + \|\hat{g} - g^\star\|_2 \le \sqrt{\tilde{C}_3 \frac{\log n}{n}} + \|\hat{g} - g^\star\|_2,
\end{align*} which completes the proof. Here the last inequality follows from the fact that
\begin{align*}
    \|\beta_{\hat{g}}^\top x - \hat{g}\|_2 \le \left\{\frac{4}{|\mathcal{E}|} \sum_{e\in \mathcal{E}} \mathbb{E}[|(\Sigma^{1/2} \beta_{\hat{g}})^\top(\Sigma^{-1/2} X^{(e)})|^2 1\{|(\Sigma^{1/2} \beta_{\hat{g}})^\top(\Sigma^{-1/2} X^{(e)})| \ge B\}]\right\}^{1/2}.
\end{align*}

\subsection{Proof of Theorem \ref{thm:lgbf}}
\label{sec:proof-lgbf}

The proof is similar to that of \cref{thm:lglf}. So we only highlight the difference. Let $\overline{\mathcal{G}_S} = \mathcal{H}_{\mathtt{lin}}(d) \cap \Theta_S$ and $\overline{\mathcal{F}_S} = \mathcal{H}_{\mathtt{alin}}(d, \phi) \cap \Theta_S$. First, it follows a similar idea to the proof in \cref{sec:proof-lglf} that both $\overline{\mathcal{G}_S}$ and $\overline{\mathcal{F}_S}$ are closed subspace of $\Theta_S$ because $\Sigma$ and $\tilde{\Sigma}$ are positive definite matrix by \cref{cond:lglf} (2) and \cref{cond:lgbf}.

\noindent {\sc Step 1. Verify \cref{cond:general-gf}.} We first claim that
\begin{align}
\label{eq:lgbf:projection}
    \Pi_{\overline{\mathcal{G}_S}}({\bar{m}^{(S)}}) = (\beta_\dagger^{(S)})^\top x_S \qquad \text{and} \qquad \Pi_{\overline{\mathcal{F}_S}} (m^{(e,S)}) = (\tilde{\beta}^{(e,S)})^\top \tilde{x}.
\end{align} The calculation of $\Pi_{\overline{\mathcal{G}_S}}({\bar{m}^{(S)}})$ is the same as that in \eqref{eq:proof-corollary1-claim1}. For $\Pi_{\overline{\mathcal{F}_S}} (m^{(e,S)})$, we let $\Pi_{\overline{\mathcal{F}_S}} (m^{(e,S)}) = \tilde{\beta}_S^\top \tilde{x}_S$, applying the projection theorem gives that, for any $j\in S$
\begin{align*}
    \langle m^{(e,S)} - \tilde{\beta}_S^\top \tilde{x}_S, x_j \rangle_e = 0 \qquad and \qquad \langle m^{(e,S)} - \tilde{\beta}_S^\top \tilde{x}_S, h(x_j) \rangle_e = 0,
\end{align*} where $\langle f, g\rangle = \int f(x) g(x) \mu^{(e)}_x(dx)$. Similarly, we have $\mathbb{E}[Y^{(e)} \tilde{X}_S^{(e)}] - \tilde{\Sigma}_S^{(e)} \tilde{\beta}_S^\top = 0$, combining with the fact that $\tilde{\Sigma}_S^{(e)}$ is positive definite thus is invertible, we obtain
\begin{align*}
    \Pi_{\overline{\mathcal{F}_S}} (m^{(e,S)}) = \big\{(\tilde{\Sigma}_S^{(e)})^{-1} \mathbb{E}[Y^{(e)} \tilde{X}_S^{(e)}]\big\}^\top \tilde{x}_S, 
\end{align*} which completes the proof of \eqref{eq:lgbf:projection}. Since all the projections are calculated, it then follows from \cref{cond:lgbf-invariance} that 
\begin{align*}
\Pi_{\overline{\mathcal{F}_{S^\star}}}^{(e)} (m^{(e,S^\star)}) = x^\top \beta^{(e,S^\star)} + \bar{\phi}(x)^\top 0  = x^\top \beta^\star = x_{S^\star}^\top {\beta_{\dagger}^{(S^\star)}} = \Pi_{\overline{\mathcal{G}_{S^\star}}} (\bar{m}^{(S^\star)})
\end{align*} hence the ``invariance'' condition in \cref{cond:general-gf} holds with $g^\star = x^\top \beta^\star$. Moreover, since $\Sigma$ is positive definite, the ``nondegenerate covariate'' condition \cref{cond:general-gf} holds with $s_{\min} = \beta_{\min}^2$.

Moreover, it follows from some simple calculation that
\begin{align*}
    \mathsf{b}_{\mathtt{LA}}(S) := \mathsf{b}_{\mathcal{G}}(S) = \mathsf{b}_{\mathtt{LL}}(S) \qquad \text{and} \qquad \bar{\mathsf{d}}_{\mathtt{LA}}(S) := \bar{\mathsf{d}}_{\mathcal{G}, \mathcal{F}}(S) = \frac{1}{|\mathcal{E}|} \sum_{e\in \mathcal{E}} \|\beta^{(e,S)}_S - [{\beta}^{(S)}_\dagger, 0]\|_{\tilde{\Sigma}_S^{(e)}}^2.
\end{align*} The calculation of $\mathsf{b}_{\mathtt{LA}}(S)$ is the same as that in \cref{sec:proof-lglf}, and 
\begin{align*}
    \bar{\mathsf{d}}_{\mathtt{LA}}(S) = \frac{1}{|\mathcal{E}|} \sum_{e\in \mathcal{E}} \int \left\{(\tilde{\beta}^{(e,S)}_S - [\beta^{(S)}_\dagger, 0])^\top \tilde{x}_S\right\}^2 \mu^{(e)}(dx_S) = \frac{1}{|\mathcal{E}|} \sum_{e\in \mathcal{E}} \|\beta^{(e,S)}_S - [{\beta}^{(S)}_\dagger, 0]\|_{\tilde{\Sigma}_S^{(e)}}^2 
\end{align*} For the inequality in \eqref{eq:biasvar-lgbf}, it follows from the fact that $\overline{\mathcal{G}_S} \subseteq \overline{\mathcal{F}_S}$ are both subspace of $\Theta_S$ that, 
\begin{align*}
    \|\beta^{(e,S)}_S - [{\beta}^{(S)}_\dagger, 0]\|_{\tilde{\Sigma}_S^{(e)}}^2 &= \|\Pi_{\overline{\mathcal{F}_S}}^{(e)} (m^{(e,S)}) - \Pi_{\overline{\mathcal{G}_S}} (\bar{m}^{(S)})\|_{2,e}^2 \\
    &= \|\Pi_{\overline{\mathcal{F}_S}}^{(e)} \{m^{(e,S)} - \Pi_{\overline{\mathcal{G}_S}} (\bar{m}^{(S)})\}\|_{2,e}^2 \\
    &\ge \|\Pi_{\overline{\mathcal{G}_S}}^{(e)} \{m^{(e,S)} - \Pi_{\overline{\mathcal{G}_S}} (\bar{m}^{(S)})\}\|_{2,e}^2 = \|\beta^{(e,S)}_S - {\beta}^{(S)}_\dagger\|_{\Sigma_S^{(e)}}^2,
\end{align*} summing over all the $e\in \mathcal{E}$ concludes the proof of inequality in \eqref{eq:biasvar-lgbf}.

Finally, according to the above calculation, we have
\begin{align*}
\forall S\subseteq [d] ~with~ \mathsf{b}_{\mathcal{G}}(S) > 0 \qquad \overset{(a)}{\Longrightarrow} \qquad & \sum_{e\in \mathcal{E}} \mathbb{E}[X_S^{(e)} \varepsilon^{(e)}] \neq 0\\
\overset{(b)}{\Longrightarrow} \qquad & \exists e\in \mathcal{E} ~s.t.~\tilde{\beta}^{(e,S)} \neq [\beta^{(e,S)},0] 
 ~~OR~~ \exists e,e'\in \mathcal{E} ~s.t.~{\beta}^{(e,S)} \neq \beta^{(e',S)} \\
\overset{(c)}{\Longrightarrow} \qquad & \bar{\mathsf{d}}_{\mathcal{G}, \mathcal{F}}(S) > 0.
\end{align*} This completes the validation of condition ``heterogeneity'' in \cref{cond:general-gf}. Here $(a)$ follows from the calculate of $\mathsf{b}_{\mathcal{G}}(S)$, $(b)$ follows from \cref{cond:lgbf:identification}. The derivation of $(c)$ can be divided into two cases:

\noindent \emph{Case 1. $\exists e\in \mathcal{E}$, $\tilde{\beta}^{(e,S)} \neq [\beta^{(e,S)},0]$.} Let $e_1$ be the environment such that $\tilde{\beta}^{(e_1,S)} \neq [\beta^{(e_1,S)},0]$, we have
\begin{align*}
    \bar{\mathsf{d}}_{\mathcal{G}, \mathcal{F}}(S) = \frac{1}{|\mathcal{E}|} \sum_{e\in \mathcal{E}} \|\beta^{(e,S)}_S - [{\beta}^{(S)}_\dagger, 0]\|_{\tilde{\Sigma}_S^{(e)}}^2 \ge \frac{1}{|\mathcal{E}|} \min_{e\in \mathcal{E}} \lambda_{\min}(\tilde{\Sigma}^{(e)}) \|\beta^{(e,S)}_S - [{\beta}^{(S)}_\dagger, 0]\|_2 > 0.
\end{align*}
\noindent \emph{Case 2. $\forall e\in \mathcal{E}$, $\tilde{\beta}^{(e,S)} = [\beta^{(e,S)},0]$.} In this case, we have
\begin{align*}
    \bar{\mathsf{d}}_{\mathcal{G}, \mathcal{F}}(S) = \frac{1}{|\mathcal{E}|} \sum_{e\in \mathcal{E}} \|\beta^{(e,S)}_S - {\beta}^{(S)}_\dagger\|_{\Sigma_S^{(e)}}^2.
\end{align*} Combining the condition that $\exists e,e'\in \mathcal{E} ~s.t.~{\beta}^{(e,S)} \neq \beta^{(e',S)}$ with the discussion {\sc Step 1.3} in \cref{sec:proof-lglf} concludes that $\bar{\mathsf{d}}_{\mathcal{G}, \mathcal{F}}(S)>0$ in this case.

\noindent {\sc Step 2. Verify \cref{cond:general-function-class}.} Now we choose $\mathcal{G} = \mathcal{H}_{\mathtt{lin}}(d, C_2, B)$ and $\mathcal{F} = \mathcal{H}_{\mathtt{alin}}(d, \phi, 2B)$ with
\begin{align*}
    B = C_2\sqrt{n} \ge 2\sigma_{\tilde{x}} \max_{e\in \mathcal{E}, S\subseteq[d]} \|\tilde{\Sigma}^{1/2} \tilde{\beta}^{(e,S)}\|_2 \sqrt{\log n}
\end{align*} it is easy to see that $\mathcal{G}$ and $\mathcal{F}$ can be represented as ReLU networks with input dimension at most $2d$, depth $0$ and width $0$, then it follows similarly to {\sc Step 2} in the proof of \cref{thm:fairnn} that
\begin{align*}
    \mathrm{Pdim}(\mathcal{G}) \lor \mathrm{Pdim}(\mathcal{F}) \le 2d,
\end{align*} and \cref{cond:general-function-class} holds with $\delta_n = \tilde{C}_1  (\log n)^{3/2} \sqrt{d/n}$ for some constant $\tilde{C}_1$ dependent on $C_2$.

\noindent {\sc Step 3. Apply \cref{thm:oracle}.} Now that we have validated \cref{cond:general-gf} with corresponding $(\mathsf{b}_{\mathcal{G}}(S), \mathsf{b}_{\mathcal{G}, \mathcal{F}}(S))$ and $g^\star = x^\top \beta^\star$, and validated \cref{cond:general-function-class} with $\delta_{n} = \tilde{C}_1  (\log n)^{3/2} \sqrt{d/n}$. Setting $t=100\log n$, we have
\begin{align*}
    \delta_{n, t} \le \tilde{C}_2 \log^{5/2} n \sqrt{d/n},
\end{align*} where $\tilde{C}_2$ is a constant dependent on $(C_1, C_2, \sigma_{\tilde{x}}, \sigma_y, C_y)$
It follows similar to \eqref{eq:approx-a-f-linear} and \eqref{eq:approx-a-g-linear} in {\sc Step 3} of \cref{sec:proof-lglf} that
\begin{align*}
    \delta_{\mathtt{a}, \mathcal{G}} + \sup_{S\subseteq [d]} \delta_{\mathtt{a}, \mathcal{F}, \mathcal{G}}(S) \le \sqrt{\tilde{C}_3 \frac{\log n}{n}}
\end{align*} for some constant $\tilde{C}_3$ that dependent on $(C_2, \sigma_y, C_y, \sigma_{\tilde{x}}, C_{\tilde{x}})$. Following a similar spirit of the remaining discussion in \cref{sec:proof-lglf} {\sc Step 3} concludes the proof of the error bounds. 

\subsection{Proof of Theorem \ref{thm:lgnf}}

\label{sec:proof-lgnf}

The proof is similar to that of \cref{thm:lglf} and \cref{thm:lgbf}. In this case, we let $\overline{\mathcal{G}_S} = \mathcal{H}_{\mathtt{lin}}(d) \cap \Theta_S$ and $\overline{\mathcal{F}_{S}} = \Theta_S$, it follows from the discussion in \cref{sec:proof-lglf} that $\overline{\mathcal{G}_S}$ is closed subspace of $\Theta_S$.

\noindent {\sc Step 1. Verify \cref{cond:general-gf}.} It is easy to see that $\Pi_{\overline{\mathcal{G}_S}}({\bar{m}^{(S)}})$ is the same as that in \cref{sec:proof-lglf}, and $\Pi_{\overline{\mathcal{F}_S}}^{(e)} (m^{(e,S)}) = m^{(e,S)}$. Then it follows from \cref{cond:lgnf-invariance} that \cref{cond:general-gf} ``invariance'' and ``nondegenerate covariate'' holds with $S^\star$ and $g^\star(x) = (\beta^\star)^\top x$ and $s_{\min} = \beta_{\min}^2$.

Moreover, $\mathsf{b}_{\mathtt{LN}}(S):=\mathsf{b}_{\mathcal{G}}(S)=\mathsf{b}_{\mathtt{LL}}(S)$, which is the same as the {\sc Step 2} in \cref{sec:proof-lglf}. At the same time, substituting our calculations of projections into the general representation of $\bar{\mathsf{d}}_{\mathcal{G}, \mathcal{F}}(S)$, we have
\begin{align*}
    \bar{\mathsf{d}}_{\mathtt{LN}}(S)=\bar{\mathsf{d}}_{\mathcal{G}, \mathcal{F}}(S) &= \frac{1}{|\mathcal{E}|} \sum_{e\in \mathcal{E}} \|m^{(e,S)} - \{(\beta^{(S)}_\dagger)^\top x_S \}\|_{2,e}^2
\end{align*}
For the inequality in \eqref{eq:biasvar-lgnf}, it follows from the fact that $\mathcal{S}:=\mathcal{H}_{\mathtt{alin}}(d,\phi) \cap \Theta_S \subseteq \overline{\mathcal{F}_S}$ is subspace and $\overline{\mathcal{G}_S}$ is also a subspace of $\mathcal{S}$ that, 
\begin{align*}
    \|m^{(e,S)} - \{(\beta^{(S)}_\dagger)^\top x_S \}\|_{2,e}^2 &= \|\Pi_{\overline{\mathcal{F}_S}}^{(e)} \{m^{(e,S)} - \Pi_{\overline{\mathcal{G}_S}} (\bar{m}^{(S)})\}\|_{2,e}^2 \\
    &\ge \|\Pi_{\mathcal{S}}^{(e)} \{m^{(e,S)} - \Pi_{\overline{\mathcal{G}_S}} (\bar{m}^{(S)})\}\|_{2,e}^2 = \|\tilde{\beta}^{(e,S)}_S - [{\beta}^{(S)}_\dagger, 0]\|_{\tilde{\Sigma}_S^{(e)}}^2,
\end{align*} summing over all the $e\in \mathcal{E}$ concludes the proof of inequality in \eqref{eq:biasvar-lgnf}.

Finally, we find
\begin{align*}
\forall S\subseteq [d] ~with~ \mathsf{b}_{\mathcal{G}}(S) > 0 \qquad \overset{(a)}{\Longrightarrow} \qquad & \sum_{e\in \mathcal{E}} \mathbb{E}[X_S^{(e)} \varepsilon^{(e)}] \neq 0\\
\overset{(b)}{\Longrightarrow} \qquad & \exists e\in \mathcal{E} ~s.t.~ \mu^{(e)}(\{m^{(e,S)}(X_S) \neq X_S^\top \beta^{(e,S)}\}) > 0  \\
 & ~~OR~~ \exists e,e'\in \mathcal{E} ~s.t.~{\beta}^{(e,S)} \neq \beta^{(e',S)} \\
\overset{(c)}{\Longrightarrow} \qquad & \bar{\mathsf{d}}_{\mathcal{G}, \mathcal{F}}(S) > 0.
\end{align*} where $(a)$ follows from the calculate of $\mathsf{b}_{\mathcal{G}}(S)$, $(b)$ follows from \cref{cond:lgnf:identification}. The derivation of $(c)$ can be divided into two cases:
\noindent \emph{Case 1. $\exists e\in \mathcal{E} ~s.t.~ \mu^{(e)}(\{m^{(e,S)}(X_S) \neq X_S^\top \beta^{(e,S)}\})>0$.} Let $e_1$ be such an environment, we have
\begin{align*}
    \mathsf{d}_S &= \frac{1}{|\mathcal{E}|} \sum_{e\in \mathcal{E}} \|m^{(e,S)} - (\beta^{(S)}_\dagger)^\top x_S \|_{2,e}^2 \ge \frac{1}{|\mathcal{E}|} \|m^{(e_1,S)} - (\beta^{(S)}_\dagger)^\top x_S\|_{2,e_1}^2 \\
    & \ge \frac{1}{|\mathcal{E}|}\|m^{(e_1,S)} - \Pi_{\overline{\mathcal{G}}}^{(e_1)} (m^{(e,S)}) + \Pi_{\overline{\mathcal{G}}}^{(e_1)} (m^{(e,S)}) - (\beta^{(S)}_\dagger)^\top x_S \|_{2,e_1}^2. \\
    &\overset{(a)}{=} \frac{\|m^{(e_1,S)} - \Pi_{\overline{\mathcal{G}}}^{(e_1)} (m^{(e,S)})\|_{2,e_1}^2 + \|\Pi_{\overline{\mathcal{G}}}^{(e_1)} (m^{(e,S)}) - (\beta^{(S)}_\dagger)^\top x_S \|_{2,e_1}^2}{|\mathcal{E}|} \overset{(b)}{>} 0.
\end{align*} Here $(a)$ follows from the projection theorem, $(b)$ follows from the fact that $\|f\|_{L_2(\nu)}>0$ if $\nu(\{f \neq 0\}) > 0$. 
\noindent \emph{Case 2. $\forall e\in \mathcal{E}, \mu^{(e)}(\{m^{(e,S)}(X_S) \neq X_S^\top \beta^{(e,S)}\})=0$.} In this case, we have
\begin{align*}
    \mathsf{d}_S = \frac{1}{|\mathcal{E}|} \sum_{e\in \mathcal{E}} \|\beta^{(e,S)}_S - {\beta}^{(S)}_\dagger\|_{\Sigma_S^{(e)}}^2.
\end{align*} Combining the condition that $\exists e,e'\in \mathcal{E} ~s.t.~{\beta}^{(e,S)} \neq \beta^{(e',S)}$ with the discussion in \cref{sec:proof-lglf} concludes that $\bar{\mathsf{d}}_{\mathcal{G},\mathcal{F}}(S)>0$ in this case. This completes the validation of \cref{cond:general-gf} ``heterogeneity''.

\noindent {\sc Step 2. Verify \cref{cond:general-function-class}. } Recall that we choose
\begin{align*}
    \mathcal{G} = \mathcal{H}_{\mathtt{lin}}(d, C_2, B) \qquad  \text{and} \qquad \mathcal{F} = \mathcal{H}_{\mathtt{nn}}(d, \log n, \log^{d} n, B)
\end{align*} with
\begin{align*}
    B = C_2 \sqrt{\log n} \ge 4(\sigma_x \lor \sigma_m \lor 1) \|\Sigma^{1/2} \beta^\star\|_2.
\end{align*} Following similarly to {\sc Step 2} in \cref{sec:proof-fairnn}, we have \cref{cond:general-function-class} holds with 
\begin{align*}
    \delta_{n} = \tilde{C}_1 \frac{\log^{d+2} n}{n}
\end{align*} with some constant $\tilde{C}_1$ dependent on $(d, C_2)$.

\noindent {\sc Step 3. Calculate the approximation error.} It follows similar to \eqref{eq:approx-a-g-linear} that
\begin{align*}
    \delta_{\mathtt{a}, \mathcal{G}} \le \tilde{C}_2 \sqrt{\frac{\log n}{n}}
\end{align*} for some constant $\tilde{C}_2$ dependent on $(C_1, C_2, \sigma_x, C_x)$. Now we turn to establish upper bounds on $\delta_{\mathtt{a}, \mathcal{F}, \mathcal{G}}^\star$ and $\delta_{\mathtt{a}, \mathcal{F}, \mathcal{G}}(S)$. Observe that $f(x) = \beta^\top x$ can be implemented via ReLU neural network with $L \ge 0$ and $N \ge 2$ because the identity map can be implemented via the neural network with depth $1$ and width $2$ (Lemma 12 (1) of \cite{fan2024factor}), then, $\forall e\in \mathcal{E}$,
\begin{align*}
    \sup_{g\in \mathcal{G}} \inf_{f\in \mathcal{F}_S} \|(\beta^\star)^\top x - g - f\|_{2,e}^2 &\le \sup_{\beta\in \mathcal{G}} \inf_{f\in \mathcal{F}} \|(\beta^\star)^\top x - \mathrm{Tc}_B(\beta^\top x) - f\|_{2,e}^2 \\
    &\le \sup_{\beta\in \mathcal{G}}  \|(\beta^\star)^\top x - \mathrm{Tc}_B(\beta^\top x) - \mathrm{Tc}_{2B}((\beta^\star - \beta)^\top x )\|_{2,e}^2 \\
    &\le \tilde{C}_2 \sqrt{\frac{\log n}{n}}
\end{align*} provided $N \ge 2$ and $L\ge 0$, where the last inequality follows similarly to \eqref{eq:approx-a-f-linear}.

Meanwhile, for any $C_m$ Lipschitz $m^{(e,S)}$, let $K = \sqrt{\log n}$. We can see that $g^\dagger(x)=m^{(e,S)}(K(2x-1))$ is $2KC_m$ Lipschitz, applying \cref{lemma:nn-approx-error} claim (1), there exists a ReLU network $g$ with depth $L-2$ and width $N-2$ such that
\begin{align*}
    \|g - g^\dagger\|_{\infty, [0,1]^d} \le 2KC_m\tilde{C}_3(NL/\log (\log n))^{-2/d} \le 2KC_m\tilde{C}_3 N^{-2/d}
\end{align*} provided $N, L\ge \tilde{C}_2(\log(\log n)+1)$ for some constant $\tilde{C}_2$ dependent on $d$, substituting our choice of $(N, L)$ gives $\|g - g^\dagger\|_{\infty, [0,1]^d} \le (\log n)^{-1/2}$ when $n$ is large enough. Let $\mathsf{S}(x):\mathbb{R}^d \to \mathbb{R}^d$ be such that $\mathsf{S}(x) = ((x_1 + K)/2K, \ldots, (x_d + K)/2K)$. Then we have
\begin{align*}
    &\left\|\mathrm{Tc}_{B}\left\{g(\mathsf{S}(X^{(e)}))\right\} - m^{(e,S)}(x)\right\|_{2,e}^2 \\
    &~~~~\le \mathbb{E} \left[\left|g(\mathsf{S}(X^{(e)})) - m^{(e,S)}(X^{(e)})\right|^2 1\left\{\|X^{(e)}\|_2 \le K, |g(\mathsf{S}(X^{(e)}))| > B\right\}\right] \\
    &~~~~~~~~~~~~ + \mathbb{E} [|B - m^{(e,S)}(X^{(e)})|^2 1\{\|X^{(e)}\|_2 \le K, |g(\mathsf{S}(X^{(e)}))| \ge B\}] \\
    &~~~~~~~~~~~~ +  \mathbb{E} [(2B^2+2|m^{(e,S)}(X^{(e)})|^2) 1\{\|X^{(e)}\|_2 > K\}] = \mathsf{T}_1 + \mathsf{T}_2 + \mathsf{T}_3
    %&\le \|g - g_\dagger\|_{\infty, [0,1]^d}^2 \\
    %&~~~~~~~~ + \mathbb{E}\left[(2B^2 + |m^{(e,S)}|^2) 1\{|m^{(e,S)}| \ge B-\log^{-1}n\}\right] \\
    %&~~~~~~~~ + \mathbb{E}\left[(2B^2 + C_m \|X^{(e)}\|_2^2) 1\{\|X^{(e)}\|_2 > K\}\right]
\end{align*} 
Observe that $\|x\|_{2} \le K$ implying $\|x\|_\infty \le K$. It follows from our construction of $g$ that
\begin{align}
\label{eq:approx-e1}
|g(\mathsf{S}(x)) - g^\dagger(\mathsf{S}(x))| \le \|g - g^\dagger\|_{\infty, [0,1]^d} \le \log^{-0.5} n \qquad \forall ~x:~ \|x\|_2 \le K
\end{align}
then it follows from the construction of $g^\dagger$ that
\begin{align*}
\mathsf{T}_1 &\le \mathbb{E}[|g(\mathsf{S}(X^{(e)})) - g^\dagger(\mathsf{S}(X^{(e)}))|^2 1\{\|X^{(e)}\|_2 \le K\}] \le \log^{-1} n.
\end{align*} At the same time, when $\|X^{(e)}\|_2 \le K$, it follows from triangle inequality and \eqref{eq:approx-e1} that $|m^{(e,S)}(x)| \ge B/2$ provided $n$ is large enough and $|g(\mathsf{S}(x))| \ge B$. It then follows from \cref{cond:lgnf} and \eqref{eq:sub-gaussian-integral} that
\begin{align*}
    \mathsf{T}_2 &\lesssim \mathbb{E} \left[|m^{(e,S)}(X^{(e)})|^2 1\{|m^{(e,S)}(X^{(e)})| \ge B/2\}\right] \\
    &\le \int_{0}^\infty \mathbb{P}(|m^{(e,S)}| \ge B/2\lor \sqrt{t}) dt \le \tilde{C}_4 \frac{\log n}{n}.
\end{align*} for some constant $\tilde{C}_4$ dependent on $C_m$ and $\sigma_m$. It also follows from Lipschitz condition in \cref{cond:lgnf} that
\begin{align*}
    \mathsf{T}_3 &\le \mathbb{E}\left[\left(2B^2 + 4C_m^2 + \|X^{(e)}\|_2^2\right) 1\{\|X^{(e)}\|_4^2 > K^4\}\right]\le \frac{1}{K^4} \tilde{C}_5 (B^2 + 1).
\end{align*} for some constant $\tilde{C}_5$ dependent on $(C_m, \sigma_m)$.

Putting all the pieces together, we conclude that under our choice of $(N,L)$, if $n$ is large enough, then for any $e\in \mathcal{E}$ and $S\subseteq[d]$, there exists some ReLU network $f$ with depth $L-2$ and width $N-2$ such that
\begin{align*}
\left\|\mathrm{Tc}_{B}(f(x)) - m^{(e,S)}(x)\right\|_{2,e}^2 \le \tilde{C}_6 (\log n)^{-1}.
\end{align*} for some constant $\tilde{C}_6$ dependent on $(d, C_m, \sigma_m,C_x, \sigma_x, C_2)$. Applying  \cref{lemma:approx-minus}, the function $\mathrm{Tc}_{B}(f(x)) - \mathsf{Trunc}_{B}(\beta^\top x)$ can be realized by a neural network with depth $L$ and width $N$. Using the fact
\begin{align*}
    |u - \mathrm{Tc}_B(v) - \mathrm{Tc}_{2B}(\tilde{u} - \mathrm{Tc}_{B}(v))|^2 \le 16\left\{1\{|u|\ge B\}|u|^2 + 1\{|v| \ge B\} B^2\right\} + |u - \tilde{u}|^2,
\end{align*}
we obtain
\begin{align*}
&\sup_{g\in \mathcal{G}_S: S_g = S} \inf_{f\in \mathcal{F}_S} \|m^{(e,S)} - g - f\|_{2,e}^2 \\
&~~~\le \inf_{\beta\in \mathbb{R}^d} \|m^{(e,S)} - \beta^\top x - \mathrm{Tc}_{2B}(\mathrm{Tc}_{B}(f(x)) - \mathrm{Tc}_{B}(\beta^\top x))\|_{2,e}^2 \\
&~~~\le \frac{\tilde{C}_6}{\log n} + 16 \mathbb{E}[|m^{(e,S)}(X^{(e)})|^2 1\{|m^{(e,S)}(X^{(e)})| \ge B\}] \\
&~~~~~~~~~~~ + 16 B^2 \sup_{\beta: \|{\Sigma}^{1/2} \beta\|_2 \le C_2} \mathbb{E} \left[|\beta^\top X^{(e)}|^2 1\{|\beta^\top X^{(e)}|\ge B\}\right] \\
&~~~\le\tilde{C}_7 \frac{1}{\log n},
\end{align*} for some constant $\tilde{C}_7$ dependent on $(d, C_m, \sigma_m, C_x, \sigma_x, C_2)$, which gives $\max_{S\subseteq [d]} \delta_{\mathtt{a}, \mathcal{F}, \mathcal{G}}(S) \le \sqrt{\tilde{C}_7(\log n)^{-1}}$.

\noindent {\sc Step 4. Apply \cref{thm:oracle}.} Now we apply \cref{thm:oracle} (2) with $t=100\log n$, under which $\delta_{n, t}$ can be upper bounded by
\begin{align*}
U \delta_{n, t} \le \tilde{C}_8n^{-1/2}\log^{d+3} n.
\end{align*} for some constant $\tilde{C}_8$ dependent on $(C_1, C_2, \sigma_y, C_y, \sigma_x, \sigma_m)$. Given that \cref{cond:general-gf} and \cref{cond:general-function-class} are all validated, it remains to verify \eqref{eq:main-result-faster-cond}, substituting the quantities we calculated in {\sc Step 2} and {\sc Step 3}, we have
\begin{align*}
    BU \delta_{n, t} + \max_{S\subseteq [d]} \delta^2_{\mathtt{a}, \mathcal{F}, \mathcal{G}}(S) + \delta^2_{\mathtt{a}, \mathcal{G}} = O((\log n)^{-1}) = o(1)
\end{align*} thus \eqref{eq:main-result-faster-cond} is validated with large enough $n$. Applying \cref{thm:oracle} (2) gives
\begin{align*}
    \|\hat{g} - (\beta^\star)^\top x\|_2 \le \delta_{\mathtt{a}, \mathcal{G}} + \delta_{\mathtt{a}, \mathcal{F}, \mathcal{G}}^\star + U \delta_{n, t} \lesssim \frac{\log^{d+3} n}{\sqrt{n}},
\end{align*} which completes the proof.

%%%%%%%%%%%%%%%%%%%%%%%%%%%%%%%%%%%%%%%%%%%%%%%%%%%%%%%%%%%%%%%%%%%%%%%%%%%%%%%%%%%%%%%%%%%%%%%%%%%%%%%%%%%%%%%%%%%%%%%%%%%%%
%%%%%%%%%%%%%%%%%%%%%%%%%%%%%%%%%%%%%%%%%%%%%%%%%%%%%%%%%%%%%%%%%%%%%%%%%%%%%%%%%%%%%%%%%%%%%%%%%%%%%%%%%%%%%%%%%%%%%%%%%%%%%
%%%%%%%%%%%%%%%%%%%%%%%%%
%%%%%%%%%%%%%%%%%%%%%%%%%                                Population-level proof
%%%%%%%%%%%%%%%%%%%%%%%%%
%%%%%%%%%%%%%%%%%%%%%%%%%%%%%%%%%%%%%%%%%%%%%%%%%%%%%%%%%%%%%%%%%%%%%%%%%%%%%%%%%%%%%%%%%%%%%%%%%%%%%%%%%%%%%%%%%%%%%%%%%%%%%
%%%%%%%%%%%%%%%%%%%%%%%%%%%%%%%%%%%%%%%%%%%%%%%%%%%%%%%%%%%%%%%%%%%%%%%%%%%%%%%%%%%%%%%%%%%%%%%%%%%%%%%%%%%%%%%%%%%%%%%%%%%%%

\section{Proofs for Population-level Results}
\label{sec:proof-population}
\subsection{Proof of Proposition \ref{prop:scm-invariant}}

The existence of all the conditional moments follows from the Radon-Nikodym theorem and the fact that for any $e\in \mathcal{E}$, $\mathbb{E}[|Y^{(e)}|] \le \mathbb{E}[|Y^{(e)}|^2] + 1 < \infty$. Moreover, it follows from the structural assignments \eqref{eq:scm-model} and the fact that $G$ is acyclic that, for any $x \in \mathbb{R}^d$ and $e\in \mathcal{E}$, 
\begin{align*}
    m^{(e,S^\star)}(x) = \mathbb{E}_{U_{d+1}\sim \nu_{d+1}}\left[f_{d+1}(x_{S^\star}, U_{d+1})\right]
\end{align*} and it is clear that the R.H.S. is independent of $e$.

\subsection{Proof of Proposition \ref{prop:transfer-learning}}

We first claim that
\begin{align}
\label{eq:proof-oos-sigma2}
    \mathsf{R}_{\mathtt{oos}}(m^\star; \nu_x) = \sigma^2.
\end{align}
On one hand, it follows from the definition of $\mathcal{U}_{S^\star, m^\star, \sigma^2}$ that for any $\mu \in \mathcal{U}_{S^\star, m^\star, \sigma^2}$ with $\mu_x \sim \nu_x$, the upper bound holds
\begin{align*}
    \mathbb{E}_{(X,Y)\sim \mu}\left[ |Y - m^\star(X)|^2 \right] = \mathbb{E}_{(X,Y)\sim \mu}\left[ |Y - \mathbb{E}[Y|X_{S^\star}]|^2 \right] = \mathrm{Var}_\mu[Y|X_{S^\star}] \le \sigma^2.
\end{align*} Taking supremum over all the $\mu$ on both sides yields $\mathsf{R}_{\mathtt{oos}}(m^\star; \nu_x) \le \sigma^2$. On the other hand, we let $\tilde{\nu}$ be the joint distribution of $(X,Y)$ with $\tilde{\nu}(d(x,y))=\nu_x(dx) \tilde{\mu}(dy|x)$ with $\tilde{\mu}(dy|x)\sim \mathcal{N}(m^\star(x_{S^\star}), \sigma^2)$, where $\mathcal{N}(\theta, \tau)$ is Gaussian distribution with mean $\theta$ and variance $\tau$. It is easy to verify that $\tilde{\nu}\in \mathcal{U}_{S^\star, m^\star, \sigma^2}$, which implies that $\mathsf{R}_{\mathtt{oos}}(m^\star; \nu_x) \ge \mathbb{E}_{(X,Y)\sim \tilde{\nu}}\left[ |Y - m^\star(X)|^2 \right] = \sigma^2$.

At the same time, for any $m\in \Theta^{(t)}$ and $\mu \in \mathcal{U}_{S^\star, m^\star, \sigma^2}$ with $\mu_x \sim \nu_x$, 
\begin{align*}
    \mathbb{E}_{(X,Y)\sim {\mu}}\left[ |Y - m(X)|^2 \right] &= \mathbb{E}_{(X,Y)\sim {\mu}}\left[ |Y - m^\star(X)|^2 \right] + \int |m(x) - m^\star(x_{S^\star})|^2 \nu_x(dx) \\
    &~~~~~~~~ - 2\mathbb{E}_{(X,Y)\sim {\mu}}\left[ \{Y - m^\star(X)\}\{m(X) - m^\star(X_{S^\star})\} \right].
\end{align*} 

We first characterize the corresponding excess risk when $\|m - \tilde{m}\|_{L_2(\nu_x)} \neq 0$. 
On one hand, we let $\breve{\nu}(dxdy) = \nu_x(dx) \breve{\nu}(dy|x)$, where $\breve{\nu}$ satisfies
\begin{align*}
    Y = m^\star(x_{S^\star}) - \frac{\sigma}{\|m - \tilde{m}\|_{L_2(\nu_x)}} \cdot (m(x) - \tilde{m}(x_{S^\star})) \
\end{align*} It is easy to verify that $\mathbb{E}_{\breve{\nu}}[Y|X_{S^\star}=x_{S^\star}] = m^\star(x_{S^\star})$, and $\mathbb{E}_{\mu}[\textrm{Var}_{\mu}(Y|X_{S^\star})] \le \sigma^2$. Therefore, $\breve{\nu}  \in \mathcal{U}_{S^\star, m^\star, \sigma^2}$, which further yields that
\begin{align}
    \mathsf{R}_{\mathtt{oos}}(m; \nu_x) &\ge \mathbb{E}_{(X,Y)\sim \breve{\nu}}\left[ |Y - m(X)|^2 \right] \nonumber \\
    &= \mathbb{E}_{(X,Y)\sim \breve{\nu}}\left[ |Y - m^\star(X)|^2 \right] + \int |m(x) - m^\star(x_{S^\star})|^2 \nu_x(dx) \nonumber \\
    &~~~~~~~~ + 2\frac{\sigma}{\|m - \tilde{m}\|_{L_2(\nu_x)}}\mathbb{E}_{X\sim {\nu}_x}\left[ \{m(X) - \tilde{m}(X_{S^\star})\}\{m(X) - m^\star(X_{S^\star})\} \right] \nonumber \\
    &\overset{(a)}{=} \sigma^2 + \|m - m^\star\|^2_{L_2(\nu_x)} \nonumber \\
    &~~~~~~~~ + 2\frac{\sigma}{\|m - \tilde{m}\|_{L_2(\nu_x)}}\mathbb{E}_{X\sim {\nu}_x}\left[ \{m(X) - \tilde{m}(X_{S^\star})\}\{m(X) - \tilde{m}(X_{S^\star})\} \right] \nonumber \\
    &\overset{(b)}{=} \mathsf{R}_{\mathtt{oos}}(m^\star; \nu_x) + \|m - m^\star\|^2_{L_2(\nu_x)} + 2\sigma \|m - \tilde{m}\|_{L_2(\nu_x)} \label{eq:proof-oos-lb}
\end{align} Here $(a)$ follows from the tower rule of conditional expectation and our construction of $\tilde{m}$ that
\begin{align*}
    &\mathbb{E}_{X\sim {\nu}_x}\left[ \{m(X) - \tilde{m}(X_{S^\star})\}\{\tilde{m}(X_{S^\star}) - m^\star(X_{S^\star})\} \right] \\
    &= \mathbb{E}_{X\sim {\nu}_x}\left[ \mathbb{E}\left[\{m(X) - \tilde{m}(X_{S^\star})\}\{\tilde{m}(X_{S^\star}) - m^\star(X_{S^\star})\} | X_{S^\star}\right]\right] \\
    &= \mathbb{E}_{X\sim {\nu}_x}\left[ \mathbb{E}\left[m(X) - \tilde{m}(X_{S^\star})|X_{S^\star}\right]\{\tilde{m}(X_{S^\star}) - m^\star(X_{S^\star})\}\right] = 0,
\end{align*} and $(b)$ follows from the claim \eqref{eq:proof-oos-sigma2}.

On the other hand, it also follows from the tower rule of conditional expectation and the fact that $\mathbb{E}_{\mu}[Y|X_{S^\star}] = m^\star(X_S^\star)$ for any $\mu \in \mathcal{U}_{S^\star, m^\star, \sigma^2}$ that
\begin{align*}
\mathbb{E}_{(X,Y)\sim {\mu}}\left[ \{Y - m^\star(X)\}\{\tilde{m}(X_{S^\star}) - m^\star(X_{S^\star})\} \right] = 0.
\end{align*} Further applying Cauchy Schwarz inequality and the fact that $\mathbb{E}_{(X,Y)\sim {\mu}}\left[ |Y - m^\star(X)|^2 \right] \le \sigma^2$, we find, the following holds for any $\mu \in \mathcal{U}_{S^\star, m^\star, \sigma^2}$ with $\mu_x \sim \nu_x$:
\begin{align*}
\mathbb{E}_{(X,Y)\sim {\mu}}\left[ |Y - m(X)|^2 \right] 
&= \mathbb{E}_{(X,Y)\sim {\mu}}\left[ |Y - m^\star(X)|^2 \right] + \int |m(x) - m^\star(x_{S^\star})|^2 \nu_x(dx) \\
&~~~~~~~~ - 2\mathbb{E}_{(X,Y)\sim {\mu}}\left[ \{Y - m^\star(X)\}\{m(X) - \tilde{m}(X_{S^\star})\} \right]\\
&\le \sigma^2 + \|m - m^\star\|_{L_2(\nu_x)}^2 + 2\sigma\|m - \tilde{m}\|_{L_2(\nu_x)}
\end{align*} Taking supremum on both sides over $\mu$ and substituting \eqref{eq:proof-oos-sigma2} in, we conclude that
\begin{align*}
\mathsf{R}_{\mathtt{oos}}(m; \nu_x) \le \mathsf{R}_{\mathtt{oos}}(m^\star; \nu_x) + \|m - m^\star\|_{L_2(\nu_x)}^2 + 2\sigma\|m - \tilde{m}\|_{L_2(\nu_x)}.
\end{align*} Combining it with the lower bound \eqref{eq:proof-oos-lb} completes the proof.

\subsection{Preliminaries for the Proofs in Section \ref{sec:ident-fairnn}}

We will use the terminology ``path'', ``blocked'', ``collider'', ``chain'', ``fork'', ``child'', ``parent'', ``descendant'', ``ancestor'' under the SCM framework. See a formal definition in \cite{glymour2016causal}.

We will repeatedly use the fact that
\begin{align}
\label{eq:scm-ident-fact1}
\begin{split}
&\mathbb{E}[Y|X_S, E] = \mathbb{E}[Y|X_S] \\
&\qquad \Longleftrightarrow\qquad (\mu^{(e)}\land \mu^{(e')})(\{m^{(e,S)}(X)\neq m^{(e',S)}(X)\})=0, ~~\forall e,e'\in \mathcal{E}
\end{split}
\end{align} and
\begin{align}
\label{eq:scm-ident-fact2}
\mathbb{E}[Y|X_S] = \bar{m}^{(S)},
\end{align} where the distribution of the random vector $(X,Y,E)=(X_1,\ldots, X_d, Y, E)=(Z_1,\ldots, Z_d, Z_{d+1}, Z_{d+2})$ is determined by $\tilde{M}$. The above facts \eqref{eq:scm-ident-fact1} and \eqref{eq:scm-ident-fact2} follow from the data generating process of $\tilde{M}$.

It is easy to verify that $\tilde{G}$ is also a DAG when $G$ is a DAG, this is because one can do topological sorting for $\tilde{G}$ by first choosing $E$ and then doing topological sorting for $G$.

\subsection{Proof of Theorem \ref{prop:ident-transfer-learning}}

We first show that 
\begin{align}
\label{eq:invariance1}
    E \indep Y | X_{S_\star}
\end{align} 
Because $\tilde{G}$ is a DAG, it suffices to show that $E \indep_{\tilde{G}} Y | X_{S_\star}$, i.e., all the paths connecting $Y$ and $E$ are blocked by $X_{S_\star}$ by the definition of $d$-separation. Let $p$ be a path connecting $Y$ and $E$. We divide it into two cases. 

\noindent \emph{Case 1. the path contains $Y$'s parent, i.e., $E \to \cdots X_j \to Y$.} In this case, the path is blocked by some fork or chain ``$\gets X_j \to Y$'' or ``$\to X_j \to Y$'' because $j\in \mathtt{pa}(d+1) \subseteq S_\star$. 

\noindent \emph{Case 2. the path contains $Y$'s child, i.e., $E \to \cdots X_j \gets Y$. } We consider the two sub-cases: (1) $j\in S_\star \setminus \mathtt{pa}(d+1)$ and $j$ is $Y$'s child or (2) $j$ is $Y$'s child but not in $S_\star$. 

For the first case (1), the path in the right can either be (1a) $X_k \gets X_j\gets Y$, under which the path is blocked by a chain centered by $j\in S^\star$; or (1b) $X_k \to X_j \gets Y$. 

For the case (1b), if $X_k$ is also $Y$'s child, then $X_k \in A(I)$ by the definition of $A(I)$, and verifying the path $p$ is blocked can be reduced to verifying whether the path $Y\to X_k \cdots \gets E$ that removes $X_j$ in $p$ is blocked (and there is no cyclic proof here because the length of the path will be deducted by 1). This can be done by a proof-by-induction argument. On the other hand, if $X_k$ is not $Y$'s child, then we have $k\in \mathtt{pa}(j) \setminus \{d+1\} \setminus \mathtt{ch}(d+1) \subseteq \mathtt{pa}(j) \setminus \{d+1\} \setminus A(I) \subseteq S_\star$. Therefore, the path $p$ is blocked by a fork ($X_j \gets X_k \to$) or chain ($X_j \gets X_k \gets$) centered at $k\in S_\star$.

For the case (2), the path can be written as $Y\to \underbrace{\cdots}_{(i)} \to X_k \gets \cdots \gets E$ where there is no $\gets$ in (i). We claim that $X_k$ and all its descendants are not in $S_\star$ in this case thus the path is blocked by a collider. We prove that claim using proof by contradiction. If there exists $X_a$ such that it is either $X_k$ or $X_k$'s descendant and $a\in S_\star$, then $X_a$ must be either in (2a) $A(I)$, or (2b) a parent of some $X_{\tilde{a}}$ in $A(I)$. 

For (2a), it is contrary to the definition of $A(I)$ because $X_a$ in this case has an ancestor $X_j$, who is $Y$'s child but not in $A(I)$ because $X_j$ also has an ancestor $X_b$ such that $X_b$ is $Y$'s child and $b\in I$. Therefore, $X_a$ has an ancestor $X_b$ such that $X_b$ is $Y$'s child and $b\in I$, which is contrary to the definition of $A(I)$. 

For (2b), it has a child $X_{\tilde{a}}$ such that $\tilde{a} \in A(I)$, this is also contrary to the definition of $A(I)$, because $X_{\tilde{a}}$ now also has an ancestor $X_j$, the same argument in (2a) applies. 

Putting these detailed discussions together, we can conclude \eqref{eq:invariance1} holds, which further implies that
\begin{align*}
    m_\star := \bar{m}^{(S_\star)}(x) \equiv m^{(e,S_\star)}. 
\end{align*}

We need the following lemma to characterize the topology of the set $S$ when $\mathsf{b}_{\mathtt{NN}}(S) > 0$.

\begin{lemma}
\label{lemma:11} 
If $\mathsf{b}_{\mathtt{NN}}(S) > 0$, then $S  \cap \cup_{j\in \mathtt{ch}(d+1) \setminus A(I)} \mathtt{dt}(j) \neq \emptyset$, where $\mathtt{dt}(k)$ refers to the set of descendants of variable $Z_k$ with $k\in [d+1]$, and $\mathtt{ch}(k)$ refers to the set of children of variable $Z_k$ with $k\in [d+1]$.
\end{lemma}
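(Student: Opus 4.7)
The plan is to prove the contrapositive: assuming $S \cap \bigcup_{j \in \mathtt{ch}(d+1) \setminus A(I)} \mathtt{dt}(j) = \emptyset$, I will show $\mathsf{b}_{\mathtt{NN}}(S) = 0$. Since $m^\star = \bar{m}^{(S^\star)}$ and the pooled regression satisfies $\bar{m}^{(T)} = \mathbb{E}_{\tilde{M}}[Y \mid X_T]$ by \eqref{eq:scm-ident-fact2}, this reduces to the conditional-mean equality $\mathbb{E}[Y \mid X_{S \cup S^\star}] = \mathbb{E}[Y \mid X_{S^\star}]$, which is implied by $Y \indep X_{S \setminus S^\star} \mid X_{S^\star}$ under $\tilde{M}$. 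The global Markov property on the SCM-induced DAG $\tilde{G}$ then lets me pass to a purely graphical task: show $Y \indep_{\tilde{G}} X_{S \setminus S^\star} \mid X_{S^\star}$, i.e.\ every $\tilde{G}$-path $p$ from $Y$ to any $X_k$, $k \in S \setminus S^\star$, is blocked by $X_{S^\star}$.

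I would split on the first edge of $p$, mirroring the argument already used to prove $E \indep_{\tilde{G}} Y \mid X_{S_\star}$ in \cref{prop:ident-transfer-learning}. If $p$ starts $Y \gets X_j$ then $j \in \mathtt{pa}(d+1) \subseteq S^\star$ provides a chain or fork blocker at $X_j$. If $p$ starts $Y \to X_j$ with $j \in A(I) \subseteq S^\star$, either $X_j$ is a chain blocker, or the collider at $X_j$ is opened and the next node $X_\ell$ has $\ell \in \mathtt{pa}(j) \setminus \{d+1\} \subseteq S^\star$, which then blocks $p$ as a chain or fork. The only nontrivial case is $p$ beginning $Y \to X_j$ with $j \in \mathtt{ch}(d+1) \setminus A(I)$, because no element of $S^\star$ is immediately at hand.

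For that case let $X_v$ be the first collider on $p$ traversed from $Y$ (possibly $X_v = X_j$). Such a $v$ must exist, for otherwise every edge of $p$ past $Y$ points forward, forcing $X_k \in \mathtt{dt}(j)$ and contradicting the standing hypothesis. The initial segment $Y \to X_j \to \cdots \to X_v$ is entirely directed, so $\{X_v\} \cup \mathtt{dt}(X_v) \subseteq \{j\} \cup \mathtt{dt}(j)$. Consequently the collider at $X_v$ is unopened and $p$ is blocked, provided the following descendant-closure identity holds:
\begin{equation*}
  S^\star \cap \bigl(\{j\} \cup \mathtt{dt}(j)\bigr) = \emptyset \qquad \text{for every } j \in \mathtt{ch}(d+1) \setminus A(I).
\end{equation*}

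The crux of the proof is this closure identity, which I would check against the three components of $S^\star = \mathtt{pa}(d+1) \cup A(I) \cup \bigcup_{\ell \in A(I)}(\mathtt{pa}(\ell) \setminus \{d+1\})$. The first component is ruled out by acyclicity of $\tilde{G}$, since $\{j\} \cup \mathtt{dt}(j) \subseteq \{j\} \cup \mathtt{dt}(d+1)$ cannot meet ancestors of $Y$. For the other two, since $j \in \mathtt{ch}(d+1) \setminus A(I)$ there exists $j' \in \mathtt{at}(j) \cap \mathtt{ch}(d+1) \cap I$; if some $m \in \{j\} \cup \mathtt{dt}(j)$ were to belong to $A(I)$, or to $\mathtt{pa}(\ell) \setminus \{d+1\}$ for some $\ell \in A(I)$, then using $\mathtt{at}(j) \cup \{j\} \subseteq \mathtt{at}(m)$ (respectively $\subseteq \mathtt{at}(\ell)$) we would land $j'$ in $\mathtt{at}(m) \cap \mathtt{ch}(d+1) \cap I$ or $\mathtt{at}(\ell) \cap \mathtt{ch}(d+1) \cap I$, contradicting $m \in A(I)$ or $\ell \in A(I)$. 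This inheritance of affectedness under descendant closure is exactly what the definition of $A(I)$ is engineered to provide, and it is the main technical obstacle; the rest is bookkeeping.
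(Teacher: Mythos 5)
Your proposal is correct and follows essentially the same route as the paper's proof: contrapositive via $d$-separation of $Y$ from $X_{S\setminus S_\star}$ given $X_{S_\star}$, case analysis on whether the path leaves $Y$ through a parent, a child in $A(I)$, or a child outside $A(I)$, and in the last case showing the first collider and its descendants avoid $S_\star$ by propagating the intervened child-of-$Y$ ancestor guaranteed by $j\notin A(I)$. Your version is if anything slightly more explicit than the paper's (the existence of the first collider and the clean statement of the descendant-closure identity), but the substance is identical.
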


\begin{proof}[Proof of \cref{lemma:11}]
We use proof by contradiction argument. To be specific, we show that if $S$ doesn't contain any variables in $\cup_{j\in \mathtt{ch}(d+1) \setminus A(I)} \mathtt{dt}(j)$, then
\begin{align}
\label{eq:nobias1}
    Y \indep_{\tilde{G}} X_{S\setminus S_\star}|X_{S_\star} \qquad \Longrightarrow \qquad \mathbb{E}[Y|X_{S_\star \cup S}] = \mathbb{E}[Y|X_{S_\star}]
\end{align} which further implies that $\mathsf{b}_{\mathtt{NN}}(S) = 0$. Hence it remains to establish $Y \indep_{\tilde{G}} X_{S\setminus S_\star}|X_{S_\star}$, i.e., all the paths $p$ connecting $Y$ and $X_{S\setminus S_\star}$ is blocked by $X_{S_\star}$. 

Let $Y$ be any path connecting $Y$ and $X_j$ with $j\in S \setminus S_\star$. We can divide $p$ into three cases: (1) the path contains one of $Y$'s parent, or (2) the path contains one of $Y$'s children in $A(I)$, or (3) the path contains one of $Y$'s children not in $A(I)$. For case (1), the path is blocked by some fork or chain in the path containing $X_k$ with $k\in \mathtt{pa}(d+1) \subseteq S_\star$. 

Turning to (2), the path will be either (2a) $Y\to X_k \to X_\ell \cdots X_j$ or (2b) $Y\to X_k \gets X_\ell \cdots X_j$. For (2a), such a path is blocked by the chain $\to X_k\to $ with $k\in S_\star$ by the assumption that $X_k \in A(I) \subseteq S_\star$. For (2b), such a path is blocked by fork $\gets X_\ell \to$ or chain $\gets X_\ell \gets$ with $X_\ell \in \mathtt{pa}(k) \subseteq S_\star$. 

For last case (3), the path can be written as $Y\to X_k \to \underbrace{\cdots}_{(i)} \to X_\ell \gets \cdots \gets X_j$ where there is no $\gets$ in $(i)$ and $k \in \mathtt{ch}(d+1) \setminus A(I)$, otherwise $j$ will be in $\cup_{j\in \mathtt{ch}(d+1) \setminus A(I)} \mathtt{dt}(j)$. We claim that $X_\ell$ and all its descendants are not in $S_\star$. We prove such a claim using a proof-by-contradiction argument. To be specific, if there exists some $X_a$ who is the descendant of $X_\ell$, which is also the descendant of $X_k$ such that $X_a \in S_\star$, then either $X_a$ and one of $X_a$'s child is lying in $A(I)$. Let be either $a$ or one of $X_a$'s children such that $\tilde{a} \in A(I)$. It follows from the definition of $A(I)$ that all the ancestors of $\tilde{a}$ who is also $Y$'s child cannot be in $I$. At the same time, because $X_k$ is $Y$'s child and $X_k \notin A(I)$, then $k$ has one ancestor $b$ who is also $Y$'s child and is not in $I$. Note that $b$ is also $\tilde{a}$'s ancestor. This is contrary to the fact that $\tilde{a} \in A(I)$. 

\end{proof}

Now we are ready to validate \cref{cond-fairnn-ident} with $S^\star = S_\star$. If $\mathsf{b}_{\mathtt{NN}}(S) > 0$, then by \cref{lemma:11}, there exists some $j\in S$ such that $j$ is the descendant of $Y$'s child $o$ such that $o \notin A(I)$. It follows from the definition of $A(I)$ that $o$ has an ancestor $k$ such that $k\in \mathtt{ch}(d+1)$ and $k\in I$. Then $j$ is the descendant of $Y$'s child $k$ with $k\in I$. Therefore, we have the path $E\to X_k \gets Y$ is not blocked by $S$ because $j\in S$ and $j$ is either $k$ or $k$'s descendant, which further implies that
\begin{align*}
    E \notindep_{\tilde{G}} Y | X_S
\end{align*}
It then follows from conditions (b) and (a) in \cref{cond:expected-faithfulness} that
\begin{align*}
    \exists e, e'\in \mathcal{E}, ~~~(\mu^{(e')} \land \mu^{(e)}) \left( \{ m^{(e,S)} \neq m^{(e',S)} \}\right) > 0.
\end{align*} This verifies \cref{cond-fairnn-ident}.

\subsection{Proof of Proposition \ref{prop:rtl}}

\begin{proof}[Proof of \cref{prop:rtl-full}]
The proof is almost identical to the proof of \cref{prop:ident-transfer-learning}. We only highlight the differences.

It follows similarly to the proof of the claim \eqref{eq:invariance1} that 
\begin{align*}
    Y \indep_{\bar{M}} E | X_{S_\star}
\end{align*} holds in the new graph, this implies that $\mathbb{E}[Y^{(t)}|X_{S_\star}^{(t)}] = \mathbb{E}[Y^{(0)}|X_{S_\star}^{(0)}]$.

It follows similar to the proof of \cref{lemma:11} that the following claim holds: if $\mathbb{E}[Y^{(t)}|X^{(t)}_{S\cup S_\star}] \neq \mathbb{E}[Y^{(0)}|X^{(0)}_{S_\star}]$, then $S$ contains $\cup_{j\in \mathtt{ch}(d+1) \setminus A(I)} \mathtt{dt}(j)$. 

Let $j\in S$ be such that $j$ is the descendant of $Y$'s child $o$ such that $o \notin A(I)$. It follows from the definition of $A(I)$ that $o$ has an ancestor $k$ such that $k\in \mathtt{ch}(d+1)$ and $k\in I$. Then $j$ is the descendant of $Y$'s child $k$ with $k\in I$. Therefore, we have the path $E\to X_k \gets Y$ is not blocked by $S$ because $j\in S$ and $j$ is either $k$ or $k$'s descendant, which further implies that
\begin{align*}
    E \notindep_{\bar{G}} Y | X_S
\end{align*} It then follows from \cref{cond:expected-faithfulness-2} that
\begin{align*}
    m^{(0,S)} \neq m^{(t, S)}.
\end{align*} This completes the proof.

\end{proof}

\subsection{Proof of Proposition \ref{prop:ident-causal-discovery}}

%Let $G=(V,E)$ be a directed acyclic graph of interests. We say that a node $a \in V$ is a \emph{parent} of a node $b\in V$ if $(a,b) \in E$, we also call $b$ as a \emph{child} of $a$. We call a sequence $p = (a_1, \ldots, a_l)$ as a path if 

Our first lemma characterizes the topology of the set $S$ when $\mathsf{b}_{\mathtt{NN}}(S) > 0$.
\begin{lemma}
\label{lemma:1} 
If $\mathsf{b}_{\mathtt{NN}}(S) > 0$, then $S\text{ contains }Y\text{'s descendants}$. 
\end{lemma}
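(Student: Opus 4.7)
The plan is to prove Lemma~\ref{lemma:1} by contraposition: assume $S \cap \mathtt{dt}(d+1) = \emptyset$ (i.e., $S$ contains no descendant of $Y$) and show that $\mathsf{b}_{\mathtt{NN}}(S) = 0$. In the setting of \cref{prop:ident-causal-discovery} in which this lemma will be applied, we have $S^\star = \mathtt{pa}(d+1)$, so the goal reduces to proving $\bar{m}^{(T)} \equiv m^\star$ $\bar{\mu}$-a.s., where $T := S \cup \mathtt{pa}(d+1)$.

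The key observation is that in each environment $e \in \mathcal{E}$, the joint distribution of $(X^{(e)}, Y^{(e)})$ is governed by the SCM $M^{(e)}$ whose induced DAG is $G$. Applying the local Markov property to this SCM, $Y$ is conditionally independent of its non-descendants given its parents. Since $T \setminus \mathtt{pa}(d+1) \subseteq S$ consists entirely of non-descendants of $Y$ by assumption, we obtain
\begin{align*}
Y^{(e)} \;\indep\; X^{(e)}_{T \setminus \mathtt{pa}(d+1)} \;\Big|\; X^{(e)}_{\mathtt{pa}(d+1)},
\end{align*}
so that $m^{(e,T)}(x_T) = m^{(e, \mathtt{pa}(d+1))}(x_{\mathtt{pa}(d+1)})$. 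Combining this with \cref{prop:scm-invariant}, which gives $m^{(e, \mathtt{pa}(d+1))} \equiv m^\star$ for every $e \in \mathcal{E}$, yields $m^{(e,T)} \equiv m^\star$ across all environments.

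To finish, recall $\bar{m}^{(T)}(x_T) = \sum_{e\in \mathcal{E}} \rho^{(e)}_T(x_T)\, m^{(e,T)}(x_T)$ and $\sum_{e\in \mathcal{E}} \rho^{(e)}_T \equiv 1$ $\bar{\mu}_{x,T}$-a.s.\ by construction of the Radon--Nikodym derivatives; substituting the common value $m^\star$ immediately gives $\bar{m}^{(T)} \equiv m^\star$, hence $\mathsf{b}_{\mathtt{NN}}(S) = \|m^\star - \bar{m}^{(T)}\|_2^2 = 0$, which is the contrapositive of the claim.

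I do not expect a serious obstacle here. The one subtlety worth flagging is that the local Markov property is a consequence of the SCM structure alone (via the tower rule and mutual independence of the exogenous noises $U_1, \ldots, U_{d+1}$), and therefore does \emph{not} invoke \cref{cond:expected-faithfulness}; faithfulness is needed only for the converse direction (from conditional independence back to $d$-separation), which is used elsewhere in the proof of \cref{prop:ident-causal-discovery} but not here. This keeps Lemma~\ref{lemma:1} a purely graphical/SCM-topological fact about the support of $S$, cleanly separated from the faithfulness-dependent steps of the main argument.
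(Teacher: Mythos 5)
Your proof is correct. It establishes the same contrapositive claim as the paper --- if $S$ contains no descendant of $Y$, then $\mathsf{b}_{\mathtt{NN}}(S)=0$ --- but by a different and more elementary route. The paper works in the augmented graph $\tilde{G}$ and proves $Y \indep_{\tilde{G}} X_{S\setminus S^\star}\mid X_{S^\star}$ by an explicit two-case enumeration of paths (paths through a parent of $Y$ are blocked by a fork or chain at a node of $S^\star$; paths through a child of $Y$ must contain a collider whose descendants are excluded from the conditioning set), and then invokes the global Markov property. You instead apply the local Markov property of each environment's SCM $M^{(e)}$ directly ($Y$ is independent of any subset of its non-descendants given $\mathtt{pa}(d+1)$, by the decomposition property of conditional independence), obtain $m^{(e,T)}=m^{(e,\mathtt{pa}(d+1))}=m^\star$ environment by environment, and average. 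This collapses the path-blocking case analysis to one line, and you are right that neither argument touches \cref{cond:expected-faithfulness}. What the paper's heavier $d$-separation machinery buys is uniformity with the surrounding proofs: the other topological claims in this section (e.g.\ \cref{lemma:11} and the necessity direction of \cref{prop:ident-causal-discovery}) involve conditioning sets that do include descendants or children of $Y$, where the relevant independencies are not instances of the local Markov property and genuinely require blocking paths in $\tilde{G}$; your shortcut is specific to this lemma. One cosmetic point: your normalization $\sum_{e}\rho^{(e)}_T\equiv 1$ matches the paper's intended (if inconsistently typeset) convention for $\bar{m}^{(T)}$ as a convex combination, so the final averaging step goes through as you wrote it.
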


%We first claim that
%\begin{align}
%\mathsf{b}_{\mathtt{NN}}(S) > 0 \qquad \text{if and only if} \qquad S\text{ contains }Y\text{'s descendants}.
%\end{align}
%For the ``if'' direction, if $S$ contains one descendant of $Y$ which is represented by $X_k$, then $Y \notindep_{\mathcal{G}} X_{S\setminus S^\star}|X_{S^\star}$ because the path between $Y$ and $X_k$ is not blocked by any variables in $S^\star$. This further yields
%\begin{align*}
%    \mathbb{E}[Y|X_{S^\star \cup S^\star}] \neq \mathbb{E}[Y|X_{S^\star}]
%\end{align*} under \cref{cond:expected-faithfulness}, which implies $\mathsf{b}_{\mathtt{NN}}(S) > 0$. 

\begin{proof}[Proof of \cref{lemma:1}]
To prove \cref{lemma:1}, we use the proof by contradiction argument. To be specific, we show that if $S$ does not contain any descendant of $Y$, then $\mathsf{b}_{\mathtt{NN}}(S)=0$. Let $p$ be any path connecting $Y$ and $X_j$ with $j\in S \setminus S^\star$. We can divide $p$ into two cases: (1) the path contains one of $Y$'s parents, or (2) the path contains one of $Y$'s children. For the first case, this path is blocked by some fork or chain in the path containing $X_k$ with $k\in S^\star$. For the second case, the path will be $Y\to \underbrace{\cdots}_{(i)} \to X_k \gets \cdots \gets X_j$, where there is no $\to$ in $(i)$. Because $X_j$ is not the descendant of $Y$, then the path contains a collider and is also blocked by $S^\star$. Since all the paths are blocked by $S^\star$, we can conclude that
\begin{align*}
    Y \indep_{\tilde{G}} X_{S\setminus S^\star}|X_{S^\star} \qquad \Longrightarrow \qquad \mathbb{E}[Y|X_{S^\star \cup S}] = \mathbb{E}[Y|X_{S^\star}].
\end{align*} when $S$ does not contain descendants of $Y$. This further implies that $\mathsf{b}_{\mathtt{NN}}(S)=0$ by definition.
\end{proof}

Now we are ready to prove the ``if'' direction. Let $S\subseteq [d]$ be some set such that $\mathsf{b}_{\mathtt{NN}}(S) > 0$. It then follows from \cref{lemma:1} that $S$ contains at least one descendant of $Y$. We argue that in this case, one has
\begin{align}
\label{eq:scm-ident-if}
    Y \notindep_{\tilde{G}} ~E | X_S.
\end{align} 
To this end, we only need to show that there exists a path connecting $Y$ and $E$ and is not blocked by $X_S$. Let $j \in S$ such that $X_j$ is the descendant of $Y$.  If $j\in I^\star$, then it is obvious that the path $Y\to X_j \gets E$ is not blocked by $X_S$ because $j\in S$. Otherwise, there must exist some node $k$ in $I^\star$ such that $X_k$ is the ancestor of $X_j$ by the definition of $I^\star$. Therefore, we can see that the path $Y\to X_k \gets E$ is not blocked by $X_S$ since $j\in S$. This completes the proof of the claim. It then follows from the faithfulness of $\tilde{G}$ (\cref{cond:expected-faithfulness} (2)) that
\begin{align*}
    Y \notindep E | X_S.
\end{align*} This validates \cref{cond-fairnn-ident} because of \cref{cond:expected-faithfulness} (a).

We then prove the ``only if'' direction using proof by contradiction. Specifically, we will show that if there exists some $j \in I^\star$ such $j\notin I$, then \cref{cond-fairnn-ident} will violate. Define $\overline{S} = S^\star \cup \{j\} \cup (\mathtt{pa}(j) \setminus \{d+1\})$. We argue that if $j \in I^\star$ and $j\notin I$, then
\begin{align}
    Y \indep_{\tilde{G}} E|X_{\overline{S}}.
\label{eq:scm-ident-onlyif}
\end{align} This further yields that $\mathbb{E}[Y|X_{\overline{S}}, E] = \mathbb{E}[Y|X_{\overline{S}}]$ and hence $m^{(e,\overline{S})} \equiv \bar{m}^{(\overline{S})}$, i.e., $\mathsf{d}_{\mathtt{NN}}(\overline{S}) = 0$, by \eqref{eq:scm-ident-fact1}. At the same time, because $j$ is the child of $Y$, we also have $\mathsf{b}_{\mathtt{NN}}(\overline{S})>0$ by our assumption that $m^\star \neq \bar{m}^{(\overline{S})}$. Thus \cref{cond-fairnn-ident} is violated. This completes the proof of the ``only if'' direction.

\noindent {\it Proof of \eqref{eq:scm-ident-onlyif}.} To establish \eqref{eq:scm-ident-onlyif}, we need to show that every path $p$ connecting $Y$ and $E$ is blocked by $X_{\overline{S}}$. Let $p=(E,X_\ell, \ldots, X_k, Y)$ be a path connecting $Y$ and $E$. Then our discussions will be divided into two cases: (1) $X_k$ is the parent of $Y$, or (2) $Y$ is the parent of $X_k$. For the first case, we have $k\in S^\star$ by the definition of $S^\star$ and the tuple containing the last three elements in the path is a fork or chain because the cause-effect relationship $X_k \to Y$. This means this path is blocked by the node $k\in S^\star \subseteq \overline{S}$. For the second case, we also consider two sub-cases: (2a) $k\neq j$ and (2b) $k= j$. For (2a), let the path be $Y\to X_k \to \underbrace{\cdots}_{(i)} \to X_\ell \gets \cdots \gets E$ where there is no ``$\gets$'' in $(i)$, it is easy to see that there is a collider centered around $X_\ell$ in the path. Note that $X_\ell$ and all its descendants cannot be $X_j$ by $j\in I^\star$, that is, $X_j$ does not have an ancestor that is also a child of $Y$. Combining this with the fact that $\tilde{G}$ is DAG and $S^\star$ is the set of parents of $Y$, we find that $X_\ell$ and all its descendants do not belong to $\overline{S}$. Therefore, the path is blocked in this case. For (2b), it is obvious that the length of the path is greater than $3$ because $j \notin I$. We also divide it into two cases: $Y\to X_k \to \underbrace{\cdots}_{(i)} \to X_\ell \gets \cdots \gets E$ where there is no ``$\gets$'' in $(i)$, or $Y\to X_j \gets X_\ell \cdots \gets E$. For the former, the path is blocked by a collider centered around $X_\ell$ since $X_\ell$ and all its descendants do not belong to $\overline{S}$. For the latter, the path is blocked by a fork/chain centered around $X_\ell$ provided $\ell \in \overline{S}$. Therefore, we can conclude that the path is also blocked in the case of (2b). This completes the proof. \qed

\subsection{Proof of Theorem \ref{thm:causal-identification-hc}}

We first show that 
\begin{align}
\label{eq:invariance1-hc}
    E \indep Y | X_{S_\star}
\end{align} 
Because $\tilde{G}$ is a DAG, it suffices to show that $E \indep_{\tilde{G}} Y | X_{S_\star}$, i.e., all the paths connecting $Y$ and $E$ are blocked by $X_{S_\star}$ by the definition of $d$-separation. Let $p$ be a path connecting $Y$ and $E$. We divide it into two cases. 

\noindent \emph{Case 1. the path contains $Y$'s parent (but is not $H$), i.e., $E \to \cdots X_j \to Y$.} In this case, the path is blocked by some fork or chain ``$\gets X_j \to Y$'' or ``$\to X_j \to Y$'' because $j\in \mathtt{pa}(d+1) \subseteq S_\star$. 
\medskip

\noindent \emph{Case 2.} When the parent is $H$ and it is $E\to \cdots X_j\to H \to Y$. In this case, the path is blocked by some fork or chain ``$\gets X_j \to Y$'' or ``$\to X_j \to Y$'' because $j\in \mathtt{pa}(d+2) \subseteq S_\star$. 
\medskip

\noindent \emph{Case 3.} When the parent is $H$ and it is $E\to \cdots X_j\gets H \to Y$. We consider the two sub-cases: (1) $j$ is $H$'s child and $j\in S_\star$ or (2) $j$ is $H$'s child but not in $S_\star$. 

For the first case (1). Given $j$ is the child of $H$, $j$ must be in $A(I)$: $j$ cannot be in $\mathtt{pa}(d+2)$ by the acyclic property of the graph, it also cannot be in $\mathtt{pa}(d+1) \setminus \{d+2\}$ provided $d+2\notin \cup_{j\in \mathtt{pa}(d + 1) \setminus \{d+2\}} \mathtt{at}(j)$. Given $j$ is the child of $H$, $j$ cannot be in $\bigcup_{j\in A(I)} \left(\mathtt{pa}(j) \setminus \{d+1, d+2\}\right) \setminus A(I)$ because the definition of $A(I)$ implies that if $j$ is a child of $d+2$ and $j$ is not in $A(I)$, then its children are also not in $A(I)$.

It should be noted that the path cannot be $E\to X_j \gets H \to Y$ by the the fact that $j\in A(I)$. Therefore, the path in the right can either be (1a) $X_k \gets X_j \gets H \to Y$, under which the path is blocked by a chain $X_k\gets X_j \gets H$ centered by $j\in S^\star$; or (1b) $X_k \to X_j \gets H \to Y$. 

For (1b), if $k$ is not in $A(I)$, then we have 
\begin{align*}
    k\in \mathtt{pa}(j) \setminus \{d+2\} \subseteq S_\star
\end{align*} given $j \in S_\star$ and $j$ is a child of $H$. Therefore, the path $p$ is blocked by a fork ($X_j \gets X_k \to$) or chain ($X_j \gets X_k \gets$) centered at $k\in S_\star$.

Also under (1b), if $k \in A(I)$, which indicates that $k$ is either a child of $Y$, or a child of $H$. In this case, verifying the path $p$ is blocked can be reduced to verifying whether the path $Y\to X_k \cdots \gets E$ or $Y \gets H\to X_k \cdots \gets E$ that removes $X_j$ in $p$ is blocked (and there is no cyclic proof here because the length of the path will be deducted by 1). This can be done by a proof-by-induction argument and combining arguments in a similar case in Case 4. 

For the case (2), the path can be written as $Y\gets H \to \underbrace{\cdots}_{(i)} \to X_k \gets \cdots \gets E$ where there is no $\gets$ in (i). We prove that claim using proof by contradiction. If there exists $X_a$ such that it is either $X_k$ or $X_k$'s descendant and $a\in S_\star$, then $X_a$ must be either in (2a) $A(I)$, or (2b) a parent of some $X_{\tilde{a}}$ in $A(I)$ given the graph is acyclic and $d+2$ is not the ancestor of $\mathtt{pa}(d+1)\setminus \{d+2\}$. 

For (2a), it is contrary to the definition of $A(I)$ because $X_a$ in this case has an ancestor $X_j$, who is $H$'s child but not in $A(I)$ because $X_j$ also has an ancestor $X_b$ such that $X_b$ is either $Y$'s child or $H$'s child and $b\in I$. Therefore, $X_a$ has an ancestor $X_b$ such that $X_b$ is $Y$'s or $H$'s child and $b\in I$, which is contrary to the definition of $A(I)$. 

For (2b), it has a child $X_{\tilde{a}}$ such that $\tilde{a} \in A(I)$, this is also contrary to the definition of $A(I)$, because $X_{\tilde{a}}$ now also has an ancestor $X_j$, the same argument in (2a) applies. 

\medskip

\noindent \emph{Case 4. the path contains $Y$'s child, i.e., $E \to \cdots X_j \gets Y$. } The discussion in this case is very similar to that in Case 3. We consider the two sub-cases: (1) $j$ is $Y$'s child and $j\in S_\star$ or (2) $j$ is $Y$'s child but not in $S_\star$. 

For the first case (1), we first can verify that $j\in A(I)$ (similar to (1) in Case 3), then the path in the right can either be (1a) $X_k \gets X_j\gets Y$, under which the path is blocked by a chain centered by $j\in S^\star$; or (1b) $X_k \to X_j \gets Y$. 

For the case (1b), if $k$ is not in $A(I)$, then we have $k\in \mathtt{pa}(j) \setminus \{d+2\} \subseteq S_\star$. Therefore, the path $p$ is blocked by a fork ($X_j \gets X_k \to$) or chain ($X_j \gets X_k \gets$) centered at $k\in S_\star$. On the other hand, if $k\in A(I)$, verifying the path $p$ is blocked can be reduced to verifying whether the path $Y\to X_k \cdots \gets E$ or $Y \gets H\to X_k \cdots \gets E$ that removes $X_j$ in $p$ is blocked (and there is no cyclic proof here because the length of the path will be deducted by 1). This can be done by a proof-by-induction argument and combining arguments in a similar case in Case 3. 

For the case (2), the path can be written as $Y\to \underbrace{\cdots}_{(i)} \to X_k \gets \cdots \gets E$ where there is no $\gets$ in (i). We claim that $X_k$ and all its descendants are not in $S_\star$ in this case thus the path is blocked by a collider, and the proof is identical to the (2) in Case 3. 

Putting these detailed discussions together, we can conclude \eqref{eq:invariance1-hc} holds, which further implies that
\begin{align*}
    m_\star := \bar{m}^{(S_\star)}(x) \equiv m^{(e,S_\star)}. 
\end{align*}

We need the following lemma to characterize the topology of the set $S$ when $\mathsf{b}_{\mathtt{NN}}(S) > 0$.

\begin{lemma}
\label{lemma:11-hc} 
If $\mathsf{b}_{\mathtt{NN}}(S) > 0$, then $S  \cap \cup_{j\in \{\mathtt{ch}(d+1) \cup \mathtt{ch}(d+2)\}\setminus A(I)} \mathtt{dt}(j) \neq \emptyset$, where $\mathtt{dt}(k)$ refers to the set of descendants of variable $Z_k$ with $k\in [d+1]$, and $\mathtt{ch}(k)$ refers to the set of children of variable $Z_k$ with $k\in [d+2]$.
\end{lemma}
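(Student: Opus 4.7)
I would mirror the proof strategy of \cref{lemma:11} (the analog without hidden confounders) and argue by contradiction. Specifically, I would assume that $S$ is disjoint from $\bigcup_{j\in\{\mathtt{ch}(d+1)\cup\mathtt{ch}(d+2)\}\setminus A(I)}\mathtt{dt}(j)$, and then try to show that $Y\indep_{\tilde G} X_{S\setminus S_\star}\mid X_{S_\star}$ in the augmented SCM $\tilde M$. By faithfulness (\cref{cond:expected-faithfulness-3}) together with \eqref{eq:scm-ident-fact1}--\eqref{eq:scm-ident-fact2}, this independence yields $\mathbb{E}[Y\mid X_{S\cup S_\star}] = \mathbb{E}[Y\mid X_{S_\star}]$, which contradicts $\mathsf{b}_{\mathtt{NN}}(S)>0$.

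Next I would take an arbitrary path $p$ in $\tilde G$ between $Y$ and some $X_j$ with $j\in S\setminus S_\star$, and split into cases by the last edge incident to $Y$: (i) $X_k\to Y$ for some $k\in\mathtt{pa}(d+1)\setminus\{d+2\}\subseteq S_\star$; (ii) $H\to Y$; or (iii) $Y\to X_k$. Case (i) is immediately blocked by the chain/fork centered at $k\in S_\star$. Case (ii) further subdivides depending on whether $H$ is entered from a parent ($X_\ell\in\mathtt{pa}(d+2)\subseteq S_\star$, blocked) or from a co-child of $H$ (path enters as $\cdots X_k\gets H\to Y$). For the latter and for case (iii), I would mimic the argument of \cref{lemma:11}: if the node $X_k$ adjacent to $Y$ or $H$ lies in $A(I)$ then $k\in S_\star$ and I block via a chain/fork, reducing to a proper subpath by an induction on path length; if $X_k\notin A(I)$, then the path continues to the right of $X_k$ via some collider structure.

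The heart of the argument—and what I expect to be the main obstacle—is the collider case: showing that any node $X_\ell$ playing the role of a collider on such a continuation, together with all its descendants, lies outside $S_\star$. My plan is to argue by contradiction at this nested level: suppose $a$ equals $\ell$ or is a descendant of $\ell$ with $a\in S_\star$. Then $a$ must either (a) belong to $A(I)$ itself or (b) be a parent of some $\tilde a\in A(I)$ distinct from $d+1,d+2$. In subcase (a), $a$ would have an ancestor which is a child of $Y$ or $H$ that is not in $A(I)$ (by tracing the collider's source back to the child of $Y$/$H$ that is not in $A(I)$ and is not blocked), and walking further up that ancestor chain produces a child of $Y$ or $H$ lying in $I$, contradicting the definition of $A(I)$ applied to $a$. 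Subcase (b) reduces to subcase (a) applied to $\tilde a$. The hypothesis $d+2\notin \bigcup_{j\in \mathtt{pa}(d+1)\setminus\{d+2\}} \mathtt{at}(j)$ in \cref{cond:scm-model-hc} is crucial here: it prevents spurious intermingling of $H$'s ancestry with $Y$'s non-$H$ parents, so the tracing argument goes through uniformly for both $\mathtt{ch}(d+1)$ and $\mathtt{ch}(d+2)$.

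Finally, once every path is shown to be blocked, the $d$-separation $Y\indep_{\tilde G} X_{S\setminus S_\star}\mid X_{S_\star}$ holds, and the chain of implications outlined in the first paragraph closes the contradiction. Most of the bookkeeping is a faithful adaptation of the analogous case analysis in the proofs of \cref{prop:ident-transfer-learning} and \cref{lemma:11}; the genuinely new ingredient is carrying $H$ and its children through the definition of $A(I)$ in the collider step, which is where I expect careful case enumeration to be necessary.
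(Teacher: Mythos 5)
Your proposal follows essentially the same route as the paper's proof: contradiction via showing every path from $Y$ to $X_{S\setminus S_\star}$ is $d$-separated by $X_{S_\star}$, with the same case split on whether the path meets $Y$ through a parent, through $H$, or through a child, and the same nested contradiction argument for the collider case using the definition of $A(I)$ and the condition $d+2\notin \bigcup_{j\in \mathtt{pa}(d+1)\setminus\{d+2\}}\mathtt{at}(j)$. One small correction: the step from $d$-separation to $\mathbb{E}[Y\mid X_{S\cup S_\star}]=\mathbb{E}[Y\mid X_{S_\star}]$ uses the global Markov property of the SCM (which holds automatically), not faithfulness—faithfulness is only needed for the converse direction elsewhere in \cref{thm:causal-identification-hc}.
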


\begin{proof}[Proof of \cref{lemma:11-hc}]
We use proof by contradiction argument. To be specific, we show that if $S$ doesn't contain any variables in $\cup_{j\in \{\mathtt{ch}(d+1) \cup \mathtt{ch}(d+2)\} \setminus A(I)} \mathtt{dt}(j)$, then
\begin{align}
    Y \indep_{\tilde{G}} X_{S\setminus S_\star}|X_{S_\star} \qquad \Longrightarrow \qquad \mathbb{E}[Y|X_{S_\star \cup S}] = \mathbb{E}[Y|X_{S_\star}]
\end{align} which further implies that $\mathsf{b}_{\mathtt{NN}}(S) = 0$. Hence it remains to establish $Y \indep_{\tilde{G}} X_{S\setminus S_\star}|X_{S_\star}$, i.e., all the paths $p$ connecting $Y$ and $X_{S\setminus S_\star}$ is blocked by $X_{S_\star}$. 

Let $Y$ be any path connecting $Y$ and $X_j$ with $j\in S \setminus S_\star$. We can divide $p$ into three cases: (1) the path contains one of $Y$'s parent (but not in $H$), or one of $H$'s parent; (2) the path contains one of $Y$'s children in $A(I)$, or $H$ and one of $H$'s children in $A(I)$; (3) the path contains one of $Y$'s children not in $A(I)$, or $H$ and one of $H$'s children not in $A(I)$. For case (1), the path is blocked by some fork or chain in the path containing $X_k$ with $k\in \mathtt{pa}(d+1) \setminus \{d+2\} \cup \mathtt{pa}(d+2) \subseteq S_\star$. 

Turning to (2), if the path starts with $Y\to X_k$, then the path will be either (2a) $Y\to X_k \to X_\ell \cdots X_j$ or (2b) $Y\to X_k \gets X_\ell \cdots X_j$. For (2a), such a path is blocked by the chain $\to X_k\to $ with $k\in S_\star$ by the assumption that $X_k \in A(I) \subseteq S_\star$. For (2b), such a path is blocked by fork $\gets X_\ell \to$ or chain $\gets X_\ell \gets$ with $X_\ell \in \mathtt{pa}(k) \subseteq S_\star$. The discussion under which the path starts with $Y\gets H\to X_k$ follows similarly. 

For the last case (3), if the path starts with $Y\to X_k$, then the path can be written as $Y\to X_k \to \underbrace{\cdots}_{(i)} \to X_\ell \gets \cdots \gets X_j$ where there is no $\gets$ in $(i)$ and $k \in \mathtt{ch}(d+1) \setminus A(I)$, otherwise $j$ will be in $\cup_{j\in \{\mathtt{ch}(d+1) \cup \mathtt{ch}(d+2)\} \setminus A(I)} \mathtt{dt}(j)$. We claim that $X_\ell$ and all its descendants are not in $S_\star$. We prove such a claim using a proof-by-contradiction argument. To be specific, if there exists some $X_a$ who is the descendant of $X_\ell$, which is also the descendant of $X_k$ such that $X_a \in S_\star$, then either $X_a$ and one of $X_a$'s child is lying in $A(I)$. Let be either $a$ or one of $X_a$'s children such that $\tilde{a} \in A(I)$. It follows from the definition of $A(I)$ that all the ancestors of $\tilde{a}$ who is also either $Y$'s or $H$'s child cannot be in $I$. At the same time, because $X_k$ is $Y$'s child and $X_k \notin A(I)$, then $k$ has one ancestor $b$ who is also $Y$'s child and is not in $I$. Note that $b$ is also $\tilde{a}$'s ancestor. This is contrary to the fact that $\tilde{a} \in A(I)$. The discussion under which the path starts with $Y\gets H\to X_k$ follows similarly.

\end{proof}

Now we are ready to validate \cref{cond-fairnn-ident} with $S^\star = S_\star$. If $\mathsf{b}_{\mathtt{NN}}(S) > 0$, then by \cref{lemma:11-hc}, there exists some $j\in S$ such that $j$ is the descendant of either $Y$'s or $H$'s child $o$ such that $o \notin A(I)$. It follows from the definition of $A(I)$ that $o$ has an ancestor $k$ such that $k\in \mathtt{ch}(d+1) \cup \mathtt{ch}(d+2)$ and $k\in I$. Then $j$ is the descendant of $Y$'s (or $H$'s) child $k$ with $k\in I$. Therefore, we have the path $E\to X_k \gets Y$ is not blocked by $S$ because $j\in S$ and $j$ is either $k$ or $k$'s descendant, which further implies that
\begin{align*}
    E \notindep_{\tilde{G}} Y | X_S
\end{align*}
It then follows from conditions (b) and (a) in \cref{cond:expected-faithfulness-3} that
\begin{align*}
    \exists e, e'\in \mathcal{E}, ~~~(\mu^{(e')} \land \mu^{(e)}) \left( \{ m^{(e,S)} \neq m^{(e',S)} \}\right) > 0.
\end{align*} This verifies \cref{cond-fairnn-ident}.

%\renewcommand{\contentsname}{Table of Contents}
%\tableofcontents

\bibliographystyle{apalike2}
\bibliography{main.bib}

\end{document}